\documentclass{amsart}
\usepackage[utf8]{inputenc}
\usepackage[T1]{fontenc}
\usepackage{amsmath,amssymb,amsthm}
\usepackage[margin=1in]{geometry}
\usepackage{graphicx}
\usepackage{enumitem}
\usepackage{array,booktabs,longtable}
\usepackage[colorlinks=true,linkcolor=blue,citecolor=blue,urlcolor=blue]{hyperref}
\setlist[enumerate,1]{label=\textup{(\alph*)},ref=(\alph*),leftmargin=2.2em}

\graphicspath{{figures/}}
\newcommand{\TCDevice}{4 CPU processes with one thread each}
\newcommand{\TCMode}{full}

\newcommand{\TCStarts}{64}
\newcommand{\TCSteps}{800}
\newcommand{\TCGridSize}{14}
\newcommand{\TCDigits}{40}
\newcommand{\TCDims}{2, 3, 4}

\newcommand{\TCWorkers}{4}
\newcommand{\TCSeconds}{475}
\newcommand{\TCVzero}{1\times10^{-16}}
\newcommand{\TCCertDev}{5\times10^{-6}}
\newcommand{\TCSharpDev}{1\times10^{-7}}
\newcommand{\TCBoundExcess}{-1\times10^{-13}}
\newcommand{\TCNDev}{4\times10^{-3}}
\newcommand{\TCSectorCentral}{1\times10^{-16}}
\newcommand{\TCSectorExcess}{-7\times10^{-14}}
\newcommand{\TCSectorReached}{63}
\newcommand{\TCSectorTotal}{72}
\newcommand{\TCSectorDet}{2\times10^{-11}}
\newcommand{\TCSciRatioMin}{0.994}
\newcommand{\TCSciRatioMax}{1.097}
\newcommand{\TCSciStarts}{40}

\newcommand{\TCSGMExcessV}{0.847}
\newcommand{\TCSGMExcessKappa}{0.667}
\newcommand{\TCSGMExcessDelta}{0.8}
\newcommand{\TCSGMExcessN}{4}
\newcommand{\TCSharpDevNew}{3\times10^{-4}}
\newcommand{\TCSharpDevApproached}{2\times10^{-6}}
\newcommand{\TCNDevNew}{4\times10^{-7}}

\newcommand{\TCNDevProj}{3\times10^{-10}}
\newcommand{\TCNDevMon}{3\times10^{-8}}
\newcommand{\TCVPowersNineThree}{0.3064}
\newcommand{\TCVSgmNineThree}{1.2372}

\newcommand{\TCRefSgmNine}{1.2942}

\newcommand{\TCVPowersNine}{0.2197}

\newcommand{\TCRefExpqpqHalf}{0.0802}

\newcommand{\TCWmonLead}{1.020}
\newcommand{\TCExpOddHalf}{0.0694}
\newcommand{\TCWmonFirst}{0.3}

\newcommand{\TCWmonReach}{0.7}
\newcommand{\TCSGMRatioHalf}{0.924}

\newcommand{\TCALTRatioHalf}{0.796}

\newcommand{\TCRows}{%
1 & $|\varphi(X)|\le\varphi(|X|)$ & \cite{gardner1979} & 0.0050 & 0.1547 & 1.2942 & 0.1547 & $\delta^{2}$ & 2.03 & $4\times10^{-10}$ \\
2 & $\varphi(|X+Y|)\le\varphi(|X|)+\varphi(|Y|)$ & \cite{gardner1979,stolyarov2002} & 0.0050 & 0.1547 & 1.2942 & 0.1547 & $\delta^{2}$ & 2.03 & $1\times10^{-9}$ \\
3 & $\varphi(A+A^*)\le2\varphi(|A^*|)$ & \cite{bikchentaev2019abed} & 0.0050 & 0.1547 & 1.2942 & 0.1547 & $\delta^{2}$ & 2.03 & $5\times10^{-10}$ \\
4 & $|\varphi(A^2)|\le\varphi(A^*A)$ & \cite{petz1988,bikchentaev2026moslehian} & 0.0050 & 0.1547 & 1.2942 & 0.1547 & $\delta^{2}$ & 2.03 & $1\times10^{-8}$ \\
5 & $\varphi(|P-R|)\le\varphi(|P-Q|)+\varphi(|Q-R|)$ & \cite{bikchentaev2019metric} & 0.0050 & 0.1547 & 1.2942 & 0.1547 & $\delta^{2}$ & 2.03 & $1\times10^{-8}$ \\
6 & $\varphi(A\natural B)\le\sqrt{\varphi(A)\varphi(B)}$ & \cite{bikchentaev2026sgm} & n.r. & 0.1430 & 1.1979 & 0.1547$^{b}$ & $\delta^{2}$ & -- & $n\ge3$ \\
7 & $\varphi((B^{1/2}AB^{1/2})^2)\le\varphi(BA^2B)$ & \cite{bikchentaev2011pb} & 0.0059 & 0.2653 & 3.4322 & 0.3333$^{b}$ & $\delta^{2}$ & 2.31 & $9\times10^{-2}$ \\
8 & $\varphi(|AB^*|)\le\frac12\varphi(|A|^2)+\frac12\varphi(|B|^2)$ & \cite{bikchentaev2005young} & 0.2222 & 2.0000 & 18.00 & 2.0000 & $\delta^{1}$ & 1.17 & $9\times10^{-12}$ \\
9 & $\varphi(PQ+QP)\le2\varphi(QPQ)$ & \cite{bikchentaev2019abed} & 0.0477 & 0.2056 & 0.3469 & 0.2056$^{a}$ & $\delta^{1}$ & 0.94 & $3\times10^{-10}$ \\
10 & $0\le A\le B\Rightarrow\varphi(A^2)\le\varphi(B^2)$ & \cite{bikchentaev2007mono,sano2006,hoa2010} & $6.3\times10^{-6}$ & 0.0052 & 0.1825 & 0.0052 & $\delta^{4}$ & 4.04 & $3\times10^{-8}$ \\
11 & $(\det A)^{1/n}\le\frac1n\varphi(A)$, $\varphi(1)=n$ & \cite{bikchentaev2020det} & 0.0050 & 0.1340 & 0.5641 & 0.1340 & $\delta^{2}$ & 2.01 & yes \\
12 & $\varphi((A+B)^{1/2})\le\varphi(A^{1/2})+\varphi(B^{1/2})$ & \cite{tikhonov2005} & 0.0050 & 0.1338 & 0.9678 & -- & -- & 2.00 & $5\times10^{-4}$ \\
13 & $\varphi(A+B-|A-B|)\le2\varphi(A^{1/4}B^{1/2}A^{1/4})$ & \cite{hoa2013} & 0.0048 & 0.0911 & 0.2197 & -- & -- & 1.93 & $4\times10^{-1}$ \\
14 & $|\varphi(PQ)|\le\varphi(P)^{1/3}\varphi(Q)^{2/3}$ & \cite{bikchentaev2011mn,bikchentaev2019abed} & $2.8\times10^{-4}$ & 0.0086 & 0.0711 & -- & -- & 2.03 & $4\times10^{-3}$ \\
15 & $\varphi(e^{P+Q})\le\varphi(e^{P/2}e^Qe^{P/2})$ & \cite{bikchentaev2011mn} & $6.4\times10^{-4}$ & 0.0142 & 0.0420 & -- & -- & 1.98 & $2\times10^{-1}$ \\
16 & $\varphi(e^A)e^{\varphi(e^{A/2}Be^{A/2})/\varphi(e^A)}\le\varphi(e^{A+B})$ & \cite{bikchentaev2011pb} & $9.7\times10^{-4}$ & 0.0561 & 0.8890 & -- & -- & 2.35 & $2\times10^{-3}$ \\
17 & $\varphi(PQP)\le\varphi(Q)$ & \cite{bikchentaev2010smj} & 0.0101 & 0.3333 & 4.2632 & 0.3333 & $\delta^{2}$ & 2.04 & $7\times10^{-9}$ \\
18 & $\varphi(|P-Q|)\le\varphi(P+Q)$ & \cite{bikchentaev2010smj} & 0.0050 & 0.1547 & 1.2942 & 0.1547 & $\delta^{2}$ & 2.03 & $1\times10^{-8}$ \\
19 & $\varphi(P+Q)\le\varphi(1+P\wedge Q)$ & \cite{bikchentaev2010smj} & 0.1000 & 0.5000 & 0.9000 & 0.5000$^{c}$ & $\delta^{1}$ & 1.00 & odd $n$ \\
20 & $\varphi(e^{QPQ})\le\varphi(e^P)$ & \cite{bikchentaev2011mn} & 0.0029 & 0.0802 & 0.3115 & 0.0802 & $\delta^{2}$ & 2.02 & odd $n$ \\
21 & $\varphi(\mathrm{Re}\,PQ)\ge0$ & \cite{bikchentaev2011mn} & 0.0013 & 0.0359 & 0.1964 & 0.0359 & $\delta^{2}$ & 2.02 & $8\times10^{-9}$ \\
22 & $\varphi(\mathrm{Re}\,PQ)\le\varphi(|PQ|)$ & \cite{bikchentaev2011mn} & 0.0050 & 0.1547 & 1.2942 & 0.1547 & $\delta^{2}$ & 2.03 & $5\times10^{-9}$ \\
23 & $\varphi(P\vee Q)\le\varphi(P)+\varphi(Q)$ & \cite{petz1988} & 0.1111 & 1.0000 & 9.0000 & 1.0000$^{c}$ & $\delta^{1}$ & 1.17 & $4\times10^{-7}$ \\
24 & $\varphi(A^2)^{1/2}\le\varphi(A)$, $\varphi(1)=n$ & \cite{bikchentaev2020det} & 0.0541 & 0.4142 & 2.1623 & 0.4142 & $\delta^{1}$ & 1.13 & yes \\
25 & $0\le A\le B\Rightarrow\varphi(A^4)\le\varphi(AB^2A)$ & \cite{hoa2010} & n.r. & $1.5\times10^{-6}$ & 0.0017 & $1.7\times10^{-6}{}^{b}$ & $\delta^{8}$ & -- & $3\times10^{-6}$ \\
}
\newcommand{\TCClassRows}{%
8 & $\kappa-1=2\delta/(1-\delta)$ & $2\delta$ & 2.0000 & 2.0000 & sharp \\
9 & $s/(1+s)$, $s=\sin\frac\psi2$ & $\delta/2$ & 0.2056 & 0.2056 & sharp for $n=2$ \\
19 & $\delta$ & $\delta$ & 0.5000 & 0.5000 & sharp, approached \\
23 & $\delta/(1-\delta)$ & $\delta$ & 1.0000 & 1.0000 & sharp, approached \\
24 & $(1-\delta)^{-1/2}-1$ & $\delta/2$ & 0.4142 & 0.4142 & exact, every $n$ ($n=2$ shown) \\
\midrule
1--5, 18, 22 & $G-1$ & $\delta^2/2$ & 0.1547 & 0.1547 & sharp \\
17 & $G^2-1$ & $\delta^2$ & 0.3333 & 0.3333 & sharp \\
11 & $1-\sqrt{1-\delta^2}$ & $\delta^2/2$ & 0.1340 & 0.1340 & exact, every $n$ ($n=2$ shown) \\
20 & max.\ of one variable & $\frac{(e-1)^2}{e(e+1)}\delta^2$ & 0.0802 & 0.0802 & sharp \\
21 & $\frac{1}{2}\tan^2(\psi/2)$ & $\delta^2/8$ & 0.0359 & 0.0359 & sharp \\
6 & $\ge G-1$ & $\ge\delta^2/2$ & 0.1547 & 0.1430 & lower bound \\
7 & $\ge G^2-1$ & $\ge\delta^2$ & 0.3333 & 0.2653 & lower bound \\
12--16 & -- & $c\,\delta^2$, $c\in[\frac1{36},\frac12]$ & -- & 0.0086--0.1338 & search; lower bounds for 12, 14, 16 \\
\midrule
10 & $(G-1)^2/(4G)$ & $\delta^4/16$ & 0.0052 & 0.0052 & sharp \\
25 & $\ge g_{25}(\delta)$ & $\ge\delta^8/4096$ & $1.7\times10^{-6}$ & $1.5\times10^{-6}$ & lower bound \\
}

\newcommand{\TCCStarts}{64}
\newcommand{\TCCSteps}{800}
\newcommand{\TCCDigits}{40}
\newcommand{\TCCSeconds}{760}
\newcommand{\TCCRowsA}{%
1 & 0.1547 & 0.1547 & 0.1547 & 1.2942 & 1.2942 & 1.2942 & 0.1547 & 0.1547 & 0.1547 & 1.2942 & 1.2942 & 1.2942 \\
2 & 0.1547 & 0.1547 & 0.1547 & 1.2942 & 1.2941 & 1.2942 & 0.1547 & 0.1547 & 0.1547 & 1.2942 & 1.2941 & 1.2942 \\
3 & 0.1547 & 0.1547 & 0.1547 & 1.2942 & 1.2942 & 1.2942 & 0.1547 & 0.1547 & 0.1547 & 1.2942 & 1.2942 & 1.2942 \\
4 & 0.1547 & 0.1547 & 0.0774 & 1.2942 & 1.2942 & 0.6471 & 0.1547 & 0.1328 & 0.0774 & 1.2942 & 1.2612 & 0.7393 \\
6 & 0.1427 & 0 & 0.1501 & 1.2139 & 1.1707 & 1.2924 & 0.1359 & 0 & 0.1491 & 1.2125 & 1.1405 & 1.2922 \\
7 & 0.2656 & 0.2730 & 0 & 3.4387 & 3.5241 & 0 & 0.2733 & 0.2703 & 0 & 3.4965 & 3.4358 & 0 \\
8 & 2.0000 & 2.0000 & 2.0000 & 18.00 & 18.00 & 18.00 & 2.0000 & 2.0000 & 2.0000 & 18.00 & 18.00 & 18.00 \\
10 & 0.0052 & 0.0052 & 0.0052 & 0.1825 & 0.1825 & 0.1825 & 0.0052 & 0.0052 & 0.0052 & 0.1825 & 0.1825 & 0.1825 \\
11 & 0.1340 & 0.1340 & -- & 0.5641 & 0.5641 & -- & 0.1347 & 0.1347 & -- & 0.6188 & 0.6188 & -- \\
12 & 0.1338 & 0.1338 & 0.1338 & 0.9678 & 0.9679 & 0.9680 & 0.1338 & 0.1338 & 0.1338 & 0.9678 & 0.9675 & 0.9680 \\
13 & 0.0911 & 0.0329 & 0.0911 & 0.2197 & 0.2171 & 0.2197 & 0.0911 & 0.0911 & 0.0911 & 0.3048 & 0.2057 & 0.2197 \\
16 & 0.0560 & 0.0561 & 0.0560 & 0.8885 & 0.8885 & 0.8873 & 0.0561 & 0.0561 & 0.0560 & 0.8894 & 0.8887 & 0.8864 \\
24 & 0.4142 & 0.4142 & 0.4142 & 2.1623 & 2.1623 & 2.1623 & 0.5275 & 0.5275 & 0.5275 & 2.6056 & 2.6056 & 2.6056 \\
25 & $1.6\cdot10^{-6}$ & $1.4\cdot10^{-6}$ & 0 & 0.0017 & 0.0017 & 0 & $5.3\cdot10^{-7}$ & $2.2\cdot10^{-8}$ & 0 & 0.0017 & 0.0017 & 0 \\
}
\newcommand{\TCCGardnerDev}{1\times10^{-9}}
\newcommand{\TCCGardnerDevEven}{9\times10^{-10}}
\newcommand{\TCCTriangleDevOdd}{-2\times10^{-4}}
\newcommand{\TCCRealpartDevOdd}{-2\times10^{-9}}
\newcommand{\TCCGardnerTwoDDev}{5\times10^{-10}}
\newcommand{\TCCPowerMeanDev}{2\times10^{-11}}
\newcommand{\TCCProjGap}{0.73}
\newcommand{\TCCTwoLevelShort}{2 ($\delta'\in\{0.4, 0.6\}$); 4 ($\delta'\ge0.2$); 6 ($\delta'\ge0.8$); 7 ($\delta'\ge0.6$); 11 ($\delta'\in\{0.4, 0.8, 0.9, 0.99\}$); 13 ($\delta'\ge0.9$); 25 ($\delta'\ge0.99$)}
\newcommand{\TCCTwoLevelDim}{3}
\newcommand{\TCCSubaddOnset}{0.999 for $\delta=0.3$, 0.99 for $\delta=0.6$, 0.99 for $\delta=0.9$ ($M_2$ and $M_3$ alike)}

\newcommand{\TCCSubaddNone}{0.95}

\newcommand{\TCCMonOnsetTwo}{0.99 for $\delta=0.3$, 0.95 for $\delta=0.6$, 0.8 for $\delta=0.9$}

\newcommand{\TCCThresholds}{6 ($n=2$): 0.9; 6 ($n=3$): 0.8; 10 ($n=2$): 0.8; 10 ($n=3$): 0.8; 12 ($n=2$): 0.99; 12 ($n=3$): 0.99; 13 ($n=2$): 0.9; 13 ($n=3$): 0.9; 24 ($n=2$): 0.8; 24 ($n=3$): 0.9; 25 ($n=2$): --; 25 ($n=3$): 0.99}
\newcommand{\TCCThresholdMin}{0.8}

\newcommand{\TMCSamples}{200000}
\newcommand{\TMCDims}{2, 3, 4}
\newcommand{\TMCWorstRel}{1.003}
\newcommand{\TMCRows}{%
1 & Gardner & 4.4 & 19.0 & $<0.1$ & 2.9 & $<0.1$ & 0.6 \\
2 & triangle & 1.1 & 6.3 & $<0.1$ & 6.2 & $<0.1$ & 4.5 \\
3 & $\varphi(A{+}A^*)$ & 0.6 & 3.1 & $<0.1$ & 0.4 & 0 & $<0.1$ \\
4 & $|\varphi(A^2)|$ & 6.6 & 6.8 & 0.2 & 0.4 & $<0.1$ & $<0.1$ \\
5 & metric & 0 & -- & $<0.1$ & -- & $<0.1$ & -- \\
6 & geom. mean & $<0.1$ & -- & 0 & -- & 0 & -- \\
7 & ALT & 19.5 & 0 & 14.3 & 0 & 11.0 & 0 \\
8 & Young (ext.) & 7.6 & 8.4 & 3.0 & 2.1 & 1.4 & 0.6 \\
9 & Bikchentaev-Abed & 50.2 & -- & 50.1 & -- & 49.8 & -- \\
10 & monotonicity & $<0.1$ & 0.7 & 0 & $<0.1$ & 0 & $<0.1$ \\
11 & AM-GM & 20.5 & -- & 13.2 & -- & 10.0 & -- \\
12 & subadditivity & $<0.1$ & 21.3 & 0 & 9.1 & 0 & 5.1 \\
13 & Powers-Stormer & $<0.1$ & 9.0 & 0 & 3.0 & 0 & 1.4 \\
14 & Hoelder & 1.3 & -- & $<0.1$ & -- & 0 & -- \\
15 & Golden-Thompson & 25.6 & -- & 16.8 & -- & 11.5 & -- \\
16 & Peierls-Bogoliubov & 2.4 & 1.8 & 0.8 & 0.4 & 0.4 & 0.1 \\
17 & $\varphi(PQP)$ & 12.0 & -- & 4.0 & -- & 1.9 & -- \\
18 & $\varphi(|P-Q|)$ & 13.0 & -- & 4.3 & -- & 2.1 & -- \\
19 & $P\wedge Q$ & 49.9 & -- & 37.8 & -- & 33.3 & -- \\
20 & $e^{QPQ}$ & 8.0 & -- & 2.2 & -- & 1.0 & -- \\
21 & Re$PQ\geq0$ & 5.6 & -- & 1.4 & -- & 0.6 & -- \\
22 & Re$PQ\leq|PQ|$ & 14.6 & -- & 7.7 & -- & 4.3 & -- \\
23 & $P\vee Q$ & 49.8 & -- & 37.7 & -- & 33.5 & -- \\
24 & power mean & 7.2 & 50.0 & 0.2 & 25.1 & $<0.1$ & 16.8 \\
25 & weighted mon. & 0 & 0 & 0 & 0 & 0 & 0 \\
}

\newcommand{\COSamples}{100000}
\newcommand{\COWorst}{-5\times10^{-8}}
\newcommand{\COIndefInside}{0}
\newcommand{\COJordanTwo}{0.8839}
\newcommand{\COJordanThree}{0.8963}
\newcommand{\COJordanThreeOpt}{0.8963}
\newcommand{\COJordanFour}{0.8963}
\newcommand{\COJordanDev}{3\times10^{-7}}
\newcommand{\COJordanSin}{0.9600}
\newcommand{\COJordanTanh}{0.8824}

\newtheorem{theorem}{Theorem}[section]
\newtheorem{proposition}[theorem]{Proposition}
\newtheorem{lemma}[theorem]{Lemma}
\newtheorem{corollary}[theorem]{Corollary}
\newtheorem{conjecture}[theorem]{Conjecture}
\newtheorem*{thmA}{Theorem A}
\newtheorem*{thmB}{Theorem B}
\newtheorem*{thmC}{Theorem C}
\theoremstyle{definition}
\newtheorem{definition}[theorem]{Definition}

\theoremstyle{remark}
\newtheorem{remark}[theorem]{Remark}

\newcommand{\cM}{\mathcal{M}}
\newcommand{\M}{\mathcal{M}}
\newcommand{\cZ}{\mathcal{Z}}
\newcommand{\cH}{\mathcal{H}}
\newcommand{\cT}{\mathcal{T}}
\newcommand{\C}{\mathbb{C}}
\newcommand{\R}{\mathbb{R}}
\newcommand{\cL}{\mathcal{L}}
\newcommand{\cR}{\mathcal{R}}
\newcommand{\cN}{\mathcal{N}}
\DeclareMathOperator{\OLC}{OLC}
\DeclareMathOperator{\OMC}{OMC}
\DeclareMathOperator{\LNC}{LNC}
\DeclareMathOperator{\NC}{NC}
\DeclareMathOperator{\Tr}{Tr}
\DeclareMathOperator{\diag}{diag}
\DeclareMathOperator{\Ad}{Ad}
\DeclareMathOperator{\re}{Re}
\renewcommand{\Re}{\mathrm{Re}\,}
\newcommand{\ceil}[1]{\lceil #1\rceil}
\newcommand{\flr}[1]{\lfloor #1\rfloor}

\newcommand{\one}{\mathbf 1}
\newcommand{\yp}{\mathsf y_p}
\newcommand{\tp}{\mathsf t_p}
\newcommand{\mpp}{\mathsf m_p}

\begin{document}

\title[Violation constants, contrast and noncentrality]{How characterisations of traces and central
elements fail:\\ sharp violation constants via Michelson contrast\\ and noncentrality}

\author{Sami Abdullah Abed}
\address{College of Administration and Economics, Diyala University, Baquba, Diyala Governorate, Iraq}
\email{samialbarkish@gmail.com}

\author{Irina Nikolaeva}
\address{Lean Technologies Laboratory LLC, 189a-202 Krasnokokshayskaia str., Kazan, 420000, Russia}

\author{Andrej Novikov}
\address{Novikov Laboratories LLC, 14-124 Absaliamova str., Kazan, 420124, Russia}
\email{a.hobukob@gmail.com}
\thanks{Authors are listed in alphabetical order. Corresponding author: Andrej Novikov,
\texttt{a.hobukob@gmail.com}.}

\begin{abstract}
A positive functional is a trace if and only if it satisfies any one of many inequalities, the first
being Gardner's $|\varphi(X)|\le\varphi(|X|)$, and positive central elements are characterised by
similar inequalities. We ask how strongly these characterisations fail. For
$\varphi=\Tr(\rho\,\cdot\,)$ on a matrix algebra the natural scale is the Michelson contrast
$\delta=\OMC(\rho)$ of the density. For Gardner's inequality the sharp constant depends on $\delta$
and on the contrast $\delta'$ of $|X|$ only through their product: it is
$(1-\delta^2\delta'^2)^{-1/2}$, the secant of the angle whose sine is $\delta\delta'$, and it is
attained in even dimension. On a finite von Neumann algebra $\cM$ with a faithful normal trace
$\tau$ the contrast is the wrong parameter, since a central density gives a trace whatever its
contrast. For $\varphi=\tau(\rho\,\cdot\,)$ with $\rho$ positive and invertible, the governing
parameter is the noncentrality, defined through the central condition number
$\kappa_\cZ(\rho)=\|\ceil\rho\flr\rho^{-1}\|$ of the central envelopes of $\rho$ (the extreme
elements of Halpern's central spectrum). A transfer principle turns sharp $2\times2$ examples and
upper bounds in terms of the condition number into exact constants in terms of $\kappa_\cZ$. For
38 trace characterisations the constant is exact in 23 cases (in 18, one of them $+\infty$, as an
explicit function of $\kappa_\cZ$ in every finite von Neumann algebra; in five only on matrix algebras,
two of these only for $n=2$ or even $n$), with proved bounds in 13 and known only numerically in two.
For example, it is $\cosh\LNC(\rho)$, $\LNC=\frac12\ln\kappa_\cZ$, for Gardner's inequality and the
subadditivity of the modulus, $(\cosh\LNC+1)^2/(4\cosh\LNC)$ for the monotonicity of
$A\mapsto\varphi(A^2)$, and $\kappa_\cZ^{1/p}$ for Young's inequality, while the Bikchentaev--Abed
inequality holds exactly for $-1\le K\le\coth^2(\LNC(\rho)/2)$. On matrix algebras the relative
violations vanish at the trace to the orders $\delta$, $\delta^2$ or $\delta^4$ where they are known
exactly and finite (the constant of the Peierls--Bogoliubov inequality in the form with $\varphi(Be^A)$ is $+\infty$ for every non-central $\rho$); for the weighted monotonicity we prove a lower bound of order $\delta^8$. Outside
factors and abelian algebras no function of the contrast describes these constants. We also find
the sharp constants of the centre characterisations of Novikov and Tikhonov, sharp bounds for three
unnormalised trace functionals, and sharp contrast thresholds below which subadditivity of the
square root and the power-mean inequality cannot fail. A numerical search, certified as a lower
bound for each supremum, supports conjectured values for the remaining inequalities.
\end{abstract}

\subjclass[2020]{Primary 46L10, 47A63; Secondary 46L30, 46L51, 47A30, 15A42, 15A45}
\keywords{Trace characterisation, central element, violation constant, Michelson contrast,
noncentrality, condition number, central envelope, operator angle, Kantorovich inequality,
Krein inequality, Wielandt inequality, Jordan product, two projections, von Neumann algebra}

\maketitle

\section{Introduction}\label{sec:intro}

\subsection{The question}
\enlargethispage{6pt}
Let $\varphi$ be a positive linear functional on a C$^*$-algebra. Gardner~\cite{gardner1979} proved
that $\varphi$ is a trace, $\varphi(XY)=\varphi(YX)$, if and only if
\begin{equation}\label{eq:gardner}
  |\varphi(X)|\le\varphi(|X|)\qquad\text{for all }X .
\end{equation}
Since then many inequalities that hold for every trace have been shown to hold \emph{only} for
traces: the triangle inequality for $\varphi(|\cdot|)$, the inequalities of Petz--Zem\'anek,
monotonicity of $A\mapsto\varphi(A^2)$, Young's, Golden--Thompson's and Peierls--Bogoliubov's
inequalities, the anticommutator inequality of Bikchentaev and Abed, and a family of inequalities in
which only projections enter. Centrality of a positive element admits similar characterisations
(Novikov and Tikhonov~\cite{novikov2015}). Section~\ref{sec:related} reviews these results. Each of
them answers a yes/no question. For a functional that is not a trace the inequality fails for some
operators; but how strongly, and does the size of the failure measure how far the functional is
from being a trace? This paper answers these questions for 38 characterisations of traces and for
the characterisations of central elements of~\cite{novikov2015}.

The known proofs are qualitative. They reduce to $M_2(\C)$ with $\varphi=\Tr(S\,\cdot\,)$,
$S=\diag(1/2+s,1/2-s)$, and exhibit a one-parameter family of test operators for which the
inequality fails at first order in the parameter as soon as $s\ne0$; no constant results. For
Gardner's inequality on $M_n(\C)$ the sharp constant is contained in an inequality of
Bourin~\cite[Cor.~1.4, Rem.~1.9]{Bou06}, which is not formulated as a trace characterisation
(Remark~\ref{rem:bourin}), and the mixed Schwarz inequality of Kato~\cite{Kat52}, in the form
$|\varphi(x)|^2\le\varphi(|x|)\varphi(|x^*|)$ (see~\cite{Dra14}), gives the non-sharp bound
$|\varphi(x)|\le\kappa(\rho)^{1/2}\varphi(|x|)$ for $\varphi=\Tr(\rho\,\cdot\,)$, where $\kappa(\rho)$ is
the condition number. Apart from this we are not aware of quantitative versions of these
characterisations in the literature.

\subsection{Contrast and noncentrality}
On $M_n$ every positive functional is $\varphi_\rho=\Tr(\rho\,\cdot\,)$ with a density $\rho\ge0$, and
$\varphi_\rho$ is a trace if and only if $\rho$ is a multiple of~$1$. The natural scale-invariant
distance of $\rho$ from the scalars is its \emph{Michelson contrast}
\[
  \delta=\OMC(\rho)=\frac{\lambda_{\max}(\rho)-\lambda_{\min}(\rho)}{\lambda_{\max}(\rho)+\lambda_{\min}(\rho)}
  =\frac{\kappa(\rho)-1}{\kappa(\rho)+1}\in[0,1],
\]
which was studied for operators in~\cite{abed2024}, together with its logarithmic form
$\OLC(\rho)=\frac12\ln\kappa(\rho)=\operatorname{artanh}\delta$~\cite{NN26}. Gibbs states show that this
is a physically meaningful scale. For a Hamiltonian $H$ with extreme eigenvalues $E_{\min}\le E_{\max}$
and the inverse temperature $\beta$, the density $\rho=e^{-\beta H}/\Tr e^{-\beta H}$ has
\[
  \OMC(\rho)=\tanh\frac{\beta(E_{\max}-E_{\min})}2 ,\qquad \OLC(\rho)=\frac{\beta(E_{\max}-E_{\min})}2,
\]
so $\delta\to0$ at infinite temperature, where the Gibbs state becomes the normalised trace, and
$\delta\to1$ at zero temperature. Every bound in terms of $\delta$ is therefore a law in the
temperature: for Gardner's inequality we shall find the largest relative violation
$\cosh(\beta(E_{\max}-E_{\min})/2)-1$. In the language of modular theory, $\varphi_\rho$ (for
invertible $\rho$) is a trace if and only if its modular group $\sigma_t(X)=\rho^{it}X\rho^{-it}$ is
trivial; the derivation $X\mapsto[\log \rho,X]$ that generates it has the spectrum
$\{\log w_i-\log w_j\}$, where $w_i$ are the eigenvalues of $\rho$, whose largest point is
$\log\kappa(\rho)=2\OLC(\rho)$. The contrast thus measures how far the modular dynamics is from being
trivial.

Beyond factors the contrast is the wrong parameter. Let $\cM$ be a finite von Neumann algebra with a
faithful normal tracial state $\tau$ and $\varphi=\tau(\rho\,\cdot\,)$ with $\rho\in\cM$ positive and
invertible. If $\rho$ is central, then $\varphi$ is a trace and nothing is violated, however large
$\OMC(\rho)$ is; already on $M_2\oplus M_2$ the density $1\oplus 4$ has contrast $3/5$ and gives a
trace. The relevant parameter is the \emph{noncentrality} of $\rho$, the distance of $\rho$ from the
centre $\cZ$ rather than from the scalars. It is defined through the central envelopes of $\rho$:
$\ceil{\rho}$ is the least central element $\ge\rho$ and $\flr{\rho}$ the greatest central element
$\le\rho$ (they exist by Lemma~\ref{lem:env}; they are the largest and the smallest elements of
Halpern's central spectrum~\cite{Hal72}), and
\[
\kappa_\cZ(\rho)=\bigl\|\ceil{\rho}\,\flr{\rho}^{-1}\bigr\|,\qquad
\LNC(\rho)=\tfrac12\ln\kappa_\cZ(\rho),\qquad \delta_\cZ=\NC(\rho)=\tanh\LNC(\rho)=\frac{\kappa_\cZ(\rho)-1}{\kappa_\cZ(\rho)+1},
\]
in analogy with $\OMC=\tanh\OLC$. Thus $\kappa_\cZ(\rho)=1$ if and only if $\rho$ is central,
$\kappa_\cZ(\rho)\le\kappa(\rho)$, $\kappa_\cZ=\kappa$ in factors, and
$\kappa_\cZ(\rho)=\min_z\kappa(z\rho)$ over positive invertible central $z$ (Section~\ref{sec:nc}). On
a direct sum $\bigoplus_cM_{k_c}$ the noncentrality of $\rho=\bigoplus_c\rho_c$ is the largest
contrast \emph{inside} the blocks, $\max_c\OMC(\rho_c)$.

The constants are expressed through the \emph{Kantorovich constant}
\[
  G=G(\rho)=\cosh\OLC(\rho)=\frac{1+\kappa(\rho)}{2\sqrt{\kappa(\rho)}}=\frac1{\sqrt{1-\delta^2}}=\sec\psi,
  \qquad G_\cZ=\cosh\LNC(\rho)=\frac1{\sqrt{1-\delta_\cZ^2}}=\sec\psi_\cZ,
\]
where $\psi=\arcsin\delta$ is the angle of $\rho$ in the sense of Krein and Gustafson (the largest
angle between a vector $x$ and its image $\rho x$) and $\psi_\cZ=\arcsin\delta_\cZ$; $G^2$ is the
best constant in Kantorovich's inequality. Table~\ref{tab:notation} in Section~\ref{sec:setting}
collects the notation.

\subsection{Envelopes and violation constants}
In~\cite{abed2024} we plotted the violation of Gardner's inequality against $\OMC(\rho)$ for random
real $2\times2$ and $3\times3$ matrices~\cite[Sect.~5.1, Fig.~1]{abed2024} and asked whether such
violations are bounded by functions of the contrasts~\cite[Sect.~7]{abed2024}. To make the question
precise, write an inequality as $\cL\le\cR$, where $\cL$ and $\cR$ depend on $\varphi$ and on operator
variables. The \emph{violation constant} $c(\rho)$ is the best constant $c$ in the weakened
inequality $\cL\le c\,\cR$ (for instance $|\varphi(X)|\le c\,\varphi(|X|)$); it equals $1$ exactly when
$\varphi$ is a trace. On $M_n$ we also fix a normalisation $\cN>0$ that makes the quotient
$(\cL-\cR)/\cN$ invariant under the scaling of $\rho$ and of the variables (mostly $\cN=\cR$, and then
$(\cL-\cR)/\cN=\cL/\cR-1$), and call
\[
  V_n(\delta)=\sup\Bigl\{\frac{\cL-\cR}{\cN}:\ \rho\ge0\ \text{in}\ M_n,\ \OMC(\rho)=\delta,\ \text{all
  operator variables free}\Bigr\}
\]
the \emph{violation envelope}. By the characterisation theorems $V_n(\delta)>0$ for $\delta>0$, and
$V_n(0)=0$. A violation needs more than a non-tracial functional: if the operators are scalars,
nothing can go wrong. So we also prescribe the contrast $\delta'$ of the operator variables (of their
moduli, for operators that are not positive) and write $V_n(\delta,\delta')$.

\subsection{Gardner's inequality with two contrasts}
For Gardner's inequality both contrasts can be prescribed, and the answer is a function of their
product alone.

\begin{thmA}
Let $\rho>0$ and $X$ be in $M_n$, $n\ge2$, with $\OMC(\rho)=\delta$ and $\OMC(|X|)=\delta'$,
$\delta\delta'<1$. Then
\[
  |\varphi_\rho(X)|\le\frac{1}{\sqrt{1-\delta^2\delta'^2}}\,\varphi_\rho(|X|),
\]
and for even $n$ the constant is attained, so $V_n(\delta,\delta')=(1-\delta^2\delta'^2)^{-1/2}-1$. For
odd $n$ the envelope is given by an explicit one-dimensional maximum; it is smaller when $\delta'<1$
and tends to the even value as $n\to\infty$. The same upper bound holds for the triangle inequality
$\varphi(|X+Y|)\le\varphi(|X|)+\varphi(|Y|)$ and for $\varphi(A+A^*)\le2\varphi(|A^*|)$
(Theorem~\ref{thm:gardnerD}).
\end{thmA}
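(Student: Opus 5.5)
The plan is to reduce the statement to a trace-norm inequality for a product of two positive matrices, settle that on $M_2$ by direct computation, and handle general $n$ by a Wielandt-type pairing argument. Attainment for even $n$ comes from block-diagonal copies of the $2\times2$ extremiser.

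\emph{Reduction.} Let $X=U|X|$ be a polar decomposition with $U$ unitary (possible on $M_n$) and write $B=|X|$. Then $\varphi_\rho(X)=\Tr(UB\rho)$. Since $\sup_U|\Tr(UB\rho)|=\|B\rho\|_1$, Theorem~A is equivalent to
\[
\|B\rho\|_1\le(1-\delta^2\delta'^2)^{-1/2}\,\Tr(B\rho)\qquad\text{for }B\ge0,\ \rho>0,\ \OMC(B)=\delta',\ \OMC(\rho)=\delta .
\]
This is the form in which Bourin's inequality (Remark~\ref{rem:bourin}) applies. I still give a self-contained route. Both sides are invariant under scaling $\rho$ and $B$, so I normalise the spectra to lie in $[1-\delta,1+\delta]$ and $[1-\delta',1+\delta']$, with the endpoints attained.

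\emph{The $2\times2$ case and attainment.} Take $\rho=\diag(1+\delta,1-\delta)$ and let $B$ have eigenvalues $1\pm\delta'$ and eigenvector rotated by an angle $\theta$. For $2\times2$ matrices with $\det(B\rho)>0$ we have
\[
\|M\|_1^2=\|M\|_F^2+2|\det M|=|\Tr M|^2+\bigl(\|M\|_F^2-\|M\|_{F,\mathrm{sym}}^2\bigr),
\]
and for $M=B\rho$ the excess over $|\Tr M|^2$ equals $\|[B^{1/2},\rho^{1/2}]\|$-type terms, explicitly $\tfrac12\|B\rho-\rho B\|_F^2$. I will compute $\|B\rho\|_1/\Tr(B\rho)$ as a function of $\theta$ and maximise over $\theta$. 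The maximum should occur where $\cos2\theta=-\delta\delta'$ or at a similar interior point, and should give exactly $(1-\delta^2\delta'^2)^{-1/2}$, the secant of the angle whose sine is $\delta\delta'$. The product structure appears because, at the optimum, the ratio depends only on the Krein angle between the two "directions of anisotropy". For even $n$, a direct sum of $n/2$ copies of this extremal pair with the same ratio attains the constant, since both $\|\cdot\|_1$ and $\Tr$ are additive over blocks. Hence $V_n(\delta,\delta')=(1-\delta^2\delta'^2)^{-1/2}-1$ for even $n$. For odd $n$, one leftover $1\times1$ block forces a weighted average. Optimising the weight gives the one-dimensional maximum mentioned in the statement, which tends to the even value as $n\to\infty$.

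\emph{Upper bound for general $n$ (main obstacle).} I expect the $n$-dimensional upper bound to be the hard step. I would first try a Wielandt argument. Write $\Tr(UB\rho)=\Tr(\rho^{1/2}UB^{1/2}\cdot B^{1/2}\rho^{1/2})$. Decompose $U$ relative to the spectral projections of $\rho$ and $B$ so that the off-diagonal part acts between orthogonal vectors. Then bound each off-diagonal pairing by the Wielandt inequality
\[
|\langle Ax,y\rangle|^2\le\Bigl(\tfrac{\kappa-1}{\kappa+1}\Bigr)^2\langle Ax,x\rangle\langle Ay,y\rangle\qquad(x\perp y)
\]
applied to both $\rho$ and $B$. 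The contrasts enter multiplicatively this way, giving the factor $\delta\delta'$. A final Cauchy--Schwarz step should then yield the secant.

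If this bookkeeping does not close, the fallback is a variational argument. Among $B$ with prescribed spectral endpoints, $\|B\rho\|_1/\Tr(B\rho)$ is maximised at $B$ with only two eigenvalues, because the ratio is quasi-convex in the interior eigenvalues. Then Halmos' two-projection theorem decomposes the pair (spectral projection of $B$, $\rho$) into $2\times2$ blocks plus a commuting part, which brings us back to the $2\times2$ computation above.

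The same bound for the triangle inequality and for $\varphi(A+A^*)\le2\varphi(|A^*|)$ would then follow by the reductions of Theorem~\ref{thm:gardnerD}, which express both through Gardner-type quantities $|\varphi(V\,|Z|)|$.
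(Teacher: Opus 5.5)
Your reduction to $\|B\rho\|_1\le(1-\delta^2\delta'^2)^{-1/2}\Tr(B\rho)$ is correct, and so is your identity $\|B\rho\|_1^2=(\Tr B\rho)^2+\tfrac12\|B\rho-\rho B\|_F^2$ on $M_2$. The computation you leave open closes as you predict. With $\rho=1+\delta S$, $B=1+\delta'S'$ and $\Tr(SS')=2c$ one gets $\Tr(B\rho)=2(1+\delta\delta'c)$ and $\|B\rho\|_1^2=4(1+2\delta\delta'c+\delta^2\delta'^2)$. The ratio is maximal at $c=-\delta\delta'$, with value $(1-\delta^2\delta'^2)^{-1/2}$; this is the computation of Remark~\ref{rem:geom}.

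\textbf{The gap is in the $n$-dimensional upper bound.} The Wielandt pairing is not yet an argument: $U$, $\rho$ and $B$ have no common decomposition into orthogonal pieces, and nothing indicates that the bookkeeping produces the sharp constant. Your fallback is the right route, and it is the paper's, but as stated it does not work. After making only $B$ two-level you cannot apply Lemma~\ref{lem:halmos} to ``the pair (spectral projection of $B$, $\rho$)'', because that lemma needs two projections. Indeed, if $Q=vv^*$ and $v$ is cyclic for $\rho$, the only subspace invariant under $Q$ and $\rho$ that is not orthogonal to $v$ is $\C^n$. So for $n\ge3$ there is no splitting into blocks of dimension at most $2$.

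The missing step is a second application of Lemma~\ref{lem:vertex}, now to $\rho$. For a fixed eigenbasis, $\rho\mapsto B\rho$ is linear, so $\|B\rho\|_1-c\Tr(B\rho)$ is convex in the eigenvalues of $\rho$ as well. You may therefore take $\rho=a(\ell+(1-\ell)P)$ and $B=t(\ell'+(1-\ell')Q)$. Halmos' lemma for $(P,Q)$ then gives two kinds of blocks. One-dimensional blocks contribute equally to both sides. On two-dimensional blocks the compressions have exactly the extreme eigenvalues, so the $2\times2$ bound applies. A quotient of sums is at most the largest quotient of the summands.

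\textbf{The odd case is only gestured at.} ``A weighted average'' neither produces the formula nor shows that it bounds the envelope from above. What is needed is that every such splitting of $\C^{2k+1}$ contains a one-dimensional summand. On it $\rho$ and $T=|X|$ act as scalars, and it contributes the same amount $\mu\ge\ell\ell'$ (largest eigenvalues normalised to $1$) to $\|T\rho\|_1$ and to $\Tr(T\rho)$. Adding $\mu$ to numerator and denominator lowers a ratio larger than $1$, so the worst case is $k$ two-dimensional summands and $\mu=\ell\ell'$.

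The two-dimensional blocks contribute $s_j=\sqrt{S^2+\gamma_jpq}$ and $t_j=S+\gamma_jp$, with $S=\ell+\ell'$, $q=(1+\ell)(1+\ell')$, $p=(1-\ell)(1-\ell')$ and $\gamma_j$ the squared overlap of the leading eigenvectors. Since $\sum_j(s_j-ct_j)$ is a sum of identical functions of the $\gamma_j$, one may take the same $\gamma$ in each block. This gives $\max_\gamma\bigl(k\sqrt{S^2+\gamma pq}+\ell\ell'\bigr)/\bigl(k(S+\gamma p)+\ell\ell'\bigr)$, and the strict inequality for $\delta'<1$ comes from $\ell\ell'>0$.

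Finally, for items 2 and 3 you invoke ``the reductions of Theorem~\ref{thm:gardnerD}'', which is the statement you are proving. The reductions you need are those of Theorem~\ref{thm:subadd} and Corollary~\ref{cor:item3}. They keep the contrasts because $|U^*X|=|X|$ for unitary $U$.
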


With the operator free ($\delta'=1$) the constant is $G=\sec\psi$, and in a finite von Neumann
algebra it is $G_\cZ=\cosh\LNC(\rho)$ (Theorem~\ref{thm:gardner}).

\begin{figure}[t]
\centering\includegraphics[width=\textwidth]{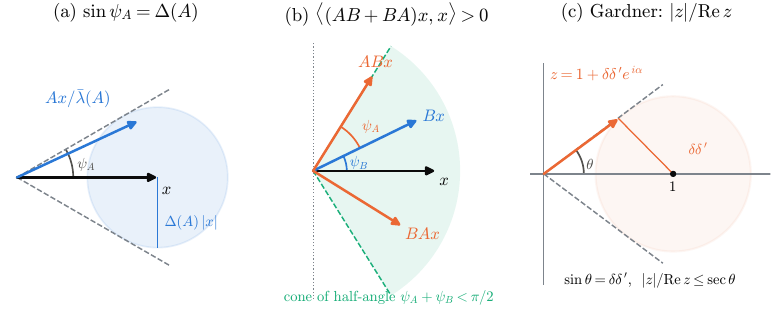}
\caption{(a) The angle of a positive operator: $Ax/\bar\lambda(A)$ lies in the ball of radius
$\OMC(A)|x|$ about $x$, so $\angle(x,Ax)\le\psi_A=\arcsin\OMC(A)$. (b) For a Jordan product the
angles add; if $\psi_A+\psi_B<\pi/2$, then $ABx$ and $BAx$ stay in a cone around $x$ that lies in the
half-space of $x$, and $AB+BA>0$ (Theorem~\ref{thm:jordan}). (c) On a two-dimensional block,
Gardner's quotient is $|z|/\Re z=\sec\arg z$ for a point $z$ on the circle of radius $\delta\delta'$
about $1$ (Remark~\ref{rem:geom}); the largest argument is $\arcsin(\delta\delta')$. In the figure the
contrast is written $\Delta(\cdot)$, so $\Delta(A)=\OMC(A)$.}
\label{fig:angle}
\end{figure}

The theorem has a simple geometric meaning (Figure~\ref{fig:angle}). Krein~\cite{krein1969} and
Gustafson~\cite{gustafson1968} introduced the angle $\psi(A)$ of an operator, the largest angle
between a vector $x$ and its image $Ax$; for $A>0$, $\sin\psi(A)=\OMC(A)$, since $A$ divided by the
midpoint $\bar\lambda(A)$ of its spectrum is $1+E$ with $\|E\|=\OMC(A)$. If $|X|=vv^*$ has rank one
($\delta'=1$), Gardner's quotient is $|\langle \rho v,Uv\rangle|/\langle \rho v,v\rangle\le\|\rho
v\|/\langle \rho v,v\rangle=\sec\angle(v,\rho v)\le\sec\psi(\rho)$, which is the Kantorovich constant
of $\rho$. For general $|X|$, the proof reduces to two-dimensional blocks on which $\rho$ and $|X|$
are, up to scale, $1+\delta S$ and $1+\delta'S'$ with symmetries $S,S'$; there $\Tr(|X|\rho)$ and the
trace norm $\||X|\rho\|_1$ are $2\Re z$ and $2|z|$ for $z=1+\delta\delta'e^{i\alpha}$, where
$e^{\pm i\alpha}$ are the eigenvalues of the rotation $S'S$. Gardner's quotient is $\sec\arg z$, and
the circle described by $z$ is seen from the origin under the angle $\theta$ with
$\sin\theta=\delta\delta'=\sin\psi(\rho)\sin\psi(|X|)$. The product of the contrasts is the radius of
this circle, and the constant of Theorem~A is $\sec\theta$.

\subsection{The catalogue}
Table~\ref{tab:main} summarises the constants. Two general statements organise them. The first
(Theorem~\ref{thm:transfer}) is a transfer principle: if an inequality is compatible with central
rescaling, its upper bound is proved with the ambient condition number in arbitrary finite von
Neumann algebras, and a matching example exists in $M_2(\C)$, then the violation constant is the
same function of $\kappa_\cZ$ in every finite von Neumann algebra. The matrix envelopes of
Section~\ref{sec:catalogue} supply such examples and, conversely, the transfer principle turns
several matrix results into results on von Neumann algebras. The constants that appear are $G$, its
square, the condition number, the Specht ratio and $\coth^2(\LNC/2)$; on $M_n$ the finite proved envelopes
vanish at the trace to the orders $\delta$, $\delta^2$ and $\delta^4$ (item~37 is $+\infty$ for every non-central $\rho$, i.e.\ non-scalar on $M_n$), and
the weighted monotonicity (item~25) has a proved lower bound of order $\delta^8$, while only the trivial
upper bound $V_n\le\kappa-1=2\delta/(1-\delta)$ (constant $\le\kappa_\cZ$ in finite von Neumann algebras) is known for it; the
computed envelope ($\delta\ge0.3$) does not exceed the lower bound
(Theorem~B and Figure~\ref{fig:ladder}). So the characterisations differ greatly in sensitivity: two inequalities
that are equivalent for traces may fail at different rates. In particular, the monotonicity of
$A\mapsto\varphi(A^2)$ fails only to the order $\delta^4$: its constant is
$(G_\cZ+1)^2/(4G_\cZ)$ (Theorem~\ref{thm:mono2}), and on $M_n$ an explicit two-parameter family of
rank-one perturbations attains it (Remark~\ref{rem:g10}).

Of the 38 trace characterisations, 18 have an exact constant in every finite von Neumann algebra
(items 1--5, 8, 10, 17, 18, 21--23, 26, 28, 29, 31, 33, and item~37, whose constant is $+\infty$),
five have an exact envelope on matrix algebras only (items 11, 19 and 24, item~9 for $n=2$ and
item~20 for even $n$), 13 have proved bounds (items 6, 7, 12, 14, 16, 25, 27, 30, 32, 34--36 and~38),
and items 13 and 15 have numerical values only.

Returning to the unnormalised quantities of~\cite{abed2024}, we find the sharp bound of
$|\Tr(XZ)|-\Tr(X|Z|)$ with both contrasts prescribed and those of the two other trace functionals
sampled there (Theorem~C), which answers the question of~\cite{abed2024}.

{\footnotesize
\renewcommand{\arraystretch}{1.2}
\begin{longtable}{@{}>{\raggedright}p{0.9cm}>{\raggedright}p{4.1cm}>{\raggedright}p{4.4cm}>{\raggedright\arraybackslash}p{4.5cm}@{}}
\caption{Violation constants. Items refer to Table~\ref{tab:items}. Third column: the constant $c(\rho)$
in every finite von Neumann algebra (``exact'' when proved to be the stated function of $\kappa_\cZ$);
in factors $\kappa_\cZ=\kappa$ and $G_\cZ=G$. Fourth column: the envelope $V_n(\delta)$ on $M_n$ with
the normalisation $\cN$ of Table~\ref{tab:items} ($V=c-1$ when $\cN=\cR$). $\yp$, $\tp$, $\mpp$,
$b_2$, $g_{25}$ and $s$ are defined in the cited results. Items 39--42: every von Neumann algebra
(item~42 exact for finite $\cM$).}\label{tab:main}\\
\toprule
item & inequality & $c(\rho)$, finite $\cM$ & $V_n(\delta)$ on $M_n$; status\\
\midrule
\endfirsthead
\toprule
item & inequality & $c(\rho)$, finite $\cM$ & $V_n(\delta)$ on $M_n$; status\\
\midrule
\endhead
\bottomrule
\endfoot
\multicolumn{4}{@{}l}{\emph{Traces: $\varphi=\tau(\rho\,\cdot)$ on a finite von Neumann algebra}}\\
1--5, 18, 22 & Gardner; subadditivity of $\varphi(|\cdot|)$ (four versions); $\varphi(A+A^*)\le2\varphi(|A^*|)$; $|\varphi(A^2)|\le\varphi(A^*A)$; metric on projections; $\varphi(\Re PQ)\le\varphi(|PQ|)$ & $G_\cZ$, exact (Thms.~\ref{thm:gardner}, \ref{thm:subadd}, \ref{thm:pz2}, Cors.~\ref{cor:item3}, \ref{cor:item5}, Prop.~\ref{prop:item22}) & $G-1\sim\delta^2/2$, attained in every $n$; with $\OMC(|X|)=\delta'$: Theorem~A\\
17 & $\varphi(PQP)\le\varphi(Q)$ & $G_\cZ^2$, exact (Thm.~\ref{thm:proj}) & $G^2-1\sim\delta^2$, attained\\
23 & $\varphi(P\vee Q)\le\varphi(P)+\varphi(Q)$ & $\frac{1+\kappa_\cZ}2$, exact (Thm.~\ref{thm:proj}) & $\frac{\kappa-1}2=\frac{\delta}{1-\delta}$, approached\\
26 & $\varphi(|X^*|^q)\le\varphi(|X|^q)$ & $\kappa_\cZ$, exact (Thm.~\ref{thm:modulus}) & $\kappa-1$, attained\\
33 & $\varphi(PQP)=\varphi(QPQ)$, $\varphi(s(PQP))=\varphi(s(QPQ))$ & $\kappa_\cZ$, exact (Thm.~\ref{thm:proj}) & $\kappa-1$, approached\\
8 & $\varphi(|AB^*|)\le\frac12\varphi(|A|^2+|B|^2)$, all $A,B$ & $\kappa_\cZ$, exact (Cor.~\ref{cor:item8}) & $\kappa-1=\frac{2\delta}{1-\delta}$, attained; with $\delta'$: Prop.~\ref{prop:youngtwo}\\
29 & Young, all $A,B$ & $\kappa_\cZ^{1/p}$, exact (Thm.~\ref{thm:young}) & $\kappa^{1/p}-1$, attained\\
30 & Young, $A,B\ge0$ & in $[\yp(\kappa_\cZ),\kappa_\cZ^{1/p}]$; $=\yp$ conjectured for $p\le2$, $>\yp$ for $p>2$ (Prop.~\ref{prop:youngpos}) & $p=2$: numerically $G-1$\\
31 & Young, $p=2$, $B=B^*$ & $(4G_\cZ^2-3)^{1/2}$, exact (Prop.~\ref{prop:youngsa}) & $(4G^2-3)^{1/2}-1$, attained\\
21 & $\varphi(\Re PQ)\ge0$ & $\cN$-normalised: $\frac12\tanh^2\frac{\LNC}2=\frac{G_\cZ-1}{2(G_\cZ+1)}$, exact (Prop.~\ref{prop:item21}) & $\frac12\tan^2\frac\psi2\sim\frac{\delta^2}8$, attained\\
28 & $\varphi(A^2+B^2+K(AB+BA))\ge0$ & holds iff $-1\le K\le\frac{G_\cZ+1}{G_\cZ-1}=\coth^2\frac{\LNC}2$, exact (Thm.~\ref{thm:ba}) & the same with $G$\\
10 & $0\le A\le B\Rightarrow\varphi(A^2)\le\varphi(B^2)$ & $\frac{(G_\cZ+1)^2}{4G_\cZ}$, exact (Thm.~\ref{thm:mono2}) & $\frac{(G-1)^2}{4G}\sim\frac{\delta^4}{16}$, attained (Rem.~\ref{rem:g10})\\
32 & monotonicity for $t^p$ ($p>1$) and $\exp$ & in $[\mpp(\kappa_\cZ),\kappa_\cZ]$; $\le K(\kappa_\cZ^{1/p},p)$, resp.\ $\le S(\kappa_\cZ)$, if $\cM$ is AFD (Prop.~\ref{prop:monop}) & --\\
25 & $0\le A\le B\Rightarrow\varphi(A^4)\le\varphi(AB^2A)$ & $\le\kappa_\cZ$ (Prop.~\ref{prop:wmon}) & in $[g_{25}(\delta),\kappa-1]$, $g_{25}\sim\frac{\delta^8}{4096}$ (Prop.~\ref{prop:wmon})\\
9 & $\varphi(PQ+QP)\le2\varphi(QPQ)$ & -- & $V_2=\frac s{1+s}$, $s=\sin\frac\psi2$, exact for $n=2$ only (Prop.~\ref{prop:abed})\\
19 & $\varphi(P+Q)\le\varphi(1+P\wedge Q)$ & --; diluted by other blocks (Rem.~\ref{rem:dilute}) & $\delta$ ($n$ even), $\frac{(n-1)\delta}{n-\delta}$ ($n$ odd), approached (Thm.~\ref{thm:projP})\\
20 & $\varphi(e^{QPQ})\le\varphi(e^P)$ & $c^{\rm pr}_{\exp}\in(1,\kappa_\cZ]$, not a function of $\kappa_\cZ$ (Prop.~\ref{prop:bikexp}) & max.\ of an explicit function, $\sim\frac{(e-1)^2}{e(e+1)}\delta^2$, exact for even $n$ only (Thm.~\ref{thm:projP})\\
36 & $\varphi(e^{PQP})\le\varphi(e^Q)$, $Q\ge0$ & in $(1,\kappa_\cZ]$, not a function of $\kappa_\cZ$ (Prop.~\ref{prop:bikexp}) & --\\
6 & $\varphi(A\natural B)\le\sqrt{\varphi(A)\varphi(B)}$ & in $[G_\cZ,\kappa_\cZ]$ (Prop.~\ref{prop:sgm}) & in $[G-1,\kappa-1]$; $>G-1$ for $n=3,4$ at some $\delta$ (numerical)\\
7, 38 & Araki--Lieb--Thirring, $(r,q)=(2,1)$; general $r>1$, $q>0$ & $c_{2,1}\in[G_\cZ^2,\kappa_\cZ]$ (Prop.~\ref{prop:altlow}); $c_{r,q}\in(1,\kappa_\cZ]$, $\sup_qc_{r,q}=\kappa_\cZ$ (Prop.~\ref{prop:alt}) & item 7: $\ge G^2-1$; the value $G^2-1$ is approached by an explicit family\\
27 & $|\varphi(X^k)|\le\varphi(|X|^k)$, $k\ge3$ & in $[G_\cZ,\kappa_\cZ^{1-1/k}]$; not a function of $\kappa$, already on $M_k(\C)$ (Props.~\ref{prop:pzk}--\ref{prop:pzk4}) & --\\
12, 34 & $\varphi((A+B)^p)\le\varphi(A^p)+\varphi(B^p)$, $0<p<1$ (item 12: $p=\frac12$) & in $[\tp(\kappa_\cZ),\frac{1+\kappa_\cZ}2]$ (Thm.~\ref{thm:tik}) & item 12: search $\approx\mathsf t_{1/2}(\kappa)-1\approx\frac{\delta^2}2$; no violation if $\OMC(A)=\OMC(B)\le\frac{2\sqrt2}3$ (Prop.~\ref{prop:item12})\\
13, 15 & Powers--St{\o}rmer; Golden--Thompson for projections & -- & numerics only; search: $\approx0.5\,\delta^2$, $\approx0.064\,\delta^2$\\
14 & $|\varphi(PQ)|\le\varphi(P)^{1/3}\varphi(Q)^{2/3}$ & -- & $\ge\frac{\delta^2}{36}+O(\delta^3)$ (Prop.~\ref{prop:holder})\\
16, 37 & Peierls--Bogoliubov, symmetric form; form with $\varphi(Be^A)$ & $c_{\rm PB}\in[b(\rho),\kappa_\cZ]$, $b\ge b_2(\kappa_\cZ)$; item 37: exact ($+\infty$ for $\rho\notin\cZ$) (Prop.~\ref{prop:pb}) & item 16: search $\approx b_2(\kappa)-1$\\
35 & $\varphi(A)\le\varphi(|A+iB|)$ & in $(1,G_\cZ]$; $<2$ on $M_2$; not a function of $\kappa_\cZ$ (Prop.~\ref{prop:af}) & --\\
11, 24 & $(\det A)^{1/n}\le\frac1n\varphi(A)$, $\varphi(A^2)^{1/2}\le\varphi(A)$, canonical trace & -- & exact for every $n$ (Props.~\ref{prop:det}, \ref{prop:power}); $n=2$ values shown: $1-\sqrt{1-\delta^2}$, $(1-\delta)^{-1/2}-1$; they grow with $n$\\
\midrule
\multicolumn{4}{@{}l}{\emph{Central elements: $a$ positive invertible in a von Neumann algebra}}\\
39 & $|\varphi(a)|\le|\varphi|(a)$ & $\cosh\LNC(a)$, exact (Thm.~\ref{thm:nt2}) & \\
40 & $pap\le a$ & $\cosh^2\LNC(a)$, exact (Thm.~\ref{thm:pap}) & \\
41 & other items of \cite[Thm.~1, Cor.~1]{novikov2015} & $\cosh^2\LNC(a)$ or $\cosh\LNC(a)$, exact (Thm.~\ref{thm:ntrest}) & \\
42 & $|\varphi|(a)=\|a^{1/2}\varphi a^{1/2}\|$ & ratio in $[\kappa_\cZ^{-1/2},\kappa_\cZ^{1/2}]$; exact for finite $\cM$ (Thm.~\ref{thm:l1}) & \\
\end{longtable}}

\subsection{Noncentrality and the comparison theorem}
The second general statement (Theorem~\ref{thm:compare}) compares contrast and noncentrality: the
contrast always gives a valid but weaker bound; it gives the exact constant if and only if
$\max\ceil{\rho}$ and $\min\flr{\rho}$ are attained at a common point of the spectrum of the centre;
the loss is unbounded; and in every finite von Neumann algebra that is neither a factor nor abelian
(every such von Neumann algebra, finite or not, for the constants of Section~\ref{sec:centre} other
than Theorem~\ref{thm:l1}) the violation constant is not a function of the contrast, while for the
exact cases it is a function of the noncentrality. On a direct sum $\bigoplus_cM_{k_c}$ this means
that only the contrast inside the blocks counts (Corollary~\ref{cor:sectors}): a density that varies
only between the blocks violates none of the inequalities except the two that characterise the
\emph{canonical} trace, items 11 and 24.

\subsection{Central elements}
The dual problem, in which the functional is arbitrary and an element is to be recognised as
central, has the same answer. For $a\ge0$ invertible the best constant $\lambda$ in $pap\le\lambda a$
over all projections $p$ is $\cosh^2\LNC(a)$, and the best constant in $|\varphi(a)|\le
C|\varphi|(a)$ over all normal functionals is $\cosh\LNC(a)$, in every von Neumann algebra
(Section~\ref{sec:centre}).

\subsection{Composite operators, thresholds and numerics}
Section~\ref{sec:composite} records how contrasts behave under the composites through which the
operators enter the inequalities: the rapidities $\OLC=\operatorname{artanh}\OMC$ add under products and
means (the triangle inequality of Hilbert's projective metric), the angles $\arcsin\OMC$ add under
Jordan products (Gustafson's criterion), and the commutator test $\varphi(AB)=\varphi(BA)$ fails by exactly
$2\OMC(\rho)\OMC(A)\OMC(B)$ in even dimension. These rules prove sharp thresholds: if
$\OMC(A)=\OMC(B)\le2\sqrt2/3$, then $(A+B)^{1/2}\le A^{1/2}+B^{1/2}$ as operators, so no positive functional
violates subadditivity of the square root, while above $2\sqrt2/3$ suitable operators violate it for
every density of contrast close enough to $1$; and the power-mean inequality $\varphi(A^2)^{1/2}\le\varphi(A)$
is never violated when $\OMC(A)\le\sqrt{(n-1)/n}$, sharply (Section~\ref{sec:thresholds} and
Figure~\ref{fig:square}).

For the inequalities without a closed form we computed the envelopes on $M_2$, $M_3$ and $M_4$ by
constrained optimisation (Section~\ref{sec:numerics}), using the open-source library of~\cite{lib}. Every
reported value is recomputed in 40-digit arithmetic at an exactly admissible point, so it is a rigorous
\emph{lower} bound for the supremum, not a certified value of it. Where the envelope is known, the
search reproduces it to $\TCSharpDev$--$\TCSharpDevNew$; it reaches the proved value of item~10 and the
proved lower bound of item~12, and comes within $3\cdot10^{-3}$ of the proved lower bound of item~16
(Section~\ref{sec:numcompare}), and elsewhere it supports the conjectures of
Section~\ref{sec:open}. Random sampling, the method of~\cite{abed2024}, rarely comes close to the
envelope: for monotonicity and subadditivity fewer than one random sample in a thousand violates the
inequality at all. No proof in this paper depends on a floating-point computation.

\subsection{What is new and what is classical}
The characterisation theorems are due to the cited authors. The constants $G$, $\kappa$ and the
Specht ratio, and the inequalities behind them, are classical: the Kantorovich
inequality~\cite{kantorovich1948} in the operator form of Greub and Rheinboldt~\cite{greub1959}
(equivalently, the Krein inequality and Gustafson's operator angle~\cite{gustafson1968,krein1969}),
Wielandt's inequality in the Bauer--Householder form~\cite{BH60}, H\"older's inequality~\cite{FK86},
the Araki--Lieb--Thirring inequality~\cite{Kos92} and Specht's reverse arithmetic--geometric mean
inequality~\cite{specht1960}. In factors several of the upper bounds below are short consequences of
these inequalities; on $M_n(\C)$ the Gardner constant and the operator inequality $P\rho P\le
G^2\rho$ are contained in inequalities of Bourin~\cite{Bou06}, and Bourin's reverse of Araki's
inequality~\cite{Bou05} gives our upper bound for monotonicity on approximately finite-dimensional
algebras. Several statements of Section~\ref{sec:composite} are also classical: the bounds for
congruences are the submultiplicativity of the condition number, the positivity criterion for
Jordan products is Gustafson's~\cite{gustafson1968}, and the norm bound for commutators of positive
operators follows from Kittaneh's inequality~\cite{kittaneh2008}. In Section~\ref{sec:nc}, the central
envelopes are the extreme elements of Halpern's central spectrum~\cite{Hal72}, and the identity
$\|\Ad_A\|=\kappa_\cZ(|A|)$ for the norm of an inner automorphism follows from the description of
norms of elementary operators in~\cite{CS93,Som98,Tim07} (Remark~\ref{rem:ad}).

What we add is that the classical constants are the exact violation constants; the determination of
these constants in finite von Neumann algebras and their central versions; the identification of
the noncentrality as the governing parameter; the dependence on the product of the contrasts in
Theorem~A; the exact envelopes of the projection inequalities 17--23 (items 19 and 20 on $M_n$, item~20
for even $n$), of the Bikchentaev--Abed inequality 28 and, for $n=2$, of item~9, and of the
monotonicity (item~10); the constructions with violations of the orders $\delta^4$ and $\delta^8$; the sharp
bounds of Theorem~C; the sharp trace version of the commutator bound; and the threshold results of
Section~\ref{sec:thresholds}. For the Petz--Zem\'anek inequality with $k\ge3$ the situation is
different: the constant depends on more of the spectrum than its extreme points
(Propositions~\ref{prop:pz3} and~\ref{prop:pzk4}). We know of no previous study of these constants;
the closest work is~\cite{abed2024}, which samples the unnormalised Gardner violation but defines no
envelope and proves no bound.

\subsection{Organisation}
Section~\ref{sec:related} reviews related work. Section~\ref{sec:setting} fixes the notation, lists
the inequalities and collects the tools. Section~\ref{sec:nc} recalls central envelopes and
noncentrality and proves the transfer principle. Section~\ref{sec:catalogue} determines the violation
constants item by item, Section~\ref{sec:gardner2} proves Theorems~A and~C,
Section~\ref{sec:centre} treats the centre characterisations, and Section~\ref{sec:compare} compares
contrast and noncentrality, including direct sums. Section~\ref{sec:composite} treats composite
operators and the thresholds, Section~\ref{sec:numerics} the numerical evidence, and
Section~\ref{sec:open} lists conjectures and open problems. Appendix~\ref{app:mc} shows the computed
envelopes and random samples item by item.

\section{Related work}\label{sec:related}

\paragraph{Characterisations of the trace.} Gardner~\cite{gardner1979} proved that a linear functional
on a $C^*$-algebra with $|\varphi(x)|\le\varphi(|x|)$ for all $x$ is a trace. Petz and
Zem\'anek~\cite{petz1988} showed that $|\varphi(x^k)|\le\varphi(|x|^k)$ for all $x$ and a single
$k\in\mathbb N$ already characterises positive traces, and characterised traces by subadditivity on
the projection lattice~\cite[Theorem~4]{petz1988}. Stolyarov, Tikhonov and
Sherstnev~\cite{stolyarov2002} characterised traces among normal semifinite weights by the
subadditivity $\varphi(|a_1+a_2|)\le\varphi(|a_1|)+\varphi(|a_2|)$ for self-adjoint $a_1,a_2$, and
Tikhonov~\cite{tikhonov2005} studied when the subadditivity $\varphi(f(A+B))\le\varphi(f(A))+\varphi(f(B))$
for positive $A,B$ characterises tracial functionals; formally, for $f=\one_{(0,\infty)}$ and
projections this inequality becomes $\varphi(P\vee Q)\le\varphi(P)+\varphi(Q)$. Bikchentaev and Tikhonov
characterised the trace on $M_n$ by Young's inequality $\varphi(|AB|)\le\varphi(A^p)/p+\varphi(B^q)/q$ for
positive $A,B$~\cite{bikchentaev2005young} and by the monotonicity $0\le A\le
B\Rightarrow\varphi(A^p)\le\varphi(B^p)$~\cite{bikchentaev2007mono}; Sano and Yatsu~\cite{sano2006}
treated monotonicity and convexity for general functions, Cho and Sano~\cite{cho2009} Young's
inequality for arbitrary operators and general Young pairs, and Dinh Trung Hoa and Tikhonov the
weighted forms of monotonicity~\cite{hoa2007,hoa2010}. Bikchentaev obtained a long series of
characterisations by projection inequalities such as $\varphi(PQP)=\varphi(QPQ)$, $\varphi(PQP)\le\varphi(Q)$
and $\varphi(|P-Q|)\le\varphi(P+Q)$~\cite{bikchentaev2009rm,bikchentaev2010smj,bikchentaev2011mn,Bik15}, and
by many other inequalities, see~\cite{Bik16} and the survey~\cite{Bik12}; Bikchentaev and
Abed~\cite{bikchentaev2019abed} characterised traces by the anticommutator inequality
$\varphi(A^2+B^2+K(AB+BA))\ge0$ for $A,B\ge0$ and all large $K$ and by inequalities for two projections.
Bikchentaev~\cite{bikchentaev2011pb} characterised tracial functionals by the Peierls--Bogoliubov and
the Araki--Lieb--Thirring inequalities, and Alhasan and Fawwaz~\cite{AF21} by
$\varphi(A)\le\varphi(|A+iB|)$ for $A\ge0$ and $B=B^*$. Further characterisations use the generalised
Powers--St{\o}rmer inequality~\cite{hoa2013}, inequalities for determinants and power
means~\cite{bikchentaev2020det}, the spectral geometric mean~\cite{bikchentaev2026sgm}, inequalities
for $|\varphi(A^2)|$~\cite{bikchentaev2026moslehian} and metrics on
projections~\cite{bikchentaev2019metric}. Recent contributions include~\cite{BM25b,moslehian2025jmaa};
the canonical trace on $M_n$ is characterised in~\cite{moslehian2025jmaa,bikchentaev2020det}. Bikchentaev
and Moslehian~\cite{bikchentaev2025survey} survey these results; see also~\cite{AM25,BKMS24}.

\paragraph{Characterisations of central elements.} Novikov and Tikhonov~\cite{novikov2015} proved that
$a\ge0$ is central if and only if $pap\le a$ for all projections $p$, if and only if
$|\varphi(a)|\le|\varphi|(a)$ for all normal functionals $\varphi$, and gave several equivalent forms;
Novikov~\cite{Nov17} extended such results to unbounded operators affiliated with the algebra.
Characterisations of central elements by local monotonicity or convexity of functions are due to
Moln\'ar~\cite{Mol17} and Virosztek~\cite{Vir17}. All these results are qualitative.

\paragraph{Reverse inequalities.} Our upper bounds are reverse inequalities of Kantorovich type.
Kantorovich's inequality~\cite{kantorovich1948} and its operator form due to Greub and
Rheinboldt~\cite{greub1959} bound $\omega(\rho^2)\omega(1)$ by $G^2\omega(\rho)^2$; the Mond--Pe\v{c}ari\'c
method~\cite{furuta2005} produces such constants systematically, Specht's ratio~\cite{specht1960} is
the sharp constant of the reverse arithmetic--geometric mean inequality, and Bourin~\cite{Bou05,Bou06}
and Fujii and Seo~\cite{FS05} proved reverse inequalities of Araki and Davis--Hansen--Pedersen type.
The question here is different: not whether such a constant bounds the violation, but whether it is
the smallest one.

\paragraph{Operator angles.} Krein~\cite{krein1969} introduced the deviation of an operator and
Gustafson~\cite{gustafson1968} the equivalent angle $\psi(A)$,
$\cos\psi(A)=\inf_{x\ne0}\Re\langle Ax,x\rangle/(\|Ax\|\|x\|)$, the first antieigenvalue; for
$0<m\le A\le M$ with $m,M$ in the spectrum, $\cos\psi(A)=2\sqrt{Mm}/(M+m)$ and
$\sin\psi(A)=(M-m)/(M+m)$~\cite{gustafsonrao1997,gustafson2012}. Thus the Michelson contrast of a
positive operator is the sine of its angle, and the Kantorovich constant is $G(\rho)=\sec\psi(\rho)$.
Gustafson used the angle to decide when products of positive operators are
accretive~\cite{gustafson1968}.

\paragraph{Jordan products.} The product $AB$ of positive operators is accretive exactly when the
Jordan product $AB+BA=2\Re(AB)$ is positive. Bounds for the eigenvalues of $AB+BA$ go back to
Strang~\cite{strang1962} and Nicholson~\cite{nicholson1979}; Alikakos and Bates~\cite{alikakos1984}
bounded all eigenvalues and showed that Strang's lower bound for the smallest one is best
possible~\cite{alikakos1985}, and Conley, Pucci and Serrin~\cite{conley2005} gave necessary and
sufficient conditions for the positivity of the symmetrised product of two positive definite
matrices. Moslehian and Najafi~\cite[Theorem~2.1]{moslehian2011} proved that $AB+BA\ge0$ if and only if
$f(A+B)\le f(A)+f(B)$ for every non-negative operator monotone function $f$ on $[0,\infty)$. This
links Jordan products to the subadditivity inequalities that characterise traces~\cite{tikhonov2005},
and it is the background of Section~\ref{sec:thresholds}.

\paragraph{Commutators, derivations and Hilbert's metric.} Kittaneh~\cite{kittaneh2008} proved
$\|XY-YX\|\le\|X\|\|Y\|$ when $X\ge0$, which gives $\|AB-BA\|\le\frac12\|A\|\|B\|$ for positive $A,B$;
versions for unitarily invariant norms are in~\cite{kittaneh2007,bhatia2008}. The norm of an inner
derivation is twice the distance to the centre~\cite{Sta70,KLR67,Gaj72,Zsi73}, and the noncentrality is
its multiplicative counterpart: the norm of the inner automorphism $\Ad_A$ is $\kappa_\cZ(|A|)$, which
follows from~\cite{CS93,Som98,Tim07} (Remark~\ref{rem:ad}). On the cone of positive definite matrices,
Thompson's metric~\cite{thompson1963} is $d_T(A,B)=\|\log(A^{-1/2}BA^{-1/2})\|$ and Hilbert's
projective metric~\cite{bushell1973,lemmens2012} is
$d_H(A,B)=\log\bigl(\lambda_{\max}/\lambda_{\min}\bigr)(A^{-1/2}BA^{-1/2})$. The logarithmic contrast of a
positive definite $A$ is
\[
  \OLC(A)=\operatorname{artanh}\OMC(A)=\tfrac12\log\frac{\lambda_{\max}(A)}{\lambda_{\min}(A)}=\tfrac12\,d_H(A,1)
  =\min_{t>0}d_T(A,t1),
\]
the Thompson distance of $A$ to the scalars; this is why it behaves additively under products
(Proposition~\ref{prop:rapid}), and why its central version $\LNC$ is the Thompson distance to the
centre (Proposition~\ref{prop:char}).

\section{Setting and tools}\label{sec:setting}

\subsection{Von Neumann algebras and densities}
$\cM\subset B(\cH)$ is a von Neumann algebra with centre $\cZ$; $\cM_{sa}$, $\cM_+$ and
$\cM_{++}$ denote the self-adjoint, positive, and positive invertible elements. By the Gelfand
representation $\cZ\cong C(\Omega)$ with $\Omega$ extremally disconnected; we identify $z\in\cZ$
with its Gelfand transform. Central projections correspond to clopen subsets of $\Omega$, which
form a base of the topology. For a projection $e$, $c(e)$ is its central support, and $s(x)$ is
the support projection of $x\in\cM_+$. We use the comparison fact \cite[Ch.~6]{KR2}: if $E,F$
are projections with $c(E)c(F)\ne0$, then there is a nonzero partial isometry $v\in\cM$ with
$v^*v\le F$ and $vv^*\le E$.

A von Neumann algebra is \emph{finite} here if it carries a faithful normal tracial state
$\tau$. Then $\|X\|_p=\tau(|X|^p)^{1/p}$, H\"older's inequality
$|\tau(XY)|\le\tau(|XY|)\le\|X\|_p\|Y\|_q$ ($1/p+1/q=1$) and the duality
$\|X\|_p=\sup\{|\tau(XY)|:\|Y\|_q\le1\}$ hold \cite{FK86}. Every $X\in\cM$ has a polar
decomposition $X=U|X|$ with $U$ unitary, and $|X^*|=U|X|U^*$. On $L^2(\cM,\tau)$ with
$\langle X,Y\rangle=\tau(Y^*X)$ the left and right multiplications $L_X h=Xh$, $R_Xh=hX$ are
bounded; for $D\in\cM_{++}$, $L_D$ and $R_D$ are commuting positive operators with spectra in
$\sigma(D)$. We use the Araki--Lieb--Thirring inequality in Kosaki's form \cite{Kos92}: for
$A,B\in\cM_+$ and $r\ge1$,
\begin{equation}\label{eq:alt}
\tau\bigl((B^{1/2}AB^{1/2})^r\bigr)\le\tau\bigl(B^{r/2}A^rB^{r/2}\bigr).
\end{equation}

For $\rho\in\cM_{++}$ put $\varphi=\varphi_\rho=\tau(\rho\,\cdot)$. Then $\varphi$ is tracial
if and only if $\rho\in\cZ$: if $\tau(\rho XY)=\tau(\rho YX)=\tau(X\rho Y)$ for all $X,Y$, then
$\tau((\rho X-X\rho)Y)=0$ for all $Y$, so $\rho X=X\rho$.

On $M_n=M_n(\C)$ we use the usual trace $\Tr$ in place of $\tau$ (all constants are invariant under
$\tau\mapsto s\tau$), $|X|=(X^*X)^{1/2}$, $\|X\|_1=\Tr|X|$, $\Re X=\frac12(X+X^*)$, and
$\varphi=\varphi_\rho=\Tr(\rho\,\cdot\,)$ with $\rho\ge0$; $\varphi$ is a trace iff $\rho\in\C1$. We write
$a\ge b$ for the extreme eigenvalues of $\rho$, $e_1,e_2$ for corresponding unit eigenvectors, and
$\delta=\OMC(\rho)$; unless stated otherwise $\rho>0$, i.e.\ $\delta<1$. Variables called $P,Q,R$ are
orthogonal projections, $A,B\ge0$ unless said otherwise, and $X,Y$ arbitrary; $P\wedge Q$ and $P\vee Q$
are the projections onto $\mathrm{ran}\,P\cap\mathrm{ran}\,Q$ and onto $\mathrm{ran}\,P+\mathrm{ran}\,Q$.

\begin{table}[ht]
\centering\small
\renewcommand{\arraystretch}{1.15}
\begin{tabular}{@{}>{\raggedright}p{4.6cm}>{\raggedright\arraybackslash}p{10.6cm}@{}}
\toprule
symbol & meaning\\
\midrule
$\cM$, $\cZ$, $\tau$ & von Neumann algebra, its centre, a faithful normal tracial state ($\Tr$ on $M_n$)\\
$\rho\in\cM_{++}$, $\varphi=\varphi_\rho=\tau(\rho\,\cdot\,)$ & density and functional\\
$\kappa(T)=\max\sigma(T)/\min\sigma(T)$ & condition number; $\kappa=\kappa(\rho)$\\
$\OMC(T)=\frac{\kappa(T)-1}{\kappa(T)+1}$ & Michelson contrast; $\delta=\OMC(\rho)$, and $\delta'$ the contrast of the operator variables\\
$\OLC(T)=\frac12\ln\kappa(T)=\operatorname{artanh}\OMC(T)$ & logarithmic contrast (rapidity; $\eta_A=\OLC(A)$ in Section~\ref{sec:composite})\\
$\psi(T)=\arcsin\OMC(T)$ & angle of Krein and Gustafson; $\psi=\psi(\rho)$, $\psi_A=\psi(A)$\\
$G(T)=\cosh\OLC(T)=\sec\psi(T)=\frac{1+\kappa(T)}{2\sqrt{\kappa(T)}}$ & Kantorovich constant; $G=G(\rho)=(1-\delta^2)^{-1/2}$, and $G(x)=(1-x^2)^{-1/2}$ for a number $x\in[0,1)$\\
$\ell=\frac1\kappa=\frac{1-\delta}{1+\delta}$, $\ell'=\frac{1-\delta'}{1+\delta'}$ & smallest eigenvalue after normalising the largest to $1$\\
$\ceil\rho$, $\flr\rho$ & central envelopes (Lemma~\ref{lem:env})\\
$\kappa_\cZ(\rho)=\|\ceil\rho\flr\rho^{-1}\|$ & central condition number\\
$\delta_\cZ=\NC(\rho)=\frac{\kappa_\cZ-1}{\kappa_\cZ+1}$ & noncentrality\\
$\LNC(\rho)=\frac12\ln\kappa_\cZ(\rho)=\operatorname{artanh}\delta_\cZ$ & logarithmic noncentrality\\
$\psi_\cZ=\arcsin\delta_\cZ$, $G_\cZ=\cosh\LNC(\rho)=\sec\psi_\cZ$ & central angle and central Kantorovich constant\\
$c(\rho)=c_\cM(\rho)$ & violation constant (Definition~\ref{def:viol})\\
$\cL$, $\cR$, $\cN$; $V(\rho)$, $V_n(\delta)$, $V_n(\delta,\delta')$ & sides and normalisation of an inequality; violation envelopes on $M_n$ (Section~\ref{sec:envelopes})\\
\bottomrule
\end{tabular}
\caption{Notation. In factors, in particular on $M_n$, $\kappa_\cZ=\kappa$, $\delta_\cZ=\delta$,
$\psi_\cZ=\psi$ and $G_\cZ=G$.}
\label{tab:notation}
\end{table}

\subsection{Contrast, angle and the Kantorovich constant}
For a positive invertible operator $T$ on a Hilbert space with $m=\min\sigma(T)$, $M=\max\sigma(T)$
let
\begin{gather*}
\kappa(T)=\frac Mm,\qquad \OLC(T)=\tfrac12\ln\kappa(T),\\
\OMC(T)=\tanh\OLC(T)=\frac{M-m}{M+m},\qquad G(T)=\cosh\OLC(T)=\frac{M+m}{2\sqrt{Mm}}.
\end{gather*}
$\OMC$ is the operator Michelson contrast of~\cite{abed2024} and $\OLC$ its logarithmic
form~\cite{NN26}. In Gustafson's operator trigonometry~\cite{gustafson1968} the angle $\psi(T)$ of $T$
is defined by $\cos\psi(T)=\inf_{h\ne0}\langle Th,h\rangle/(\|Th\|\,\|h\|)$, and
$\cos\psi(T)=2\sqrt{Mm}/(M+m)$ by the Krein inequality~\cite{gustafson1968,krein1969}. Hence
\[
\OMC(T)=\sin\psi(T),\qquad G(T)=\cosh\OLC(T)=\sec\psi(T).
\]

\begin{lemma}[Kantorovich, Krein]\label{lem:krein}
Let $0<m\le T\le M$ and $G=(M+m)/(2\sqrt{Mm})$.
\begin{enumerate}
\item\label{k:a} $\omega(T^2)\,\omega(1)\le G^2\,\omega(T)^2$ for every positive functional $\omega$ on a
$C^*$-algebra containing $T$~\cite{greub1959}.
\item\label{k:b} $\|Th\|\,\|h\|\le G\,\langle Th,h\rangle$ for every vector $h$.
\end{enumerate}
\end{lemma}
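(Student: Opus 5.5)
Part (b) is elementary and I will do it first; part (a) is then its functional version, obtained through the GNS construction.

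For (b), start from the operator inequality $(M-T)(T-m)\ge0$. It holds because $M-T$ and $T-m$ are commuting positive functions of $T$. Expanding it gives
\[
  T^2+Mm\le (M+m)T .
\]
Pair this with a vector $h$:
\[
  \|Th\|^2+Mm\|h\|^2\le (M+m)\langle Th,h\rangle .
\]
Apply the arithmetic–geometric mean inequality to the left side:
\[
  2\sqrt{Mm}\,\|Th\|\,\|h\|\le \|Th\|^2+Mm\|h\|^2 .
\]
Combining the two and dividing by $2\sqrt{Mm}$ gives $\|Th\|\,\|h\|\le G\langle Th,h\rangle$. Since the inequality is homogeneous in $h$, no normalisation is needed.

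For (a), I see two routes.
\begin{itemize}
\item \emph{GNS route.} Let $(\pi,\cH_\omega,\xi)$ be the GNS triple of $\omega$. Then $\omega(T^k)=\langle \pi(T)^k\xi,\xi\rangle$ and $\omega(1)=\|\xi\|^2$. Since $\pi$ is a $*$-homomorphism, it preserves the bounds $m\le\pi(T)\le M$. Squaring (b) with $h=\xi$ gives
\[
  \omega(T^2)\,\omega(1)=\|\pi(T)\xi\|^2\|\xi\|^2\le G^2\omega(T)^2 .
\]
\item \emph{Direct route.} Apply $\omega$ to $T^2+Mm\le(M+m)T$, which is legitimate because $\omega$ is positive. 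This gives $\omega(T^2)+Mm\,\omega(1)\le(M+m)\omega(T)$. Then use AM–GM in the form
\[
  2\sqrt{Mm\,\omega(T^2)\omega(1)}\le \omega(T^2)+Mm\,\omega(1) .
\]
Squaring yields the claim.
\end{itemize}
The direct route avoids representations entirely, so I would present that one.

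The only point requiring care is the degenerate case $\omega(1)=0$. Then $\omega=0$ by the Cauchy–Schwarz inequality, and the statement is trivial. Otherwise every quantity involved is nonnegative, so squaring is legitimate.

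I expect no real obstacle. The substantive step is the quadratic operator inequality $(M-T)(T-m)\ge0$, which is immediate from the functional calculus on $\sigma(T)\subset[m,M]$.
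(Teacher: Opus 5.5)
Your proposal is correct and is essentially the paper's proof: both apply $\omega$ to $(M-T)(T-m)\ge0$ and finish with an elementary scalar inequality. The paper multiplies by $\omega(1)$ and completes the square $(G\omega(T)-\sqrt{Mm}\,\omega(1))^2\ge0$, while you use AM--GM, which amounts to the same thing; the only organisational difference is that the paper obtains (b) as the case $\omega=\langle\cdot\,h,h\rangle$ of (a) rather than proving it separately, and your degenerate case $\omega(1)=0$ needs no separate treatment since AM--GM already holds for nonnegative numbers.
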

\begin{proof}
$(M-T)(T-m)\ge0$ gives $\omega(T^2)\le(M+m)x-Mm\,y$ with $x=\omega(T)$, $y=\omega(1)$, and
$((M+m)x-Mmy)y\le G^2x^2$ because the difference is $(Gx-\sqrt{Mm}\,y)^2$. Part~\ref{k:b} is
\ref{k:a} for the vector functional $\omega=\langle\cdot\,h,h\rangle$.
\end{proof}

With $x=T^{1/2}h$, Lemma~\ref{lem:krein}\ref{k:b} is the Kantorovich inequality
$\langle Tx,x\rangle\langle T^{-1}x,x\rangle\le G(T)^2\,\|x\|^4$~\cite{kantorovich1948}; equivalently,
it is the case $B=1$ of the Greub--Rheinboldt form~\cite{greub1959}
\[
\langle A^2x,x\rangle\langle B^2x,x\rangle\le\frac{(M_1M_2+m_1m_2)^2}{4m_1m_2M_1M_2}\,\langle ABx,x\rangle^2
\]
for commuting positive $A,B$ with $0<m_1\le A\le M_1$ and $0<m_2\le B\le M_2$. We call $G(T)$ the
\emph{Kantorovich constant} of $T$; its square $G(T)^2=(M+m)^2/(4Mm)$ is the best constant in
Kantorovich's inequality, and the expression $\langle Tx,x\rangle\langle T^{-1}x,x\rangle$ for a unit
vector $x$ will be called the Kantorovich expression.

\begin{lemma}[Wielandt]\label{lem:wielandt}
Let $\delta=\OMC(T)$.
\begin{enumerate}
\item\label{w:one} If $\re\langle x,y\rangle\ge0$, then
$-\re\langle Tx,y\rangle\le\delta\,\langle Tx,x\rangle^{1/2}\langle Ty,y\rangle^{1/2}$.
\item\label{w:two} If $x\perp y$, then $|\langle Tx,y\rangle|\le\delta\,\langle
Tx,x\rangle^{1/2}\langle Ty,y\rangle^{1/2}$.
\end{enumerate}
\end{lemma}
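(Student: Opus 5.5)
We first normalise $T$: by scale invariance of both sides we may assume $m+M=2$, so $T=1+E$ with $E$ self-adjoint and $\|E\|\le\delta$, where $\delta=(M-m)/(M+m)$. Both parts reduce to the sesquilinear form $\langle Tx,y\rangle$ restricted to the two-dimensional subspace spanned by $x$ and $y$. It suffices to prove the inequalities there with the compression of $T$, since the right-hand sides involve only $\langle Tx,x\rangle$ and $\langle Ty,y\rangle$. The compression $T_0=PTP$ to this span still satisfies $m\le T_0\le M$, so its contrast is at most $\delta$.

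\emph{Part (b).} Take $x\perp y$ and normalise $\|x\|=\|y\|=1$, which is allowed by homogeneity. In the orthonormal basis $x,y$ we have
\[
T_0=\begin{pmatrix}\alpha&\beta\\ \bar\beta&\gamma\end{pmatrix},
\qquad \alpha=\langle Tx,x\rangle,\quad \gamma=\langle Ty,y\rangle,\quad |\beta|=|\langle Tx,y\rangle|.
\]
With eigenvalues $\lambda_1\ge\lambda_2$ of $T_0$, both in $[m,M]$, set
\[
\frac{|\beta|^2}{\alpha\gamma}=\frac{\alpha\gamma-\lambda_1\lambda_2}{\alpha\gamma}=1-\frac{\lambda_1\lambda_2}{\alpha\gamma}.
\]
Because $\alpha+\gamma=\lambda_1+\lambda_2$, AM--GM gives $\alpha\gamma\le\bigl((\lambda_1+\lambda_2)/2\bigr)^2$. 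Hence
\[
\frac{|\beta|^2}{\alpha\gamma}\le 1-\frac{4\lambda_1\lambda_2}{(\lambda_1+\lambda_2)^2}=\Bigl(\frac{\lambda_1-\lambda_2}{\lambda_1+\lambda_2}\Bigr)^2\le\delta^2,
\]
where the last step holds because $(\lambda_1-\lambda_2)/(\lambda_1+\lambda_2)$ increases in $\lambda_1$ and decreases in $\lambda_2$ on $[m,M]$. If $x$ and $y$ are dependent, they are both nonzero only when $\langle x,y\rangle\ne0$, contradicting orthogonality; so one of them vanishes and the claim is trivial.

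\emph{Part (a).} This is the step needing care. Write $u=T^{1/2}x$ and $v=T^{1/2}y$, so that the inequality becomes $-\Re\langle u,v\rangle\le\delta\|u\|\|v\|$; that is, the angle between $u$ and $v$ is at most $\pi/2+\psi$ with $\sin\psi=\delta$.

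A cleaner route avoids this reformulation. Decompose $y=tx+y_\perp$ with $y_\perp\perp x$ and $t=\langle y,x\rangle/\|x\|^2$, so that $\Re t\ge0$. Expanding gives
\[
-\Re\langle Tx,y\rangle=-\Re\bigl(\bar t\langle Tx,x\rangle\bigr)-\Re\langle Tx,y_\perp\rangle .
\]
Since $\langle Tx,x\rangle>0$ and $\Re t\ge0$, the first term is $\le0$. Part (b) bounds the second term by $\delta\,\langle Tx,x\rangle^{1/2}\langle Ty_\perp,y_\perp\rangle^{1/2}$.

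It remains to compare $\langle Ty_\perp,y_\perp\rangle$ with $\langle Ty,y\rangle$, and this is the main obstacle: the naive comparison can fail. We handle it by optimising in $t$. Set $a=\langle Tx,x\rangle$, $b=\langle Tx,y_\perp\rangle$ and $c=\langle Ty_\perp,y_\perp\rangle$. Then
\[
\langle Ty,y\rangle=|t|^2a+2\Re(t\bar b)+c,
\]
and we must show $-\Re(\bar ta)-\Re b\le\delta\sqrt{a}\,\sqrt{\langle Ty,y\rangle}$ for all $t$ with $\Re t\ge0$.

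We expect to reduce this to a real two-variable inequality using $|b|^2\le\delta^2ac$, which is part (b). Since $-\Re(\bar t a)\le0$, the worst case takes $t$ purely imaginary, aligned so that $2\Re(t\bar b)$ is as negative as possible. The inequality then becomes
\[
-\Re b\le\delta\sqrt a\,\sqrt{a s^2-2s|\Im b|+c}\quad\text{for all real } s,
\]
that is, $(\Re b)^2\le\delta^2\bigl(ac-(\Im b)^2\bigr)$ after minimising the right-hand side over $s$. This need not follow from $|b|^2\le\delta^2ac$ alone, since it requires $(\Re b)^2+\delta^2(\Im b)^2\le\delta^2ac$.

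Were that to fail, the fallback is to use the genuine two-dimensional structure: in the basis $x/\|x\|$, $y_\perp/\|y_\perp\|$, the matrix with entries $a$, $b$, $c$ has its eigenvalues in $[m,M]$. Alternatively, we would cite the Bauer--Householder form of Wielandt's inequality \cite{BH60} directly. This is the point where the argument is least settled.
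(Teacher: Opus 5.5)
Your part (b) is correct: it is the classical compression proof of the Bauer--Householder inequality. The gap is part (a), which you leave unproved. The obstruction you name is not real: since $\delta\le1$, $(\Re b)^2+\delta^2(\mathrm{Im}\,b)^2\le|b|^2\le\delta^2ac$, so the inequality you feared might fail follows at once from part (b).

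What does need an argument is your reduction to purely imaginary $t$. Increasing $\Re t$ lowers the left side, but when $\Re b<0$ it can also lower $\langle Ty,y\rangle$. So ``the first term is $\le0$'' does not show that $\Re t=0$ is the worst case. A completed square handles all $t$ at once. Take $x\ne0$, so $a>0$, and write $t=r+is$ with $r\ge0$, $\beta=-\Re b$, $\eta=\mathrm{Im}\,b$. Note also that $\langle Ty,y\rangle=|t|^2a+2\Re(tb)+c$, not $2\Re(t\bar b)$. Then
\[
-\Re\langle Tx,y\rangle=\beta-ra,\qquad a\langle Ty,y\rangle=(ar-\beta)^2+(as-\eta)^2+ac-|b|^2 .
\]
If $\beta-ra\le0$ there is nothing to prove. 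Otherwise $0<\beta-ra\le\beta$, and it suffices that $(1-\delta^2)(\beta-ra)^2\le\delta^2(ac-|b|^2)$. This follows from $(1-\delta^2)\beta^2\le\delta^2(ac-|b|^2)$, which is equivalent to $\beta^2+\delta^2\eta^2\le\delta^2ac$, the inequality above. With this step your route is complete. Your fallback of citing Bauer--Householder would not have closed the gap, since that inequality is (b), not the one-sided (a).

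The paper argues in the opposite direction. It proves (a) directly, using the substitution $u=T^{1/2}x$, $v=T^{1/2}y$ that you set aside. The hypothesis becomes $\Re\langle T^{-1}u,v\rangle\ge0$. Writing $T^{-1}=c(1+E)$ with $c=\frac12(m^{-1}+M^{-1})$ and $\|E\|\le\delta$ gives $-\Re\langle u,v\rangle\le\Re\langle Eu,v\rangle\le\delta\|u\|\|v\|$ in one line. Part (b) then follows by applying (a) to $x$ and $-e^{i\theta}y$. That route is shorter and needs no compressions. Your completed argument instead shows that the one-sided version is a formal consequence of the orthogonal Wielandt inequality.
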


\begin{proof}
\ref{w:one} Put $u=T^{1/2}x$, $v=T^{1/2}y$ and $c=\frac12(m^{-1}+M^{-1})$. Then
$T^{-1}=c(1+E)$ with $E$ self-adjoint and $\|E\|\le(m^{-1}-M^{-1})/(m^{-1}+M^{-1})=\delta$. Hence
$0\le\re\langle x,y\rangle=\re\langle T^{-1}u,v\rangle=c(\re\langle u,v\rangle+\re\langle
Eu,v\rangle)$, so $-\re\langle Tx,y\rangle=-\re\langle u,v\rangle\le\|E\|\,\|u\|\,\|v\|$.
\ref{w:two} If $x\perp y$, then \ref{w:one} applies to $x$ and $-e^{i\theta}y$ for every $\theta$.
\end{proof}

Part~\ref{w:two} is Wielandt's inequality in the form of Bauer and Householder \cite{BH60}; part
\ref{w:one} is a one-sided version that we need for the Bikchentaev--Abed inequality.

\begin{lemma}\label{lem:pTp}
For every projection $p$ on $\cH$, $pTp\le\cosh^2\OLC(T)\,T$.
\end{lemma}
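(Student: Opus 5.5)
The plan is to reduce the operator inequality to a scalar inequality for vectors, and to get that from Lemma~\ref{lem:krein}\ref{k:b}. We must show $\langle pTp h,h\rangle\le G^2\langle Th,h\rangle$ for every $h\in\cH$, where $G=\cosh\OLC(T)=(M+m)/(2\sqrt{Mm})$. The left side is $\langle Tph,ph\rangle$. Write $x=ph$ and $y=(1-p)h$, so that $h=x+y$ with $x\perp y$. By scale invariance we may normalise $m$ and $M$ as convenient.

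The key step is an operator form of the Cauchy--Schwarz inequality that uses $T^{-1}$. For any $k$ we have $|\langle k,h\rangle|^2\le\langle Tk,k\rangle\,\langle T^{-1}h,h\rangle$ by Cauchy--Schwarz applied to $T^{1/2}k$ and $T^{-1/2}h$. Take $k=Tx$ with $x=ph$. Since $\langle x,Tx\rangle$ also equals $\langle Tx,x\rangle$, this route is circular, so I would take a cleaner approach: it suffices to prove $T^{-1/2}pTpT^{-1/2}\le G^2$. Equivalently, we need $\|T^{1/2}pT^{-1/2}\|^2\le G^2$, because $T^{-1/2}pTpT^{-1/2}=(T^{1/2}pT^{-1/2})^*(T^{1/2}pT^{-1/2})$. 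So we need the norm bound $\|T^{1/2}pT^{-1/2}\|\le G$. Put $S=T^{1/2}$. We want $\|SpS^{-1}\|\le(\sqrt{\kappa}+1/\sqrt{\kappa})/2$ with $\kappa(S)=\sqrt{\kappa(T)}$.

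To bound $\|SpS^{-1}\|$, I would use the formula for the norm of an idempotent. The operator $E=SpS^{-1}$ is an idempotent, and the norm of an idempotent is $1/\sin$ of the angle between its range $S\,\mathrm{ran}\,p$ and its kernel $S\,\mathrm{ran}(1-p)$. Equivalently, $\|E\|=(1-c^2)^{-1/2}$, where $c$ is the supremum of $|\langle Sx,Sy\rangle|/(\|Sx\|\|Sy\|)$ over $x\in\mathrm{ran}\,p$ and $y\in\mathrm{ran}(1-p)$. Since $x\perp y$, this quantity is $|\langle Tx,y\rangle|/(\langle Tx,x\rangle^{1/2}\langle Ty,y\rangle^{1/2})$. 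Wielandt's inequality, Lemma~\ref{lem:wielandt}\ref{w:two}, bounds it by $\delta=\OMC(T)$. Hence $\|E\|\le(1-\delta^2)^{-1/2}=G$, and so $pTp\le G^2T$.

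The main obstacle is justifying the idempotent-norm identity $\|E\|=1/\sqrt{1-c^2}$. If I want to avoid citing it, I would prove the needed direction directly. For $u=Sx+Sy$ we have $Eu=Sx$. Write $a=\|Sx\|$ and $b=\|Sy\|$; then $\|u\|^2\ge a^2+b^2-2cab\ge(1-c^2)a^2$, where the last step is minimisation over $b$. This gives $\|Eu\|^2\le\|u\|^2/(1-c^2)$ on the dense set of such $u$, which is enough. The edge cases $p=0$ or $p=1$ are trivial.
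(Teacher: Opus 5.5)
Your argument is correct and is essentially the paper's. With $u=T^{1/2}k$ and $k=x+y$, your estimate $\|u\|^2\ge a^2+b^2-2\delta ab\ge(1-\delta^2)a^2$ is exactly the paper's chain $\langle Tk,k\rangle\ge X+Y-2\delta\sqrt{XY}\ge(1-\delta^2)X$ obtained from Wielandt's inequality, and the paper itself remarks after the lemma that this is the bound on the oblique projection $T^{1/2}pT^{-1/2}$. Since $T^{1/2}$ is invertible, these $u$ fill all of $\cH$, not just a dense set, so you can drop the abandoned Cauchy--Schwarz start and the general idempotent-norm formula.
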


\begin{proof}
For $k\in\cH$ let $x=pk$, $y=(1-p)k$, $X=\langle Tx,x\rangle$, $Y=\langle Ty,y\rangle$ and
$\delta=\OMC(T)$. By Lemma~\ref{lem:wielandt}\ref{w:two},
$\langle Tk,k\rangle\ge X+Y-2\delta\sqrt{XY}\ge(1-\delta^2)X$ (minimise over $\sqrt Y$), and
$(1-\delta^2)^{-1}=\cosh^2\OLC(T)$.
\end{proof}

Lemma~\ref{lem:pTp} is classical. Since $pTp\le\lambda T$ if and only if
$\|T^{1/2}pT^{-1/2}\|^2\le\lambda$, the best constant for a rank-one projection
$p=|e\rangle\langle e|$ is the Kantorovich expression $\langle Te,e\rangle\langle T^{-1}e,e\rangle$,
and the lemma is then the Kantorovich inequality \cite{kantorovich1948}. For general $p$ it is the
Wielandt (Bauer--Householder) bound \cite{BH60} on the norm of the oblique projection
$T^{1/2}pT^{-1/2}$, whose range $T^{1/2}p\cH$ and kernel $T^{1/2}(1-p)\cH$ make an angle with
cosine at most $\delta$ by Lemma~\ref{lem:wielandt}\ref{w:two}. For matrices it is also the case
$A=p$ of Bourin's inequality $AZA\le\frac{(a+b)^2}{4ab}Z$ for $0\le A\le1$ and $Z>0$ with extremal
eigenvalues $a,b$ \cite[Cor.~1.5]{Bou06}.

\subsection{Thompson metric}
For $B,C\in\cM_{++}$ let $M(B/C)=\inf\{t>0:B\le tC\}$ and
$d_T(B,C)=\ln\max\{M(B/C),M(C/B)\}$, the Thompson metric \cite{thompson1963}.

\subsection{Violation constants and envelopes}\label{sec:envelopes}
For each inequality $\cL\le\cR$ of Table~\ref{tab:items} the \emph{violation constant} $c_\cM(\rho)$ is
the best constant $c$ in $\cL\le c\,\cR$ on the algebra $\cM$, i.e.\ the supremum of $\cL/\cR$ over the
admissible variables (Definition~\ref{def:viol} makes this precise). On $M_n$ we measure the violation
also by the quotient $(\cL-\cR)/\cN$ with the normalisation $\cN$ of Table~\ref{tab:items}; it is
invariant under $\rho\mapsto t\rho$, $t>0$. For a fixed density $V(\rho)$ denotes its supremum, and the
envelope $V_n(\delta)$ is the supremum of $V(\rho)$ over all $\rho$ in $M_n$ with $\OMC(\rho)=\delta$. If
$\cN=\cR$, then $V(\rho)=c_{M_n}(\rho)-1$; since $\kappa_\cZ=\kappa$ on $M_n$, every constant of the form
$c(\rho)=f(\kappa_\cZ(\rho))$ gives $V_n(\delta)=f(\frac{1+\delta}{1-\delta})-1$, the same in every
dimension $n\ge2$. We call a value of a constant or an envelope \emph{attained} if the supremum is a
maximum and \emph{approached} otherwise.

\subsection{The inequalities}\label{sec:items}
Table~\ref{tab:items} lists the inequalities. Items 1--25 are those for which we compute matrix
envelopes; items 26--38 are further characterisations of traces, several of them families that
contain an item among 1--25; items 39--42 characterise central elements.

{\small
\setlength{\tabcolsep}{3pt}
\begin{longtable}{@{}r>{\raggedright}p{7.4cm}>{\raggedright}p{2.5cm}>{\raggedright}p{1.9cm}>{\raggedright\arraybackslash}p{3.2cm}@{}}
\caption{The inequalities studied, with the normalisation $\cN$ of the violation $(\cL-\cR)/\cN$ on
$M_n$ and the source of the characterisation. Projections are $P,Q,R$; positive operators $A,B$
unless stated otherwise. Remarks on the forms are in Section~\ref{sec:forms}.}\label{tab:items}\\
\toprule
 & inequality $\cL\le\cR$ & variables & $\cN$ & source\\
\midrule
\endfirsthead
\toprule
 & inequality $\cL\le\cR$ & variables & $\cN$ & source\\
\midrule
\endhead
\bottomrule
\endfoot
\multicolumn{5}{@{}l}{\emph{Traces}}\\
1 & $|\varphi(X)|\le\varphi(|X|)$ & $X$ & $\cR$ & \cite{gardner1979}\\
2 & $\varphi(|X+Y|)\le\varphi(|X|)+\varphi(|Y|)$ & $X,Y$ & $\cR$ & \cite{gardner1979,stolyarov2002}\\
3 & $\varphi(A+A^*)\le2\varphi(|A^*|)$ & $A$ arbitrary & $\cR$ & \cite{bikchentaev2019abed}\\
4 & $|\varphi(A^2)|\le\varphi(A^*A)$ & $A$ arbitrary & $\cR$ & \cite[Thm.~1, $k=2$]{petz1988}, \cite{bikchentaev2026moslehian}\\
5 & $\varphi(|P-R|)\le\varphi(|P-Q|)+\varphi(|Q-R|)$ & $P,Q,R$ & $\cR$ & \cite{bikchentaev2019metric}\\
6 & $\varphi(A\natural B)\le\sqrt{\varphi(A)\varphi(B)}$ & $A,B>0$ & $\cR$ & \cite{bikchentaev2026sgm}\\
7 & $\varphi((B^{1/2}AB^{1/2})^2)\le\varphi(BA^2B)$ & $A,B$ & $\cR$ & \cite[Lemma~2]{bikchentaev2011pb}; item~38 with $(r,q)=(2,1)$\\
8 & $\varphi(|AB^*|)\le\frac12\varphi(|A|^2)+\frac12\varphi(|B|^2)$ & $A,B$ arbitrary & $\cR$ & extension of \cite{bikchentaev2005young}\\
9 & $\varphi(PQ+QP)\le2\varphi(QPQ)$ & $P,Q$ & $\varphi(P)+\varphi(Q)$ & \cite{bikchentaev2019abed}\\
10 & $0\le A\le B\Rightarrow\varphi(A^2)\le\varphi(B^2)$ & $A\le B$ & $\cR$ & \cite{bikchentaev2007mono,sano2006}\\
11 & $(\det A)^{1/n}\le\frac1n\varphi(A)$, $\varphi(1)=n$ & $A$ & $(\det A)^{1/n}$ & \cite[Thm.~3(iii)]{bikchentaev2020det}\\
12 & $\varphi((A+B)^{1/2})\le\varphi(A^{1/2})+\varphi(B^{1/2})$ & $A,B$ & $\cR$ & \cite[Cor.~1]{tikhonov2005}\\
13 & $\varphi(A+B-|A-B|)\le2\varphi(A^{1/4}B^{1/2}A^{1/4})$ & $A,B$ & $\varphi(A+B)$ & \cite[Lemma~3.1]{hoa2013}\\
14 & $|\varphi(PQ)|\le\varphi(P)^{1/3}\varphi(Q)^{2/3}$ & $P,Q$ & $\cR$ & \cite[Thm.~2(i)]{bikchentaev2011mn}\\
15 & $\varphi(e^{P+Q})\le\varphi(e^{P/2}e^Qe^{P/2})$ & $P,Q$ & $\cR$ & \cite[Thm.~2(iv)]{bikchentaev2011mn}\\
16 & $\varphi(e^A)\exp\frac{\varphi(e^{A/2}Be^{A/2})}{\varphi(e^A)}\le\varphi(e^{A+B})$ & $A,B$ self-adj.\ & $\cR$ & \cite[Lemma~1]{bikchentaev2011pb}\\
17 & $\varphi(PQP)\le\varphi(Q)$ & $P,Q$ & $\cR$ & \cite[Thm.~3.4(iv)]{bikchentaev2010smj}\\
18 & $\varphi(|P-Q|)\le\varphi(P+Q)$ & $P,Q$ & $\cR$ & \cite[Thm.~3.4(v)]{bikchentaev2010smj}\\
19 & $\varphi(P+Q)\le\varphi(1+P\wedge Q)$ & $P,Q$ & $\cR$ & \cite[Thm.~3.4(iii)]{bikchentaev2010smj}\\
20 & $\varphi(e^{QPQ})\le\varphi(e^P)$ & $P,Q$ & $\cR$ & \cite[Thm.~2(v)]{bikchentaev2011mn}\\
21 & $0\le\varphi(\Re PQ)$ & $P,Q$ & $\varphi(P)+\varphi(Q)$ & \cite[Thm.~2(vi)]{bikchentaev2011mn}\\
22 & $\varphi(\Re PQ)\le\varphi(|PQ|)$ & $P,Q$ & $\cR$ & \cite[Thm.~2(vii)]{bikchentaev2011mn}\\
23 & $\varphi(P\vee Q)\le\varphi(P)+\varphi(Q)$ & $P,Q$ & $\cR$ & \cite[Thm.~4]{petz1988}\\
24 & $\varphi(A^2)^{1/2}\le\varphi(A)$, $\varphi(1)=n$ & $A$ & $\cR$ & \cite[Thm.~3(vii)]{bikchentaev2020det}\\
25 & $0\le A\le B\Rightarrow\varphi(A^4)\le\varphi(AB^2A)$ & $A\le B$ & $\cR$ & \cite[Thms.~4, 5, $r=1$]{hoa2010}, \cite{hoa2007}\\
26 & $\varphi(|X^*|^q)\le\varphi(|X|^q)$, $q>0$ & $X$ & $\cR$ & $|X^*|=U|X|U^*$\\
27 & $|\varphi(X^k)|\le\varphi(|X|^k)$, $k\ge3$ & $X$ & $\cR$ & \cite[Thm.~1]{petz1988}; cf.\ 4\\
28 & $\varphi(A^2+B^2+K(AB+BA))\ge0$ & $A,B$; $K\in\R$ & threshold $K^*$ & \cite[Thm.~4.6]{bikchentaev2019abed}\\
29 & $\varphi(|AB|)\le\frac1p\varphi(|A|^p)+\frac1q\varphi(|B|^q)$, $\frac1p+\frac1q=1$ & $A,B$ arbitrary & $\cR$ & \cite{cho2009}; cf.\ 8\\
30 & the same & $A,B\ge0$ & $\cR$ & \cite{bikchentaev2005young}\\
31 & the same with $p=q=2$ & $A$ arbitrary, $B=B^*$ & $\cR$ & --\\
32 & $0\le A\le B\Rightarrow\varphi(A^p)\le\varphi(B^p)$ ($p>1$); $A\le B\Rightarrow\varphi(e^A)\le\varphi(e^B)$ & $A\le B$ & $\cR$ & \cite{bikchentaev2007mono,sano2006}; cf.\ 10\\
33 & $\varphi(PQP)=\varphi(QPQ)$; $\varphi(s(PQP))=\varphi(s(QPQ))$ & $P,Q$ & $\cR$ & \cite{bikchentaev2009rm,bikchentaev2011mn}\\
34 & $\varphi((A+B)^p)\le\varphi(A^p)+\varphi(B^p)$, $0<p<1$ & $A,B$ & $\cR$ & \cite{tikhonov2005}; cf.\ 12\\
35 & $\varphi(A)\le\varphi(|A+iB|)$ & $A\ge0$, $B=B^*$ & $\cR$ & \cite{AF21}\\
36 & $\varphi(e^{PQP})\le\varphi(e^Q)$ & $P$, $Q\ge0$ & $\cR$ & cf.\ \cite{Bik12}; cf.\ 20\\
37 & $\varphi(e^A)\exp\frac{\re\varphi(Be^A)}{\varphi(e^A)}\le\varphi(e^{A+B})$ & $A,B$ self-adj. & $\cR$ & \cite{bikchentaev2011pb}; cf.\ 16\\
38 & $\varphi((B^{1/2}AB^{1/2})^{rq})\le\varphi((B^{r/2}A^rB^{r/2})^q)$, $r>1$, $q>0$ & $A,B$ & $\cR$ & \cite{bikchentaev2011pb}; cf.\ 7\\
\midrule
\multicolumn{5}{@{}l}{\emph{Central elements: $a$ positive and invertible, $\varphi,\omega$ normal functionals, $p$ projections}}\\
39 & $|\varphi(a)|\le|\varphi|(a)$ & $\varphi$ & $\cR$ & \cite[Thm.~1]{novikov2015}\\
40 & $pap\le a$ & $p$ & best $\lambda$ in $pap\le\lambda a$ & \cite[Thm.~1]{novikov2015}\\
41 & $\varphi^+(a)\le\varphi_1(a)$ ($\varphi=\varphi_1-\varphi_2$); $\varphi^+(a)\le\omega^+(a)$ ($\varphi\le\omega$); $(\varphi+\omega)^+(a)\le\varphi^+(a)+\omega^+(a)$; $|\varphi+\omega|(a)\le|\varphi|(a)+|\omega|(a)$ & $\varphi,\omega$ & $\cR$ & \cite[Thm.~1, Cor.~1]{novikov2015}\\
42 & $|\varphi|(a)=\|a^{1/2}\varphi a^{1/2}\|$ & $\varphi$ & ratio & \cite{novikov2015,Nov17}\\
\end{longtable}}

\subsection{Remarks on the forms}\label{sec:forms}
Several sources state families of inequalities; we use one member. In item~4, $|\varphi(A^k)|\le\varphi(|A|^k)$ for all
$A$ and one fixed $k\ge1$ characterises the trace on $M_n$~\cite[Theorem~1]{petz1988}; $k=1$ is Gardner's inequality.
Item~7 is the Araki--Lieb--Thirring inequality $\varphi((B^{1/2}AB^{1/2})^{rq})\le\varphi((B^{r/2}A^rB^{r/2})^q)$ (item~38) with
$r=2$ and outer exponent $1$~\cite[Lemma~2]{bikchentaev2011pb}; we call the outer exponent $q$ throughout, as in
item~38, because $p$ denotes the Young exponent of items 29--30. The published form of item~8 is $\varphi(|AB|)\le\frac1p\varphi(A^p)+
\frac1q\varphi(B^q)$ for $A,B\ge0$~\cite{bikchentaev2005young}; we use $p=q=2$ and allow arbitrary $A,B$, in the form of
Ando's matrix Young inequality~\cite{ando1995}, and Corollary~\ref{cor:item8} refers to this extension; it differs from item~29
(Remark~\ref{rem:youngforms}). Item~9 is the
member $p=1$ of the family $\varphi(PQ+QP)\le2\varphi((QPQ)^p)$, $p\in(0,1]$~\cite{bikchentaev2019abed}. Items 10 and 25
are the cases $r=0$ and $r=1$ of $\varphi(A^{2r+2})\le\varphi(A^rB^2A^r)$ for $0\le A\le B$, which characterises traces for
every fixed $r\ge0$~\cite[Lemma~1, Theorems~4,~5]{hoa2010}; on $M_n$ the case $r=0$ is
in~\cite{bikchentaev2007mono,sano2006} and $r>0$ in~\cite[Theorem~6]{hoa2007}. Items 11 and 24 characterise the
canonical trace among the positive functionals with $\varphi(1)=n$~\cite[Theorem~3]{bikchentaev2020det}; item~24 is (vii)
there with $p=2$, $q=1$ and needs $\varphi$ faithful. Item~12 is the case $f(t)=t^{1/2}$ of Tikhonov's
characterisation by $\varphi(f(A+B))\le\varphi(f(A))+\varphi(f(B))$ for concave $f$ with
$f(0)=0$~\cite[Corollary~1]{tikhonov2005}. Item~13 is the generalised Powers--St{\o}rmer inequality
$\varphi(A+B)-\varphi(|A-B|)\le2\varphi(f(A)^{1/2}g(B)f(A)^{1/2})$, $g(t)=t/f(t)$, with $f(t)=t^{1/2}$~\cite[Lemma~3.1,
Remark~4]{hoa2013}; with $\cN=\cR$ its envelope would be infinite (Remark~\ref{rem:13}), so we normalise by $\varphi(A+B)$.
Item~14 is Hölder's inequality for projections with the exponent $\alpha=3$; for $\alpha=2$ it holds for every positive
$\varphi$~\cite[remark after Theorem~2]{bikchentaev2011mn}. Item~16 is the Peierls--Bogoliubov inequality, stated
in~\cite{bikchentaev2011pb} for positive $H,K$; both sides scale by $e^c$ under $A\mapsto A+c$, $B\mapsto B+c$, so the
envelope over self-adjoint arguments is the same. Items 17--19 are~\cite[Theorem~3.4]{bikchentaev2010smj}, announced
in~\cite{bikchentaev2009rm}, and items 14, 15, 20--22 are~\cite[Theorem~2]{bikchentaev2011mn}.
The remaining items are taken as in Table~\ref{tab:items}. In item~2 the four versions with
arbitrary, self-adjoint and positive variables and with projections (item~18) have the same
constant (Theorem~\ref{thm:subadd}); the self-adjoint version is the form in which~\cite{stolyarov2002}
characterises traces. The equality $\varphi(|X^*|)=\varphi(|X|)$ for all $X$ characterises traces,
since $|X^*|=U|X|U^*$ (item~26). Item~28 is~\cite[Thm.~4.6]{bikchentaev2019abed}, in which the
inequality is required for all $K$ larger than some $K_0>-1$; Corollary~\ref{cor:ba} shows that
arbitrarily large $K$ suffice. In item~29 the Cho--Sano form $\varphi(|A^*B|)$ gives the same constant
(Section~\ref{ssec:young}). Bikchentaev's characterisation by $\varphi(e^{PQP})\le\varphi(e^Q)$
concerns projections~\cite{Bik12}; item~20 is this inequality with the roles of $P$ and $Q$
interchanged, and item~36 allows $Q\ge0$ (for self-adjoint $Q$ it fails already for traces,
Section~\ref{ssec:further}). Items 16 and 37 are the two forms of the Peierls--Bogoliubov inequality
of~\cite{bikchentaev2011pb}; for non-tracial $\varphi$ the number $\varphi(Be^A)$ need not be real, and
item~37 uses its real part. Items 39--42 are~\cite[Thm.~1 and Cor.~1]{novikov2015}; the convexity
items (vii) and (viii) of~\cite[Remark~1]{novikov2015} are not treated here.

\subsection{Two projections}
\begin{lemma}[Two projections; Halmos~\cite{halmos1969}]\label{lem:halmos}
For projections $P,Q$ on a finite-dimensional Hilbert space $H$, $H$ is the orthogonal sum of subspaces of dimension
$1$ or $2$ invariant under $P$ and $Q$. On each two-dimensional one there is an orthonormal basis $f,f'$ with $P=ff^*$
and $Q=qq^*$, $q=\cos\theta\,f+e^{i\chi}\sin\theta\,f'$, $0<\theta<\pi/2$.
\end{lemma}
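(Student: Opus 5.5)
The plan is an induction on $\dim H$ that peels off common eigenvectors and then treats the generic part by canonical angles. First I split off the four subspaces
\[
H_{11}=\mathrm{ran}P\cap\mathrm{ran}Q,\quad H_{10}=\mathrm{ran}P\cap\ker Q,\quad H_{01}=\ker P\cap\mathrm{ran}Q,\quad H_{00}=\ker P\cap\ker Q .
\]
Each is invariant under $P$ and $Q$, and on each both act as $0$ or $1$. Any orthonormal basis of $H_{11}\oplus H_{10}\oplus H_{01}\oplus H_{00}$ therefore gives one-dimensional invariant subspaces. The orthogonal complement $H_g$ of this sum is also invariant under $P$ and $Q$, because it is the complement of an invariant subspace for self-adjoint operators. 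So it remains to decompose $H_g$, on which $P$ and $Q$ are in generic position.

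On $H_g$ I consider the positive operator $PQP$ restricted to $\mathrm{ran}P\cap H_g$. Its eigenvalues lie in $[0,1]$, and I claim they lie in $(0,1)$. If $PQPf=0$ for a unit vector $f\in\mathrm{ran}P$, then $\|Qf\|^2=\langle PQPf,f\rangle=0$, so $f\in H_{10}$, which is excluded. If $PQPf=f$, then $\|Qf\|=1$, so $f\in\mathrm{ran}Q$, hence $f\in H_{11}$, also excluded. I choose an eigenvector $f$ with $PQPf=\cos^2\theta\,f$ and $0<\theta<\pi/2$. Then $g=Qf$ satisfies $\|g\|=\cos\theta$ and $\langle g,f\rangle=\cos^2\theta$, so $g-\cos^2\theta\,f\perp f$ and $g-\cos^2\theta\,f$ has norm $\cos\theta\sin\theta\ne0$. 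I set $f'=e^{-i\chi}(g-\cos^2\theta f)/(\cos\theta\sin\theta)$ for a chosen phase $\chi$, which gives an orthonormal pair with $q:=g/\cos\theta=\cos\theta\,f+e^{i\chi}\sin\theta\,f'$. The natural choice is $\chi=0$; the phase in the statement is harmless.

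The step that needs care is showing that $K=\mathrm{span}\{f,f'\}=\mathrm{span}\{f,Qf\}$ is invariant under $P$ and $Q$, and that on $K$ we have $P=ff^*$ and $Q=qq^*$.
\begin{itemize}
\item For $Q$: $Q(Qf)=Qf$, so $Q$ preserves $K$. Since $Qf\in K$ and $\|Qf\|=\cos\theta\ne 0$, the operator $Q|_K$ is nonzero. It has rank one, because $K\not\subset\mathrm{ran}Q$: otherwise $f\in\mathrm{ran}Q$. Hence $Q|_K=qq^*$.
\item For $P$: $Pf=f$ and $PQf=PQPf=\cos^2\theta f\in K$, so $P$ preserves $K$. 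Moreover $f'\in K\ominus\mathbb C f$ and $Pf'$ is a combination of $PQf$ and $Pf$, which is $0$ by the computation above. Hence $P|_K=ff^*$.
\end{itemize}

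Finally, $K^\perp\cap H_g$ is invariant under $P$ and $Q$ and still in generic position, since the four intersection subspaces are contained in $H_{11}\oplus H_{10}\oplus H_{01}\oplus H_{00}$. Induction on the dimension then completes the decomposition. I expect no real obstacle here; the main point is to verify the invariance and that the eigenvalues avoid $0$ and $1$, both done above.
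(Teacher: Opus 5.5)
Your proof is correct. The paper gives no proof of its own and only cites Halmos, and your argument is the standard finite-dimensional form of his: split off $\mathrm{ran}P\cap\mathrm{ran}Q$, $\mathrm{ran}P\cap\ker Q$, $\ker P\cap\mathrm{ran}Q$ and $\ker P\cap\ker Q$, then diagonalise the compression $PQP$ on $\mathrm{ran}P\cap H_g$, which is the operator $C^2$ of Halmos's canonical form on the generic part.

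Two small repairs are needed. First, the one-dimensional blocks must come from orthonormal bases of each $H_{ij}$ taken separately. An arbitrary orthonormal basis of $H_{11}\oplus H_{10}\oplus H_{01}\oplus H_{00}$ need not consist of common eigenvectors; for instance $(x_{11}+x_{10})/\sqrt2$ is not an eigenvector of $Q$. Second, to pick $f$ at each step you need $\mathrm{ran}P\cap H_g\neq0$ whenever $H_g\neq0$. This does hold: if $P$ vanished on a nonzero generic part, $Q$ would split that part into pieces of $H_{01}$ and $H_{00}$, both orthogonal to $H_g$.
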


\subsection{Contrasts of the operator variables}\label{sec:twocontrasts}

The envelope $V_n(\delta)$ lets the operator variables range over all operators. To see how the violation depends on
the operators themselves, we also prescribe their contrast. For a positive variable $A$ we use $\OMC(A)$, for an
arbitrary or self-adjoint variable $X$ the contrast $\OMC(|X|)$ of its modulus (for the Peierls--Bogoliubov
inequality, item~16, the variables are positive, as in~\cite{bikchentaev2011pb}), and we let every operator variable of
an item have the same contrast $\delta'\in[0,1]$:
\[
  V_n(\delta,\delta')=\sup\Bigl\{\frac{\cL-\cR}{\cN}:\ \OMC(\rho)=\delta,\ \text{every operator variable of contrast }\delta'\Bigr\}.
\]
Projections other than $0$ and $1$ have contrast $1$, so for items 5, 9, 14, 15 and 17--23 only $\delta'=1$ occurs, and
$V_n(\delta,1)\le V_n(\delta)$ with equality whenever the supremum over all operators is approached by singular ones.
A positive operator of contrast $\delta'$ whose spectrum has only two points is, up to a positive factor,
\[
  A=\ell'+(1-\ell')P,\qquad \ell'=\frac{1-\delta'}{1+\delta'},\qquad P\ \text{a projection},\ P\ne0,1;
\]
we call such operators \emph{two-level}. They are the affine images of projections, and as $\delta'\to1$ they become
scaled projections.

\begin{proposition}\label{prop:scalar}
If every operator variable is a positive multiple of the identity (for arbitrary and self-adjoint variables: if every
modulus is), then $\cL\le\cR$ in every item 1--25 of Table~\ref{tab:items} and for every positive $\varphi$. Hence
$V_n(\delta,0)\le0$ for all items; and $V_n(0,\delta')\le0$, since for $\delta=0$ the functional is a multiple of the trace
(the canonical trace in items 11 and 24, where $\varphi(1)=n$).
\end{proposition}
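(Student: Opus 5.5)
The proof is a direct check, item by item. Every operator variable is replaced by a positive scalar $t$, or, for arbitrary and self-adjoint variables, the modulus is replaced by $t1$. The point is that $\cL$ and $\cR$ then involve only functions of scalars and of $\varphi$ applied to scalars, to projections, or to unitaries. Write $\varphi(1)=w>0$. The items split into three kinds, treated in the following order.

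\emph{Positive variables $A=a1$, $B=b1$} (items 6, 7, 10--13, 16, 24, 25). Both sides become explicit functions of $a,b,w$, and the inequality reduces to a scalar inequality.
\begin{itemize}
\item Item 6: $\varphi(\sqrt{ab})=\sqrt{ab}\,w=\sqrt{\varphi(A)\varphi(B)}$.
\item Item 7: both sides equal $a^2b^2w$.
\item Item 10: $a\le b$ gives $a^2w\le b^2w$.
\item Item 12: $\sqrt{a+b}\le\sqrt a+\sqrt b$.
\item Item 13: $a+b-|a-b|=2\min(a,b)\le2\sqrt{ab}$.
\item Item 16: $e^a w\,e^{b}=e^{a+b}w$, so equality holds.
\item Item 25: $a^4\le a^2b^2$ when $a\le b$.
\item Item 11: with $\varphi(1)=n$ we get $(\det A)^{1/n}=a=\frac1n\varphi(A)$.
\item Item 24: $\varphi(A^2)^{1/2}=a\sqrt n\le an$.
\end{itemize}

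\emph{Arbitrary or self-adjoint variables with $|X|=t1$} (items 1--4, 8). Then $X=tU$ with $U$ unitary, or a self-adjoint unitary in the self-adjoint case.
\begin{itemize}
\item Item 1: $|\varphi(U)|\le\varphi(1)$ because a positive functional satisfies $\|\varphi\|=\varphi(1)$, so $|\varphi(X)|\le tw=\varphi(|X|)$.
\item Item 2: $|X+Y|=|sU+tV|\le\|sU+tV\|\cdot 1\le(s+t)1$, and positivity of $\varphi$ gives the claim.
\item Item 3: $A+A^*=2t\Re U\le 2t1=2|A^*|$.
\item Item 4: $|\varphi(t^2U^2)|\le t^2w=\varphi(A^*A)$.
\item Item 8: $|AB^*|=st\,|UV^*|=st1\le\frac12(s^2+t^2)1$.
\end{itemize}
The operator inequalities here are used in the form $Z\le\|Z\|1$ for self-adjoint $Z$.

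\emph{Projection items} (5, 9, 14, 15, 17--23). A projection that is a positive multiple of the identity is $1$, so $P=Q=R=1$ and each item is trivial. In item 5 both sides vanish. In item 9, $2w\le2w$. In item 14, $w\le w^{1/3}w^{2/3}$. In item 15, $e^2w$ on both sides. In item 17, $w\le w$. In item 18, $0\le2w$. In item 19, $2w\le2w$. In item 20, $ew\le ew$. In item 21, $0\le w$. In item 22, $w\le w$. In item 23, $w\le2w$. Since this family only allows contrast $\delta'=1$, it could arguably be left out, but including it costs nothing.

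\emph{The two consequences.} Since $\cL\le\cR$ holds, $(\cL-\cR)/\cN\le0$ because $\cN>0$, and so $V_n(\delta,0)\le0$. For $\delta=0$ we have $\rho=c1$, so $\varphi=c\Tr$ is a trace, and every item holds by the cited characterisation theorems, each of which includes the easy direction ``every trace satisfies the inequality''. In items 11 and 24 the constraint $\varphi(1)=n$ forces $\varphi=\Tr$, the canonical trace, for which AM–GM and the power-mean inequality hold. Hence $V_n(0,\delta')\le0$.

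The only step needing care is the arbitrary-variable case in items 2 and 8. There the modulus of a sum or a product of scalar multiples of unitaries must be bounded by a scalar, and this is done through the norm bound $|Z|\le\|Z\|1$ before applying positivity of $\varphi$.
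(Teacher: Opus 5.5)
Your proof is correct and follows the paper's own argument essentially verbatim: you write $X=tU$ with $U$ unitary (a symmetry in the self-adjoint case) and reduce each item to a scalar inequality such as $\sqrt{a+b}\le\sqrt a+\sqrt b$ or $2\min(a,b)\le2\sqrt{ab}$. The only addition is that you check the projection items explicitly, which the paper leaves implicit, since a projection that is a positive multiple of $1$ equals $1$ and those items only admit $\delta'=1$.
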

\begin{proof}
Item by item, with $|X|=x$, $|Y|=y$ scalar and $X=xU$, $Y=yV$ with unitaries (symmetries for self-adjoint $X,Y$), and
$A=\alpha$, $B=\beta$ scalars: (1) $|\varphi(U)|\le\varphi(1)$; (2) $\varphi(|xU+yV|)\le\|xU+yV\|\varphi(1)\le(x+y)\varphi(1)$; (3) and (4)
as (1); (6) and (7) are equalities; (8) $|AB^*|=xy$ and $xy\le\frac12(x^2+y^2)$; (10) and (25) follow from $\alpha\le\beta$; (11) is an
equality for $\varphi(1)=n$; (12) $\sqrt{\alpha+\beta}\le\sqrt\alpha+\sqrt\beta$; (13) $\alpha+\beta-|\alpha-\beta|=2\min(\alpha,\beta)\le2\sqrt{\alpha\beta}$; (16) is an equality;
(24) $\varphi(\alpha^2)^{1/2}=\alpha\sqrt n\le\alpha n$.
\end{proof}

So a violation needs both contrasts: the functional must be far from a trace and the operators must be far from scalars.
The next lemma is the mechanism behind the two-level operators: whenever the violation depends convexly on the spectrum of a
variable, the extremal spectra have only two points.

\begin{lemma}[Vertex principle]\label{lem:vertex}
Fix all variables except the eigenvalues $\lambda\in[\ell',1]^n$ of one positive variable (its eigenbasis fixed), and let
$F(\lambda)$ and $\cN(\lambda)>0$ be the resulting values of $\cL-\cR$ and $\cN$. If $\lambda\mapsto F(\lambda)-c\cN(\lambda)$ is convex for
every $c\in\R$, then the supremum of $F/\cN$ over the spectra with $\max_i\lambda_i=1$ and $\min_i\lambda_i=\ell'$ equals its
supremum over the two-level spectra $\lambda\in\{\ell',1\}^n$ (both values occurring).
\end{lemma}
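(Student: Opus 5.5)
The plan is to reduce the quotient to the convex function $H_c=F-c\cN$ and argue at the level of $H_c$. Let $K=\{\lambda\in[\ell',1]^n:\max_i\lambda_i=1,\ \min_i\lambda_i=\ell'\}$ and let $s=\sup_K F/\cN$. Since $\{\ell',1\}^n\cap K\subseteq K$ (two-level spectra with both values present), the supremum over two-level spectra is at most $s$. It remains to prove the reverse inequality. We may assume $s>-\infty$; for $s=+\infty$, apply the argument below with any finite $c<s$ to get two-level quotients $>c$. For $c<s$, pick $\lambda\in K$ with $F(\lambda)/\cN(\lambda)>c$, i.e.\ $H_c(\lambda)>0$ where $H_c=F-c\cN$. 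The goal is a two-level $\mu\in K$ with $H_c(\mu)\ge H_c(\lambda)>0$. This gives $F(\mu)/\cN(\mu)>c$, and since $c<s$ is arbitrary, equality of the suprema follows.

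The construction of $\mu$ will be a coordinate-by-coordinate pushing argument that keeps one coordinate at $1$ and one at $\ell'$. Fix indices $i_0$ with $\lambda_{i_0}=1$ and $i_1$ with $\lambda_{i_1}=\ell'$; these exist by the definition of $K$. Now take any other index $j$ with $\ell'<\lambda_j<1$. The restriction $t\mapsto H_c(\lambda_1,\dots,t,\dots,\lambda_n)$ (with $t$ in slot $j$) is a convex function on $[\ell',1]$, because it is the restriction of a convex function to a segment. A convex function on an interval attains its maximum at an endpoint, so replacing $\lambda_j$ by $\ell'$ or $1$ does not decrease $H_c$. Coordinates $i_0$ and $i_1$ are never moved, so every intermediate vector stays in $K$. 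After at most $n-2$ steps every coordinate lies in $\{\ell',1\}$, both values still occur, and $H_c$ has not decreased.

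Before running this, one point needs checking: the hypothesis is convexity on $[\ell',1]^n$, and the variables other than $\lambda$ are fixed. So $H_c$ must be well defined on the whole cube, including where $\max\lambda_i<1$. That holds because the vertex principle is stated with $F$ and $\cN$ defined on all of $[\ell',1]^n$. The only other point is to use $\cN>0$ in both directions, $F/\cN>c\iff H_c>0$, and this is given.

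I do not expect a real obstacle. The one subtle point is making sure the endpoint replacement preserves the normalisation constraint $\max=1$, $\min=\ell'$. Reserving the two indices $i_0,i_1$ handles this, and it is exactly why the statement requires both values to occur.
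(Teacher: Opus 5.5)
Your proof is correct and follows the paper's argument. The paper notes that the admissible set is the union of the faces $\{\lambda_i=1,\ \lambda_j=\ell'\}$ of the cube, and that by convexity $F-c\cN>0$ at an admissible point forces $F-c\cN>0$ at a vertex of the same face; your coordinate-by-coordinate pushing, with $i_0,i_1$ held fixed, is an explicit proof that a convex function on such a face is maximised at a vertex.
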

\begin{proof}
The admissible set is the union of the faces $\{\lambda_i=1,\ \lambda_j=\ell'\}$ of the cube, and the vertices of these faces are
the two-level spectra. If $F/\cN>c$ at an admissible point, then $F-c\cN>0$ there, and by convexity $F-c\cN>0$ at a vertex
of the same face, where therefore $F/\cN>c$.
\end{proof}

\noindent The hypothesis holds for Gardner's inequality ($F-c\cN=|\varphi(UT)|-(1+c)\varphi(T)$ for $X=UT$: the modulus of a
linear function minus a linear one), for item~3, for item~24 ($\varphi(A^2)^{1/2}=\|\rho^{1/2}A\|_2$ is a norm of a linear
function of $\lambda$) and for the unnormalised Gardner violation of Section~\ref{sec:gardner2}; for these items two-level
operators suffice. For the other items, whether they suffice is a question we examine numerically
(Section~\ref{sec:projsuffice}).

\section{Central envelopes, noncentrality and the transfer principle}\label{sec:nc}

\subsection{Central envelopes and noncentrality}\label{ssec:nc}

We collect, with short proofs, the facts on noncentrality that we use. The envelopes in Lemma~\ref{lem:env} are the largest and smallest elements
$C_u,C_l$ of Halpern's central spectrum \cite[Prop.~3.11, 3.12]{Hal72} (taken with respect to the
zero central ideal); we include short proofs of the elementary properties we need.

\begin{lemma}[Central envelopes; cf.\ \cite{Hal72}]\label{lem:env}
Let $B\in\cM_{sa}$. The set $\{z\in\cZ_{sa}:z\ge B\}$ has a least element $\ceil{B}$ and
$\{z\in\cZ_{sa}:z\le B\}$ has a greatest element $\flr{B}$. Moreover:
\begin{enumerate}
\item\label{env:a} $\flr{B}\le B\le\ceil{B}$, $\flr{B}=-\ceil{-B}$, $\ceil{B+z}=\ceil{B}+z$ for
$z\in\cZ_{sa}$, $\ceil{zB}=z\ceil{B}$ for $z\in\cZ_{++}$, and $\ceil{UBU^*}=\ceil{B}$ for unitaries
$U\in\cM$;
\item\label{env:b} if $B\in\cM_{++}$, then $\ceil{B},\flr{B}\in\cZ_{++}$, $\ceil{B^s}=\ceil{B}^s$
and $\flr{B^s}=\flr{B}^s$ for $s>0$, and $\flr{B}=\ceil{B^{-1}}^{-1}$;
\item\label{env:c} if $\cM$ is a factor, then $\ceil{B}=\max\sigma(B)$ and
$\flr{B}=\min\sigma(B)$; if $\cM$ is abelian, then $\ceil{B}=\flr{B}=B$; for a projection $e$,
$\ceil{e}=c(e)$.
\end{enumerate}
\end{lemma}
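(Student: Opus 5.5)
I will build $\ceil{B}$ from the projection lattice of the centre, via central supports, and then read off the listed properties. For $t\in\R$ let $e_t=\one_{(t,\infty)}(B)$, the spectral projection of $B$ above $t$, and put $z_t=c(e_t)$; this is a decreasing family of central projections. The candidate is
\[
\ceil{B}=\int t\,d(1-z_t),
\]
that is, the central element whose spectral projections are $\one_{(t,\infty)}(\ceil B)=z_t$. This is defined as in the construction of a self-adjoint operator from a right-continuous spectral family. Since central supports of a decreasing family need not be right-continuous, I will replace $z_t$ by $\bigwedge_{s>t}z_s$ where necessary. Because $\Omega$ is extremally disconnected, such infima of central projections exist in $\cZ$.

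\textbf{Existence and the least property.} First I show $B\le\ceil B$. On the central projection $1-z_t$ we have $e_t(1-z_t)=0$, so $B(1-z_t)\le t(1-z_t)$. Summing over a partition of unity by differences $z_{t_{k}}-z_{t_{k+1}}$ of a fine grid gives $B\le\sum t_{k+1}(z_{t_k}-z_{t_{k+1}})$, and letting the mesh go to $0$ gives $B\le\ceil B$.

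Next, let $z\in\cZ_{sa}$ with $z\ge B$, and suppose $z\not\ge\ceil B$. Then there are $t$ and $\varepsilon>0$ such that the central projection $f=\one_{(-\infty,t]}(z)\,z_{t+\varepsilon}$ is nonzero. Since $f\le z_{t+\varepsilon}=c(e_{t+\varepsilon})$, the product $fe_{t+\varepsilon}$ is nonzero. For a unit vector $h$ in its range, $\langle Bh,h\rangle>t+\varepsilon\ge t\ge\langle zh,h\rangle$, which contradicts $z\ge B$. Hence $\ceil B$ is the least central majorant. The greatest central minorant is then $\flr B:=-\ceil{-B}$, which is the first identity of (a).

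\textbf{The properties in (a).} These follow from uniqueness of least elements:
\begin{itemize}
\item translation and scaling are order isomorphisms of $\cZ$, so they carry central majorants of $B$ onto central majorants of $B+z$ and of $zB$; for $zB$ with $z\in\cZ_{++}$ I use that $zB\le w$ iff $B\le z^{-1}w$, since $z$ commutes with everything;
\item unitary invariance holds because $z\ge B$ iff $z=UzU^*\ge UBU^*$.
\end{itemize}

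\textbf{The properties in (b).} For $B\in\cM_{++}$ with $B\ge m>0$, the scalars give $\flr B\ge m$ and $\ceil B\le\|B\|$, so both envelopes lie in $\cZ_{++}$. The relation $\flr B=\ceil{B^{-1}}^{-1}$ holds because $z\le B$ iff $z^{-1}\ge B^{-1}$ for central invertible positive $z$: $z$ commutes with $B$ and $t\mapsto t^{-1}$ is operator decreasing. The power identity is the main obstacle, because $t\mapsto t^s$ is not operator monotone for $s>1$, so I cannot simply apply it to $z\ge B$. Instead I use the spectral description. We have $\one_{(t,\infty)}(B^s)=\one_{(t^{1/s},\infty)}(B)$, so the families $z_t$ for $B^s$ and $z_{t^{1/s}}$ for $B$ coincide. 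By the construction, $\ceil{B^s}$ and $\ceil B^s$ therefore have the same spectral projections, hence are equal. The same argument applied to $\flr{\cdot}$, or the inverse formula together with $(B^{-1})^s=(B^s)^{-1}$, gives $\flr{B^s}=\flr B^s$.

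\textbf{The properties in (c).} In a factor, $z_t\in\{0,1\}$ and $z_t=1$ exactly when $t<\max\sigma(B)$, so $\ceil B=\max\sigma(B)$; dually $\flr B=\min\sigma(B)$. In an abelian algebra $B$ is itself central and is clearly the least central majorant of itself. For a projection $e$, $e_t=e$ for $0\le t<1$, $e_t=0$ for $t\ge1$ and $e_t=1$ for $t<0$, so $\ceil e=c(e)$. The only delicate points are the right-continuity regularisation and the limit in the existence step, which are routine.
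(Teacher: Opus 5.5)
Your proof is correct, but it takes a different route from the paper's. The paper proves existence order-theoretically. The set of central majorants of $B$ is directed downwards: two majorants $z_1,z_2$ are patched into the smaller majorant $z_1q+z_2(1-q)$ with the central projection $q=\one_{(-\infty,0]}(z_1-z_2)$. The infimum of a decreasing net in $\cZ_{sa}$ is its strong limit, which is central and still dominates $B$. Properties (a) and (b) then follow from order isomorphisms, and only $\ceil e=c(e)$ needs a short spectral argument. You instead construct $\ceil B$ explicitly through $\one_{(t,\infty)}(\ceil B)=c(\one_{(t,\infty)}(B))$. This costs the spectral-family and Riemann-sum machinery. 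In return it describes the whole spectral resolution of $\ceil B$, makes the factor and projection cases immediate, and already contains the content of Lemma~\ref{lem:local}.

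Two minor corrections. First, no regularisation is needed. Central supports commute with suprema, $c(\bigvee_\alpha p_\alpha)=\bigvee_\alpha c(p_\alpha)$, and $e_t=\bigvee_{s>t}e_s$, so $z_t=\bigvee_{s>t}z_s$ automatically; this is exactly the right-continuity of $1-z_t$. The replacement by $\bigwedge_{s>t}z_s$ as you wrote it would give $0$, since $z_s=0$ for $s\ge\max\sigma(B)$. You must not make it anyway: your minimality step and your arguments for (b) and (c) use the exact identity $\one_{(t,\infty)}(\ceil B)=c(e_t)$.

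Second, the power identity is not an obstacle. A central $w$ commutes with $B$, so $w\ge B$ iff $w^s\ge B^s$ by functional calculus in the abelian algebra generated by $w$ and $B$. This is the same reason you used for the inverse, and it is how the paper proves $\ceil{B^s}=\ceil B^s$.
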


\begin{proof}
The set $S=\{z\in\cZ_{sa}:z\ge B\}$ is nonempty, bounded below and directed downwards: for
$z_1,z_2\in S$ and the central projection $q=\one_{(-\infty,0]}(z_1-z_2)$, the element
$z_1q+z_2(1-q)$ lies in $S$, since $q$ commutes with $B$. A decreasing net in $\cZ_{sa}$ converges
strongly to its infimum, which is central and still dominates $B$. The statements in
\ref{env:a} follow from the definition; for instance $w\ge zB$ iff $z^{-1}w\ge B$ for
$z\in\cZ_{++}$. For \ref{env:b}: $B\ge\|B^{-1}\|^{-1}$ gives $\flr{B}\ge\|B^{-1}\|^{-1}$; for
central $w>0$, $w$ and $B$ commute, so $w\ge B$ iff $w^s\ge B^s$, and $w\le B$ iff
$w^{-1}\ge B^{-1}$. For \ref{env:c}, only the last claim needs comment: if $z\in\cZ$, $z\ge e$, and
$q=\one_{(-\infty,1-t)}(z)$ with $t>0$, then $eq\le(1-t)q$ forces the projection $eq$
to vanish, so $q\le1-c(e)$ and $z\ge(1-t)c(e)$ (note $z\ge e\ge0$); hence $z\ge c(e)$, and
since $c(e)\ge e$ is central, $\ceil e=c(e)$.
\end{proof}

\begin{lemma}[Local attainment]\label{lem:local}
Let $B\in\cM_{sa}$, $\omega_0\in\Omega$, $\varepsilon>0$, $\alpha=\ceil{B}(\omega_0)$ and
$\beta=\flr{B}(\omega_0)$. Let $P$ be a central projection whose clopen set contains $\omega_0$
and on which $\ceil{B}$ and $\flr{B}$ oscillate by less than $\varepsilon$ (such $P$ exists).
Then the spectral projections $E=\one_{[\alpha-2\varepsilon,\infty)}(B)P$ and
$F=\one_{(-\infty,\beta+2\varepsilon]}(B)P$ satisfy $c(E)=c(F)=P$.
\end{lemma}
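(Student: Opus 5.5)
By symmetry ($\flr{B}=-\ceil{-B}$ and $F$ for $B$ is the $E$ construction for $-B$ up to sign conventions) I will only prove $c(E)=P$; the claim $c(F)=P$ then follows by applying the same argument to $-B$. First, the existence of $P$. Since $\ceil{B},\flr{B}$ are continuous on $\Omega$ and clopen sets form a base, the set $\{\omega:|\ceil{B}(\omega)-\alpha|<\varepsilon/2,\ |\flr{B}(\omega)-\beta|<\varepsilon/2\}$ is an open neighbourhood of $\omega_0$. It therefore contains a clopen neighbourhood, and its characteristic function is the desired central projection $P$. On this clopen set, $\ceil{B}>\alpha-\varepsilon$. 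Also $E\le P$ gives $c(E)\le P$ trivially, so the whole content is the reverse inequality.

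Suppose $c(E)\ne P$. Then $q=P-c(E)$ is a nonzero central projection with $qE=0$, i.e.\ $q\,\one_{[\alpha-2\varepsilon,\infty)}(B)P=0$. Since $q\le P$ and $q$ commutes with $B$, this means $Bq\le(\alpha-2\varepsilon)q$ by the spectral theorem applied to $Bq$ in the algebra $q\cM$. Now I would construct the central element
\[
z=(\alpha-2\varepsilon)q+\ceil{B}(1-q).
\]
It is self-adjoint and central, and $z\ge B$: on the range of $1-q$ we have $B(1-q)\le\ceil{B}(1-q)$ because $1-q$ is a central projection commuting with everything, and on $q$ we have the bound just derived. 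Hence, by minimality in Lemma~\ref{lem:env}, $\ceil{B}\le z$, so $\ceil{B}q\le(\alpha-2\varepsilon)q$. But the clopen set of $q$ is a nonempty subset of that of $P$, on which $\ceil{B}>\alpha-\varepsilon$. This is a contradiction, so $c(E)=P$.

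For $F$ the argument is identical with $\flr{B}$, the maximality of the lower envelope, and $z=(\beta+2\varepsilon)q+\flr{B}(1-q)\le B$. The only point needing care is the step $qE=0\Rightarrow Bq\le(\alpha-2\varepsilon)q$. This is where centrality of $q$ is used: $q$ commutes with the spectral projections of $B$, so $q\,\one_{[\alpha-2\varepsilon,\infty)}(B)=\one_{[\alpha-2\varepsilon,\infty)}(Bq)$ restricted to $q\cH$. I expect this spectral bookkeeping, together with the small identification of $c(E)$ with the smallest central projection annihilating $1-\cdot$, to be the main (mild) obstacle.
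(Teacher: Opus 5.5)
Your proof is correct and is essentially the paper's argument. You show $c(E)\le P$ from $E\le P$, and then observe that a nonzero central $q\le P$ with $qE=0$ would make $(\alpha-2\varepsilon)q+\ceil{B}(1-q)$ a central majorant of $B$, which contradicts $\ceil{B}>\alpha-\varepsilon$ on the clopen set of $P$; the case of $F$ follows by passing to $-B$. Note that the bound $\ceil{B}>\alpha-\varepsilon$ holds for every admissible $P$, not only for the one you construct, because $\omega_0$ lies in its clopen set.
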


\begin{proof}
$E$ commutes with $\cZ$, so $c(E)\le P$. If $Q\le P$ is a nonzero central projection with
$EQ=0$, then $BQ\le(\alpha-2\varepsilon)Q$, so $w=\ceil{B}(1-Q)+(\alpha-2\varepsilon)Q$ is central
and $w\ge B$, hence $w\ge\ceil{B}$; on $Q$ this gives $\alpha-2\varepsilon\ge\ceil B>\alpha-\varepsilon$, a
contradiction. Apply this to $-B$ for $F$. $P$ exists by continuity of Gelfand transforms.
\end{proof}

\begin{corollary}\label{cor:norm}
For $B\in\cM_{sa}$, $\max\sigma(B)=\max_\Omega\ceil{B}$ and $\min\sigma(B)=\min_\Omega\flr{B}$.
\end{corollary}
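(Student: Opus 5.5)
We prove the two identities separately. By symmetry it is enough to treat the maximum: the minimum follows by applying the maximum identity to $-B$ and using $\flr{B}=-\ceil{-B}$ from Lemma~\ref{lem:env}\ref{env:a}. We will show $\max\sigma(B)\le\max_\Omega\ceil{B}$ and $\max\sigma(B)\ge\max_\Omega\ceil{B}$. Since $\Omega$ is compact and the Gelfand transform of $\ceil B$ is continuous, $\max_\Omega\ceil B$ exists and equals $\max\sigma(\ceil B)$.

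\emph{Upper bound.} Put $\mu=\max_\Omega\ceil{B}$. Then $\ceil{B}\le\mu 1$ in $\cZ$, and $B\le\ceil{B}$ by Lemma~\ref{lem:env}\ref{env:a}. Hence $B\le\mu 1$, so $\max\sigma(B)\le\mu$.

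\emph{Lower bound.} We use Lemma~\ref{lem:local}. Choose $\omega_0\in\Omega$ with $\ceil{B}(\omega_0)=\alpha:=\mu$, and fix $\varepsilon>0$. That lemma gives a central projection $P$ whose clopen set contains $\omega_0$, together with the spectral projection $E=\one_{[\alpha-2\varepsilon,\infty)}(B)P$, with $c(E)=P\ne0$. In particular $E\ne0$. Because $P$ is central, $E$ is a nonzero spectral projection of $B$ (it is a subprojection of $\one_{[\alpha-2\varepsilon,\infty)}(B)$ commuting with $B$). So $\one_{[\alpha-2\varepsilon,\infty)}(B)\ne0$, which means $\sigma(B)$ meets $[\alpha-2\varepsilon,\infty)$, and therefore $\max\sigma(B)\ge\alpha-2\varepsilon$. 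Letting $\varepsilon\to0$ gives $\max\sigma(B)\ge\mu$.

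The only delicate point is that $E\ne0$. This is exactly where Lemma~\ref{lem:local} is used: its proof shows that a vanishing $E$ would allow lowering $\ceil B$ near $\omega_0$. Everything else is routine spectral theory.
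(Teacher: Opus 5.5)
Your proof is correct and is essentially the paper's argument. The upper bound comes from $B\le\ceil{B}\le\max_\Omega\ceil{B}$, and the lower bound from the fact that the projection $E$ of Lemma~\ref{lem:local} is nonzero at a maximum point of $\ceil{B}$ for every $\varepsilon$; the statement for the minimum then follows by applying this to $-B$.
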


\begin{proof}
$B\le\ceil{B}\le\max\ceil{B}$; conversely, at a maximum point of $\ceil{B}$ the projection $E$ of
Lemma~\ref{lem:local} is nonzero for every $\varepsilon$. Apply this to $-B$ as well.
\end{proof}

\begin{definition}\label{def:nc}
For $B\in\cM_{++}$ let $u=\ceil{B}$, $l=\flr{B}$ and
\[
\kappa_\cZ(B)=\|ul^{-1}\|=\max_\Omega\frac ul,\qquad \LNC(B)=\tfrac12\ln\kappa_\cZ(B),\qquad
\NC(B)=\tanh\LNC(B)=\frac{\kappa_\cZ(B)-1}{\kappa_\cZ(B)+1}.
\]
\end{definition}

By Corollary~\ref{cor:norm}, $\kappa(B)=\max_\Omega u/\min_\Omega l$, whereas
$\kappa_\cZ(B)=\max_\Omega(u/l)$.

\begin{proposition}\label{prop:char}
Let $B\in\cM_{++}$, $u=\ceil{B}$, $l=\flr{B}$ and $z_0=(ul)^{-1/2}$.
\begin{enumerate}
\item\label{ch:resc} $\kappa_\cZ(B)=\min\{\kappa(zB):z\in\cZ_{++}\}$, attained at $z=z_0$.
\item\label{ch:thomp} $\LNC(B)=\min\{d_T(B,w):w\in\cZ_{++}\}$, attained at $w=(ul)^{1/2}$.
\item\label{ch:mich} $\NC(B)=\min\{\|1-zB\|:z\in\cZ_+\}$, attained at $z=2(u+l)^{-1}$.
\item\label{ch:prop} $\kappa_\cZ(B)\ge1$ with equality iff $B\in\cZ$;
$\kappa_\cZ(zB)=\kappa_\cZ(B^{-1})=\kappa_\cZ(UBU^*)=\kappa_\cZ(B)$ and
$\kappa_\cZ(B^s)=\kappa_\cZ(B)^s$ for $z\in\cZ_{++}$, unitaries $U\in\cM$ and $s>0$;
$\kappa_\cZ(B)\le\kappa(B)$, with equality if $\cM$ is a factor; $\kappa_\cZ\equiv1$ if $\cM$ is
abelian; and $\kappa_\cZ(\bigoplus_iB_i)=\sup_i\kappa_\cZ(B_i)$ in $\bigoplus_i\cM_i$.
\end{enumerate}
\end{proposition}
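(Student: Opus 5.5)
The whole proposition rests on one pointwise computation on $\Omega$. Combining Lemma~\ref{lem:env}\ref{env:a}--\ref{env:b} with Corollary~\ref{cor:norm}, for $z\in\cZ_{++}$ we have $\ceil{zB}=zu$ and $\flr{zB}=zl$, hence
\[
\kappa(zB)=\frac{\max_\Omega zu}{\min_\Omega zl}.
\]
I will prove \ref{ch:resc} first and then obtain \ref{ch:thomp} and \ref{ch:mich} as reformulations of it.

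For \ref{ch:resc}, take any $\omega\in\Omega$. Since $\max_\Omega zu\ge z(\omega)u(\omega)$ and $\min_\Omega zl\le z(\omega)l(\omega)$,
\[
\kappa(zB)\ge\frac{u(\omega)}{l(\omega)}.
\]
Taking the maximum over $\omega$ gives $\kappa(zB)\ge\kappa_\cZ(B)$. For $z=z_0$ the functions $z_0u=(u/l)^{1/2}$ and $z_0l=(l/u)^{1/2}$ satisfy
\[
\frac{\max_\Omega (u/l)^{1/2}}{\min_\Omega (l/u)^{1/2}}=\max_\Omega\frac ul,
\]
so the minimum is attained at $z_0$.

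For \ref{ch:thomp}, note that for $w\in\cZ_{++}$ the element $w$ commutes with $B$. Hence $B\le tw$ iff $w^{-1}B\le t$, and $M(B/w)=\max\sigma(w^{-1}B)$, $M(w/B)=1/\min\sigma(w^{-1}B)$. By the same pointwise reasoning,
\[
d_T(B,w)=\max\Bigl\{\max_\Omega\ln\frac uw,\ \max_\Omega\ln\frac wl\Bigr\}\ge\max_\Omega\tfrac12\Bigl(\ln\frac uw+\ln\frac wl\Bigr)=\LNC(B),
\]
because the larger of two numbers dominates their average. Equality holds for $w=(ul)^{1/2}$, where both terms equal $\tfrac12\ln(u/l)$.

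For \ref{ch:mich}, let $z\in\cZ_+$. By Corollary~\ref{cor:norm} applied to $1-zB$, together with $\ceil{1-zB}=1-zl$ and $\flr{1-zB}=1-zu$, we get
\[
\|1-zB\|=\max_\Omega\max\{|1-zl|,|1-zu|\}.
\]
At each point, $\max\{|1-zl|,|1-zu|\}\ge\tfrac{u-l}{u+l}$ (the elementary one-variable minimax over a scalar $z(\omega)$), with equality at $z=2/(u+l)$. The maximum over $\Omega$ of $(u-l)/(u+l)$ is $\tanh\LNC(B)$, since this expression is increasing in $u/l$.

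For \ref{ch:prop}, the inequality $\kappa_\cZ\ge1$ follows from $l\le u$. If $u=l$, then $B=u$ is central, and conversely a central $B$ satisfies $\ceil B=\flr B=B$. The invariances follow from Lemma~\ref{lem:env}:
\begin{itemize}
\item $z$ cancels in $zu/zl$;
\item $\ceil{B^{-1}}=l^{-1}$ and $\flr{B^{-1}}=u^{-1}$, so $B^{-1}$ has the same ratio;
\item unitary invariance holds because $\ceil{UBU^*}=\ceil{B}$;
\item $\ceil{B^s}/\flr{B^s}=(u/l)^s$.
\end{itemize}
Taking $z=1$ in \ref{ch:resc} gives $\kappa_\cZ\le\kappa$. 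Lemma~\ref{lem:env}\ref{env:c} gives equality in factors and $\kappa_\cZ\equiv1$ in abelian algebras. For direct sums, the centre is $\bigoplus\cZ_i$ and the envelopes are computed componentwise, so $\|ul^{-1}\|=\sup_i\|u_il_i^{-1}\|$.

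The main obstacle is making the pointwise-on-$\Omega$ arguments rigorous: one needs that norms, maxima and minima of central elements are sup/inf of Gelfand transforms, and that the envelopes are continuous functions on $\Omega$. Both are supplied by Corollary~\ref{cor:norm}.
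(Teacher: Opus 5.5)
Your proof is correct and follows the paper's argument essentially step for step in \ref{ch:resc}, \ref{ch:thomp} and \ref{ch:prop}. In \ref{ch:thomp} the paper reads off $M(B/w)=\|uw^{-1}\|$ and $M(w/B)=\|wl^{-1}\|$ directly from the definition of the envelopes ($B\le tw\iff u\le tw$) and uses $\|uw^{-1}\|\,\|wl^{-1}\|\ge\|ul^{-1}\|$. This is the same computation as your pointwise average argument via Corollary~\ref{cor:norm}.

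The one genuine difference is in \ref{ch:mich}. The paper proves $\|1-zB\|=\max_\Omega\max\{|1-zu|,|1-zl|\}$ by repeating the local-attainment argument of Lemma~\ref{lem:local} for $zB$. You instead get it in one line from Corollary~\ref{cor:norm} applied to $1-zB$, using $\ceil{1-zB}=1-zl$ and $\flr{1-zB}=1-zu$. That is shorter, but these identities rest on $\ceil{zB}=zu$ and $\flr{zB}=zl$. Lemma~\ref{lem:env} states these only for $z\in\cZ_{++}$, while \ref{ch:mich} ranges over all $z\in\cZ_+$, which need not be invertible.

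The gap closes in one line. If $w\in\cZ_{sa}$ and $w\le zB$, then $w+\varepsilon l\le(z+\varepsilon)B$, so $w+\varepsilon l\le\flr{(z+\varepsilon)B}=(z+\varepsilon)l$, i.e.\ $w\le zl$; the ceiling is handled in the same way. Alternatively, prove the lower bound for $z+\varepsilon\in\cZ_{++}$ and let $\varepsilon\to0$, using the norm continuity of $z\mapsto\|1-zB\|$; the minimiser $2(u+l)^{-1}$ is invertible anyway.
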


\begin{proof}
\ref{ch:resc} By Lemma~\ref{lem:env} and Corollary~\ref{cor:norm}, $\|zB\|=\max zu$ and
$\|(zB)^{-1}\|=\max(zl)^{-1}$, and $\max(zu)\cdot\max(zl)^{-1}\ge\max(u/l)$, with equality for
$z=z_0$, where $z_0u=(z_0l)^{-1}=(u/l)^{1/2}$.
\ref{ch:thomp} For $w\in\cZ_{++}$, $B\le tw$ iff $u\le tw$, and $w\le sB$ iff $w\le sl$, so
$M(B/w)=\|uw^{-1}\|$ and $M(w/B)=\|wl^{-1}\|$; the larger of the two is at least
$\|ul^{-1}\|^{1/2}$, with equality for $w=(ul)^{1/2}$.
\ref{ch:mich} For $z\in\cZ_+$, $zl\le zB\le zu$ gives
$\|1-zB\|\le\max_\Omega\max\{|1-zu|,|1-zl|\}$. Conversely, if $|1-zu|$ is maximal at $\omega_0$
and $P,E$ are as in Lemma~\ref{lem:local} with $z$ also oscillating by less than $\varepsilon$ on
$P$, then for a unit vector $\xi\in E\cH$ we have $\langle B\xi,\xi\rangle\in[\alpha-2\varepsilon,\alpha+\varepsilon]$
and $\langle zB\xi,\xi\rangle=z(\omega_0)\langle B\xi,\xi\rangle+O(\varepsilon)$, so
$\|1-zB\|\ge|1-z(\omega_0)u(\omega_0)|-O(\varepsilon)$; similarly for $l$. Hence
$\|1-zB\|=\max_\Omega\max\{|1-zu|,|1-zl|\}$, and pointwise
$\max\{|1-tu|,|1-tl|\}\ge(u-l)/(u+l)$ with equality at $t=2/(u+l)$.
\ref{ch:prop} If $u=l$, then $B=u\in\cZ$. The invariances follow from Lemma~\ref{lem:env}
(for $B^{-1}$: $\ceil{B^{-1}}=l^{-1}$, $\flr{B^{-1}}=u^{-1}$). The rest follows from
$\max(u/l)\le\max u/\min l$, Lemma~\ref{lem:env}\ref{env:c}, and the fact that envelopes are
computed summandwise.
\end{proof}

\begin{remark}[Direct sums]\label{rem:blocks}
Let $\cM=\bigoplus_{c=1}^CM_{k_c}$ and $\rho=\bigoplus_c\rho_c\in\cM_{++}$. By
Proposition~\ref{prop:char}\ref{ch:prop}, $\kappa_\cZ(\rho)=\max_c\kappa(\rho_c)$, hence
\[
\delta_\cZ=\NC(\rho)=\max_c\OMC(\rho_c),\qquad G_\cZ(\rho)=\max_cG(\rho_c):
\]
on a finite-dimensional algebra the noncentrality is the largest contrast \emph{inside} the blocks,
whereas $\OMC(\rho)$ also sees the differences between the blocks. Corollary~\ref{cor:sectors} uses
this.
\end{remark}

\begin{remark}\label{rem:halpern}
By \cite[Prop.~3.11, 3.12]{Hal72}, $\ceil{B}(\zeta)$ and $\flr{B}(\zeta)$ are the maximum and the
minimum of the spectrum of the image of $B$ in the quotient of $\cM$ by the Glimm ideal of
$\zeta\in\Omega$. Hence $\LNC(B)$ is the supremum of the contrasts of these images; for
$\cM=L^\infty(X)\bar\otimes M_n(\C)$ it is $\operatorname{ess\,sup}_x\OLC(B(x))$.
\end{remark}

\begin{remark}\label{rem:ann}
In \cite[\S4]{abed2024} the quantity
$\Delta_\cM(x)=\inf\{\|z-x/\lambda\|:\lambda>0,\ z\in\cZ,\ \|z\|=1\}$ was proposed as a central
version of the contrast. With $u=\ceil x$, $l=\flr x$, the choice $\lambda=\frac12\|u+l\|$,
$z=(u+l)/\|u+l\|$ gives $\Delta_\cM(x)\le\|u-l\|/\|u+l\|\le\NC(x)$. The inequality can be strict,
and $\Delta_\cM$ is not invariant under central rescaling: in $M_2(\C)\oplus M_2(\C)$,
$x=\diag(1,4)\oplus100\cdot1$ has $\NC(x)=3/5$ and $\Delta_\cM(x)\le0.015$ (take $\lambda=100$,
$z=(0.025,1)$), while $\Delta_\cM(\diag(1,4)\oplus1)=3/5$.
\end{remark}

\begin{remark}\label{rem:ad}
For $A$ invertible, the norm of the inner automorphism $\Ad_A(X)=AXA^{-1}$ of $\cM$ equals
$\kappa_\cZ(|A|)$. This is known: it follows from the description of norms of elementary operators
$X\mapsto aXb$ on von Neumann algebras through Glimm quotients \cite{CS93,Som98,Tim07},
combined with Remark~\ref{rem:halpern}. It also follows in two lines from our tools. Writing
$A=U|A|$ with $U$ unitary, $\Ad_A=\Ad_U\circ\Ad_{|A|}$, so we may assume $A=B\in\cM_{++}\setminus\cZ$.
Since $\Ad_{zB}=\Ad_B$ for $z\in\cZ_{++}$, rescaling by $z_0$ (Proposition~\ref{prop:char}\ref{ch:resc})
gives $\|\Ad_B\|\le\|z_0B\|\,\|(z_0B)^{-1}\|=\kappa_\cZ(B)$; conversely, for the corner partial
isometry $v$ and a unit vector $\xi\in v^*v\cH$ as in Lemma~\ref{lem:corner}\ref{c:a},\ref{c:b}
with $a=B$, the unit vector $h=B\xi/\|B\xi\|$ gives
$\|\Ad_B(v)\|\ge\|Bv\xi\|/\|B\xi\|\ge(\alpha-2\varepsilon)/(\beta+2\varepsilon)\to\kappa_\cZ(B)$.
We do not use this. It is the multiplicative counterpart of
the formula $\|D_T\|=2\operatorname{dist}(T,\cZ)$ for the norm of an inner derivation
$D_T=[T,\cdot\,]$, which goes back to Stampfli \cite{Sta70} for $B(\cH)$, to Kadison, Lance
and Ringrose \cite{KLR67} for self-adjoint $T$, to Gajendragadkar \cite{Gaj72}, and to Zsid\'o
\cite{Zsi73} without separability assumptions. For self-adjoint $B$ the additive quantity is
$\|\ceil B-\flr B\|$ and the multiplicative one is $\|\ceil B\flr B^{-1}\|$; only the latter is
invariant under central rescaling.
\end{remark}

\begin{remark}\label{rem:upgrade}
The rescaling argument behind Proposition~\ref{prop:char}\ref{ch:resc} upgrades every sufficient
condition in terms of contrasts for a property invariant under central rescaling to the same
condition in terms of noncentralities. For example, the classical criterion
$\OMC(X)^2+\OMC(Y)^2<1\Rightarrow XY+YX>0$ (see Gustafson \cite{gustafson1968} and Theorem~\ref{thm:jordan})
holds with $\NC$ in place of $\OMC$, since $XY+YX>0$ is invariant under $X\mapsto zX$, $z\in\cZ_{++}$.
We do not pursue such applications here.
\end{remark}

\subsection{The transfer principle}\label{sec:transfer}

\subsubsection*{Violation constants}
We fix the class of quantities to which the transfer principle applies. An \emph{admissible
expression} in noncommuting variables $X_1,\dots,X_r$ is built from the variables by sums, scalar
multiples, products, adjoints, and Borel functions $g:\R\to\C$ with $g(0)=0$ applied to
self-adjoint expressions. Examples are $|X_1|^p$, $X_1^k$, $|X_1X_2|$, $s(X_1)$ for $X_1\ge0$, and
$X_1\vee X_2=s(X_1+X_2)$ for projections.

\begin{definition}\label{def:viol}
A \emph{violation constant} is given by a test class and two functionals. The test class assigns
to every finite von Neumann algebra $\cM$ the set $\cT(\cM)\subset\cM^r$ of tuples satisfying
finitely many conditions of the form $X_i\in\cM_{sa}$, $X_i\in\cM_+$, $X_i$ is a projection,
$X_i\le X_j$. The functionals are $N_\varphi(X)=\Psi_N(\varphi(F_1(X)),\dots,\varphi(F_s(X)))$ and
$D_\varphi(X)=\Psi_D(\varphi(F_1(X)),\dots,\varphi(F_s(X)))$, where the $F_j$ are admissible
expressions and $\Psi_N:\C^s\to\R$, $\Psi_D:\C^s\to[0,\infty)$ are continuous and positively
homogeneous of the same degree; for instance, for the constant $\gamma$ of Theorem~\ref{thm:ba}
one takes $\Psi_N(z)=-\re z_1$ and $\Psi_D(z)=(|z_2||z_3|)^{1/2}$ with $F_1=X_1X_2$, $F_2=X_1^2$,
$F_3=X_2^2$. For a faithful normal positive functional $\varphi$ on $\cM$,
\[
c_\cM(\varphi)=\sup\{N_\varphi(X)/D_\varphi(X):X\in\cT(\cM),\ D_\varphi(X)>0\},
\]
and $c_\cM(\rho)=c_\cM(\varphi_\rho)$ for $\rho\in\cM_{++}$. By homogeneity
$c_\cM(s\varphi)=c_\cM(\varphi)$ for $s>0$, so the normalisation of $\tau$ is irrelevant; on
$M_n(\C)$ we use $\Tr$.
\end{definition}

The constant $m_{\exp}$ of Section~\ref{ssec:mono} and the exponential constants of
Section~\ref{ssec:further} fall outside this definition, since $\exp(0)\ne0$, and so does the
spectral geometric mean (item~6), whose variables are invertible; they are treated
directly. Condition (R) below and Lemma~\ref{lem:resc} apply verbatim to these suprema, and the
lower bounds of Theorem~\ref{thm:transfer}\ref{t:low} are obtained there by a direct modification
of its proof.

The constant satisfies the \emph{rescaling condition} (R) if for every $z\in\cZ_{++}$ there is a
bijection $\sigma_z$ of $\cT(\cM)$ with $N_{z\rho}(X)=N_\rho(\sigma_zX)$ and
$D_{z\rho}(X)=D_\rho(\sigma_zX)$ for all $\rho$ and $X$.

\begin{lemma}[Central rescaling]\label{lem:resc}
If (R) holds, then $c_\cM(z\rho)=c_\cM(\rho)$ for all $z\in\cZ_{++}$. In particular
$c_\cM(\rho)=c_\cM(z_0\rho)$, where $z_0=(\ceil\rho\flr\rho)^{-1/2}$ and
$\kappa(z_0\rho)=\kappa_\cZ(\rho)$.
\end{lemma}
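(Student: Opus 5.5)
The first claim follows directly from the definition of $c_\cM$ as a supremum. Fix $z\in\cZ_{++}$ and let $\sigma_z$ be the bijection of $\cT(\cM)$ provided by (R).

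For every $X\in\cT(\cM)$ with $D_{z\rho}(X)>0$, condition (R) gives
\[
\frac{N_{z\rho}(X)}{D_{z\rho}(X)}=\frac{N_\rho(\sigma_zX)}{D_\rho(\sigma_zX)},
\]
and $D_\rho(\sigma_zX)=D_{z\rho}(X)>0$. Hence every quotient entering $c_\cM(z\rho)$ is a quotient entering $c_\cM(\rho)$, so $c_\cM(z\rho)\le c_\cM(\rho)$. Since $\sigma_z$ is a bijection, every $Y\in\cT(\cM)$ with $D_\rho(Y)>0$ is of the form $\sigma_zX$, and then $D_{z\rho}(X)=D_\rho(Y)>0$. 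So the two sets of quotients coincide and the suprema are equal.

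One point deserves a check: $z\rho$ must again be an admissible density. It is, because $z\in\cZ_{++}$ commutes with $\rho$, so $z\rho\in\cM_{++}$ and $\varphi_{z\rho}$ is faithful and normal. Nothing else is needed, because (R) is stated for all $\rho$ and all $X$.

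For the particular case, take $z_0=(\ceil\rho\flr\rho)^{-1/2}$. This is central and positive invertible, since $\ceil\rho,\flr\rho\in\cZ_{++}$ by Lemma~\ref{lem:env}\ref{env:b}. The first part then gives $c_\cM(\rho)=c_\cM(z_0\rho)$. The identity $\kappa(z_0\rho)=\kappa_\cZ(\rho)$ is exactly Proposition~\ref{prop:char}\ref{ch:resc}, which states that the minimum of $\kappa(z\rho)$ over $z\in\cZ_{++}$ equals $\kappa_\cZ(\rho)$ and is attained at $z=z_0$; I would simply cite it.

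There is no real obstacle here. The only care needed is the direction of the bijection, so that the supports $\{D>0\}$ correspond. That correspondence holds because (R) matches $D_{z\rho}$ with $D_\rho\circ\sigma_z$ exactly, not merely up to a factor.
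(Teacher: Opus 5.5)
Your proposal is correct and follows the paper's proof, which likewise gets the first claim directly from (R) and the second from Proposition~\ref{prop:char}\ref{ch:resc}. Your additional checks, that $\sigma_z$ maps the set where $D_{z\rho}>0$ onto the set where $D_\rho>0$ and that $z\rho$ and $z_0$ are admissible, are the details the paper leaves implicit.
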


\begin{proof}
The first claim is immediate from (R); the second is Proposition~\ref{prop:char}\ref{ch:resc}.
\end{proof}

\subsubsection*{The corner construction}

\begin{lemma}\label{lem:hom}
Let $\pi:M_n(\C)\to\cM$ be an injective $*$-homomorphism and $e=\pi(1)$. Then
$g(\pi(x))=\pi(g(x))$ for every self-adjoint $x\in M_n(\C)$ and every Borel function $g$ with
$g(0)=0$; consequently
$F(\pi(X_1),\dots,\pi(X_r))=\pi(F(X_1,\dots,X_r))$ for every admissible expression $F$. Moreover
$\pi(x)\ge0$ iff $x\ge0$, $\pi(x)$ is a projection iff $x$ is, and $\varphi\circ\pi$ is a
faithful positive functional on $M_n(\C)$ for every faithful positive $\varphi$ on $\cM$.
\end{lemma}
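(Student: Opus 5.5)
The plan is to treat $\pi$ as a unital $*$-isomorphism of $M_n(\C)$ onto the corner $\pi(M_n(\C))\subset e\cM e$, regarded as an algebra with unit $e$. Every claim will be deduced from the fact that such an isomorphism commutes with the continuous and Borel functional calculus as long as the unit is handled correctly; the condition $g(0)=0$ exists precisely to make that correct.

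\emph{Functional calculus.} Let $x=\sum_k\lambda_kp_k$ be self-adjoint in $M_n(\C)$, with spectral projections $p_k$ summing to $1$. Then $\pi(x)=\sum_k\lambda_k\pi(p_k)$. The $\pi(p_k)$ are mutually orthogonal projections in $\cM$ summing to $e$. Adjoining $1-e$ with eigenvalue $0$ gives the spectral decomposition of $\pi(x)$ in $\cM$, namely $\pi(x)=\sum_k\lambda_k\pi(p_k)+0\cdot(1-e)$; for coinciding eigenvalues the projections are merged. Hence
\[
g(\pi(x))=\sum_k g(\lambda_k)\pi(p_k)+g(0)(1-e).
\]
The last term vanishes because $g(0)=0$, so $g(\pi(x))=\pi(g(x))$. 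The merging is harmless: if some $\lambda_k=0$, the projection for eigenvalue $0$ is $\pi(p_k)+(1-e)$, and it still carries coefficient $g(0)=0$. Injectivity of $\pi$ is not needed here.

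\emph{Admissible expressions.} The identity $F(\pi(X_1),\dots,\pi(X_r))=\pi(F(X_1,\dots,X_r))$ follows by structural induction on $F$. Sums, scalar multiples, products and adjoints commute with the $*$-homomorphism $\pi$. For the step $g(\cdot)$ applied to a self-adjoint subexpression $H$, the induction hypothesis gives $H(\pi(X))=\pi(H(X))$, with $H(X)$ self-adjoint, and the previous paragraph then applies. This step is the main point, and it is exactly where $g(0)=0$ enters.

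\emph{Order, projections and the functional.} If $x\ge0$ then $x=y^*y$, so $\pi(x)=\pi(y)^*\pi(y)\ge0$. Conversely, suppose $\pi(x)\ge0$. Then $\pi(x)$ is self-adjoint, so $\pi(x-x^*)=0$, and injectivity gives $x=x^*$. Injectivity also makes the nonzero spectrum of $\pi(x)$ coincide with the spectrum of $x$ by the spectral decomposition above, so $\sigma(x)\subset[0,\infty)$.

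For projections, $\pi(x)^2=\pi(x)=\pi(x)^*$ means $\pi(x^2-x)=0$ and $\pi(x-x^*)=0$, and injectivity gives $x^2=x=x^*$. The converse is immediate.

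The functional $\varphi\circ\pi$ is positive because $\pi$ preserves positivity. It is faithful because $x\ge0$ and $\varphi(\pi(x))=0$ give $\pi(x)=0$ by faithfulness of $\varphi$, hence $x=0$ by injectivity.
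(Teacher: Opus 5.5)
Your proposal is correct and follows the paper's own proof. It writes $\pi(x)$ through the spectral projections $\pi(p_k)$, which sum to $e$, adjoins $1-e$ with eigenvalue $0$ so that the condition $g(0)=0$ removes that term, and derives the remaining claims from injectivity. You only spell out what the paper leaves implicit, namely the structural induction over admissible expressions and the short arguments for order, projections and faithfulness.
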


\begin{proof}
Write $x=\sum_\lambda\lambda q_\lambda$ with spectral projections $q_\lambda$. The
$\pi(q_\lambda)$ are orthogonal projections with sum $e$, so $\pi(x)=\sum\lambda\pi(q_\lambda)+0\cdot(1-e)$
and $g(\pi(x))=\sum g(\lambda)\pi(q_\lambda)=\pi(g(x))$. The remaining claims follow from this and
the injectivity of $\pi$.
\end{proof}

\begin{lemma}[Corner construction]\label{lem:corner}
Let $a\in\cM_{++}\setminus\cZ$, let $\omega_0\in\Omega$ be a point where $\ceil a/\flr a$ attains
its maximum $\kappa_\cZ(a)$, $\alpha=\ceil a(\omega_0)$, $\beta=\flr a(\omega_0)$, so that
$\alpha/\beta=\kappa_\cZ(a)$, and let $0<\varepsilon<(\alpha-\beta)/4$,
$I_1=[\alpha-2\varepsilon,\alpha+\varepsilon]$, $I_2=[\beta-\varepsilon,\beta+2\varepsilon]$.
\begin{enumerate}
\item\label{c:a} There is a nonzero partial isometry $v\in\cM$ with $f_1:=vv^*\le\one_{I_1}(a)$
and $f_2:=v^*v\le\one_{I_2}(a)$. Hence $f_1f_2=0$, $f_1g(a)f_2=0$ for every bounded Borel $g$, and
$(\inf_{I_i}g)f_i\le f_ig(a)f_i\le(\sup_{I_i}g)f_i$ for real $g$.
\item\label{c:b} $e_{11}=f_1$, $e_{12}=v$, $e_{21}=v^*$, $e_{22}=f_2$ are matrix units, and
$\pi(w)=\sum_{i,j}w_{ij}e_{ij}$ is an injective $*$-homomorphism $M_2(\C)\to\cM$. For a unit
vector $\xi\in f_2\cH$ the vectors $\eta_1=v\xi$, $\eta_2=\xi$ are orthonormal,
$\pi(w)\eta_j=\sum_iw_{ij}\eta_i$, $\langle g(a)\eta_1,\eta_2\rangle=0$ and
$\langle g(a)\eta_i,\eta_i\rangle\in[\inf_{I_i}g,\sup_{I_i}g]$.
\item\label{c:c} If $\cM$ is finite, then $\tau(f_1)=\tau(f_2)=:t>0$ and
$\tau(g(a)\pi(w))=t\,(r_1w_{11}+r_2w_{22})$ with $r_i=\tau(g(a)f_i)/t\in[\inf_{I_i}g,\sup_{I_i}g]$.
\item\label{c:d} If $\cM$ is a II$_1$ factor, $\lambda_1,\dots,\lambda_n\in\sigma(a)$ and
$\varepsilon>0$, then there are matrix units $\{e_{ij}\}_{i,j=1}^n\subset\cM$ with
$e_{ii}\le\one_{[\lambda_i-\varepsilon,\lambda_i+\varepsilon]}(a)$, $\tau(e_{ii})=t>0$ and
$|\tau(ae_{ij})-\delta_{ij}\lambda_it|\le\varepsilon t$.
\end{enumerate}
\end{lemma}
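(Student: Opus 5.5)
The plan is to prove the four parts in order. Parts (a)--(c) rest on Lemma~\ref{lem:local} together with the comparison fact from \cite[Ch.~6]{KR2}. Part (d) uses the standard comparison theory of II$_1$ factors.

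For (a), first I pick a central projection $P$ whose clopen set contains $\omega_0$ and on which $\ceil a$ and $\flr a$ oscillate by less than $\varepsilon$. Lemma~\ref{lem:local} then gives $E=\one_{[\alpha-2\varepsilon,\infty)}(a)P$ and $F=\one_{(-\infty,\beta+2\varepsilon]}(a)P$ with $c(E)=c(F)=P\neq0$.

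I want to replace $E$ by $\one_{I_1}(a)P$, i.e.\ cut off the spectrum above $\alpha+\varepsilon$. On $P$ we have $a\le\ceil a\le\alpha+\varepsilon$, because $\ceil a<\alpha+\varepsilon$ there. Hence $aP\le(\alpha+\varepsilon)P$, so $E=\one_{I_1}(a)P$. In the same way, $a\ge\flr a>\beta-\varepsilon$ on $P$ gives $F=\one_{I_2}(a)P$.

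The comparison fact then yields a nonzero partial isometry $v$ with $vv^*\le E$ and $v^*v\le F$. The projections $\one_{I_1}(a)$ and $\one_{I_2}(a)$ are orthogonal because $\varepsilon<(\alpha-\beta)/4$ makes $I_1\cap I_2=\emptyset$; this gives $f_1f_2=0$. Since $f_i\le\one_{I_i}(a)$ commutes with $g(a)$, we get $f_1g(a)f_2=f_1\one_{I_1}(a)g(a)\one_{I_2}(a)f_2=0$. The sandwich bounds follow from $(\inf_{I_i}g)\one_{I_i}\le g(a)\one_{I_i}\le(\sup_{I_i}g)\one_{I_i}$, compressed by $f_i$.

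For (b), the relations $v^*v=f_2$, $vv^*=f_1$ and $f_1f_2=0$ are exactly the matrix-unit relations. So $\pi$ is a $*$-homomorphism, and it is injective because $M_2(\C)$ is simple and $\pi\neq0$. For a unit vector $\xi\in f_2\cH$, $v$ is isometric on $f_2\cH$, so $\eta_1=v\xi$ is a unit vector lying in $f_1\cH\perp f_2\cH$. The action formula is a direct check: for instance $e_{12}\eta_2=v\xi=\eta_1$. The remaining assertions are (a) evaluated at vectors: $\langle g(a)\eta_1,\eta_2\rangle=\langle f_2g(a)f_1\eta_1,\eta_2\rangle=0$, and $\langle g(a)\eta_i,\eta_i\rangle=\langle f_ig(a)f_i\eta_i,\eta_i\rangle$ lies in the stated interval.

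For (c), traciality gives $\tau(f_1)=\tau(vv^*)=\tau(v^*v)=\tau(f_2)$, and faithfulness of $\tau$ gives $t>0$. Expanding $\tau(g(a)\pi(w))=\sum w_{ij}\tau(g(a)e_{ij})$, I treat the off-diagonal terms first. Using $e_{12}=f_1vf_2$ and traciality, $\tau(g(a)f_1vf_2)=\tau(f_2g(a)f_1v)=0$ by (a), and likewise for $e_{21}$. For the diagonal terms, $\tau(g(a)f_i)=\tau(f_ig(a)f_i)\in t[\inf_{I_i}g,\sup_{I_i}g]$ by the sandwich inequality, which gives the claimed form with $r_i$.

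For (d), let $J_i=[\lambda_i-\varepsilon',\lambda_i+\varepsilon']$ with $\varepsilon'\le\varepsilon$ small. If the $\lambda_i$ are distinct, I also take $\varepsilon'$ small enough that the $J_i$ are disjoint; this is where some care is needed. Because $\lambda_i\in\sigma(a)$, each $\one_{J_i}(a)$ is nonzero, so $s=\min_i\tau(\one_{J_i}(a))>0$. In a II$_1$ factor every projection contains subprojections of every trace value up to its own, and projections of equal trace are equivalent. So I choose $e_{ii}\le\one_{J_i}(a)$ with $\tau(e_{ii})=t:=s$ and partial isometries $e_{i1}$ from $e_{11}$ to $e_{ii}$. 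Setting $e_{ij}=e_{i1}e_{j1}^*$ gives a system of matrix units. The diagonal estimate $|\tau(ae_{ii})-\lambda_it|\le\varepsilon't$ comes from $e_{ii}ae_{ii}\in e_{ii}[\lambda_i-\varepsilon',\lambda_i+\varepsilon']$. Off the diagonal, $\tau(ae_{ij})=\tau(e_{jj}ae_{ii}e_{ij})=0$, because disjoint spectral projections $\one_{J_i}(a)$ and $\one_{J_j}(a)$ make $e_{jj}ae_{ii}=0$.

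\textbf{Main obstacle.} The delicate point is repeated $\lambda_i$. Then the $J_i$ coincide and I need mutually orthogonal $e_{ii}$ inside the same spectral projection. I would handle this by grouping equal values: inside each $\one_{J}(a)$, choose as many orthogonal subprojections of trace $t$ as there are repetitions, shrinking $t$ accordingly. The off-diagonal estimate then changes. For $i\ne j$ with $\lambda_i=\lambda_j$, the vanishing argument fails. Instead, $|\tau(ae_{ij})|=|\tau((a-\lambda_i)\one_J(a)e_{ij})|\le\varepsilon' t$: the term $\lambda_i\tau(e_{ij})$ is zero, and $\|(a-\lambda_i)\one_J(a)\|\le\varepsilon'$. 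Both cases therefore stay within $\varepsilon t$.
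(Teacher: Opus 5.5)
Your proposal is correct and follows the paper's proof essentially step by step. You use Lemma~\ref{lem:local} plus the comparison fact for (a), direct matrix-unit computations with traciality for (b) and (c), and for (d) you group equal $\lambda_i$, shrink $t$ to $\min_\mu\tau(E_\mu)/n_\mu$, and bound $|\tau((a-\lambda_i)E_{\lambda_i}e_{ij})|\le\varepsilon t$. The only differences are cosmetic: you note $E=\one_{I_1}(a)P$ exactly where the paper only needs $E\le\one_{I_1}(a)$, and you treat distinct values first, where the off-diagonal terms vanish exactly, before giving the uniform estimate.
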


\begin{proof}
\ref{c:a} Choose $P$ as in Lemma~\ref{lem:local} for $B=a$. Since $aP\le\ceil aP\le(\alpha+\varepsilon)P$
and $aP\ge\flr aP\ge(\beta-\varepsilon)P$, the projections $E,F$ of that lemma satisfy
$E\le\one_{I_1}(a)$ and $F\le\one_{I_2}(a)$; they are orthogonal because $I_1\cap I_2=\emptyset$,
and $c(E)c(F)=P\ne0$. The comparison fact gives $v$. Since $E,F$ commute with $a$,
$f_1g(a)f_2=f_1g(a)EFf_2=0$, and $f_ig(a)f_i=f_i(g(a)\one_{I_i}(a))f_i$ gives the bounds.
\ref{c:b} is a direct computation: $v^*v\xi=\xi$, $e_{ij}=e_{ij}f_j$, and $\langle
g(a)\eta_1,\eta_2\rangle=\langle f_2g(a)f_1\eta_1,\eta_2\rangle=0$.
\ref{c:c} $\tau(vv^*)=\tau(v^*v)$; $\tau(g(a)e_{12})=\tau(g(a)f_1vf_2)=\tau(f_2g(a)f_1v)=0$, and
likewise for $e_{21}$.
\ref{c:d} Shrinking $\varepsilon$ we may assume that the windows of distinct values among the
$\lambda_i$ are disjoint. For each distinct value $\mu$ of multiplicity $n_\mu$ the spectral
projection $E_\mu=\one_{[\mu-\varepsilon,\mu+\varepsilon]}(a)$ is nonzero. Let
$t=\min_\mu\tau(E_\mu)/n_\mu$. In a II$_1$ factor we can choose orthogonal subprojections of the
$E_\mu$ of trace $t$, $n_\mu$ of them under $E_\mu$; projections of equal trace are equivalent,
which gives matrix units with $e_{ii}\le E_{\lambda_i}$. Then
$\tau(ae_{ij})=\tau(aE_{\lambda_i}e_{ij})=\lambda_i\tau(e_{ij})+\tau((a-\lambda_i)E_{\lambda_i}e_{ij})$,
$\tau(e_{ij})=\delta_{ij}t$ and $|\tau(Ye_{ij})|\le\|Y\|\tau(|e_{ij}|)=\|Y\|t$.
\end{proof}

\subsubsection*{The transfer theorem}

\begin{theorem}\label{thm:transfer}
Let $c$ be a violation constant.
\begin{enumerate}
\item\label{t:low} For every finite $\cM$ and $\rho\in\cM_{++}\setminus\cZ$,
$c_\cM(\rho)\ge c_{M_2}(\diag(\kappa_\cZ(\rho),1))$. If $\cM$ is a II$_1$ factor, then
$c_\cM(\rho)\ge c_{M_n}(\diag(\lambda_1,\dots,\lambda_n))$ for all $\lambda_1,\dots,\lambda_n\in\sigma(\rho)$.
\item\label{t:up} Suppose (R) holds and $f:[1,\infty)\to[0,\infty]$ satisfies
$c_\cM(\rho)\le f(\kappa(\rho))$ for all finite $\cM$ and all $\rho\in\cM_{++}$. Then
$c_\cM(\rho)\le f(\kappa_\cZ(\rho))$.
\item\label{t:eq} If, in addition, $c_{M_2}(\diag(\kappa,1))\ge f(\kappa)$ for every $\kappa>1$, and
$c_\cM(\rho)\ge f(1)$ for central $\rho$, then $c_\cM(\rho)=f(\kappa_\cZ(\rho))$ for every
finite $\cM$ and every $\rho\in\cM_{++}$. In particular $c=f(\kappa)$ in every finite factor.
\end{enumerate}
\end{theorem}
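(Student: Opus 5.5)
For \ref{t:low} we use the corner construction and pass to a limit. Let $a=\rho$ and choose $\omega_0$, $\alpha,\beta$, $\varepsilon$ and the partial isometry $v$ as in Lemma~\ref{lem:corner}. This gives the injective $*$-homomorphism $\pi:M_2(\C)\to\cM$ with $e=\pi(1)=f_1+f_2$. Given a tuple $X\in\cT(M_2)$ with $D>0$ for the density $\diag(\alpha,\beta)$, we look at $\pi(X)=(\pi(X_1),\dots,\pi(X_r))$.

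By Lemma~\ref{lem:hom}, $\pi(X)$ satisfies the same test-class conditions, and $F_j(\pi(X))=\pi(F_j(X))$ for every admissible expression $F_j$. There is one point to check here: conditions such as $X_i\le X_j$ and positivity transfer through $\pi$, and no condition requires $X_i$ to be invertible. The latter is why item~6 is excluded.

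By Lemma~\ref{lem:corner}\ref{c:c} with $g(t)=t$,
\[
\varphi_\rho(\pi(w))=t\,(r_1w_{11}+r_2w_{22}),\qquad r_1\in I_1,\ r_2\in I_2 .
\]
So $\varphi_\rho\circ\pi=t\,\Tr(\diag(r_1,r_2)\,\cdot\,)$. Since $\Psi_N$ and $\Psi_D$ are homogeneous of the same degree, the factor $t$ cancels. Therefore $c_\cM(\rho)\ge N/D$ evaluated at $X$ with density $\diag(r_1,r_2)$.

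Now let $\varepsilon\to0$. Then $r_1\to\alpha$ and $r_2\to\beta$, and by continuity of $\Psi_N,\Psi_D$ and of $w\mapsto\Tr(\diag(r_1,r_2)w)$, the quotient tends to its value at $\diag(\alpha,\beta)$. This works because $D>0$ there, so $D$ stays positive for small $\varepsilon$. Taking the supremum over $X$ and using $c_{M_2}(s\varphi)=c_{M_2}(\varphi)$ gives $c_\cM(\rho)\ge c_{M_2}(\diag(\kappa_\cZ(\rho),1))$.

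The II$_1$ factor case is the same argument with Lemma~\ref{lem:corner}\ref{c:d}. There $\varphi_\rho\circ\pi$ equals $t\,\Tr(\diag(\lambda_i)\,\cdot\,)$ up to an error of size $\varepsilon t$ in each matrix unit, and this error again vanishes in the limit.

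For \ref{t:up}, take $z_0=(\ceil\rho\flr\rho)^{-1/2}$. Lemma~\ref{lem:resc} gives $c_\cM(\rho)=c_\cM(z_0\rho)$, and the hypothesis applied to $z_0\rho\in\cM_{++}$ gives $c_\cM(z_0\rho)\le f(\kappa(z_0\rho))$. By Proposition~\ref{prop:char}\ref{ch:resc}, $\kappa(z_0\rho)=\kappa_\cZ(\rho)$.

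For \ref{t:eq}, first take $\rho\notin\cZ$. Combining \ref{t:low} with the hypothesis $c_{M_2}(\diag(\kappa,1))\ge f(\kappa)$ at $\kappa=\kappa_\cZ(\rho)>1$ gives the lower bound $c_\cM(\rho)\ge f(\kappa_\cZ(\rho))$, and \ref{t:up} gives the matching upper bound. Next take $\rho$ central. Then $\kappa_\cZ(\rho)=1$, the upper bound from \ref{t:up} is $f(1)$, and the assumed lower bound $f(1)$ gives equality. In a factor $\kappa_\cZ=\kappa$ by Proposition~\ref{prop:char}\ref{ch:prop}.

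The main obstacle is the limiting step in \ref{t:low}. The approximate densities $\diag(r_1,r_2)$ must approach $\diag(\alpha,\beta)$ while the test tuple stays fixed. This needs the per-tuple continuity just described, and it needs $D>0$ to be preserved, which holds because $D$ is continuous in $(r_1,r_2)$. One must also confirm that the admissible expressions, including Borel $g$ with $g(0)=0$, are evaluated inside the corner with no contribution from $1-e$. That is exactly what Lemma~\ref{lem:hom} guarantees.
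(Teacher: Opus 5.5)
Your proposal is correct and follows the paper's own proof. Part \ref{t:low} uses the corner homomorphism of Lemma~\ref{lem:corner}, Lemma~\ref{lem:hom} to carry tuples and admissible expressions into $\cM$, and Lemma~\ref{lem:corner}\ref{c:c} (resp.\ \ref{c:d}) to write $\varphi\circ\pi=t\Tr(R_\varepsilon\,\cdot)$, followed by continuity in the functional as $\varepsilon\to0$; part \ref{t:up} uses Lemma~\ref{lem:resc} with Proposition~\ref{prop:char}\ref{ch:resc}, and your case split for \ref{t:eq} simply spells out the paper's ``combines \ref{t:low} and \ref{t:up}''.
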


\begin{proof}
\ref{t:low} Let $\pi$ be the homomorphism of Lemma~\ref{lem:corner}\ref{c:b} for $a=\rho$. By
Lemma~\ref{lem:corner}\ref{c:c} with $g(t)=t$, $\varphi\circ\pi=t\Tr(R_\varepsilon\,\cdot)$ with
$R_\varepsilon=\diag(r_1,r_2)\to\Lambda=\diag(\alpha,\beta)$ as $\varepsilon\to0$. By
Lemma~\ref{lem:hom}, $\pi(\cT(M_2))\subset\cT(\cM)$, $N_\varphi(\pi X)=N_{\varphi\circ\pi}(X)$ and
$D_\varphi(\pi X)=D_{\varphi\circ\pi}(X)$. Hence, for every $X\in\cT(M_2)$ with
$D_{\Lambda}(X)>0$ and all small $\varepsilon$,
$c_\cM(\rho)\ge N_{R_\varepsilon}(X)/D_{R_\varepsilon}(X)$, and the right side tends to
$N_\Lambda(X)/D_\Lambda(X)$ because $N$ and $D$ depend continuously on the functional. Thus
$c_\cM(\rho)\ge c_{M_2}(\Lambda)=c_{M_2}(\diag(\kappa_\cZ(\rho),1))$ by homogeneity. In a II$_1$
factor use Lemma~\ref{lem:corner}\ref{c:d}: $\varphi\circ\pi=t\Tr(R_\varepsilon\,\cdot)$ with
$(R_\varepsilon)_{ji}=\tau(\rho e_{ij})/t\to\delta_{ij}\lambda_i$.
\ref{t:up} By Lemma~\ref{lem:resc}, $c_\cM(\rho)=c_\cM(z_0\rho)\le f(\kappa(z_0\rho))=f(\kappa_\cZ(\rho))$.
\ref{t:eq} combines \ref{t:low} and \ref{t:up}.
\end{proof}

\begin{remark}\label{rem:attain}
If $\cM=M_n(\C)$ and $e_{\max},e_{\min}$ are unit eigenvectors of $\rho$ for $\max\sigma(\rho)$ and
$\min\sigma(\rho)$, the homomorphism $\pi$ defined by these eigenvectors satisfies
$\varphi\circ\pi=\Tr(\diag(\max\sigma(\rho),\min\sigma(\rho))\,\cdot)$ exactly. Hence every
supremum in Section~\ref{sec:catalogue} that is attained in $M_2(\C)$ is attained in $M_n(\C)$.
\end{remark}

In the applications all conditions are checked as follows: the constant is a violation constant in
the sense of Definition~\ref{def:viol} by inspection; (R) is verified by exhibiting $\sigma_z$;
the upper bound $f(\kappa)$ is proved in an arbitrary finite von Neumann algebra; and an example
in $M_2(\C)$ gives the lower bound. For central $\rho$ the value $1=f(1)$ is attained at a
trivial tuple (e.g.\ $X=1$).

\section{The catalogue of violation constants}\label{sec:catalogue}

This section determines the violation constants item by item. Where a constant is known in finite
von Neumann algebras we prove it there, with one general upper bound and one $2\times2$ example
transferred by Theorem~\ref{thm:transfer}; the matrix envelopes follow by
Section~\ref{sec:envelopes}. With the operator variables free, the matrix envelopes are as follows,
where $a\ge b>0$ are the extreme eigenvalues of $\rho$ and
\[
  \kappa=\frac ab=\frac{1+\delta}{1-\delta},\qquad G=\frac{a+b}{2\sqrt{ab}}=\frac1{\sqrt{1-\delta^2}}=\sec\psi,\qquad
  \psi=\arcsin\delta .
\]

\begin{thmB}
Let $n\ge2$ and $\delta=\OMC(\rho)$ for densities $\rho$ in $M_n$.
\begin{enumerate}
\item[(a)] $V_n=G-1$ for Gardner's inequality, the triangle inequalities for $\varphi(|\cdot|)$ on operators and on
projections, $\varphi(A+A^*)\le2\varphi(|A^*|)$, $|\varphi(A^2)|\le\varphi(A^*A)$, $\varphi(|P-Q|)\le\varphi(P+Q)$ and
$\varphi(\Re PQ)\le\varphi(|PQ|)$ (items 1--5, 18, 22), in every dimension
(Theorems~\ref{thm:gardner}, \ref{thm:subadd}, \ref{thm:pz2}, Corollaries~\ref{cor:item3}, \ref{cor:item5},
Proposition~\ref{prop:item22}).
\item[(b)] $V_n=G^2-1$ for $\varphi(PQP)\le\varphi(Q)$; $V_n=\frac12\tan^2(\psi/2)$ for $\varphi(\Re PQ)\ge0$;
$V_n=\delta/(1-\delta)$ for $\varphi(P\vee Q)\le\varphi(P)+\varphi(Q)$; $V_n=\delta$ for
$\varphi(P+Q)\le\varphi(1+P\wedge Q)$ ($n$ even); $V_n$ is the maximum of an explicit function of one
variable, $\frac{(e-1)^2}{e(e+1)}\delta^2+O(\delta^3)$, for $\varphi(e^{QPQ})\le\varphi(e^P)$ ($n$ even); and
$V_n=(G-1)^2/(4G)=\delta^4/16+O(\delta^6)$ for the monotonicity $0\le A\le B\Rightarrow\varphi(A^2)\le\varphi(B^2)$
(Theorems~\ref{thm:proj}, \ref{thm:projP}, \ref{thm:mono2} and Proposition~\ref{prop:item21}).
\item[(c)] $V_n=\kappa-1$ for the extension of Young's inequality to arbitrary operators; $V_2=s/(1+s)$, $s=\sin(\psi/2)$, for
$\varphi(PQ+QP)\le2\varphi(QPQ)$; for the inequalities that characterise the \emph{canonical} trace, $V_2=1-\sqrt{1-\delta^2}$
for the inequality $(\det A)^{1/n}\le\frac1n\varphi(A)$, increasing with $n$ to $1-1/S(\kappa)$ with the Specht ratio $S$,
and $V_n=\sqrt{(\ell+n-1)/(n\ell)}-1$, $\ell=1/\kappa$, for $\varphi(A^2)^{1/2}\le\varphi(A)$
(Corollary~\ref{cor:item8}, Propositions~\ref{prop:abed}, \ref{prop:det} and~\ref{prop:power}).
\item[(d)] $V_n\ge G-1$ for the spectral geometric mean inequality, $V_n\ge G^2-1$ for the
Araki--Lieb--Thirring inequality with $(r,q)=(2,1)$, and $V_n\ge\delta^8/4096\,(1+O(\delta^2))$ for the
weighted monotonicity with $r=1$ (Propositions~\ref{prop:sgm}, \ref{prop:altlow} and~\ref{prop:wmon}); and
$V_n\le\kappa-1$ for items 1--8, 10 and~25: for items 1--5 by (a), since $G\le\kappa$, for item~8 by (c)
(Corollary~\ref{cor:item8}), for item~10 by (b) (Theorem~\ref{thm:mono2}), and for items 6, 7 and~25 by
Propositions~\ref{prop:sgm}, \ref{prop:alt} and~\ref{prop:wmon}.
\end{enumerate}
\end{thmB}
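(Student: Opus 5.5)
Theorem~B is a summary statement, and I would prove it by assembling item-by-item results that all follow one pattern. For each item with a constant of the form $c(\rho)=f(\kappa_\cZ(\rho))$ I prove an upper bound $c_\cM(\rho)\le f(\kappa(\rho))$ in an arbitrary finite von Neumann algebra. I verify the rescaling condition (R) by exhibiting $\sigma_z$; typically $X\mapsto z^{-1/2}X$, or $X\mapsto z^{-1}X$ for the linear slot. I then give a $2\times2$ example with $\rho=\diag(\kappa,1)$ attaining $f(\kappa)$. Theorem~\ref{thm:transfer}\ref{t:eq} then gives exactness in every finite $\cM$. Since $\kappa_\cZ=\kappa$ on $M_n$ and $\cN=\cR$ for these items, Section~\ref{sec:envelopes} gives $V_n(\delta)=f(\frac{1+\delta}{1-\delta})-1$ in every $n\ge2$. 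Remark~\ref{rem:attain} carries attainment from $M_2$ to $M_n$. The items normalised differently (21, 9, 11, 24) are handled directly on $M_n$.

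For part (a), Gardner's upper bound comes from the polar decomposition $X=U|X|$ and the spectral decomposition $|X|=\sum_k\mu_k v_kv_k^*$. This gives $|\varphi(X)|\le\sum_k\mu_k|\langle\rho v_k,U^*\!\cdots\rangle|\le\sum_k\mu_k\|\rho^{1/2}\cdots\|$. Lemma~\ref{lem:krein}\ref{k:b} applied to each vector then yields $\|\rho v\|\le G\langle\rho v,v\rangle$. In a general finite $\cM$ I would instead write $\varphi(X)=\tau(\rho U|X|)$ and use Cauchy--Schwarz with the Greub--Rheinboldt form. A rank-one $|X|=vv^*$ with $v=(e_1+e_2)/\sqrt2$ and a unitary $U$ sending $v$ to $\rho v/\|\rho v\|$ attains $G$. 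Items 2--5, 18 and 22 reduce to Gardner: the upper bounds follow from $|\varphi(X)|\le G\varphi(|X|)$ through the standard trace proofs, and each lower bound comes from an explicit $2\times2$ (two-projection, Lemma~\ref{lem:halmos}) configuration realising the Gardner extremal.

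For part (b), $\varphi(PQP)\le G^2\varphi(Q)$ follows from Lemma~\ref{lem:pTp} applied to $T=\rho$ via $\tau(\rho PQP)=\tau(Q^{1/2}P\rho PQ^{1/2})$. The bound for $P\vee Q$ comes from comparing $P\vee Q$ with $P+Q$ on Halmos blocks. Items 19, 20 and 21 are computed block by block on the Halmos decomposition, which reduces each to a maximisation over the angle $\theta$ and the position of the block relative to $e_1,e_2$. For even $n$ the blocks can all be placed extremally, and for odd $n$ a leftover one-dimensional block dilutes the constant.

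For (c) and (d), Young's constant $\kappa$ follows from Hölder's inequality and $\rho\le\|\rho\|$, $\rho\ge\|\rho^{-1}\|^{-1}$, with a rank-one example. Items 9, 11 and 24 are explicit $M_n$ optimisations: item~24 via the vertex principle (Lemma~\ref{lem:vertex}) and Lagrange multipliers, and item~11 via Specht's ratio. The lower bounds in (d) require explicit families, and the upper bound $\kappa-1$ uses the crude comparison $\|\rho^{-1}\|^{-1}\tau\le\varphi\le\|\rho\|\tau$ together with the inequality holding for $\tau$.

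I expect the main obstacle to be the monotonicity constant $(G+1)^2/(4G)$ in (b). For the upper bound I would write $B=A+D$ with $D\ge0$, so that $\varphi(B^2)-\varphi(A^2)=\varphi(AD+DA)+\varphi(D^2)$. I would then bound the negative part of $\Re\varphi(AD)$ using Lemma~\ref{lem:wielandt}\ref{w:one} in $L^2(\cM,\tau)$, applied to the positive operator $\frac12(L_\rho+R_\rho)$ or a related positive operator, and complete the square in $\|D\|_{2,\rho}$. For the sharp example I would use rank-one perturbations in $M_2$, tuning the two parameters to make the Wielandt step an equality. The $\delta^8$ construction for item~25 would come last, as a perturbative expansion of an explicit family.
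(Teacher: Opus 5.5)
Your overall scheme (upper bound with $\kappa$, a $2\times2$ example, Theorem~\ref{thm:transfer} and Remark~\ref{rem:attain}) is the paper's. Your alternative routes on $M_n$ are also sound: the crude comparison $\|\rho^{-1}\|^{-1}\tau\le\varphi\le\|\rho\|\tau$ for item~8, and Halmos blocks for items 21 and~23. The gaps are in the step you single out as the main obstacle, and in part~(a).

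\emph{Monotonicity.} With the operator you name the step fails. For $\Lambda=\frac12(L_\rho+R_\rho)$ you do have $\langle\Lambda A,D\rangle=\re\varphi(AD)$, $\langle\Lambda A,A\rangle=\varphi(A^2)$ and $\langle A,D\rangle=\tau(DA)\ge0$. But its spectrum is $\{\frac12(\lambda_i+\lambda_j)\}$, so $\OMC(\Lambda)=\delta$. Lemma~\ref{lem:wielandt}\ref{w:one} then gives only $-\re\varphi(AD)\le\delta\sqrt{\varphi(A^2)\varphi(D^2)}$. Completing the square yields $\varphi(A^2)\le(1-\delta^2)^{-1}\varphi(B^2)=G^2\varphi(B^2)$, a bound of order $\delta^2$ rather than $(G-1)^2/(4G)\sim\delta^4/16$. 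No example can close this gap. For positive $A,D$ the true supremum of $-\re\varphi(AD)/\sqrt{\varphi(A^2)\varphi(D^2)}$ is $\tan^2(\psi/2)<\delta$, so this Wielandt step is never sharp on positive pairs. The missing idea is to pass to $S=\rho^{1/4}A\rho^{1/4}$ and $T=\rho^{1/4}D\rho^{1/4}$. These are again positive, so $\tau(ST)\ge0$. Then $\re\varphi(AD)=\langle\Phi S,T\rangle$, $\varphi(A^2)=\langle\Phi S,S\rangle$ and $\varphi(D^2)=\langle\Phi T,T\rangle$, with $\Phi=\frac12(L_{\rho^{1/2}}R_{\rho^{-1/2}}+L_{\rho^{-1/2}}R_{\rho^{1/2}})$. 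The spectrum of $\Phi$ lies in $[1,G]$, so Wielandt gives the sharp $\gamma=\frac{G-1}{G+1}$ (Theorem~\ref{thm:ba}). Completing the square then gives $\varphi(B^2)\ge(1-\gamma^2)\varphi(A^2)$, and $(1-\gamma^2)^{-1}=(G+1)^2/(4G)$.

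\emph{Gardner's example.} Your example is not extremal. For $v=(e_1+e_2)/\sqrt2$ and $\rho=\diag(a,b)$ the quotient is $\|\rho v\|/\langle\rho v,v\rangle=\sqrt{2(a^2+b^2)}/(a+b)=\sqrt{1+\delta^2}<G$. That vector maximises the Kantorovich expression, which is the right choice for item~17 but not for Gardner. Equality in Lemma~\ref{lem:krein}\ref{k:b} needs $\langle\rho v,v\rangle=2ab/(a+b)$, i.e.\ $v=(b^{1/2},a^{1/2})/\sqrt{a+b}\propto\rho^{-1/2}(e_1+e_2)$. This vector, with its mirror image, is also what the lower bounds for items 2, 3, 5, 18 and~22 are built from.

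\emph{Item~4.} It does not reduce to Gardner's inequality. The tracial proofs of $|\tau(A^2)|\le\tau(A^*A)$ go one of two ways, and neither transfers.
\begin{itemize}
\item Cauchy--Schwarz with $\tau(AA^*)=\tau(A^*A)$ transfers only with the constant $\kappa^{1/2}$.
\item The route through $\tau(|A^2|)\le\tau(|A|^2)$ breaks because that inequality is false for $\varphi$. At the extremiser $A=\rho^{1/2}\sigma$ ($\sigma$ the flip) one has $A^2=\sqrt\kappa\,1$. So the Gardner step is an equality, and the whole factor $G$ is $\varphi(|A^2|)/\varphi(A^*A)$.
\end{itemize}
A separate argument is needed. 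For example, put $h=A\rho^{1/2}$, $Th=\rho^{1/2}h\rho^{-1/2}$ and $Jh=h^*$. Then $JTJ=T^{-1}$ and $\varphi(A^2)=\langle T^{1/2}h,JT^{-1/2}h\rangle$. Since $\sigma(T)\subset[\kappa^{-1/2},\kappa^{1/2}]$, this gives $|\varphi(A^2)|\le\frac12\langle(T+T^{-1})h,h\rangle\le G\|h\|_2^2=G\varphi(A^*A)$.

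A smaller point: the odd-$n$ dilution you mention affects only items 19 and~20, whose $\cN$ contains $\varphi(1)$. Item~21 has the same envelope in every dimension.
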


\begin{figure}[t]
\centering\includegraphics[width=0.72\textwidth]{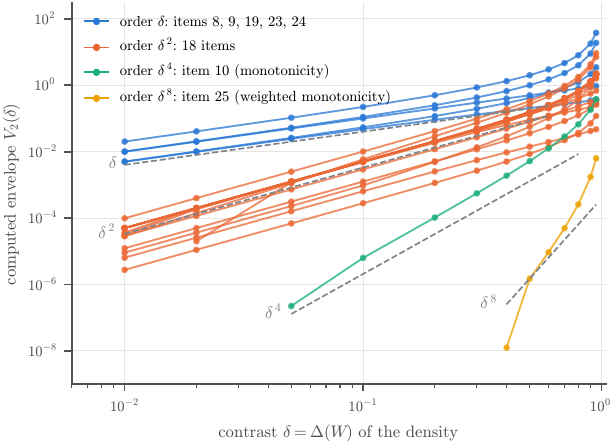}
\caption{The ladder of orders: the computed envelopes $V_2(\delta)$ of items 1--25 (certified lower bounds for the
suprema, Section~\ref{sec:numerics}) on logarithmic axes, coloured by the order of vanishing at the trace, with guide
lines of slopes $1$, $2$, $4$ and~$8$. The horizontal axis is the contrast $\delta=\OMC(\rho)$ of the
density, written $\Delta(W)$ in the figure. Item~25 is shown for $\delta\ge0.4$ (the search resolves it for
$\delta\ge\TCWmonFirst$).}
\label{fig:ladder}
\end{figure}

The proved envelopes thus vanish at the trace to different orders, $\delta$, $\delta^2$ and $\delta^4$;
for item~25 the lower bound is of order $\delta^8$, while only the trivial upper bound
$\kappa-1=2\delta/(1-\delta)$ is known (Proposition~\ref{prop:wmon}), and the computed envelope is consistent with $g_{25}$ for $\delta\ge0.7$ (Figure~\ref{fig:ladder} and
Table~\ref{tab:classes}). Two inequalities that are equivalent for traces may fail at
different rates. For instance, $\varphi(P\vee Q)+\varphi(P\wedge Q)=\varphi(P)+\varphi(Q)$ for every trace, yet the lattice
subadditivity fails as $\delta/(1-\delta)$, while $\varphi(|P-Q|)\le\varphi(P+Q)$, with the same right-hand side, fails only as
$G-1\sim\delta^2/2$. The orders refer to the normalisations $\cN$ of Table~\ref{tab:items}; a normalisation that
vanishes at the extremisers can change them (Remark~\ref{rem:13}).

Throughout this section, except where $M_n$ is named, $\cM$ is a von Neumann algebra with a faithful normal tracial state
$\tau$, $\rho\in\cM_{++}$, $\varphi=\tau(\rho\,\cdot)$, $\kappa=\kappa(\rho)$,
$\kappa_\cZ=\kappa_\cZ(\rho)$, $u=\ceil\rho$, $l=\flr\rho$ and $G_\cZ=\cosh\LNC(\rho)$. All suprema
are over tuples for which the denominator is nonzero. We repeatedly use that, for $Y\in\cM_+$,
\begin{equation}\label{eq:sandwich}
\varphi(Y)\le\tau(uY)\quad\text{and}\quad\tau(lY)\le\varphi(Y),\qquad\text{and}\quad u\le\kappa_\cZ l,
\end{equation}
since $\tau(\rho Y)=\tau(Y^{1/2}\rho Y^{1/2})$, $l\le\rho\le u$, and $u/l\le\kappa_\cZ$ on $\Omega$.

\subsection{Gardner's inequality}

\begin{theorem}[Item 1]\label{thm:gardner}
$\displaystyle\sup_{X\ne0}\frac{|\varphi(X)|}{\varphi(|X|)}=G_\cZ=\frac{1+\kappa_\cZ}{2\sqrt{\kappa_\cZ}}$.
In $M_n(\C)$ the supremum is $G(\rho)=\cosh\OLC(\rho)=\sec\psi(\rho)$ and is attained; for $M_n(\C)$
cf.\ Bourin \cite[Cor.~1.4, Rem.~1.9]{Bou06} and Remark~\ref{rem:bourin}.
\end{theorem}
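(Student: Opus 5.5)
The plan is to apply the transfer principle, Theorem~\ref{thm:transfer}\ref{t:eq}, with $f(\kappa)=\frac{1+\kappa}{2\sqrt\kappa}$. Gardner's constant is a violation constant in the sense of Definition~\ref{def:viol}: take one variable $X_1$ with no constraint, $F_1=X_1$, $F_2=|X_1|$, $\Psi_N(z)=|z_1|$ and $\Psi_D(z)=|z_2|$, all homogeneous of degree one. Condition (R) holds with $\sigma_z(X)=zX$. Indeed, $z$ is central and positive, so $|zX|=z|X|$, $\tau(z\rho X)=\tau(\rho\,zX)$ and $\tau(z\rho|X|)=\tau(\rho|zX|)$. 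For central $\rho$ the value $1$ is attained at $X=1$. Two ingredients remain: an upper bound $c_\cM(\rho)\le G(\rho)$ in an arbitrary finite $\cM$, and a $2\times2$ example reaching $G$.

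For the upper bound, write the polar decomposition $X=U|X|$ with $U$ unitary, as recalled in the setting, and put $T=|X|^{1/2}$. Then
\[
\varphi(X)=\tau(\rho U T^2)=\tau\bigl((T\rho^{1/2})\,(\rho^{1/2}UT)\bigr).
\]
I would view this as a Hilbert--Schmidt inner product on $L^2(\cM,\tau)$ and try to reduce it to Lemma~\ref{lem:krein}\ref{k:b}. The operators $L_\rho$ and $R_\rho$ commute and are positive with spectrum in $[m,M]$.

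A first attempt: set $h=T\in L^2$. Then $\varphi(|X|)=\tau(T\rho T)=\langle L_\rho h,h\rangle$, while
\[
\varphi(X)=\tau(\rho U|X|)=\tau(T\rho U T)=\langle L_\rho h,\,U^*h\rangle
\]
up to conjugation and ordering. Cauchy--Schwarz then gives $|\varphi(X)|\le\|L_\rho h\|\,\|U^*h\|=\|\rho T\|_2\|T\|_2$. Lemma~\ref{lem:krein}\ref{k:b}, applied to the positive operator $L_\rho$ on $L^2$, bounds this by $G\langle L_\rho h,h\rangle=G\varphi(|X|)$. I need to check the ordering of the factors carefully, that is, that $|\varphi(X)|=|\tau(T\rho UT)|$. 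By traciality $\tau(\rho UT^2)=\tau(T\rho UT)=\langle \rho U T,\,T\rangle_{L^2}$ with $\langle Y,Z\rangle=\tau(Z^*Y)$. Also $\|\rho UT\|_2$ need not equal $\|\rho T\|_2$, so I would instead write the quantity as $\langle UT,\rho T\rangle$. This gives $|\cdot|\le\|UT\|_2\|\rho T\|_2=\|T\|_2\|L_\rho T\|_2$, and Lemma~\ref{lem:krein}\ref{k:b} applies. This yields $c_\cM(\rho)\le G(\rho)=f(\kappa(\rho))$, and Theorem~\ref{thm:transfer}\ref{t:up} upgrades it to $f(\kappa_\cZ)=G_\cZ$.

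For the matching example, take $\rho=\diag(a,b)$ in $M_2$ with $a>b$, and $X=Uvv^*$ with $v=(b^{1/2},a^{1/2})/\sqrt{a+b}$. This is the vector attaining Kantorovich for $\rho$; equivalently, $v$ maximises $\|\rho v\|/\langle\rho v,v\rangle$. Choose the unitary $U$ so that $Uv$ is parallel to $\rho v/\|\rho v\|$. Then $|\varphi(X)|=|\langle\rho v,Uv\rangle|=\|\rho v\|$, and $\varphi(|X|)=\langle\rho v,v\rangle$. A one-line computation should give the ratio $(a+b)/(2\sqrt{ab})=G$, so the supremum is attained. Remark~\ref{rem:attain} then transports attainment to $M_n$, and Theorem~\ref{thm:transfer}\ref{t:eq} gives exactness for all finite $\cM$. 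The main obstacle is the bookkeeping in the $L^2$ inequality, namely getting the placement of $\rho$, $U$ and $T$ right so that Cauchy--Schwarz produces exactly $\|T\|_2\|\rho T\|_2$. Everything else is formal.
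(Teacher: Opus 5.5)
Your proposal is correct and follows the paper's proof essentially step for step. The ingredients match: the identity $\varphi(X)=\langle Uh,\rho h\rangle_{L^2}$ with $h=|X|^{1/2}$, Cauchy--Schwarz together with Lemma~\ref{lem:krein}\ref{k:b} for $L_\rho$, condition (R) via $\sigma_z(X)=zX$, and the $2\times2$ example $X=|\rho v/\|\rho v\|\rangle\langle v|$ with $v\propto(b^{1/2},a^{1/2})$, which is the paper's example. These are finished, as in the paper, by Theorem~\ref{thm:transfer} and Remark~\ref{rem:attain}.
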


\begin{proof}
\emph{Upper bound with $\kappa$.} Let $X=U|X|$ and $h=|X|^{1/2}$. Then
$\varphi(X)=\tau(h\rho Uh)=\langle Uh,\rho h\rangle_{L^2}$, so by Lemma~\ref{lem:krein} for
$L_\rho$, whose spectrum lies in $\sigma(\rho)$,
$|\varphi(X)|\le\|h\|\,\|L_\rho h\|\le\cosh\OLC(\rho)\langle L_\rho h,h\rangle=\cosh\OLC(\rho)\varphi(|X|)$.
\emph{(R).} $\sigma_z(X)=zX$: $\tau(z\rho X)=\tau(\rho zX)$ and $|zX|=z|X|$.
\emph{Example.} In $M_2$ with $\rho=\diag(\kappa,1)$ let $b=(\kappa+1)^{-1/2}(1,\sqrt\kappa)$,
$a=\rho b/\|\rho b\|$ and $X=|a\rangle\langle b|$. Then $|X|=|b\rangle\langle b|$,
$\varphi(|X|)=\langle\rho b,b\rangle=2\kappa/(\kappa+1)$ and
$\varphi(X)=\langle a,\rho b\rangle=\|\rho b\|=\sqrt\kappa$, so the ratio is
$(1+\kappa)/(2\sqrt\kappa)$. Theorem~\ref{thm:transfer} and Remark~\ref{rem:attain} complete the
proof.
\end{proof}

\begin{remark}\label{rem:bourin}
On $M_n(\C)$ the upper bound of Theorem~\ref{thm:gardner} is contained in a result of Bourin: if
$A\ge0$ and $Z>0$ has extremal eigenvalues $a,b$, then
$\|AZ\|_1\le\frac{a+b}{2\sqrt{ab}}\Tr(AZ)$, and this is sharp \cite[Cor.~1.4, Rem.~1.9]{Bou06}.
Indeed, with $X=U|X|$ we have $|\Tr(\rho X)|=|\Tr(U\,|X|\rho)|\le\||X|\rho\|_1$, and Bourin's
inequality with $A=|X|$ and $Z=\rho$ gives $|\varphi(X)|\le\cosh\OLC(\rho)\,\varphi(|X|)$. The
extension to finite von Neumann algebras and the replacement of $\kappa$ by $\kappa_\cZ$ are not
contained in \cite{Bou06}.
\end{remark}

\noindent By Theorem~\ref{thm:gardner}, the constant on $M_n$ depends on $\rho$ only through its
extreme eigenvalues, not on the dimension or on the rest of the spectrum. Item~3 is a consequence.

\begin{corollary}[Item 3]\label{cor:item3}
For every $A\in\cM$, $\varphi(A+A^*)\le2G_\cZ\,\varphi(|A^*|)$, and the constant is optimal. In $M_n(\C)$
it is $G$ and attained, so $V_n=G-1$ for item~3.
\end{corollary}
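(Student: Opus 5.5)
The upper bound reduces to Theorem~\ref{thm:gardner}, applied to $X=A^*$. Since $\varphi$ is positive, $\varphi(A^*)=\overline{\varphi(A)}$, so
\[
\varphi(A+A^*)=2\Re\varphi(A)=2\Re\varphi(A^*)\le2|\varphi(A^*)|.
\]
Theorem~\ref{thm:gardner} with $X=A^*$ then gives $|\varphi(A^*)|\le G_\cZ\,\varphi(|A^*|)$, hence $\varphi(A+A^*)\le2G_\cZ\varphi(|A^*|)$ in every finite $\cM$. This is a direct argument; no rescaling is needed for the upper bound.

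For optimality I would use the transfer principle, Theorem~\ref{thm:transfer}\ref{t:low}. The relevant quantity is a violation constant in the sense of Definition~\ref{def:viol}, with
\[
F_1=X_1+X_1^*,\qquad F_2=|X_1^*|,\qquad \Psi_N(z)=\re z_1,\qquad \Psi_D(z)=2|z_2|,
\]
both of degree one. It therefore suffices to exhibit, in $M_2$ with $\rho=\diag(\kappa,1)$, an operator $A$ for which the ratio equals $G$.

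The extremal $X$ from the proof of Theorem~\ref{thm:gardner} is $X=|a\rangle\langle b|$ with $\varphi(X)=\|\rho b\|>0$ real. Take $A=X^*=|b\rangle\langle a|$. Then $A^*=X$ and $|A^*|=|X|=|b\rangle\langle b|$. Moreover
\[
\varphi(A+A^*)=2\Re\varphi(X)=2\sqrt\kappa,\qquad 2\varphi(|A^*|)=\frac{4\kappa}{\kappa+1},
\]
so the ratio is $(1+\kappa)/(2\sqrt\kappa)=G$.

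Transfer then gives $c_\cM(\rho)\ge G(\diag(\kappa_\cZ,1))=G_\cZ$ for noncentral $\rho$. For central $\rho$ the constant is $1=G_\cZ$ by taking $A=1$. On $M_n$, Remark~\ref{rem:attain} shows the value $G$ is attained, and since $\cN=\cR$ we get $V_n=G-1$.

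The only point needing care is that the extremal value of Gardner's quotient is realised with $\varphi(X)$ real and positive, so that taking the real part loses nothing. The explicit example above settles this. No further obstacle is expected.
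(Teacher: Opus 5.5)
Your proof is correct and follows the paper's argument essentially verbatim. The upper bound comes from $\varphi(A^*)=\overline{\varphi(A)}$ and Theorem~\ref{thm:gardner} with $X=A^*$. Optimality comes from $A=X^*$ for the extremal $X$ of that theorem, where $\varphi(X)=\sqrt\kappa>0$, followed by Theorem~\ref{thm:transfer}\ref{t:low} and Remark~\ref{rem:attain}. Your explicit $\Psi_N$, $\Psi_D$ and the separate treatment of central $\rho$ only spell out what the paper leaves implicit.
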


\begin{proof}
Since $\rho\ge0$, $\varphi(A^*)=\overline{\tau(A\rho)}=\overline{\varphi(A)}$, so
$\varphi(A+A^*)=2\re\varphi(A)\le2|\varphi(A^*)|\le2G_\cZ\varphi(|A^*|)$ by Theorem~\ref{thm:gardner} for
$X=A^*$. For the lower bound let $X$ be the example in the proof of Theorem~\ref{thm:gardner}, for
which $\varphi(X)=\sqrt\kappa>0$, and $A=X^*$. Then $\varphi(A+A^*)=2\varphi(X)$ and $\varphi(|A^*|)=\varphi(|X|)$,
so the ratio is $(1+\kappa)/(2\sqrt\kappa)$. The constant is a violation constant (one arbitrary variable,
$F_1=X_1+X_1^*$, $F_2=|X_1^*|$), so Theorem~\ref{thm:transfer}\ref{t:low} and Remark~\ref{rem:attain}
complete the proof.
\end{proof}

\subsection{Subadditivity of the modulus}
Consider
\begin{gather*}
c_{\rm gen}=\sup_{A,B\in\cM}\frac{\varphi(|A+B|)}{\varphi(|A|)+\varphi(|B|)},\qquad
c_{\rm sa}=\sup_{A,B\in\cM_{sa}}\frac{\varphi(|A+B|)}{\varphi(|A|)+\varphi(|B|)},\\
c_{\rm pd}=\sup_{A,B\in\cM_+}\frac{\varphi(|A-B|)}{\varphi(A+B)},\qquad
c_{\rm proj}=\sup_{P,Q}\frac{\varphi(|P-Q|)}{\varphi(P+Q)},
\end{gather*}
the last over projections. Clearly $c_{\rm proj}\le c_{\rm pd}\le c_{\rm sa}\le c_{\rm gen}$ (for
$c_{\rm pd}\le c_{\rm sa}$ take $a_1=A$, $a_2=-B$). The self-adjoint version is the form in which
\cite{stolyarov2002} characterises traces; $c_{\rm proj}$ corresponds to Bikchentaev's inequality
$\varphi(|P-Q|)\le\varphi(P+Q)$ from the same series \cite{bikchentaev2009rm,bikchentaev2010smj,bikchentaev2011mn,Bik15}.

\begin{theorem}[Items 2 and 18]\label{thm:subadd}
$c_{\rm gen}=c_{\rm sa}=c_{\rm pd}=c_{\rm proj}=G_\cZ$. In $M_n(\C)$ all four suprema are attained,
by a pair of rank-one projections.
\end{theorem}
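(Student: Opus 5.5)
The plan is to prove the upper bound for the largest constant $c_{\rm gen}$ and the lower bound for the smallest one $c_{\rm proj}$, and to close the chain $c_{\rm proj}\le c_{\rm pd}\le c_{\rm sa}\le c_{\rm gen}$ with Theorem~\ref{thm:transfer}.

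\emph{Upper bound with $\kappa$.} Since $\cM$ is finite, the polar decomposition $A+B=U|A+B|$ has $U$ unitary. Then
\[
\varphi(|A+B|)=\varphi(U^*(A+B))=\varphi(U^*A)+\varphi(U^*B).
\]
Because $U^*U=1$, we have $|U^*A|=(A^*UU^*A)^{1/2}=|A|$, and likewise $|U^*B|=|B|$. Theorem~\ref{thm:gardner}, in its $\kappa$-form $|\varphi(X)|\le\cosh\OLC(\rho)\,\varphi(|X|)$ (which its proof establishes in every finite $\cM$), applied to $X=U^*A$ and $X=U^*B$, gives
\[
\varphi(|A+B|)\le\cosh\OLC(\rho)\bigl(\varphi(|A|)+\varphi(|B|)\bigr).
\]

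\emph{Passage to $\kappa_\cZ$.} The constant $c_{\rm gen}$ is a violation constant in the sense of Definition~\ref{def:viol}, with two arbitrary variables and $F_1=|X_1+X_2|$, $F_2=|X_1|$, $F_3=|X_2|$. It satisfies (R) with $\sigma_z(A,B)=(zA,zB)$, since $|zY|=z|Y|$ for $z\in\cZ_{++}$ and $\tau(z\rho Y)=\tau(\rho zY)$. Theorem~\ref{thm:transfer}\ref{t:up} then gives $c_{\rm gen}\le G_\cZ$. Projections are not preserved by central rescaling, but this does not matter: for $c_{\rm proj}$ we only need a lower bound, and Theorem~\ref{thm:transfer}\ref{t:low} does not require (R).

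\emph{Example in $M_2$.} Take $\rho=\diag(\kappa,1)$, $p=(\cos\alpha,\sin\alpha)$, $q=(\cos\alpha,-\sin\alpha)$, $P=pp^*$ and $Q=qq^*$. For two rank-one projections at angle $\theta$, $P-Q$ has eigenvalues $\pm\sin\theta$ on $\mathrm{span}\{p,q\}=\C^2$. Hence $|P-Q|=\sin\theta\cdot1$ with $\sin\theta=\sin2\alpha$, and $\varphi(|P-Q|)=(\kappa+1)\sin2\alpha$. Also $\varphi(P+Q)=2(\kappa\cos^2\alpha+\sin^2\alpha)$. With $t=\tan\alpha$ the ratio is $(\kappa+1)t/(\kappa+t^2)$. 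It is maximal at $t=\sqrt\kappa$, where it equals $(1+\kappa)/(2\sqrt\kappa)=G$. So $c_{\rm proj}(M_2,\diag(\kappa,1))\ge G$.

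\emph{Conclusion.} The constant $c_{\rm proj}$ is also a violation constant, with $F_1=|X_1-X_2|$, $F_2=X_1+X_2$ and projection variables. Theorem~\ref{thm:transfer}\ref{t:low} therefore yields $c_{\rm proj}\ge G_\cZ$ for non-central $\rho$. For central $\rho$ all four constants equal $1=G_\cZ$, attained at $A=1$, $B=0$. Together with the chain of inequalities this shows that all four constants equal $G_\cZ$. In $M_n(\C)$, Remark~\ref{rem:attain} embeds the $2\times2$ example via the extreme eigenvectors of $\rho$ with $\varphi\circ\pi=\Tr(\diag(a,b)\,\cdot)$ exactly. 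The rank-one pair $P,Q$ therefore attains $G$, and hence attains all four suprema.

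The only delicate step is the identity $|U^*A|=|A|$, which needs $U$ unitary rather than merely a partial isometry; this is where finiteness of $\cM$ enters.
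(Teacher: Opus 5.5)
Your proof is correct and is essentially the paper's: you use the polar decomposition of $A+B$ with Gardner's bound for $c_{\rm gen}$, and for $c_{\rm proj}$ the same pair of rank-one projections (your $\tan\alpha=\sqrt\kappa$ is the paper's $b_\pm=(s,\pm c)$) transferred by Theorem~\ref{thm:transfer}\ref{t:low}; the only difference is that the paper quotes Theorem~\ref{thm:gardner} with $G_\cZ$ directly, instead of passing through the $\kappa$-form and condition (R). Your closing remark overstates the role of unitarity: for a partial isometry $U$ one still has $|U^*A|^2=A^*UU^*A\le|A|^2$, hence $|U^*A|\le|A|$ and $\varphi(|U^*A|)\le\varphi(|A|)$, and this inequality is all the argument needs.
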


\begin{proof}
Let $A+B=U|A+B|$. Then $|A+B|=U^*A+U^*B$, so by Theorem~\ref{thm:gardner}
$\varphi(|A+B|)\le|\varphi(U^*A)|+|\varphi(U^*B)|\le G_\cZ(\varphi(|U^*A|)+\varphi(|U^*B|))$, and
$|U^*A|=|A|$ because $|U^*A|^2=A^*UU^*A=|A|^2$ ($U$ is unitary). Hence $c_{\rm gen}\le G_\cZ$. For the lower bound
take, in $M_2$ with $\rho=\diag(\kappa,1)$, $s=(\kappa+1)^{-1/2}$, $c=(\kappa/(\kappa+1))^{1/2}$,
$b_\pm=(s,\pm c)$ and $P_\pm=|b_\pm\rangle\langle b_\pm|$. Then $P_+-P_-=2sc\,\sigma$ with
$\sigma=\bigl(\begin{smallmatrix}0&1\\1&0\end{smallmatrix}\bigr)$, $|P_+-P_-|=2sc\cdot1$,
$\varphi(|P_+-P_-|)=2sc(\kappa+1)=2\sqrt\kappa$ and $\varphi(P_\pm)=2\kappa/(\kappa+1)$, so the
ratio is $(1+\kappa)/(2\sqrt\kappa)$. Theorem~\ref{thm:transfer}\ref{t:low} applied to $c_{\rm
proj}$ gives $c_{\rm proj}\ge G_\cZ$.
\end{proof}

\begin{corollary}\label{cor:subadd}
$\varphi(|A+B|)\le(1-\NC(\rho)^2)^{-1/2}(\varphi(|A|)+\varphi(|B|))$ for all $A,B\in\cM$, and the
constant is optimal already for pairs $(P,-Q)$ of projections.
\end{corollary}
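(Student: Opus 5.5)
This is a restatement of Theorem~\ref{thm:subadd} in terms of $\NC$, so the plan is to read it off from that theorem. The first step is the identity $(1-\NC(\rho)^2)^{-1/2}=G_\cZ$. By Definition~\ref{def:nc}, $\NC(\rho)=\tanh\LNC(\rho)$, and $(1-\tanh^2x)^{-1/2}=\cosh x$. Hence the constant in the corollary is exactly $\cosh\LNC(\rho)=G_\cZ$.

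\emph{Upper bound.} Take arbitrary $A,B\in\cM$ and argue as in Theorem~\ref{thm:subadd}. Write $A+B=U|A+B|$ with $U$ unitary, which is possible in a finite algebra. Then $\varphi(|A+B|)=\varphi(U^*A)+\varphi(U^*B)$. Apply Theorem~\ref{thm:gardner} to each term and use $|U^*A|=|A|$, $|U^*B|=|B|$. This gives the inequality with constant $G_\cZ$, i.e.\ $c_{\rm gen}\le G_\cZ$.

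\emph{Optimality.} Here the pairs are restricted to the form $(P,-Q)$ with $P,Q$ projections. For such a pair the quotient is
\[
\frac{\varphi(|P-Q|)}{\varphi(P)+\varphi(|{-Q}|)}=\frac{\varphi(|P-Q|)}{\varphi(P+Q)},
\]
because $|-Q|=Q$. So the supremum over these pairs is $c_{\rm proj}$, and Theorem~\ref{thm:subadd} gives $c_{\rm proj}=G_\cZ$. That lower bound came from the $2\times2$ rank-one projections $P_\pm$ transferred by Theorem~\ref{thm:transfer}\ref{t:low}, so it already holds in every finite $\cM$.

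If $\rho\in\cZ$, the constant is $1$. It is attained, for example by $P=1$, $Q=0$, where both sides equal $\varphi(1)$.

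There is no real obstacle here. The only point to check is that the optimality statement refers to exactly the supremum $c_{\rm proj}$, which the displayed quotient confirms.
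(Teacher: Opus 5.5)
Your proposal is correct and matches how the paper obtains the corollary, which it states without a separate proof. The corollary is read off from Theorem~\ref{thm:subadd} via $(1-\NC(\rho)^2)^{-1/2}=\cosh\LNC(\rho)=G_\cZ$: the upper bound is $c_{\rm gen}\le G_\cZ$, and optimality is $c_{\rm proj}=G_\cZ$ for the pairs $(P,-Q)$, with the central case handled by a trivial tuple.
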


\begin{corollary}[Items 5 and 18]\label{cor:item5}
For projections $P,Q,R\in\cM$,
\[
\varphi(|P-R|)\le G_\cZ\bigl(\varphi(|P-Q|)+\varphi(|Q-R|)\bigr),\qquad \varphi(|P-Q|)\le G_\cZ\,\varphi(P+Q),
\]
and both constants are optimal; in $M_n(\C)$ they are attained (by rank-one $P,Q$, and for the first
inequality with $Q=0$ and rank-one $P,R$). Hence
$V_n=G-1$ for items 5 and~18.
\end{corollary}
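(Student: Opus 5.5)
The plan is to derive both upper bounds from Theorem~\ref{thm:subadd}, and both lower bounds from the $2\times2$ projection example in its proof together with Theorem~\ref{thm:transfer}\ref{t:low} and Remark~\ref{rem:attain}.

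For the triangle inequality we use $P-R=(P-Q)+(Q-R)$. Theorem~\ref{thm:subadd} for arbitrary (here self-adjoint) $A=P-Q$, $B=Q-R$ gives
\[
\varphi(|P-R|)\le G_\cZ\bigl(\varphi(|P-Q|)+\varphi(|Q-R|)\bigr).
\]
For the second inequality, $\varphi(|P-Q|)\le G_\cZ\,\varphi(P+Q)$ is exactly the statement $c_{\rm proj}\le c_{\rm gen}=G_\cZ$ of Theorem~\ref{thm:subadd}. Equivalently, it follows from subadditivity applied to $A=P$, $B=-Q$, since $|P|=P$ and $|-Q|=Q$.

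Optimality of the second inequality is $c_{\rm proj}=G_\cZ$, already proved there. In $M_2$ with $\rho=\diag(\kappa,1)$, the rank-one projections $P_\pm$ attain $(1+\kappa)/(2\sqrt\kappa)$. Embedding them through the eigenvectors of $\max\sigma(\rho)$ and $\min\sigma(\rho)$ (Remark~\ref{rem:attain}) gives attainment in $M_n$.

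For the first inequality take $Q=0$, so the right side becomes $G_\cZ\bigl(\varphi(P)+\varphi(R)\bigr)$, with $|P-0|=P$ and $|0-R|=R$. Choosing $P=P_+$ and $R=P_-$ makes the quotient equal to that of $c_{\rm proj}$, namely $(1+\kappa)/(2\sqrt\kappa)$. The metric quotient is a violation constant in the sense of Definition~\ref{def:viol}: it has three projection variables, the admissible expressions $|X_1-X_3|$, $|X_1-X_2|$ and $|X_2-X_3|$, and linear $\Psi$'s. Theorem~\ref{thm:transfer}\ref{t:low} therefore transfers the $M_2$ value $G(\diag(\kappa_\cZ,1))=G_\cZ$ to every finite $\cM$. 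Remark~\ref{rem:attain} again gives attainment in $M_n$, by rank-one $P,R$ with $Q=0$.

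Finally, $V_n=G-1$ follows from Section~\ref{sec:envelopes}, since $\cN=\cR$ and $\kappa_\cZ=\kappa$ on $M_n$.

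There is no real obstacle. The only point needing care is checking that the $Q=0$ specialisation is legitimate: $0$ is a projection, and the denominator $\varphi(P)+\varphi(R)$ is nonzero. This ensures that the $2\times2$ supremum equals $G$ exactly, so that the lower bound $c\ge G$ matches the upper bound.
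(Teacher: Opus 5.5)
Your proposal is correct and follows the paper's proof: the upper bounds are $c_{\rm gen}\le G_\cZ$ applied to $A=P-Q$, $B=Q-R$, together with $c_{\rm proj}=G_\cZ$, and the lower bound for the metric inequality comes from the specialisation $Q=0$. The one difference is that you rerun Theorem~\ref{thm:transfer}\ref{t:low} for the metric quotient. This is unnecessary, because the $Q=0$ specialisation already shows, directly in $\cM$, that its constant is at least $c_{\rm proj}=G_\cZ$; it also spares you the check of Definition~\ref{def:viol}, where $\Psi_D$ would have to be $|z_2|+|z_3|$ rather than linear.
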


\begin{proof}
The first inequality is $c_{\rm gen}\le G_\cZ$ for $A=P-Q$, $B=Q-R$; the second is $c_{\rm proj}=G_\cZ$.
For $Q=0$ the first inequality becomes the second with $(P,R)$ in place of $(P,Q)$, so its constant is
at least $c_{\rm proj}$. Attainment follows from Theorem~\ref{thm:subadd}.
\end{proof}

\subsection{The modulus of the adjoint}
The equality $\varphi(|X^*|)=\varphi(|X|)$ for all $X$ characterises traces, since
$|X^*|=U|X|U^*$.

\begin{theorem}[Item 26]\label{thm:modulus}
For every partial isometry $U$ and $Y\in\cM_+$ with $U^*U\ge s(Y)$,
$\varphi(UYU^*)\le\kappa_\cZ\varphi(Y)$. Consequently, for every $q>0$,
\[
\sup_{X\ne0}\frac{\varphi(|X^*|^q)}{\varphi(|X|^q)}=\kappa_\cZ,\qquad
\inf_{X\ne0}\frac{\varphi(|X^*|^q)}{\varphi(|X|^q)}=\kappa_\cZ^{-1}.
\]
In $M_n(\C)$ both are attained at $X=|e_{\max}\rangle\langle e_{\min}|$ and its adjoint.
\end{theorem}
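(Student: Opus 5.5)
The plan is to prove the operator bound first with the ambient condition number, move to $\kappa_\cZ$ by central rescaling, and then transfer a $2\times2$ example for the lower bound.

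\emph{Upper bound with $\kappa$.} For a partial isometry $U$ and $Y\ge0$ with $U^*U\ge s(Y)$, write $\varphi(UYU^*)=\tau(\rho UYU^*)=\tau(Y^{1/2}U^*\rho UY^{1/2})$. Since $\rho\le\|\rho\|$, we get $U^*\rho U\le\|\rho\|U^*U$, and so
\[
\varphi(UYU^*)\le\|\rho\|\,\tau(Y^{1/2}U^*UY^{1/2})=\|\rho\|\tau(Y)\le\|\rho\|\,\|\rho^{-1}\|\,\varphi(Y).
\]
The last step uses $\tau(Y)\le\|\rho^{-1}\|\tau(\rho Y)$, which holds because $\rho\ge\|\rho^{-1}\|^{-1}$. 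This gives the bound with $\kappa(\rho)$.

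\emph{Passing to $\kappa_\cZ$.} One could run the same argument with $u,l$ in place of the norms, via \eqref{eq:sandwich}. We have $U^*\rho U\le U^*uU$, and since $u$ is central, $U^*uU=uU^*U$. Hence $\varphi(UYU^*)\le\tau(uY^{1/2}U^*UY^{1/2})=\tau(uY)\le\kappa_\cZ\tau(lY)\le\kappa_\cZ\varphi(Y)$, using $u\le\kappa_\cZ l$ pointwise on $\Omega$. This is the direct route and needs no transfer. Alternatively, apply Theorem~\ref{thm:transfer}\ref{t:up} with $\sigma_z(U,Y)=(U,zY)$; this map is compatible because $z$ commutes with $U$ and $Y$, and $\tau(z\rho\,UYU^*)=\tau(\rho\,U(zY)U^*)$.

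\emph{The supremum and infimum.} Write the polar decomposition $X=U|X|$ with $U$ unitary, as is available in finite $\cM$. Then $|X^*|^q=U|X|^qU^*$, so the bound with $Y=|X|^q$ gives $\sup\le\kappa_\cZ$. Applying the same bound to $X^*$ gives $\varphi(|X|^q)\le\kappa_\cZ\varphi(|X^*|^q)$, hence $\inf\ge\kappa_\cZ^{-1}$.

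\emph{Lower bound.} Take $X=|e_{\max}\rangle\langle e_{\min}|$. Then $|X|^q=|e_{\min}\rangle\langle e_{\min}|$ and $|X^*|^q=|e_{\max}\rangle\langle e_{\max}|$, so the ratio is $\lambda_{\max}/\lambda_{\min}=\kappa$ in $M_n$. The adjoint gives $\kappa^{-1}$.

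\emph{Transfer to general finite $\cM$.} The quotient is a violation constant in the sense of Definition~\ref{def:viol}, with $F_1=|X_1^*|^q$, $F_2=|X_1|^q$ and $g(t)=|t|^{q/2}$ applied to $X^*X$, so $g(0)=0$. Theorem~\ref{thm:transfer}\ref{t:low} then gives $\sup\ge c_{M_2}(\diag(\kappa_\cZ,1))=\kappa_\cZ$. For the infimum, apply the same transfer to the reciprocal quotient with the roles of $F_1,F_2$ exchanged.

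\emph{Main obstacle.} The only delicate point is the hypothesis $U^*U\ge s(Y)$, which is needed so that $\tau(Y^{1/2}U^*UY^{1/2})=\tau(Y)$. It holds for polar decompositions, since $U^*U\ge s(|X|)=s(|X|^q)$. Everything else is routine.
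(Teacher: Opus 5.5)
Your proposal is correct, and your \emph{direct route} is exactly the paper's proof: the sandwich \eqref{eq:sandwich} with $U^*\rho U\le uU^*U$ and $u\le\kappa_\cZ l$; the polar decomposition $|X^*|^q=U|X|^qU^*$, with $X\leftrightarrow X^*$ for the infimum; and the matrix-unit example transferred by Theorem~\ref{thm:transfer}\ref{t:low}. The preliminary $\kappa$-bound and the rescaling alternative are correct but not needed, and the only case you leave implicit is central $\rho$, where Theorem~\ref{thm:transfer}\ref{t:low} does not apply but $\varphi$ is a trace, so both extrema trivially equal $1=\kappa_\cZ$.
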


\begin{proof}
By \eqref{eq:sandwich} and centrality of $u$,
$\varphi(UYU^*)\le\tau(uUYU^*)=\tau(uY^{1/2}U^*UY^{1/2})=\tau(uY)\le\kappa_\cZ\tau(lY)\le\kappa_\cZ\varphi(Y)$.
With $X=U|X|$ we have $|X^*|^q=U|X|^qU^*$, which gives $\le$; the infimum follows by
$X\leftrightarrow X^*$. For the lower bound, $X=e_{12}$ in $M_2$ with $\rho=\diag(\kappa,1)$ has
$|X|^q=e_{22}$, $|X^*|^q=e_{11}$ and ratio $\kappa$; apply Theorem~\ref{thm:transfer}\ref{t:low}.
\end{proof}

\subsection{The Petz--Zem\'anek inequality}
For $k\ge2$ let $c_k(\rho)=\sup_{X\ne0}|\varphi(X^k)|/\varphi(|X|^k)$. For $k=2$ the inequality
$|\varphi(X^2)|\le\varphi(|X|^2)$ is also one of the characterisations in \cite{bikchentaev2026moslehian}.

\begin{theorem}[Item 4]\label{thm:pz2}
$c_2(\rho)=G_\cZ$; moreover $|\re\varphi(X^2)|\le|\varphi(X^2)|\le G_\cZ\varphi(|X|^2)$, and both constants
are optimal. In $M_n(\C)$ the supremum is attained.
\end{theorem}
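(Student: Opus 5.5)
The plan is to follow the pattern of Theorem~\ref{thm:gardner}: an upper bound in terms of $\kappa(\rho)$ valid in every finite $\cM$, a rescaling map verifying (R), and a $2\times2$ example, combined through Theorem~\ref{thm:transfer}.

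\emph{Upper bound.} Write $X=U|X|$ with $U$ unitary. Then
\[
\varphi(X^2)=\tau(\rho\,U|X|U|X|)=\tau\bigl(|X|^{1/2}\rho U|X|^{1/2}\cdot|X|^{1/2}U|X|^{1/2}\bigr).
\]
A cleaner route is to note that $X^2=X\cdot X$, so $\varphi(X^2)=\langle X,\,\rho X^*\rangle_{L^2}$, i.e.\ $\tau(\rho X X)=\langle X, X^*\rho\rangle$. With $h=X$ and $k=X^*$ as vectors of $L^2(\cM,\tau)$, we get $\varphi(X^2)=\tau((X^*\rho)^* X)$-type pairings. The intended inequality is
\[
|\tau(\rho XX)|\le \|\rho^{1/2}X\|_2\,\|X\rho^{1/2}\|_2=\varphi(XX^*)^{1/2}\varphi(X^*X)^{1/2},
\]
which is Cauchy--Schwarz. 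Then $\varphi(XX^*)=\varphi(|X^*|^2)\le\kappa\,\varphi(|X|^2)$ by Theorem~\ref{thm:modulus}, giving only $\kappa^{1/2}$, which is too weak. So instead I would use the two commuting positive operators $L_\rho$ and $R_\rho$ on $L^2$:
\[
\tau(\rho XX)=\langle L_\rho^{1/2}R_\rho^{-1/2}\,(X\rho^{1/2}),\;(X^*\rho^{1/2})\rangle
\]
after suitable bookkeeping, and apply the Greub--Rheinboldt/Kantorovich bound to the commuting pair. The key reformulation is that $\varphi(X^2)=\langle JX, \rho^{1/2}\ldots\rangle$, where $J:Y\mapsto Y^*$. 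Concretely, with $h=X\rho^{1/2}\in L^2$, we have $\varphi(|X|^2)=\|h\|^2$ and $\varphi(X^2)=\tau(\rho^{1/2}X\rho^{-1/2}\cdot\rho^{1/2}X\rho^{1/2})$. This equals $\langle Th, Jh\rangle$ with $T=L_{\rho^{1/2}}R_{\rho^{-1/2}}$ conjugated appropriately.

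The cleanest plan is the $2\times2$ block reduction in the spirit of Remark~\ref{rem:geom}: the quantity is a sesquilinear form $\langle D h, Jh\rangle$ where $D=L_\rho^{1/2}R_\rho^{1/2}$-type, and the modular operator $\Delta=L_\rho R_\rho^{-1}$ has spectrum in $[\kappa^{-1},\kappa]$. Decomposing into $J$-pairs $(h,Jh)$ reduces the estimate to
\[
|\lambda_i^{1/2}\lambda_j^{1/2}\cdot 2c|\le G\,(\lambda_i|c|^2+\lambda_j|c|^2)\cdot\text{const},
\]
i.e.\ $2\sqrt{\lambda_i\lambda_j}\le\lambda_i+\lambda_j$ combined with a Kantorovich factor. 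I expect this to be the main obstacle: making the bound $G$ rather than $\kappa^{1/2}$ precise. My first attempt will be to reduce, via spectral calculus of the commuting $L_\rho,R_\rho$, to checking $|\varphi(X^2)|\le G\varphi(|X|^2)$ on $M_2$ with $X$ supported on an eigenpair, and then use Lemma~\ref{lem:krein}(a) for a functional on $C^*(L_\rho,R_\rho)$. The bound for $\re$ is then immediate.

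\emph{(R).} Use $\sigma_z(X)=zX$? Then $\varphi(z^2X^2)$ picks up $z^2$ while the right side picks up $z^2$ too, so instead take $\sigma_z(X)=z^{1/2}X$. This gives $\tau(z\rho X^2)=\tau(\rho(z^{1/2}X)^2)$ and $|z^{1/2}X|^2=z|X|^2$.

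\emph{Example.} In $M_2$ with $\rho=\diag(\kappa,1)$ I would take $X=\bigl(\begin{smallmatrix}0&x\\y&0\end{smallmatrix}\bigr)$ plus a diagonal part, and optimise. For instance $X=\bigl(\begin{smallmatrix}1&\kappa^{-1/2}\\ \ldots\end{smallmatrix}\bigr)$, chosen so that equality holds in the Kantorovich step, aiming for the ratio $(1+\kappa)/(2\sqrt\kappa)$ with $\varphi(X^2)>0$ real. Theorem~\ref{thm:transfer} and Remark~\ref{rem:attain} then finish the proof.
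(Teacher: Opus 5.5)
The decisive step, the bound $|\varphi(X^2)|\le\cosh\OLC(\rho)\,\varphi(|X|^2)$ in an arbitrary finite $\cM$, is missing from your proposal. You reach the right setup and then leave the passage from $\kappa^{1/2}$ to $G$ as an open obstacle. With $h=X\rho^{1/2}$ and $T=L_{\rho^{1/2}}R_{\rho^{-1/2}}$, so that $Th=\rho^{1/2}X$, one has exactly $\varphi(X^2)=\tau(h\,Th)=\langle Th,Jh\rangle$ and $\|h\|_2^2=\varphi(|X|^2)$; no further conjugation is needed.

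The missing idea is the symmetry $JTJ=T^{-1}$, which holds because $J(\rho^{1/2}h^*\rho^{-1/2})=\rho^{-1/2}h\rho^{1/2}$. Hence $T^{1/2}J=JT^{-1/2}$, and
\[
\varphi(X^2)=\langle T^{1/2}h,\,JT^{-1/2}h\rangle,\qquad |\varphi(X^2)|\le\|T^{1/2}h\|\,\|T^{-1/2}h\|\le\tfrac12\langle(T+T^{-1})h,h\rangle\le\cosh\OLC(\rho)\,\|h\|_2^2 .
\]
The last step uses $\sigma(T)\subset[\kappa^{-1/2},\kappa^{1/2}]$.

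Your fallback does not supply this.
\begin{itemize}
\item Lemma~\ref{lem:krein} cannot be applied directly. The form $h\mapsto\langle Th,Jh\rangle$ is complex-quadratic in $h$, not a Hermitian form, so it is not $\omega$ of a positive element of $C^*(L_\rho,R_\rho)$. It must first be symmetrised through $J$.
\item Your displayed pair inequality is not the needed estimate. It uses a single coefficient $c$ for the two independent entries $X_{ij}$ and $X_{ji}$. Also, $2\sqrt{\lambda_i\lambda_j}\le\lambda_i+\lambda_j$ points the wrong way for an upper bound.
\end{itemize}
In an eigenbasis of $\rho=\diag(\lambda_i)$ the correct pair estimate is
\[
|(\lambda_i+\lambda_j)X_{ij}X_{ji}|\le\frac{\lambda_i+\lambda_j}{2\sqrt{\lambda_i\lambda_j}}\bigl(\lambda_j|X_{ij}|^2+\lambda_i|X_{ji}|^2\bigr)\le G\bigl(\lambda_j|X_{ij}|^2+\lambda_i|X_{ji}|^2\bigr).
\]
Diagonal terms have ratio $1$. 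This proves the bound on $M_n$. However, Theorem~\ref{thm:transfer}\ref{t:up} requires the $\kappa$-bound in every finite $\cM$. You would have to repeat the argument with the spectral projections $p_k$ of a step-function approximation of $\rho$, pairing $p_kXp_l$ with $p_lXp_k$ via $\tau(p_kXp_lXp_k)=\tau(p_lXp_kXp_l)$. Your proposal does not do this.

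The example is also not specified, and adding a diagonal part works against you: diagonal entries contribute with ratio $1<G$ and can only lower the quotient. Equality in the pair estimate for $\rho=\diag(\kappa,1)$ forces three conditions:
\begin{itemize}
\item $X_{11}=X_{22}=0$;
\item $|X_{12}|=\sqrt\kappa\,|X_{21}|$;
\item $X_{12}X_{21}>0$.
\end{itemize}
That is, $X=\rho^{1/2}\sigma=\bigl(\begin{smallmatrix}0&\sqrt\kappa\\1&0\end{smallmatrix}\bigr)$. Then $X^2=\sqrt\kappa\cdot1$ and $|X|^2=\diag(1,\kappa)$, giving the ratio $(1+\kappa)/(2\sqrt\kappa)$. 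Because $X^2$ is self-adjoint, this example also proves optimality for $|\re\varphi(X^2)|$.

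Your rescaling $\sigma_z(X)=z^{1/2}X$ is correct. With the two missing pieces supplied, Theorem~\ref{thm:transfer} and Remark~\ref{rem:attain} finish the proof as you intend.
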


\begin{proof}
\emph{Upper bound.} On $L^2(\cM,\tau)$ let $Th=\rho^{1/2}h\rho^{-1/2}$, a positive invertible
operator with $\sigma(T)\subset[\kappa^{-1/2},\kappa^{1/2}]$, and $Jh=h^*$. Then $JTJ=T^{-1}$,
hence $JT^{1/2}J=T^{-1/2}$. For $h=X\rho^{1/2}$ we have $\|h\|^2=\varphi(|X|^2)$ and $Th=\rho^{1/2}X$,
so, using $\tau(xy)=\langle x,Jy\rangle$,
\[
\varphi(X^2)=\tau(\rho^{1/2}X\cdot X\rho^{1/2})=\langle Th,Jh\rangle=\langle
T^{1/2}h,JT^{-1/2}h\rangle,
\]
and $|\varphi(X^2)|\le\|T^{1/2}h\|\,\|T^{-1/2}h\|\le\frac12\langle(T+T^{-1})h,h\rangle\le\cosh\OLC(\rho)\|h\|^2$.
\emph{(R).} $\sigma_z(X)=z^{1/2}X$. \emph{Example.} For $\rho=\diag(\kappa,1)$ and
$X=\rho^{1/2}\sigma$ with $\sigma$ the flip, $X^2=\sqrt\kappa\cdot1$ and $|X|^2=\sigma\rho\sigma=\diag(1,\kappa)$, so
the ratio is $(\kappa+1)\sqrt\kappa/(2\kappa)$. Theorem~\ref{thm:transfer} and
Remark~\ref{rem:attain} apply. Since $X^2$ is self-adjoint in the example, the constant for
$|\re\varphi(X^2)|$ is the same.
\end{proof}

\begin{proposition}[Item 27]\label{prop:pzk}
Let $k\ge3$.
\begin{enumerate}
\item\label{pz:inv} $c_k(z\rho)=c_k(\rho)$ for $z\in\cZ_{++}$, and $c_k(\rho)\le\kappa_\cZ^{1-1/k}$.
\item\label{pz:low} If $\rho\notin\cZ$, then $c_k(\rho)\ge c_k^{M_2}(\diag(\kappa_\cZ,1))>1$;
moreover $c_k(\rho)\ge G_\cZ$ for every $k$ (Proposition~\ref{prop:pzG}).
\item\label{pz:cyc} In $M_n(\C)$, $c_k(\rho)\ge\frac1k\sum_i\lambda_i/(\prod_i\lambda_i)^{1/k}$ for the
eigenvalues $\lambda_1,\dots,\lambda_k$ of any $k\le n$ orthonormal eigenvectors of $\rho$. In a
II$_1$ factor, $c_k(\rho)\ge\max_{1\le j<k}(j\kappa+k-j)/(k\kappa^{j/k})$; in particular
$\frac1k\kappa^{1-1/k}\le c_k(\rho)\le\kappa^{1-1/k}$ there.
\end{enumerate}
\end{proposition}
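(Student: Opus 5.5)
Part \ref{pz:inv} follows the pattern of Theorem~\ref{thm:pz2}. For the invariance we use (R) with $\sigma_z(X)=z^{1/k}X$: since $z$ is central, $(z^{1/k}X)^k=zX^k$ and $|z^{1/k}X|^k=z|X|^k$. Hence $\varphi_\rho((\sigma_zX)^k)=\varphi_{z\rho}(X^k)$, and the same holds for the denominator. Lemma~\ref{lem:resc} then reduces the upper bound to $\kappa(\rho)$.

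For the upper bound itself, write $X=U|X|$ with $U$ unitary and $T=|X|$. Then $X^k=UT\,UT\cdots UT$. We apply H\"older's inequality in $L^p(\cM,\tau)$ with $k$ factors. Put $\rho^{1/k}$ between the factors, $X^k\rho=\prod_j(\rho^{a_j}UT\rho^{-a_j'})$, so that each factor is of the form $\rho^{s}UT\rho^{t}$ with the exponents telescoping. Each factor then has $k$-norm controlled by $\varphi(|X|^k)^{1/k}$ times a power of $\|\rho\|\|\rho^{-1}\|$. More concretely, $|\tau(\rho X^k)|\le\|\rho^{1/k}X\|_k^{k}\cdot\kappa^{\theta}$ after commuting $\rho^{1/k}$ through each $U$. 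Here the cost of moving $\rho^{1/k}$ past $X$ is bounded by $\|\rho^{1/k}\|\|\rho^{-1/k}\|=\kappa^{1/k}$ for $k-1$ of the moves. This gives $\kappa^{(k-1)/k}$ times $\tau(|\rho^{1/k}X|^k)$ or $\tau(|X\rho^{1/k}|^k)$, and the Araki--Lieb--Thirring inequality \eqref{eq:alt} compares these with $\tau(\rho|X|^k)$. The bookkeeping of which side receives $\rho^{1/k}$ is the main obstacle. If it goes wrong, the fallback is to bound $|\varphi(X^k)|\le\|\rho\|\,\tau(|X|^k)\le\kappa\,\varphi(|X|^k)$ and then refine by interpolating the exponent. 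The target is to balance the moves so that the total loss is $\kappa^{1-1/k}$.

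Part \ref{pz:low}: $c_k$ is a violation constant ($F_1=X^k$, $F_2=|X|^k$, $\Psi_N=|z_1|$), so Theorem~\ref{thm:transfer}\ref{t:low} gives $c_k(\rho)\ge c_k^{M_2}(\diag(\kappa_\cZ,1))$. This exceeds $1$ by the Petz--Zem\'anek characterisation, since $\diag(\kappa_\cZ,1)$ is not scalar. The bound $G_\cZ$ is deferred to Proposition~\ref{prop:pzG}.

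Part \ref{pz:cyc}: take the cyclic weighted shift on the span of the $k$ eigenvectors, $X=\sum_i\mu_i|e_{i+1}\rangle\langle e_i|$ (indices mod $k$). Then $X^k=(\prod\mu_i)\cdot1$ on that span and $|X|^k=\sum|\mu_i|^k|e_i\rangle\langle e_i|$. The ratio is $\prod\mu_i\sum\lambda_i/\sum\lambda_i\mu_i^k$. Choosing $\mu_i^k\propto1/\lambda_i$ gives $\frac1k\sum\lambda_i/(\prod\lambda_i)^{1/k}$. In a II$_1$ factor we use Lemma~\ref{lem:corner}\ref{c:d} together with Theorem~\ref{thm:transfer}\ref{t:low} to realise $j$ eigenvalues near $\max\sigma$ and $k-j$ near $\min\sigma$. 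After normalisation this yields $(j\kappa+k-j)/(k\kappa^{j/k})$. Taking $j=k-1$, this is at least $\kappa^{1/k}\cdot\frac{k-1}{k}$; more simply, dropping the terms $k-j$ gives at least $\frac1k\kappa^{1-j/k}$, which is $\frac1k\kappa^{1-1/k}$ at $j=1$. Together with \ref{pz:inv} this gives the stated two-sided bound.
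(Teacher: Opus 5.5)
Parts~\ref{pz:low} and~\ref{pz:cyc}, and the invariance in~\ref{pz:inv}, are correct and follow the paper. For~\ref{pz:low} this is the transfer principle plus the Petz--Zem\'anek theorem in $M_2(\C)$. For~\ref{pz:cyc}, your weighted cyclic shift with $\mu_i=\lambda_i^{-1/k}$ is the paper's $X=W\rho^{-1/k}$, and Lemma~\ref{lem:corner}\ref{c:d} handles the II$_1$ factor.

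The gap is the upper bound $c_k\le\kappa^{1-1/k}$ in~\ref{pz:inv}, which the proposal does not prove. You leave open which side of $X$ receives $\rho^{1/k}$, and that is exactly the point on which the argument turns. Your one concrete estimate, $|\varphi(X^k)|\le\kappa^{1-1/k}\|\rho^{1/k}X\|_k^k$, is true (put $W=\rho^{1/k}X$ and use cyclicity), but it leads nowhere. Indeed $\|\rho^{1/k}X\|_k^k=\tau\bigl((\rho^{1/k}XX^*\rho^{1/k})^{k/2}\bigr)$, which \eqref{eq:alt} bounds by $\varphi(|X^*|^k)$, not by $\varphi(|X|^k)$. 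These differ by up to the factor $\kappa_\cZ$ of Theorem~\ref{thm:modulus}: for $\rho=\diag(\kappa,1)$ and $X=e_{12}$ one has $\|\rho^{1/k}X\|_k^k=\kappa\,\varphi(|X|^k)$. So this choice cannot give better than $\kappa^{2-1/k}$, which is worse than your fallback $c_k\le\kappa$. ``Interpolating the exponent'' from that fallback is not an argument, and no step of ``commuting $\rho^{1/k}$ through $U$'' is available or needed.

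The correct bookkeeping puts $\rho^{1/k}$ on the right. Let $\epsilon=\rho^{1/k}$ and $Z=X\epsilon$. Moving the last factor $\epsilon^{-1}$ of $\rho(Z\epsilon^{-1})^k$ to the front gives
\[
\varphi(X^k)=\tau\bigl(\epsilon^{k-1}Z\epsilon^{-1}Z\cdots\epsilon^{-1}Z\bigr),
\]
with $k$ factors $Z$ and $k-1$ factors $\epsilon^{-1}$. H\"older's inequality, with $Z$ in $L^k$ and the powers of $\epsilon$ in $L^\infty$, yields
\[
|\varphi(X^k)|\le\|\rho\|^{(k-1)/k}\|\rho^{-1}\|^{(k-1)/k}\|Z\|_k^k=\kappa^{1-1/k}\|Z\|_k^k .
\]
Now \eqref{eq:alt} with $A=X^*X$, $B=\rho^{2/k}$ and $r=k/2\ge1$ gives
\[
\|Z\|_k^k=\tau\bigl((\rho^{1/k}X^*X\rho^{1/k})^{k/2}\bigr)\le\tau(\rho^{1/2}|X|^k\rho^{1/2})=\varphi(|X|^k).
\]
Combined with your reduction via (R) and Lemma~\ref{lem:resc} (equivalently Theorem~\ref{thm:transfer}\ref{t:up}), this gives $c_k\le\kappa_\cZ^{1-1/k}$.
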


\begin{proof}
\ref{pz:inv} $\sigma_z(X)=z^{1/k}X$ gives (R). Let $\epsilon=\rho^{1/k}$ and $Z=X\epsilon$. Moving
the last factor $\epsilon^{-1}$ of $(Z\epsilon^{-1})^k$ to the front,
$\varphi(X^k)=\tau(\epsilon^{k-1}Z\epsilon^{-1}Z\cdots\epsilon^{-1}Z)$ with $k$ factors $Z$ and $k-1$
factors $\epsilon^{-1}$. H\"older's inequality ($Z$ in $L^k$, the rest in $L^\infty$) gives
$|\varphi(X^k)|\le\|\rho\|^{(k-1)/k}\|\rho^{-1}\|^{(k-1)/k}\|Z\|_k^k$, and by \eqref{eq:alt} with
$r=k/2$, $\|Z\|_k^k=\tau((\epsilon X^*X\epsilon)^{k/2})\le\tau(\rho^{1/2}|X|^k\rho^{1/2})=\varphi(|X|^k)$.
So $c_k\le\kappa^{1-1/k}$; apply Theorem~\ref{thm:transfer}\ref{t:up}.
\ref{pz:low} The first inequality is Theorem~\ref{thm:transfer}\ref{t:low}, and
$c_k^{M_2}(\diag(\kappa,1))>1$ for $\kappa>1$ by the theorem of Petz and Zem\'anek \cite{petz1988}
applied in $M_2(\C)$. The bound $c_k\ge G_\cZ$ is Proposition~\ref{prop:pzG} below.
\ref{pz:cyc} With $W$ the cyclic shift $e_i\mapsto e_{i+1}$ (indices mod $k$) on the span of the
eigenvectors and $X=W\rho^{-1/k}$ there, $X^k=(\prod\lambda_i)^{-1/k}\cdot1$ and $|X|^k=\rho^{-1}$ on the
span, which gives the ratio. In a II$_1$ factor apply Theorem~\ref{thm:transfer}\ref{t:low} with
$j$ copies of $\max\sigma(\rho)$ and $k-j$ copies of $\min\sigma(\rho)$.
\end{proof}

\begin{proposition}\label{prop:pzG}
\begin{enumerate}
\item\label{pzG:a} For every $k\ge2$, $c_k(\rho)\ge G_\cZ$. In $M_2(\C)$ with $\rho=\diag(\kappa,1)$
the ratio $G_\cZ$ is attained for every $k$.
\item\label{pzG:b} $\liminf_{k\to\infty}c_k(\rho)\ge c_\infty(\rho):=\sup_{A\in\cM}|\varphi(e^A)|/\varphi(e^{\re A})$.
\end{enumerate}
\end{proposition}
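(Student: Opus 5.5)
The plan has three steps: a reduction to $M_2(\C)$, an explicit $2\times2$ family for part~\ref{pzG:a}, and a Lie--Trotter limit for part~\ref{pzG:b}. By Proposition~\ref{prop:pzk}\ref{pz:inv}, $c_k$ is a violation constant in the sense of Definition~\ref{def:viol}, with $F_1=X_1^k$ and $F_2=|X_1|^k$. Theorem~\ref{thm:transfer}\ref{t:low} therefore gives $c_k(\rho)\ge c_k^{M_2}(\diag(\kappa_\cZ,1))$. So part~\ref{pzG:a} reduces to exhibiting, for $\rho=\diag(\kappa,1)$ in $M_2$ and every $k\ge2$, a matrix $X$ with $|\Tr(\rho X^k)|/\Tr(\rho|X|^k)=(1+\kappa)/(2\sqrt\kappa)$. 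By Remark~\ref{rem:attain}, such an $X$ also gives attainment in $M_n$.

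\emph{Even $k=2m$.} I would use $X=D\sigma$ with $D=\diag(t,1)$, $t>0$, and $\sigma$ the flip, generalising the example in Theorem~\ref{thm:pz2}. Then $X^2=t\cdot1$, so $X^k=t^m\cdot1$ and $\varphi(X^k)=t^m(\kappa+1)$. Also $|X|^2=\sigma D^2\sigma=\diag(1,t^2)$, so $|X|^k=\diag(1,t^k)$ and $\varphi(|X|^k)=\kappa+t^{2m}$. With $s=t^m$ the ratio is $s(\kappa+1)/(\kappa+s^2)$, which is maximal at $s=\sqrt\kappa$ with value $G$.

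\emph{Odd $k$.} This is the main obstacle. The flip family fails here: $X^k$ is a multiple of the off-diagonal matrix $X$, so $\varphi(X^k)=0$. I would instead take $X=DW$, where $W$ is a unitary of the form $\bigl(\begin{smallmatrix}\cos\theta&-e^{-i\chi}\sin\theta\\ e^{i\chi}\sin\theta&\cos\theta\end{smallmatrix}\bigr)$, and choose parameters so that the two eigenvalues of $X$ differ by a factor $e^{2\pi i/k}$. Then $X^k$ is a scalar $\mu^k\cdot1$, and $\varphi(X^k)=\mu^k(\kappa+1)$ with $|\mu|^2=|\det X|=t$. One can then compute $|X|^k=(W^*D^2W)^{k/2}$ explicitly in $M_2$ via the spectral decomposition of $W^*D^2W$. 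The aim is a one-parameter optimisation in $t$ that again gives the maximum $(\kappa+1)/(2\sqrt\kappa)$. I am not certain this family reaches $G$ exactly for odd $k$. If it does not, the fallback is to let $|X|$ be two-level and run the Kantorovich-type choice $\langle\rho b,b\rangle$ versus $\|\rho b\|$ from the proof of Theorem~\ref{thm:gardner} on the eigenvectors of $|X|$. The target is again equality in Lemma~\ref{lem:krein}\ref{k:b}, i.e.\ $b$ with $b\propto(1,\sqrt\kappa)$ as there.

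\emph{Part~\ref{pzG:b}.} Fix $A\in\cM$ and put $X_k=1+A/k$. Then $X_k^k\to e^A$ in norm. Also $|X_k|^2=1+2\re A/k+O(k^{-2})$, so $|X_k|^k=(|X_k|^2)^{k/2}\to e^{\re A}$ in norm, by the standard estimate $(1+B/k+O(k^{-2}))^{k}\to e^{B}$ with $B=\re A$. Since $\varphi$ is norm continuous, $c_k(\rho)\ge|\varphi(X_k^k)|/\varphi(|X_k|^k)\to|\varphi(e^A)|/\varphi(e^{\re A})$. Taking $\liminf_k$ and then the supremum over $A$ gives $\liminf_k c_k\ge c_\infty$.
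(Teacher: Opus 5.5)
Your reduction to $M_2(\C)$, your even-$k$ flip example and your part~(b) are correct. Part~(b) is the paper's argument; for odd $k$ the exponent $k/2$ is not an integer, so write $|X_k|^k=\exp(\frac k2\ln|X_k|^2)$, as the paper does.

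The gap is odd $k$. Your family $X=DW$ does \emph{not} reach $G$ for any $k\ge3$ when $\kappa>1$, and your fallback is not a construction. With $D=\diag(t,1)$ and $c=\cos\theta$ you have $\operatorname{tr}X=(1+t)c$ and $\det X=t$. So the condition that the eigenvalues be $\sqrt t\,e^{\pm i\pi/k}$ (up to a common sign) forces $c^2=4t\cos^2(\pi/k)/(1+t)^2$, which lies strictly between $0$ and $1$ for $k\ge3$. Then $|\varphi(X^k)|=u(\kappa+1)$ with $u=t^{k/2}$. But $|X|=W^*DW$ does not commute with $\rho$, and
\[
\varphi(|X|^k)=\Tr(\rho\,W^*D^kW)=c^2(\kappa u^2+1)+(1-c^2)(\kappa+u^2).
\]
This is a proper convex combination of two quantities that are both $\ge2\sqrt\kappa\,u$, with equality only at $u=\kappa^{-1/2}$ and at $u=\kappa^{1/2}$ respectively. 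Hence $\varphi(|X|^k)>2\sqrt\kappa\,u$, and the ratio is strictly below $G$ for every $t$. It also tends to $0$ as $t\to0$ and as $t\to\infty$, so its supremum over the family is below $G$ as well. For $k=3$ and $\kappa=4$ the family gives only about $1.05$, against $G=1.25$. The fallback via the Kantorovich vector $b$ says nothing about how to make $|\varphi(X^k)|$ large while $|X|$ is controlled. So the bound $c_k\ge G_\cZ$ and the attainment in $M_2$ remain unproved for odd $k$.

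The missing idea is only the order of the factors. Take $X=WD$, the adjoint of your matrix. Then $X^*X=D^2$, so $|X|^k=D^k=\diag(t^k,1)$ commutes with $\rho$ and $\varphi(|X|^k)=\kappa t^k+1$. Since $\operatorname{tr}X$ and $\det X$ are unchanged, the same $\theta$ still gives $X^k=-t^{k/2}\cdot1$. The ratio $u(\kappa+1)/(\kappa u^2+1)$ equals $G$ at $u=\kappa^{-1/2}$. This is the paper's proof, with $D$ normalised to determinant one, $s=\kappa^{-1/(2k)}$ and $X^k=-1$. It covers every $k\ge2$ at once, so the separate flip example for even $k$ is not needed.

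Finally, Theorem~\ref{thm:transfer}\ref{t:low} requires $\rho\notin\cZ$. For central $\rho$, the claim $c_k\ge G_\cZ=1$ follows from $X=1$.
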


\begin{proof}
\ref{pzG:a} Let $\rho=\diag(\kappa,1)$ with $\kappa>1$, $\theta=\pi/k$, $s=\kappa^{-1/(2k)}$ and
$c=2\cos\theta/(s+s^{-1})$, so that $0\le c<1$. Put
\[
V=\begin{pmatrix}c&-\sqrt{1-c^2}\\ \sqrt{1-c^2}&c\end{pmatrix},\qquad D=\diag(s,s^{-1}),\qquad X=VD.
\]
Since $V$ is unitary, $X^*X=D^2$, so $|X|^k=D^k$ and $\varphi(|X|^k)=\kappa s^k+s^{-k}=2\sqrt\kappa$.
Moreover $\det X=1$ and $\operatorname{tr}X=c(s+s^{-1})=2\cos\theta$, so $X$ has the distinct
eigenvalues $e^{\pm i\theta}$. Hence $X$ is diagonalisable and $X^k=-1$, so
$|\varphi(X^k)|=\kappa+1$ and the ratio is $(\kappa+1)/(2\sqrt\kappa)$. For $\rho\notin\cZ$ the
claim follows from Theorem~\ref{thm:transfer}\ref{t:low}; for $\rho\in\cZ$ both sides equal $1$.

\ref{pzG:b} For $X_k=1+A/k$ we have $X_k^k\to e^A$ and
$|X_k|^k=\exp\bigl(\frac k2\ln(1+\frac{A+A^*}k+\frac{A^*A}{k^2})\bigr)\to e^{\re A}$ in norm, so
$\liminf_kc_k(\rho)\ge|\varphi(e^A)|/\varphi(e^{\re A})$ for every $A$.
\end{proof}

For $k=2$ the matrix in \ref{pzG:a} is, up to a scalar and a diagonal unitary, the example in the
proof of Theorem~\ref{thm:pz2}.

For $k=2$ the bounds in \ref{pz:inv} and \ref{pz:low} meet. For $k\ge3$ they do not, and the
constant depends on more than the extreme points of the spectrum: Proposition~\ref{prop:pz3}
proves this for $k=3$ and Proposition~\ref{prop:pzk4} for $k\ge4$.

\begin{proposition}\label{prop:pz3}
Let $\mu=(2/27)^{1/2}\approx0.2722$, $\kappa>1$ and $\theta=\kappa^{1/3}$.
\begin{enumerate}
\item\label{pz3:a} In $M_2(\C)$, $c_3(\diag(\kappa,1))\le\mu\theta^2+2\theta+2+2\theta^{-1}+\theta^{-2}$.
\item\label{pz3:b} In $M_3(\C)$, $c_3(\diag(\kappa,\kappa,1))\le\mu\theta^2+8\theta+9+4\theta^{-1}+4\theta^{-2}$.
\item\label{pz3:c} In $M_3(\C)$, $c_3(\diag(\kappa,1,1))\ge(\kappa+2)/(3\kappa^{1/3})=\frac13\theta^2+\frac2{3\theta}$.
\end{enumerate}
The bound in \ref{pz3:c} exceeds the bound in \ref{pz3:a} for $\kappa\ge4\cdot10^4$ and the bound in
\ref{pz3:b} for $\kappa\ge2.5\cdot10^6$. Hence $c_3$ is not a function of the condition number
(which equals $\kappa_\cZ$ here) even on the single algebra $M_3(\C)$: the densities
$\diag(\kappa,\kappa,1)$ and $\diag(\kappa,1,1)$ have the same condition number but different
constants.
\end{proposition}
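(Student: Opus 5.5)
Part~\ref{pz3:c} follows directly from Proposition~\ref{prop:pzk}\ref{pz:cyc}: take $k=n=3$ and the three eigenvectors of $\rho=\diag(\kappa,1,1)$, so that $\lambda=(\kappa,1,1)$. The ratio $\frac13\sum_i\lambda_i/(\prod_i\lambda_i)^{1/3}=(\kappa+2)/(3\kappa^{1/3})$ is attained by $X=W\rho^{-1/3}$. The substance is therefore in the two upper bounds \ref{pz3:a} and \ref{pz3:b}. Their only important feature is that the coefficient of $\theta^2$ is $\mu=(2/27)^{1/2}<\frac13$. The lower-order terms may be wasteful, since they only affect the threshold values of $\kappa$.

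\emph{Setup for \ref{pz3:a}.} Write $\rho=1+(\kappa-1)E$ with $E=e_1e_1^*$, and split
\[
\varphi(X^3)=\Tr(X^3)+(\kappa-1)\langle X^3e_1,e_1\rangle .
\]
The trace part satisfies $|\Tr X^3|\le\Tr|X|^3\le\varphi(|X|^3)$, by Gardner's inequality for the trace, or by Weyl's majorant theorem. For the second part I introduce
\[
\alpha=\langle |X|^3e_1,e_1\rangle,\qquad \beta=\langle |X|^3e_2,e_2\rangle,
\]
so that $\varphi(|X|^3)=\kappa\alpha+\beta$. The goal is to bound $|\langle X^3e_1,e_1\rangle|$ by a combination of $\alpha^{i/3}\beta^{1-i/3}$. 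I expand $\langle X^3e_1,e_1\rangle=\sum x_{1i}x_{ij}x_{j1}$ over paths $1\to j\to i\to1$. Each entry is controlled by column norms, $|x_{ij}|\le\|Xe_j\|=\langle|X|^2e_j,e_j\rangle^{1/2}$, and Jensen gives $\langle|X|^2e,e\rangle\le\langle|X|^3e,e\rangle^{2/3}$. A cruder version of this bound is the Hölder argument of Proposition~\ref{prop:pzk}\ref{pz:inv} with $\epsilon=\rho^{1/3}$, applied termwise in the eigenbasis.

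After this, every path becomes a monomial $\alpha^{a}\beta^{b}$ with $a+b=1$. Dividing by $\kappa\alpha+\beta$ and maximising over $t=\beta/(\kappa\alpha)$ turns each monomial into a power of $\theta=\kappa^{1/3}$ times a constant. The pure path through $e_1$ only, and the paths with one or two visits to $e_2$, give the successive terms $\theta^2,\theta,1,\theta^{-1},\theta^{-2}$.

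\emph{Main obstacle: the leading coefficient.} The crude entrywise bound does not give $\mu$ for the $\theta^2$ term. That term comes from the competition between $x_{11}^3$ and the mixed paths $x_{12}x_{21}x_{11}$, and it cannot be handled by bounding entries separately. For this term I would use the constraint tying the entries together. Normalise $\|Xe_1\|=1$ and write $x_{11}=c$, $|x_{21}|=\sqrt{1-c^2}$. Then the contribution of the dominant part to $\langle X^3e_1,e_1\rangle/\alpha$ reduces to a one-variable maximisation of an expression of the form $c\,(1-c^2)\cdot(\text{factor})$. Its maximum at $c^2=1/3$ should produce the constant $\sqrt{2/27}$. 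Everything else I would absorb into the lower-order coefficients $2\theta+2+2\theta^{-1}+\theta^{-2}$, without trying to be sharp.

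\emph{Part \ref{pz3:b} and the comparison.} For $\diag(\kappa,\kappa,1)$ I run the same argument with $E$ the rank-two projection onto $\mathrm{span}(e_1,e_2)$. The leading $\theta^2$ analysis is unchanged; one works on the range of $E$ and treats the $e_3$-excursions as before, and the larger number of paths only inflates the lower-order coefficients to $8,9,4,4$. Finally, the threshold claims reduce to checking sign changes of the explicit polynomials in $\theta$:
\[
\bigl(\tfrac13-\mu\bigr)\theta^4-2\theta^3-2\theta^2-\tfrac43\theta-1\ge0
\]
for \ref{pz3:c} against \ref{pz3:a} (after multiplying by $\theta^2$), and the analogous inequality for \ref{pz3:b}. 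Each polynomial has a single positive root. Evaluating it at $\theta=(4\cdot10^4)^{1/3}$, respectively $\theta=(2.5\cdot10^6)^{1/3}$, and checking monotonicity beyond that point gives the stated ranges. The non-dependence on $\kappa$ then follows, since both densities have condition number $\kappa$.
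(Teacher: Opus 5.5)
The gap is the decisive step, showing that the $\theta^2$-coefficient is below $\frac13$. It is not established, and the term you aim at is the wrong one. In your own setup (entries bounded by column norms, $\|Xe_j\|\le\langle|X|^3e_j,e_j\rangle^{1/3}$, denominator $\kappa\alpha+\beta$), the pure path $x_{11}^3$ contributes at most $(\kappa-1)\alpha/(\kappa\alpha+\beta)<1$, which is order $\theta^0$. The path $x_{11}x_{12}x_{21}$ is bounded by $\alpha^{2/3}\beta^{1/3}$, which is order $\theta$. The only path of order $\theta^2$ is $\kappa\,x_{12}x_{22}x_{21}$, which visits index $2$ twice. Its crude bound $\alpha^{1/3}\beta^{2/3}$ gives $\kappa^{2/3}\max_{s>0}s^{2/3}/(1+s)=\frac{2^{2/3}}{3}\theta^2\approx0.53\,\theta^2>\frac13\theta^2$.

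Normalising the \emph{first} column and maximising $c(1-c^2)$ does not touch this term. In any case $\max_c c(1-c^2)=2/(3\sqrt3)=\sqrt{4/27}\ne\sqrt{2/27}$, so the unspecified ``factor'' carries the whole argument. The constraint that matters is on the \emph{second} column. The bound $|x_{12}x_{22}|\le\frac12(|x_{12}|^2+|x_{22}|^2)=\frac12\|Xe_2\|^2$ gives $|x_{12}x_{22}x_{21}|\le\frac12\alpha^{1/3}\beta^{2/3}$ and hence the coefficient $2^{-1/3}/3\approx0.265<\mu$. That would prove \ref{pz3:a} in your framework, even without the Araki--Lieb--Thirring inequality. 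The paper instead passes to $Z=X\rho^{1/3}$ and uses $\varphi(|X|^3)\ge\Tr|Z|^3$. It then bounds the single $\theta^2$-term $Z_{12}Z_{22}Z_{21}$ by $\bigl(\tfrac13(|Z_{12}|^2+|Z_{22}|^2+|Z_{21}|^2)\bigr)^{3/2}\le\mu\Tr|Z|^3$, via AM--GM on three entries and the power-mean inequality.

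For \ref{pz3:b} the leading analysis is not ``unchanged''. With $\alpha_i=\langle|X|^3e_i,e_i\rangle$ there are two paths of weight $\theta^2$, namely $\kappa x_{13}x_{33}x_{31}$ and $\kappa x_{23}x_{33}x_{32}$. Bounding each separately by the $2\times2$ estimate of \ref{pz3:a} gives the coefficient $2\mu\approx0.54>\frac13$, which destroys the comparison with \ref{pz3:c}. The two paths must be estimated jointly. By Cauchy--Schwarz their sum is at most $\kappa|x_{33}|\,b\,g$, with $b=(|x_{13}|^2+|x_{23}|^2)^{1/2}$ and $g=(|x_{31}|^2+|x_{32}|^2)^{1/2}$. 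Then $g$ has to be controlled by $\alpha_1+\alpha_2$ as a whole. The paper does this by compressing $Z$ to a $2\times2$ matrix $W$ with $W_{12}=b$, $W_{21}=g$, $W_{22}=Z_{33}$, whose singular values are dominated by those of $Z$. In your variables, $g=|\langle Xy,e_3\rangle|$ for a unit vector $y\in\operatorname{span}\{e_1,e_2\}$, so $g\le\langle|X|^3y,y\rangle^{1/3}\le(\alpha_1+\alpha_2)^{1/3}$. Part \ref{pz3:c} and your threshold checks (one sign change, hence one positive root) are fine.
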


\begin{proof}
First, every $W\in M_2(\C)$ with singular values $s_1\ge s_2$ satisfies
\begin{equation}\label{eq:w2}
|W_{12}W_{22}W_{21}|\le\Bigl(\tfrac13(|W_{12}|^2+|W_{22}|^2+|W_{21}|^2)\Bigr)^{3/2}
\le\Bigl(\tfrac{s_1^2+s_2^2}3\Bigr)^{3/2}\le\mu\,(s_1^3+s_2^3),
\end{equation}
by the AM--GM inequality, $\sum|W_{ij}|^2=s_1^2+s_2^2$, and the power-mean inequality
$(s_1^2+s_2^2)^{3/2}\le\sqrt2(s_1^3+s_2^3)$.

Let $\rho$ be diagonal, $\epsilon=\rho^{1/3}$ and $Z=X\epsilon$. As in
Proposition~\ref{prop:pzk}, $\varphi(X^3)=\sum_{i,j,l}\epsilon_i^2\epsilon_j^{-1}\epsilon_l^{-1}Z_{ij}Z_{jl}Z_{li}$
and $\varphi(|X|^3)\ge\Tr|Z|^3$; moreover $|Z_{ij}Z_{jl}Z_{li}|\le\|Z\|^3\le\Tr|Z|^3$.

\ref{pz3:a} Here $\epsilon=\diag(\theta,1)$. The weight $\epsilon_i^2/(\epsilon_j\epsilon_l)$ is
$\theta^2$ for $(i,j,l)=(1,2,2)$, and the other seven weights are
$1,\theta,\theta,\theta^{-2},\theta^{-1},\theta^{-1},1$. The term with weight $\theta^2$ is bounded by
\eqref{eq:w2} with $W=Z$, the others by $\Tr|Z|^3$.

\ref{pz3:b} Here $\epsilon=\diag(\theta,\theta,1)$. The weight $\theta^2$ occurs for $i\in\{1,2\}$,
$j=l=3$; the weight $\theta$ for $8$ triples, $1$ for $9$, $\theta^{-1}$ for $4$ and $\theta^{-2}$ for
$4$. The $\theta^2$-part is $T=Z_{33}(Z_{13}Z_{31}+Z_{23}Z_{32})$, so
$|T|\le|Z_{33}|\beta\gamma$ with $\beta=(|Z_{13}|^2+|Z_{23}|^2)^{1/2}$,
$\gamma=(|Z_{31}|^2+|Z_{32}|^2)^{1/2}$. If $\beta\gamma\ne0$, let $u,u'\in\operatorname{span}\{e_1,e_2\}$
be the unit vectors with coordinates $\bar Z_{3k}/\gamma$ and $Z_{k3}/\beta$ ($k=1,2$), and let
$W$ be the matrix of the compression of $Z$ from $\operatorname{span}\{u,e_3\}$ to
$\operatorname{span}\{u',e_3\}$. Then $W_{12}=\beta$, $W_{21}=\gamma$, $W_{22}=Z_{33}$, and the singular values
of $W$ do not exceed those of $Z$. By \eqref{eq:w2}, $|T|\le\mu\Tr|Z|^3$.

\ref{pz3:c} is Proposition~\ref{prop:pzk}\ref{pz:cyc}.

The difference between \ref{pz3:c} and \ref{pz3:a} is
$h_1(\theta)=(\frac13-\mu)\theta^2-2\theta-2-\frac4{3\theta}-\frac1{\theta^2}$, which is increasing for
$\theta>(\frac13-\mu)^{-1}\approx16.35$ and positive at $\theta=(4\cdot10^4)^{1/3}\approx34.20$. The
difference between \ref{pz3:c} and \ref{pz3:b} is
$h_2(\theta)=(\frac13-\mu)\theta^2-8\theta-9-\frac{10}{3\theta}-\frac4{\theta^2}$, which is increasing for
$\theta>4(\frac13-\mu)^{-1}\approx65.4$ and positive at $\theta=(2.5\cdot10^6)^{1/3}\approx135.7$.
\end{proof}

The thresholds $4\cdot10^4$ and $2.5\cdot10^6$ are conservative: they come from bounding all
lower-order terms crudely by $\Tr|Z|^3$. Numerically the constants already differ at $\kappa=10$
(Remark~\ref{rem:pznum}).

For $k\ge4$ the same comparison works, with a different $2\times2$ estimate in place of
\eqref{eq:w2}.

\begin{proposition}\label{prop:pzk4}
Let $k\ge4$, $n\ge k$, $\kappa>1$ and $\theta=\kappa^{1/k}$. Put $\nu_4=1/(2\cdot3^{3/4})\approx0.2193$
and $\nu_k=\frac2k(1-\frac2k)^{(k-2)/2}$ for $k\ge5$. Then $\nu_k<\frac1k$, and in $M_n(\C)$:
\begin{enumerate}
\item\label{pz4:a} $c_k(\diag(\kappa,\dots,\kappa,1))\le\nu_k\theta^{k-1}+n^k\theta^{k-2}$;
\item\label{pz4:b} $c_k(\diag(\kappa,1,\dots,1))\ge(\kappa+k-1)/(k\kappa^{1/k})>\frac1k\theta^{k-1}$.
\end{enumerate}
Hence $c_k(\diag(\kappa,1,\dots,1))>c_k(\diag(\kappa,\dots,\kappa,1))$ for
$\kappa\ge\bigl(n^k/(\frac1k-\nu_k)\bigr)^k$. Both densities have condition number $\kappa$ ($=\kappa_\cZ$
here), so $c_k$ is not a function of the condition number on $M_n(\C)$ for any $n\ge k$.
\end{proposition}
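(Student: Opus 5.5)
Part (b) is a direct instance of Proposition~\ref{prop:pzk}\ref{pz:cyc}: take the $k$ eigenvectors $e_1,\dots,e_k$ with eigenvalues $\kappa,1,\dots,1$. This gives $c_k\ge(\kappa+k-1)/(k\kappa^{1/k})$. The strict inequality $>\frac1k\theta^{k-1}$ follows from $(\kappa+k-1)/\theta=\theta^{k-1}+(k-1)/\theta$. The check $\nu_k<\frac1k$ is elementary. For $k\ge5$ it reads $2(1-\frac2k)^{(k-2)/2}<1$, which holds because $(1-\frac2k)^{(k-2)/2}\le e^{-(k-2)/k}\le e^{-3/5}<\frac12$. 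For $k=4$ it is $1/(2\cdot3^{3/4})\approx0.2193<0.25$. Once (a) and (b) are in hand, the final comparison is arithmetic: $\frac1k\theta^{k-1}>\nu_k\theta^{k-1}+n^k\theta^{k-2}$ as soon as $\theta>n^k/(\frac1k-\nu_k)$.

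For (a) I follow the proof of Proposition~\ref{prop:pz3}\ref{pz3:b}. Let $\epsilon=\rho^{1/k}=\diag(\theta,\dots,\theta,1)$ and $Z=X\epsilon$. As in Proposition~\ref{prop:pzk}\ref{pz:inv},
\[
\varphi(X^k)=\sum_{i_1,\dots,i_k}\epsilon_{i_1}^{k-1}\epsilon_{i_2}^{-1}\cdots\epsilon_{i_k}^{-1}Z_{i_1i_2}Z_{i_2i_3}\cdots Z_{i_ki_1},
\]
and $\varphi(|X|^k)\ge\Tr|Z|^k$ by \eqref{eq:alt}. The weight of a cycle is $\theta^{a}$ with $a=(k-1)[i_1\ne n]-\#\{j\ge2:i_j\ne n\}$, so $a\le k-1$. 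Equality $a=k-1$ holds exactly when $i_1\in\{1,\dots,n-1\}$ and $i_2=\dots=i_k=n$. Every other multi-index has weight at most $\theta^{k-2}$, and its product term is bounded by $\|Z\|^k\le\Tr|Z|^k$. There are at most $n^k$ such terms, which gives the $n^k\theta^{k-2}$ part. The top part is
\[
T=Z_{nn}^{k-2}\sum_{i<n}Z_{in}Z_{ni}.
\]
By Cauchy--Schwarz, $|T|\le|Z_{nn}|^{k-2}\beta\gamma$ with $\beta,\gamma$ the norms of the off-diagonal column and row. The compression trick of Proposition~\ref{prop:pz3}\ref{pz3:b} gives a $2\times2$ compression $W$ with $W_{12}=\beta$, $W_{21}=\gamma$, $W_{22}=Z_{nn}$, whose singular values are dominated by those of $Z$. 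It therefore suffices to prove the two-by-two estimate
\[
|W_{22}|^{k-2}|W_{12}||W_{21}|\le\nu_k(s_1^k+s_2^k)\qquad(W\in M_2).
\]

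This $2\times2$ estimate is the main obstacle. I would maximise $x^{k-2}yz$ with $x=|W_{22}|$, $y=|W_{12}|$, $z=|W_{21}|$, subject to $x^2+y^2+z^2\le s_1^2+s_2^2=:S$. I would also use the extra constraint that a $2\times2$ matrix forces $|W_{12}W_{21}|\le s_1s_2+|W_{11}W_{22}|$, and more usefully $xy\le s_1s_2$-type bounds coming from the determinant, though I am unsure which of these the intended argument uses. The plain AM--GM route with only the Frobenius constraint gives $x^2=\frac{k-2}{k}S$ and $y^2=z^2=S/k$, hence $\frac1k(1-\frac2k)^{(k-2)/2}S^{k/2}$. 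Converting $S^{k/2}\le2^{k/2-1}(s_1^k+s_2^k)$ loses a factor, so that alone will not give $\nu_k=\frac2k(1-\frac2k)^{(k-2)/2}$.

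Instead I would exploit $y,z\le s_1$ and $|W_{22}|\le s_1$ together with $|\det W|=s_1s_2$. Normalise $s_1=1$ and $s_2=t\in[0,1]$, and bound $x^{k-2}yz\le x^{k-2}\cdot\frac{y^2+z^2}{2}\le x^{k-2}\,\frac{1+t^2-x^2}{2}$. Maximising in $x$ gives $\frac1k(1-\frac2k)^{(k-2)/2}(1+t^2)^{k/2}$. One then checks $\frac1k(1+t^2)^{k/2}\cdot(\ldots)\le\nu_k(1+t^k)$. This fails near $t=1$, so the determinant must enter there: when $t$ is near $1$, $W$ is near-unitary and the off-diagonal entries are small. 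For $k=4$ the constant $1/(2\cdot3^{3/4})$ suggests the extremum sits at an interior $t$ and needs a separate one-variable optimisation. I would carry out this one-parameter maximisation explicitly, with the unitary-times-diagonal parametrisation $W=V\diag(1,t)V'$, and verify that it yields $\nu_k$.
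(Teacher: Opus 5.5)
Your reduction of (a) to a $2\times2$ inequality matches the paper's proof: the weight count, the bound $n^k\theta^{k-2}$ for the lower-order terms, and the compression $W$ with $W_{12}=\beta$, $W_{21}=\gamma$, $W_{22}=Z_{nn}$. Part (b) and the final comparison also match. The gap is the estimate $|W_{22}|^{k-2}|W_{12}||W_{21}|\le\nu_k\Tr|W|^k$ itself, which is the whole content of (a). You leave it as a one-parameter optimisation you have not carried out. The ingredient you go looking for, the determinant, is not what is needed for $k\ge5$.

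What you are missing is that $W_{22}$ shares a column with $W_{12}$ and a row with $W_{21}$. With $x=|W_{22}|$, $y=|W_{12}|$, $z=|W_{21}|$ this gives
$x^2+y^2=\|We_2\|^2\le\|W\|^2$ and $x^2+z^2=\|W^*e_2\|^2\le\|W\|^2$. Normalising $\|W\|=1$ yields $yz\le1-x^2$, hence
\[
x^{k-2}yz\le x^{k-2}(1-x^2)\le\max_{0\le u\le1}u^{(k-2)/2}(1-u)=\tfrac2k\bigl(1-\tfrac2k\bigr)^{(k-2)/2}=\nu_k ,
\]
with the maximum at $u=1-\frac2k$. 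Then $\|W\|^k\le\Tr|W|^k$ finishes the case $k\ge5$. The paper instead transposes $W$ to arrange $z\le y$ and uses only the column. Your bounds used the Frobenius constraint $x^2+y^2+z^2\le s_1^2+s_2^2$, which spreads the entries over both singular values. That is exactly where the factor $2^{k/2-1}$ you noticed is lost.

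For $k=4$ the column argument gives only $\frac14=\frac1k$, which is useless in the final comparison, so here both singular values must be used. The paper writes $\Tr|W|^4=\bigl(\sum_{ij}|W_{ij}|^2\bigr)^2-2|\det W|^2$. It then bounds $|\det W|\le wx+yz\le\sqrt{PQ}$ with $w=|W_{11}|$, $P=w^2+y^2$, $Q=x^2+z^2$ by Cauchy--Schwarz. This gives $\Tr|W|^4\ge P^2+Q^2\ge y^4+(x^2+z^2)^2$. Two facts finish the case:
\begin{itemize}
\item $\max_{y>0}y/(y^4+K)=\frac{3^{3/4}}4K^{-3/4}$ for $K=(x^2+z^2)^2$;
\item $x^2z\,(x^2+z^2)^{-3/2}\le2\cdot3^{-3/2}$.
\end{itemize}
The product of these two constants is exactly $\nu_4$. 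So your guess that the determinant must enter is right only for $k=4$. There it enters through this identity, not through a parametrisation $W=V\diag(1,t)V'$.

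There is also a small slip in your check of $\nu_k<\frac1k$. Since $e^{-3/5}\approx0.549>\frac12$, your bound only settles $k\ge7$. The cases $k=5,6$ need a direct check, $(3/5)^{3/2}\approx0.465$ and $(2/3)^2\approx0.444$. Alternatively, as in the paper, note that $(1-\frac1m)^{m-1}$ is strictly decreasing in $m$ and equals $\frac12$ at $m=2$.
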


\begin{proof}
\emph{The constants.} $4\nu_4=2\cdot3^{-3/4}<1$. For $k\ge5$, $k\nu_k=2e^{g(k/2)}$ with
$g(m)=(m-1)\ln(1-\frac1m)$; since $g'(m)=\ln(1-\frac1m)+\frac1m<0$ and $g(2)=-\ln2$, we get
$k\nu_k<1$.

\emph{A $2\times2$ estimate.} We show that every $W\in M_2(\C)$ satisfies
\begin{equation}\label{eq:wk}
|W_{22}|^{k-2}|W_{12}|\,|W_{21}|\le\nu_k\Tr|W|^k.
\end{equation}
Let $x,a,b,c$ be the moduli of $W_{11},W_{22},W_{12},W_{21}$. Let $k\ge5$. Replacing $W$ by its
transpose, which has the same singular values, we may assume $c\le b$. Then
$\|W\|^2\ge\|We_2\|^2=a^2+b^2$, so $\Tr|W|^k\ge(a^2+b^2)^{k/2}$, and
$a^{k-2}bc\le a^{k-2}b^2=(a^2+b^2)^{k/2}(1-s)^{(k-2)/2}s$ with $s=b^2/(a^2+b^2)$; the maximum over
$s\in[0,1]$ is attained at $s=2/k$ and equals $\nu_k$. For $k=4$ this argument gives \eqref{eq:wk}
only with the constant $1/4=1/k$ in place of $\nu_4$, whereas the final comparison of~\ref{pz4:a}
with~\ref{pz4:b} needs a constant $<1/k$; so we argue differently. Let $k=4$. With singular values $s_1,s_2$,
$\Tr|W|^4=(s_1^2+s_2^2)^2-2s_1^2s_2^2=(\sum|W_{ij}|^2)^2-2|\det W|^2$, and
$|\det W|\le xa+bc\le P^{1/2}Q^{1/2}$ with $P=x^2+b^2$, $Q=a^2+c^2$. Hence
$\Tr|W|^4\ge(P+Q)^2-2PQ=P^2+Q^2\ge b^4+(a^2+c^2)^2$. For fixed $a,c$ and $K=(a^2+c^2)^2$,
$\max_{b>0}b/(b^4+K)=\frac{3^{3/4}}4K^{-3/4}$ (at $b^4=K/3$), and
$a^2c\,(a^2+c^2)^{-3/2}\le2\cdot3^{-3/2}$ (at $c^2=a^2/2$). The product of the two constants
is $\nu_4$.

\ref{pz4:a} Let $\rho=\diag(\kappa,\dots,\kappa,1)$, $\epsilon=\rho^{1/k}=\diag(\theta,\dots,\theta,1)$ and
$Z=X\epsilon$. As in the proof of Proposition~\ref{prop:pzk}\ref{pz:inv},
\[
\varphi(X^k)=\sum_{i_1,\dots,i_k}\epsilon_{i_1}^{k-1}\epsilon_{i_2}^{-1}\cdots\epsilon_{i_k}^{-1}
Z_{i_1i_2}Z_{i_2i_3}\cdots Z_{i_ki_1},\qquad \varphi(|X|^k)\ge\Tr|Z|^k,
\]
and every product of $k$ entries of $Z$ is bounded by $\|Z\|^k\le\Tr|Z|^k$. The weight is
$\theta^{k-1}$ if $i_1<n$ and $i_2=\dots=i_k=n$, and at most $\theta^{k-2}$ for each of the fewer
than $n^k$ other index tuples. The $\theta^{k-1}$-part is $T=Z_{nn}^{k-2}\sum_{i<n}Z_{in}Z_{ni}$, so
$|T|\le|Z_{nn}|^{k-2}\beta\gamma$ with $\beta=(\sum_{i<n}|Z_{in}|^2)^{1/2}$ and
$\gamma=(\sum_{i<n}|Z_{ni}|^2)^{1/2}$. If $\beta\gamma\ne0$, the compression $W$ of $Z$ constructed as
in the proof of Proposition~\ref{prop:pz3}\ref{pz3:b}, with $\operatorname{span}\{e_1,\dots,e_{n-1}\}$
and $e_n$ in place of $\operatorname{span}\{e_1,e_2\}$ and $e_3$, has $W_{12}=\beta$, $W_{21}=\gamma$,
$W_{22}=Z_{nn}$, and singular values not exceeding those of $Z$. Hence $\Tr|W|^k\le\Tr|Z|^k$, and
\eqref{eq:wk} gives $|T|\le\nu_k\Tr|Z|^k$. Altogether
$|\varphi(X^k)|\le(\nu_k\theta^{k-1}+n^k\theta^{k-2})\varphi(|X|^k)$.

\ref{pz4:b} is Proposition~\ref{prop:pzk}\ref{pz:cyc} for $\kappa,1,\dots,1$, and $\theta^k+k-1>\theta^k$.

Finally, if $\theta\ge n^k/(\frac1k-\nu_k)$, then $(\frac1k-\nu_k)\theta^{k-1}\ge n^k\theta^{k-2}$, and the
claim follows from \ref{pz4:a} and \ref{pz4:b}.
\end{proof}

As for $k=3$, the threshold is conservative (for $k=n=4$ it is about $5\cdot10^{15}$), while
numerically the constants differ already at $\kappa=10$.

\begin{remark}\label{rem:pznum}
By Proposition~\ref{prop:pzG}, $c_k\ge c_2=G_\cZ$ for every $k$ and $\liminf_kc_k\ge c_\infty$.
Numerical optimisation (not used in the proofs) indicates that $c_2<c_3<c_4<\dots$ (in $M_2(\C)$
for $1.05\le\kappa\le1000$ and $k\le7$; near $\kappa=1$, $c_k-G_\cZ=O(\LNC^4)$), that the
limit is $c_\infty$, and that for fixed $\kappa$ the constant $c_k$,
$k\ge3$, is nondecreasing in the multiplicity of the bottom of the spectrum and does not depend on that of
the top; in all cases computed it increases strictly from multiplicity $1$ to $2$ (provably for
$k=3$ and large $\kappa$, by Proposition~\ref{prop:pz3}; see the examples below), but not always beyond (for $k=4$,
$\kappa=10$, multiplicities $2$ and $3$ give the same value).
For $\kappa=10$, for instance, $c_3(\diag(10,1))\approx c_3(\diag(10,10,1))\approx1.7790$ while
$c_3(\diag(10,1,1))\ge(10+2)/(3\cdot10^{1/3})\approx1.8566$; and
$c_4(\diag(10,1))\approx c_4(\diag(10,10,1))\approx c_4(\diag(10,10,10,1))\approx1.8050$ while
$c_4(\diag(10,1,1))\approx c_4(\diag(10,1,1,1))\approx1.8725$.
\end{remark}

\subsection{The Bikchentaev--Abed inequality}
Let
\begin{gather*}
\gamma(\rho)=\sup_{A,B\in\cM_+\setminus0}\frac{-\re\varphi(AB)}{\sqrt{\varphi(A^2)\varphi(B^2)}},\\
\mathcal K(\rho)=\{K\in\R:\varphi(A^2+B^2+K(AB+BA))\ge0\ \text{ for all }A,B\in\cM_+\}.
\end{gather*}

\begin{theorem}[Item 28]\label{thm:ba}
Let $\cM\ne\C1$. Then $\gamma(\rho)=\tanh^2\frac{\LNC(\rho)}2=\Bigl(\frac{\sqrt{\kappa_\cZ}-1}{\sqrt{\kappa_\cZ}+1}\Bigr)^2=\frac{G_\cZ-1}{G_\cZ+1}$,
and $\mathcal K(\rho)=[-1,K^*(\rho)]$ with
\[
K^*(\rho)=\frac1{\gamma(\rho)}=\coth^2\frac{\LNC(\rho)}2=\frac{G_\cZ+1}{G_\cZ-1}\qquad(K^*=+\infty\text{ iff }\rho\in\cZ).
\]
Equivalently, $\re\varphi(AB)\ge-\tanh^2(\LNC(\rho)/2)\sqrt{\varphi(A^2)\varphi(B^2)}$ for all
$A,B\in\cM_+$, with the optimal constant. In $M_n(\C)$ the supremum is attained.
\end{theorem}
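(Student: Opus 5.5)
The plan is to prove the upper bound $\gamma(\rho)\le\bigl(\frac{\sqrt\kappa-1}{\sqrt\kappa+1}\bigr)^2$ in an arbitrary finite $\cM$ with the ambient condition number, to check (R), and to produce a matching $2\times2$ example. Theorem~\ref{thm:transfer} then replaces $\kappa$ by $\kappa_\cZ$; Remark~\ref{rem:attain} gives attainment on $M_n$. The statement about $\mathcal K(\rho)$ is an elementary consequence of the value of $\gamma$. For (R), $\sigma_z(A,B)=(z^{-1/2}A,z^{-1/2}B)$ works, since $z$ is central and $N,D$ are homogeneous of degree $1$ in $\rho$ and of degree $2$ in $(A,B)$ jointly. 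Also $\gamma\ge0$ whenever $\cM\ne\C1$: take two orthogonal nonzero projections, for which $\varphi(AB)=0$.

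\emph{Upper bound.} Work on $L^2(\cM,\tau)$ with the commuting positive operators $L=L_\rho$ and $R=R_\rho$. Then $\varphi(A^2)=\langle LA,A\rangle$ and $\re\varphi(AB)=\langle MA,B\rangle$ with $M=\frac12(L+R)$, where the second identity uses $\tau(\rho AB)+\tau(\rho BA)=\langle LA,B\rangle+\langle RA,B\rangle$; likewise $\varphi(A^2)=\langle MA,A\rangle$. Factor $M=\Phi C$, where $\Phi=L^{1/2}R^{1/2}$ and $C=\frac12(D+D^{-1})$ with $D=L^{1/2}R^{-1/2}$. Here $\sigma(D)\subset[\kappa^{-1/2},\kappa^{1/2}]$, so $\sigma(C)\subset[1,\frac{1+\sqrt\kappa}{2\kappa^{1/4}}]$, that is, $[1,G(\rho)^{1/2}\!\cdot\!\ldots]$. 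More precisely, the top of $\sigma(C)$ is $\cosh(\OLC(\rho)/2)=:G_{1/2}$. Put $a=\Phi^{1/2}A=\rho^{1/4}A\rho^{1/4}$ and $b=\rho^{1/4}B\rho^{1/4}$. Both are positive, hence $\langle a,b\rangle=\tau(ba)\ge0$. Moreover $\langle MA,B\rangle=\langle Ca,b\rangle$ and $\varphi(A^2)=\langle Ca,a\rangle$, and similarly for $B$.

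Lemma~\ref{lem:wielandt}\ref{w:one}, applied to $T=C$, now gives
\[
-\re\varphi(AB)\le\OMC(C)\sqrt{\varphi(A^2)\varphi(B^2)},\qquad
\OMC(C)=\frac{G_{1/2}-1}{G_{1/2}+1}\ \text{at most}.
\]
I expect the main obstacle to be checking that this is exactly $\tanh^2(\OLC/2)$: using the extreme points $1$ and $\cosh t$ of $\sigma(C)$ with $t=\OLC(\rho)/2$, one gets $(\cosh t-1)/(\cosh t+1)=\tanh^2(t/2)$. That is only $\tanh^2(\OLC/4)$, which is too good. So I will recheck the parametrisation. The intended factorisation is probably $\Phi=L^{1/2}R^{1/2}$ with $D=(LR^{-1})^{1/2}$ having spectrum in $[\kappa^{-1/2},\kappa^{1/2}]$. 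In that case $C$ has top $\cosh(\frac14\ln\kappa)$, which would contradict the lower bound. The likely fix is $\sigma(D)\subset[\kappa^{-1},\kappa]$ after all, giving top $G(\rho)$ and contrast $(G-1)/(G+1)=\tanh^2(\OLC/2)$. I would settle this by computing on $M_2$ with $\rho=\diag(\kappa,1)$.

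\emph{Example and the set $\mathcal K$.} For the example, take rank-one $A=|x\rangle\langle x|$ and $B=|y\rangle\langle y|$ in $M_2$ with $\rho=\diag(\kappa,1)$. I will choose $x,y$ so that $a,b$ realise equality in the Wielandt step: $\langle a,b\rangle=0$, and $a\pm b$ are eigenvectors of $C$ for its extreme eigenvalues. This amounts to a one-parameter optimisation over the angle between $x$ and $y$, with a suitable phase. For $\mathcal K(\rho)$: if $K<-1$, then $A=B$ gives $(2+2K)\varphi(A^2)<0$. For $-1\le K\le0$, the Cauchy--Schwarz inequality $|\varphi(AB)|\le\sqrt{\varphi(A^2)\varphi(B^2)}$ together with the AM--GM inequality gives nonnegativity. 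For $K>0$, the bound on $\gamma$ gives $\varphi(\cdot)\ge\varphi(A^2)+\varphi(B^2)-2K\gamma\sqrt{\varphi(A^2)\varphi(B^2)}$, which is $\ge0$ iff $K\gamma\le1$. Scaling $A,B$ so that $\varphi(A^2)=\varphi(B^2)$ along a near-extremal pair shows that $K>1/\gamma$ fails.
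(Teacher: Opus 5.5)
Your route is the paper's. Your $C=\frac12\bigl(L_\rho^{1/2}R_\rho^{-1/2}+L_\rho^{-1/2}R_\rho^{1/2}\bigr)$ is exactly the paper's $\Phi=\frac12(L_DR_{D^{-1}}+L_{D^{-1}}R_D)$ with $D=\rho^{1/2}$, and your $a=\rho^{1/4}A\rho^{1/4}$ is its $S$. The one-sided Wielandt lemma, the transfer principle and the analysis of $\mathcal K(\rho)$ are also the same.

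However, the one quantitative step, the top of $\sigma(C)$, is wrong as written, and the ``fix'' you settle on does not give what you claim. Your first inclusion $\sigma(D)\subset[\kappa^{-1/2},\kappa^{1/2}]$ is correct: $D^2=L_\rho R_\rho^{-1}$ is a function of the commuting operators $L_\rho,R_\rho$ and has spectrum in $[\kappa^{-1},\kappa]$. The error is the arithmetic after it: $\frac12(\kappa^{1/2}+\kappa^{-1/2})=\frac{1+\kappa}{2\sqrt\kappa}=\cosh\OLC(\rho)=G(\rho)$, not $\frac{1+\sqrt\kappa}{2\kappa^{1/4}}=\cosh(\OLC(\rho)/2)$. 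Hence $1\le C\le G$ directly. Lemma~\ref{lem:wielandt}\ref{w:one}, whose proof uses only these two bounds, then gives $-\re\varphi(AB)\le\frac{G-1}{G+1}\sqrt{\varphi(A^2)\varphi(B^2)}$ with $\frac{G-1}{G+1}=\tanh^2\frac{\OLC(\rho)}2$, as required. The same slip is behind your remark that $C$ would have top $\cosh(\frac14\ln\kappa)$.

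The inclusion $\sigma(D)\subset[\kappa^{-1},\kappa]$ in your ``fix'' is true but too weak. It only gives $C\le\frac12(\kappa+\kappa^{-1})=\cosh(2\OLC(\rho))$ and the non-sharp constant $\tanh^2\OLC(\rho)$, not the top $G$ you attribute to it. The $M_2$ check you mention settles the matter at once: for $\rho=\diag(\rho_1,\rho_2)$ one has $Ce_{ij}=\frac12\bigl(\sqrt{\rho_i/\rho_j}+\sqrt{\rho_j/\rho_i}\bigr)e_{ij}$, so $\sigma(C)=\{1,G\}$ for $\rho=\diag(\kappa,1)$.

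The same computation turns your description of the equality case into an explicit example, with no optimisation. $C$ acts as the Schur multiplier with entries $1$ on the diagonal and $G$ off it. Take $x=\rho^{-1/4}(1,1)$ and $y=\rho^{-1/4}(1,-1)$, i.e.\ $a=\bigl(\begin{smallmatrix}1&1\\1&1\end{smallmatrix}\bigr)$ and $b=\bigl(\begin{smallmatrix}1&-1\\-1&1\end{smallmatrix}\bigr)$. Then $\Tr(ba)=0$, and $a+b$ is diagonal while $a-b$ is off-diagonal, so they are eigenvectors of $C$ for $1$ and $G$. One gets $\langle Ca,b\rangle=2-2G$ and $\langle Ca,a\rangle=\langle Cb,b\rangle=2+2G$, so the ratio is $(G-1)/(G+1)$; this is the paper's example.

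Finally, your $\sigma_z$ has the wrong exponent. $(z^{-1/2}A,z^{-1/2}B)$ gives $N_\rho(\sigma_zX)=N_{z^{-1}\rho}(X)$, whereas (R) as defined needs $(z^{1/2}A,z^{1/2}B)$. This is harmless, since $z\mapsto z^{-1}$ is a bijection of $\cZ_{++}$.
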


\begin{proof}
\emph{Reduction.} $\varphi(AB+BA)=2\re\varphi(AB)$, since
$\overline{\tau(\rho AB)}=\tau(BA\rho)$. Replacing $(A,B)$ by $(sA,B/s)$ and minimising
$s^2\varphi(A^2)+s^{-2}\varphi(B^2)$ shows that $K\in\mathcal K(\rho)$ iff
$-K\re\varphi(AB)\le\sqrt{\varphi(A^2)\varphi(B^2)}$ for all $A,B\ge0$. For $K\in[-1,0]$ this holds
by the Cauchy--Schwarz inequality for $(X,Y)\mapsto\varphi(Y^*X)$; for $K<-1$ it fails at $A=B$;
for $K>0$ it holds iff $K\gamma\le1$. Since $\gamma\ge0$ (take nonzero $A,B\ge0$ with $AB=0$, which
exist as $\cM\ne\C1$), $\mathcal K=[-1,1/\gamma]$.

\emph{Upper bound with $\kappa$.} Let $D=\rho^{1/2}$, $L=\OLC(\rho)$,
$S=\rho^{1/4}A\rho^{1/4}$, $T=\rho^{1/4}B\rho^{1/4}$; $A\mapsto S$ is a bijection of $\cM_+$. By
cyclicity $\varphi(AB)=\tau(DSD^{-1}T)$, and
$\overline{\tau(DSD^{-1}T)}=\tau(D^{-1}SDT)$, so
\[
\re\varphi(AB)=\tau(\Phi(S)T),\quad\varphi(A^2)=\tau(\Phi(S)S),\quad\varphi(B^2)=\tau(\Phi(T)T),
\]
where $\Phi=\frac12(L_DR_{D^{-1}}+L_{D^{-1}}R_D)$ acts on $L^2(\cM,\tau)$.
$\Phi$ is a function $g(L_D,R_D)$ of two commuting positive operators with spectra in
$[\|D^{-1}\|^{-1},\|D\|]$, where $g(s,t)=\frac12(s/t+t/s)\in[1,\cosh L]$, so $\sigma(\Phi)\subset[1,\cosh L]$
on $L^2(\cM,\tau)$. Since $\langle S,T\rangle=\tau(TS)=\tau(S^{1/2}TS^{1/2})\ge0$,
Lemma~\ref{lem:wielandt}\ref{w:one} gives
$-\re\varphi(AB)\le\frac{\cosh L-1}{\cosh L+1}\sqrt{\varphi(A^2)\varphi(B^2)}$, and
$\frac{\cosh L-1}{\cosh L+1}=\tanh^2\frac L2$.

\emph{(R).} $\sigma_z(A,B)=(z^{1/2}A,z^{1/2}B)$: $\tau(\rho z^{1/2}Az^{1/2}B)=\tau(z\rho AB)$, and
similarly for $A^2$, $B^2$.

\emph{Example.} In $M_2$ with $\rho=\diag(\kappa,1)$ let $A=|a\rangle\langle a|$,
$B=|b\rangle\langle b|$ with $a=\rho^{-1/4}(1,1)$, $b=\rho^{-1/4}(1,-1)$, i.e.\
$S=\bigl(\begin{smallmatrix}1&1\\1&1\end{smallmatrix}\bigr)$,
$T=\bigl(\begin{smallmatrix}1&-1\\-1&1\end{smallmatrix}\bigr)$. $\Phi$ acts as the Schur multiplier
with entries $1$ on the diagonal and $C=\cosh L$ off it, so $\tau(\Phi(S)T)=2-2C$ and
$\tau(\Phi(S)S)=\tau(\Phi(T)T)=2+2C$, and the ratio is $(C-1)/(C+1)$. The theorem follows from
Theorem~\ref{thm:transfer} applied to $\gamma$ (with $f(\kappa)=\tanh^2(\frac14\ln\kappa)$, $f(1)=0$,
attained at $AB=0$).
\end{proof}

\begin{corollary}\label{cor:ba}
Let $K>1$. Then $\varphi(A^2+B^2+K(AB+BA))\ge0$ for all $A,B\in\cM_+$ if and only if
\[
\kappa_\cZ(\rho)\le\Bigl(\frac{\sqrt K+1}{\sqrt K-1}\Bigr)^2.
\]
In particular, for $\varphi=\tau(\rho\,\cdot)$ with $\rho\in\cM_{++}$, condition (iii) of
\cite[Thm.~4.6]{bikchentaev2019abed} (there is $K_0>-1$ such that the inequality holds for all $K>K_0$ and all
$A,B\ge0$) can be weakened to: the inequality holds for arbitrarily large $K$. Every non-tracial
$\varphi$ satisfies the inequality exactly on the bounded interval $[-1,K^*(\rho)]$.
\end{corollary}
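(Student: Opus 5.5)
The plan is to read Corollary~\ref{cor:ba} off Theorem~\ref{thm:ba}, which says $\mathcal K(\rho)=[-1,K^*(\rho)]$ with $K^*=\coth^2(\LNC(\rho)/2)$ and $K^*=+\infty$ exactly when $\rho\in\cZ$. For $K>1$ the inequality holds for all $A,B\in\cM_+$ if and only if $K\le K^*(\rho)$. If $\rho\in\cZ$, then $K^*=+\infty$ and $\kappa_\cZ=1$, so both conditions hold. Otherwise I will invert the monotone relation. Since $\tanh(\LNC/2)=(\sqrt{\kappa_\cZ}-1)/(\sqrt{\kappa_\cZ}+1)$, the condition $K\le\coth^2(\LNC/2)$ is equivalent to
\[
\frac{\sqrt{\kappa_\cZ}-1}{\sqrt{\kappa_\cZ}+1}\le\frac1{\sqrt K}.
\]
The map $x\mapsto(x-1)/(x+1)$ is increasing on $[1,\infty)$, so solving for $\sqrt{\kappa_\cZ}$ gives $\sqrt{\kappa_\cZ}\le(\sqrt K+1)/(\sqrt K-1)$. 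Here $K>1$ is needed so that the right side is positive and finite. Squaring gives the stated criterion. The case $\cM=\C1$ excluded in Theorem~\ref{thm:ba} is trivial, because then $\rho$ is central and $\varphi$ is a multiple of $\tau$.

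For the remark on \cite[Thm.~4.6]{bikchentaev2019abed}, suppose the inequality holds for arbitrarily large $K$. By the theorem, $\mathcal K(\rho)=[-1,K^*(\rho)]$ is an interval, so it then contains arbitrarily large $K$. Hence $K^*(\rho)=+\infty$, which by Theorem~\ref{thm:ba} forces $\rho\in\cZ$, so $\varphi$ is a trace. Conversely, a trace satisfies the inequality for all $K\ge-1$, so condition (iii) holds with any $K_0\in(-1,0)$. The final sentence is the restatement that for $\rho\notin\cZ$ we have $K^*(\rho)<\infty$, so $\mathcal K(\rho)=[-1,K^*]$ is bounded.

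There is no real obstacle, because everything follows from Theorem~\ref{thm:ba}. The only points to check carefully are that $\mathcal K$ is an interval, so that ``arbitrarily large $K$'' really forces $K^*=\infty$, and the handling of the endpoint case $\kappa_\cZ=1$.
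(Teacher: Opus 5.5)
Your proposal is correct and is the paper's argument: the paper's proof is the one-line inversion of $K\le 1/\gamma(\rho)=K^*(\rho)$ into $\tanh(\LNC(\rho)/2)=(\sqrt{\kappa_\cZ}-1)/(\sqrt{\kappa_\cZ}+1)\le K^{-1/2}$ via Theorem~\ref{thm:ba}, and your treatment of central $\rho$, of $\cM=\C1$ and of the two closing remarks only spells out what the paper leaves implicit. One small quibble: what forces $K^*(\rho)=+\infty$ from ``arbitrarily large $K$'' is the inclusion $\mathcal K(\rho)\subseteq[-1,K^*(\rho)]$, i.e.\ the upper endpoint, not the fact that $\mathcal K(\rho)$ is an interval.
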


\begin{proof}
$K\le1/\gamma$ iff $\tanh(\LNC/2)=(\sqrt{\kappa_\cZ}-1)/(\sqrt{\kappa_\cZ}+1)\le K^{-1/2}$.
\end{proof}

\noindent For projections, $\varphi(P^2)=\varphi(P)$, and Theorem~\ref{thm:ba} becomes a sharp lower bound
for $\varphi(\Re PQ)$; this is item~21.

\begin{proposition}[Item 21]\label{prop:item21}
For all projections $P,Q\in\cM$,
\[
-\varphi(\Re PQ)\le\gamma(\rho)\sqrt{\varphi(P)\varphi(Q)}\le\frac{\gamma(\rho)}2\bigl(\varphi(P)+\varphi(Q)\bigr),\qquad
\gamma(\rho)=\tanh^2\frac{\LNC(\rho)}2=\frac{G_\cZ-1}{G_\cZ+1},
\]
where $\gamma(\rho)$ is the $\gamma$ of Theorem~\ref{thm:ba} if $\cM\ne\C1$ (on $\C1$ we use the right-hand side),
and $\gamma(\rho)/2$ is the supremum of $-\varphi(\Re PQ)/(\varphi(P)+\varphi(Q))$. In $M_n(\C)$ it is
attained and equals $\frac12\tan^2(\psi/2)$, so $V_n=\frac12\tan^2(\psi/2)=\frac{G-1}{2(G+1)}$ for item~21.
\end{proposition}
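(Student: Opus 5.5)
The plan is to read the statement off Theorem~\ref{thm:ba}, using $P^2=P$. For projections $P,Q$ we have $\varphi(\Re PQ)=\re\varphi(PQ)$, since $\overline{\varphi(PQ)}=\varphi(QP)$ as in the reduction step of that proof. We also have $\varphi(P^2)=\varphi(P)$ and $\varphi(Q^2)=\varphi(Q)$. The first inequality is therefore the bound $-\re\varphi(AB)\le\gamma(\rho)\sqrt{\varphi(A^2)\varphi(B^2)}$ of Theorem~\ref{thm:ba} with $A=P$, $B=Q$. The second inequality is the AM--GM inequality $\sqrt{xy}\le\frac12(x+y)$.

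On $\cM=\C1$ the only projections are $0$ and $1$, so $\varphi(\Re PQ)\ge0$ holds trivially. Moreover $\rho$ is central there, so $\kappa_\cZ=1$ and the right-hand side is $0$; this case needs nothing more.

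For optimality of $\gamma/2$, I would show that the supremum of $-\varphi(\Re PQ)/(\varphi(P)+\varphi(Q))$ is at least $\gamma/2$; the upper bound already holds. I would take the example of Theorem~\ref{thm:ba} in $M_2$ with $\rho=\diag(\kappa,1)$, where $A,B$ are rank one and the ratio equals $(C-1)/(C+1)$. That example uses $A=|a\rangle\langle a|$ with the non-unit vector $a=\rho^{-1/4}(1,1)$. Normalising $a,b$ gives projections $P,Q$, and since the ratio $-\re\varphi(AB)/\sqrt{\varphi(A^2)\varphi(B^2)}$ is invariant under $A\mapsto sA$, the ratio $\gamma$ is still attained by $P,Q$. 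Equality in AM--GM needs $\varphi(P)=\varphi(Q)$. The symmetry of $a$ and $b$ under $(x,y)\mapsto(x,-y)$, which commutes with $\rho$, gives $\langle\rho a,a\rangle=\langle\rho b,b\rangle$ and $\|a\|=\|b\|$, so $\varphi(P)=\varphi(Q)$ and the ratio $\gamma/2$ is attained in $M_2$.

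To pass to general finite $\cM$, I would apply Theorem~\ref{thm:transfer}\ref{t:low}. The quantity is a violation constant with two projection variables, $F_1=X_1X_2$, $F_2=X_1$, $F_3=X_2$, $\Psi_N=-\re z_1$ and $\Psi_D=|z_2|+|z_3|$, all of degree one. The transfer gives a lower bound $\gamma(\diag(\kappa_\cZ,1))/2=\gamma(\rho)/2$, using the formula for $\gamma$ from Theorem~\ref{thm:ba}. Remark~\ref{rem:attain} gives attainment in $M_n$. Finally, I would convert the constant: on $M_n$ we have $\kappa_\cZ=\kappa$ and $\LNC=\psi$'s rapidity, $\OLC=\operatorname{artanh}\sin\psi$. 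The identity $\tanh(L/2)=\tan(\psi/2)$ whenever $\tanh L=\sin\psi$ is the Gudermannian relation, and it turns $\gamma/2$ into $\frac12\tan^2(\psi/2)=\frac{G-1}{2(G+1)}$. The only real check is $\varphi(P)=\varphi(Q)$ in the example, which is immediate.
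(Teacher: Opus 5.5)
Your proposal is correct and follows the paper's proof essentially step for step: Theorem~\ref{thm:ba} with $A=P$, $B=Q$ plus AM--GM gives the inequalities. The normalised rank-one example of Theorem~\ref{thm:ba}, with $\varphi(P)=\varphi(Q)$, gives sharpness in $M_2$, and Theorem~\ref{thm:transfer}\ref{t:low} with Remark~\ref{rem:attain} gives the general lower bound and attainment. The differences are cosmetic: the paper takes $F_1=X_1X_2+X_2X_1$ with $\Psi_N=-\frac12\re z_1$, and it converts the constant via $\tan^2\frac\psi2=\frac{1-\cos\psi}{1+\cos\psi}$ with $\cos\psi=1/G$ rather than via the Gudermannian identity; both choices are equivalent to yours.
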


\begin{proof}
If $\cM=\C1$, then $P,Q\in\{0,1\}$, $\varphi(\Re PQ)\ge0$ and $\LNC(\rho)=0$, so all claims hold
trivially (the supremum is $0$, attained at $P=1$, $Q=0$); let $\cM\ne\C1$, so that
Theorem~\ref{thm:ba} applies. Since $\varphi(\Re PQ)=\re\varphi(PQ)$, $P^2=P$ and $Q^2=Q$, the first inequality is Theorem~\ref{thm:ba}, and the second
is the arithmetic--geometric mean inequality. For the lower bound let $\rho=\diag(\kappa,1)$ in $M_2$ and
let $P,Q$ be the projections onto the vectors $a=\rho^{-1/4}(1,1)$ and $b=\rho^{-1/4}(1,-1)$ of the
example in the proof of Theorem~\ref{thm:ba}. The ratio $-\re\varphi(AB)/\sqrt{\varphi(A^2)\varphi(B^2)}$ is
invariant under positive scalings of $A$ and of $B$, so it equals $\gamma$ also for $(P,Q)$; moreover
$\|a\|=\|b\|$ and $\langle\rho a,a\rangle=\langle\rho^{1/2}(1,1),(1,1)\rangle=\langle\rho b,b\rangle$, so
$\varphi(P)=\varphi(Q)$ and the arithmetic--geometric mean inequality is an equality. Hence the ratio
$-\varphi(\Re PQ)/(\varphi(P)+\varphi(Q))$ equals $\gamma(\diag(\kappa,1))/2$. This ratio is a violation
constant (two projections, $F_1=X_1X_2+X_2X_1$, $F_2=X_1$, $F_3=X_2$, $\Psi_N(z)=-\frac12\re z_1$,
$\Psi_D(z)=|z_2|+|z_3|$), and Theorem~\ref{thm:transfer}\ref{t:low} and Remark~\ref{rem:attain} give the
lower bound in $\cM$ and the attainment in $M_n$. Finally $\tan^2\frac\psi2=\frac{1-\cos\psi}{1+\cos\psi}=\frac{G-1}{G+1}$,
since $\cos\psi=1/G$.
\end{proof}

\noindent In the example, $|\langle a,b\rangle|/(\|a\|\|b\|)=(\sqrt\kappa-1)/(\sqrt\kappa+1)=\tan(\psi/2)$: the
two projections make the angle whose cosine is $\tan(\psi/2)$, and their Bloch vectors are symmetric
about the eigenvector of $\rho$ for its smaller eigenvalue.

The second inequality of~\cite{bikchentaev2019abed} that we consider, item~9, involves two projections
and the compression $QPQ$; we determine its envelope on $M_2$.

\begin{proposition}[Item 9]\label{prop:abed}
For $n=2$, item~9 has the envelope $V_2=s/(1+s)$, $s=\sin(\psi/2)$. Thus $V_2\sim\delta/2$ as $\delta\to0$ and
$V_2\to\sqrt2-1$ as $\delta\to1$.
\end{proposition}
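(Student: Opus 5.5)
Every projection in $M_2$ is $0$, $1$ or of rank one, so we split into cases. If $P$ or $Q$ is $0$ or $1$, there is no violation. For $Q=1$ both sides equal $2\varphi(P)$. For $P=1$ both sides equal $2\varphi(Q)$. If one of them is $0$, the left side vanishes and the right side is nonnegative. Since the envelope is a supremum of a quotient invariant under $\rho\mapsto t\rho$ and unitary conjugation, we may take $\rho=\frac12(1+\delta\sigma_z)$. We write the rank-one projections in Bloch form $P=\frac12(1+p\cdot\sigma)$ and $Q=\frac12(1+q\cdot\sigma)$ with unit vectors $p,q\in\R^3$, and put $c=p\cdot q$.

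The first step is to express the violation quotient in these coordinates. From $PQ=\frac14\bigl(1+c+(p+q)\cdot\sigma+i(p\times q)\cdot\sigma\bigr)$ the imaginary part drops out of $PQ+QP$, which gives $\varphi(PQ+QP)=\frac12\bigl(1+c+\delta(p_z+q_z)\bigr)$. Since $QPQ=\frac{1+c}2Q$, we get $2\varphi(QPQ)=\frac12(1+c)(1+\delta q_z)$. Finally $\varphi(P)+\varphi(Q)=\frac12\bigl(2+\delta(p_z+q_z)\bigr)$. Hence
\[
\frac{\cL-\cR}{\cN}=F(p,q)=\frac{\delta\,(p_z-c\,q_z)}{2+\delta(p_z+q_z)},
\]
and $V_2(\delta)=\sup F$ over pairs of unit vectors.

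The second step reduces the dimension. For fixed $q$ and fixed $p_z$, the numerator increases as $c$ decreases when $q_z>0$, and the denominator does not depend on $c$. So $p$ should lie in the plane spanned by $e_z$ and $q$, on the side that minimises $c\,q_z$. After this, $p$ and $q$ are coplanar with $e_z$, say $p=(\sin a,\cos a)$ and $q=(\sin b,\cos b)$ in that plane, with $c=\cos(a-b)$. It remains to maximise
\[
F(a,b)=\frac{\delta\bigl(\cos a-\cos(a-b)\cos b\bigr)}{2+\delta(\cos a+\cos b)}=\frac{\delta\,\sin b\,\sin(b-a)}{2+\delta(\cos a+\cos b)} .
\]

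The main obstacle is this two-variable maximisation, together with identifying its value as $s/(1+s)$ with $\delta=\sin\psi$ and $s=\sin(\psi/2)$. I would first fix $b$ and maximise in $a$. The numerator has the form $A\sin(a-\alpha)$ and the denominator the form $B+C\cos a$, so the critical point satisfies a linear equation in $\cos a$ and $\sin a$. It can be solved by the tangent-of-the-angle trick: the maximum of $(u\cos a+v\sin a)/(B+C\cos a)$ comes from the tangent line from a point to a circle. This yields a closed form in $b$. Then I would optimise in $b$, and I expect the optimum to be a symmetric configuration in which $p$ and $q$ make related angles with the axis, making the half-angle $\psi/2$ appear. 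To confirm the value, I would substitute the candidate extremiser and check $F=\sin(\psi/2)/(1+\sin(\psi/2))$, using $1-\cos\psi=2s^2$ and $\sin\psi=2s\cos(\psi/2)$.

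For the upper bound, I would verify that $\frac{s}{1+s}\bigl(2+\delta(p_z+q_z)\bigr)-\delta(p_z-cq_z)\ge0$ on the reduced torus, for instance by writing it as a sum of squares or a nonnegative quadratic form in $(\cos a,\sin a)$ for each fixed $b$. Finally, the asymptotics are immediate: $s\sim\delta/2$ gives $V_2\sim\delta/2$, and as $\delta\to1$ we have $\psi\to\pi/2$, so $s\to1/\sqrt2$ and $V_2\to1/(1+\sqrt2)=\sqrt2-1$.
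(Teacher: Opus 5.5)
Your reduction is correct: the Bloch formula $F=\delta(p_z-c\,q_z)/\bigl(2+\delta(p_z+q_z)\bigr)$, the identity $\cos a-\cos(a-b)\cos b=\sin b\,\sin(b-a)$, the passage to coplanar $p,q,e_z$, the trivial cases and the asymptotics are all fine. The coplanarity step is the analogue of the paper's phase choice $v^*\rho v_\perp=\mu>0$.

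The gap is that the argument stops exactly where the content of the proposition begins: you never prove that $\sup F=s/(1+s)$. The two-variable maximisation is only announced. The extremiser is only guessed, and there is no symmetry to exploit: the maximum sits at $q_z=-\tan\frac\psi2$, $p\cdot q=\tan^2\frac\psi2$, so for small $\delta$ the vector $p$ is near $e_z$ and $q$ near the equator. The upper bound is deferred to a sum-of-squares certificate that you do not produce. As written, this is a correct reduction plus a plan, not a proof of the value.

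The missing step is a splitting that separates the variables. From your formulas, $\cN-(\cL-\cR)=\tfrac12\bigl(2+\delta q_z(1+c)\bigr)\ge1-\delta>0$. Hence $F=r/(1+r)$ with $r=(\cL-\cR)/\bigl(\cN-(\cL-\cR)\bigr)$, and $F$ is increasing in $r$. In your coordinates, put $\phi=b-a$ and assume $\sin b\ge0$, which is allowed since $F(a,b)=F(-a,-b)$. Then
\[
r=\frac{\delta\sin b\,\sin\phi}{(2+\delta\cos b)+\delta\cos b\,\cos\phi}\le\frac{\delta\sin b}{\sqrt{(2+\delta\cos b)^2-\delta^2\cos^2b}}=\frac{\delta\sin b}{2\sqrt{1+\delta\cos b}}.
\]
This uses $\max_\phi\sin\phi/(m+k\cos\phi)=(m^2-k^2)^{-1/2}$ for $m>|k|$, here with $m=2+\delta\cos b$, $k=\delta\cos b$, $m-|k|\ge2-2\delta$.

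Hence $r^2\le\frac{\delta^2}4\cdot\frac{1-x^2}{1+\delta x}$ with $x=\cos b$. The right side vanishes at $x=\pm1$ and is maximal at the root $x=-\tan\frac\psi2$ of $\delta x^2+2x+\delta=0$. There $1+\delta x=\cos\psi$ and $1-x^2=\cos\psi/\cos^2\frac\psi2$, so $r\le\sin\frac\psi2=s$. Equality holds for $\cos b=-\tan\frac\psi2$, $\cos\phi=\tan^2\frac\psi2$, $\sin\phi>0$, so $\sup F=s/(1+s)$ is attained. Your tangent-line plan for fixed $b$ lands in the same place: it yields $R/(1+R)$ with $R=\delta\sin b/(2\sqrt{1+\delta\cos b})$.

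This is the paper's proof in Bloch coordinates. There $Q=vv^*$, $P=uu^*$ with $u=\cos\theta\,v+\sin\theta\,v_\perp$. One has $\cL-\cR=t\sin2\theta$ and $\cN=D+t\sin2\theta$, with $D$ independent of $t=\re(v^*\rho v_\perp)$. The same trigonometric maximum gives $r\le\mu/(2\sqrt\beta)$ with $\beta=v^*\rho v$. Then $\beta(2-\beta)-\mu^2=1-\delta^2$ (for $\Tr\rho=2$) leaves a maximisation in $\beta$ with optimum $\beta=\cos\psi$, which is your $q_z=-\tan\frac\psi2$.
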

\begin{proof}
If $P$ or $Q$ is $0$ or $1$, or $P=Q$, then $\cL=\cR$. Otherwise $P=uu^*$, $Q=vv^*$ with unit vectors; choose phases so that
$u=cv+\sigma v_\perp$ with $c=\cos\theta$, $\sigma=\sin\theta$, $0<\theta<\pi/2$, and a unit vector $v_\perp\perp v$.
Normalise $\Tr \rho=2$, so that $\rho$ has the eigenvalues $1\pm\delta$, and put $\beta=v^*\rho v$, $\beta'=v_\perp^*\rho v_\perp=2-\beta$,
$t=\Re(v^*\rho v_\perp)$ and $\mu=|v^*\rho v_\perp|$; then $\beta\beta'-\mu^2=\det \rho=1-\delta^2$. Since $PQ+QP=c(uv^*+vu^*)$
and $QPQ=c^2Q$,
\[
  \cL-\cR=2c\Re(v^*\rho u)-2c^2\beta=2c\sigma t,\qquad \cN=u^*\rho u+\beta=\beta(1+c^2)+\sigma^2\beta'+2c\sigma t .
\]
With $D=\beta(1+c^2)+\sigma^2\beta'>0$ the quotient $2c\sigma t/(D+2c\sigma t)$ increases with $t\le\mu$, so it is at most
$r/(1+r)$ with
\[
  r=\frac{2c\sigma\mu}{D}=\frac{\mu\sin2\theta}{\beta+1+(\beta-1)\cos2\theta}\le\frac{\mu}{\sqrt{(\beta+1)^2-(\beta-1)^2}}
  =\frac\mu{2\sqrt\beta},
\]
using $\beta'=2-\beta$ and $\max_x\sin x/(p+q\cos x)=(p^2-q^2)^{-1/2}$ for $p>|q|$. Finally
$r^2\le\mu^2/(4\beta)=\frac14\bigl(2-\beta-(1-\delta^2)/\beta\bigr)$, which is maximal at $\beta=\sqrt{1-\delta^2}=\cos\psi$
with the value $\frac12(1-\cos\psi)=\sin^2(\psi/2)$. All steps are equalities for $v$ with $v^*\rho v=\cos\psi$, a phase of
$v_\perp$ with $v^*\rho v_\perp=\mu>0$, and the optimal $\theta$, and $r\mapsto r/(1+r)$ is increasing.
\end{proof}

\subsection{Young's inequality}\label{ssec:young}
Let $1<p<\infty$, $1/p+1/q=1$, and
\[
c_p(\rho)=\sup_{A,B\in\cM}\frac{\varphi(|AB|)}{\varphi(|A|^p)^{1/p}\varphi(|B|^q)^{1/q}},\qquad
c_p^+(\rho)=\sup_{A,B\in\cM_+}\frac{\varphi(|AB|)}{\varphi(A^p)^{1/p}\varphi(B^q)^{1/q}}.
\]
Replacing $(A,B)$ by $(sA,B/s)$ and using $\inf_s(s^pa/p+s^{-q}b/q)=a^{1/p}b^{1/q}$, $c_p$ is also the
best constant $c$ in $\varphi(|AB|)\le c\,\varphi(|A|^p/p+|B|^q/q)$, and similarly for $c_p^+$. The
Cho--Sano form $\varphi(|A^*B|)$ gives the same constant: with $A=U|A|$,
$|A^*B|=||A|U^*B|=|A(U^*B)|$ and $|U^*B|=|B|$, so $B\mapsto U^*B$ transforms one supremum into the
other.

\begin{lemma}\label{lem:mixedholder}
For $c,b\in\cM_+$, $\tau(\rho|cb|)\le\|\rho\|^{1/p}\,\|c\|_p\,\varphi(b^q)^{1/q}$.
\end{lemma}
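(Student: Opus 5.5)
Write $\tau(\rho|cb|)$ as $\tau(\rho\,|cb|)$ and compare $|cb|$ with $b$ through the polar decomposition. Let $cb=W|cb|$ with $W$ unitary; this is possible since $\cM$ is finite. Then $|cb|=W^*cb$, and
\[
\tau(\rho|cb|)=\tau(\rho W^*cb)=\tau\bigl(b\rho W^*c\bigr).
\]
This expression is real and nonnegative, so it equals its modulus. The plan is to apply Hölder's inequality in the weighted form adapted to $\varphi$. Split $\rho=\rho^{1/p}\rho^{1/q}$ and write
\[
\tau(\rho|cb|)=\tau(\rho|cb|^{1/2}|cb|^{1/2}),
\]
but it is cleaner to work directly with the three-factor trace. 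By cyclicity,
\[
\tau(\rho W^*cb)=\tau\bigl((\rho^{1/q}b)\,(\rho^{1/p}W^*c)\bigr)\quad\text{after moving }\rho^{1/q}\text{ around};
\]
more precisely, we use $\tau(\rho W^*cb)=\tau(b\rho^{1/q}\cdot\rho^{1/p}W^*c)$. Hölder's inequality with exponents $q,p$ then gives
\[
\tau(\rho|cb|)\le\|b\rho^{1/q}\|_q\,\|\rho^{1/p}W^*c\|_p .
\]

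Next we bound each factor. For the second, $\|\rho^{1/p}W^*c\|_p\le\|\rho^{1/p}\|\,\|c\|_p=\|\rho\|^{1/p}\|c\|_p$, using unitarity of $W$ and the ideal property of $L^p$ norms. For the first, $\|b\rho^{1/q}\|_q^q=\tau\bigl((\rho^{1/q}b^2\rho^{1/q})^{q/2}\bigr)$. We want this to be at most $\varphi(b^q)=\tau(\rho^{1/2}b^q\rho^{1/2})$, which is an Araki--Lieb--Thirring comparison. With $B=\rho^{2/q}$, $A=b^2$ and $r=q/2$, inequality \eqref{eq:alt} gives
\[
\tau\bigl((\rho^{1/q}b^2\rho^{1/q})^{q/2}\bigr)\le\tau(\rho^{1/2}b^q\rho^{1/2})=\varphi(b^q)
\]
when $q\ge2$. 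Combining the two bounds yields the lemma in that case.

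The main obstacle is the case $q<2$, i.e.\ $p>2$, where the Araki--Lieb--Thirring inequality reverses. To avoid it, I would distribute $\rho$ differently so that the factor attached to $b$ carries exactly $\rho^{1/2}$ on both sides. The first attempt is to write
\[
\tau(\rho W^*cb)=\tau\bigl(\rho^{1/2}W^*c\,b^{1/q'}\cdots\bigr),
\]
but it is more robust to use the Hölder duality stated in Section~\ref{sec:setting}. Put $y=\rho^{1/2}b^{q/2}$, so that $\|y\|_2^2=\varphi(b^q)$, and interpolate: consider $F(z)=\tau\bigl(\rho^{1-z}W^*c\,b^{qz}\rho^{z}\bigr)$ on the strip and apply the three-lines lemma between the endpoint $\mathrm{Re}\,z=0$, which is bounded by $\|\rho\|\,\|c\|_p\cdots$, and the endpoint $\mathrm{Re}\,z=1$. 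If this becomes messy, I would instead reduce to $q\ge2$ by symmetry. Alternatively, I would note that the lemma is only needed with the roles fixed as in Theorem~\ref{thm:young}, and there check which range of $q$ actually occurs, proving it via \eqref{eq:alt} in that range.
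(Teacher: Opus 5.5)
Your argument is correct for $q\ge2$, i.e.\ $1<p\le2$. There the splitting $\tau(\rho W^*cb)=\tau(b\rho^{1/q}\cdot\rho^{1/p}W^*c)$, H\"older's inequality, and \eqref{eq:alt} with $B=\rho^{2/q}$, $A=b^2$, $r=q/2\ge1$ prove the lemma. It does so more simply than the paper, with no approximation by invertible $b$ and no interpolation. For $1<q<2$, however, nothing is proved, and this range cannot be dropped: Theorem~\ref{thm:young} applies the lemma with an arbitrary $1<p<\infty$, and Proposition~\ref{prop:youngpos}\ref{yp:a} uses the resulting bound also for $p>2$.

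Your splitting cannot be rescued by a sharper estimate of the second factor. As $q\to1$ the first factor $\|b\rho^{1/q}\|_q$ tends to $\|b\rho\|_1$, which is strictly larger than $\tau(\rho b)$ whenever $b\rho\ne\rho b$. Meanwhile $\|\rho^{1/p}W^*c\|_p\ge\kappa(\rho)^{-1/p}\|\rho\|^{1/p}\|c\|_p$. So for $q$ near $1$ the H\"older bound itself exceeds the right-hand side of the lemma.

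Your fallbacks do not close the gap either. There is no $p\leftrightarrow q$ symmetry: $c$ enters through the unweighted norm $\|c\|_p$ and $b$ through $\varphi$. Exchanging the factors turns $|cb|$ into $|bc|=W|cb|W^*$, a different quantity for non-tracial $\varphi$ (item~26). Your interpolation function $F(z)=\tau(\rho^{1-z}W^*c\,b^{qz}\rho^{z})$ is bounded by $\|\rho\|\,\|c\|_1$ on $\re z=0$ and by $\|c\|\,\|b^q\rho\|_1$ on $\re z=1$. The three lines lemma then gives only $\|\rho\|^{1/p}\|c\|_1^{1/p}\|c\|^{1/q}\|b^q\rho\|_1^{1/q}$, which is weaker than the lemma in both $c$ and $b$.

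The missing idea, which is the paper's proof, is to split $b$ symmetrically around $\rho$ instead of attaching a power of $\rho$ to $b$ on one side. Then \eqref{eq:alt} is used with exponent $r=q>1$, which is valid for every $q$.
\begin{itemize}
\item By approximation let $b$ be invertible, and put $Y=|cb|$, $Z=Yb^{-1}=W^*c$ and $S=b^{-1/2}Yb^{-1/2}$.
\item Then $\tau(\rho Y)=\tau(S\,b^{1/2}\rho b^{1/2})\le\|S\|_p\,\|b^{1/2}\rho b^{1/2}\|_q$.
\item By \eqref{eq:alt} and $\rho^q\le\|\rho\|^{q-1}\rho$,
$\|b^{1/2}\rho b^{1/2}\|_q^q=\tau((\rho^{1/2}b\rho^{1/2})^q)\le\tau(\rho^qb^q)\le\|\rho\|^{q-1}\varphi(b^q)$.
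Since $(q-1)/q=1/p$, this gives exactly the $\rho$- and $b$-part of the lemma.
\item It remains to show $\|S\|_p\le\|Z\|_p\le\|c\|_p$, and this is where interpolation enters. Because $Y$ is self-adjoint, $b^{-1}Zb=b^{-1}Y=Z^*$. So $F(z)=b^{-z}Zb^{z}$ is bounded and analytic on $0\le\re z\le1$, with $\|F(iy)\|_p=\|F(1+iy)\|_p=\|Z\|_p$ and $F(\frac12)=S$.
\item The three lines lemma for $z\mapsto\tau(F(z)V)$ with $\|V\|_q\le1$, combined with H\"older duality, gives the bound.
\end{itemize}
Your $q\ge2$ argument genuinely simplifies the proof in its range, but it does not replace this step for $q<2$.
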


\begin{proof}
By norm continuity of $b\mapsto|cb|$ we may assume $b$ invertible. Let $Y=|cb|$, $cb=WY$ the polar
decomposition, $S=b^{-1/2}Yb^{-1/2}\ge0$ and $Z=b^{1/2}Sb^{-1/2}=Yb^{-1}=W^*c$, so
$\|Z\|_p\le\|c\|_p$. The function $F(z)=b^{-z}Zb^{z}$ is analytic and bounded on the strip
$0\le\re z\le1$, $F(\frac12)=S$, $\|F(iy)\|_p=\|Z\|_p$, and $F(1+iy)=b^{-iy}Z^*b^{iy}$ because
$b^{-1}Zb=b^{-1/2}Sb^{1/2}=Z^*$. By the three lines lemma applied to $z\mapsto\tau(F(z)V)$ with
$\|V\|_q\le1$ and by duality, $\|S\|_p\le\|Z\|_p\le\|c\|_p$. Next,
$\tau(\rho Y)=\tau(S\,b^{1/2}\rho b^{1/2})\le\|S\|_p\|b^{1/2}\rho b^{1/2}\|_q$, and by \eqref{eq:alt}
and $\rho^q\le\|\rho\|^{q-1}\rho$,
$\|b^{1/2}\rho b^{1/2}\|_q^q=\tau((\rho^{1/2}b\rho^{1/2})^q)\le\tau(\rho^qb^q)\le\|\rho\|^{q-1}\varphi(b^q)$.
Since $(q-1)/q=1/p$, the lemma follows.
\end{proof}

\begin{theorem}[Item 29]\label{thm:young}
$c_p(\rho)=\kappa_\cZ^{1/p}=e^{2\LNC(\rho)/p}$; that is,
$\varphi(|AB|)\le\kappa_\cZ^{1/p}\varphi(|A|^p)^{1/p}\varphi(|B|^q)^{1/q}\le\kappa_\cZ^{1/p}\varphi(|A|^p/p+|B|^q/q)$
for all $A,B\in\cM$, with the optimal constant. In $M_n(\C)$ it is attained.
\end{theorem}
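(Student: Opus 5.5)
The plan is to follow the template of Section~\ref{sec:transfer}: prove the upper bound $c_p(\rho)\le\kappa(\rho)^{1/p}$ in an arbitrary finite $\cM$, check (R), and exhibit a $2\times2$ example with ratio $\kappa^{1/p}$. Theorem~\ref{thm:transfer}\ref{t:eq} and Remark~\ref{rem:attain} then give $c_p=\kappa_\cZ^{1/p}$ with attainment in $M_n(\C)$. The second displayed inequality follows from the first by the weighted arithmetic--geometric mean inequality $x^{1/p}y^{1/q}\le x/p+y/q$.

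\emph{Upper bound.} Write $A=U|A|$ and $B=V|B|$. First, $|AB|=||A|\,V|B||$, since $U^*U=1$ in a finite algebra. Next, $|A|V|B|=(|A|V)|B|$, and I will replace $|A|V$ by a positive operator with the same $p$-norm. Put $c=|V^*|A|\,|=(|A|VV^*|A|)^{1/2}=|A|$, using that $V$ is unitary. Then $|A|V=Wc$ for a unitary $W$, namely the polar decomposition of $|A|V$, so $||A|V|B||=|Wc|B||=|c|B||$.

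Now apply Lemma~\ref{lem:mixedholder} with $c=|A|$ and $b=|B|$:
\[
\varphi(|AB|)\le\|\rho\|^{1/p}\,\||A|\|_p\,\varphi(|B|^q)^{1/q}.
\]
It remains to compare $\||A|\|_p=\tau(|A|^p)^{1/p}$ with $\varphi(|A|^p)^{1/p}$. Since $\rho\ge\|\rho^{-1}\|^{-1}$, we have $\tau(|A|^p)\le\|\rho^{-1}\|\varphi(|A|^p)$. Together with $\|\rho\|^{1/p}$ this gives the factor $(\|\rho\|\|\rho^{-1}\|)^{1/p}=\kappa^{1/p}$.

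\emph{(R).} Take $\sigma_z(A,B)=(z^{1/p}A,z^{1/q}B)$. Then $|z^{1/p}Az^{1/q}B|=z|AB|$ because $z$ is central and positive, $|z^{1/p}A|^p=z|A|^p$, and $|z^{1/q}B|^q=z|B|^q$. Hence the numerator and the denominator transform exactly as required: $\varphi_{z\rho}(\cdot)=\varphi_\rho(z\,\cdot)$.

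\emph{Example.} In $M_2$ with $\rho=\diag(\kappa,1)$, take $A=e_{12}$ and $B=e_{22}$. Then $AB=e_{12}$, so $|AB|=e_{22}$, and $|A|^p=e_{22}$, $|B|^q=e_{22}$; this gives ratio $1$, which is no good. Instead I want $|AB|$ to sit at the large eigenvalue while $|A|^p$ sits at the small one and $|B|^q$ at the large one. Take $B=e_{11}$ and $A=e_{21}$. Then $AB=e_{21}$ and $|AB|=e_{11}$, so $\varphi(|AB|)=\kappa$, while $|A|^p=e_{11}$ gives $\kappa$ again. So I need $A$ to carry the large eigenvalue into the small one in its own modulus.

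The right choice is $B=e_{12}$, so that $|B|^q=e_{22}$, and $A$ with $|A|=e_{11}$, i.e.\ $A=e_{21}$. Then $AB=e_{21}e_{12}=e_{22}$ and $\varphi(|AB|)=1$, with $\varphi(|A|^p)=\kappa$ and $\varphi(|B|^q)=1$, which is worse. Reversing, take $A=e_{12}$ with $|A|=e_{22}$ and $B=e_{21}$ with $|B|=e_{11}$. Then $AB=e_{11}$, $\varphi(|AB|)=\kappa$, $\varphi(|A|^p)=1$ and $\varphi(|B|^q)=\kappa$, so the ratio is $\kappa/\kappa^{1/q}=\kappa^{1/p}$, as required. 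The case of central $\rho$ gives $f(1)=1$ at $A=B=1$.

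The main obstacle is the reduction to $c\ge0$ in the upper bound: the identity $|A|V=Wc$ with $c=|A|$ must be justified carefully, and the example must match the exponents correctly. The rest follows directly from Lemma~\ref{lem:mixedholder}.
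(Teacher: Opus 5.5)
Your route is the paper's: Lemma~\ref{lem:mixedholder}, the rescaling $\sigma_z(A,B)=(z^{1/p}A,z^{1/q}B)$, and the same example $A=e_{12}$, $B=e_{21}$. The false starts before the example are harmless. The reduction step, however, is wrong as written.

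The right polar decomposition of $|A|V$ is $|A|V=W\,\bigl||A|V\bigr|$ with $\bigl||A|V\bigr|=(V^*|A|^2V)^{1/2}=V^*|A|V$. The operator $(|A|VV^*|A|)^{1/2}=|A|$ that you compute is the modulus of the adjoint $V^*|A|$. It appears on the \emph{left}, trivially: $|A|V=|A|\cdot V$. An identity $|A|V=W|A|$ with $W$ unitary would force $V^*|A|^2V=|A|^2$, i.e.\ $V$ commuting with $|A|$. Consequently $\bigl||A|V|B|\bigr|\ne\bigl||A|\,|B|\bigr|$ in general. For $|A|=|B|=e_{11}$ and $V$ the flip (so $A=e_{11}$, $B=e_{21}$), $|A|V|B|=0$ while $|A|\,|B|=e_{11}$. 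So your application of Lemma~\ref{lem:mixedholder} with $c=|A|$ bounds $\varphi(\bigl||A|\,|B|\bigr|)$, not $\varphi(|AB|)$.

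The fix is to take $c=V^*|A|V$, as the paper does. Then $\bigl||A|V|B|\bigr|^2=|B|V^*|A|^2V|B|=|B|c^2|B|$, so $|AB|=|c\,|B||$ directly, with no unitary $W$ needed. Moreover $\tau(c^p)=\tau(V^*|A|^pV)=\tau(|A|^p)$. The right-hand side of Lemma~\ref{lem:mixedholder} depends on $c$ only through $\|c\|_p$, so the estimate you wrote, and everything after it, goes through unchanged. With this correction your proof coincides with the paper's. (Also, $|AB|=\bigl||A|V|B|\bigr|$ needs only $A^*A=|A|^2$, not $U^*U=1$.)
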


\begin{proof}
Since $|AB|^2=B^*|A|^2B$, $|AB|=||A|B|$. With $B=V|B|$, $V$ unitary, $||A|V|B||=|c\,|B||$ for
$c=V^*|A|V$, and $\tau(c^p)=\tau(|A|^p)\le\|\rho^{-1}\|\varphi(|A|^p)$. Lemma~\ref{lem:mixedholder}
gives $c_p\le\kappa^{1/p}$. (R) holds with $\sigma_z(A,B)=(z^{1/p}A,z^{1/q}B)$, since
$|z^{1/p}Az^{1/q}B|=z|AB|$. In $M_2$ with $\rho=\diag(\kappa,1)$, $A=e_{12}$, $B=e_{21}$ give
$|AB|=e_{11}$, $|A|^p=e_{22}$, $|B|^q=e_{11}$ and ratio $\kappa/\kappa^{1/q}=\kappa^{1/p}$.
Theorem~\ref{thm:transfer} and Remark~\ref{rem:attain} complete the proof.
\end{proof}

\begin{corollary}[Item 8]\label{cor:item8}
For all $A,B\in\cM$,
\[
\varphi(|AB^*|)\le\kappa_\cZ\,\varphi(|A|^2)^{1/2}\varphi(|B|^2)^{1/2}\le\kappa_\cZ\cdot\tfrac12\bigl(\varphi(|A|^2)+\varphi(|B|^2)\bigr),
\]
and $\kappa_\cZ$ is optimal in both inequalities. In $M_n(\C)$ the constant $\kappa$ is attained at
$A=B=e_1e_2^*$, so $V_n=\kappa-1=2\delta/(1-\delta)$ for item~8 in every dimension.
\end{corollary}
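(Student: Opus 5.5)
The plan is to read the corollary as the case $p=q=2$ of Theorem~\ref{thm:young}, applied with $B^*$ in place of $B$.

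\emph{First inequality.} Theorem~\ref{thm:young} with $p=2$ gives
\[
\varphi(|AC|)\le\kappa_\cZ^{1/2}\varphi(|A|^2)^{1/2}\varphi(|C|^2)^{1/2}
\]
for all $A,C$. Put $C=B^*$. Then $|C|^2=BB^*=|B^*|^2$, but the right-hand side of the corollary involves $\varphi(|B|^2)$, not $\varphi(|B^*|^2)$. To pass from one to the other we use Theorem~\ref{thm:modulus} with $q=2$, which gives $\varphi(|B^*|^2)\le\kappa_\cZ\varphi(|B|^2)$. Combining, the constant becomes $\kappa_\cZ^{1/2}\cdot\kappa_\cZ^{1/2}=\kappa_\cZ$. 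The second inequality of the corollary is then just AM--GM.

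\emph{Optimality.} The constant item-8 ratio is a violation constant in the sense of Definition~\ref{def:viol}: take two arbitrary variables, $F_1=|X_1X_2^*|$, $F_2=|X_1|^2$, $F_3=|X_2|^2$. It therefore suffices to produce a $2\times2$ example and invoke Theorem~\ref{thm:transfer}\ref{t:low}. In $M_2$ take $\rho=\diag(\kappa,1)$ and $A=B=e_1e_2^*=e_{12}$.
\begin{itemize}
\item $AB^*=e_{12}e_{21}=e_{11}$, so $\varphi(|AB^*|)=\kappa$.
\item $|A|^2=|B|^2=e_{22}$, so both $\varphi$-values equal $1$.
\end{itemize}
The ratio is therefore $\kappa$ for both the geometric-mean and the arithmetic-mean forms, since the two $\varphi$-values coincide. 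Theorem~\ref{thm:transfer}\ref{t:low} gives $c\ge\kappa_\cZ$ in every finite $\cM$, and Remark~\ref{rem:attain} gives attainment in $M_n$ via the extreme eigenvectors. Hence $V_n=\kappa-1=2\delta/(1-\delta)$.

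\emph{Obstacles.} This is essentially bookkeeping. The one subtle point is that the upper bound needs the modulus-swap factor $\kappa_\cZ^{1/2}$, which is exactly what makes the constant $\kappa_\cZ$ rather than $\kappa_\cZ^{1/2}$. The example confirms that this loss is genuine.
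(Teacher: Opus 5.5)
Your proposal is correct and follows the paper's proof essentially step for step: Theorem~\ref{thm:young} with $p=2$ applied to $(A,B^*)$, then Theorem~\ref{thm:modulus} with $q=2$ to replace $\varphi(|B^*|^2)$ by $\kappa_\cZ\varphi(|B|^2)$, AM--GM, and the example $A=B=e_{12}$, $\rho=\diag(\kappa,1)$, carried to general $\cM$ and $M_n$ by Theorem~\ref{thm:transfer}\ref{t:low} and Remark~\ref{rem:attain}. The one point left implicit, in your proposal as in the paper, is central $\rho$: there $\kappa_\cZ=1$, Theorem~\ref{thm:transfer}\ref{t:low} does not apply, and optimality is instead witnessed by a trivial tuple such as $A=B=1$.
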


\begin{proof}
Theorem~\ref{thm:young} with $p=2$, applied to $A$ and $B^*$, gives
$\varphi(|AB^*|)\le\kappa_\cZ^{1/2}\varphi(|A|^2)^{1/2}\varphi(|B^*|^2)^{1/2}$, and Theorem~\ref{thm:modulus}
with $q=2$ gives $\varphi(|B^*|^2)\le\kappa_\cZ\varphi(|B|^2)$; the second inequality is the
arithmetic--geometric mean inequality. For the lower bound let $\rho=\diag(\kappa,1)$ in $M_2$ and
$A=B=e_{12}$. Then $|AB^*|=e_{11}$ and $|A|^2=|B|^2=e_{22}$, so both ratios equal $\kappa$. The constants
are violation constants (two arbitrary variables, $F_1=|X_1X_2^*|$, $F_2=|X_1|^2$, $F_3=|X_2|^2$), so
Theorem~\ref{thm:transfer}\ref{t:low} and Remark~\ref{rem:attain} complete the proof.
\end{proof}

\begin{remark}[Two forms of Young's inequality]\label{rem:youngforms}
Items 8 and 29 (with $p=2$) differ in the way the second operator is measured: item~29 compares
$\varphi(|AB|)$ with $\varphi(|A|^2)$ and $\varphi(|B|^2)$, item~8 compares $\varphi(|AB^*|)$ with
$\varphi(|A|^2)$ and $\varphi(|B|^2)$, that is, with $\varphi(B^*B)$ instead of $\varphi(|B^*|^2)=\varphi(BB^*)$.
For traces the two are the same. For $\varphi_\rho$ the constant of item~29 is $\kappa_\cZ^{1/2}$ and that of
item~8 is $\kappa_\cZ$; the additional factor $\kappa_\cZ^{1/2}$ is the square root of the constant
$\kappa_\cZ$ of item~26 with $q=2$, which measures the discrepancy between $\varphi(|B^*|^2)$ and
$\varphi(|B|^2)$; it enters as a square root because $\varphi(|B^*|^2)$ appears as $\varphi(|B^*|^2)^{1/2}$. Both fail linearly at the trace, item~8 as
$\kappa-1\sim2\delta$ and item~29 as $\kappa^{1/2}-1\sim\delta$ on $M_n$. For positive $A,B$ the two forms
coincide ($|AB^*|=|AB|$, $|A|^2=A^2$) and become item~30 with $p=2$, whose constant is conjecturally
$G_\cZ$ (Conjecture~\ref{conj:young}).
\end{remark}

\noindent With the contrast of the operators prescribed, the violation of item~8 is bounded below as
follows.

\begin{proposition}[Item 8 with two contrasts]\label{prop:youngtwo}
Let $\OMC(\rho)=\delta$ and let the moduli of $A$ and $B$ have contrast $\delta'$. On $M_2$, for $\rho=\mathrm{diag}(a,b)$,
$T=\mathrm{diag}(\ell',1)$ with $\ell'=\frac{1-\delta'}{1+\delta'}$, $U$ the swap of the two basis vectors and $A=B=UT$,
the extended Young inequality has the relative violation $\frac{\kappa+\ell'^2}{1+\kappa\ell'^2}-1$. Hence, for $n=2$ and, by direct
sums, for every even $n$,
\[
  V_n(\delta,\delta')\ \ge\ \frac{\kappa+\ell'^2}{1+\kappa\ell'^2}-1=\frac{4\delta\delta'}{1+\delta'^2-2\delta\delta'}.
\]
For $\delta'=1$ this is the value of Corollary~\ref{cor:item8}.
\end{proposition}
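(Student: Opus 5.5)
The plan is to compute the relative violation directly for the given $2\times2$ example, rewrite it in terms of the two contrasts, and then pass to even $n$ by block direct sums. Item~8 has normalisation $\cN=\cR$, so the relative violation is $\cL/\cR-1$ with $\cL=\varphi(|AB^*|)$ and $\cR=\frac12\varphi(|A|^2)+\frac12\varphi(|B|^2)$.

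\emph{The matrices.} Take $\rho=\diag(a,b)$ with $\kappa=a/b$, $T=\diag(\ell',1)$, $U$ the swap, and $A=B=UT$. Since $U$ is unitary, $|A|=|B|=T$, so both moduli have contrast $\OMC(T)=\frac{1-\ell'}{1+\ell'}=\delta'$, as required. Next, $AB^*=UT^2U^*=\diag(1,\ell'^2)$, which is positive, so $|AB^*|=\diag(1,\ell'^2)$. Hence
\[
\varphi(|AB^*|)=a+b\ell'^2,\qquad \tfrac12\varphi(|A|^2)+\tfrac12\varphi(|B|^2)=\varphi(T^2)=a\ell'^2+b .
\]
Dividing by $b$, the ratio is $(\kappa+\ell'^2)/(1+\kappa\ell'^2)$. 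This gives the stated relative violation for $n=2$.

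\emph{Rewriting in the contrasts.} First,
\[
\frac{\kappa+\ell'^2}{1+\kappa\ell'^2}-1=\frac{(\kappa-1)(1-\ell'^2)}{1+\kappa\ell'^2}.
\]
Substitute $\kappa-1=2\delta/(1-\delta)$ and $1-\ell'^2=4\delta'/(1+\delta')^2$, and clear the common factor $(1-\delta)(1+\delta')^2$. The numerator becomes $8\delta\delta'$. The denominator becomes
\[
(1-\delta)(1+\delta')^2+(1+\delta)(1-\delta')^2=2(1+\delta'^2)-4\delta\delta'.
\]
This yields $4\delta\delta'/(1+\delta'^2-2\delta\delta')$. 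At $\delta'=1$ we have $\ell'=0$, so the ratio is $\kappa$, which is the value of Corollary~\ref{cor:item8}.

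\emph{Even $n$.} For $n=2m$, take $\rho=\bigoplus_{j=1}^m\diag(a,b)$ and $A=B=\bigoplus_j UT$. All operations (products, adjoints, moduli, $\varphi$) act blockwise. So $\cL$ and $\cR$ are each multiplied by $m$, and the ratio is unchanged. The spectra are unchanged as sets, so $\OMC(\rho)=\delta$ and $\OMC(|A|)=\OMC(|B|)=\delta'$ still hold. Taking the supremum gives the lower bound for $V_n(\delta,\delta')$.

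The only delicate step is the algebraic simplification to the closed form in $\delta,\delta'$, and it is a routine computation; there is no real obstacle.
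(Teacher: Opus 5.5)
Your proposal is correct and is the same direct computation as the paper's proof: from $|AB^*|=AA^*=UT^2U^*=\mathrm{diag}(1,\ell'^2)$ and $|A|^2=|B|^2=T^2$ you get the ratio $(\kappa+\ell'^2)/(1+\kappa\ell'^2)$. You also write out the algebra leading to $4\delta\delta'/(1+\delta'^2-2\delta\delta')$ and the blockwise direct-sum argument for even $n$, both of which the paper leaves implicit.
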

\begin{proof}
For $A=B$, $|AB^*|=|AA^*|=AA^*=UT^2U^*=\mathrm{diag}(1,\ell'^2)$ and $|A|^2=T^2=\mathrm{diag}(\ell'^2,1)$, so $\cL=a+b\ell'^2$ and
$\cR=a\ell'^2+b$, and $\cL/\cR=(\kappa+\ell'^2)/(\kappa\ell'^2+1)$. Inserting $\kappa=\frac{1+\delta}{1-\delta}$ and $\ell'$ gives the second form.
\end{proof}

\noindent The numerical search over all $A,B$ with this contrast finds no larger value on $M_2$ (agreement to four
digits, Figure~\ref{fig:mc-young}), and we conjecture equality for even $n$. Unlike Gardner's inequality, the extended Young
inequality fails \emph{linearly} in each contrast, $V\sim4\delta\delta'$.

For positive operators the problem is different. For $\kappa\ge1$ put
\[
\yp(\kappa)=\max_{v>0}\ (1+\kappa)\kappa^{1/p}\,v\,(1+v^2)^{\frac1p-\frac12}(v^2+\kappa^2)^{-\frac1p-\frac12}.
\]

\begin{proposition}[Item 30]\label{prop:youngpos}
\begin{enumerate}
\item\label{yp:a} $c_p^+(z\rho)=c_p^+(\rho)$ for $z\in\cZ_{++}$, and $\yp(\kappa_\cZ)\le c_p^+(\rho)\le\kappa_\cZ^{1/p}$.
\item\label{yp:b} In $M_2(\C)$ with $\rho=\diag(\kappa,1)$, $\yp(\kappa)$ is the supremum of the ratio over
rank-one $A,B$. Moreover $\yp(\kappa)>1$ for $\kappa>1$, and $\mathsf y_2(\kappa)=\cosh(\frac12\ln\kappa)$.
\item\label{yp:c} If $1<p\le2$, then in $M_2(\C)$ with $\rho=\diag(\kappa,1)$ the supremum over $A\ge0$
and rank-one $B\ge0$ equals $\yp(\kappa)$.
\item\label{yp:d} If $p>2$ and $\rho\notin\cZ$, then $c_p^+(\rho)>\yp(\kappa_\cZ)$.
\end{enumerate}
\end{proposition}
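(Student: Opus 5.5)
The plan is to prove \ref{yp:b} and \ref{yp:c} by explicit computations in $M_2(\C)$, and then to get \ref{yp:a} and \ref{yp:d} from Theorem~\ref{thm:transfer}.

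\emph{Part \ref{yp:a}.} The rescaling $\sigma_z(A,B)=(z^{1/p}A,z^{1/q}B)$ from the proof of Theorem~\ref{thm:young} preserves positivity of both variables, so (R) holds for $c_p^+$. Since $c_p^+\le c_p$, the upper bound $\kappa_\cZ^{1/p}$ follows from Theorem~\ref{thm:young}. For the lower bound, Theorem~\ref{thm:transfer}\ref{t:low} gives $c_p^+(\rho)\ge c_p^+(\diag(\kappa_\cZ,1))$ in $M_2$, and \ref{yp:b} bounds the right side below by $\yp(\kappa_\cZ)$. The variables are positive and all expressions are admissible, so Definition~\ref{def:viol} applies.

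\emph{Part \ref{yp:b}.} Take $A=aa^*$ and $B=bb^*$ with unit vectors $a,b$; the ratio is invariant under scaling each variable, so this normalisation costs nothing. Then $|AB|=(BA^2B)^{1/2}=|\langle a,b\rangle|\,bb^*$ and $A^p=aa^*$, $B^q=bb^*$, so the ratio is
\[
|\langle a,b\rangle|\,\langle\rho b,b\rangle^{1/p}\langle\rho a,a\rangle^{-1/p}.
\]
Phases can be removed, so $a$ and $b$ may be taken real, and we optimise over the two angles. For fixed $b$, maximising $|\langle a,b\rangle|^p/\langle\rho a,a\rangle$ over $a$ is a one-variable calculus problem whose optimum can be solved explicitly. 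Substituting it back and writing the remaining parameter as a coordinate ratio $v$ of $b$ should produce the expression defining $\yp$. I expect this bookkeeping to be the main computational obstacle: one has to match exactly the exponents $\frac1p\pm\frac12$ in the definition of $\yp$.

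Once the formula is in hand, the remaining claims of \ref{yp:b} are checks. For $p=2$ the maximum over $v$ is elementary and equals $(1+\kappa)/(2\sqrt\kappa)=\cosh(\frac12\ln\kappa)$. To show $\yp(\kappa)>1$ for $\kappa>1$, note that $v\to\infty$ gives the value $1$ (the trace-like configuration $a=b$). A first-order expansion in $1/v$, or simply evaluating at a suitable finite $v$, should show that the maximum is strictly larger.

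\emph{Part \ref{yp:c}.} Let $B=bb^*$ and write $A=\sum_i\lambda_ie_ie_i^*$ in its eigenbasis. Then $|AB|=\|Ab\|\,bb^*$, so
\[
\varphi(|AB|)=\Bigl(\sum_i\lambda_i^2|\langle e_i,b\rangle|^2\Bigr)^{1/2}\langle\rho b,b\rangle
\quad\text{and}\quad
\varphi(A^p)=\sum_i\lambda_i^pw_i,\qquad w_i=\langle\rho e_i,e_i\rangle.
\]
Fix the eigenbasis and set $\mu_i=\lambda_i^p$. The quantity $\sum_i\mu_i^{2/p}|\langle e_i,b\rangle|^2$ is convex in $\mu$ because $2/p\ge1$, while the constraint $\sum_i\mu_iw_i=1$ is a simplex. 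Its maximum is therefore at a vertex, that is, at a rank-one $A$. Taking the supremum over eigenbases reduces \ref{yp:c} to \ref{yp:b}.

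\emph{Part \ref{yp:d}.} Now $2/p<1$ and the same function of $\mu$ is concave. Take the rank-one maximiser from \ref{yp:b}, with $A=e_1e_1^*$, and add a small multiple $\mu$ of $e_2e_2^*$, where $e_2\perp e_1$. The numerator $\|Ab\|^2$ gains $\mu^{2/p}|\langle e_2,b\rangle|^2$, while the normalisation $\varphi(A^p)$ grows only linearly in $\mu$. So for small $\mu$ the ratio strictly exceeds $\yp(\kappa)$, provided $\langle e_2,b\rangle\ne0$. This holds because the optimal rank-one pair has $a\ne b$ (otherwise the ratio would be $1<\yp$), and in $M_2$ the vector $b$ is then not parallel to $e_1=a$.

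This gives $c_p^+(\diag(\kappa,1))>\yp(\kappa)$ in $M_2$. For $\rho\notin\cZ$ we have $\kappa_\cZ>1$, and Theorem~\ref{thm:transfer}\ref{t:low} yields $c_p^+(\rho)\ge c_p^+(\diag(\kappa_\cZ,1))>\yp(\kappa_\cZ)$.
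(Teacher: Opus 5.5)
\textbf{Gap in part \ref{yp:b}.} Your arguments for \ref{yp:a}, \ref{yp:c} and \ref{yp:d} follow the paper. Convexity of $\mu\mapsto\mu^{2/p}$ on the simplex plays the role of the paper's subadditivity of $t\mapsto t^{p/2}$, and your comparison of $\mu^{2/p}$ with $\mu$ in \ref{yp:d} is the paper's comparison of $a^2$ with $a^p$. But all three rest on \ref{yp:b}, and there the decisive step is missing.

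You propose to maximise $R(a,b)=|\langle a,b\rangle|(\langle\rho b,b\rangle/\langle\rho a,a\rangle)^{1/p}$ over $a$ for fixed $b$. You then expect the resulting profile, written in the coordinate ratio of $b$, to be the function maximised in the definition of $\yp$. That cannot happen:
\begin{itemize}
\item The profile is $\ge1$ everywhere (take $a=b$).
\item The function $(1+\kappa)\kappa^{1/p}v(1+v^2)^{\frac1p-\frac12}(v^2+\kappa^2)^{-\frac1p-\frac12}$ equals $1$ at $v=\sqrt\kappa$ and tends to $0$ both as $v\to0$ and as $v\to\infty$.
\end{itemize}
So your remark that ``$v\to\infty$ gives the value $1$'' is true for your profile, but false for $\yp$. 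Only the maxima of the two functions agree, and proving that is exactly the content of \ref{yp:b}. The inner problem is also less explicit than you suggest. Put $a=(\cos s,\sin s)$, $b=(\cos t,\sin t)$, $u=\tan s$, $T=\tan t$. Stationarity in $s$ reads $p(T-u)(\kappa+u^2)=-2(\kappa-1)u(1+Tu)$. This is a cubic in $u$, and only for $p=2$ does it collapse to $u=\kappa T$.

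\textbf{The missing idea.} Impose both first-order conditions at once. Let $g(t)=\kappa\cos^2t+\sin^2t$ and $h(v)=v/(\kappa+v^2)$. An interior critical point of $\ln R$ satisfies
$\tan(t-s)=\frac1p g'(s)/g(s)=\frac1p g'(t)/g(t)$,
and $g'(t)/g(t)=-2(\kappa-1)h(\tan t)$. Since $h(v_1)=h(v_2)$ iff $v_1=v_2$ or $v_1v_2=\kappa$, either $s=t$ (ratio $1$) or $\tan s\tan t=\kappa$. Substituting $\tan s=v$, $\tan t=\kappa/v$ turns $R$ into exactly the function defining $\yp$.

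You must still show that the maximum of $R$ over $[0,\pi/2]^2$, which exists by compactness, is interior and not on the branch $s=t$:
\begin{itemize}
\item On the sides of the square, check the sign of a partial derivative; for instance $\partial_s\ln R=-\cot t<0$ at $s=\pi/2$. The corners give $R\le1$.
\item The function defining $\yp$ has value $1$ and logarithmic derivative $\frac2p\frac{\kappa-1}{\sqrt\kappa(\kappa+1)}>0$ at $v=\sqrt\kappa$. Hence its maximum exceeds $1$, which also gives $\yp(\kappa)>1$ directly.
\end{itemize}
Without these steps the equality of the rank-one supremum with $\yp(\kappa)$ is not established. The following then remain unproved: the lower bound in \ref{yp:a}, the equality in \ref{yp:c}, and the existence of a rank-one maximiser with value $\yp(\kappa)>1$ used in \ref{yp:d}.
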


\begin{proof}
\ref{yp:b} For unit vectors $x,y$ and $A=|x\rangle\langle x|$, $B=|y\rangle\langle y|$ we have
$|AB|=|\langle x,y\rangle|\,|y\rangle\langle y|$, so the ratio is
$R(x,y)=|\langle x,y\rangle|(\langle\rho y,y\rangle/\langle\rho x,x\rangle)^{1/p}$. Replacing the
coordinates by their moduli does not decrease $R$, so let $x=(\cos s,\sin s)$, $y=(\cos t,\sin t)$
with $s,t\in[0,\pi/2]$, and $g(t)=\kappa\cos^2t+\sin^2t$. On the boundary of the square $R$ is not
maximal unless $R\le1$: e.g.\ at $s=\pi/2$, $\partial_s\ln R=-\cot t<0$, and similarly on the other
sides. At an interior critical point, $\tan(t-s)=\frac1p\frac{g'(s)}{g(s)}=\frac1p\frac{g'(t)}{g(t)}$, and
$g'(t)/g(t)=-2(\kappa-1)h(\tan t)$ with $h(v)=v/(\kappa+v^2)$. Now $h(v_1)=h(v_2)$ iff $v_1=v_2$ or
$v_1v_2=\kappa$; the first case means $s=t$ and $R=1$. Substituting $\tan s=v$, $\tan t=\kappa/v$
gives $\cos(t-s)=(1+\kappa)v/\sqrt{(1+v^2)(v^2+\kappa^2)}$ and $g(t)/g(s)=\kappa(1+v^2)/(v^2+\kappa^2)$,
i.e.\ $R$ equals the function maximised in the definition of $\yp$. That function tends to $0$ as
$v\to0$ or $v\to\infty$, equals $1$ at $v=\sqrt\kappa$, and its logarithmic derivative there is
$\frac2p\frac{\kappa-1}{\sqrt\kappa(\kappa+1)}>0$; hence its maximum exceeds $1$, so the maximum of $R$
is interior and equals $\yp(\kappa)$. For $p=2$ the maximum is at $v=\kappa$ and equals
$(1+\kappa)/(2\sqrt\kappa)$.

\ref{yp:a} (R) holds with $\sigma_z(A,B)=(z^{1/p}A,z^{1/q}B)$. The upper bound follows from
Theorem~\ref{thm:young}, the lower bound from \ref{yp:b} and Theorem~\ref{thm:transfer}\ref{t:low}.

\ref{yp:c} For $B=|y\rangle\langle y|$ and $A=\sum_ia_i|x_i\rangle\langle x_i|$ (orthonormal $x_i$),
with $w_i=|\langle x_i,y\rangle|^2$ and $r_i=\langle\rho x_i,x_i\rangle$, the $p$-th power of the
ratio is $(\sum a_i^2w_i)^{p/2}\langle\rho y,y\rangle/\sum a_i^pr_i$. Since $p/2\le1$,
$(\sum a_i^2w_i)^{p/2}\le\sum a_i^pw_i^{p/2}\le\max_i(w_i^{p/2}/r_i)\sum a_i^pr_i$, so the ratio is at
most $\max_iR(x_i,y)\le\yp(\kappa)$.

\ref{yp:d} Let $(x,y)$ be a maximiser of $R$ in $M_2$ with $\kappa=\kappa_\cZ$, and $x'\perp x$. Then
$w_1=|\langle x,y\rangle|^2\in(0,1)$, $w_2=1-w_1>0$. For $A=|x\rangle\langle x|+a|x'\rangle\langle
x'|$ and $B=|y\rangle\langle y|$ the $p$-th power of the ratio equals
$R(x,y)^p(1+\frac p2\frac{w_2}{w_1}a^2+o(a^2))(1-\frac{r_2}{r_1}a^p+o(a^p))$, which exceeds $R(x,y)^p$
for small $a>0$ because $p>2$. Hence $c_p^+(\diag(\kappa_\cZ,1))>\yp(\kappa_\cZ)$; apply
Theorem~\ref{thm:transfer}\ref{t:low}.
\end{proof}

\begin{conjecture}\label{conj:young}
For $1<p\le2$, $c_p^+(\rho)=\yp(\kappa_\cZ(\rho))$. In particular $c_2^+(\rho)=\cosh\LNC(\rho)$:
for positive operators, the $p=2$ Young inequality is violated exactly as much as Gardner's.
\end{conjecture}

Numerical optimisation over general positive $A,B$ in $M_2$, $M_3$ and $M_4$ agrees with $\yp$ to
about seven digits for $p\le2$, and shows a small excess over $\yp$ for $p>2$, in accordance
with \ref{yp:d}. By Proposition~\ref{prop:youngpos}, the theorem of Bikchentaev and Tikhonov
\cite{bikchentaev2005young} holds in every finite von Neumann algebra for $\varphi=\tau(\rho\,\cdot)$ with
$\rho\in\cM_{++}$; densities that are unbounded or have unbounded inverses are not covered.

For $p=2$ and the normalisation of item~8, an independent BFGS search over $A,B\ge0$ on $M_2$ (30
random starts; a test of the repository) comes within $0.1\%$ of $G-1$ and does not exceed it
($0.1546$ at $\delta=0.5$ and $1.2938$ at $\delta=0.9$, against $G-1=0.1547$ and $1.2942$), in accordance
with the case $c_2^+=G_\cZ$ of Conjecture~\ref{conj:young}: for positive operators the violation stays at
the order $\delta^2$, and the linear failure of item~8 is a feature of the extension to arbitrary
operators.

For $p=2$ and self-adjoint $B$ the constant is $(4G_\cZ^2-3)^{1/2}>G_\cZ$. So if Conjecture~\ref{conj:young}
holds, the positivity of $B$ cannot be weakened to self-adjointness without changing the constant.

\begin{proposition}[Item 31]\label{prop:youngsa}
For $A\in\cM$ and $B\in\cM_{sa}$,
\[
\varphi(|AB|)\le\bigl(\kappa_\cZ-1+\kappa_\cZ^{-1}\bigr)^{1/2}\varphi(|A|^2)^{1/2}\varphi(B^2)^{1/2}.
\]
The constant $(\kappa_\cZ-1+\kappa_\cZ^{-1})^{1/2}=(4G_\cZ^2-3)^{1/2}$ is optimal, already for $A\ge0$,
and in $M_n(\C)$ it is attained.
\end{proposition}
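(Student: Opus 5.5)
We would follow the standard scheme of Section~\ref{sec:catalogue}. First we prove an upper bound with the ambient condition number $\kappa$ in an arbitrary finite $\cM$. Then we verify~(R) and give a $2\times2$ example attaining $f(\kappa)=(\kappa-1+\kappa^{-1})^{1/2}$. Theorem~\ref{thm:transfer} and Remark~\ref{rem:attain} then give the constant in terms of $\kappa_\cZ$ and attainment in $M_n(\C)$. For~(R) we take $\sigma_z(A,B)=(z^{1/2}A,z^{1/2}B)$. This keeps $B$ self-adjoint, and $|z^{1/2}Az^{1/2}B|=z|AB|$, $|z^{1/2}A|^2=z|A|^2$, $(z^{1/2}B)^2=zB^2$. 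For central $\rho$ the value $f(1)=1$ is the tracial Cauchy--Schwarz bound, attained at $A=B=1$. The identity $4G^2-3=\kappa-1+\kappa^{-1}$ is immediate from $G^2=(\kappa+1)^2/(4\kappa)$.

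\emph{Example.} In $M_2$ let $\rho=\diag(\kappa,1)$, $B=\sigma$ the flip and $A=\diag(1,t)$ with $t\ge0$. Then $(AB)^*(AB)=\diag(t^2,1)$, so $|AB|=\diag(t,1)$ and $\varphi(|AB|)=\kappa t+1$. Also $\varphi(A^2)=\kappa+t^2$ and $\varphi(B^2)=\kappa+1$. By Cauchy--Schwarz applied to $(\sqrt\kappa,t)$ and $(\kappa^{-1/2},\kappa)$,
\[
\max_t\frac{(\kappa t+1)^2}{\kappa+t^2}=\kappa^2+\kappa^{-1},
\]
attained at $t=\kappa^2$. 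The squared ratio is therefore $(\kappa^3+1)/(\kappa(\kappa+1))=\kappa-1+\kappa^{-1}$. This shows optimality already for $A\ge0$.

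\emph{Upper bound.} Since $|AB|=||A|B|$, we may assume $A=x\ge0$. Let $xB=W|xB|$ with $W$ unitary, so that $|xB|=W^*xB=BxW$; here self-adjointness of $B$ is used. Hence
\[
\varphi(|xB|)=\tfrac12\bigl(\tau(\rho W^*xB)+\tau(\rho BxW)\bigr).
\]
Put $h=x\rho^{1/2}$, with $\|h\|_2^2=\varphi(x^2)$, and $k=B\rho^{1/2}$, with $\|k\|_2^2=\varphi(B^2)$. Moving powers of $\rho$ cyclically, we rewrite both terms as inner products $\langle\Theta h,k\rangle$ in $L^2(\cM,\tau)$. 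Here $\Theta$ is built from $L_{\rho^{\pm1/2}}$, $R_{\rho^{\pm1/2}}$ and multiplication by $W,W^*$. The symmetrisation should make $\Theta$ act on the ``off-diagonal'' part with the weight $s/t-1+t/s$ and on the diagonal part with weight $1$, in the manner of the operator $\Phi=g(L_D,R_D)$ in the proof of Theorem~\ref{thm:ba}. Then $\|\Theta\|^2\le\sup_{s,t\in\sigma(\rho)}(s/t-1+t/s)=\kappa-1+\kappa^{-1}$. The naive Cauchy--Schwarz bound $\tau(\rho W^*xB)\le\|\rho^{1/2}W^*x\|_2\|B\rho^{1/2}\|_2$ gives only $\kappa^{1/2}$, by Theorem~\ref{thm:modulus}. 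The gain must therefore come from averaging the two expressions for $|xB|$.

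\emph{Main obstacle.} We expect the hardest step to be identifying the right operator $\Theta$ and proving $\|\Theta\|\le(\kappa-1+\kappa^{-1})^{1/2}$ in a general finite $\cM$. We would first verify the bound in $M_2$ with $\rho$ diagonal, where everything is an explicit Schur multiplier. We would then look for a $2\times2$-block decomposition, in the spirit of Lemma~\ref{lem:halmos}, that reduces the general case to this one. If this fails, we would try interpolating between the trivial bound $1$ in the tracial direction and the bound $\kappa$ of Corollary~\ref{cor:item8}, using the symmetry $B=B^*$ to kill the cross term.
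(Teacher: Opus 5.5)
The parts around the core inequality are correct: the reduction to $A\ge0$, the map $\sigma_z(A,B)=(z^{1/2}A,z^{1/2}B)$ for (R), the $2\times2$ example, and the appeal to Theorem~\ref{thm:transfer} and Remark~\ref{rem:attain}. At $t=\kappa^2$ your example is $\kappa\,\diag(\kappa^{-1},\kappa)$, a multiple of the paper's extremiser.

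The gap is the upper bound, which is the substance of the proposition. You postulate an operator $\Theta$ carrying the weight $s/t-1+t/s$, which you read off from the target constant, and you leave its construction open. The mechanism you suggest cannot produce the gain. The traces $\tau(\rho W^*xB)$ and $\tau(\rho BxW)$ are the same number $\varphi(|xB|)$, so averaging them adds nothing. A Halmos-type $2\times2$ reduction is also unavailable: Lemma~\ref{lem:halmos} concerns two projections, whereas $\rho$, $x$, $B$, $W$ are in general position in an arbitrary finite $\cM$.

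The missing idea is to put all the $\rho$-weights on $B$ before applying Cauchy--Schwarz, and then to use $B=B^*$ to symmetrise a quadratic form in $B$. Since $|AB|=W^*AB$ and $\tau(WYW^*)\le\tau(Y)$ for $Y\ge0$,
\[
\varphi(|AB|)=\tau\bigl(A\rho^{1/2}\cdot\rho^{-1/2}B\rho W^*\bigr)\le\varphi(|A|^2)^{1/2}\,\tau(\rho^2B\rho^{-1}B)^{1/2}.
\]
Take $Jh=h^*$ on $L^2(\cM,\tau)$, so that $JL_xJ=R_{x^*}$. For self-adjoint $B$,
\[
\tau(\rho^2B\rho^{-1}B)=\langle L_{\rho^2}R_{\rho^{-1}}B,B\rangle=\langle L_{\rho^{-1}}R_{\rho^{2}}B,B\rangle .
\]
Averaging these gives $\langle g(L_\rho,R_\rho)B,B\rangle$ with $g(s,t)=\frac12(s^2/t+t^2/s)$. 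Likewise $\varphi(B^2)=\langle h(L_\rho,R_\rho)B,B\rangle$ with $h(s,t)=\frac12(s+t)$.

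On $\sigma(\rho)^2$ we have $g/h=s/t-1+t/s\le\kappa-1+\kappa^{-1}$. Joint functional calculus of the commuting positive operators $L_\rho,R_\rho$ then gives the bound with $\kappa$, and (R) replaces $\kappa$ by $\kappa_\cZ$.

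In your language, $\Theta$ is simply $L_{\rho^{-1/2}}R_{\rho^{1/2}}$ acting on $k=B\rho^{1/2}$. On all of $L^2(\cM,\tau)$ its norm is only bounded by $\kappa^{1/2}$, which is the constant of Theorem~\ref{thm:young} for arbitrary $B$. It is the restriction to the real subspace $\{B\rho^{1/2}:B=B^*\}$ that lowers it to $(\kappa-1+\kappa^{-1})^{1/2}$.
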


\begin{proof}
Let $AB=W|AB|$ be the polar decomposition. Since $|AB|=W^*AB$ and $\tau(WYW^*)\le\tau(Y)$ for
$Y\ge0$,
\[
\varphi(|AB|)=\tau\bigl(A\rho^{1/2}\cdot\rho^{-1/2}B\rho W^*\bigr)
\le\|A\rho^{1/2}\|_2\,\|\rho^{-1/2}B\rho W^*\|_2\le\varphi(|A|^2)^{1/2}\,\tau(\rho^2B\rho^{-1}B)^{1/2}.
\]
On $L^2(\cM,\tau)$ let $Jh=h^*$, so that $JL_xJ=R_{x^*}$. For $B=B^*$ and the positive operator
$Y=L_{\rho^2}R_{\rho^{-1}}$ we have $\langle YB,B\rangle=\langle JYJB,B\rangle$. Hence
$\tau(\rho^2B\rho^{-1}B)=\langle YB,B\rangle=\langle g(L_\rho,R_\rho)B,B\rangle$ with
$g(s,t)=\frac12(s^2/t+t^2/s)$, while $\varphi(B^2)=\langle h(L_\rho,R_\rho)B,B\rangle$ with
$h(s,t)=\frac12(s+t)$. On $[\|\rho^{-1}\|^{-1},\|\rho\|]^2$ we have
$g/h=s/t+t/s-1\le\kappa-1+\kappa^{-1}$, which gives the bound with $\kappa$ in place of
$\kappa_\cZ$. (R) holds with $\sigma_z(A,B)=(z^{1/2}A,z^{1/2}B)$, so Lemma~\ref{lem:resc} gives the
bound with $\kappa_\cZ$. For the lower bound let $\rho=\diag(\kappa,1)$ in $M_2$, $A=\diag(\kappa^{-1},\kappa)$
and $B=\sigma$. Then $|AB|=\diag(\kappa,\kappa^{-1})$, $\varphi(|AB|)=\varphi(A^2)=\kappa^2+\kappa^{-1}$
and $\varphi(B^2)=\kappa+1$, so the squared ratio is
$(\kappa^2+\kappa^{-1})/(\kappa+1)=\kappa-1+\kappa^{-1}$. Theorem~\ref{thm:transfer} and
Remark~\ref{rem:attain} complete the proof, and $\kappa+\kappa^{-1}=4G_\cZ^2-2$.
\end{proof}

\begin{remark}\label{rem:youngstar}
Since $|AB|=||A|B|$ and $\varphi(|A|^2)$ depends only on $|A|$, $c_2^+$ is also the supremum over
arbitrary $A$ and $B\ge0$; only the positivity of $B$ matters. The constant is conjecturally $G_\cZ$ for
$B\ge0$, it is $(4G_\cZ^2-3)^{1/2}>G_\cZ$ for $B=B^*$ (Proposition~\ref{prop:youngsa}), and it is
$\kappa_\cZ^{1/2}$ for arbitrary $B$ (Theorem~\ref{thm:young}). The case $p=2$ of
Conjecture~\ref{conj:young} is equivalent to
\begin{equation}\label{eq:youngstar}
\varphi(BXB)^2\le G_\cZ^2\,\varphi(XB^2X)\,\varphi(B^2)\qquad(X\in\cM_{sa},\ B\in\cM_+).
\end{equation}
Indeed, by continuity we may take $B$ invertible; put $\Gamma=B^{-1}\rho B^{-1}$. As $A$ runs
through $\cM$, $Y=AB$ runs through $\cM$ and $\varphi(|A|^2)=\tau(\Gamma|Y|^2)$, so
$\sup_A\varphi(|AB|)^2/\varphi(|A|^2)=\sup_{P\in\cM_+}\tau(\rho P)^2/\tau(\Gamma P^2)$. On the real
Hilbert space of self-adjoint elements of $L^2(\cM,\tau)$, $\tau(\Gamma P^2)=\langle\Lambda P,P\rangle$
with $\Lambda=\frac12(L_\Gamma+R_\Gamma)$. So the supremum over $P\in\cM_{sa}$ is
$\langle\Lambda^{-1}\rho,\rho\rangle$. It is attained at
$\Lambda^{-1}\rho=2\int_0^\infty e^{-t\Gamma}\rho e^{-t\Gamma}\,dt\ge0$, hence it is also the
supremum over $P\ge0$. Substituting $P=BXB$ gives the claim. For rank-one
$B=|y\rangle\langle y|$ with $\|y\|=1$, \eqref{eq:youngstar} reads
$\langle Xy,y\rangle^2\langle\rho y,y\rangle\le G_\cZ^2\langle\rho Xy,Xy\rangle$. In $M_n(\C)$ this
follows from the Cauchy--Schwarz inequality
$\langle Xy,y\rangle^2\le\langle\rho Xy,Xy\rangle\langle\rho^{-1}y,y\rangle$ and the Kantorovich
inequality. Numerically (not used in the proofs), the supremum of the quotient of the two sides of
\eqref{eq:youngstar} is $G_\cZ^2$ to seven digits in $M_2(\C)$ and $M_3(\C)$.
\end{remark}

\subsection{Monotonicity}\label{ssec:mono}
For $p>1$ let
\[
m_p(\rho)=\sup\Bigl\{\frac{\varphi(A^p)}{\varphi(B^p)}:0\le A\le B,\ B\ne0\Bigr\},\qquad
m_{\exp}(\rho)=\sup\Bigl\{\frac{\varphi(e^A)}{\varphi(e^B)}:A\le B\text{ in }\cM_{sa}\Bigr\}.
\]

\begin{theorem}[Item 10]\label{thm:mono2}
$\displaystyle m_2(\rho)=\frac1{1-\gamma(\rho)^2}=\frac{(G_\cZ+1)^2}{4G_\cZ}$, where
$\gamma(\rho)=\tanh^2(\LNC(\rho)/2)$, which is the constant of Theorem~\ref{thm:ba} if $\cM\ne\C1$. In
$M_n(\C)$ the supremum is attained.
\end{theorem}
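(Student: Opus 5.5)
The plan is to reduce the monotonicity constant to the Bikchentaev--Abed constant $\gamma(\rho)$ of Theorem~\ref{thm:ba}. For the upper bound I would write $B=A+C$ with $C=B-A\in\cM_+$ and expand
\[
\varphi(B^2)=\varphi(A^2)+\varphi(C^2)+\varphi(AC+CA)=\varphi(A^2)+\varphi(C^2)+2\re\varphi(AC),
\]
using $\varphi(CA)=\overline{\varphi(AC)}$ as in the reduction step of Theorem~\ref{thm:ba}. Since $A,C\ge0$, Theorem~\ref{thm:ba} gives $\re\varphi(AC)\ge-\gamma\sqrt{\varphi(A^2)\varphi(C^2)}$. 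With $x=\varphi(A^2)^{1/2}$ and $y=\varphi(C^2)^{1/2}$ this yields
\[
\varphi(B^2)\ge x^2+y^2-2\gamma xy\ge(1-\gamma^2)x^2,
\]
where the last step minimises the quadratic over $y\ge0$ at $y=\gamma x$. Hence $\varphi(A^2)\le(1-\gamma^2)^{-1}\varphi(B^2)$. This bound already holds directly with $\kappa_\cZ$, so neither (R) nor the upper part of the transfer theorem is needed. The case $\cM=\C1$ is trivial: there $\gamma=0$ and $A\le B$ are scalars.

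For sharpness I would take the extremal pair of Theorem~\ref{thm:ba} in $M_2$ with $\rho=\diag(\kappa,1)$: $A=|a\rangle\langle a|$ and $C_0=|b\rangle\langle b|$, for which $-\re\varphi(AC_0)=\gamma\sqrt{\varphi(A^2)\varphi(C_0^2)}$. The ratio $-\re\varphi(AC)/\sqrt{\varphi(A^2)\varphi(C^2)}$ is invariant under positive scaling of $C$, so I would put $C=sC_0$ with $s>0$ chosen so that $\varphi(C^2)^{1/2}=\gamma\,\varphi(A^2)^{1/2}$. Then both inequalities above are equalities, $B=A+C\ge A\ge0$, and the ratio $\varphi(A^2)/\varphi(B^2)$ equals $1/(1-\gamma^2)$.

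The quantity $m_2$ is a violation constant in the sense of Definition~\ref{def:viol}: two positive variables with the constraint $X_1\le X_2$, $F_1=X_1^2$, $F_2=X_2^2$, and $\Psi_N,\Psi_D$ of degree one. Theorem~\ref{thm:transfer}\ref{t:low} therefore transfers the $M_2$ example to every finite $\cM$, using that $\gamma(\diag(\kappa_\cZ,1))=\gamma(\rho)$. Remark~\ref{rem:attain} then gives attainment in $M_n$.

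Finally I would check the identity: with $\gamma=(G_\cZ-1)/(G_\cZ+1)$ one has $1-\gamma^2=4G_\cZ/(G_\cZ+1)^2$. The only delicate point is the equality case, namely that the extremal $A$ and the rescaled $C$ can be realised simultaneously, and the scaling invariance of the $\gamma$-ratio settles it.
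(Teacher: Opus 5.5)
Your proposal is correct and follows the paper's proof essentially line by line. The shared steps are the decomposition $B=A+C$, the bound $\varphi(B^2)\ge a-2\gamma\sqrt{ac}+c\ge(1-\gamma^2)a$ from Theorem~\ref{thm:ba}, and the extremal pair of that theorem with $C=\gamma B'$ (your $s$ is exactly $\gamma$, since $\varphi(B'^2)=\varphi(A^2)$ there), transferred by Theorem~\ref{thm:transfer}\ref{t:low} and Remark~\ref{rem:attain}. Your remark that (R) is superfluous is also right: the upper bound is already in terms of $\kappa_\cZ$, so only part~\ref{t:low} of the transfer theorem is needed, although the paper mentions (R).
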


\begin{proof}
If $\cM=\C1$, then $\re\varphi(AC)\ge0$ for $A,C\ge0$, so $\varphi(A^2)\le\varphi(B^2)$ whenever $0\le A\le B$, and
$m_2=1=(G_\cZ+1)^2/(4G_\cZ)$ since $G_\cZ=1$; let $\cM\ne\C1$.
Let $0\le A\le B$, $C=B-A\ge0$, $a=\varphi(A^2)$, $c=\varphi(C^2)$. By Theorem~\ref{thm:ba},
$\varphi(B^2)=a+2\re\varphi(AC)+c\ge a-2\gamma\sqrt{ac}+c\ge(1-\gamma^2)a$. For the converse, in
$M_2$ with $\rho=\diag(\kappa,1)$, take the pair $A=|a\rangle\langle a|$, $B'=|b\rangle\langle b|$ of
the proof of Theorem~\ref{thm:ba}, for which $\re\varphi(AB')=-\gamma\varphi(A^2)$ and
$\varphi(B'^2)=\varphi(A^2)$, and put $B=A+\gamma B'$. Then $\varphi(B^2)=(1-\gamma^2)\varphi(A^2)$.
(R) holds with $\sigma_z(A,B)=(z^{1/2}A,z^{1/2}B)$, and Theorem~\ref{thm:transfer} applies. Finally
$1-\gamma^2=4G_\cZ/(G_\cZ+1)^2$ since $\gamma=(G_\cZ-1)/(G_\cZ+1)$.
\end{proof}

\begin{remark}[Item 10: an explicit extremal family on $M_2$]\label{rem:g10}
Normalise $a+b=2$, so $\rho=\mathrm{diag}(1+\delta,1-\delta)$, and take $A=uu^*$, $B=A+tvv^*$, $t\ge0$, with
$u=(\cos\alpha,\sin\alpha)$, $v=(-\cos\alpha,\sin\alpha)$ and $y=-\cos2\alpha$. Then $u^*v=y$, $v^*\rho u=y-\delta$ and
$u^*\rho u=v^*\rho v=1-\delta y$, so
\[
  \varphi(A^2)-\varphi(B^2)=-2t\Re\bigl((u^*v)(v^*\rho u)\bigr)-t^2v^*\rho v=2ty(\delta-y)-t^2(1-\delta y).
\]
For $0<y<\delta$ the best $t=y(\delta-y)/(1-\delta y)>0$ gives $\cL-\cR=\Gamma:=y^2(\delta-y)^2/(1-\delta y)$ and
\[
\frac{\cL-\cR}\cR=\frac{\Gamma}{1-\delta y-\Gamma}=g(y):=\frac{y^2(\delta-y)^2}{(1-2\delta y+y^2)(1-y^2)} .
\]
Let $y=\tan(\psi/2)$, so that $\delta=\sin\psi=2y/(1+y^2)$ and $0<y<\delta$. Then
$\delta-y=y(1-y^2)/(1+y^2)$ and $1-2\delta y+y^2=(1-y^2)^2/(1+y^2)$, hence
\[
g\bigl(\tan\tfrac\psi2\bigr)=\frac{y^4}{(1+y^2)(1-y^2)}=\frac{(G-1)^2}{4G},\qquad\text{since } G=\sec\psi=\frac{1+y^2}{1-y^2}.
\]
By Theorem~\ref{thm:mono2}, $(G-1)^2/(4G)=m_2(\rho)-1$ is the supremum, so
$\max_{0<y<\delta}g(y)=(G-1)^2/(4G)$ and $V_n(\delta)=(G-1)^2/(4G)=\delta^4/16+O(\delta^6)$ for item~10 in every
dimension $n\ge2$, attained at $y=\tan(\psi/2)$. The numerical search of Section~\ref{sec:numerics} had
suggested that this family is extremal: it reproduces $\max_yg$ to the digits of Table~\ref{tab:values},
and the values for $n=3,4$ agree with $n=2$ to $\TCNDevMon$. The theorem proves it. The leading order comes from $y\approx\delta/2$. The pair $(A,\gamma B'+A)$ in the proof of
Theorem~\ref{thm:mono2} is a second extremal family of the same type, a rank-one $A$ perturbed by a
rank-one positive operator.
\end{remark}

Let $\mpp(\kappa)=m_p(\diag(\kappa,1))$ computed in $M_2(\C)$, and $C_p=(p-1)^{p-1/p}/p^{p+1}$. For
$h>1$ and $p>1$ let
\[
K(h,p)=\frac{h^p-h}{(p-1)(h-1)}\Bigl(\frac{p-1}p\,\frac{h^p-1}{h^p-h}\Bigr)^p,\qquad
S(h)=\frac{(h-1)h^{1/(h-1)}}{e\ln h},
\]
and $K(1,p)=S(1)=1$. These are the generalised Kantorovich constant (Ky Fan's constant, see
\cite{Bou05}) and the Specht ratio. We call $\cM$ \emph{approximately finite-dimensional} (AFD)
if it contains an increasing net of finite-dimensional $*$-subalgebras containing $1$ whose union is
weakly dense. Examples are finite-dimensional algebras and the hyperfinite II$_1$ factor.

\begin{proposition}[Item 32]\label{prop:monop}
\begin{enumerate}
\item\label{m:a} $m_p(z\rho)=m_p(\rho)$ and $m_{\exp}(z\rho)=m_{\exp}(\rho)$ for $z\in\cZ_{++}$.
\item\label{m:b} $p\mapsto m_p(\rho)$ is nondecreasing, and $m_{\exp}(\rho)=\sup_pm_p(\rho)=\lim_{p\to\infty}m_p(\rho)$.
\item\label{m:c} $\mpp(\kappa_\cZ)\le m_p(\rho)\le\kappa_\cZ$, and $\mpp(\kappa)>1$ for $\kappa>1$.
Moreover $\mpp(\kappa)\ge C_p\kappa^{1-1/p}(1+o(1))$ as $\kappa\to\infty$.
\item\label{m:d} For $\kappa_\cZ\ge e$,
\[
\frac{\kappa_\cZ}{e\,(\ln\kappa_\cZ+\ln\ln\kappa_\cZ)(1+1/\ln\kappa_\cZ)}\le m_{\exp}(\rho)\le\kappa_\cZ;
\]
in particular $m_{\exp}(\rho)\ge(1-o(1))\kappa_\cZ/(e\ln\kappa_\cZ)$ as $\kappa_\cZ\to\infty$.
\item\label{m:e} If $\cM$ is AFD, then $m_p(\rho)\le K(\kappa_\cZ^{1/p},p)$ and
$m_{\exp}(\rho)\le S(\kappa_\cZ)$. Consequently, on AFD algebras, as $\kappa_\cZ\to\infty$,
\[
C_p\kappa_\cZ^{1-1/p}(1+o(1))\le m_p(\rho)\le\frac{(p-1)^{p-1}}{p^p}\,\kappa_\cZ^{1-1/p}(1+o(1)),
\qquad m_{\exp}(\rho)=(1+o(1))\frac{\kappa_\cZ}{e\ln\kappa_\cZ}.
\]
\end{enumerate}
\end{proposition}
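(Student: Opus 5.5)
The plan is to prove the five parts in order. Parts (a)--(c) are soft consequences of the transfer machinery. Part (d) combines (b), (c) and one explicit $2\times2$ family. Part (e) needs a reverse Araki inequality, and I take that from Bourin~\cite{Bou05}.

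For (a), I would exhibit the bijections of condition (R). For $m_p$ take $\sigma_z(A,B)=(z^{1/p}A,z^{1/p}B)$. Centrality of $z$ preserves $0\le A\le B$ and gives $(z^{1/p}A)^p=zA^p$, so $\varphi_{z\rho}(A^p)=\varphi_\rho((z^{1/p}A)^p)$. For $m_{\exp}$ take $(A,B)\mapsto(A+\ln z,B+\ln z)$, which uses $e^{A+\ln z}=ze^A$; then Lemma~\ref{lem:resc} gives the invariance.

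For (b), let $p<r$ and $0\le A\le B$. By L\"owner--Heinz, $A^{p/r}\le B^{p/r}$, and this pair gives the exponent-$r$ ratio $\varphi(A^p)/\varphi(B^p)$, so $m_p\le m_r$. For $m_p\le m_{\exp}$, replace $A,B$ by $A+\epsilon,B+\epsilon$. Operator monotonicity of $\ln$ gives $p\ln(A+\epsilon)\le p\ln(B+\epsilon)$, whose exponentials are the $p$-th powers; then let $\epsilon\to0$. Conversely, for self-adjoint $A\le B$ and large $p$, the elements $1+A/p\le1+B/p$ are positive and their $p$-th powers converge in norm to $e^A,e^B$. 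Hence $m_{\exp}\le\liminf_pm_p$, which proves the equality.

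For the upper bound in (c), I would use \eqref{eq:sandwich}. Since $u$ is central, $\tau(u\,\cdot)$ is a positive trace, and traces are monotone for $t^p$. This gives $\varphi(A^p)\le\tau(uA^p)\le\tau(uB^p)\le\kappa_\cZ\tau(lB^p)\le\kappa_\cZ\varphi(B^p)$. The lower bound $\mpp(\kappa_\cZ)\le m_p$ is Theorem~\ref{thm:transfer}\ref{t:low}, which applies because $t^p$ vanishes at $0$. The strict inequality $\mpp(\kappa)>1$ is the characterisation theorem \cite{bikchentaev2007mono} in $M_2$. For the asymptotics I would use a family in the spirit of Remark~\ref{rem:g10}. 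Take $\rho=\diag(\kappa,1)$, $A=|x\rangle\langle x|$ with $x$ close to $e_1$, and $B=A+t|y\rangle\langle y|$. Here $x,y$ are chosen with $y$ close to $e_2$, so that $B^p$ puts little weight on $e_1$. I would compute $B^p$ from the two eigenvalues of a rank-two operator and optimise the angle and $t$ as powers of $\kappa$. I expect the optimisation to produce the factor $(p-1)^{p-1/p}/p^{p+1}$ and the order $\kappa^{1-1/p}$.

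For (d), the upper bound follows from (b) and (c). For the lower bound I would take self-adjoint $A\le B$ in $M_2$ with $\rho=\diag(\kappa_\cZ,1)$. Let $A=\lambda|x\rangle\langle x|$ and let $B$ dominate $A$ with its top eigenvector rotated towards $e_2$. I would choose $\lambda\approx\ln\kappa+\ln\ln\kappa$ and estimate $\varphi(e^A)$ and $\varphi(e^B)$ directly, then transfer to $\cM$ as in Theorem~\ref{thm:transfer}\ref{t:low} (the paper notes this modification applies to $m_{\exp}$).

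Part (e) is the main obstacle. By (a) and Lemma~\ref{lem:resc} I may assume $\kappa(\rho)=\kappa_\cZ$. I would first prove the bound on a finite-dimensional algebra and then pass to AFD $\cM$ by approximating $\rho,A,B$ with the finite-dimensional subalgebras and conditional expectations. The difficulty is to preserve $A\le B$ and the bound $\kappa\le\kappa_\cZ$ in the limit, and I would handle this by compressing with the trace-preserving conditional expectations. On a finite-dimensional algebra I would put $\rho_p=\rho^{1/(2p)}$. Then $\rho_pA\rho_p\le\rho_pB\rho_p$, and the Araki--Lieb--Thirring inequality \eqref{eq:alt} gives
$\tau((\rho_pA\rho_p)^p)\le\tau((\rho_pB\rho_p)^p)\le\varphi(B^p)$.
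It remains to show $\varphi(A^p)\le K(\kappa^{1/p},p)\,\tau((\rho_pA\rho_p)^p)$, and this is Bourin's reverse Araki inequality~\cite{Bou05}. Composing these gives $m_p\le K(\kappa_\cZ^{1/p},p)$, and letting $p\to\infty$ in (b) gives $K(\kappa^{1/p},p)\to S(\kappa)$. The stated asymptotics then follow by expanding $K$ and $S$ and comparing with (c) and (d).
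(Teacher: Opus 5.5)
Your treatment of (a), (b), the two main bounds in (c) and the upper bounds in (e) is sound and essentially the paper's. In (e) you push the whole pair $A\le B$ through the conditional expectations $E_\alpha$. That is legitimate, since $E_\alpha$ is positive and unital. The paper instead approximates only the one-operator inequality $\varphi(A^p)\le K(\kappa^{1/p},p)\,\tau((\rho^{1/2p}A\rho^{1/2p})^p)$ and then applies monotonicity and \eqref{eq:alt} in $\cM$ itself. In either version Bourin's constant depends on the extremal eigenvalues of each block $\rho_i$, so you must control it blockwise; the paper pads each block with $\diag(\mu_-^{1/p},\mu_+^{1/p})$.

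The gap is in the explicit $2\times2$ families behind the asymptotic lower bounds. The family you propose for (c) cannot produce growth in $\kappa$. Let $A=\lambda|x\rangle\langle x|$ with $\|x\|=1$, $c=|\langle x,e_1\rangle|^2$, and $A\le B$. Jensen's inequality for the spectral measure of $B$ in the vector state of $e_1$ gives $\langle B^pe_1,e_1\rangle\ge\langle Be_1,e_1\rangle^p\ge(\lambda c)^p$. Hence
\[
\frac{\varphi(A^p)}{\varphi(B^p)}\le\frac{\lambda^p(\kappa c+1-c)}{\kappa\lambda^pc^p}=c^{1-p}+\frac{1-c}{\kappa c^p}.
\]
This is bounded in $\kappa$ when $c$ stays away from $0$, and it tends to $1$ as $x\to e_1$. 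So ``$x$ close to $e_1$'' is incompatible with ``$B^p$ puts little weight on $e_1$''. The order $\kappa^{1-1/p}$ forces $c=O(\kappa^{-1/p})$, i.e.\ $x$ close to $e_2$.

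The mechanism is as follows. $B$ has a small eigenvalue $\beta\sim\kappa^{-1/p}$ at $e_1$, so $B^p$ carries weight $\beta^p\sim\kappa^{-1}$ there. Meanwhile $A^p=\lambda^{p-1}A$ keeps an $e_1$-weight of order $\beta$. The paper realises this with $B=\diag(\beta,1)$ and $A=\lambda|w\rangle\langle w|$, $w=(c',s')$, taking the largest admissible $\lambda=\langle B^{-1}w,w\rangle^{-1}$. This gives the exact ratio \eqref{eq:ansatz}, and the choice $c'^2=\theta\beta$, $\kappa\beta^p=y$, $\theta=y=1/(p-1)$ yields $C_p\kappa^{1-1/p}$.

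For (d) your family is not specified: neither $x$ nor $B$ is given, and no estimate is carried out. The role you assign to $\ln\kappa_\cZ+\ln\ln\kappa_\cZ$ also does not match the configuration that works. There $\sigma=\ln\kappa_\cZ+\ln\ln\kappa_\cZ$ is the spectral gap of the upper operator ($p\ln B=\diag(-\sigma,0)$), not an eigenvalue of $A$. The lower operator is rank one with $e_1$-weight $1/\sigma$.

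The paper needs neither an exponential construction nor a separate transfer argument for $m_{\exp}$. It puts $\beta=e^{-\sigma/p}$ and $c'^2=1/\sigma$ in \eqref{eq:ansatz} and lets $p\to\infty$, so that $(c'^2/\beta+s'^2)^p\to e$. It then uses $m_{\exp}=\sup_pm_p\ge\sup_p\mpp(\kappa_\cZ)$ from (b) and (c), and chooses $e^{-\sigma}=1/(\kappa_\cZ\ln\kappa_\cZ)$. Since the lower halves of the asymptotics in (e) rest on (c) and (d), your plan does not establish them either.
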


\begin{proof}
\ref{m:a} Use $(A,B)\mapsto(z^{1/p}A,z^{1/p}B)$, resp.\ $(A+\ln z,B+\ln z)$.
\ref{m:b} If $1<p<r$ and $0\le A\le B$, then $A^{p/r}\le B^{p/r}$ by the L\"owner--Heinz
inequality, so $\varphi(A^p)/\varphi(B^p)\le m_r$. If $0<A\le B$, then $p\ln A\le p\ln B$ and
$\varphi(A^p)/\varphi(B^p)=\varphi(e^{p\ln A})/\varphi(e^{p\ln B})\le m_{\exp}$; singular $A$ is
handled by $A+\varepsilon$. Conversely, for $A\le B$ and large $p$, $0\le1+A/p\le1+B/p$ and
$(1+A/p)^p\to e^A$ in norm, so $\varphi(e^A)\le(\sup_pm_p)\varphi(e^B)$.
\ref{m:c} The lower bound is Theorem~\ref{thm:transfer}\ref{t:low}, and $\mpp(\kappa)>1$ for $\kappa>1$
by the theorem of Bikchentaev and Tikhonov \cite{bikchentaev2007mono}. For the upper bound, $\tau(\one_QA^p)\le\tau(\one_QB^p)$
for every central projection $Q$ by the monotonicity of $\tau(f(\cdot))$ for increasing $f$ in the finite
algebra $Q\cM$ \cite{FK86}; approximating central positive elements by positive combinations of central
projections, $\tau(uA^p)\le\tau(uB^p)$, and by \eqref{eq:sandwich}
$\varphi(A^p)\le\tau(uA^p)\le\tau(uB^p)\le\kappa_\cZ\tau(lB^p)\le\kappa_\cZ\varphi(B^p)$. For the
asymptotics, let $\rho=\diag(\kappa,1)$, $B=\diag(\beta,1)$, $w=(c',s')$ a unit vector and
$A=\lambda|w\rangle\langle w|$ with $\lambda=\langle B^{-1}w,w\rangle^{-1}$, the largest $\lambda$ with
$A\le B$. Then
\begin{equation}\label{eq:ansatz}
\frac{\varphi(A^p)}{\varphi(B^p)}=\frac{\kappa c'^2+s'^2}{(c'^2/\beta+s'^2)^p(\kappa\beta^p+1)}.
\end{equation}
With $c'^2=\theta\beta$, $\kappa\beta^p=y$, $\theta=y=1/(p-1)$ and $\kappa\to\infty$ this is
$\theta y^{1/p}\kappa^{1-1/p}(1+\theta)^{-p}(1+y)^{-1}(1+o(1))=C_p\kappa^{1-1/p}(1+o(1))$.
\ref{m:d} The upper bound follows from \ref{m:b} and \ref{m:c}. For the lower bound take
$\beta=e^{-\sigma/p}$ in \eqref{eq:ansatz} with fixed $\sigma\ge1$ and $c'^2=1/\sigma$; as $p\to\infty$,
$(c'^2e^{\sigma/p}+s'^2)^p\to e^{\sigma c'^2}=e$, so by \ref{m:b} and \ref{m:c}
$m_{\exp}(\rho)\ge\sup_p\mpp(\kappa_\cZ)\ge(\kappa_\cZ/\sigma+1-1/\sigma)/(e(\kappa_\cZ e^{-\sigma}+1))$.
Choose $e^{-\sigma}=1/(\kappa_\cZ\ln\kappa_\cZ)$, which gives $\sigma\ge1$ for $\kappa_\cZ\ge e$.

\ref{m:e} By \ref{m:a} and Proposition~\ref{prop:char}\ref{ch:resc} we may replace $\rho$ by
$z_0\rho$ and assume $\kappa(\rho)=\kappa_\cZ$. Let $\mu_-=\min\sigma(\rho)$ and $\mu_+=\max\sigma(\rho)$. We
first show that
\begin{equation}\label{eq:bourintrace}
\varphi(A^p)\le K(\kappa^{1/p},p)\,\tau\bigl((\rho^{1/2p}A\rho^{1/2p})^p\bigr)\qquad(A\in\cM_+)
\end{equation}
for every finite-dimensional $\cM$ and every $\rho$ with $\sigma(\rho)\subset[\mu_-,\mu_+]$, where
$\kappa=\mu_+/\mu_-$. Then $\cM=\bigoplus_iM_{n_i}(\C)$ and $\tau=\sum_it_i\Tr_i$ with $t_i>0$. Bourin's
reverse of Araki's inequality \cite[Thm.~2]{Bou05} states the following. If $A'\ge0$ and $Z>0$ in
$M_N(\C)$, $Z$ has extremal eigenvalues $a,b$, and $p>1$, then there is a unitary $V$ with
$A'^pZ^pA'^p\le K(b/a,p)\,V(A'ZA')^pV^*$ (the constant there is written $K(a,b,p)=K(b/a,p)$). Apply
it in $M_{n_i+2}(\C)$ to $A'=A_i^{1/2}\oplus0$ and $Z=\rho_i^{1/p}\oplus\diag(\mu_-^{1/p},\mu_+^{1/p})$,
whose extremal eigenvalues are $\mu_-^{1/p}$ and $\mu_+^{1/p}$, and take traces. This gives
$\Tr_i(\rho_iA_i^p)=\Tr_i(A_i^{p/2}\rho_iA_i^{p/2})\le K(\kappa^{1/p},p)\Tr_i((A_i^{1/2}\rho_i^{1/p}A_i^{1/2})^p)$,
and the last trace is $\Tr_i((\rho_i^{1/2p}A_i\rho_i^{1/2p})^p)$. Multiplying by $t_i$ and summing
gives \eqref{eq:bourintrace}.
If $\cM$ is AFD with finite-dimensional subalgebras $N_\alpha$, let $E_\alpha$ be the
$\tau$-preserving conditional expectations onto $N_\alpha$. Then $E_\alpha(x)\to x$ in
$\|\cdot\|_2$, hence strongly, with $\|E_\alpha(x)\|\le\|x\|$, and $\sigma(E_\alpha(\rho))\subset[\mu_-,\mu_+]$.
Apply \eqref{eq:bourintrace} in $N_\alpha$ to $E_\alpha(\rho)$ and $E_\alpha(A)$, and pass to the
limit; products, continuous functional calculus and $\tau$ are strongly continuous on bounded sets.
Now let $0\le A\le B$. By \eqref{eq:bourintrace}, by the monotonicity of $X\mapsto\tau(f(X))$ for
increasing $f$ \cite{FK86}, and by \eqref{eq:alt} with $r=p$,
\[
\varphi(A^p)\le K\tau\bigl((\rho^{1/2p}A\rho^{1/2p})^p\bigr)\le K\tau\bigl((\rho^{1/2p}B\rho^{1/2p})^p\bigr)
\le K\tau(\rho^{1/2}B^p\rho^{1/2})=K\varphi(B^p),
\]
with $K=K(\kappa_\cZ^{1/p},p)$. For $h>1$ and $x=h^{1/p}$, as $p\to\infty$ we have
$(h-x)/((p-1)(x-1))\to(h-1)/\ln h$, $((p-1)/p)^p\to e^{-1}$ and
$((h-1)/(h-x))^p\to h^{1/(h-1)}$. Hence $K(h^{1/p},p)\to S(h)$, and by \ref{m:b},
$m_{\exp}=\lim_pm_p\le S(\kappa_\cZ)$. The asymptotics follow from \ref{m:c}, \ref{m:d},
$K(h,p)\sim\frac{(p-1)^{p-1}}{p^p}h^{p-1}$ as $h\to\infty$, and $S(h)\sim h/(e\ln h)$.
\end{proof}

\begin{remark}\label{rem:fujii}
Fujii and Seo \cite{FS05} proved related reverse inequalities of Araki's inequality with the
constant $K$. In Bourin's inequality the unitary $V$ cannot be omitted: the operator inequality
$A^{p/2}\rho A^{p/2}\le K(\kappa^{1/p},p)(A^{1/2}\rho^{1/p}A^{1/2})^p$ fails in general. For instance,
let $p=2$, $\rho=\diag(4,1)$, so that $K(\kappa^{1/2},2)=K(2,2)=\frac98$, and $A=C^2$ with
$C=\bigl(\begin{smallmatrix}1&-1/2\\-1/2&1\end{smallmatrix}\bigr)>0$. Then the $(2,2)$ entry of
$\frac98(C\rho^{1/2}C)^2-C^2\rho C^2$ equals $-\frac12$; this is an exact computation. We do not know whether the trace inequality
\eqref{eq:bourintrace} holds in every finite von Neumann algebra, for instance in a II$_1$ factor
that is not hyperfinite. If it does, Proposition~\ref{prop:monop}\ref{m:e} holds for all finite
$\cM$.
\end{remark}

\subsubsection*{Weighted monotonicity}
\begin{proposition}[Item 25]\label{prop:wmon}
For item~25 ($\varphi(A^4)\le\varphi(AB^2A)$ for $0\le A\le B$), rank-one $A$ gives no violation, and an explicit
construction with $A$ of rank two and $B-A$ of rank one gives
\[
  V_n\ge g_{25}(\delta)=\frac{\delta^8}{4096}\bigl(1+O(\delta^2)\bigr)\qquad\text{in every dimension }n\ge2 .
\]
Conversely, $V_n\le\kappa-1=2\delta/(1-\delta)$, and in every finite von Neumann algebra the constant
$c_{25}(\rho)=\sup\varphi(A^4)/\varphi(AB^2A)$ over $0\le A\le B$ satisfies $c_{25}(\rho)\le\kappa_\cZ(\rho)$.
\end{proposition}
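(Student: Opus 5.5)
Three claims have to be established: no violation for rank-one $A$, the upper bound $c_{25}\le\kappa_\cZ$, and the explicit lower bound of order $\delta^8$. I take them in order of difficulty.

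\emph{Upper bound.} I follow the proof of Proposition~\ref{prop:monop}\ref{m:c}. For $0\le A\le B$ we have $AB^2A\ge A^4$ in trace. Indeed, $\tau(AB^2A)=\tau(BA^2B)$, and since $A\le B$ gives $A^{1/2}\cdots$, I would instead use $\tau(AB^2A)=\|BA\|_2^2\ge\|A^2\|_2^2$ whenever $A^2\le\cdots$. Cleaner is to argue in every corner $Q\cM$ with $Q$ a central projection. There the trace inequality $\tau(QA^4)\le\tau(QAB^2A)$ is the tracial case of the characterisation of Dinh Trung Hoa and Tikhonov, and it holds for every trace: $\tau(AB^2A)=\tau(B A^2 B)\ge\tau(BA\cdot AB)$, and $\tau(A^4)=\tau(A^2A^2)\le\tau(A B^2 A)$ follows from $A^2\le AB\cdot$, which I would verify via $\tau(A^4)=\tau(A^{3/2}AA^{3/2})\le\tau(A^{3/2}BA^{3/2})$ iterated with Cauchy--Schwarz.

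Approximating $u$ by positive combinations of central projections then gives $\tau(uA^4)\le\tau(uAB^2A)$. By \eqref{eq:sandwich}, applied with the positive elements $A^4$ and $AB^2A$,
\[
\varphi(A^4)\le\tau(uA^4)\le\tau(uAB^2A)\le\kappa_\cZ\tau(lAB^2A)\le\kappa_\cZ\varphi(AB^2A).
\]
On $M_n$ this gives $V_n\le\kappa-1=2\delta/(1-\delta)$.

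\emph{Rank-one $A$.} Write $A=\lambda|w\rangle\langle w|$ with $\|w\|=1$. The condition $A\le B$ forces $\lambda\langle B^{-1}w,w\rangle\le1$, and in general $\lambda\le\langle Bw,w\rangle$ together with $\lambda^2\le\langle B^2w,w\rangle$, the latter by Cauchy--Schwarz. Then $\varphi(A^4)=\lambda^4\langle\rho w,w\rangle$ and $\varphi(AB^2A)=\lambda^2\langle B^2w,w\rangle\langle\rho w,w\rangle$. So the inequality reduces to $\lambda^2\le\langle B^2w,w\rangle$. This holds because $\lambda=\langle Aw,w\rangle\le\langle Bw,w\rangle\le\langle B^2w,w\rangle^{1/2}$.

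\emph{Lower bound, the main obstacle.} I work in $M_2$ with $\rho=\diag(1+\delta,1-\delta)$. I take $A$ of rank two and $B=A+tvv^*$ with $t>0$, in the spirit of Remark~\ref{rem:g10}. The violation is $\cL-\cR=-2t\,\re\varphi(Avv^*A\cdot\,\cdot)\ldots$, explicitly
\[
\varphi(A^4)-\varphi(AB^2A)=-t\bigl(2\re\varphi(A^2vv^*A)\bigr)-t^2(v^*Av)\varphi(Avv^*A).
\]
The linear term must be positive, and optimising in $t$ gives a quotient $\Gamma$ of the square of the linear coefficient by the quadratic coefficient.

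I would choose $A=\diag$-rotated $U\diag(1,\eta)U^*$ with small parameters scaled like $\delta$: $\eta$ close to a constant, the rotation angle, and $v$ at angle $\sim\delta$. Then I expand $\re\varphi(A^2vv^*A)$. It vanishes to first order because any trace satisfies the inequality, and rank-one $A$ shows that the leading terms cancel. I expect the linear coefficient to be of order $\delta^4$ (four small parameters must align) and the quadratic coefficient of order $1$. This yields $\Gamma\sim\delta^8$. Matching the constant $1/4096=4^{-6}$ requires the optimal choices of scale, and I expect this to be the main work. I would do the expansion symbolically, optimise the leading coefficient over the scaled parameters (expecting values like $\delta/4$), and define $g_{25}$ as the exact value at the chosen family.

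Remark~\ref{rem:attain} and direct sums with a zero block then transfer the bound to every $n\ge2$.
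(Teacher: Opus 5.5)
Your rank-one argument is the paper's. Your upper bound is also correct once the false starts are deleted. A single Cauchy--Schwarz step suffices:
\[
\tau(A^4)\le\tau(A^{3/2}BA^{3/2})=\tau(A^2\cdot AB)\le\tau(A^4)^{1/2}\tau(AB^2A)^{1/2}.
\]
Note that $\tau(BA^2B)=\tau(BA\cdot AB)$ is an identity, not an inequality. Your passage to $\kappa_\cZ$ through $\tau(uA^4)\le\tau(uAB^2A)$ and \eqref{eq:sandwich} is a valid alternative to the paper's route. The paper first proves $c_{25}\le\kappa(\rho)$ from $\|\rho^{-1}\|^{-1}\tau\le\varphi\le\|\rho\|\tau$, then uses condition (R) with $\sigma_z(A,B)=(z^{1/4}A,z^{1/4}B)$ and Lemma~\ref{lem:resc}. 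In your route you can skip the approximation by central projections, since $\tau(u\,\cdot\,)$ is itself a trace.

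The gap is the lower bound, which is the substance of the proposition. You give neither the construction nor the constant, and the scaling you propose produces no violation at all. Your quadratic coefficient is also wrong: for a unit vector $v$ it is $\varphi(Avv^*A)$, without the factor $v^*Av$. For $B=A+C$ with $C\ge0$,
\[
  \varphi(AB^2A)-\varphi(A^4)=\Tr\bigl(C\,A(\rho A+A\rho)A\bigr)+\varphi(AC^2A).
\]
So a violation needs $\rho A+A\rho\not\ge0$. By Theorem~\ref{thm:jordan}(a) this requires $\OMC(\rho)^2+\OMC(A)^2>1$, i.e.\ an eigenvalue ratio of $A$ below $\tan^2(\psi/2)\approx\delta^2/4$, where $\psi=\arcsin\delta$. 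If that ratio is ``close to a constant'', item~25 holds outright for all small $\delta$.

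The paper uses the orthonormal basis at the fixed angle $\pi/4$ to the eigenbasis of $\rho$, so that $\rho_{11}=\rho_{22}=\frac12$ and $\rho_{12}=\frac\delta2$ for $\Tr\rho=1$. It takes $A=\diag(1,s)$ with $\sqrt s=\sigma\delta$, and $v=(\cos\phi,\sin\phi)$ with $\tan\phi=-\varrho s^{-3/2}$. Thus $v$ lies within an angle $O(\delta^3)$ of the small eigenvector of $A$. Then
\[
T_1=\varphi(A^2vv^*A+Avv^*A^2)=\cos^2\phi\,\bigl(1+\varrho^2-(1+s)\varrho/\sigma\bigr),
\]
\[
T_2=\varphi(Avv^*A)=\cos^2\phi\,\bigl(\tfrac12+\tfrac{\varrho^2}{2s}-\tfrac\varrho\sigma\bigr),\qquad \cos^2\phi=s^3/(s^3+\varrho^2).
\]
Hence $T_1$ is of order $\delta^6$ and $T_2$ of order $\delta^4$, not $\delta^4$ and $1$ as you predicted. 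With the optimal $t=-T_1/(2T_2)$, of order $\delta^2$, and $\Gamma=T_1^2/(4T_2)$, the relative violation is
\[
\Gamma/(\varphi(A^4)-\Gamma)=\bigl(\sigma^2(\varsigma-\varsigma^2)-\sigma^4\bigr)^2\delta^8+O(\delta^{10}),\qquad\varsigma=\sigma/\varrho .
\]
Its maximum, at $\varsigma=\frac12$ and $\sigma^2=\frac18$, is $4^{-6}$. Without these explicit scales the order $\delta^8$ and the constant $1/4096$ remain unproved.
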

\begin{proof}
\emph{Upper bound.} For a finite trace $\tau$ and $0\le A\le B$ we have $\tau(A^4)\le\tau(AB^2A)$ (cf.~\cite{hoa2010}):
congruence by $A^{1/2}$ twice gives $A^3\le ABA$, so $\tau(A^4)\le\tau(A^{3/2}BA^{3/2})=\tau(A^2\cdot AB)\le
\tau(A^4)^{1/2}\tau(AB^2A)^{1/2}$ by the Cauchy--Schwarz inequality for $\tau$. Since also $AB^2A\ge0$,
$\varphi(A^4)\le\|\rho\|\tau(A^4)\le\|\rho\|\tau(AB^2A)\le\kappa(\rho)\varphi(AB^2A)$. Condition (R) holds with
$\sigma_z(A,B)=(z^{1/4}A,z^{1/4}B)$, which preserves $0\le A\le B$ and multiplies $A^4$ and $AB^2A$ by $z$, so
Lemma~\ref{lem:resc} gives $c_{25}\le\kappa_\cZ$; on $M_n$ this is $V_n\le\kappa-1$.
\emph{Lower bound.} If $A=\alpha uu^*$ has rank one and $A\le B$, then $u^*Bu\ge\alpha$ and $AB^2A=\alpha^2\|Bu\|^2uu^*\ge\alpha^2(u^*Bu)^2uu^*\ge
A^4$, so every positive $\varphi$ satisfies the inequality. For the construction take $\rho$ with $\Tr \rho=1$ and eigenvalues
$(1\pm\delta)/2$, and, in an orthonormal basis at the angle $\vartheta$ to its eigenbasis, $A=\mathrm{diag}(1,s)$, $0<s<1$,
and $B=A+tvv^*$ with $v=(\cos\phi,\sin\phi)$; in this basis $\rho$ has the entries $\rho_{11,22}=\frac12(1\pm\delta\cos2\vartheta)$
and $\rho_{12}=\frac\delta2\sin2\vartheta$. Then $\cR-\cL=tT_1+t^2T_2$ with $T_1=\varphi(A^2P_vA+AP_vA^2)$, $T_2=\varphi(AP_vA)$,
$P_v=vv^*$, and whenever $T_1<0$ the best $t=-T_1/(2T_2)$ gives $(\cL-\cR)/\cR=\Gamma/(\cL-\Gamma)$, $\Gamma=T_1^2/(4T_2)$. Let $g_{25}(\delta)$
be the maximum over $(\vartheta,s,\phi)$; it is attained, and by embedding $\C^2\subset\C^n$ (with $A$, $B$ extended by $0$)
it is a lower bound in every dimension. For the leading order put $\vartheta=\pi/4$, $\sqrt s=\sigma\delta$ and $\tan\phi=-\varrho
s^{-3/2}$. Then $T_1=\cos^2\phi\,(1+\varrho^2-(1+s)\varrho/\sigma)$, $T_2=\cos^2\phi\,(\frac12+\frac{\varrho^2}{2s}-\frac\varrho\sigma)$ and
$\cos^2\phi=s^3/(s^3+\varrho^2)$, so that
\[
  \frac{\cL-\cR}\cR=\bigl(\sigma^2(\varsigma-\varsigma^2)-\sigma^4\bigr)^2\delta^8+O(\delta^{10}),\qquad\varsigma=\sigma/\varrho,
\]
which is maximal for $\varsigma=\frac12$, $\sigma^2=\frac18$, with the value $\delta^8/4096$.
\end{proof}

\noindent The numerical maximisation gives $g_{25}(0.1)/(0.1^8/4096)=\TCWmonLead$. The same scaling for a general
exponent $r\ge0$ in $\varphi(A^{2r+2})\le\varphi(A^rB^2A^r)$ gives the order $\delta^{4(r+1)}$; for $r=0$ the supremum of
the leading coefficient is $\frac1{16}$, that of item~10.

\subsection{Projection inequalities}
For projections $P,Q\in\cM$ let
\[
c_{PQP}=\sup\frac{\varphi(PQP)}{\varphi(Q)},\quad c_{PQ}=\sup_{QP\ne0}\frac{\varphi(PQP)}{\varphi(QPQ)},\quad
c_s=\sup_{QP\ne0}\frac{\varphi(s(PQP))}{\varphi(s(QPQ))},\quad c_\vee=\sup\frac{\varphi(P\vee Q)}{\varphi(P)+\varphi(Q)}.
\]
They correspond to the characterisations $\varphi(PQP)\le\varphi(Q)$, $\varphi(PQP)=\varphi(QPQ)$ and
$\varphi(s(PQP))=\varphi(s(QPQ))$ in Bikchentaev's series \cite{bikchentaev2009rm,bikchentaev2010smj,bikchentaev2011mn,Bik15} and to the
inequality $\varphi(P\vee Q)\le\varphi(P)+\varphi(Q)$, the formal specialisation of Tikhonov's
subadditivity \cite{tikhonov2005} to $f=\one_{(0,\infty)}$ and projections (for traces it follows from
Kaplansky's formula). The constant for
$\varphi(|P-Q|)\le\varphi(P+Q)$ is $c_{\rm proj}=G_\cZ$ by Theorem~\ref{thm:subadd}.

\begin{theorem}[Items 17, 23 and 33]\label{thm:proj}
\begin{enumerate}
\item\label{p:a} $P\rho P\le G_\cZ^2\rho$ for every projection $P$, and $c_{PQP}=G_\cZ^2=\frac{(1+\kappa_\cZ)^2}{4\kappa_\cZ}$;
in $M_n(\C)$ the supremum is attained (on $M_n(\C)$ the operator inequality is contained in
\cite[Cor.~1.5]{Bou06}).
\item\label{p:b} $c_{PQ}=c_s=\kappa_\cZ$; the corresponding infima are $\kappa_\cZ^{-1}$. If
$\cM=M_n(\C)$ and $\rho$ is not scalar, neither $c_{PQ}$ nor $c_s$ is attained.
\item\label{p:c} $\varphi(P\vee Q)\le\min\{\varphi(Q)+\kappa_\cZ\varphi(P),\varphi(P)+\kappa_\cZ\varphi(Q)\}$
and $c_\vee=\frac{1+\kappa_\cZ}2$.
\end{enumerate}
\end{theorem}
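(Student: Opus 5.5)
Each part follows the pattern of the earlier items: an upper bound with $\kappa$ (or directly with $u,l$), condition (R), and a $2\times2$ example transferred by Theorem~\ref{thm:transfer}.

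\emph{Part (a).} The operator inequality $P\rho P\le G^2\rho$ with $G=\cosh\OLC(\rho)$ is Lemma~\ref{lem:pTp} with $T=\rho$. To get $G_\cZ$, I use that the inequality $P\rho P\le\lambda\rho$ is invariant under $\rho\mapsto z\rho$, because $z$ is central and commutes with $P$. Rescaling by $z_0$ (Proposition~\ref{prop:char}\ref{ch:resc}) then gives $P\rho P\le G(z_0\rho)^2\rho=G_\cZ^2\rho$. Taking $\tau(\cdot\,Q)$ gives $\varphi(PQP)=\tau(P\rho PQ)\le G_\cZ^2\varphi(Q)$. For the example I take $\rho=\diag(\kappa,1)$ in $M_2$ and $Q$ the rank-one projection onto a unit vector $e$. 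With $P$ the projection onto $\rho e$, we have $\varphi(PQP)=|\langle\rho e,e\rangle|^{-1}\cdots$; more cleanly, the best $P$ for fixed rank-one $Q=|e\rangle\langle e|$ gives $\langle P\rho P e,e\rangle/\langle\rho e,e\rangle$. Choosing $P=|e'\rangle\langle e'|$, the ratio becomes $|\langle e,e'\rangle|^2\langle\rho e',e'\rangle/\langle\rho e,e\rangle$. Taking $e'=\rho^{-1}e/\|\rho^{-1}e\|$ yields $\langle\rho e,e\rangle^{-1}\cdot\langle\rho^{-1}e,e\rangle^{-1}\cdots$. I expect the optimum to be exactly the Kantorovich expression $\langle\rho^{-1}e,e\rangle\langle\rho e,e\rangle=G^2$ at $e=(1,1)/\sqrt2$, and I will check this algebra. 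Condition (R) holds with $\sigma_z=\mathrm{id}$, since $\varphi_{z\rho}(PQP)/\varphi_{z\rho}(Q)$ is not literally invariant. I will therefore argue instead through the operator inequality and transfer only the lower bound via Theorem~\ref{thm:transfer}\ref{t:low}, which needs no (R). Attainment in $M_n$ follows from Remark~\ref{rem:attain}.

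\emph{Part (b).} For the upper bound I use \eqref{eq:sandwich} as in Theorem~\ref{thm:modulus}. If $QP=V|QP|$, then $PQP=|QP|^2$ and $QPQ=|PQ|^2=V|QP|^2V^*$, so $\varphi(PQP)\le\kappa_\cZ\varphi(QPQ)$ by Theorem~\ref{thm:modulus}. The same holds for supports, since $s(QPQ)=Vs(PQP)V^*$. The lower bound comes from $M_2$ with $P=|e_1\rangle\langle e_1|$ and $Q$ the projection onto $(\cos\theta,\sin\theta)$. Then $PQP=\cos^2\theta\,e_{11}$ and $QPQ=\cos^2\theta\,Q$, so the ratio is $\kappa/(\kappa\cos^2\theta+\sin^2\theta)\to\kappa$ as $\theta\to\pi/2$; the support version is identical. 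Non-attainment in $M_n$: equality $\varphi(PQP)=\kappa\varphi(QPQ)$ would force $QPQ$ to be supported on the bottom eigenspace of $\rho$ and $PQP$ on the top one. I then need to show this is incompatible with $QP\ne0$ and unitary equivalence via $V$. I plan to reduce to Halmos blocks (Lemma~\ref{lem:halmos}), where $0<\theta<\pi/2$ forces both ranges to be non-orthogonal to both $P$ and $Q$ ranges; this is the delicate step.

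\emph{Part (c).} Write $P\vee Q=Q+s(Q^\perp PQ^\perp)$, and note $s(Q^\perp P Q^\perp)\sim s(PQ^\perp P)\le P$ via partial isometry. By the first inequality of the theorem applied through \eqref{eq:sandwich}, $\varphi(P\vee Q)\le\varphi(Q)+\kappa_\cZ\varphi(P)$, and symmetrically. Taking the minimum and using $\min\{x+\kappa y,y+\kappa x\}\le\frac{1+\kappa}2(x+y)$ gives $c_\vee\le\frac{1+\kappa_\cZ}2$. For the lower bound in $M_2$, I take $P,Q$ rank one and nearly equal, both near $e_2$ and symmetric about it. Then $P\vee Q=1$, $\varphi(1)=\kappa+1$, and $\varphi(P)+\varphi(Q)\to2$; this is approached but not attained. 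The main obstacle is the non-attainment claim in (b); everything else is routine.
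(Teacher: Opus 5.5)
Your route is the paper's: Lemma~\ref{lem:pTp} for $z_0\rho$ with the central invariance of $P\rho P\le\lambda\rho$ in (a), Theorem~\ref{thm:modulus} for $X=QP$ and its polar partial isometry in (b), and $P\vee Q-Q=s(Q^\perp PQ^\perp)\sim s(PQ^\perp P)\le P$ (Kaplansky's formula) in (c). The $2\times2$ examples are moved by Theorem~\ref{thm:transfer}\ref{t:low}, without (R) in (a), as you eventually decide; your sentence asserting (R) with $\sigma_z=\mathrm{id}$ contradicts itself. Your examples for (b) and (c) are valid.

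\emph{The example for (a) fails as written.} With $Q=|e\rangle\langle e|$ and $P=|e'\rangle\langle e'|$ the ratio is $|\langle e,e'\rangle|^2\langle\rho e',e'\rangle/\langle\rho e,e\rangle$. Moreover $|\langle e,e'\rangle|^2\le\langle\rho e,e\rangle\langle\rho^{-1}e',e'\rangle$, with equality iff $e\propto\rho^{-1}e'$. So the vector of $Q$ must be $\rho^{-1}$ applied to the vector of $P$, not conversely. Then the ratio is $\langle\rho^{-1}e',e'\rangle\langle\rho e',e'\rangle$, which equals $(1+\kappa)^2/(4\kappa)$ at $e'=(1,1)/\sqrt2$; this is the paper's example. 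Your choice $e'=\rho^{-1}e/\|\rho^{-1}e\|$ with $e=(1,1)/\sqrt2$ gives $\kappa(1+\kappa)^2/(1+\kappa^2)^2<1$ for $\kappa>1$, i.e.\ no violation at all. Also, with $e=(1,1)/\sqrt2$ placed in $Q$, no $P$ gives more than $\|\rho\|/\langle\rho e,e\rangle<2$, whereas $G^2\to\infty$ as $\kappa\to\infty$. So the Kantorovich expression is attained at the vector of $P$, not at that of $Q$.

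\emph{The non-attainment in (b) is left open exactly where it has content.} Your reduction is right: equality in $\Tr(\rho PQP)\le M\Tr(PQP)=M\Tr(QPQ)\le\kappa\Tr(\rho\,QPQ)$, $M=\max\sigma(\rho)$, forces $\mathrm{ran}\,PQP\subset E_M$ and $\mathrm{ran}\,QPQ\subset E_m$. But no contradiction can come from $QPQ=V(PQP)V^*$ alone. For arbitrary $X$ the constant $\kappa$ of item~26 is attained by $X=|e_{\max}\rangle\langle e_{\min}|$, where $|X|^2$ and $|X^*|^2$ sit on orthogonal eigenspaces.

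What you need is that $\mathrm{ran}\,PQP$ and $\mathrm{ran}\,QPQ$ are never orthogonal when $QP\ne0$, and this uses that $P,Q$ are projections. For instance $\Tr(PQP\cdot QPQ)=\Tr((PQP)^3)>0$, while $E_M\perp E_m$ would make it $0$. The paper argues with a vector: for $x\in\mathrm{ran}\,Q$ with $Px\ne0$ and $T=QPQ$, we have $PTx=PQPx\in E_M$ and $Tx\in E_m$, yet $\langle PTx,Tx\rangle=\langle T^3x,x\rangle>0$.

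Your Halmos-block plan also works. It gives either a common vector of $\mathrm{ran}\,P$ and $\mathrm{ran}\,Q$ on a one-dimensional block, or $\langle f,q\rangle=\cos\theta\ne0$ on a two-dimensional block. Since $\mathrm{ran}\,s(PQP)=\mathrm{ran}\,PQP$, the same argument covers $c_s$. Finally, the infima $\kappa_\cZ^{-1}$, which you do not mention, follow by interchanging $P$ and $Q$.
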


\begin{proof}
\ref{p:a} By Lemma~\ref{lem:pTp} for $T=z_0\rho$ with $z_0$ as in Lemma~\ref{lem:resc},
$Pz_0\rho P\le\cosh^2\OLC(z_0\rho)z_0\rho=G_\cZ^2z_0\rho$; multiply by $z_0^{-1/2}$ on both sides. Then
$\varphi(PQP)=\tau(QP\rho PQ)\le G_\cZ^2\tau(Q\rho Q)=G_\cZ^2\varphi(Q)$. For the lower bound, in $M_2$ with
$\rho=\diag(\kappa,1)$ let $P=|w\rangle\langle w|$, $w=(1,1)/\sqrt2$, and $Q=|q\rangle\langle q|$ with
$q=\rho^{-1}w/\|\rho^{-1}w\|$. Then $PQP=|\langle w,q\rangle|^2P$ and the ratio is
$\langle\rho^{-1}w,w\rangle\langle\rho w,w\rangle=(\kappa+1)^2/(4\kappa)$, the Kantorovich
expression, which here attains its maximum $G^2$. Apply
Theorem~\ref{thm:transfer}\ref{t:low}.

\ref{p:b} With $X=QP$, $|X|^2=PQP$ and $|X^*|^2=QPQ$, so $c_{PQ}\le\kappa_\cZ$ by
Theorem~\ref{thm:modulus}. If $X=w|X|$, then $s(PQP)=w^*w$, $s(QPQ)=ww^*$, and
$\varphi(w^*w)\le\kappa_\cZ\varphi(ww^*)$ by Theorem~\ref{thm:modulus} with $U=w^*$, $Y=ww^*$. For
the lower bound, in $M_2$ with $\rho=\diag(\kappa,1)$, let $P,Q$ be the projections onto
$(\cos\theta,\sin\theta)$ and $(\sin\theta,\cos\theta)$, $0<\theta<\pi/4$. Then $PQP=\sin^22\theta\,P$,
$QPQ=\sin^22\theta\,Q$, $s(PQP)=P$, $s(QPQ)=Q$, and both ratios equal
$(\kappa\cos^2\theta+\sin^2\theta)/(\kappa\sin^2\theta+\cos^2\theta)\to\kappa$ as $\theta\to0$. Non-attainment:
equality in $\tau(\rho PQP)\le\|\rho\|\tau(PQP)=\|\rho\|\tau(QPQ)\le\kappa\tau(\rho QPQ)$ forces
$\operatorname{ran}PQP\subset E_M$ and $\operatorname{ran}QPQ\subset E_m$, the eigenspaces of
$M=\max\sigma(\rho)$ and $m=\min\sigma(\rho)$. Take $x\in\operatorname{ran}Q$ with $Px\ne0$, put
$T=QPQ$ and $y=Tx\in E_m$; $y\ne0$ since $\langle Tx,x\rangle=\|Px\|^2$. Then $Py=PQP x\in E_M\perp
E_m$, so $\|Py\|^2=\langle Py,y\rangle=0$, whereas $\langle Py,y\rangle=\langle T^3x,x\rangle=\|T^{3/2}x\|^2>0$. For $c_s$, equality in
$\tau(\rho\,s(PQP))\le\|\rho\|\tau(s(PQP))=\|\rho\|\tau(s(QPQ))\le\kappa\tau(\rho\,s(QPQ))$ forces
$\operatorname{ran}s(PQP)=\operatorname{ran}PQP\subset E_M$ and $\operatorname{ran}s(QPQ)=\operatorname{ran}QPQ\subset E_m$,
which gives the same contradiction.

\ref{p:c} By Kaplansky's formula \cite[Ch.~6]{KR2} there is a partial isometry $w$ with
$ww^*=P\vee Q-Q$ and $w^*w=P-P\wedge Q\le P$. By Theorem~\ref{thm:modulus},
$\varphi(P\vee Q)=\varphi(Q)+\varphi(ww^*)\le\varphi(Q)+\kappa_\cZ\varphi(P)$; symmetrise and average to
get $c_\vee\le(1+\kappa_\cZ)/2$. In $M_2$ with $\rho=\diag(\kappa,1)$, $P=e_{22}$ and $Q_\theta$ the
projection onto $(\sin\theta,\cos\theta)$ give $P\vee Q_\theta=1$ for $\theta\ne0$ and ratio
$(\kappa+1)/(1+\kappa\sin^2\theta+\cos^2\theta)\to(1+\kappa)/2$. Apply
Theorem~\ref{thm:transfer}\ref{t:low}.
\end{proof}

The five projection constants $c_{\rm proj},c_{PQP},c_\vee,c_{PQ},c_s$ take four different values,
$G_\cZ\le G_\cZ^2\le\frac{1+\kappa_\cZ}2\le\kappa_\cZ$; near the centre they behave like
$1+\frac12\LNC^2$, $1+\LNC^2$, $1+\LNC$ and $1+2\LNC$. The quadratic ones come from the Krein and
Wielandt inequalities, the linear ones from comparing equivalent projections.

\begin{proposition}[Item 22]\label{prop:item22}
For all projections $P,Q\in\cM$, $\varphi(\Re PQ)\le G_\cZ\,\varphi(|PQ|)$, and the constant is optimal. In
$M_n(\C)$ it is $G$ and attained by rank-one projections, so $V_n=G-1$ for item~22.
\end{proposition}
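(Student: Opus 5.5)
The plan is to reduce the upper bound to Gardner's inequality (Theorem~\ref{thm:gardner}) and to obtain the lower bound from a rank-one example in $M_2$, which Theorem~\ref{thm:transfer} then carries over to $\cM$.

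\emph{Upper bound.} Since $\rho\ge0$, we have $\varphi(Y^*)=\overline{\varphi(Y)}$, so $\varphi(\Re PQ)=\re\varphi(PQ)\le|\varphi(PQ)|$. Theorem~\ref{thm:gardner} with $X=PQ$ gives $|\varphi(PQ)|\le G_\cZ\varphi(|PQ|)$, which is the claimed inequality.

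\emph{Lower bound.} Work in $M_2$ with $\rho=\diag(\kappa,1)$ and rank-one projections $P=|p\rangle\langle p|$, $Q=|q\rangle\langle q|$, where $p,q$ are unit vectors. Then $PQ=\langle p,q\rangle\,|p\rangle\langle q|$ and $|PQ|=|\langle p,q\rangle|\,|q\rangle\langle q|$. Choosing phases with $\langle p,q\rangle>0$, the ratio is
\[
\frac{\re\varphi(PQ)}{\varphi(|PQ|)}=\frac{\re\langle \rho p,q\rangle}{\langle\rho q,q\rangle}.
\]
This is the Gardner quotient for the rank-one operator $X=|p\rangle\langle q|$, except that the rank-one projection $P$ forces $\langle p,q\rangle\ne0$. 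Take $q=b=(\kappa+1)^{-1/2}(1,\sqrt\kappa)$ and $p=\rho b/\|\rho b\|$, as in the proof of Theorem~\ref{thm:gardner}. Then $\langle p,q\rangle=\langle\rho b,b\rangle/\|\rho b\|>0$, and the ratio equals $\|\rho b\|/\langle\rho b,b\rangle=(1+\kappa)/(2\sqrt\kappa)=G$. So the value $G$ is attained by rank-one projections.

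\emph{Transfer.} The constant $\sup\,\re\varphi(PQ)/\varphi(|PQ|)$ is a violation constant in the sense of Definition~\ref{def:viol}: there are two projection variables, $F_1=X_1X_2+X_2X_1$, $F_2=|X_1X_2|$, $\Psi_N(z)=\frac12\re z_1$ and $\Psi_D(z)=|z_2|$. Theorem~\ref{thm:transfer}\ref{t:low} therefore gives the lower bound $G_\cZ$ in every finite $\cM$, and Remark~\ref{rem:attain} gives attainment in $M_n$, hence $V_n=G-1$.

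The only step that needs care is checking that the Gardner extremiser of Theorem~\ref{thm:gardner} actually factors as $PQ$ up to a positive scalar, that is, that the overlap $\langle p,q\rangle$ is positive. The explicit computation above settles this, since $\langle\rho b,b\rangle>0$.
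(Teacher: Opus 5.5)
Your proposal is correct and follows the paper's proof essentially step for step. The upper bound is $\re\varphi(PQ)\le|\varphi(PQ)|$ combined with Theorem~\ref{thm:gardner}. The $M_2$ extremiser is the same ($Q$ the projection onto $b$, $P$ the projection onto $\rho b/\|\rho b\|$, with positive overlap $\langle\rho b,b\rangle/\|\rho b\|$). The conclusion then comes from Theorem~\ref{thm:transfer}\ref{t:low} and Remark~\ref{rem:attain}. The only difference, putting the factor $\frac12$ into $\Psi_N$ rather than into $F_1$, is immaterial.
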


\begin{proof}
$\varphi(\Re PQ)=\re\varphi(PQ)\le|\varphi(PQ)|\le G_\cZ\varphi(|PQ|)$ by Theorem~\ref{thm:gardner}. For the lower
bound let $\rho=\diag(a,b)$ in $M_2$, $v=\sqrt{b/(a+b)}\,e_1+\sqrt{a/(a+b)}\,e_2$, $u=\rho v/\|\rho v\|$,
$P=uu^*$ and $Q=vv^*$. Then $PQ=(u^*v)\,uv^*$ with $u^*v=\langle\rho v,v\rangle/\|\rho v\|>0$, so
$|PQ|=(u^*v)Q$, $\varphi(PQ)=(u^*v)\|\rho v\|$ and $\varphi(|PQ|)=(u^*v)\langle\rho v,v\rangle$. The ratio is
$\|\rho v\|/\langle\rho v,v\rangle=\sqrt{ab}\,(a+b)/(2ab)=G(\rho)$, the equality case of
Lemma~\ref{lem:krein}. The constant is a violation constant (two projections,
$F_1=\frac12(X_1X_2+X_2X_1)$, $F_2=|X_1X_2|$), and Theorem~\ref{thm:transfer}\ref{t:low} and
Remark~\ref{rem:attain} complete the proof.
\end{proof}

\noindent The violation constants of items 17, 18, 22 and 23 are functions of $\kappa_\cZ$; on $M_n$ they do
not depend on the dimension. For items 19 and 20, the remaining projection inequalities of~\cite{bikchentaev2010smj}
and~\cite{bikchentaev2011mn} the normalisation contains $\varphi(1)$, and we determine the envelopes on
$M_n$.

\begin{theorem}[Items 19, 20 and 23 on $M_n$]\label{thm:projP}
Let $\rho>0$ in $M_n$, $n\ge2$, with $\OMC(\rho)=\delta$, and let $P,Q$ be projections. With $\cN$ as in
Table~\ref{tab:items}:
\begin{enumerate}
\item[(19)] $V_n=\delta$ for $\varphi(P+Q)\le\varphi(1+P\wedge Q)$ if $n$ is even, and $V_n=(n-1)\delta/(n-\delta)$ if $n$
is odd;
\item[(20)] for $\varphi(e^{QPQ})\le\varphi(e^P)$ and even $n$, $V_n=\max_{0<g<\pi}\lambda(\delta,g)$, where $\lambda(\delta,g)$ is
the larger root of the quadratic~\eqref{eq:quad} below, and $V_n=\frac{(e-1)^2}{e(e+1)}\delta^2+O(\delta^3)$;
\item[(23)] the value $V_n=\delta/(1-\delta)$ of $\varphi(P\vee Q)\le\varphi(P)+\varphi(Q)$
(Theorem~\ref{thm:proj}\ref{p:c}) is approached but not attained.
\end{enumerate}
The value is attained in 20 by rank-one $P,Q$, and approached in 19 and 23 as $P\ne Q$ tend to the same
eigenvector of $\rho$. For odd $n$, $V_n$ in item~20 lies between the $M_2$ value and the same maximum with
$(1-\delta)/(n-1)$ added to $\beta$ in~\eqref{eq:quad}.
\end{theorem}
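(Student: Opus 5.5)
The plan is to reduce all three items to two-dimensional blocks with Halmos' Lemma~\ref{lem:halmos}. For projections $P,Q$ on $\C^n$ we write $\C^n=\bigoplus_k H_k$, where each $H_k$ is invariant under $P$ and $Q$ and has dimension $1$ or $2$. On a one-dimensional block $P,Q\in\{0,1\}$, and each inequality holds there pointwise as an operator inequality. On a two-dimensional block with angle $\theta_k$ we have $P\wedge Q=0$, $P\vee Q=1$, and $P+Q-1$ has eigenvalues $\pm\cos\theta_k$. The block contribution to $\cL-\cR$ is then a linear functional of the compression $\rho_k$ of $\rho$ to $H_k$. This holds for items 19 and 23. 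For item~20 it holds because $QPQ=\cos^2\theta_k\,Q$ on $H_k$, so $e^{QPQ}=1+(e^{\cos^2\theta_k}-1)Q$ and $e^P=1+(e-1)P$. The off-block entries of $\rho$ do not enter, since all operators are block diagonal.

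\emph{Item 19.} Here $\cL-\cR=\sum_k\Tr(\rho_k(P+Q-1))\le\sum_k\cos\theta_k(\alpha_k-\beta_k)$, where $\alpha_k\ge\beta_k$ are the eigenvalues of $\rho_k$. The denominator satisfies $\cR\ge\Tr\rho$. If there are $m$ two-dimensional blocks, Ky Fan's maximum principle gives $\sum_k\alpha_k\le X:=\sum_{i\le m}\lambda_i$ and $\sum_k\beta_k\ge Y:=\sum_{i>n-m}\lambda_i$. Hence the quotient is at most $(X-Y)/(X+Y+Z)$, where $Z$ is the sum of the middle eigenvalues, of which there are $n-2m$. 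Since $X\le ma$, $Y\ge mb$, and $Z\ge(n-2m)b$, an elementary monotonicity argument maximises this bound, and the maximum comes out as follows. For even $n$ it equals $\delta$, at $m=n/2$ with half the eigenvalues equal to $a$ and half equal to $b$; indeed $X/Y\le\kappa$ is exactly $X-Y\le\delta(X+Y)$. For odd $n$ the leftover eigenvalue is best taken equal to $b$, and the value is $(n-1)(a-b)/((n-1)(a+b)/2\cdot 2/(n-1)\cdot\ldots)$, which simplifies to $(n-1)\delta/(n-\delta)$. Equality would require $\cos\theta_k=1$, so the value is only approached. To approach it we let each $H_k$ be spanned by an $a$- and a $b$-eigenvector, and let $P\ne Q$ be rank one in $H_k$ with $\theta_k\to0$, both close to the same eigenvector. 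That eigenvector must be the one for which the sign of $\Tr(\rho_k(P+Q-1))$ is positive, namely the $a$-eigenvector.

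\emph{Item 23.} The value is Theorem~\ref{thm:proj}\ref{p:c}. It only remains to show that it is not attained, and I would copy the non-attainment argument of Theorem~\ref{thm:proj}\ref{p:b}. Equality in $\varphi(P\vee Q)\le\varphi(Q)+\kappa\varphi(P)$ forces $\varphi(ww^*)=\kappa\varphi(w^*w)$ for the Kaplansky partial isometry $w$. This puts $\operatorname{ran}w^*w\subset E_m$ and $\operatorname{ran}ww^*\subset E_M$. Equality must also hold in the symmetric inequality, and the averaged equality forces the same conclusion with $P$ and $Q$ interchanged. On a two-dimensional Halmos block these constraints cannot hold simultaneously with $0<\theta<\pi/2$, since $P$ and $Q$ would both have to be eigenprojections.

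\emph{Item 20.} This is the main obstacle. By the block reduction and convexity in the block weights, it suffices to treat a single two-dimensional block with rank-one $P=ff^*$ and $Q=qq^*$. The violation quotient is then
$$
\frac{(e^{c^2}-1)\langle\rho q,q\rangle-(e-1)\langle\rho f,f\rangle}{\Tr\rho+(e-1)\langle\rho f,f\rangle}, \qquad c=\cos\theta .
$$
For fixed $\theta$ (the parameter $g$), the supremum over the position of the pair relative to the eigenbasis of $\rho$ is a ratio of two quadratic forms in the unit vector $f$, with $q$ determined by $f$, $\theta$ and a phase. Its maximum is the larger root $\lambda(\delta,g)$ of the characteristic quadratic~\eqref{eq:quad} of this $2\times2$ generalised eigenvalue problem. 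Maximising over $g$ gives $V_2$, and even $n$ follows from direct sums via Remark~\ref{rem:attain}. For the asymptotics I would expand the root in $\delta$ and optimise at second order, which should give $\frac{(e-1)^2}{e(e+1)}\delta^2$. For odd $n$, one block is necessarily one-dimensional. It contributes nothing to the numerator and at least $b\ge(1-\delta)/(n-1)$ (in the normalisation of~\eqref{eq:quad}) to the denominator term $\beta$, which yields the stated sandwich. The delicate part is to verify that splitting weight among several blocks never beats the single-block optimum. I would try to prove this by noting that the quotient of sums is at most the maximum of the block quotients once each block's denominator share is accounted for.
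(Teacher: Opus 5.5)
The genuine gap is in item~20, in passing from one Halmos block to $M_n$. The compression $\rho_j$ of $\rho$ to a two-dimensional block has its eigenvalues somewhere in $[b,a]$, so $\OMC(\rho_j)\le\delta$, not $=\delta$.

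The quotient-of-sums step is not delicate. Every $\cN_j=\Tr(\rho_je^{P_j})$ is positive, so $(\cL-\cR)/\cN\le\max_j(\cL_j-\cR_j)/\cN_j$ at once. But this only bounds the violation by the $M_2$ envelope at the contrast of $\rho_j$. You never show that this envelope is nondecreasing in the contrast. Without that, neither $V_n=\max_g\lambda(\delta,g)$ for even $n\ge4$ nor the upper half of the odd-$n$ statement is proved. Your Ky Fan argument avoids this issue for item~19, and Theorem~\ref{thm:proj}\ref{p:c} avoids it for item~23.

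The missing step is short. For fixed $P_j,Q_j$ the block quotient is a linear-fractional function, with positive denominator, of the Bloch vector $w$ of $\rho_j/\Tr\rho_j$. This vector ranges over the ball $|w|\le\delta$, so the supremum is reached on the sphere $|w|=\delta$. The paper argues the same way along $t\in[-\delta,\delta]$ and uses conjugation by $\sigma_1$ for the sign.

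Your odd-$n$ sentence for item~20 also runs the wrong way. That the one-dimensional block adds ``at least'' something to $\beta$ could only serve an upper bound. The upper bound by the maximum with $\beta+(1-\delta)/(n-1)$ is neither claimed by the theorem nor justified by this, since the two-dimensional blocks need not be $k$ blocks of trace $a+b$ and contrast $\delta$. In the theorem the $M_2$ value is the upper bound, and the modified maximum is a lower bound that needs an explicit configuration:
\begin{itemize}
\item spectrum $a^{(k)},b^{(k+1)}$ with $k=(n-1)/2$;
\item the $M_2$ extremiser in each of $k$ blocks spanned by an $a$- and a $b$-eigenvector;
\item $P=Q=0$ on the leftover $b$-eigenvector.
\end{itemize}
The last choice adds $b$ to both $\cL$ and $\cR$, i.e.\ exactly $b/k=(1-\delta)/(n-1)$ per block to $\beta$ when $\Tr\rho_j=1$.

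The rest needs only repairs.
\begin{itemize}
\item The asymptotic $\frac{(e-1)^2}{e(e+1)}\delta^2$ is announced, not derived. It comes from the regime where $g=2\theta$ is of order $\delta$. With $\Tr\rho=1$ and the best position, the numerator is $-\frac e8g^2+\frac{(e-1)\delta}2g+O(\delta^3)$ and $\cN=\frac{e+1}2+O(\delta)$. The choice $g=2(e-1)\delta/e$ gives the constant.
\item Remark~\ref{rem:attain} does not apply to item~20. Since $e^0=1$, embedding one $M_2$ configuration adds $\varphi(1-\pi(1))$ to both $\cL$ and $\cR$ and dilutes the quotient. Attainment for even $n$ needs the $n/2$-fold direct sum, with $\rho$ having $a$ and $b$ each $n/2$ times.
\item For item~19 with odd $n$, your approaching family must put $P=1$, $Q=0$ on the leftover $b$-eigenvector. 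With $P=Q=0$ there, that summand contributes $-b$, and $k(a-b)/(k(a+b)+b)=(n-1)\delta/(n-\delta)$ is not reached. That expression is what your garbled intermediate formula should read.
\item For the $2\times2$ maximisation, you get a genuine $2\times2$ Hermitian pencil by fixing $f,q$ and varying the eigenvector of $\rho$. Varying $f$ with $q$ tied to it does not work, because $f^\perp$ then enters antilinearly. With the eigenvector as variable, $\det(M_1-\lambda M_2)=0$ is the quadratic.
\item Your non-attainment argument for item~23 is correct and differs from the paper's. You use equality in $\varphi(P\vee Q)\le\varphi(Q)+\kappa\varphi(P)$ and in its mirror, as in Theorem~\ref{thm:proj}\ref{p:b}; the paper instead gets strictness blockwise from the explicit $M_2$ value of the quotient. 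Yours needs no $M_2$ formula, but you should add the case with no two-dimensional blocks, where $P,Q$ commute and the ratio is at most $1$.
\end{itemize}
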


\begin{proof}
\emph{Step 1: reduction to rank-one projections on $M_2$.} By Lemma~\ref{lem:halmos}, $\C^n=\bigoplus_jH_j$ with
$\dim H_j\in\{1,2\}$, every $H_j$ invariant under $P$ and $Q$. Every operator in $\cL$, $\cR$ and $\cN$ (namely $P+Q$,
$1+P\wedge Q$, $e^{QPQ}$, $e^P$, $P\vee Q$) is then block diagonal, with the block
obtained from $P_j,Q_j$; for the lattice operations because $\mathrm{ran}\,P\cap\mathrm{ran}\,Q=\bigoplus_j(\mathrm{ran}\,
P_j\cap\mathrm{ran}\,Q_j)$, the projections onto the $H_j$ commuting with $P$ and $Q$. Hence $\cL=\sum_jL_j$, $\cR=\sum_jR_j$ and
$\cN=\sum_jN_j$, where $\cL_j$ is the same expression for $P_j,Q_j$ and the functional $\Tr(\rho_j\,\cdot\,)$ on $H_j$, with the
compression $\rho_j$ of $\rho$ to $H_j$; its spectrum lies in $[b,a]$, so $\OMC(\rho_j)\le\delta$. On a one-dimensional $H_j$,
$P_j,Q_j\in\{0,1\}$ commute and $\cL_j\le \cR_j$ in all three items ($p+q\le1+pq$, $e^{pq}\le e^p$, $\max(p,q)\le p+q$).
On a two-dimensional $H_j$, $P_j\ne Q_j$ are rank-one projections. Since $\cN_j\ge0$, and $\cL_j\le \cR_j$ wherever $\cN_j=0$,
\[
  \frac{\cL-\cR}\cN\le\max_j\frac{\cL_j-\cR_j}{\cN_j},
\]
and the quotients do not change when $\rho_j$ is scaled. It therefore suffices to bound $(\cL-\cR)/\cN$ for
$\rho=\frac12(1+t\sigma_3)$, $0\le t\le\delta$, and $P=\frac12(1+\mu\cdot\sigma)$, $Q=\frac12(1+\nu\cdot\sigma)$ with unit
Bloch vectors $\mu\ne\nu$ in $\R^3$ ($\sigma=(\sigma_1,\sigma_2,\sigma_3)$ the Pauli matrices). For fixed $\mu,\nu$ the
quotient is a linear-fractional function of $t$ (all three quantities are affine in $\rho$) with positive denominator, hence
monotone on $[-\delta,\delta]$; conjugation by $\sigma_1$ maps $t$ to $-t$ and $(\mu,\nu)$ to reflected vectors. So
the bound for $t=\delta$ covers every $t\le\delta$, and the $M_2$ envelope bounds the envelope in every dimension.
In item~23, $\cN$ vanishes on blocks where $P=Q=0$, so the strict bound below holds on $M_n$. In 19 and
20, $\cN$ contains $\varphi(1)$, which we treat at the end.

\emph{Step 2: formulas on $M_2$.} Write $\varphi(x_0+x\cdot\sigma)=x_0+\delta x_3$, $\mu\cdot\nu=\cos2\theta$
($0<\theta\le\pi/2$) and $c=\cos\theta=|\langle u,v\rangle|$ for unit vectors $u,v$ with $P=uu^*$, $Q=vv^*$. Then
$P\wedge Q=0$, $P\vee Q=1$, $e^{QPQ}=1+(e^{c^2}-1)Q$ and $e^P=1+(e-1)P$. We use the following
elementary fact: \emph{for $\beta>|z|$, $\max_{|\xi|=1}(\alpha+y\cdot\xi)/(\beta+z\cdot\xi)$ is the unique zero $\lambda$ of
$\alpha-\lambda\beta+|y-\lambda z|$}, since $(\alpha+y\cdot\xi)-\lambda(\beta+z\cdot\xi)\le\alpha-\lambda\beta+|y-\lambda z|$
with equality for $\xi\parallel y-\lambda z$, and the right-hand side decreases strictly in $\lambda$.

(19) $(\cL-\cR)/\cN=\varphi(P+Q)-1=\frac\delta2(\mu_3+\nu_3)<\delta$, approached as $\mu,\nu\to e_3$.

(20) For fixed $g=2\theta$, the pair $(\mu_3,\nu_3)$ ranges over the image of the unit sphere under $\xi\mapsto(\mu\cdot
\xi,\nu\cdot\xi)$, a filled ellipse, and the linear-fractional quotient is maximal on its boundary, $\mu_3=\cos x$,
$\nu_3=\cos(x+g)$. With $K=e-1$ and $E=e^{\cos^2(g/2)}-1$,
\[
  \frac{\cL-\cR}\cN=\frac{\alpha+y\cdot\xi_x}{\beta+z\cdot\xi_x},\quad \alpha=\tfrac{E-K}2,\ y=\tfrac\delta2(E\cos g-K,\,-E\sin g),
  \ \beta=1+\tfrac K2,\ z=(\tfrac{\delta K}2,0),
\]
$\xi_x=(\cos x,\sin x)$. By the fact, its maximum $\lambda(\delta,g)$ is the larger root of
\begin{equation}\label{eq:quad}
  (\beta^2-|z|^2)\lambda^2-2(\alpha\beta-y\cdot z)\lambda+\alpha^2-|y|^2=0 .
\end{equation}
For small $\delta$ the maximiser has $g=2K\delta/e+O(\delta^2)$, $x=\pi/2$, and $\cL-\cR=-\frac e8g^2+\frac{K\delta}2g+O(\delta^3)$,
which gives $V=K^2\delta^2/(e(e+1))+O(\delta^3)$.

(23) $\cL/\cR=1/(1+\frac\delta2(\mu_3+\nu_3))<1/(1-\delta)$, approached as $\mu,\nu\to-e_3$.

\emph{Step 3: items 19 and 20 in dimension $n$.} For even $n$ take $\rho$ with the eigenvalues $a$ and $b$, each $n/2$
times, and $n/2$ copies of the $M_2$ extremiser, one in each block spanned by an eigenvector for $a$ and one for $b$;
this attains (approaches) the $M_2$ value, which is an upper bound by Step~1. For odd $n$ at least one block of
Lemma~\ref{lem:halmos} is one-dimensional, so there are $k'\le k=(n-1)/2$ two-dimensional blocks. In item~19,
$\cN\ge\varphi(1)=\Tr \rho$; by Step~2 a two-dimensional block contributes $\cL_j-\cR_j=\Tr \rho_j\cdot\frac{\OMC(\rho_j)}2(\mu_3+\nu_3)
<\lambda_1(\rho_j)-\lambda_2(\rho_j)$, and a one-dimensional one $\cL_j-\cR_j\le0$. By Ky Fan's maximum
principle~\cite[Sect.~III.1]{bhatia1997}, $\sum_j\lambda_1(\rho_j)$ is at most the sum of the $k'$ largest and
$\sum_j\lambda_2(\rho_j)$ at least the sum of the $k'$ smallest eigenvalues of $\rho$. The resulting bound for $(\cL-\cR)/\cN$ is a
linear-fractional function of the spectrum of $\rho$ and is maximal for $k'=k$ and the spectrum $a^{(k)},b^{(k+1)}$; hence
$(\cL-\cR)/\cN<k(a-b)/(k(a+b)+b)=(n-1)\delta/(n-\delta)$, which is approached for this spectrum with $P=1$, $Q=0$ on the remaining
eigenvector (a block with $\cL_j=\cR_j$). In item~20 the same configuration, with $P=Q=0$ on the remaining eigenvector, adds $b/k$ per block
to $\cN$, i.e.\ $(1-\delta)/(n-1)$ to $\beta$ (normalisation $\Tr \rho_j=1$); this gives the lower bound, and the $M_2$ value
the upper bound.
\end{proof}

\begin{proposition}[Item 14]\label{prop:holder}
For every $n\ge2$, $V_n\ge\delta^2/36+O(\delta^3)$ for item~14, and more generally
$V_n\ge(1-\frac2p)^2\delta^2/4+O(\delta^3)$ for H\"older's inequality $|\varphi(PQ)|\le\varphi(P)^{1/p}\varphi(Q)^{1/q}$.
\end{proposition}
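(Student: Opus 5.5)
The plan is to exhibit an explicit family of rank-one projections in $M_2(\C)$ and expand the Hölder quotient to second order in $\delta$. Since both sides of $|\varphi(PQ)|\le\varphi(P)^{1/p}\varphi(Q)^{1/q}$ are homogeneous of degree one in $\rho$, I normalise $\rho=\diag(1+\delta,1-\delta)$. I then take $P=uu^*$ and $Q=vv^*$ with real unit vectors $u=(\cos s,\sin s)$ and $v=(\cos t,\sin t)$. Then
\[
\varphi(PQ)=\langle u,v\rangle\,\langle\rho u,v\rangle=\cos(t-s)\bigl(\cos(t-s)+\delta\cos(s+t)\bigr),
\]
and $\varphi(P)=1+\delta\cos2s$, $\varphi(Q)=1+\delta\cos2t$. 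For $\delta=0$ the inequality is an equality exactly at $u=v$. So I look for a violation at the next order, with the two projections close to each other at angle $d=t-s$ of order $\delta$ and a mean angle $m=s+t$ still to be chosen.

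Next I expand the logarithm of the quotient,
\[
F=2\ln\cos d+\ln\Bigl(1+\delta\frac{\cos m}{\cos d}\Bigr)-\tfrac1p\ln(1+\delta\cos(m-d))-\tfrac1q\ln(1+\delta\cos(m+d)),
\]
in the regime $d=O(\delta)$, using $\cos2s=\cos(m-d)$ and $\cos2t=\cos(m+d)$. The terms of first order in $\delta$ cancel because $1/p+1/q=1$. The $\delta^2\cos^2m$ terms cancel for the same reason, and the factor $1/\cos d$ only contributes at order $O(\delta d^2)=O(\delta^3)$. What remains is
\[
F=-d^2+\Bigl(1-\tfrac2p\Bigr)\delta d\sin m+O(\delta^3).
\]
I choose $\sin m=1$, i.e.\ $m=\pi/2$, so that the projections are symmetric about the diagonal, and $d=(1-\frac2p)\delta/2$. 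This gives $F=(1-\frac2p)^2\delta^2/4+O(\delta^3)$. Hence the relative violation $e^F-1=\cL/\cR-1$ has the same leading term. For $p=3$ it equals $\frac19\cdot\frac14\delta^2=\delta^2/36$. If the sign of the linear term is wrong for the chosen orientation, I take $m=-\pi/2$ or swap $u$ and $v$ instead.

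To pass to every $n\ge2$, I embed $\C^2$ as the span of the extreme eigenvectors $e_{\max},e_{\min}$ of a density with contrast $\delta$. The projections $P,Q$ live in this span, so every quantity in the inequality only sees the compression $\diag(a,b)$, as in Remark~\ref{rem:attain}. Therefore the $M_2$ value is a lower bound for $V_n$. The main obstacle is the bookkeeping of the error terms. I must check that the discarded terms are genuinely $O(\delta^3)$ uniformly along the chosen family, in particular the cross term $\delta\cos m\,(1/\cos d-1)$ and the third-order terms of the logarithms. With $m=\pi/2$ these are easily controlled, since $\cos m=0$ there.
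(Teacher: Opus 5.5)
Your proposal is correct and follows essentially the same route as the paper. Both arguments take two real rank-one projections at angles $\pi/4\pm d/2$ with $d$ of order $\delta$, reduce the second-order expansion to $-d^2+(1-\frac2p)\delta d$, and choose $d=(1-\frac2p)\delta/2$. The differences are cosmetic: you expand $\ln(\cL/\cR)$ with $\Tr\rho=2$, where the paper expands $\cL-\cR$ directly with $\Tr\rho=1$. Your embedding into $M_n$ via the extreme eigenvectors also makes explicit the dimension claim that the paper leaves implicit.
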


\begin{proof}
Take $\rho=\frac12(1+\delta D)$, $D=\mathrm{diag}(1,-1)$, and real unit vectors $u,v$ at the angles $\vartheta\pm\varepsilon/2$ to
$e_1$, $P=uu^*$, $Q=vv^*$. Then $\varphi(PQ)=\frac12(\cos^2\varepsilon+\delta\cos\varepsilon\cos2\vartheta)$ and
$\varphi(P)^{1/p}\varphi(Q)^{1/q}=\frac12\bigl(1+\delta\cos2\vartheta+\delta(\frac1q-\frac1p)\varepsilon\sin2\vartheta\bigr)+O(\delta^2\varepsilon^2)$,
so $\cL-\cR=\frac12\bigl(-\varepsilon^2-\delta(1-\frac2p)\varepsilon\sin2\vartheta\bigr)+O(\delta^3)$ for $\varepsilon=O(\delta)$. The choice
$\vartheta=\pi/4$, $\varepsilon=-\frac\delta2(1-\frac2p)$ gives $\cL-\cR=\frac18(1-\frac2p)^2\delta^2+O(\delta^3)$ and $\cR=\frac12+O(\delta)$.
\end{proof}

\subsection{The canonical trace}\label{ssec:canon}
Items 11 and 24 characterise the canonical trace among the positive functionals with $\varphi(1)=n$
on $M_n$; they are not violation constants in the sense of Definition~\ref{def:viol}, and we
determine them on $M_n$.

\begin{proposition}[Item 11]\label{prop:det}
Let $\Tr \rho=n$. For item~11, $V(\rho)=1-\mathrm{GM}/\mathrm{AM}$ of the eigenvalues of $\rho$, attained at $A=\rho^{-1}$. Over
the densities with contrast $\delta$ the maximum is attained at a two-level spectrum:
\[
  V_n(\delta)=1-\min_{1\le m<n}\frac{n\,\ell^{(n-m)/n}}{m+(n-m)\ell},\qquad\ell=\frac1\kappa=\frac{1-\delta}{1+\delta}.
\]
In particular $V_2=1-\sqrt{1-\delta^2}$, and $V_n\uparrow1-1/S(\kappa)$ as $n\to\infty$, where
$S(\kappa)=\frac{(\kappa-1)\kappa^{1/(\kappa-1)}}{e\log \kappa}$ is the Specht ratio.
\end{proposition}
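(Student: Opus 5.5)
The plan has three steps: compute $V(\rho)$ for a fixed density, optimise over spectra of contrast $\delta$, and take the limit $n\to\infty$.

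\emph{Fixed $\rho$.} Item~11 has $\cL=(\det A)^{1/n}$, $\cR=\frac1n\varphi(A)$ and $\cN=(\det A)^{1/n}$, so $(\cL-\cR)/\cN=1-\Tr(\rho A)/(n(\det A)^{1/n})$. Maximising this means minimising $\Tr(\rho A)/(\det A)^{1/n}$ over $A>0$. Since $\det A=\det(\rho^{1/2}A\rho^{1/2})/\det\rho$, I put $C=\rho^{1/2}A\rho^{1/2}$. The AM--GM inequality for the eigenvalues of $C$ gives $\Tr C\ge n(\det C)^{1/n}$, hence $\Tr(\rho A)\ge n(\det\rho)^{1/n}(\det A)^{1/n}$. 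Equality holds when $C$ is scalar, i.e.\ $A=\rho^{-1}$. Therefore $V(\rho)=1-(\det\rho)^{1/n}=1-\mathrm{GM}/\mathrm{AM}$, using $\Tr\rho=n$, i.e.\ $\mathrm{AM}=1$. (Singular $A$ gives $\cN=0$ and is excluded.)

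\emph{Optimisation over spectra.} By scale invariance I normalise $\lambda_{\max}=1$ and $\lambda_{\min}=\ell$, with the spectrum in $[\ell,1]$ and both endpoints occurring. I must minimise $\mathrm{GM}/\mathrm{AM}$, or equivalently $\frac1n\sum\log\lambda_i-\log(\frac1n\sum\lambda_i)$. For fixed $\mathrm{AM}$, the map $\lambda\mapsto\sum\log\lambda_i$ is concave on the box. Its minimum over the slice $\{\sum\lambda_i=\mathrm{const}\}\cap[\ell,1]^n$ is therefore at an extreme point of that polytope, where at most one coordinate is interior. To remove the interior coordinate, fix the others and view the ratio as a function $\log\mathrm{GM}-\log\mathrm{AM}=\frac1n\log x-\log(s+x)+\mathrm{const}$ of that coordinate $x$. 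Its derivative $\frac1{nx}-\frac1{s+x}$ changes sign at most once, from $+$ to $-$, so the function is quasi-concave and its minimum on $[\ell,1]$ lies at an endpoint. This gives a two-level spectrum with $m$ eigenvalues $1$ and $n-m$ eigenvalues $\ell$, $1\le m<n$. For it, $\mathrm{GM}=\ell^{(n-m)/n}$ and $\mathrm{AM}=(m+(n-m)\ell)/n$, which yields the formula. For $n=2$, $m=1$ gives $2\sqrt\ell/(1+\ell)=\sqrt{1-\delta^2}$.

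\emph{Limit $n\to\infty$.} Write $m/n=1-w$, so the ratio is $\ell^{w}/(1-w+w\ell)$ with $w\in\{1/n,\dots,(n-1)/n\}$. Monotonicity of $V_n$ along all $n$ is delicate, because the grid $\{k/n\}$ is not nested. I would therefore present the cleaner statement: $\min_w$ over $w\in(0,1)$ of $\ell^{w}/(1-w+w\ell)$ is the classical minimum $1/S(\kappa)$, obtained from $\frac{d}{dw}$ with $w^*=\frac1{\log\kappa}-\frac1{\kappa-1}\in(0,1)$ and evaluation to the Specht ratio. Since grid points of mesh $1/n$ approach $w^*$, $V_n\to1-1/S(\kappa)$ with $V_n\le1-1/S(\kappa)$ for all $n$.

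The monotone increase is the main obstacle. I would first prove it along $n\mapsto kn$, where the grids are nested. For general $n$, I would try to show that $V_{n+1}\ge V_n$ by embedding, using the fact that $V(\rho\oplus\mathrm{AM})$ compared with $V(\rho)$ changes GM and AM favourably. If that works only up to rounding, I would state the convergence with the bound and treat monotonicity along the nested subsequences.
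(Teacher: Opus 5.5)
Your first step is the paper's argument: AM--GM for the eigenvalues of $\rho^{1/2}A\rho^{1/2}$, with equality at $A=\rho^{-1}$.

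\textbf{Reduction to two-level spectra.} You take a different but valid route. The paper passes to $x_i=\log w_i$. In these coordinates $\mathrm{AM}/\mathrm{GM}=\frac1n\sum_ie^{x_i-\bar x}$ is convex on the box $[\log b,\log a]^n$, so it is maximal at a vertex in one step. You instead combine concavity of $\sum_i\log\lambda_i$ on the slices $\sum_i\lambda_i=\mathrm{const}$ with quasi-concavity in the one remaining free coordinate. Both reach the same vertices; the log-convexity argument is shorter. In either form, say explicitly why replacing the requirement that both endpoints occur by the whole box is harmless: the vertices with $m=0$ or $m=n$ give the ratio $1$, and every vertex with $1\le m<n$ gives less.

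\textbf{The limit.} The paper only invokes Specht's theorem, while you compute the continuous minimum directly. That works, but you have interchanged $w$ and $1-w$. With $w=(n-m)/n$, the critical point of $w\log\ell-\log(1-w(1-\ell))$ is $w^*=\frac{\kappa}{\kappa-1}-\frac1{\log\kappa}$, and it is there that the ratio equals $1/S(\kappa)$. Your expression is $1-w^*$, the optimal value of $m/n$; for $\kappa=3$ it gives $0.877$ instead of $1/S(3)\approx0.862$.

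\textbf{Monotonicity.} Your doubt is justified, and the plan to prove $V_{n+1}\ge V_n$ must fail, because this inequality is false. Expanding in $\log\ell=-2\delta+O(\delta^3)$ gives $V_n(\delta)=2\delta^2\max_{1\le m<n}m(n-m)/n^2+O(\delta^3)$. This is $\delta^2/2$ for even $n$ but $\frac{\delta^2}2(1-n^{-2})$ for odd $n$. Concretely, $V_2(0.1)\approx0.00501>V_3(0.1)\approx0.00456$. Your embedding $\rho\mapsto\rho\oplus\mathrm{AM}$ even moves in the wrong direction: it keeps AM and raises GM to $(\mathrm{GM}^n\,\mathrm{AM})^{1/(n+1)}$. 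The paper's proof does not address monotonicity at all; it only identifies $\sup_nV_n=1-1/S(\kappa)$.

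What is true is exactly your fallback: $V_n\le1-1/S(\kappa)$ for every $n$, $V_{kn}\ge V_n$, and $V_n\to1-1/S(\kappa)$. So the arrow in the statement is justified only along sequences in which each $n$ divides the next (for instance $n=2^j$), not for consecutive $n$.
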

\begin{proof}
By the AM--GM inequality for the eigenvalues of $\rho^{1/2}A\rho^{1/2}$, $\varphi_\rho(A)/n\ge(\det \rho\det A)^{1/n}$, with
equality for $A=\rho^{-1}$, so $\inf_{A>0}\varphi_\rho(A)/(n(\det A)^{1/n})=(\det \rho)^{1/n}$ and $V(\rho)=1-(\det \rho)^{1/n}$, which
for $\Tr \rho=n$ is $1-\mathrm{GM}/\mathrm{AM}$. In the coordinates $x_i=\log w_i$, where $w_1,\dots,w_n$ are the
eigenvalues of $\rho$, $\mathrm{AM}/\mathrm{GM}=\frac1n\sum_i
e^{x_i-\bar x}$ is convex, so on the box of spectra in $[b,a]$ it is maximal at a vertex, a two-level spectrum with $m$
eigenvalues $a$ and $n-m$ eigenvalues $b$; this gives the formula. Its limit is Specht's theorem~\cite{specht1960}:
$\sup\mathrm{AM}/\mathrm{GM}$ over all finite families in $[b,a]$ is $S(\kappa)$.
\end{proof}

\begin{proposition}[Item 24]\label{prop:power}
Let $\Tr \rho=n$ and let $w_1,\dots,w_n$ be the eigenvalues of $\rho$, $w_{\min}=\min_iw_i$. For item~24,
$V(\rho)=w_{\min}^{-1/2}-1$, and
\[
  V_n(\delta)=\sqrt{\frac{\ell+n-1}{n\ell}}-1,\qquad\ell=\frac{1-\delta}{1+\delta};
\]
$V_2=(1-\delta)^{-1/2}-1\sim\delta/2$, $V_n\sim\frac{n-1}n\delta$, and $V_n\uparrow\sqrt \kappa-1$ as $n\to\infty$.
\end{proposition}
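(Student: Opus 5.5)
The proof has three steps. First we compute $V(\rho)$ for a fixed density, then optimise over densities of contrast $\delta$, then take the limits.

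\emph{Step 1: $V(\rho)$.} With $\cN=\cR=\varphi(A)$ the quotient is $\varphi(A^2)^{1/2}/\varphi(A)-1$. Put $h=\rho^{1/2}A\rho^{1/2}$; this is a positive element, and $A\mapsto h$ is a bijection of the positive cone. Then $\varphi(A)=\Tr h$ and
\[
\varphi(A^2)=\Tr(\rho A\rho^{1/2}\rho^{-1/2}A)=\Tr(h\rho^{-1}h).
\]
Since $\rho^{-1}\le w_{\min}^{-1}$, we get $\Tr(h\rho^{-1}h)\le w_{\min}^{-1}\Tr h^2\le w_{\min}^{-1}(\Tr h)^2$, the last step because $\Tr h^2\le(\Tr h)^2$ for $h\ge0$. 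Hence
\[
\varphi(A^2)^{1/2}\le w_{\min}^{-1/2}\varphi(A).
\]
Equality holds when $h$ has rank one and is supported on an eigenvector $e$ of $\rho$ for $w_{\min}$. That is $A=\rho^{-1/2}ee^*\rho^{-1/2}=w_{\min}^{-1}ee^*$, so the bound is attained at $A=ee^*$. A direct check confirms it: $\varphi(ee^*)=w_{\min}=\varphi((ee^*)^2)$, so the ratio is $w_{\min}^{-1/2}$. Therefore $V(\rho)=w_{\min}^{-1/2}-1$.

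\emph{Step 2: the envelope.} We must maximise $w_{\min}^{-1/2}$, i.e.\ minimise $w_{\min}$, under $\Tr\rho=n$ and contrast $\delta$. Write $w_{\min}=b$ and $w_{\max}=a=\kappa b$ with $\kappa$ fixed. The trace condition $n=\sum_i w_i\le a+(n-1)b$ forces
\[
b\ge\frac{n}{\kappa+n-1}.
\]
Equality holds for the spectrum with one eigenvalue $a$ and $n-1$ eigenvalues $b$, and this spectrum has contrast exactly $\delta$. So
\[
V_n=\sqrt{\frac{\kappa+n-1}{n}}-1=\sqrt{\frac{\ell+n-1}{n\ell}}\;\cdot\;\ell^{0}-1 ,
\]
where the second form follows by dividing numerator and denominator by $\kappa$, using $\ell=1/\kappa$: indeed $(\kappa+n-1)/n=(1+(n-1)\ell)/(n\ell)$. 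I need to double-check this against the stated formula $\sqrt{(\ell+n-1)/(n\ell)}$. The two expressions $(1+(n-1)\ell)/(n\ell)$ and $(\ell+n-1)/(n\ell)$ agree when $n=2$ but not in general. So either the extremal spectrum has \emph{one} eigenvalue $b$ and $n-1$ eigenvalues $a$, or my bound is reversed. Redoing the constraint carefully: minimising $b$ with $a=\kappa b$ requires making $\sum_i w_i$ as large as possible relative to $b$, which means every other eigenvalue should equal $a$. This gives $n=b+(n-1)\kappa b$, so $b=n/(1+(n-1)\kappa)$ and
\[
V_n=\sqrt{\frac{1+(n-1)\kappa}{n}}-1=\sqrt{\frac{\ell+n-1}{n\ell}}-1 ,
\]
matching the statement. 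The earlier inequality $n\le a+(n-1)b$ was the wrong direction: the correct constraint is $n\le b+(n-1)a$.

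\emph{Step 3: limits.} For $n=2$ the formula gives $\sqrt{(1+\ell)/(2\ell)}=\sqrt{1/(1-\delta)}$, hence $V_2=(1-\delta)^{-1/2}-1\sim\delta/2$. For general $n$, $\kappa-1\sim2\delta$ gives
\[
V_n\sim\frac{n-1}{2n}(\kappa-1)\sim\frac{n-1}{n}\,\delta .
\]
Finally, $(1+(n-1)\kappa)/n$ increases in $n$ to $\kappa$, so $V_n\uparrow\sqrt\kappa-1$.

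The only delicate point is the orientation of the extremal spectrum in Step 2, which is resolved by the trace constraint $n\le b+(n-1)a$.
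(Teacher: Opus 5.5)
Your proof is correct and takes the paper's route: first $\varphi(A^2)\le w_{\min}^{-1}\varphi(A)^2$, with equality at the projection onto a $w_{\min}$-eigenvector, then the trace constraint $n\le b+(n-1)a$ with extremal spectrum $\{a,\dots,a,b\}$. The only difference is that you obtain the first bound through $h=\rho^{1/2}A\rho^{1/2}$ and $\Tr h^2\le(\Tr h)^2$, where the paper uses $\varphi(A^2)\le\|A\|\varphi(A)\le\Tr(A)\varphi(A)$; your version needs $\rho$ invertible, which holds here since $\delta<1$.

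In a write-up, delete the first attempt in Step 2. The inequality $n\le a+(n-1)b$ is false (in fact $n\ge a+(n-1)b$), so the bound $b\ge n/(\kappa+n-1)$ does not follow, and the stray factor $\ell^{0}$ should also go. Keep only the corrected constraint.
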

\begin{proof}
For $A\ge0$, $\varphi(A^2)=\Tr(A^{1/2}\rho A^{1/2}A)\le\|A\|\varphi(A)\le\Tr(A)\varphi(A)\le\varphi(A)^2/w_{\min}$, with equality for
the projection onto an eigenvector for $w_{\min}$. With $\Tr \rho=n$ and the extreme eigenvalues in the ratio $\kappa$,
$w_{\min}=nb/\sum_iw_i\ge nb/(b+(n-1)a)=n\ell/(\ell+n-1)$, with equality for the spectrum $\{a,\dots,a,b\}$. For
$n=2$, $(\ell+1)/(2\ell)=1/(1-\delta)$.
\end{proof}

\begin{remark}\label{rem:canon}
Items 11 and 24 characterise the canonical trace among the functionals with $\varphi(1)=n$, so they are violated also by
central densities on a direct sum (Corollary~\ref{cor:sectors}), and their envelopes grow with $n$, unlike all others with
a closed form.
\end{remark}

\subsection{Subadditivity of powers}\label{ssec:tik}
For $0<p<1$ let $c_p^T(\rho)=\sup_{A,B\in\cM_+}\varphi((A+B)^p)/(\varphi(A^p)+\varphi(B^p))$, and
\[
\tp(\kappa)=\max_{0\le x\le1}2^{p-1}\frac{\kappa x^p+(1-x)^p}{\kappa x+1-x}.
\]

\begin{lemma}\label{lem:rotfeld}
Let $\chi$ be a normal positive tracial functional on $\cM$, $A,B\in\cM_+$ and $0<p<1$. Then
$\chi((A+B)^p)\le\chi(A^p)+\chi(B^p)$.
\end{lemma}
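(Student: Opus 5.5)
The plan is to prove the operator-level inequality first and then take $\chi$ of both sides. The case $A+B=0$ is trivial, so set $C=A+B$ and $C_\varepsilon=C+\varepsilon$ for $\varepsilon>0$.

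\emph{Factorisation.} Define $K=A^{1/2}C_\varepsilon^{-1/2}$ and $L=B^{1/2}C_\varepsilon^{-1/2}$. Then
\[
K^*K+L^*L=C_\varepsilon^{-1/2}CC_\varepsilon^{-1/2}\le1 ,
\]
so both are contractions. They also give
\[
C_\varepsilon^{1/2}K^*KC_\varepsilon^{1/2}=A,\qquad KC_\varepsilon K^*=A^{1/2}A^{1/2}\cdot{}=A^{1/2}\,(C_\varepsilon^{-1/2}C_\varepsilon C_\varepsilon^{-1/2})\,A^{1/2}=A .
\]

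\emph{Operator step.} Since $t\mapsto t^p$ is operator concave on $[0,\infty)$ with value $0$ at $0$, Hansen's inequality applies: for a contraction $K$ and $X\ge0$ it gives $K^*X^pK\le(K^*XK)^p$. With $X=C_\varepsilon$ and the contraction $K^*$ this yields
\[
KC_\varepsilon^pK^*\le(KC_\varepsilon K^*)^p=A^p,
\]
and in the same way $LC_\varepsilon^pL^*\le B^p$.

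\emph{Trace step.} By traciality of $\chi$ (use $\chi(YZ)=\chi(ZY)$ with $Y=KC_\varepsilon^{p/2}$),
\[
\chi(KC_\varepsilon^pK^*)=\chi(C_\varepsilon^{p/2}K^*KC_\varepsilon^{p/2}).
\]
Adding the two bounds gives
\[
\chi\bigl(C_\varepsilon^{p/2}(K^*K+L^*L)C_\varepsilon^{p/2}\bigr)\le\chi(A^p)+\chi(B^p).
\]
On the left, $K^*K+L^*L=C(C+\varepsilon)^{-1}$ commutes with $C_\varepsilon$, so the left side equals $\chi\bigl(C(C+\varepsilon)^{p-1}\bigr)$.

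\emph{Limit.} As $\varepsilon\downarrow0$, the functions $t(t+\varepsilon)^{p-1}$ converge to $t^p$ uniformly on $[0,\|C\|]$. Hence $C(C+\varepsilon)^{p-1}\to C^p$ in norm, and since $\chi$ is norm continuous this gives $\chi((A+B)^p)\le\chi(A^p)+\chi(B^p)$. Normality of $\chi$ is not needed; only positivity and traciality are used.

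\emph{Main obstacle.} The critical point is applying Hansen's inequality in the right direction: it must be used with the contraction $K^*$ acting on $C_\varepsilon$, not on $A$. An alternative route is the Rotfel'd/Bourin--Uchiyama argument via the integral representation $t^p=c_p\int_0^\infty t(t+s)^{-1}s^{p-1}\,ds$. That approach reduces the claim to
\[
\chi\bigl(C(C+s)^{-1}\bigr)\le\chi\bigl(A(A+s)^{-1}\bigr)+\chi\bigl(B(B+s)^{-1}\bigr),
\]
which itself follows from the same contraction trick, so it is a fallback if the direct operator-concavity route is not used.
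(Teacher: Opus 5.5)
Your proof is correct, and it takes a different route from the paper. The paper argues resolvent by resolvent. For $h_s(t)=t/(t+s)$ and $C=A+B$ it uses traciality to write $\chi(h_s(C))=\chi(A^{1/2}(C+s)^{-1}A^{1/2})+\chi(B^{1/2}(C+s)^{-1}B^{1/2})$. It bounds each term using only the operator monotonicity of the inverse, $(C+s)^{-1}\le(A+s)^{-1}$, and then integrates $\chi(h_s(A+B))\le\chi(h_s(A))+\chi(h_s(B))$ against $\frac{\sin p\pi}{\pi}s^{p-1}\,ds$. This is essentially your fallback.

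Your main route instead proves, for $t^p$ directly, the operator inequalities $KC_\varepsilon^pK^*\le A^p$ and $LC_\varepsilon^pL^*\le B^p$ with $K^*K+L^*L\le1$, and applies $\chi$ only at the end. Each step checks out:
\begin{itemize}
\item contractivity of $K$ and $L$, and $KC_\varepsilon K^*=A$;
\item Hansen's inequality, used in the right direction with the contraction $K^*$;
\item the trace identity with $Y=KC_\varepsilon^{p/2}$;
\item the uniform convergence of $t(t+\varepsilon)^{p-1}$ to $t^p$ on $[0,\|C\|]$.
\end{itemize}

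The trade-off is that Hansen's inequality $T^*f(X)T\le f(T^*XT)$ is a much heavier black box than $(C+s)^{-1}\le(A+s)^{-1}$. Its standard proofs use L\"owner's integral representation or a unitary dilation combined with operator concavity, so the paper's argument is self-contained where yours is not. In exchange, you avoid the integral and obtain an operator-level intermediate statement in the spirit of Bourin--Uchiyama. Your argument also applies verbatim, with $C_\varepsilon^p$ replaced by $f(C_\varepsilon)$, to every continuous operator monotone $f\ge0$ on $[0,\infty)$ with $f(0)=0$. That is the generality the paper reaches in Remark~\ref{rem:opmon} through L\"owner's representation.

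Your observation that normality of $\chi$ is not needed is right. It applies to the paper's proof as well, since its integral converges in norm.

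Two cosmetic points: the display for $KC_\varepsilon K^*$ contains a stray ``$A^{1/2}A^{1/2}\cdot{}$'', and the identity $C_\varepsilon^{1/2}K^*KC_\varepsilon^{1/2}=A$ is never used.
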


\begin{proof}
For $s>0$ let $h_s(t)=t/(t+s)$ and $C=A+B$. Then $h_s(C)=(C+s)^{-1/2}(A+B)(C+s)^{-1/2}$ and, by the trace
property and $(C+s)^{-1}\le(A+s)^{-1}$,
$\chi((C+s)^{-1/2}A(C+s)^{-1/2})=\chi(A^{1/2}(C+s)^{-1}A^{1/2})\le\chi(A^{1/2}(A+s)^{-1}A^{1/2})=\chi(h_s(A))$;
similarly for $B$. Hence $\chi(h_s(A+B))\le\chi(h_s(A))+\chi(h_s(B))$. Integrate against
$\frac{\sin p\pi}\pi s^{p-1}ds$ over $(0,\infty)$: for $X\in\cM_+$,
$X^p=\frac{\sin p\pi}\pi\int_0^\infty s^{p-1}h_s(X)\,ds$ as a norm convergent integral.
\end{proof}

For matrices this is Rotfel'd's trace inequality \cite{Rot67}; the short proof above is included
for completeness.

\begin{theorem}[Items 12 and 34]\label{thm:tik}
\begin{enumerate}
\item $c_p^T(z\rho)=c_p^T(\rho)$ for $z\in\cZ_{++}$.
\item $\varphi((A+B)^p)\le\min\{\varphi(B^p)+\kappa_\cZ\varphi(A^p),\varphi(A^p)+\kappa_\cZ\varphi(B^p)\}$ for $A,B\in\cM_+$.
\item $\tp(\kappa_\cZ)\le c_p^T(\rho)\le\frac{1+\kappa_\cZ}2$, and $\tp(\kappa)>1$ for $\kappa>1$.
\end{enumerate}
\end{theorem}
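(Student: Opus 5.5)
The proof splits into three parts, matching the three claims of the theorem.

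\emph{Part (1): central rescaling.} This is condition (R) with $\sigma_z(A,B)=(z^{1/p}A,z^{1/p}B)$. Since $z$ is central, $(z^{1/p}A+z^{1/p}B)^p=z(A+B)^p$ and $(z^{1/p}A)^p=zA^p$. Hence $\varphi_{z\rho}$ of each expression equals $\varphi_\rho$ of the transformed one. The map $\sigma_z$ is a bijection of $\cM_+^2$, so Lemma~\ref{lem:resc} (which applies to these suprema) gives invariance.

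\emph{Part (2): the one-sided bound.} I would mimic the proof of Theorem~\ref{thm:proj}\ref{p:c}, replacing Kaplansky's formula by a Rotfel'd-type splitting. By Lemma~\ref{lem:rotfeld}, applied to the tracial functionals $\chi=\tau(y\,\cdot)$ with $y\in\cZ_+$, we get $\tau(y(A+B)^p)\le\tau(yA^p)+\tau(yB^p)$. This alone is too weak: it would give $\varphi((A+B)^p)\le\tau(u(A+B)^p)\le\kappa_\cZ(\varphi(A^p)+\varphi(B^p))$. To keep $\varphi(B^p)$ with coefficient $1$, I would instead use operator monotonicity. Since $t\mapsto t^p$ is operator monotone, there is a contraction $K$ with $(A+B)^{p/2}=K^*\ldots$; the cleaner route is the known decomposition $(A+B)^p\le$?? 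Operator subadditivity fails, so I would use the Bourin--Uchiyama decomposition instead. For concave $f\ge0$ on $[0,\infty)$ and $A,B\ge0$ there are unitaries $U,V$ with $f(A+B)\le Uf(A)U^*+Vf(B)V^*$. That gives a bound with $\kappa_\cZ$ on both terms, again too crude. The better step is the integral representation from Lemma~\ref{lem:rotfeld}: $h_s(A+B)=(C+s)^{-1/2}A(C+s)^{-1/2}+(C+s)^{-1/2}B(C+s)^{-1/2}$. In the $B$-part, $(C+s)^{-1}\le(B+s)^{-1}$, which suggests the $B$-term can be compared with $h_s(B)$ through a unitary-free operator inequality: $X^*X\le Y^*Y$ implies $XX^*$ is congruent to something $\le YY^*$, with $X=B^{1/2}(C+s)^{-1/2}$. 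The $\varphi$-value of $XX^*=B^{1/2}(C+s)^{-1}B^{1/2}\le h_s(B)$ is then directly bounded by $\varphi(h_s(B))$, with no loss. The $A$-part, $X_AX_A^*$ versus $X_A^*X_A$, is where Theorem~\ref{thm:modulus} costs the factor $\kappa_\cZ$. Here $\varphi((C+s)^{-1/2}A(C+s)^{-1/2})=\varphi(|X_A|^2)\le\kappa_\cZ\varphi(|X_A^*|^2)=\kappa_\cZ\varphi(A^{1/2}(C+s)^{-1}A^{1/2})\le\kappa_\cZ\varphi(h_s(A))$. Similarly the $B$-part uses $\varphi(|X_B|^2)\le\kappa_\cZ\varphi(h_s(B))$. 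To avoid paying twice, I would split $h_s(C)=(C+s)^{-1/2}C(C+s)^{-1/2}$ differently, writing $h_s(C)$ as $X_A^*X_A+X_B^*X_B$, so that the $B$-term is measured after moving to $X_BX_B^*$. That move is exactly what costs $\kappa_\cZ$, so I expect the term I can keep free is the one where no transposition occurs. I would look for a representation of $h_s(C)$ as $X_A^*X_A+Y$ with $\varphi(Y)\le\varphi(h_s(B))$ directly. One option is the operator inequality $(C+s)^{-1/2}B(C+s)^{-1/2}\le$ a unitary conjugate of $h_s(B)$, whose $\varphi$-value is exactly $\varphi(h_s(B))$ when that conjugate is $h_s(B)$ itself. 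This is the main obstacle. If it fails, I would fall back on the averaged bound $\frac{1+\kappa_\cZ}2$, which only needs the symmetric estimate and already suffices for part (3).

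\emph{Part (3): the two-sided estimate for $c_p^T$.} Averaging the two bounds of part (2) gives $c_p^T\le\frac{1+\kappa_\cZ}2$. For the lower bound, use Theorem~\ref{thm:transfer}\ref{t:low} in its modified form (the functions $t^p$ vanish at $0$, so Definition~\ref{def:viol} applies directly). In $M_2$ with $\rho=\diag(\kappa,1)$, take rank-one $A=x\,|w\rangle\langle w|$ and $B=(1-x)|w'\rangle\langle w'|$ with $w=(1,1)/\sqrt2$ and $w'=(1,-1)/\sqrt2$. Then $A+B=\diag$ in the basis $\{w,w'\}$, and adjusting these vectors should produce $2^{p-1}(\kappa x^p+(1-x)^p)/(\kappa x+1-x)$. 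More likely the extremiser is $A,B$ rank one with $A+B$ diagonal $\diag(x,1-x)$ up to scale, with $A,B$ each having $\varphi$-value $\frac12(\kappa x+1-x)$-type averages. I would compute the ratio and maximise over $x$ to obtain $\tp(\kappa)$. Finally, $\tp(\kappa)>1$ follows by checking that the derivative at $x=\frac12$ is nonzero when $\kappa>1$, since the value there is $1$.
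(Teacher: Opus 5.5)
\textbf{Part~(2) has a genuine gap, and part~(2) is the core of the theorem.} You never prove the one-sided bound. The missing idea is to apply the sandwich estimate \eqref{eq:sandwich} to the difference $Y=(A+B)^p-B^p$ rather than to $(A+B)^p$. This difference is positive by the L\"owner--Heinz inequality. You invoke operator monotonicity of $t^p$, but never in this way.

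You already hold the other ingredient. Lemma~\ref{lem:rotfeld} for the normal tracial functional $\tau(u\,\cdot\,)$, $u=\ceil\rho$, gives $\tau(u(A+B)^p)\le\tau(uA^p)+\tau(uB^p)$, that is, $\tau(uY)\le\tau(uA^p)$. Since $Y\ge0$ and $A^p\ge0$,
\[
\varphi(Y)\le\tau(uY)\le\tau(uA^p)\le\kappa_\cZ\tau(lA^p)\le\kappa_\cZ\varphi(A^p).
\]
This is exactly $\varphi((A+B)^p)\le\varphi(B^p)+\kappa_\cZ\varphi(A^p)$, and interchanging $A$ and $B$ gives the other term.

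None of the routes you explore reaches this. The $h_s$ splitting with the transposition $X^*X\leftrightarrow XX^*$ via Theorem~\ref{thm:modulus}, and the Bourin--Uchiyama unitary decomposition, either pay $\kappa_\cZ$ on both terms or leave an uncontrolled unitary. They never isolate a positive operator whose $\varphi$-value is $\varphi((A+B)^p)-\varphi(B^p)$.

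\textbf{Your fallback does not rescue part~(3).} The symmetric estimate $\varphi((A+B)^p)\le\kappa_\cZ(\varphi(A^p)+\varphi(B^p))$ only gives $c_p^T\le\kappa_\cZ$. The constant $\frac{1+\kappa_\cZ}2$ comes from averaging the two \emph{asymmetric} inequalities of part~(2). So without part~(2) the upper bound in part~(3) is unproved as well.

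\textbf{The lower-bound construction needs fixing.} Your first concrete choice, with $w\perp w'$, makes $A$ and $B$ commute. Then $(A+B)^p=A^p+B^p$ and the ratio is $1$, so there is no violation. What works is a non-orthogonal pair placed symmetrically about an eigenvector of $\rho=\diag(\kappa,1)$: the projections $A,B$ onto $(\pm\sin\theta,\cos\theta)$. For these, $A+B=2\diag(x,1-x)$ with $x=\sin^2\theta$ is diagonal in the eigenbasis of $\rho$, while $A^p=A$ and $B^p=B$. This gives exactly $2^{p-1}(\kappa x^p+(1-x)^p)/(\kappa x+1-x)$. Your second, vaguer description points this way, but it must be made explicit.

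Part~(1) and the remaining steps are fine: the transfer by Theorem~\ref{thm:transfer}\ref{t:low}, and the value $1$ at $x=\frac12$ with logarithmic derivative $2(p-1)(\kappa-1)/(\kappa+1)\ne0$.
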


\begin{proof}
(a) Use $(A,B)\mapsto(z^{1/p}A,z^{1/p}B)$. (b) $Y=(A+B)^p-B^p\ge0$ by the L\"owner--Heinz inequality and
$Z=A^p$. Lemma~\ref{lem:rotfeld} applied to $\chi=\tau(\one_Q\,\cdot)$ for central projections $Q$ gives
$\tau(\one_QY)\le\tau(\one_QZ)$, hence $\tau(uY)\le\tau(uZ)$, and by \eqref{eq:sandwich}
$\varphi(Y)\le\tau(uY)\le\tau(uZ)\le\kappa_\cZ\tau(lZ)\le\kappa_\cZ\varphi(Z)$. (c) The upper bound
follows from (b) by averaging. For the lower bound, in $M_2$ with $\rho=\diag(\kappa,1)$, let
$A,B$ be the projections onto $(\pm\sin\theta,\cos\theta)$ and $x=\sin^2\theta$. Then
$A+B=2\diag(x,1-x)$, $A^p=A$, $B^p=B$, and the ratio is $2^{p-1}(\kappa x^p+(1-x)^p)/(\kappa x+1-x)$;
apply Theorem~\ref{thm:transfer}\ref{t:low}. This function equals $1$ at $x=\frac12$ and its
logarithmic derivative there is $2(p-1)(\kappa-1)/(\kappa+1)<0$, so $\tp(\kappa)>1$.
\end{proof}

As $p\to0$ both bounds tend to $(1+\kappa_\cZ)/2$, the constant $c_\vee$ of
Theorem~\ref{thm:proj}\ref{p:c}. Numerically the optimisers in $M_2$ and $M_3$ are always the
symmetric pairs of rank-one projections used above, which suggests $c_p^T(\rho)=\tp(\kappa_\cZ)$. For
a non-homogeneous function $f$ in place of $t^p$, condition (R) fails and the constant of the
$f$-version need not be a function of $\kappa_\cZ$.

\begin{remark}\label{rem:opmon}
Part (b) of Theorem~\ref{thm:tik}, and hence the upper bound $\frac{1+\kappa_\cZ}2$ in (c), hold for
every operator monotone $f:[0,\infty)\to[0,\infty)$ with $f(0)=0$ in place of $t^p$. Such $f$ has
the form $f(t)=\beta t+\int_{(0,\infty)}\lambda h_\lambda(t)\,d\mu(\lambda)$ with $\beta\ge0$ and a
positive measure $\mu$ \cite[Ch.~V]{bhatia1997}. So the proof of Lemma~\ref{lem:rotfeld} gives
$\chi(f(A+B))\le\chi(f(A))+\chi(f(B))$, and $f(A+B)-f(B)\ge0$ by operator monotonicity. Hence the
proof of Theorem~\ref{thm:tik}(b) applies verbatim and gives
$\varphi(f(A+B))\le\min\{\varphi(f(B))+\kappa_\cZ\varphi(f(A)),\varphi(f(A))+\kappa_\cZ\varphi(f(B))\}$.
\end{remark}

\subsubsection*{Items 12 and 13}
Item~12 is Theorem~\ref{thm:tik} with $p=\frac12$: $\mathsf t_{1/2}(\kappa_\cZ)\le c\le\frac{1+\kappa_\cZ}2$. On
$M_2$ the computed envelope of item~12 (Table~\ref{tab:values}) agrees with $\mathsf t_{1/2}(\kappa)-1$ to
three decimal places at $\delta=0.1$, $0.5$ and $0.9$, and its leading coefficient $\frac12$
(Section~\ref{sec:numerics}) is that of $\mathsf t_{1/2}(\kappa)-1$; this supports the conjecture
$c_p^T=\tp(\kappa_\cZ)$ for $p=\frac12$. With the contrast of the operators prescribed, item~12 cannot be
violated at all below a sharp threshold (Proposition~\ref{prop:item12}).

For the Powers--St{\o}rmer inequality (item~13) only numerical values are available
(Section~\ref{sec:numerics}). Its normalisation is chosen as follows.

\begin{remark}\label{rem:13}
With $\cN=\cR$, item~13 would have $V=\infty$ for every $\delta>0$: for rank-one $A=uu^*$, $B=vv^*$ with overlap $c=|u^*v|\to0$,
$\cL\approx2c\,|u^*\rho u_\perp|$ while $\cR=2c^2u^*\rho u$. This is why we normalise by $\varphi(A+B)$.
\end{remark}

\subsection{The spectral geometric mean}\label{ssec:sgm}
For $A,B\in\cM_{++}$ the spectral geometric mean is $A\natural B=(A^{-1}\#B)^{1/2}A(A^{-1}\#B)^{1/2}$, where
$X\#Y=X^{1/2}(X^{-1/2}YX^{-1/2})^{1/2}X^{1/2}$ is the geometric mean. Let
$c_6(\rho)=\sup\{\varphi(A\natural B)/\sqrt{\varphi(A)\varphi(B)}:A,B\in\cM_{++}\}$.

\begin{proposition}[Item 6]\label{prop:sgm}
$G_\cZ\le c_6(\rho)\le\kappa_\cZ$ in every finite von Neumann algebra. In $M_n(\C)$, $G-1\le V_n\le\kappa-1$
for item~6, and the value $G$ of the quotient is approached by the explicit family in the proof.
\end{proposition}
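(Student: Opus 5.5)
Three things are needed: an upper bound by $\kappa$ in an arbitrary finite von Neumann algebra, its upgrade to $\kappa_\cZ$ by central rescaling, and a family in $M_2$ whose quotient tends to $G$, which is then transported to $\cM$.

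\emph{Upper bound.} I use two facts. First, $A\natural B$ is unitarily congruent to $(A^{1/2}BA^{1/2})^{1/2}$. Indeed, with $X=A^{-1}\#B$ we have $XAX=B$. Hence $A\natural B=X^{1/2}AX^{1/2}$ has the same spectrum, and the same trace, as $A^{1/2}XA^{1/2}$, and a direct computation gives $A^{1/2}XA^{1/2}=(A^{1/2}BA^{1/2})^{1/2}$. So $\tau(A\natural B)=\tau((A^{1/2}BA^{1/2})^{1/2})$. Second, this trace is bounded by Cauchy--Schwarz for $\tau$:
\[
\tau\bigl((A^{1/2}BA^{1/2})^{1/2}\bigr)=\tau\bigl(|B^{1/2}A^{1/2}|\bigr)\le\tau(A)^{1/2}\tau(B)^{1/2}.
\]
Putting these together with \eqref{eq:sandwich} in its ambient form $\|\rho^{-1}\|^{-1}\tau\le\varphi\le\|\rho\|\tau$ on $\cM_+$, I get
\[
\varphi(A\natural B)\le\|\rho\|\,\tau(A)^{1/2}\tau(B)^{1/2}\le\kappa\sqrt{\varphi(A)\varphi(B)}.
\]

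\emph{From $\kappa$ to $\kappa_\cZ$.} The rescaling $(A,B)\mapsto(zA,zB)$ with $z\in\cZ_{++}$ gives $(zA)\natural(zB)=z(A\natural B)$, because everything commutes with central $z$. This is condition (R). Lemma~\ref{lem:resc} applies to this supremum, as the paper notes after Definition~\ref{def:viol}, so $c_6\le\kappa_\cZ$.

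\emph{Lower bound.} Item~6 falls outside Definition~\ref{def:viol}, so I modify the proof of Theorem~\ref{thm:transfer}\ref{t:low} directly. For invertible $A,B\in M_2$ set $\hat A=\pi(A)+\epsilon(1-e)$ and $\hat B=\pi(B)+\epsilon(1-e)$, where $e=\pi(1)$. Then $\hat A\natural\hat B=\pi(A\natural B)+\epsilon(1-e)$. Letting $\epsilon\to0$ and then letting the corner window shrink gives $c_6(\rho)\ge c_6^{M_2}(\diag(\kappa_\cZ,1))$. This reduces the lower bound to exhibiting a family in $M_2$, $\rho=\diag(\kappa,1)$, with quotient tending to $G$.

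\emph{The $M_2$ family.} I take $A=P_\epsilon=P+\epsilon$ and $B=Q_\epsilon=Q+\epsilon$, with $P,Q$ nearly rank-one projections onto unit vectors $u,v$. The pair is chosen to mimic the Gardner extremiser of Theorem~\ref{thm:gardner}. There the quotient $|\langle\rho v,u\rangle|/\sqrt{\langle\rho u,u\rangle\langle\rho v,v\rangle}$ reaches $G$ for $u=\rho v/\|\rho v\|$ (up to phases), with $v$ the Kantorovich vector.

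The key step is the asymptotics of $A\natural B$ as $\epsilon\to0$. I expect that, suitably normalised, $A\natural B$ converges to a rank-one operator of the form $|u\rangle\langle v|$ symmetrised, or more plausibly to $|w\rangle\langle w|$ with $w$ determined by $u,v$. Computing this limit is the main obstacle. A noncommutative mean of near-singular operators may concentrate on a direction that depends delicately on how $\epsilon$ enters. For that reason I would allow different small scalings $A=P+\epsilon$, $B=Q+\epsilon^s$ and optimise.

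If the direct limit is awkward, I would fall back on rank-one-plus-small perturbations and compute $A^{-1}\#B$ explicitly in $M_2$. This is feasible because for $2\times2$ positive matrices $X\#Y=\frac{\sqrt{st}}{\sqrt{\det(sX+tY)}}\,(\dots)$-type closed forms exist via $X\#Y=(\alpha X+\beta Y)/\sqrt{\det(\alpha X^{-1}\cdots)}$. I would then optimise numerically guided to confirm that the quotient approaches $(1+\kappa)/(2\sqrt\kappa)$. The explicit-but-not-attaining family is consistent with the statement saying $G$ is only approached.

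Finally, on $M_n$ the bounds $G-1\le V_n\le\kappa-1$ follow from the two bounds on $c_6$ with $\kappa_\cZ=\kappa$.
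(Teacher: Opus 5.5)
Your upper bound, the central rescaling and the transfer through $\pi(\cdot)+\epsilon(1-e)$ are correct and coincide with the paper's argument. The paper writes $A\natural B=Y^*Y$ with $Y=A^{1/2}(A^{-1}\#B)^{1/2}$ and $YY^*=|B^{1/2}A^{1/2}|$, which is your identity.

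\textbf{The gap.} The lower bound $c_6\ge G_\cZ$ is not proved, and neither is the claimed explicit family that approaches $G$. You stop at ``I expect'' and fall back on a numerically guided optimisation. That cannot show that $G(\diag(\kappa,1))$ is approached for every $\kappa$.

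The configuration you plan to imitate also leads elsewhere. Push $B$ first to a rank-one $vv^*$ with $A$ invertible and fixed, and then let $A\to aa^*$, with $\|a\|=\|v\|=1$. The squared quotient then tends to $|\langle a,v\rangle|^2\langle\rho v,v\rangle/\langle\rho a,a\rangle$. For your Gardner-type choice $a=\rho v/\|\rho v\|$ this equals $\langle\rho v,v\rangle^3/\langle\rho^3v,v\rangle\le1$ by Jensen's inequality, so there is no violation at all. With a single common $\epsilon$ for $A$ and $B$, as in your first ansatz, the limit of $A\natural B$ concentrates on a different direction. The paper notes explicitly that this choice ($\varepsilon'=\varepsilon$) does not give $G$, even for the right vectors.

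\textbf{The missing idea.} When one variable is rank-one, the mean is explicit. For invertible $A$ and a unit vector $v$,
\[
A^{-1}\#vv^*=(v^*Av)^{-1/2}\,vv^*,\qquad A\natural vv^*=(v^*Av)^{1/2}\,vv^*,\qquad
\frac{\varphi(A\natural vv^*)^2}{\varphi(A)\,\varphi(vv^*)}=\frac{(v^*Av)(v^*\rho v)}{\Tr(\rho A)} .
\]
By the Cauchy--Schwarz inequality, the supremum of $v^*Av/\Tr(\rho A)$ over $A\ge0$ is $\langle\rho^{-1}v,v\rangle$. It is approached by $A=xx^*+\varepsilon$ with $x=\rho^{-1}v$.

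The squared quotient therefore tends to the Kantorovich expression $\langle\rho^{-1}v,v\rangle\langle\rho v,v\rangle$. For $\rho=\diag(\kappa,1)$ and $v=(e_1+e_2)/\sqrt2$ this equals $(1+\kappa)^2/(4\kappa)=G^2$.

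Item~6 requires $B>0$, so take $B=vv^*+\varepsilon'$ and use continuity of $B\mapsto A\natural B$ at fixed invertible $A$. The limits must be taken in order: $\varepsilon'\to0$ first, then $\varepsilon\to0$. The extremal pair is thus the Kantorovich configuration, with $A$ concentrating on $\rho^{-1}v$ rather than on $\rho v$. With this family, your transfer step gives $c_6\ge G_\cZ$ and $V_n\ge G-1$.
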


\begin{proof}
\emph{Upper bound.} Let $Y=A^{1/2}(A^{-1}\#B)^{1/2}$. Then $A\natural B=Y^*Y$ and, since
\[
A^{-1}\#B=A^{-1/2}(A^{1/2}BA^{1/2})^{1/2}A^{-1/2},\qquad\text{we have}\qquad
YY^*=(A^{1/2}BA^{1/2})^{1/2}=|B^{1/2}A^{1/2}|.
\]
By the trace property and H\"older's inequality,
$\tau(A\natural B)=\tau(|B^{1/2}A^{1/2}|)\le\|B^{1/2}\|_2\|A^{1/2}\|_2=(\tau(A)\tau(B))^{1/2}$. Since
$\tau\le\|\rho^{-1}\|\varphi$ on $\cM_+$,
\[
\varphi(A\natural B)\le\|\rho\|\tau(A\natural B)\le\|\rho\|(\tau(A)\tau(B))^{1/2}\le\kappa(\rho)(\varphi(A)\varphi(B))^{1/2}.
\]
\emph{(R).} Let $z\in\cZ_{++}$. As central elements commute with $\cM$ and with the functional
calculus of its elements, $(sX)\#(tY)=(st)^{1/2}(X\#Y)$ for $s,t\in\cZ_{++}$; hence
$(zA)^{-1}\#(zB)=(z^{-1}A^{-1})\#(zB)=A^{-1}\#B$ and therefore $(zA)\natural(zB)=z(A\natural B)$. So the ratio for $z\rho$ at $(A,B)$
equals the ratio for $\rho$ at $(zA,zB)$, and $c_6(z\rho)=c_6(\rho)$. By Lemma~\ref{lem:resc},
$c_6(\rho)=c_6(z_0\rho)\le\kappa(z_0\rho)=\kappa_\cZ(\rho)$.
\emph{Lower bound in $M_2$.} Let $\rho=\diag(a,b)$, $e_\pm=(e_1\pm e_2)/\sqrt2$, $v=e_+$ and $x=\rho^{-1}v$. For
invertible $A$ and a rank-one $B=vv^*$, $\|v\|=1$, we have $A^{-1}\#vv^*=(v^*Av)^{-1/2}vv^*$, hence
$(A^{-1}\#B)^{1/2}=(v^*Av)^{-1/4}vv^*$ and $A\natural B=(v^*Av)^{1/2}vv^*$. With $A=xx^*+\varepsilon$,
\[
  \frac{\varphi(A\natural B)^2}{\varphi(A)\varphi(B)}=\frac{(v^*Av)(v^*\rho v)}{\Tr(\rho A)}\ \xrightarrow[\varepsilon\to0]{}\
  \frac{|v^*x|^2\,v^*\rho v}{x^*\rho x}=(v^*\rho^{-1}v)(v^*\rho v)=\tfrac14(a^{-1}+b^{-1})(a+b)=G^2 .
\]
For fixed $\varepsilon$ the quotient at $(A,vv^*+\varepsilon')$ tends to the quotient at $(A,vv^*)$ as
$\varepsilon'\to0$, by continuity; letting then $\varepsilon\to0$, the quotient approaches $G$ for
positive definite $A$ and $B$. (The limits must be taken in this order; for instance
$\varepsilon'=\varepsilon$ does not give $G$.)
\emph{Transfer.} Let $\rho\notin\cZ$ and let $\pi$, $t$ and $R_\varepsilon\to\diag(\alpha,\beta)$,
$\alpha/\beta=\kappa_\cZ(\rho)$, be as in the proof of Theorem~\ref{thm:transfer}\ref{t:low}, and $e=\pi(1)$.
If $e=1$ we use $\pi$ itself, a unital injective $*$-homomorphism, and drop the terms with $s$ below.
Otherwise the map $\pi'(X,\lambda)=\pi(X)+\lambda(1-e)$ is a unital injective $*$-homomorphism $M_2\oplus\C\to\cM$, so it
commutes with inverses, square roots and products, and $\pi'(A_0,s)\natural\pi'(B_0,s)=\pi'(A_0\natural B_0,s)$
for positive definite $A_0,B_0\in M_2$ and $s>0$. Hence
\[
\frac{\varphi(\pi'(A_0,s)\natural\pi'(B_0,s))}{\sqrt{\varphi(\pi'(A_0,s))\varphi(\pi'(B_0,s))}}
=\frac{t\Tr(R_\varepsilon(A_0\natural B_0))+s\varphi(1-e)}{\sqrt{(t\Tr(R_\varepsilon A_0)+s\varphi(1-e))(t\Tr(R_\varepsilon B_0)+s\varphi(1-e))}},
\]
which tends to the ratio for $\diag(\alpha,\beta)$ at $(A_0,B_0)$ as $s\to0$ and then $\varepsilon\to0$. With
the $M_2$ examples above, $c_6(\rho)\ge G(\diag(\alpha,\beta))=G_\cZ(\rho)$. For central $\rho$ both bounds are
$1$. In $M_n$ the statements follow with $\kappa_\cZ=\kappa$.
\end{proof}

\noindent Numerically, for $n=3$ and $4$ the computed envelope of item~6 exceeds $G-1$ at some $\delta$
(Section~\ref{sec:numerics}). This indicates that the constant on $M_n$, $n\ge3$, is larger than $G$, but
we do not use it as a proof. For $n=2$ we conjecture that the constant is $G$
(Conjecture~\ref{conj:values}).

\subsection{Further inequalities}\label{ssec:further}

\subsubsection*{The Alhasan--Fawwaz inequality}
Let
\[
a(\rho)=\sup\Bigl\{\frac{\varphi(A)}{\varphi(|A+iB|)}:A\in\cM_+,\ B\in\cM_{sa},\ A+iB\ne0\Bigr\}.
\]
Since $\varphi(A)$ and $\varphi(B)$ are real, $\varphi(A)=\re\varphi(X)$ for $X=A+iB$. So $a(\rho)$ is
the supremum of $\re\varphi(X)/\varphi(|X|)$ over accretive $X$, i.e.\ over $X$ with $\re X\ge0$. By
\cite{AF21}, $a(\rho)=1$ if and only if $\rho\in\cZ$; this also follows from \ref{af:a} below.

\begin{proposition}[Item 35]\label{prop:af}
\begin{enumerate}
\item\label{af:a} $a(z\rho)=a(\rho)$ for $z\in\cZ_{++}$, and
$a_{M_2}(\diag(\kappa_\cZ,1))\le a(\rho)\le G_\cZ$. If $\rho\notin\cZ$, then $a(\rho)>1$.
\item\label{af:b} If $\cM=M_n(\C)$ and $\rho$ is not scalar, the supremum is attained and
$a(\rho)<\cosh\OLC(\rho)$. In particular, $G_\cZ$ is never attained in $M_n(\C)$.
\item\label{af:c} If $X\in M_2(\C)$ and $\re X\ge0$, then $\re X\le2|X|$. Consequently
$a(\rho)<2$ for every $\rho\in M_2(\C)_{++}$.
\item\label{af:d} For $\rho=\diag(10^4,1,1)\in M_3(\C)$, $a(\rho)>2.68$. Hence $a$ is not a function
of $\kappa_\cZ$, even on finite factors: $\diag(10^4,1)\in M_2(\C)$ and $\diag(10^4,1,1)\in M_3(\C)$
both have $\kappa_\cZ=10^4$.
\end{enumerate}
\end{proposition}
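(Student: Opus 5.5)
Part \ref{af:a} is handled in the paper's usual way. (R) holds with $\sigma_z(A,B)=(zA,zB)$, since $|zA+izB|=z|A+iB|$. The ratio is a violation constant with the single arbitrary variable $X=A+iB$ restricted to accretive $X$; strictly, the class requires $\re X\ge0$, so I would treat the pair $(A,B)$ with $A\ge0$, $B=B^*$ and use $F_1=A$, $F_2=|A+iB|$. Theorem~\ref{thm:transfer}\ref{t:low} then gives the lower bound $a_{M_2}(\diag(\kappa_\cZ,1))$. The upper bound is immediate: $\varphi(A)=\re\varphi(X)\le|\varphi(X)|\le G_\cZ\varphi(|X|)$ by Theorem~\ref{thm:gardner}. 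The claim $a(\rho)>1$ for $\rho\notin\cZ$ reduces, via the transfer lower bound, to $M_2$ with $\rho=\diag(\kappa,1)$, $\kappa>1$. There I take $X=1+\varepsilon Y$ and compute to first order: $|X|=1+\varepsilon\re Y+O(\varepsilon^2)$ and $\re X=1+\varepsilon\re Y$. That cancels at first order, so I need the second order instead. Writing $|X|^2=1+2\varepsilon\re Y+\varepsilon^2Y^*Y$ gives $|X|=1+\varepsilon\re Y+\frac{\varepsilon^2}2(Y^*Y-(\re Y)^2)+\dots$, where the bracket is not simply $(\operatorname{Im}Y)^2$ because of noncommutativity. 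Rather than pursue this, it is cleaner to use a rank-one-perturbed accretive $X$, namely $X=P+iB$ with $P$ a projection and $B$ chosen so that $|X|$ tilts away from the large eigenvector. Concretely, take $X=|w\rangle\langle w|+it(|w\rangle\langle w'|+|w'\rangle\langle w|)$ with $w,w'$ orthonormal, which has $\re X=|w\rangle\langle w|\ge0$, and compute $\varphi(|X|)$ to first order in $t$. Then $|X|=(X^*X)^{1/2}$ acquires an off-diagonal term linear in $t$ that $\varphi$ sees through $\rho_{ww'}$. Choosing $w=(1,1)/\sqrt2$ makes $\rho_{ww'}\ne0$, and the sign of $t$ can be chosen to lower $\varphi(|X|)$ while $\varphi(\re X)$ stays fixed. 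That gives a ratio $>1$.

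For \ref{af:b}, attainment on $M_n$ follows from homogeneity and compactness. Normalise $\Tr|X|=1$. The set of accretive $X$ is closed, $\varphi(|X|)\ge\lambda_{\min}$ stays bounded away from $0$, and the ratio is continuous, so the maximum is attained. For the strict inequality, suppose equality $\re\varphi(X)=G\varphi(|X|)$ holds. Then equality holds in Gardner's chain, and equality in Krein's bound (Lemma~\ref{lem:krein}\ref{k:b}) applied to $h=|X|^{1/2}$ in $L^2$ forces $U h$ parallel to $\rho h$, with $h$ supported on extreme eigenvectors mixed in Kantorovich proportion. I would then show that such an $X=U|X|$ has $\re X\not\ge0$. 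In the rank-one extremiser $X=|a\rangle\langle b|$ with $a\propto\rho b$, $\re X$ is indefinite because $a\ne b$ and it is rank at most two with trace-zero-like structure. In general the equality forces a decomposition into such blocks, each non-accretive. This equality analysis is the main obstacle, since it requires showing that every equality configuration is non-accretive.

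For \ref{af:c}, on $M_2$ I would write $X=H+iK$ with $H\ge0$ and prove $H\le2|X|$ directly. The route is to compare $|X|^2=H^2+K^2+i[H,K]$ with $H^2/4$ and use operator monotonicity of the square root in the form $|X|\ge\frac12 H$, which holds iff $|X|^2\ge$ suitable bounds. In dimension two one can reduce by unitary and scaling to explicit $2\times2$ parameters and check positivity of $2|X|-H$ via its trace and determinant. The consequence then follows: $\varphi(\re X)\le2\varphi(|X|)$, and strictness comes from the compactness of \ref{af:b} together with equality analysis. For \ref{af:d}, I would exhibit an explicit $3\times3$ accretive $X$, for instance $X=H+iK$ with $H$ a rank-one projection onto a vector weighted toward $e_1$ and $K$ coupling $e_2,e_3$ so that $|X|$ concentrates on the small eigenspace. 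I would compute the ratio exactly, or with rigorous bounds, to exceed $2.68$. Since the $M_2$ value is $<2$ by \ref{af:c}, $a$ is not a function of $\kappa_\cZ$.
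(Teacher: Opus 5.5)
Your proposal has genuine gaps in parts~\ref{af:b}, \ref{af:c} and~\ref{af:d}, and a fixable error in~\ref{af:a}.

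\textbf{Part~\ref{af:c}.} The route through squares fails. Operator monotonicity of the square root would give $|X|\ge\frac12\re X$ from $|X|^2\ge\frac14(\re X)^2$, but the latter is false for accretive $2\times2$ matrices. For $X=\begin{pmatrix}1&1/4\\ -1/4&0\end{pmatrix}$ one has $\re X=\diag(1,0)\ge0$ and $|X|^2-\frac14(\re X)^2=\frac1{16}\begin{pmatrix}13&4\\ 4&1\end{pmatrix}$, whose determinant is $-3/256$. Your fallback (explicit parameters, trace and determinant of $2|X|-\re X$) is not carried out. Since the constant $2$ is sharp, there is no slack to spare.

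The missing idea is a similarity trick. For invertible $X=UP$ with $P=|X|$, write $X=P^{1/2}TP^{1/2}$ with $T=P^{-1/2}UP^{1/2}$. Then $\re T\ge0$ and $T$ is similar to a unitary. In dimension two, $W(T)=2c-W(T)$, where $c$ is the mean of the unimodular eigenvalues: $T-c$ is traceless, hence unitarily similar to a zero-diagonal matrix, which $\diag(1,-1)$ conjugates to its negative. So $\re T\le2\re c\le2$ and $\re X\le2|X|$. The singular case is $X=\mu vv^*$ with $\re\mu\ge0$. Strictness of $\varphi(\re X)<2\varphi(|X|)$ then needs no equality analysis: $\re X=2|X|$ would contradict $\Tr\re X\le\Tr|X|$.

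\textbf{Part~\ref{af:d}.} You give no matrix. The whole content of this part is an explicit accretive $X$ together with an exact verification that $\re X\ge0$ and that the ratio exceeds $2.68$. The paper uses $X=UP$ with a rational Cayley-transform orthogonal $U$ and a rational $P>0$, for which $\re X$ even has full rank. Nothing in your description shows that the shape you suggest reaches $2.68$.

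\textbf{Part~\ref{af:b}.} The step you call the main obstacle is exactly the missing idea. Let $P_M,P_m$ be the spectral projections of $\rho$ for its extreme eigenvalues $M>m$, and $h=|X|^{1/2}$. The Kantorovich proportion $M\|P_Mh\|_2^2=m\|P_mh\|_2^2$ is a condition on $h$ as an element of $L^2$, not on individual vectors, so it yields no block decomposition. What does is the unitarity of $U$. Equality in the Cauchy--Schwarz step gives $Uh=\lambda\rho h$ with $\lambda>0$, hence $X=\lambda\rho|X|$ and $|X|^2=X^*X=\lambda^2|X|\rho^2|X|$. So the compression of $\rho^2$ to $V=\operatorname{ran}|X|\subset\operatorname{ran}(P_M+P_m)$ is the scalar $\lambda^{-2}$, i.e.\ $P_VP_MP_V=tP_V$ with $0<t<1$.

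With this, your block picture is correct. Take an orthonormal eigenbasis $v_i$ of $|X|$ on $V$ with eigenvalues $k_i>0$. The vectors $P_Mv_i/\sqrt t$ and $P_mv_i/\sqrt{1-t}$ are orthonormal, and $X$ is the orthogonal sum of the rank-one blocks $\lambda k_i|\rho v_i\rangle\langle v_i|$ with $\rho v_i\nparallel v_i$. Each block has indefinite real part. The paper instead tests $\re X$ on the vector $\xi=P_Mx-cP_mx$.

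\textbf{Part~\ref{af:a}.} Reducing $a(\rho)>1$ to $M_2$ by Theorem~\ref{thm:transfer}\ref{t:low} is a legitimate alternative to the paper's direct second-order expansion of $X_t=1+t(R+iS)$, with $R$ a large multiple of $-i[S,\rho]$, in $\cM$ itself. But your $2\times2$ family needs a correction. In the basis $(w,w')$,
\[
|X|=(1+4t^2)^{-1/2}\begin{pmatrix}1+2t^2&it\\ -it&2t^2\end{pmatrix},
\]
so the first-order term of $\varphi(|X|)$ is $2t\operatorname{Im}\langle\rho w',w\rangle$. It is not governed by $\langle\rho w',w\rangle\ne0$. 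For the real choice $w'=(1,-1)/\sqrt2$ this term vanishes, and $\varphi(|X|)>\varphi(\re X)$ for every $t\ne0$. Take $w'=i(1,-1)/\sqrt2$ instead; equivalently, replace $|w\rangle\langle w'|+|w'\rangle\langle w|$ by $i(|w'\rangle\langle w|-|w\rangle\langle w'|)$. Then the sign of $t$ can be chosen as you say.
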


\begin{proof}
\ref{af:a} (R) holds with $\sigma_z(A,B)=(zA,zB)$. The upper bound is Theorem~\ref{thm:gardner}:
$\varphi(A)=\re\varphi(X)\le|\varphi(X)|\le G_\cZ\varphi(|X|)$. The lower bound is
Theorem~\ref{thm:transfer}\ref{t:low}, with the test class $X_1\in\cM_+$, $X_2\in\cM_{sa}$ and
$F_1=X_1$, $F_2=|X_1+iX_2|$. Let $\rho\notin\cZ$. Choose $S\in\cM_{sa}$ with $C=[S,\rho]\ne0$; then
$C^*=-C$. For $\lambda>0$ let $R=-\lambda iC\in\cM_{sa}$ and $X_t=1+t(R+iS)$. For small $t>0$ we
have $\re X_t=1+tR\ge0$. Since $X_t^*X_t=1+2tR+t^2(R^2+S^2+i[R,S])$,
$|X_t|=1+tR+\frac{t^2}2(S^2+i[R,S])+O(t^3)$, and
\[
\frac{\varphi(\re X_t)}{\varphi(|X_t|)}=1-\frac{t^2}{2\varphi(1)}\,\varphi\bigl(S^2+i[R,S]\bigr)+O(t^3).
\]
Here $\varphi(i[R,S])=i\tau(R[S,\rho])=\lambda\tau(C^2)=-\lambda\tau(C^*C)$. So for
$\lambda>\varphi(S^2)/\tau(C^*C)$ the ratio exceeds $1$ for small $t$.

\ref{af:b} The ratio is invariant under positive scalings of $(A,B)$. On the compact set
$\{\|A+iB\|_2=1\}$ it is continuous, and its denominator is at least $\lambda_{\min}(\rho)\Tr|X|>0$, so
the supremum is attained. Suppose the ratio equals $\cosh\OLC(\rho)$ at $X=U|X|$, with $U$ unitary,
and let $h=|X|^{1/2}$. Then the chain
$\re\langle Uh,\rho h\rangle\le\|h\|_2\|\rho h\|_2\le\cosh\OLC(\rho)\langle\rho h,h\rangle$ from the proof of
Theorem~\ref{thm:gardner} consists of equalities. Let $M>m$ be the extreme eigenvalues of $\rho$,
$P_M,P_m$ the corresponding spectral projections and $E=\operatorname{ran}(P_M+P_m)$. Equality in
Lemma~\ref{lem:krein} for $L_\rho$ means $(L_\rho-m)(M-L_\rho)h=0$ and $\|\rho h\|_2^2=mM\|h\|_2^2$.
Hence $V:=\operatorname{ran}h\subset E$ and $M\|P_Mh\|_2^2=m\|P_mh\|_2^2$, and both norms are nonzero.
Equality in the Cauchy--Schwarz inequality gives $Uh=\lambda\rho h$, and
$\varphi(X)=\lambda\|\rho h\|_2^2\ge0$ gives $\lambda>0$. So $X=\lambda\rho K$ with $K=h^2=|X|$. Also
$h^2=hU^*Uh=\lambda^2h\rho^2h$, so the compression of $\rho^2=M^2P_M+m^2P_m$ to $V$ is
$\lambda^{-2}$. Thus $P_VP_MP_V=tP_V$ and $P_VP_mP_V=(1-t)P_V$ for some $t$, and $t\in(0,1)$
because $P_Mh\ne0\ne P_mh$. Take $x\in V$ with $\langle Kx,x\rangle>0$, and
$\xi=P_Mx-cP_mx$ with $t/(1-t)<c<Mt/(m(1-t))$. Then $P_V\xi=(t-c(1-t))x$ and
$P_V\rho\xi=(Mt-cm(1-t))x$. Since $K=P_VKP_V$,
\[
\langle(\re X)\xi,\xi\rangle=\lambda\re\langle K\xi,\rho\xi\rangle
=\lambda\bigl(t-c(1-t)\bigr)\bigl(Mt-cm(1-t)\bigr)\langle Kx,x\rangle<0,
\]
which contradicts $\re X\ge0$.

\ref{af:c} Let first $X$ be invertible, $X=UP$ with $P=|X|$, and $T=P^{-1/2}UP^{1/2}$. Then
$X=P^{1/2}TP^{1/2}$, so $\re X=P^{1/2}(\re T)P^{1/2}$ and $\re T\ge0$. As $T$ is similar to $U$, it
is diagonalisable with eigenvalues $\lambda_1,\lambda_2$ of modulus $1$. Let
$c=\frac12(\lambda_1+\lambda_2)$. The traceless matrix $T-c$ is unitarily similar to a matrix with
zero diagonal, and $\diag(1,-1)$ conjugates such a matrix to its negative. Hence the numerical
range satisfies $W(T)=2c-W(T)$. Since $W(T)\subset\{\re z\ge0\}$, we get $\re z\le2\re c$ on $W(T)$,
i.e.\ $\re T\le2\re c\le2$. If $\re c=1$, then $\lambda_1=\lambda_2=1$, so $T=1$ and $\re X=P$. In all
cases $\re X\le c'P$ for some $c'<2$. If $X\ne0$ is singular, then $X=|u\rangle\langle v|$.
Here $u=\mu v$ with $\re\mu\ge0$, since otherwise
$\langle(\re X)\xi,\xi\rangle=\re(\langle\xi,v\rangle\langle u,\xi\rangle)<0$ for a suitable $\xi$;
hence $\re X\le|X|$. Thus $\varphi(\re X)<2\varphi(|X|)$ for every accretive $X\ne0$, and $a(\rho)<2$
because the supremum is attained (by \ref{af:b}; for scalar $\rho$, $a=1$).

\ref{af:d} Let
\[
P=\begin{pmatrix}2&19&20\\19&637&491\\20&491&416\end{pmatrix},\qquad
U=\frac1{10485}\begin{pmatrix}9965&-1180&3040\\2020&9907&-2776\\-2560&3224&9643\end{pmatrix},
\]
and $X=UP$. The matrix $U$ is orthogonal; it is the Cayley transform $(1-N)(1+N)^{-1}$ of the
skew-symmetric $N$ with $N_{12}=\frac2{25}$, $N_{13}=-\frac7{50}$, $N_{23}=\frac3{20}$. The matrix $P$
is positive definite, since its leading principal minors are $2$, $913$ and $16006$. Hence
$|X|=P$. An exact computation in rational arithmetic, which can be checked by hand, shows that
the leading principal minors of $\re X$ are $\frac{11662}{2097}$, $\frac{6082302974}{109935225}$ and
$\frac{36535906}{7329015}$, so $\re X>0$. It also gives $\varphi(\re X)=\frac{39575293}{699}$ and
$\varphi(|X|)=\varphi(P)=21053$, so $\varphi(\re X)/\varphi(|X|)=\frac{39575293}{14716047}>2.68$. Together
with \ref{af:c} this proves the claim.
\end{proof}

The constant $2$ in \ref{af:c} is sharp, but not attained. Every $T\in M_2(\C)$ that is similar to a
unitary can be written as $T=P^{-1/2}UP^{1/2}$ with $P>0$ and $U$ unitary: if $T=SDS^{-1}$ with $D$
unitary and $S=|S^*|V$ is the polar decomposition, take $P=|S^*|^{-2}$ and $U=VDV^*$. Then $X=UP$
satisfies $|X|=P$ and $\re X=P^{1/2}(\re T)P^{1/2}$. For
$T_\epsilon=\bigl(\begin{smallmatrix}e^{i\epsilon}&2\cos\epsilon\\0&e^{-i\epsilon}\end{smallmatrix}\bigr)$,
$0<\epsilon<\pi/2$, the eigenvalues $e^{\pm i\epsilon}$ are distinct and unimodular, and
$\re T_\epsilon=\cos\epsilon\bigl(\begin{smallmatrix}1&1\\1&1\end{smallmatrix}\bigr)\ge0$ has largest
eigenvalue $2\cos\epsilon$. So the best constant $c_2$ in the operator inequality $\re X\le c_2|X|$ for accretive
$X\in M_2(\C)$ equals $2$.

\begin{remark}\label{rem:afnum}
Numerically (not used in the proofs), in $M_2(\C)$ with $\rho=\diag(\kappa,1)$ the constant $a$ is
$1.0546$, $1.1802$, $1.3746$, $1.7491$, $1.9721$ and $1.9972$ for $\kappa=2$, $4$, $10$, $10^2$, $10^4$
and $10^6$, while $G_\cZ$ is $1.0607$, $1.25$, $1.7393$, $5.05$, $50.005$ and $500.0005$. Also
$(a-1)/(G_\cZ-1)\to1$ as $\kappa\to1$, and the maximisers have $\re X$ of rank one. In $M_n(\C)$ the
value depends on the multiplicity of $\min\sigma(\rho)$ but not on that of $\max\sigma(\rho)$. For
instance $a(\diag(10,1,1))\approx1.3898$, whereas $a(\diag(10,10,1))\approx1.3746=a(\diag(10,1))$. For
$\diag(100,1,\dots,1)\in M_n(\C)$ with $n=2,\dots,6$ the values are $1.7491$, $1.9746$, $2.0117$,
$2.0156$ and $2.0158$. For $\diag(10^4,1,1)$ the value is $2.7157$. The variant with $B\ge0$ gives
smaller values, $1.0541$ and $1.1269$ for $\kappa=4,10$ in $M_2(\C)$. For the best constant $c_n$ in
$\re X\le c_n|X|$ for accretive $X\in M_n(\C)$ we have $c_2=2$ (proved above), and numerically
$c_3\ge2.94$.
\end{remark}

\subsubsection*{Exponential inequalities}
Bikchentaev's characterisation of traces by $\varphi(e^{PQP})\le\varphi(e^Q)$ concerns projections
$P,Q$ (see \cite{Bik12} for the precise statement). For traces it remains true for all $Q\ge0$ (Lemma~\ref{lem:bk}), but it fails for
self-adjoint $Q$: for $Q=-t1$ with $t>0$ and $0\ne P\ne1$,
$\tau(e^{PQP})=e^{-t}\tau(P)+\tau(1-P)>\tau(e^Q)$. Let
\[
c_{\exp}(\rho)=\sup\frac{\varphi(e^{PQP})}{\varphi(e^Q)},\qquad
b(\rho)=\sup\frac{\varphi(Pe^{PQP})}{\varphi(e^Q)}.
\]
In $c_{\exp}$, $P$ runs through the projections and $Q$ through $\cM_+$. Let $c_{\exp}^{\rm pr}$
be the same supremum with $Q$ also a projection. In $b$, $P\ne0$ runs through the projections and
$Q$ through $\cM_{sa}$. The quantity $b$ is invariant under $Q\mapsto Q+s$. Let $b_2(\kappa)$ denote
$b(\diag(\kappa,1))$ computed in $M_2(\C)$.

\begin{lemma}\label{lem:bk}
Let $\tau$ be a faithful normal finite trace on $\cM$, $P$ a projection and $Q\in\cM_{sa}$. Then
$\tau(Pe^{PQP})\le\tau(Pe^QP)\le\tau(e^Q)$. If $Q\ge0$, then $\tau(e^{PQP})\le\tau(e^Q)$.
\end{lemma}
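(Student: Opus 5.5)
The first inequality is \(\tau(Pe^{PQP})\le\tau(Pe^QP)\). I would prove it by working in the corner algebra \(P\cM P\), which is finite with the faithful normal trace \(\tau|_{P\cM P}\) and unit \(P\). There \(PQP\) is self-adjoint. The exponential of \(PQP\) computed in \(\cM\) is \(Pe^{PQP}=e^{PQP}P\), and this equals the exponential computed in \(P\cM P\), while the \(1-P\) part contributes \(1-P\). So the left side is \(\tau_P(\exp_P(PQP))\).

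The right side is \(\tau(Pe^QP)\). I would compare the two by the Jensen-type trace inequality for the compression map \(x\mapsto PxP\), which is unital and positive from \(\cM\) to \(P\cM P\), together with the convex function \(\exp\). The inequality needed is
\[
\tau_P\bigl(f(\Phi(x))\bigr)\le\tau_P\bigl(\Phi(f(x))\bigr)
\]
for convex \(f\) and a unital positive map \(\Phi\).

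To prove it, write \(\Phi(x)=\sum\mu_j r_j\) spectrally in \(P\cM P\); I would approximate to reach finite spectrum first. Then
\[
\tau(f(\Phi(x)))=\sum_j f(\mu_j)\tau(r_j).
\]
On each piece, \(\mu_j=\langle x\rangle\) is the average of \(x\) in the normal state \(\tau(r_j\,\cdot\,r_j)/\tau(r_j)\). The scalar Jensen inequality for states gives \(f(\omega(x))\le\omega(f(x))\), by the spectral measure of \(x\) under \(\omega\). Summing over \(j\) yields \(\le\tau(Pf(x)P)\).

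The general spectrum case follows by norm approximation of \(PQP\) by operators with finite spectrum, using continuity of \(\exp\). This Jensen step is the only real content, and I expect it to be the main obstacle, mostly bookkeeping with normal states.

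The second inequality, \(\tau(Pe^QP)\le\tau(e^Q)\), is immediate: \(\tau(Pe^QP)=\tau(e^{Q/2}Pe^{Q/2})\le\tau(e^Q)\) since \(P\le1\).

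For the last claim, take \(Q\ge0\). Then
\[
\tau(e^{PQP})=\tau(Pe^{PQP})+\tau(1-P),
\]
using \(e^{PQP}=Pe^{PQP}+(1-P)\). By the first inequality this is at most \(\tau(Pe^QP)+\tau(1-P)\). It remains to show \(\tau((1-P)e^Q(1-P))\ge\tau(1-P)\), which holds because \(e^Q\ge1\) when \(Q\ge0\). Adding the two pieces gives exactly \(\tau(e^Q)\), as required.
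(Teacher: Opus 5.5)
Your proposal is correct and follows the paper's proof essentially step by step. The first inequality is Jensen's trace inequality for the compression to $P\cM P$, proved by splitting $P$ into spectral projections of $PQP$ and applying the scalar Jensen inequality in the normal states $\tau(E_j\,\cdot\,E_j)/\tau(E_j)$; the last claim uses the same splitting $e^{PQP}=Pe^{PQP}+(1-P)$ together with $e^Q\ge1$.

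The only difference is how a general spectrum of $PQP$ is handled. The paper does it directly by an $\varepsilon$-discretisation, using $E_jPQPE_j\ge\lambda_jE_j$ and $E_je^{PQP}\le e^{\lambda_j+\varepsilon}E_j$. Your norm approximation also works, provided the finite-spectrum approximant $T$ is still a compression. You can arrange this by replacing $Q$ with $Q+(T-PQP)$.
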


\begin{proof}
The first inequality is Jensen's trace inequality for the unital compression to $P\cM P$
(cf.\ \cite{BK90}); we give a direct proof. Let $\varepsilon>0$, let $I_1,\dots,I_N$ be disjoint intervals of length $<\varepsilon$ covering
$\sigma(PQP)$, with left endpoints $\lambda_j$, and let $E_j=P\one_{I_j}(PQP)$. The $E_j$ are
projections, they commute with $PQP$, and $\sum_jE_j=P$. Moreover $E_jPQPE_j\ge\lambda_jE_j$ and
$E_je^{PQP}\le e^{\lambda_j+\varepsilon}E_j$. For $E_j\ne0$, $\omega_j=\tau(E_j\,\cdot\,E_j)/\tau(E_j)$ is
a normal state. Jensen's inequality for the spectral measure of $Q$ in $\omega_j$ gives
$\omega_j(e^Q)\ge e^{\omega_j(Q)}$, and $\omega_j(Q)=\tau(E_jPQPE_j)/\tau(E_j)\ge\lambda_j$. Hence
$\tau(Pe^QP)=\sum_j\tau(E_je^QE_j)\ge\sum_je^{\lambda_j}\tau(E_j)\ge e^{-\varepsilon}\sum_j\tau(E_je^{PQP})
=e^{-\varepsilon}\tau(Pe^{PQP})$; let $\varepsilon\to0$. The second inequality holds because $e^Q\ge0$.
If $Q\ge0$, then $e^Q\ge1$ and
$\tau(e^{PQP})=\tau(Pe^{PQP})+\tau(1-P)\le\tau(Pe^QP)+\tau((1-P)e^Q(1-P))=\tau(e^Q)$.
\end{proof}

\begin{proposition}[Items 20 and 36]\label{prop:bikexp}
\begin{enumerate}
\item\label{be:a} $c^{\rm pr}_{\exp}\le c_{\exp}\le\kappa_\cZ$, $b\le\kappa_\cZ$, and
$b\le c_{\exp}\le1+b$.
\item\label{be:b} $b(z\rho)=b(\rho)$ for $z\in\cZ_{++}$, and $b(\rho)\ge b_2(\kappa_\cZ)$; moreover
$b_2(\kappa)>1$ for $\kappa>1$. If $\rho\notin\cZ$, then $c^{\rm pr}_{\exp}(\rho)>1$.
\item\label{be:c} Neither $c_{\exp}$ nor $c^{\rm pr}_{\exp}$ is a function of $\kappa_\cZ$. For
$\kappa>1$ and $\rho_\lambda=\diag(\kappa,1)\oplus\lambda1_2\in M_2(\C)\oplus M_2(\C)$, with
$\tau=\Tr\oplus\Tr$, we have $\kappa_\cZ(\rho_\lambda)=\kappa$ for all $\lambda>0$. As $\lambda\to0$ both constants tend to
their values for $\diag(\kappa,1)$ in $M_2(\C)$, which exceed $b_2(\kappa)$ and $1$
respectively. As $\lambda\to\infty$, $c_{\exp}(\rho_\lambda)\to b_2(\kappa)$ and
$c^{\rm pr}_{\exp}(\rho_\lambda)\to1$.
\item\label{be:d} Let $c_T$ denote $c_{\exp}$ with $Q$ restricted by $0\le Q\le T$. Then
$c_T=1+\gamma_{\exp}T+O(T^2)$ as $T\to0$, where
$\gamma_{\exp}(\rho)=\sup_P\tau((P\rho P-\rho)_+)/\tau(\rho)$, and $\gamma_{\exp}>0$ for $\rho\notin\cZ$.
For $\rho_\lambda$ as in \ref{be:c},
$\gamma_{\exp}(\rho_\lambda)=\gamma_{\exp}(\diag(\kappa,1))\,(\kappa+1)/(\kappa+1+2\lambda)$.
\end{enumerate}
\end{proposition}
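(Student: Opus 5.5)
The plan is to derive \ref{be:a} from Lemma~\ref{lem:bk} and \eqref{eq:sandwich}, in the same way as the upper bounds of Theorems~\ref{thm:modulus} and~\ref{thm:tik}. Lemma~\ref{lem:bk} applied to the finite traces $\tau(\one_R\,\cdot)$, $R$ a central projection, and then approximation of $u=\ceil\rho$ by positive combinations of central projections, give $\tau(ue^{PQP})\le\tau(ue^Q)$ for $Q\ge0$. Hence
\[
\varphi(e^{PQP})\le\tau(ue^{PQP})\le\tau(ue^Q)\le\kappa_\cZ\tau(le^Q)\le\kappa_\cZ\varphi(e^Q).
\]
For $b$ we use the same chain with $Pe^{PQP}\ge0$; this element is positive because $P$ commutes with $PQP$. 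The first inequality of Lemma~\ref{lem:bk} then gives $b\le\kappa_\cZ$ for all $Q\in\cM_{sa}$. The inequality $c^{\rm pr}_{\exp}\le c_{\exp}$ is trivial. For $b\le c_{\exp}$, shift $Q$ to $Q+s\ge0$, which leaves $b$ unchanged, and use $\varphi(Pe^{PQP})\le\varphi(e^{PQP})$. For $c_{\exp}\le1+b$, write $e^{PQP}=Pe^{PQP}+(1-P)$ and use $\varphi(1-P)\le\varphi(1)\le\varphi(e^Q)$ for $Q\ge0$.

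For \ref{be:b}, the rescaling is $\sigma_z(P,Q)=(P,Q+\ln z)$. Since $z$ is central, $e^{P(Q+\ln z)P}=e^{PQP}e^{P\ln z}$, and $Pe^{P\ln z}=zP$. So $Pe^{P(Q+\ln z)P}=zPe^{PQP}$, and also $e^{Q+\ln z}=ze^Q$. Because $\exp(0)\ne0$, the lower bound $b\ge b_2(\kappa_\cZ)$ needs a modified transfer. We use the corner $\pi$ of Lemma~\ref{lem:corner}, set $P=\pi(P_0)$ and $Q=\pi(Q_0)-s(1-e)$, and let $s\to\infty$. The contribution of $1-e$ to $\varphi(e^Q)$ is then $e^{-s}\varphi(1-e)\to0$, and $Pe^{PQP}$ lives under $e$. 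Finally let $\varepsilon\to0$, as in the proof of Theorem~\ref{thm:transfer}\ref{t:low}.

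For $b_2(\kappa)>1$ and $c^{\rm pr}_{\exp}>1$ we perturb. Take $P$ rank one in $M_2$ and $Q=T\,q q^*$ with $T$ small, or $Q$ a projection, and expand to first order. Via the corner this reduces to showing that $\tau((P\rho P-\rho)Q)>0$ for suitable $P,Q$. That in turn reduces to the positive part $(P\rho P-\rho)_+$ being nonzero, which is the statement $\gamma_{\exp}>0$ in \ref{be:d}.

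For \ref{be:d}, expand $e^{PQP}=1+PQP+O(T^2)$ and $e^Q=1+Q+O(T^2)$. This gives
\[
\frac{\varphi(e^{PQP})}{\varphi(e^Q)}=1+\frac{\tau((P\rho P-\rho)Q)}{\tau(\rho)}+O(T^2),
\]
with the $O(T^2)$ uniform in $P$. Over $0\le Q\le T$ the supremum of $\tau((P\rho P-\rho)Q)$ is $T\tau((P\rho P-\rho)_+)$, attained at $Q=T\,\one_{(0,\infty)}(P\rho P-\rho)$. To see that $\gamma_{\exp}>0$ for $\rho\notin\cZ$, note that $\tau(P\rho P-\rho)=-\tau((1-P)\rho(1-P))\le0$. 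The required positivity then follows from the matrix example behind Theorem~\ref{thm:proj}\ref{p:a}, in which $P\rho P\not\le\rho$, together with the corner construction. For $\rho_\lambda$ the supremum splits over the two summands. On the scalar summand, $P\rho P-\rho=-\lambda(1-P)\le0$, so the positive part comes only from the first block, while $\tau(\rho_\lambda)=\kappa+1+2\lambda$. This gives the stated factor $(\kappa+1)/(\kappa+1+2\lambda)$.

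Part \ref{be:c} is where I expect the main difficulty. The equality $\kappa_\cZ(\rho_\lambda)=\kappa$ is immediate from Remark~\ref{rem:blocks}. For $\lambda\to0$, the ratios converge uniformly to the $M_2$ ratios, since the second block's contribution to $\varphi$ is $O(\lambda)$ and $\varphi(e^Q)\ge\varphi(1)$ is bounded below. The $M_2$ value of $c_{\exp}$ exceeds $b_2$: we add the term from $1-P$, using a nonzero $1-P$ in an optimiser of $b_2$. The limit as $\lambda\to\infty$ is the delicate step. There I would split each ratio over the blocks and use that on the second block the functional is a trace, so Lemma~\ref{lem:bk} gives $\lambda\Tr(e^{P_2Q_2P_2})\le\lambda\Tr(e^{Q_2})$.
\begin{itemize}
\item For projections $Q$ this forces the ratio to tend to $1$: the second block dominates both numerator and denominator.
\item For $Q\ge0$ one shows that large $Q_1$ on the first block is needed to compete. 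After the shift invariance this recovers exactly $b_2(\kappa)$: the $1-P$ part becomes negligible relative to $e^{Q_1}$, and $Q_2=0$.
\end{itemize}
Matching upper bounds come from \ref{be:a} applied blockwise together with the same domination estimate.
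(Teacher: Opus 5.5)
The proposal has a genuine gap in part \ref{be:b}, at the two strict inequalities $b_2(\kappa)>1$ and $c^{\rm pr}_{\exp}>1$; the gap then carries into \ref{be:c}. Parts \ref{be:a} and \ref{be:d}, the rescaling and the transferred bound $b\ge b_2(\kappa_\cZ)$ in \ref{be:b}, and the block bookkeeping in \ref{be:c} follow the paper.

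\emph{The bound $b_2(\kappa)>1$.} Your first-order expansion in a small $Q$ is the expansion for $c_{\exp}$. Its numerator $e^{PQP}$ contains the summand $1-P$, so the expansion does not transfer to $b$. For $P$ of rank one in $M_2$ and $Q=T\,qq^*$,
\[
\frac{\varphi(Pe^{PQP})}{\varphi(e^Q)}=\frac{\varphi(P)}{\varphi(1)}+O(T),
\]
which stays bounded away below $1$. For $P=1$ the ratio is exactly $1$ for every $Q$. Shift invariance does not help, and $c_{\exp}>1$ gives only $b>0$ through $c_{\exp}\le1+b$. What is missing is a configuration that suppresses the $(1-P)$-part of $\varphi(e^Q)$, i.e.\ $Q$ very negative off $P$. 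Take $P=|w\rangle\langle w|$, $Q=-\mu|v_2\rangle\langle v_2|$ and $x=|\langle w,v_2\rangle|^2$. The optimum over $\mu$ is \eqref{eq:bratio}, reached at $\mu=\ln((1-x)\beta_2/(x\beta_1))\approx|\ln x|\to\infty$. With $w=\sqrt{1-x}\,v_1+\sqrt x\,e^{i\chi}v_2$ and $v_1$ not an eigenvector of $\rho$, one gets $R=1+2\beta_1^{-1}|\langle\rho v_1,v_2\rangle|\sqrt x+O(x|\ln x|)>1$. This is not a perturbation of a small $Q$: a gain of order $\sqrt x$ beats a loss of order $x|\ln x|$.

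\emph{The bound $c^{\rm pr}_{\exp}>1$.} Here $Q$ is a projection, so it is not small. Moreover, $(P\rho P-\rho)_+\ne0$ does not by itself give projections with $\varphi(e^{PQP})>\varphi(e^Q)$. From $e^t\le1+(e-1)t$ on $[0,1]$ one gets only $\varphi(e^{PQP})-\varphi(e^Q)\le(e-1)\tau((P\rho P-\rho)Q)$, which is the wrong direction. The small parameter must be the angle between $P$ and $Q$; at $P=Q$ the ratio is $1$. Let $D$ be the compressed density, $q=|v\rangle\langle v|$ with $v$ not an eigenvector of $D$, and $p=|w\rangle\langle w|$ with $w=\sqrt{1-x}\,v+\sqrt x\,e^{i\chi}v'$. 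Then $pqp=(1-x)p$ and
\[
\Tr(De^{pqp})-\Tr(De^q)=(e^{1-x}-1)\langle Dw,w\rangle-(e-1)\langle Dv,v\rangle=2(e-1)\eta_0\sqrt x+O(x),
\]
with $\eta_0=|\langle Dv,v'\rangle|>0$. The corner transfers this, because $1-\pi(1)$ adds the same amount to both sides.

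\emph{Consequences for \ref{be:c}.} The upper bound as $\lambda\to\infty$ absorbs $\lambda N_2\le\lambda D_2$ into $b_2(\kappa)\lambda D_2$, so it needs $b_2(\kappa)>1$. The claim that $c^{\rm pr}_{\exp}$ is not a function of $\kappa_\cZ$ needs $c^{\rm pr}_{\exp}(\diag(\kappa,1))>1$. There are also two smaller problems in \ref{be:c}.
\begin{enumerate}
\item For $c_{\exp}$ the ratios do not converge uniformly as $\lambda\to0$, because $Q_2\ge0$ is unbounded. With $P_1=Q_1=0$, $P_2$ of rank one and $Q_2=L1_2$, the ratio tends to $\frac12$ as $L\to\infty$ for every fixed $\lambda$. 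Use instead the mediant bound $(N_1+\lambda N_2)/(D_1+\lambda D_2)\le\max\{N_1/D_1,1\}$, which follows from $N_2\le D_2$ (Lemma~\ref{lem:bk}).
\item ``An optimiser of $b_2$'' needs an existence argument, since $Q$ ranges over a noncompact set. After optimising over $\mu$, $R$ is continuous on a compact set of pairs of unit vectors. Rank-two $P$ gives only the value $1<b_2(\kappa)$, so the maximum is attained at some $0\ne P\ne1$.
\end{enumerate}
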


\begin{proof}
\ref{be:a} Let $E$ be a central projection. Lemma~\ref{lem:bk} in $E\cM$, with the trace
$\tau(E\,\cdot)$, gives $\tau(EPe^{PQP})\le\tau(Ee^Q)$, and $\tau(Ee^{PQP})\le\tau(Ee^Q)$ if $Q\ge0$.
Approximating $u=\ceil\rho$ by positive combinations of central projections, the same holds with
$u$ in place of $E$. Since $Pe^{PQP}=Pe^{PQP}P\ge0$, \eqref{eq:sandwich} gives
$\varphi(Pe^{PQP})\le\tau(uPe^{PQP})\le\tau(ue^Q)\le\kappa_\cZ\tau(le^Q)\le\kappa_\cZ\varphi(e^Q)$, and the
same argument applies to $e^{PQP}$. For $s\ge\|Q\|$ we have $Q+s\ge0$ and
$e^{P(Q+s)P}=(1-P)+e^sPe^{PQP}$, so
\begin{equation}\label{eq:shift}
\frac{\varphi(e^{P(Q+s)P})}{\varphi(e^{Q+s})}=\frac{\varphi(Pe^{PQP})}{\varphi(e^Q)}+e^{-s}\,\frac{\varphi(1-P)}{\varphi(e^Q)},
\end{equation}
which gives $b\le c_{\exp}$. Finally, for $Q\ge0$ we have $e^Q\ge1$, so
$\varphi(e^{PQP})=\varphi(1-P)+\varphi(Pe^{PQP})\le\varphi(1)+b\,\varphi(e^Q)\le(1+b)\varphi(e^Q)$.

\ref{be:b} Since $z$ commutes with $P$ and $Q$, $P(Q+\ln z)P=PQP+(\ln z)P$ with commuting
summands. Hence $Pe^{P(Q+\ln z)P}=zPe^{PQP}$; also $e^{Q+\ln z}=ze^Q$. So (R) holds with
$\sigma_z(P,Q)=(P,Q+\ln z)$. For the lower bound let $\pi$, $E_\pi=\pi(1)$, $t$ and $\varepsilon$ be as
in the proof of Theorem~\ref{thm:transfer}\ref{t:low}, so that $\varphi\circ\pi=t\Tr(R_\varepsilon\,\cdot)$.
For a projection $p\in M_2(\C)$ and $q\in M_2(\C)_{sa}$ put $P=\pi(p)$ and
$Q_L=\pi(q)-L(1-E_\pi)$. Then $Pe^{PQ_LP}=\pi(pe^{pqp})$ and $e^{Q_L}=\pi(e^q)+e^{-L}(1-E_\pi)$;
let $L\to\infty$ and then $\varepsilon\to0$.

To see that $b_2(\kappa)>1$, let $\rho=\diag(\kappa,1)$. Every pair $(P,Q)$ in $M_2(\C)$ with $P$ of
rank one is, up to a shift of $Q$, of the form $P=|w\rangle\langle w|$ and
$Q=-\mu|v_2\rangle\langle v_2|$, where $\mu\in\R$ and $v_1,v_2$ is an orthonormal basis. Put
$\beta_i=\langle\rho v_i,v_i\rangle$. With $x=|\langle w,v_2\rangle|^2$ the ratio is
$e^{-\mu x}\langle\rho w,w\rangle/(\beta_1+e^{-\mu}\beta_2)$. Its supremum over $\mu$ is
\begin{equation}\label{eq:bratio}
R(w,v_1)=\langle\rho w,w\rangle\,x^x(1-x)^{1-x}\beta_1^{x-1}\beta_2^{-x},
\end{equation}
attained at $e^{-\mu}=x\beta_1/((1-x)\beta_2)$ if $0<x<1$; for $x\in\{0,1\}$, $R=1$. Now choose
$v_1$ not an eigenvector of $\rho$ and $w=\sqrt{1-x}\,v_1+\sqrt x\,e^{i\chi}v_2$ with a suitable
$\chi$. Then $\langle\rho w,w\rangle=\beta_1+2\sqrt{x(1-x)}\,|\langle\rho v_1,v_2\rangle|+O(x)$, so
$R=1+2\beta_1^{-1}|\langle\rho v_1,v_2\rangle|\sqrt x+O(x|\ln x|)>1$ for small $x>0$.

To see that $c^{\rm pr}_{\exp}(\rho)>1$ for $\rho\notin\cZ$, let first $D\in M_2(\C)_{++}$ be
non-scalar. Choose a unit vector $v$ that is not an eigenvector of $D$, a unit vector $v'\perp v$,
and put $\eta_0=|\langle Dv,v'\rangle|>0$. For $x\in(0,1)$ let
$w=\sqrt{1-x}\,v+\sqrt x\,e^{i\chi}v'$ with $\re(e^{i\chi}\langle Dv',v\rangle)=\eta_0$. Take
$p=|w\rangle\langle w|$ and $q=|v\rangle\langle v|$. Then $pqp=(1-x)p$, and
\[
\Tr(De^{pqp})-\Tr(De^q)=(e^{1-x}-1)\langle Dw,w\rangle-(e-1)\langle Dv,v\rangle=2(e-1)\eta_0\sqrt x+O(x),
\]
which is $>0$ for small $x$. Apply this to $D=\diag(\alpha,\beta')$, where $\alpha=\ceil\rho(\omega_0)$
and $\beta'=\flr\rho(\omega_0)$ are as in Lemma~\ref{lem:corner}, and fix such $p,q$. By continuity
$\Tr(R_\varepsilon e^{pqp})>\Tr(R_\varepsilon e^q)$ for small $\varepsilon$. For $P=\pi(p)$ and
$Q=\pi(q)$ we have $e^{PQP}=\pi(e^{pqp})+(1-E_\pi)$ and $e^Q=\pi(e^q)+(1-E_\pi)$, so
\[
\frac{\varphi(e^{PQP})}{\varphi(e^Q)}=\frac{t\Tr(R_\varepsilon e^{pqp})+\varphi(1-E_\pi)}{t\Tr(R_\varepsilon e^q)+\varphi(1-E_\pi)}>1.
\]

\ref{be:c} $\kappa_\cZ(\rho_\lambda)=\kappa$ by Proposition~\ref{prop:char}\ref{ch:prop}, and
$\varphi=(\Tr\oplus\Tr)(\rho_\lambda\,\cdot)$. For
$P=P_1\oplus P_2$ and $Q=Q_1\oplus Q_2$ write the ratio as $(N_1+\lambda N_2)/(D_1+\lambda D_2)$. Then
$N_2\le D_2$ by Lemma~\ref{lem:bk}, and $D_2=\Tr e^{Q_2}\ge2$. The ratio is therefore at most
$\max\{N_1/D_1,1\}$. On the other hand, taking $P_2=Q_2=0$ gives
$(N_1+2\lambda)/(D_1+2\lambda)\to N_1/D_1$ as $\lambda\to0$. This proves the limits as $\lambda\to0$.
As $\lambda\to\infty$, for $Q\ge0$ we have $N_1\le\varphi_1(1)+b_2(\kappa)D_1$ by \ref{be:a} in
$M_2(\C)$. Since $b_2(\kappa)>1$, the ratio is at most $b_2(\kappa)+(\kappa+1)/(2\lambda)$; and
$c_{\exp}(\rho_\lambda)\ge b(\rho_\lambda)\ge b_2(\kappa)$ by \ref{be:a} and \ref{be:b}. For
projections $N_1\le e\,\varphi_1(1)$, so the ratio is at most $1+e(\kappa+1)/(2\lambda)$, while $P=1$
gives the value $1$. It remains to compare the limits. First,
$c^{\rm pr}_{\exp}(\diag(\kappa,1))>1$ by \ref{be:b}. Second,
$R$ in \eqref{eq:bratio} is continuous on the compact set of pairs of unit vectors $(w,v_1)$, and
rank-two $P$ gives the value $1<b_2(\kappa)$. So $b_2(\kappa)$ is attained at some $(P,Q)$ with
$0\ne P\ne1$, and then \eqref{eq:shift} gives $c_{\exp}(\diag(\kappa,1))>b_2(\kappa)$.

\ref{be:d} For $0\le Q\le T$, $e^{PQP}=1+PQP+O(T^2)$ and $e^Q=1+Q+O(T^2)$ uniformly in $P$. So
the ratio is $1+\tau(Q(P\rho P-\rho))/\tau(\rho)+O(T^2)$, and
$\sup_{0\le Q\le T}\tau(Q(P\rho P-\rho))=T\,\tau((P\rho P-\rho)_+)$. If $\rho\notin\cZ$, then
$P\rho P\not\le\rho$ for some $P$ by Theorem~\ref{thm:pap}, so $\gamma_{\exp}>0$. For $\rho_\lambda$ and
$P=P_1\oplus P_2$ we have $P\rho_\lambda P-\rho_\lambda=(P_1\rho_1P_1-\rho_1)\oplus\lambda(P_2-1)$ with
$\rho_1=\diag(\kappa,1)$. The second summand is $\le0$, and $\tau(\rho_\lambda)=\kappa+1+2\lambda$, which
gives the formula for $\gamma_{\exp}(\rho_\lambda)$.
\end{proof}

By \ref{be:d}, even the slope $\gamma_{\exp}$ depends on the weights of the central summands, not
only on $\kappa_\cZ$.

\begin{remark}\label{rem:expnum}
Numerically (not used in the proofs), for $\rho=\diag(\kappa,1)$ in $M_2(\C)$ the pair
$(c_{\exp},b)$ is $(1.0405,1.0178)$, $(1.1697,1.1030)$, $(1.5351,1.4203)$ and $(5.6545,5.5296)$ for
$\kappa=2$, $4$, $10$ and $100$. For $\kappa=10$ the supremum $c_{\exp}$ is reached at
$\|Q\|\approx4$, and $\gamma_{\exp}\approx0.2448$. In $M_3(\C)$ with $\kappa=10$, $c_{\exp}$ is $1.5232$,
$1.4552$ and $1.5015$ for the spectra $(10,1,1)$, $(10,10,1)$ and $(10,\sqrt{10},1)$. In all three
cases, and in $M_2(\C)\oplus M_2(\C)$, $b=1.4203=b_2(10)$. This suggests $b=b_2(\kappa_\cZ)$.
\end{remark}

\begin{remark}[Items 20 and 36]\label{rem:item20}
Item~20 is the constant $c^{\rm pr}_{\exp}$ with $P$ and $Q$ interchanged, so $V(\rho)=c^{\rm pr}_{\exp}(\rho)-1$ on
$M_n$. On $M_2$ every density of contrast $\delta$ is a multiple of a unitary conjugate of
$\diag(\kappa,1)$, so Theorem~\ref{thm:projP} gives $c^{\rm pr}_{\exp}(\diag(\kappa,1))=1+V_2(\delta)$
explicitly. Proposition~\ref{prop:bikexp}\ref{be:c} shows that on $M_2\oplus M_2$ the constant of
$\rho_\lambda=\diag(\kappa,1)\oplus\lambda1_2$ moves from this value ($\lambda\to0$) to $1$ ($\lambda\to\infty$),
although $\kappa_\cZ(\rho_\lambda)=\kappa$ is fixed. This is the dilution by other blocks that occurs
whenever $\cN$ contains $\varphi(1)$ (Remark~\ref{rem:dilute}). The same holds for item~36, and the
computed envelope of item~15 decreases with $n$ for the same reason (Section~\ref{sec:numerics}).
\end{remark}

\subsubsection*{The Peierls--Bogoliubov and Araki--Lieb--Thirring inequalities}
Bikchentaev \cite{bikchentaev2011pb} characterised tracial functionals by two inequalities. The first is the
Peierls--Bogoliubov inequality
\[
\varphi(e^{A+B})\ge\varphi(e^A)\exp\bigl(\varphi(Be^A)/\varphi(e^A)\bigr)
\]
for self-adjoint $A,B$. The second is the Araki--Lieb--Thirring inequality
\[
\varphi((B^{1/2}AB^{1/2})^{rq})\le\varphi((B^{r/2}A^rB^{r/2})^q)
\]
for $A,B\ge0$, with exponents $r>1$, $q>0$ (see \cite{bikchentaev2011pb} for the precise quantifiers). For
non-tracial $\varphi$ the number
$\varphi(Be^A)$ need not be real; we use its real part. With $|\varphi(Be^A)|$ instead, \ref{pb:a} below remains true, since
$|\varphi(Be^A)|\ge\re\varphi(Be^A)$. We also consider the symmetric form with
$\omega_A(B)=\varphi(e^{A/2}Be^{A/2})/\varphi(e^A)$, which coincides with the first form for traces.
Put
\[
c_{\rm PB}(\rho)=\sup_{A,B\in\cM_{sa}}\frac{\varphi(e^A)\exp(\omega_A(B))}{\varphi(e^{A+B})},\qquad
c_{r,q}(\rho)=\sup_{A,B\in\cM_+}\frac{\varphi((B^{1/2}AB^{1/2})^{rq})}{\varphi((B^{r/2}A^rB^{r/2})^q)}.
\]

\begin{proposition}[Items 16 and 37]\label{prop:pb}
\begin{enumerate}
\item\label{pb:a} For $\rho\in\cZ$ the supremum below (and $c_{\rm PB}$) equals $1$: $\varphi$ is then a trace, so the
Peierls--Bogoliubov inequality holds~\cite{bikchentaev2011pb} (and $\varphi(Be^A)$ is real), with equality at $B=0$. If $\rho\notin\cZ$, then
$\sup\varphi(e^A)\exp(\re\varphi(Be^A)/\varphi(e^A))/\varphi(e^{A+B})=+\infty$, already over
$A\in\cM_{sa}$ and $B\le0$.
\item\label{pb:b} $c_{\rm PB}(z\rho)=c_{\rm PB}(\rho)$ for $z\in\cZ_{++}$, and
$b(\rho)\le c_{\rm PB}(\rho)\le\kappa_\cZ(\rho)$. In particular $c_{\rm PB}(\rho)>1$ for $\rho\notin\cZ$.
\end{enumerate}
\end{proposition}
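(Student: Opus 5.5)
For part \ref{pb:a} the plan is an explicit construction. For central $\rho$ the functional $\varphi$ is a trace, so the claim reduces to the classical Peierls--Bogoliubov inequality of \cite{bikchentaev2011pb}, with equality at $B=0$. For $\rho\notin\cZ$ the aim is to make $\re\varphi(Be^A)$ \emph{positive and large} while $B\le0$ keeps $\varphi(e^{A+B})$ bounded.

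We work first in $M_2$ with $\rho=\diag(\kappa,1)$, $\kappa>1$, and then transfer through the corner homomorphism $\pi$ of Lemma~\ref{lem:corner}. To put $\pi$ into $\cM$, we set $A=\pi(a)-L(1-e)$ and $B=\pi(b)$ and let $L\to\infty$, exactly as in the proof of Proposition~\ref{prop:bikexp}\ref{be:b}. Both exponentials then split as $\pi(\cdot)+e^{-L}(1-e)$, and the $M_2$ ratio for $R_\varepsilon\to\diag(\alpha,\beta)$ is recovered in the limit.

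In $M_2$ take $A=s\,|w\rangle\langle w|$ with $w$ a unit vector that is not an eigenvector of $\rho$. Then $e^A=1+(e^s-1)ww^*$. Take $B=-t\,|x\rangle\langle x|$ with $x\perp w$ and $t>0$. Since $x^*w=0$,
\[
\varphi(Be^A)=-t\,\Tr(\rho\,xx^*(1+(e^s-1)ww^*))=-t\langle\rho x,x\rangle-t(e^s-1)\langle\rho w,x\rangle\langle x,w\rangle .
\]
The last term vanishes, so this choice gives nothing. Instead we take $x$ oblique, with $x=\cos\theta\,w+\sin\theta\,w_\perp$. Then
\[
\re\varphi(Be^A)=-t\bigl(\langle\rho x,x\rangle+(e^s-1)\cos\theta\,\re\langle\rho w,x\rangle\bigr).
\]
We choose the phase and the angle so that $\re\langle\rho w,x\rangle<0$ while $\langle x,w\rangle$ is small. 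This is possible because $w$ is not an eigenvector, so $\rho w$ has a component along $w_\perp$. For large $s$ the bracket becomes negative, so $\re\varphi(Be^A)\approx c\,t\,e^s$ with $c>0$, whereas $\varphi(e^A)\approx e^s\langle\rho w,w\rangle$. The exponent $\re\varphi(Be^A)/\varphi(e^A)$ is therefore about $c't$, linear in $t$.

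Next we control the denominator, and this is the main point to check. Since $B\le0$, we have $A+B\le A$, hence $\varphi(e^{A+B})\le\varphi(e^A)$ by operator monotonicity of $\exp$ on the positive side. That monotonicity fails, so instead we use $\Tr e^{A+B}\le\Tr e^A$ together with $\varphi\le\|\rho\|\Tr$. This makes the ratio at least $\kappa^{-1}\exp(c't)$, up to constants, which tends to $\infty$ as $t\to\infty$. It remains to verify that a consistent choice of $s,\theta$ keeps $c'>0$ uniformly in $t$. This holds because $c'$ depends only on $s$, $\theta$ and the phase, not on $t$, since $\re\varphi(Be^A)$ is linear in $t$.

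For part \ref{pb:b}, (R) holds with $\sigma_z(A,B)=(A+\ln z,B)$. Indeed, $\ln z$ is central, so $e^{A+\ln z}=ze^A$ and $e^{A+\ln z+B}=ze^{A+B}$; the ratio $\omega_A(B)$ is unchanged, and both $\varphi(e^A)$ and $\varphi(e^{A+B})$ pick up the same factor $z$ inside $\varphi$.

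For the upper bound, the tracial Peierls--Bogoliubov inequality in the corner $E\cM$ for central projections $E$, extended to $u=\ceil\rho$ as in Proposition~\ref{prop:bikexp}\ref{be:a}, is not directly applicable because of the nonlinear $\exp(\omega_A)$. Instead we use the trace Golden--Thompson/Jensen route
\[
\varphi(e^A)e^{\omega_A(B)}\le\varphi(e^{A/2}e^Be^{A/2}),
\]
by Jensen's inequality for the state $\varphi(e^{A/2}\cdot e^{A/2})/\varphi(e^A)$. This gives
\[
\varphi(e^{A/2}e^Be^{A/2})\le\|\rho\|\Tr(e^{A/2}e^Be^{A/2})=\|\rho\|\tau(e^Ae^B)\le\|\rho\|\,\tau(e^{A+B})\cdot(\ldots).
\]
Golden--Thompson, however, goes the wrong way here. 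We therefore apply Jensen in the trace directly: tracial Peierls--Bogoliubov gives $\tau(\rho'e^{A+B})\ge\ldots$ only for a trace, which is a difficulty. The cleaner plan is to apply the tracial inequality to the central-weighted trace $\tau_u=\tau(u\,\cdot)$, comparing $\varphi(e^A)\le\tau_u(e^A)$ and $\tau_u(e^{A/2}Be^{A/2})$ with $\varphi(e^{A/2}Be^{A/2})$. The latter comparison is the delicate point, since $B$ is not positive. We handle it by shifting $B\mapsto B-\|B\|$, which changes both sides of the ratio by the same factor. With $B\le0$ the sandwich gives
\[
\omega^\varphi_A(B)\le\frac{\tau_l(e^{A/2}Be^{A/2})}{\tau_u(e^A)},
\]
and combining with $\tau_u\le\kappa_\cZ\tau_l$ should yield the bound $\kappa_\cZ$. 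Finally, the lower bound $b\le c_{\rm PB}$ follows by taking $A$ of the form $\ln$ of a compression limit: put $A=L(P-1)$ and $B=Q$ with $L\to\infty$, so that $e^A\to P$ and $e^{A+B}$ approaches $Pe^{PQP}$ in the manner of \eqref{eq:shift}, identifying the limit with $b$.
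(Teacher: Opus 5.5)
Your part (a) and your verification of (R) are correct. For your choice of $x$, $\re\varphi(Be^A)=-t\langle(\rho\circ e^A)x,x\rangle$, with $\rho\circ Y=\frac12(\rho Y+Y\rho)$, is positive and linear in $t$. The monotonicity of $\tau\circ\exp$ (which you end up using) together with $\varphi\le\|\rho\|\tau$ bounds the denominator, and the corner transfer works. The paper runs the same mechanism directly in $\cM$, with $B=-sF$ for $F$ the negative spectral projection of $\rho\circ Y$.

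\emph{The upper bound in (b) has a genuine gap.} After $B\mapsto B-c$ you have $e^{A/2}Be^{A/2}\le0$. The comparisons $\varphi\le\tau_u$, $\tau_l\le\varphi$ and $\tau_u\le\kappa_\cZ\tau_l$ then only compare the exponent multiplicatively: $\omega^\varphi_A(B)\le\kappa_\cZ^{-1}\omega^{\tau_l}_A(B)$, where $\omega^{\tau_l}_A(B)=\tau_l(e^{A/2}Be^{A/2})/\tau_l(e^A)\le0$. Combined with the Peierls--Bogoliubov inequality for the trace $\tau_l$, this gives only
\[
\frac{\varphi(e^A)\,e^{\omega_A(B)}}{\varphi(e^{A+B})}\le\kappa_\cZ\exp\bigl((1-\kappa_\cZ^{-1})\,|\omega^{\tau_l}_A(B)|\bigr).
\]
Here $|\omega^{\tau_l}_A(B)|$ becomes arbitrarily large under the very shift you used: the quotient is shift invariant, but the comparison is not.

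The missing idea is the variational form of the tracial inequality, applied to a different splitting of $A+B$. Normalise $\varphi(e^A)=1$ and put $\sigma=e^{A/2}\rho e^{A/2}$, so that $\tau(\sigma)=1$ and $\omega_A(B)=\tau(\sigma B)$.
\begin{enumerate}
\item Peierls--Bogoliubov for $\tau$, applied to $\ln\sigma$ and $A+B-\ln\sigma$, gives $\ln\tau(e^{A+B})\ge\tau(\sigma(A+B))-\tau(\sigma\ln\sigma)$.
\item From $\sigma\le\|\rho\|e^A$ and operator monotonicity of $\ln$, $\tau(\sigma(\ln\sigma-A))\le\ln\|\rho\|$.
\item Finally $\varphi(e^{A+B})\ge\|\rho^{-1}\|^{-1}\tau(e^{A+B})$.
\end{enumerate}
Together these bound the logarithm of the quotient by $\ln\kappa(\rho)$. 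Then (R) with Lemma~\ref{lem:resc} turns $\kappa$ into $\kappa_\cZ$, and $\tau_u,\tau_l$ are not needed.

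\emph{Your lower bound $b\le c_{\rm PB}$ fails as stated.} With $A=-L(1-P)$ and $B=Q$ the quotient tends to $\varphi(P)\exp(\varphi(PQP)/\varphi(P))/\varphi(Pe^{PQP})$. This is at most $1$ by Jensen's inequality for the state $\varphi(P\,\cdot\,P)/\varphi(P)$ on $P\cM P$. The operator $Pe^{PQP}$ has landed in the denominator, whereas $b=\sup\varphi(Pe^{PQP})/\varphi(e^Q)$ needs it in the numerator and $e^Q$ in the denominator.

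Take instead $A_L=PQP-L(1-P)$ and $B_L=Q-A_L$. Then $e^{A_L+B_L}=e^Q$ exactly and $e^{A_L}=Pe^{PQP}+e^{-L}(1-P)$. Moreover $\omega_{A_L}(B_L)\to0$, because the $P\,\cdot\,P$ corner of $e^{A_L/2}B_Le^{A_L/2}$ vanishes and the remaining terms are $O(Le^{-L/2})$. So the quotient tends to $\varphi(Pe^{PQP})/\varphi(e^Q)$. Only with this construction does $c_{\rm PB}>1$ follow from $b>1$ (Proposition~\ref{prop:bikexp}\ref{be:b}).
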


\begin{proof}
\ref{pb:a} For $Y\in\cM_{++}$ and $B\in\cM_{sa}$, $\re\varphi(BY)=\tau(B(\rho\circ Y))$ with
$\rho\circ Y=\frac12(\rho Y+Y\rho)$. We first find $Y\in\cM_{++}$ with $\rho\circ Y\not\ge0$. Take
$\eta_1,\eta_2$ and $\pi$ from Lemma~\ref{lem:corner}\ref{c:b} for $a=\rho$. The compression of
$\rho$ to $\mathcal K=\operatorname{span}\{\eta_1,\eta_2\}$ is $D=\diag(a_1,a_2)$ with $a_1\ne a_2$. Let
$g=(1,1)/\sqrt2$. Since $g$ and $Dg$ are linearly independent, there is $c\in\C^2$ with
$\langle c,g\rangle=1$ and $\langle c,Dg\rangle=-1$. For $\varepsilon>0$ put
$Y=\pi(|g\rangle\langle g|+\varepsilon1)+\varepsilon(1-E_\pi)\in\cM_{++}$, where $E_\pi=\pi(1)$, and $\xi=c_1\eta_1+c_2\eta_2$.
Since $Y\xi\in\mathcal K$,
$\langle(\rho\circ Y)\xi,\xi\rangle=\re\langle Y\xi,\rho\xi\rangle=\re\langle yc,Dc\rangle$ with
$y=|g\rangle\langle g|+\varepsilon1$, and this equals $-1+O(\varepsilon)<0$. Now let $F\ne0$ be the
spectral projection of $\rho\circ Y$ for $(-\infty,0)$, $\vartheta=-\tau(F(\rho\circ Y))>0$, $A=\ln Y$
and $B_s=-sF$ with $s>0$. Then $\re\varphi(B_sY)=s\vartheta$. Since $A+B_s\le A$, the monotonicity
of $\tau\circ\exp$ \cite{FK86} gives
$\varphi(e^{A+B_s})\le\|\rho\|\tau(e^{A+B_s})\le\|\rho\|\tau(e^A)$. Hence the ratio is at least
$\varphi(Y)e^{s\vartheta/\varphi(Y)}/(\|\rho\|\tau(Y))\to\infty$ as $s\to\infty$.

\ref{pb:b} (R) holds with $\sigma_z(A,B)=(A+\ln z,B)$, since
\[
ze^A=e^{A+\ln z},\qquad ze^{A/2}Be^{A/2}=e^{(A+\ln z)/2}Be^{(A+\ln z)/2},\qquad ze^{A+B}=e^{A+\ln z+B}.
\]
For the upper bound
note that the ratio is unchanged under $A\mapsto A+t$, so we may assume $\varphi(e^A)=1$. Put
$Y=e^A$, $\sigma=Y^{1/2}\rho Y^{1/2}$ and $H=A+B$. Then $\tau(\sigma)=1$ and
$\omega_A(B)=\tau(\sigma B)=\tau(\sigma H)-\tau(\sigma\ln Y)$. Also $\sigma\le\|\rho\|Y$, so
$\ln\sigma\le\ln\|\rho\|+\ln Y$ by the operator monotonicity of $\ln$. The Peierls--Bogoliubov
inequality for $\tau$ \cite{bikchentaev2011pb}, applied to $\ln\sigma$ and $H-\ln\sigma$, gives
$\ln\tau(e^H)\ge\tau(\sigma H)-\tau(\sigma\ln\sigma)$. Moreover $\varphi(e^H)\ge\|\rho^{-1}\|^{-1}\tau(e^H)$.
Hence the logarithm of the ratio is at most
$\tau(\sigma\ln\sigma)-\tau(\sigma\ln Y)+\ln\|\rho^{-1}\|\le\ln\kappa(\rho)$, and Lemma~\ref{lem:resc}
replaces $\kappa$ by $\kappa_\cZ$. For the lower bound let $P\ne0$ be a projection, $Q\in\cM_{sa}$,
$A_L=PQP-L(1-P)$ and $B_L=Q-A_L$. Then $e^{A_L+B_L}=e^Q$ and $e^{A_L}=Pe^{PQP}+e^{-L}(1-P)$. Also
$\omega_{A_L}(B_L)\to0$ as $L\to\infty$: in $e^{A_L/2}(Q-PQP+L(1-P))e^{A_L/2}$ the corner
$P\,\cdot\,P$ vanishes and the other terms are $O(Le^{-L/2})$. So the ratio tends to
$\varphi(Pe^{PQP})/\varphi(e^Q)$. Finally $b>1$ for $\rho\notin\cZ$ by
Proposition~\ref{prop:bikexp}\ref{be:b}.
\end{proof}

\begin{proposition}[Items 7 and 38]\label{prop:alt}
Let $r\ge1$ and $q>0$. Then $c_{r,q}(z\rho)=c_{r,q}(\rho)$ for $z\in\cZ_{++}$, and
$c_{r,q}(\rho)\le\kappa_\cZ$. If $r>1$ and $\rho\notin\cZ$, then $c_{r,q}(\rho)>1$ for every $q>0$, and
$\sup_{q>0}c_{r,q}(\rho)=\kappa_\cZ$.
\end{proposition}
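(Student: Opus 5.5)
The proof splits into four parts: invariance under central rescaling, an upper bound through the tracial Araki--Lieb--Thirring inequality and the sandwich \eqref{eq:sandwich}, strict violation from the Bikchentaev characterisation transferred to $M_2$, and a family of examples whose ratio tends to $\kappa_\cZ$ as $q\to\infty$.

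\emph{Rescaling.} Condition (R) holds with $\sigma_z(A,B)=(A,z^{1/(rq)}B)$. Central $z$ commutes with everything, so
$(B'^{1/2}AB'^{1/2})^{rq}=z(B^{1/2}AB^{1/2})^{rq}$ and $(B'^{r/2}A^rB'^{r/2})^q=z(B^{r/2}A^rB^{r/2})^q$ for $B'=z^{1/(rq)}B$.
Lemma~\ref{lem:resc} then gives $c_{r,q}(z\rho)=c_{r,q}(\rho)$. This does not quite fit Definition~\ref{def:viol}, because the outer functions are not of the form required there, but (R) and Lemma~\ref{lem:resc} apply verbatim.

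\emph{Upper bound.} Put $X=(B^{1/2}AB^{1/2})^{rq}$ and $Y=(B^{r/2}A^rB^{r/2})^q$, both in $\cM_+$. By \eqref{eq:sandwich}, $\varphi(X)\le\tau(uX)$ and $\tau(lY)\le\varphi(Y)$, with $u\le\kappa_\cZ l$.
It therefore suffices to show $\tau(wX)\le\tau(wY)$ for every positive central $w$. For $w=\one_E$ with $E$ a central projection, this is Kosaki's Araki--Lieb--Thirring inequality in the finite algebra $E\cM$ with trace $\tau(E\,\cdot)$. We need the general form with outer exponent $q$, $\tau((B^{1/2}AB^{1/2})^{rq})\le\tau((B^{r/2}A^rB^{r/2})^q)$, which is in \cite{Kos92}.
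General positive central $w$ then follow by approximation with positive combinations of central projections, exactly as in the proofs of Proposition~\ref{prop:monop} and Theorem~\ref{thm:tik}. This gives
\[
\varphi(X)\le\tau(uX)\le\tau(uY)\le\kappa_\cZ\tau(lY)\le\kappa_\cZ\varphi(Y),
\]
so $c_{r,q}\le\kappa_\cZ$.

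\emph{Strict violation.} For $\rho\notin\cZ$ and $r>1$, Bikchentaev's characterisation \cite{bikchentaev2011pb}, applied in $M_2$ to the non-tracial $\diag(\kappa_\cZ,1)$, gives $c^{M_2}_{r,q}>1$. I will quote it only if its quantifiers are for each fixed $(r,q)$; otherwise I supply the perturbation below.
To transfer this to $\cM$, argue as in Theorem~\ref{thm:transfer}\ref{t:low}: embed via $\pi'(x,s)=\pi(x)+s(1-e)$, use that $\pi'$ commutes with functional calculus, let $s\to0$ and then $\varepsilon\to0$.
As a backup I can produce the violation directly in $M_2$. Take $A$ and $B$ rank-one projections onto $a,b$ with overlap $c=|\langle a,b\rangle|$. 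Then $(B^{1/2}AB^{1/2})^{rq}=c^{2rq}B$ and $(B^{r/2}A^rB^{r/2})^q=c^{2q}B$. This is only a scalar multiple of the same $B$, with ratio $c^{2q(r-1)}\le1$, so rank-one pairs give nothing; the perturbation must be genuinely noncommutative. I will try $B=\one$-plus-rank-one and expand to first order.

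\emph{Supremum over $q$; the hardest step.} I want examples whose ratio tends to $\kappa$ as $q\to\infty$ in $M_2$ with $\rho=\diag(\kappa,1)$; transfer then gives $\kappa_\cZ$.
For large $q$, both $\varphi(X)$ and $\varphi(Y)$ are dominated by the top eigenvector of $B^{1/2}AB^{1/2}$ and of $B^{r/2}A^rB^{r/2}$ respectively, and the ratio is roughly
\[
\frac{\lambda_1^{rq}\langle\rho x,x\rangle}{\mu_1^{q}\langle\rho y,y\rangle},
\]
where $x,y$ are the top eigenvectors and $\lambda_1,\mu_1$ the top eigenvalues.
So I need $A,B$ with $\lambda_1^r=\mu_1$ (asymptotically), $x\approx e_1$ and $y\approx e_2$. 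Equality of top eigenvalues fails in general, since ALT forces $\lambda_1^r\le\mu_1$, with the power growth deciding. The plan is to choose a one-parameter family in which $\mu_1/\lambda_1^r\to1$ while the two eigenvectors stay far apart, for example with $A$ nearly degenerate. Then take $q\to\infty$ along the family so that $(\lambda_1^r/\mu_1)^q\to1$.
This balancing of the eigenvalue gap against the eigenvector separation is the delicate point. The ratio $\kappa$ itself will only be approached, not attained.
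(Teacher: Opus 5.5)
Your rescaling step and your upper bound are correct. Localising Kosaki's inequality to central projections and combining it with \eqref{eq:sandwich} is a valid alternative to the paper's shorter route: $\|\rho^{-1}\|^{-1}\tau\le\varphi\le\|\rho\|\tau$ on $\cM_+$ gives $\kappa$, and Lemma~\ref{lem:resc} replaces it by $\kappa_\cZ$. Contrary to your remark, $(B^{1/2}AB^{1/2})^{rq}$ and $(B^{r/2}A^rB^{r/2})^q$ are admissible expressions, so Theorem~\ref{thm:transfer}\ref{t:low} applies directly.

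The genuine gap is the supremum over $q$. Sending $q\to\infty$ is the wrong end, and the balancing you propose cannot succeed. For rank-one $A=|a\rangle\langle a|$ with $\|a\|=1$, the ratio is exactly
\[
\Bigl(\frac{\|u\|^{2r}}{\|v\|^{2}}\Bigr)^{q}\,\frac{g(u)}{g(v)},\qquad u=B^{1/2}a,\quad v=B^{r/2}a,\quad g(x)=\frac{\langle\rho x,x\rangle}{\|x\|^2},
\]
since $\langle Ba,a\rangle=\|u\|^2$ and $\langle B^ra,a\rangle=\|v\|^2$. To make $g(u)/g(v)$ close to $\kappa$ in $M_2$, $u$ and $v$ must be nearly orthogonal. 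In the eigenbasis of $B$ their coordinate ratios carry the same phases and differ by the factor $(\beta_2/\beta_1)^{(r-1)/2}$. So near-orthogonality happens only when one ratio tends to $0$ and the other to $\infty$, and then $\|v\|^2/\|u\|^{2r}\to\infty$. The Jensen factor therefore kills the ratio unless $q\to0$.

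Rank-two $A$ do not rescue the plan either; take $r=2$, say. The spectra of $X$ and $Y$ are $\{\lambda_i^{2q}\}$ and $\{s_i^{2q}\}$, where $\lambda_i$ and $s_i$ are the eigenvalues and singular values of $C=AB$. A ratio near $\kappa$ forces $X$ and $Y$ to be asymptotically rank one with $\Tr X/\Tr Y\to1$. This means $(\lambda_2/\lambda_1)^{2q}\to0$ and $(s_1/\lambda_1)^{2q}\to1$. Schur's identity $s_1^2+s_2^2=\lambda_1^2+\lambda_2^2+|n|^2$, together with $s_1s_2=\lambda_1\lambda_2$, then gives $|n|/(\lambda_1-\lambda_2)\to0$. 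So $C$ becomes nearly normal, and its left eigenvector, right eigenvector and top right singular vector align. Two consequences follow:
\begin{itemize}
\item the top eigenvector of $B^{1/2}AB^{1/2}$, which is $B^{1/2}$ times the right eigenvector and lies between the left and right eigenvectors, aligns with them;
\item the top eigenvector of $BA^2B=C^*C$ also aligns with them.
\end{itemize}
But these two vectors were supposed to tend to $e_1$ and $e_2$. Near-equality of the top eigenvalues in the Araki--Lieb--Thirring inequality, to within $1+o(1/q)$, is incompatible with separated eigenvectors. Hence $c_{2,q}$ stays bounded away from $\kappa$ as $q\to\infty$; the multistart values of Remark~\ref{rem:pbnum} likewise decrease in $q$.

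The missing idea is to let $q\to0$ instead. Then $X$ and $Y$ tend to the support projections $P_u$ and $P_v$, the Jensen factor tends to $1$ for every fixed $B$, and the ratio tends to $g(u)/g(v)$. Take $B=\diag(1,\gamma')$ and $a\propto(1,\mu\gamma'^{-1/2})$, so that $u\propto(1,\mu)$ and $v\propto(1,\mu\gamma'^{(r-1)/2})$. Letting $\mu\to0$ and $\gamma'\to\infty$ with $\mu\gamma'^{(r-1)/2}\to\infty$ gives $g(u)/g(v)\to\kappa$. With Theorem~\ref{thm:transfer}\ref{t:low} and your upper bound, this yields $\sup_qc_{r,q}=\kappa_\cZ$.

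Your strict-violation step is also unfinished. You cannot count on the quantifiers of \cite{bikchentaev2011pb} being per pair $(r,q)$, and your backup is only announced. The rank-one formula above settles it for each fixed $q$. Let $a$ be a unit vector that is not an eigenvector of $\rho$, put $d=(\rho-g(a))a\perp a$, and take $B=1-\epsilon(|d\rangle\langle a|+|a\rangle\langle d|)$. Then $\langle Ba,a\rangle=1$ and $\langle B^ra,a\rangle=1+O(\epsilon^2)$. Moreover $u=a-\frac\epsilon2d+O(\epsilon^2)$ and $v=a-\frac{r\epsilon}2d+O(\epsilon^2)$, so $g(u)/g(v)=1+(r-1)\epsilon\|d\|^2/g(a)+O(\epsilon^2)$. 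The first-order gain in the angle factor beats the second-order Jensen loss.
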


\begin{proof}
(R) holds with $\sigma_z(A,B)=(z^{1/(rq)}A,B)$. The upper bound $\kappa$ follows from Kosaki's
inequality $\tau((B^{1/2}AB^{1/2})^{rq})\le\tau((B^{r/2}A^rB^{r/2})^q)$, valid for all $r\ge1$ and
$q>0$ \cite{Kos92}, together with $\|\rho^{-1}\|^{-1}\tau\le\varphi\le\|\rho\|\tau$ on $\cM_+$; then
apply Lemma~\ref{lem:resc}. All expressions are admissible, so Theorem~\ref{thm:transfer}\ref{t:low}
reduces the remaining claims to $M_2(\C)$ with $\rho=\diag(\kappa,1)$, $\kappa>1$. For a unit vector
$a$, $A=|a\rangle\langle a|$ and $B>0$, we have $(B^{1/2}AB^{1/2})^{rq}=\langle Ba,a\rangle^{rq}P_u$ and
$(B^{r/2}A^rB^{r/2})^q=\langle B^ra,a\rangle^qP_v$, where $P_u,P_v$ are the projections onto
$u=B^{1/2}a$ and $v=B^{r/2}a$. So, with $g(x)=\langle\rho x,x\rangle/\|x\|^2$, the ratio is
$(\langle Ba,a\rangle^r/\langle B^ra,a\rangle)^q\,g(u)/g(v)$. To show $c_{r,q}>1$, let $a$ be a unit
vector that is not an eigenvector of $\rho$, $d=(\rho-g(a))a\ne0$ (so $d\perp a$),
$H=-(|d\rangle\langle a|+|a\rangle\langle d|)$ and $B=1+\epsilon H$ with small $\epsilon>0$. Since
$Ha=-d\perp a$, $\langle Ba,a\rangle=1$ and $\langle B^ra,a\rangle=1+O(\epsilon^2)$. Moreover
$u=a-\frac\epsilon2d+O(\epsilon^2)$ and $v=a-\frac{r\epsilon}2d+O(\epsilon^2)$, so
$g(u)=g(a)-\epsilon\|d\|^2+O(\epsilon^2)$ and $g(v)=g(a)-r\epsilon\|d\|^2+O(\epsilon^2)$. Hence the ratio
is $1+(r-1)\epsilon\|d\|^2/g(a)+O(\epsilon^2)>1$ for small $\epsilon$. For the supremum over $q$ let
$A=|a\rangle\langle a|$ with
$\|a\|=1$, and $B=\diag(1,\gamma')$ with $\gamma'>0$. As $q\to0$ the ratio tends to $g(u)/g(v)$. For
$a\propto(1,\mu\gamma'^{-1/2})$ we get $u\propto(1,\mu)$ and $v\propto(1,\mu\gamma'^{(r-1)/2})$.
Letting $\mu\to0$ and $\gamma'\to\infty$ with $\mu\gamma'^{(r-1)/2}\to\infty$, $g(u)/g(v)$ tends
to $\kappa$.
\end{proof}

\begin{remark}\label{rem:pbnum}
Numerically (not used in the proofs), $c_{\rm PB}=b$ to six digits for $\rho=\diag(\kappa,1)$ in
$M_2(\C)$ with $\kappa=2$, $4$, $10$ and $100$, the common values being $1.0178$, $1.1030$, $1.4203$
and $5.5296$. For $\kappa_\cZ=10$, $c_{\rm PB}=1.4203$ also in $M_3(\C)$ for three spectra and in
$M_2(\C)\oplus M_2(\C)$. For $\kappa=10$ and $(r,q)=(2,1)$, $(2,\frac12)$, $(2,2)$ and $(4,\frac14)$,
multistart optimisation in $M_2(\C)$ gives the lower bounds $c_{r,q}\ge2.99$, $4.04$, $2.15$ and
$5.52$. The near-optimal pairs are close to singular, and these values need not be sharp; for
$(r,q)=(2,1)$ the value $2.99$ is superseded by the proved bound $G^2=3.025$ of
Proposition~\ref{prop:altlow}.
\end{remark}

\noindent Item~16 is the symmetric form, with the normalisation $\cN=\cR$, so $V(\rho)=c_{\rm PB}(\rho)-1$.
On $M_2$ the computed envelope of item~16 (Table~\ref{tab:values}) agrees with $b_2(\kappa)-1$ to within
$3\cdot10^{-3}$ at $\delta=0.1$, $0.5$ and $0.9$ and lies below it; together with Remark~\ref{rem:pbnum} this
supports $c_{\rm PB}=b=b_2(\kappa_\cZ)$. In the search the best points have large $\|A\|$ and $\|B\|$, as in
the lower bound construction of Proposition~\ref{prop:pb}\ref{pb:b}, where $L\to\infty$.

\begin{proposition}[Item 7]\label{prop:altlow}
$G_\cZ(\rho)^2\le c_{2,1}(\rho)\le\kappa_\cZ(\rho)$ in every finite von Neumann algebra. In $M_n(\C)$,
$G^2-1\le V_n\le\kappa-1$ for item~7, and the value $G^2$ of the quotient is approached by the explicit
family in the proof.
\end{proposition}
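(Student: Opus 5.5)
The upper bound $c_{2,1}(\rho)\le\kappa_\cZ(\rho)$ is the case $(r,q)=(2,1)$ of Proposition~\ref{prop:alt}, so only the lower bound $c_{2,1}\ge G_\cZ^2$ needs work. Since the expressions $(X_2^{1/2}X_1X_2^{1/2})^2$ and $X_2X_1^2X_2$ are admissible and the test class is $X_1,X_2\in\cM_+$, Theorem~\ref{thm:transfer}\ref{t:low} reduces this to $M_2(\C)$ with $\rho=\diag(\kappa,1)$. There the claim is that the quotient $\varphi((B^{1/2}AB^{1/2})^2)/\varphi(BA^2B)$ approaches $G^2=(1+\kappa)^2/(4\kappa)$. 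The matrix statements $G^2-1\le V_n\le\kappa-1$ then follow from Remark~\ref{rem:attain} by embedding in the eigenvectors of the extreme eigenvalues.

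For the $M_2$ family I take $A=|a\rangle\langle a|$ rank one, as in the proof of Proposition~\ref{prop:alt}. Then $(B^{1/2}AB^{1/2})^2=\langle Ba,a\rangle^2P_u$ with $u=B^{1/2}a$, and $BA^2B=BAB=|Ba\rangle\langle Ba|$. The quotient is therefore
\[
\frac{\langle Ba,a\rangle^2\,g(B^{1/2}a)}{\|Ba\|^2\,g(Ba)},\qquad g(x)=\frac{\langle\rho x,x\rangle}{\|x\|^2}.
\]
Put $x=B^{1/2}a$. This becomes $\dfrac{\|x\|^4}{\|B^{1/2}x\|^2\langle B^{-1}x,x\rangle}\cdot\dfrac{g(x)}{g(B^{1/2}x)}$. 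The first factor is the reciprocal of a Kantorovich-type expression for $B$, so it is close to $1$ when $B$ is nearly scalar on $x$. The second factor can be as large as $\kappa$, but only when $B^{1/2}$ rotates $x$ strongly.

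I would look for a limit family in which $B$ degenerates, for instance $B=|w\rangle\langle w|+\varepsilon$ with $\varepsilon\to0$. Take $x$ with components along $w$ and $w^\perp$ of comparable order after the rescaling, so that $B^{1/2}x\to w$ while $B^{-1}$ weights the $w^\perp$ component. Writing $x=\alpha w+\beta w^\perp$ with $\beta=t\sqrt\varepsilon$, the first factor tends to $1/(1+t^2)$ and the second to $g(w)^{-1}\cdot g(x)$ with $x\approx w$. That loses everything, so instead I would balance the scales differently: choose $a$ rather than $x$ fixed, $a=w+s w^\perp$. Then $Ba\to w+s\varepsilon w^\perp$ and $B^{1/2}a\to w+s\sqrt\varepsilon w^\perp$. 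With $s\sim\varepsilon^{-1/2}$, $u$ is a fixed tilted vector while $Ba\to w$, and the prefactor $\langle Ba,a\rangle^2/\|Ba\|^2$ stays bounded. Optimising, the quotient should reduce to something like $g(u)\langle\cdot\rangle/g(w)$ times a Cauchy--Schwarz factor whose maximum is the Kantorovich expression $\langle\rho w,w\rangle\langle\rho^{-1}w,w\rangle$ at $w=(1,1)/\sqrt2$, which equals $G^2$, just as in Theorem~\ref{thm:proj}\ref{p:a}.

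The main obstacle is exactly this choice of scaling and the resulting optimisation: identifying a two-parameter degenerate family whose limiting quotient is a Kantorovich expression in closed form, and verifying that it reaches $G^2$ rather than only $G$. I would first parametrise $B=\diag(1,\gamma)$ in a rotated basis and $a$ with a free coordinate. Next I would compute the quotient exactly as a rational function, take $\gamma\to0$ with the coordinate scaled, and maximise the limit, aiming at the value $(\kappa+1)^2/(4\kappa)$ at the $45^\circ$ rotation. Since the supremum is only approached, positive definiteness is restored by $B+\varepsilon'$ and continuity, with the limits taken in the right order as in Proposition~\ref{prop:sgm}.
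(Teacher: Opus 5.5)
Your framework is the paper's. The upper bound is Proposition~\ref{prop:alt} with $(r,q)=(2,1)$, the lower bound is transferred from $M_2(\C)$ by Theorem~\ref{thm:transfer}\ref{t:low}, $A$ is rank one and $B$ degenerates. Your target, a Cauchy--Schwarz minimisation producing the Kantorovich expression at $(1,1)/\sqrt2$, is exactly what the paper's limit gives. But the family itself, which is the whole content of the lower bound, is not supplied, and the scaling you do propose fails.

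Your formula $BA^2B=|Ba\rangle\langle Ba|$ needs $\|a\|=1$. In general $\varphi(BA^2B)=\|a\|^2\langle\rho Ba,Ba\rangle$, which is the factor $\langle B^{-1}x,x\rangle$ you kept in the $x$-form but dropped when you said the prefactor stays bounded. For $B=|w\rangle\langle w|+\varepsilon$ and $a=w+sw^\perp$ with $s=\sigma\varepsilon^{-1/2}$ one has $\langle Ba,a\rangle\approx1+\sigma^2$, $\|Ba\|^2\approx1$ and $\|a\|^2\approx\sigma^2/\varepsilon$, so
\[
\frac{\cL}{\cR}\approx\frac{\varepsilon\,(1+\sigma^2)^2}{\sigma^2}\cdot\frac{g(w+\sigma w^\perp)}{g(w)}\longrightarrow0 .
\]
More generally, suppose $x=B^{1/2}a$ converges to a direction that is not an eigenvector of $B$. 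Then $\|x\|^4$ and $\|B^{1/2}x\|^2$ stay of order one while $\langle B^{-1}x,x\rangle\gtrsim1/\varepsilon$, so the first factor of your $x$-form is $O(\varepsilon)$. Your first attempt fails for the dual reason: near the eigenvector $w$ of the large eigenvalue, $B^{1/2}$ contracts the transverse component. So $x$ and $B^{1/2}x$ have the same limit direction, and the second factor tends to $1$. Neither scaling gives a limit above $1$.

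The rotation must be done by $B^{1/2}$ on a vector close to the eigenvector of the \emph{small} eigenvalue of $B$, and it is $Ba$, not $B^{1/2}a$, that should be the fixed tilted vector. The paper takes $f_1=e_+$, $f_2=e_-$, $B=\eta f_1f_1^*+f_2f_2^*$ and $A=aa^*$ with $a=f_1+s\eta f_2$, $s$ fixed. In your parametrisation this means $w^\perp=f_1$, $w=f_2$, and your $s$ must be of order $\varepsilon^{-1}$, not $\varepsilon^{-1/2}$. Then $B^{1/2}a=\sqrt\eta f_1+s\eta f_2$ has direction tending to $f_1$, and $Ba=\eta y$ with $y=f_1+sf_2$. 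With $\rho_{ij}$ the entries of $\rho=\diag(\kappa,1)$ in the basis $f_1,f_2$,
\[
\cL=\eta^2\rho_{11}+O(\eta^{5/2}),\qquad\cR=(1+s^2\eta^2)\,\eta^2\langle\rho y,y\rangle,
\]
so $\cL/\cR\to\rho_{11}/\langle\rho y,y\rangle$. Minimising $\langle\rho y,y\rangle$ subject to $\langle y,f_1\rangle=1$ is your Cauchy--Schwarz step. It gives $s=-\rho_{12}/\rho_{22}$ and the limit $\langle\rho f_1,f_1\rangle\langle\rho^{-1}f_1,f_1\rangle=(1+\kappa)^2/(4\kappa)=G^2$. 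Since $B>0$ for every $\eta>0$ and the test class of item~7 is $\cM_+$, no regularisation $B+\varepsilon'$ and no order of limits is needed here, unlike item~6.
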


\begin{proof}
The upper bound is Proposition~\ref{prop:alt}. For the lower bound let $\rho=\diag(a,b)$ in $M_2$ and, in the basis
$f_1=e_+$, $f_2=e_-$, write $\rho_{11}=\rho_{22}=\frac{a+b}2$, $\rho_{12}=\frac{a-b}2$ for its entries. Take
$B=\eta f_1f_1^*+f_2f_2^*$ and $A=uu^*$ with $u=f_1+s\eta f_2$. Then $B^{1/2}AB^{1/2}=ww^*$ with $w=\sqrt\eta f_1+s\eta f_2$,
so $\cL=\|w\|^2\,w^*\rho w=\eta^2\rho_{11}+O(\eta^{5/2})$, and $Bu=\eta(f_1+sf_2)$, so $\cR=\|u\|^2\eta^2(\rho_{11}+2s\rho_{12}+s^2\rho_{22})$.
As $\eta\to0$, $\cL/\cR\to \rho_{11}/(\rho_{11}+2s\rho_{12}+s^2\rho_{22})$, which at $s=-\rho_{12}/\rho_{22}$ equals
$\rho_{11}\rho_{22}/(\rho_{11}\rho_{22}-\rho_{12}^2)=(a+b)^2/(4ab)=G(\rho)^2$. So $c_{2,1}^{M_2}(\diag(\kappa,1))\ge G^2$, and
Theorem~\ref{thm:transfer}\ref{t:low} (all expressions are admissible, as in Proposition~\ref{prop:alt})
gives $c_{2,1}(\rho)\ge G_\cZ^2$ for $\rho\notin\cZ$; for central $\rho$ both sides are~$1$.
\end{proof}

\noindent For $\kappa=10$ the bound is $G^2=121/40=3.025$; it exceeds the value $2.99$ found by the multistart
search of Remark~\ref{rem:pbnum}. On $M_n$ we conjecture that $G^2$ is the constant
(Conjecture~\ref{conj:values}).

\section{Gardner's inequality with two contrasts}\label{sec:gardner2}

This section works on matrix algebras and their direct sums. With the operator free, Gardner's
constant is $G_\cZ$ in every finite von Neumann algebra (Theorem~\ref{thm:gardner}); with the contrast of
$|X|$ prescribed as well, the proofs below use the spectral decomposition of finite matrices and
Lemma~\ref{lem:halmos}, and we do not know whether Theorem~A holds in finite von Neumann algebras with
$\delta$ replaced by the noncentrality (the rescaling $X\mapsto zX$ used for condition~(R) changes the
contrast of $|X|$). Theorem~C concerns the unnormalised quantities of~\cite{abed2024} on direct sums of
matrix algebras with the trace $\Tr$, for which no central version is needed.

\subsection{The normalised violation}

For Gardner's inequality both contrasts can be prescribed and the envelope found exactly. It depends on the two contrasts
only through their product.

\begin{theorem}\label{thm:gardnerD}
Let $\rho>0$ and $X$ be in $M_n$, $n\ge2$, with $\OMC(\rho)=\delta$ and $\OMC(|X|)=\delta'$, $\delta,\delta'\in[0,1]$,
$\delta\delta'<1$. Then
\[
  |\varphi_\rho(X)|\le\frac{1}{\sqrt{1-\delta^2\delta'^2}}\,\varphi_\rho(|X|),
\]
and for even $n$ the constant is attained: $V_n(\delta,\delta')=G(\delta\delta')-1=(1-\delta^2\delta'^2)^{-1/2}-1$
for Gardner's inequality. For odd $n=2k+1$,
\[
  V_n(\delta,\delta')=\max_{0\le\gamma\le1}\frac{k\sqrt{S^2+\gamma pq}+\ell\ell'}{k(S+\gamma p)+\ell\ell'}-1,\qquad
  S=\ell+\ell',\ q=(1+\ell)(1+\ell'),\ p=(1-\ell)(1-\ell'),
\]
with $\ell=\frac{1-\delta}{1+\delta}$, $\ell'=\frac{1-\delta'}{1+\delta'}$; it is smaller than $G(\delta\delta')-1$ for
$\delta'<1$ and tends to it as $n\to\infty$. For $\delta'=1$ both give $G-1$, the value of Theorem~\ref{thm:gardner}. The upper bound
$G(\delta\delta')$ holds also for items 2 and~3, with the contrasts of $|X|,|Y|$ and of $|A^*|$.
\end{theorem}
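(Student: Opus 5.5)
The plan is to reduce to two-dimensional blocks, as sketched in the introduction (Remark on the geometry, Figure~\ref{fig:angle}(c)).

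\emph{Reduction.} Write $X=U|X|$ with $U$ unitary. Then $|\varphi_\rho(X)|=|\Tr(U|X|\rho)|\le\||X|\rho\|_1$, so it suffices to show
\[
\||X|\rho\|_1\le G(\delta\delta')\Tr(|X|\rho).
\]
Equality is obtainable by choosing $U$ as the polar part of $(|X|\rho)^*$. By the vertex principle (Lemma~\ref{lem:vertex}), applied to the spectrum of $T=|X|$ with $U$ fixed, we may take $T=\ell'+(1-\ell')P$ two-level. The function is $|{\rm linear}|-c\,{\rm linear}$ and hence convex.

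For the density we want the same reduction, $\rho=b+(a-b)R$. For fixed $T$ the map $\rho\mapsto\||T\rho\|_1-c\Tr(T\rho)$ is convex in the eigenvalues with the eigenbasis fixed, so the same vertex argument applies to $\rho$.

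With two projections $P,R$, Lemma~\ref{lem:halmos} splits $\C^n$ into invariant blocks of dimension $1$ or $2$. Both $T$ and $\rho$ are block diagonal, and
\[
\||T\rho\|_1=\sum_j\||T_j\rho_j\|_1,\qquad \Tr(T\rho)=\sum_j\Tr(T_j\rho_j).
\]
It therefore suffices to bound each block ratio. One-dimensional blocks give ratio $1$.

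\emph{Two-dimensional block.} After normalising, write $\rho=c(1+\delta S)$ and $T=c'(1+\delta'S')$, where $S,S'$ are reflections ($S^2=1$, traceless). Then
\[
T\rho\propto 1+\delta S+\delta'S'+\delta\delta'S'S.
\]
The element $S'S$ is a rotation with eigenvalues $e^{\pm i\alpha}$. I will compute $\Tr(T\rho)=2\Re z$ and $\||T\rho\|_1=2|z|$ with $z=1+\delta\delta'e^{i\alpha}$. The trace-norm identity should follow from $\det(T\rho)=(1-\delta^2)(1-\delta'^2)$ together with $\Tr$. Here I will use that for a $2\times2$ matrix $M$ similar to a positive one, $\|M\|_1^2=\Tr(M^*M)+2|\det M|$, and verify this against $|z|^2$.

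Then $|z|/\Re z=\sec\arg z$. Since $z$ lies on the circle of radius $\delta\delta'<1$ about $1$, we get $\arg z\le\arcsin(\delta\delta')$, which gives $G(\delta\delta')$. This algebraic identity is the step I expect to be the main obstacle; I would try it first in coordinates with $S=\diag(1,-1)$ and $S'$ a reflection at angle $\alpha/2$.

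\emph{Attainment and odd $n$.} For $n=2$, choose $\alpha$ with $\cos\alpha=-\delta\delta'$, so that $\sin\arg z=\delta\delta'$. For even $n$, take direct sums of this block, with $\rho$ having the eigenvalues $a,b$ each $n/2$ times so that $\OMC$ is preserved. $U$ is taken as the polar part.

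For odd $n$ there must be a one-dimensional block, where $\rho$ and $T$ are scalars $\le a$ and $\le1$. The extremal configuration has $k$ identical two-dimensional blocks and one block contributing $\ell\ell'$ (after normalising $a=1$ and the top of $T$ to $1$). Writing the block quantities in terms of $\gamma=(1-\cos\alpha)/2$-type parameters then gives the displayed maximum; I would determine the exact parametrisation by matching the $n=2$ case. This formula is strictly below the even value when $\ell\ell'>0$, and tends to it as $k\to\infty$.

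\emph{Items 2 and 3.} For item~2, use $|X+Y|=U^*X+U^*Y$ as in Theorem~\ref{thm:subadd} and apply the bound to each term. For item~3, use $\Re\varphi(A)\le|\varphi(A^*)|$ as in Corollary~\ref{cor:item3}.
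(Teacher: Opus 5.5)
Your argument follows the paper's route: polar reduction to $\|T\rho\|_1/\Tr(T\rho)$, Lemma~\ref{lem:vertex} for the spectra of $T$ and of $\rho$, Lemma~\ref{lem:halmos}, and a $2\times2$ computation. Your $z=1+\delta\delta'e^{i\alpha}$ is Remark~\ref{rem:geom}, while the proof itself uses the equivalent formulas of Proposition~\ref{prop:k2}. The upper bound, the even-$n$ attainment and items 2 and 3 go through as you describe. The identity $\|M\|_1^2=\Tr(M^*M)+2|\det M|$ holds for every $2\times2$ matrix, so no similarity hypothesis is needed.

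The odd case, however, is asserted rather than proved. Your sentence that the extremal configuration has $k$ identical two-dimensional blocks and one block contributing $\ell\ell'$ is exactly the upper bound $V_n\le\max_\gamma(\cdots)-1$ that has to be shown. Your parametrisation is also off: the squared overlap of the leading eigenvectors is $\gamma=(1+\cos\alpha)/2$. With top eigenvalues normalised to $1$, $2wt=q/2$, $\delta\delta'=p/q$ and $q-p=2S$, your $2wt\,|z|$ and $2wt\,\Re z$ become $s(\gamma)=\sqrt{S^2+\gamma pq}$ and $t(\gamma)=S+\gamma p$.

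Two arguments are then missing.
\begin{itemize}
\item \emph{One-dimensional summands.} What matters is not that their scalars are $\le a$ and $\le1$. After the two-level reduction, each such summand contributes the same amount, one of $1,\ell,\ell',\ell\ell'$, to $\|T\rho\|_1$ and to $\Tr(T\rho)$. So together they add a common term $\mu\ge\ell\ell'$ to numerator and denominator.
\item \emph{A linearisation.} You need an argument that disposes at once of fewer than $k$ two-dimensional blocks, of several one-dimensional ones and of unequal angles. Suppose $\bigl(\sum_{j\le k'}s(\gamma_j)+\mu\bigr)/\bigl(\sum_{j\le k'}t(\gamma_j)+\mu\bigr)>c\ge1$ with $k'\le k$. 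Then $\sum_j\bigl(s(\gamma_j)-c\,t(\gamma_j)\bigr)>(c-1)\mu\ge(c-1)\ell\ell'\ge0$. Hence $\max_\gamma(s-ct)>0$ and $k\max_\gamma(s-ct)>(c-1)\ell\ell'$. This says that the quotient $(ks+\ell\ell')/(kt+\ell\ell')$ of the statement exceeds $c$ at a maximiser of $s-ct$.
\end{itemize}
The second point is what the paper means by applying the argument of Lemma~\ref{lem:vertex} to $s-ct$. Together with your example and $s/t\le G(\delta\delta')$, these two arguments give the odd formula and its comparison with the even value.
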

\begin{proof}
Write $X=UT$ with $T=|X|$. The supremum of $|\Tr(\rho UT)|=|\Tr(U\,T\rho)|$ over unitaries $U$ is $\|T\rho\|_1$, so we must bound
$\|T\rho\|_1/\Tr(T\rho)$. For fixed eigenbases, $\|T\rho\|_1-c\Tr(T\rho)$ is a convex function of the eigenvalues of $T$ (a norm of a
linear function minus a linear one), and likewise of the eigenvalues of $\rho$. By Lemma~\ref{lem:vertex}, applied to $T$ and
then to $\rho$, we may assume $\rho=a(\ell+(1-\ell)P)$ and $T=t(\ell'+(1-\ell')Q)$ with projections $P,Q$ and $\ell=\frac{1-\delta}{1+\delta}$,
$\ell'=\frac{1-\delta'}{1+\delta'}$. By Lemma~\ref{lem:halmos}, $\C^n$ splits into subspaces of dimension $1$ or $2$ invariant
under $P$ and $Q$, hence under $\rho$ and $T$, and $\|T\rho\|_1$ and $\Tr(T\rho)$ are sums over them. On a one-dimensional summand the
two contributions are equal. On a two-dimensional one, $\rho$ and $T$ have the eigenvalues $x_1,x_2$ and $y_1,y_2$, namely
$a,a\ell$ and $t,t\ell'$; with $S=x_1y_2+x_2y_1$, $q=(x_1+x_2)(y_1+y_2)$, $p=(x_1-x_2)(y_1-y_2)$ and $\gamma$ the squared overlap
of the leading eigenvectors, the proof of Proposition~\ref{prop:k2} gives $\|T\rho\|_1^2=S^2+\gamma pq$ and $\Tr(T\rho)=S+\gamma p$.
The function $\gamma\mapsto(S^2+\gamma pq)/(S+\gamma p)^2$ is maximal at $\gamma=S/q\in[0,1]$, with the value
\[
  \frac{q^2}{4S(S+p)}=\frac{(x_1+x_2)^2(y_1+y_2)^2}{4(x_1y_2+x_2y_1)(x_1y_1+x_2y_2)}=\frac1{1-\delta^2\delta'^2},
\]
the last step by inserting $x_2/x_1=\ell$, $y_2/y_1=\ell'$. A quotient of sums is at most the largest quotient of the
summands. Equality for even $n$: $n/2$ copies of the $2\times2$ configuration $\rho=\mathrm{diag}(1,\ell)$, $T$ with the
eigenvalues $1,\ell'$ at $\gamma=S/q$, and $U$ the adjoint polar factor of $T\rho$. For odd $n$ at least one summand is
one-dimensional and contributes the same amount $\mu\ge\ell\ell'$ (normalising the largest eigenvalues of $\rho$ and $T$
to $1$) to $\|T\rho\|_1$ and to $\Tr(T\rho)$; the quotient $(\sum_js_j+\mu)/(\sum_jt_j+\mu)$, where $s_j\ge t_j$ are the
contributions of at most $k$ two-dimensional summands, is largest for $k$ summands and $\mu=\ell\ell'$, and, by the argument
of Lemma~\ref{lem:vertex} applied to $s-ct$, for the same $\gamma$ in every summand, which gives the stated maximum; it is attained by
$\rho=\mathrm{diag}(1,\ell)^{\oplus k}\oplus\ell$ and $T$ correspondingly. Items 2 and~3 reduce to the first inequality as in
the proofs of Theorem~\ref{thm:subadd} and Corollary~\ref{cor:item3}, since $|U^*X|=|X|$ for unitary $U$.
\end{proof}

\noindent Thus the normalised violation of Gardner's inequality is $\frac12\delta^2\delta'^2+O(\delta^4\delta'^4)$: it
vanishes to second order in each contrast and is symmetric in the density and the operator. For a Gibbs state at inverse
temperature $\beta$ and an operator $|X|=e^{-\beta' K}$, the constant is $(1-\tanh^2(\beta\Delta E/2)\tanh^2(\beta'\Delta
E'/2))^{-1/2}$, where $\Delta E$ and $\Delta E'$ are the spectral spreads of $H$ and $K$. The two-dimensional computation in
the proof has a transparent form, which explains the dependence on the product.

\begin{remark}[Geometric form]\label{rem:geom}
On $\C^2$ let $\rho=w(1+\delta S)$ and $T=t(1+\delta'S')$ with $w,t>0$ and symmetries $S,S'$ of trace zero. The unitary $S'S$ has
determinant $1$, hence eigenvalues $e^{\pm i\alpha}$, and $\Tr(SS')=\Tr(S'S)=2\cos\alpha$. Then
\[
  \Tr(T\rho)=2wt\,\Re z,\qquad \|T\rho\|_1=2wt\,|z|,\qquad z=1+\delta\delta'e^{i\alpha}.
\]
The first identity follows from $\Tr(T\rho)=wt\Tr(1+\delta S+\delta'S'+\delta\delta'S'S)$. For the second, $\|M\|_1^2=\Tr(M^*M)+
2|\det M|$ for $2\times2$ matrices, and $\Tr(\rho^2T^2)=w^2t^2\Tr\bigl((1+\delta^2+2\delta S)(1+\delta'^2+2\delta'S')\bigr)=
2w^2t^2\bigl((1+\delta^2)(1+\delta'^2)+4\delta\delta'\cos\alpha\bigr)$ and $|\det T\rho|=w^2t^2(1-\delta^2)(1-\delta'^2)$ add up to
$4w^2t^2|z|^2$. So Gardner's quotient $\sup_U|\Tr(\rho UT)|/\Tr(\rho T)$ equals $|z|/\Re z=\sec\arg z$. As the relative position of
$\rho$ and $T$ varies, $z$ runs over the circle of radius $\delta\delta'$ about $1$, whose points have $|\arg z|\le\theta$,
$\sin\theta=\delta\delta'$, with equality for $\cos\alpha=-\delta\delta'$ (Figure~\ref{fig:angle}(c)). This gives the
$2\times2$ case of Theorem~\ref{thm:gardnerD}, with the constant $\sec\theta=(1-\delta^2\delta'^2)^{-1/2}$. Since
$\delta=\sin\psi(\rho)$ and $\delta'=\sin\psi(T)$, the angle $\theta$ is determined by $\sin\theta=\sin\psi(\rho)\sin\psi(T)$.
\end{remark}

\subsection{\texorpdfstring{The unnormalised violations of~\cite{abed2024}}{The unnormalised violations of Abed et al.}}\label{ssec:unnorm}
We return to the unnormalised quantity of~\cite{abed2024}, in which both operators are normalised to
norm one and both contrasts are prescribed. Let $\nu=\sum_c\lfloor k_c/2\rfloor$ be the largest number of mutually
orthogonal two-dimensional subspaces that fit into the blocks of $\M$.

\begin{thmC}
Let $\M=\bigoplus_{c=1}^CM_{k_c}(\mathbb K)$, $\mathbb K\in\{\R,\C\}$, with $\nu\ge1$. For $X\ge0$ and $Z$ in $\M$ with
$\|X\|=\|Z\|=1$, $\OMC(X)=\delta<1$ and $\OMC(|Z|)=\delta'$,
\[
  \sup\bigl(|\Tr(XZ)|-\Tr(X|Z|)\bigr)=\frac{\nu\,\delta^2\delta'^2}{(1+\delta)(1+\delta')},
\]
and the supremum is attained (Theorem~\ref{thm:gardner2}). The two other functionals of~\cite{abed2024},
$\Tr|XY|-\Tr(|X||Y|)$ and $\Tr(Y|X|Y)-\Tr|YXY|$, have the sharp bounds of Proposition~\ref{prop:exp2} and
Theorem~\ref{thm:exp3}.
\end{thmC}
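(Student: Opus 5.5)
The plan is to reduce to two-level operators and two-dimensional blocks, as in the proof of Theorem~\ref{thm:gardnerD}. The difference is that here the quantity is a difference, not a quotient, so the blocks add rather than being compared.

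\emph{Step 1: eliminate the unitary.} Write $Z=UT$ with $T=|Z|$ and $U$ unitary (orthogonal if $\mathbb K=\R$), both in $\M$. Then $|\Tr(XZ)|=|\Tr(U\,TX)|$. The supremum over unitaries $U\in\M$ of this is $\|TX\|_1$: take the adjoint polar factor of $TX$ blockwise. In the real case $TX$ is a real matrix and its singular value decomposition is real, so the supremum is the same. We must therefore maximise
\[
F=\|TX\|_1-\Tr(TX)
\]
over $X\ge0$ and $T\ge0$ with $\|X\|=\|T\|=1$, $\OMC(X)=\delta$ and $\OMC(T)=\delta'$.

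\emph{Step 2: reduce to two-level operators.} Fix the eigenbases. Then $F$ is a trace norm of a linear function of the eigenvalues minus a linear function of them, so it is convex in the eigenvalues of $T$, and separately in those of $X$. The admissible eigenvalues have maximum $1$ and minimum $\ell=\frac{1-\delta}{1+\delta}$ (resp.\ $\ell'$). As in Lemma~\ref{lem:vertex}, but now applied to $F$ itself, the supremum is reached on two-level spectra. So we may take $X=\ell+(1-\ell)P$ and $T=\ell'+(1-\ell')Q$ with projections $P,Q\in\M$.

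\emph{Step 3: decompose into blocks.} Inside each summand $M_{k_c}(\mathbb K)$, Lemma~\ref{lem:halmos} splits the space into invariant subspaces of dimension $1$ or $2$. The real case needs the real version of the two-projection decomposition, which holds with $\chi\in\{0,\pi\}$. On a one-dimensional summand $TX$ is a nonnegative scalar and contributes $0$ to $F$. A two-dimensional summand has $X$-eigenvalues $1,\ell$ and $T$-eigenvalues $1,\ell'$. Let $\gamma$ be the squared overlap of the leading eigenvectors. By the formulas used in the proof of Theorem~\ref{thm:gardnerD}, the summand contributes
\[
f(\gamma)=\sqrt{S^2+\gamma pq}-(S+\gamma p),
\]
with $S=\ell+\ell'$, $q=(1+\ell)(1+\ell')$ and $p=(1-\ell)(1-\ell')$. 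Setting $f'(\gamma)=0$ gives $\sqrt{S^2+\gamma pq}=q/2$. Since $q/2-S=p/2$, the maximum value is $p^2/(4q)$. Substituting $1-\ell=2\delta/(1+\delta)$ and $1+\ell=2/(1+\delta)$ turns this into $\delta^2\delta'^2/((1+\delta)(1+\delta'))$.

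\emph{Step 4: count blocks and show attainment.} There are at most $\nu$ two-dimensional summands, which gives the upper bound. For attainment, place $\nu$ copies of the optimal $2\times2$ configuration in mutually orthogonal two-dimensional subspaces inside the blocks. On the leftover one-dimensional parts, put eigenvalues $1$ or $\ell$ (resp.\ $1$ or $\ell'$) so that both norms stay $1$ and both contrasts stay exact; since $\ell$ and $1$ already occur in every copy, any choice works. Take $U$ to be the adjoint polar factor of $TX$.

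\emph{Main obstacle.} I expect the hard part to be checking that the critical point $\gamma^*=(q^2/4-S^2)/(pq)$ lies in $[0,1]$ and is realisable in the real case. The value $\gamma^*\ge0$ is clear, since $q/2\ge S$. For $\gamma^*\le1$ I would first verify the inequality $q^2/4\le S^2+pq$, i.e.\ $(q/2-p)^2\le S^2+p^2$, by direct expansion in $\ell$ and $\ell'$. Real vectors realise every overlap $\gamma\in[0,1]$, since for $2\times2$ real symmetric matrices the overlap of the leading eigenvectors is $\cos^2$ of the angle between them. If $\gamma^*>1$ at some parameters, the maximum moves to the endpoint $\gamma=1$ and the stated formula would need revisiting; so this inequality is the step I would settle first.
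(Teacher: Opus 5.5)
Your argument for the Gardner part is the paper's proof of Theorem~\ref{thm:gardner2} together with Proposition~\ref{prop:k2}. The steps are the same: trace-norm duality removes $U$, convexity in the eigenvalues gives two-level spectra, Lemma~\ref{lem:halmos} splits into summands of dimension $1$ and $2$, each $2\times2$ summand is bounded by $p^2/(4q)$, and $\nu$ optimal copies give attainment.

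The step you flag as the main obstacle causes no difficulty.
\begin{itemize}
\item For the upper bound you never needed $\gamma^*\in[0,1]$. The function $f$ is concave in $\gamma$ with its unconstrained maximum at $\gamma^*$, so $f(\gamma)\le f(\gamma^*)=p^2/(4q)$ for every $\gamma\in[0,1]$.
\item For attainment, $q^2/4-S^2=(\frac q2-S)(\frac q2+S)=\frac p2(\frac q2+S)$. This gives $\gamma^*=\frac14+\frac S{2q}\in[\frac14,\frac12]$, since $2S\le q$. This is exactly the angle in Proposition~\ref{prop:k2}.
\item The paper avoids the derivative altogether by using $\sqrt r\le r/q+q/4$; the $\gamma$-terms then cancel. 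This also covers the case $p=0$ (i.e.\ $\delta\delta'=0$), where your formula for $\gamma^*$ is $0/0$.
\end{itemize}

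Your proposal does not address the last sentence of the statement. The sharp bounds for $\Tr|XY|-\Tr(|X||Y|)$ and $\Tr(Y|X|Y)-\Tr|YXY|$ are separate results. The first is Proposition~\ref{prop:exp2}, proved by von Neumann's trace inequality and rearrangement. The second is Theorem~\ref{thm:exp3}. A complete proof must at least cite them.

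Your block method would not prove the second bound as it stands, for three reasons:
\begin{itemize}
\item $X$ is only self-adjoint there.
\item The modulus is reduced by monotonicity (Lemma~\ref{lem:exp3}(a)) to an operator with $|X'|=1-(1-\ell)ee^*$, not by convexity.
\item The eigenvector $e$ need not lie in a single summand of the Halmos decomposition of the sign projection of $X$ and $Q$.
\end{itemize}
That non-block-diagonal case is what Proposition~\ref{prop:mblocks} is needed for.
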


The unnormalised violation behaves like the normalised one.

In~\cite[Sect.~5]{abed2024} the violation of Gardner's inequality was studied without normalisation: for $X\ge0$ with
$\|X\|=1$ (the density) and $Z$ with $\|Z\|=1$, the quantity
\[
  z(X,Z)=|\Tr(XZ)|-\Tr(X|Z|)
\]
was sampled against $\OMC(X)$ and $\OMC(|Z|)$. Writing $Z=U|Z|$ with a unitary (orthogonal, in the real case) $U$,
$z=|\Tr(XUY)|-\Tr(XY)$ with $Y=|Z|\ge0$, and $\sup_Uz=\|YX\|_1-\Tr(XY)$ by the duality of the trace norm, the supremum being
attained at the polar unitary of $YX$, which is real if $X,Y$ are. We first solve the $2\times2$ case.

\begin{proposition}\label{prop:k2}
Let $X,Y\ge0$ in $M_2(\C)$ have eigenvalues $x_1\ge x_2\ge0$ and $y_1\ge y_2\ge0$, $x_1,y_1>0$. For every unitary $U$,
\[
  |\Tr(XUY)|-\Tr(XY)\ \le\ \frac{(x_1-x_2)^2(y_1-y_2)^2}{4(x_1+x_2)(y_1+y_2)} .
\]
Let $S=x_1y_2+x_2y_1$, $q=(x_1+x_2)(y_1+y_2)$, and $\gamma=|\langle e_1,f_1\rangle|^2$ for unit eigenvectors $e_1$ of $X$ for
$x_1$ and $f_1$ of $Y$ for $y_1$. If $x_1>x_2$ and $y_1>y_2$, the bound is attained iff $|\Tr(XUY)|=\|YX\|_1$ and
$\gamma=\frac14+\frac S{2q}$; it is attained by real $X,Y,U$.
\end{proposition}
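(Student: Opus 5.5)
First reduce to the trace norm. By the duality $\sup_U|\Tr(XUY)|=\sup_U|\Tr(U\,YX)|=\|YX\|_1$, it suffices to bound $\|YX\|_1-\Tr(XY)$. The plan is to compute both quantities in terms of the four eigenvalues and the overlap $\gamma$, and then maximise over $\gamma\in[0,1]$.

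\textbf{Coordinates.} Put $e_1,e_2$ for the eigenbasis of $X$ and write $f_1=\sqrt\gamma\,e_1+\sqrt{1-\gamma}\,e_2$ up to phases. Then
\[
\Tr(XY)=x_1(\gamma y_1+(1-\gamma)y_2)+x_2((1-\gamma)y_1+\gamma y_2)=S+\gamma p,
\qquad p=(x_1-x_2)(y_1-y_2).
\]
For the trace norm we use the $2\times2$ identity $\|M\|_1^2=\Tr(M^*M)+2|\det M|$, already used in Remark~\ref{rem:geom}. Here $|\det(YX)|=x_1x_2y_1y_2$. A direct computation of $\Tr(X^2Y^2)$ in the same coordinates should give $\|YX\|_1^2=S^2+\gamma pq$, which is exactly the formula quoted in the proof of Theorem~\ref{thm:gardnerD}. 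I would verify this by expanding $\Tr(X^2Y^2)=S_2+\gamma p_2$ with the squared eigenvalues, and then checking the identity of polynomials in $\gamma$.

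\textbf{Optimisation over $\gamma$.} The problem becomes maximising
\[
h(\gamma)=\sqrt{S^2+\gamma pq}-S-\gamma p
\]
over $\gamma\in[0,1]$. This function is concave. Its critical point satisfies $pq/(2\sqrt{S^2+\gamma pq})=p$, that is $\sqrt{S^2+\gamma pq}=q/2$, so $\gamma=(q^2/4-S^2)/(pq)$. Next I use $q-2S=x_1y_1+x_2y_2-x_1y_2-x_2y_1+\dots$; more precisely $q^2-4S^2=(q-2S)(q+2S)$ with $q-2S=p$. This gives
\[
\gamma=\frac{q+2S}{4q}=\frac14+\frac S{2q},
\]
which lies in $[0,1]$ since $S\le q/2$. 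The maximal value is then
\[
\frac q2-S-\gamma p=\frac{p}{2}-\frac{p}{4}-\frac{pS}{2q}=\frac{p(q-2S)}{4q}=\frac{p^2}{4q},
\]
which is the stated bound.

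\textbf{Equality and realisation.} If $p>0$, strict concavity makes the critical point the unique maximiser. Equality therefore requires both the duality step to be tight, i.e.\ $|\Tr(XUY)|=\|YX\|_1$, and $\gamma$ to take the stated value. For real $X,Y$ (real rotated eigenvectors), the polar unitary of $YX$ is real orthogonal, which gives attainment by real $X,Y,U$.

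The only real obstacle is the algebra of $\Tr(X^2Y^2)$ and the simplification $q-2S=p$; the remaining steps are routine.
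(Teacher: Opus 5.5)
Your proof is correct and follows the paper's argument: the same duality reduction to $\|YX\|_1-\Tr(XY)$, the same formulas $\Tr(XY)=S+\gamma p$ and $\|YX\|_1^2=S^2+\gamma pq$ (your expansion of $\Tr(X^2Y^2)$ does give this), and the same extremal $\gamma$ and value $p^2/(4q)$. The only difference is that the paper replaces your maximisation over $\gamma$ with the tangent-line bound $\sqrt r\le r/q+q/4$, in which the $\gamma$-terms cancel, so it needs neither the check $\gamma^*\in[0,1]$ nor a separate treatment of $p=0$; that case is trivial ($h\equiv0$) but is not covered by your critical-point argument as written.
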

\begin{proof}
Let $p=(x_1-x_2)(y_1-y_2)$, so that $q-2S=p$. First, $|\Tr(XUY)|=|\Tr(YXU)|\le\|YX\|_1$. Expanding in the eigenbases,
$\Tr(XY)=\sum_{ij}x_iy_j|\langle e_i,f_j\rangle|^2=S+\gamma p$, since the matrix $(|\langle e_i,f_j\rangle|^2)$ is doubly
stochastic. The singular values of $YX$ satisfy $s_1^2+s_2^2=\Tr(X^2Y^2)$ and $s_1s_2=|\det YX|=x_1x_2y_1y_2$, so
$\|YX\|_1^2=\Tr(X^2Y^2)+2x_1x_2y_1y_2=S^2+\gamma pq$. With $\sqrt r\le r/q+q/4$ (equality iff $\sqrt r=q/2$),
\[
  \|YX\|_1-\Tr(XY)\le\frac{S^2+\gamma pq}q+\frac q4-S-\gamma p=\frac{(q/2-S)^2}q=\frac{p^2}{4q},
\]
with equality iff $\gamma pq=q^2/4-S^2=\frac p2(\frac q2+S)$. Every $\gamma\in[0,1]$ is realised by a real rotation, and the
polar unitary of a real $YX$ is real.
\end{proof}

\begin{theorem}\label{thm:gardner2}
Let $\M=\bigoplus_{c=1}^CM_{k_c}(\mathbb K)$, $\mathbb K\in\{\R,\C\}$, and $\nu=\sum_c\lfloor k_c/2\rfloor\ge1$. For $X\ge0$ and
$Y\ge0$ in $\M$ with $\|X\|=\|Y\|=1$, $\OMC(X)=\delta\in[0,1)$ and $\OMC(Y)=\delta'\in[0,1]$, and every unitary
(orthogonal) $U\in\M$,
\[
  |\Tr(XUY)|-\Tr(XY)\le\frac{\nu\,\delta^2\delta'^2}{(1+\delta)(1+\delta')},
\]
and the bound is attained. Equivalently, $\sup\{|\Tr(XZ)|-\Tr(X|Z|):\|X\|=\|Z\|=1,\ \OMC(X)=\delta,\ \OMC(|Z|)=\delta'\}$
has this value.
\end{theorem}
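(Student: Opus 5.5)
First I would remove the unitary: by trace-norm duality, $\sup_U|\Tr(XUY)|=\|YX\|_1$, attained at the polar unitary of $YX$, which is real when $X,Y$ are real. So it suffices to bound $F(X,Y)=\|YX\|_1-\Tr(XY)$ over $X,Y\ge0$ in $\M$ with $\|X\|=\|Y\|=1$ and the prescribed contrasts. Since $\M$ is a direct sum, both terms split over the blocks. Fixing the eigenbases, $F$ is a convex function of the eigenvalues of $Y$ (a norm of a linear function minus a linear one), and likewise of the eigenvalues of $X$. The constraint set is the union of faces of the box $[\ell,1]^N$ (resp.\ $[\ell',1]^N$) on which both extreme values occur, where $\ell=\frac{1-\delta}{1+\delta}$ and $\ell'=\frac{1-\delta'}{1+\delta'}$. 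As in Lemma~\ref{lem:vertex}, but with the unnormalised $F$ in place of $F-c\cN$, I may assume both $X$ and $Y$ are two-level: $X=\ell+(1-\ell)P$ and $Y=\ell'+(1-\ell')Q$ with projections $P,Q$. These projections may only be nontrivial in some blocks; blocks where they are $0$ or $1$ are allowed.

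Next I apply Lemma~\ref{lem:halmos} inside each block. This splits the space into mutually orthogonal one- and two-dimensional subspaces that reduce $P$ and $Q$, hence $X$ and $Y$, and both $\|YX\|_1$ and $\Tr(XY)$ become sums over these pieces. A one-dimensional piece contributes $0$. A two-dimensional piece carries the restrictions of $X$ and $Y$. If either restriction is scalar, the two restrictions commute, so $|YX|=YX$ and the contribution is again $0$. Otherwise the restrictions have eigenvalues $\{1,\ell\}$ and $\{1,\ell'\}$, and Proposition~\ref{prop:k2} bounds the contribution by
\[
\frac{(1-\ell)^2(1-\ell')^2}{4(1+\ell)(1+\ell')}=\frac{\delta^2\delta'^2}{(1+\delta)(1+\delta')}.
\]
For the last equality I use $1-\ell=\frac{2\delta}{1+\delta}$ and $1+\ell=\frac{2}{1+\delta}$, so that $\frac{(1-\ell)^2}{1+\ell}=\frac{2\delta^2}{1+\delta}$, and similarly for $\ell'$. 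The real case goes the same way, because Halmos' decomposition holds for real projections and Proposition~\ref{prop:k2} allows real data. Block $c$ contains at most $\lfloor k_c/2\rfloor$ two-dimensional pieces, so the total is at most $\nu$ times the $2\times2$ bound.

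For attainment, I place $\lfloor k_c/2\rfloor$ copies of the real $2\times2$ extremiser of Proposition~\ref{prop:k2} in each block, with overlap $\gamma=\frac14+\frac S{2q}$. On any leftover one-dimensional pieces I set $X=Y=1$, which contributes $0$ and keeps $\|X\|=\|Y\|=1$ and the contrasts $\delta,\delta'$ unchanged. I then take $U$ to be the polar unitary of $YX$, assembled blockwise. Finally I translate back to the original functional: writing $Z=U|Z|$ with $Y=|Z|$ turns $\sup|\Tr(XZ)|-\Tr(X|Z|)$ into the quantity just treated, since $\OMC(|Z|)=\OMC(Y)$ and $\|Z\|=\|Y\|$.

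I expect the reduction to two-level spectra to be the main obstacle. The objective is not a quotient here, and the constraint also fixes the norm, so I have to check that the vertex argument still applies: on each face where the top eigenvalue equals $1$ and the bottom equals $\ell$, a convex function is maximised at a vertex of that face, and such vertices are exactly the two-level spectra. I also need the eigenbases to stay fixed while the eigenvalues vary, and I would handle this separately for $X$ and for $Y$.
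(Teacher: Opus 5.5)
Your proposal is correct and follows the paper's proof essentially step by step. The shared steps are:
\begin{enumerate}
\item trace-norm duality, to replace $\sup_U$ by $\|YX\|_1-\Tr(XY)$;
\item the vertex argument on the faces $\{x_i=1,\ x_j=\ell\}$, applied to $X$ and then to $Y$, to make both operators two-level;
\item Lemma~\ref{lem:halmos} in each block, with Proposition~\ref{prop:k2} on the two-dimensional pieces and at most $\lfloor k_c/2\rfloor$ such pieces per block;
\item the same blockwise extremiser, completed by $X=Y=1$ on the leftover one-dimensional pieces and the polar unitary.
\end{enumerate}
The only difference is cosmetic and concerns the real case: the paper proves the bound over $\C$ and notes that the maximisers are real, instead of rerunning Halmos' decomposition over $\R$.
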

\begin{proof}
Let $\ell=(1-\delta)/(1+\delta)$ and $\ell'=(1-\delta')/(1+\delta')$ be the smallest eigenvalues. It suffices to prove the
bound in the complex algebra, since the maximisers below are real. By the remark above, $\sup_Uz=F(X,Y):=\|YX\|_1-\Tr(XY)$,
and the polar unitary lies in $\M$. For a fixed eigenbasis of $X$, $F$ is a convex function of the eigenvalues of $X$ (a
norm of a linear function of them minus a linear function). The admissible eigenvalues form the union of the faces
$\{x_i=1,\ x_j=\ell\}$ of the cube $[\ell,1]^K$, $K=\sum_ck_c$, and a convex function on a face attains its maximum at a vertex; so
$F(X,Y)\le F(X',Y)$ for some $X'=\ell+(1-\ell)P$ with a projection $P\in\M$, $P\ne0,1$, and likewise $F(X',Y)\le F(X',Y')$
with $Y'=\ell'+(1-\ell')Q$. By Lemma~\ref{lem:halmos}, applied in every block, $X'$ and $Y'$ are direct sums over subspaces of
dimension $1$ or $2$, and so are $\|Y'X'\|_1$ and $\Tr(X'Y')$. One-dimensional summands contribute $0$. On a
two-dimensional one, $X'=\mathrm{diag}(1,\ell)$ and $Y'$ has the eigenvalues $1,\ell'$, so by Proposition~\ref{prop:k2} it
contributes at most $(1-\ell)^2(1-\ell')^2/(4(1+\ell)(1+\ell'))=\delta^2\delta'^2/((1+\delta)(1+\delta'))$. A block of size
$k_c$ holds at most $\lfloor k_c/2\rfloor$ two-dimensional summands. Equality: $\nu$ blocks $\mathrm{diag}(1,\ell)$ and
$Y$ with spectrum $\{1,\ell'\}$ at the angle of Proposition~\ref{prop:k2}, $X=Y=1$ on the remaining one-dimensional blocks,
and the polar unitary, which makes all block traces positive.
\end{proof}

\begin{figure}[t]
\centering\includegraphics[width=\textwidth]{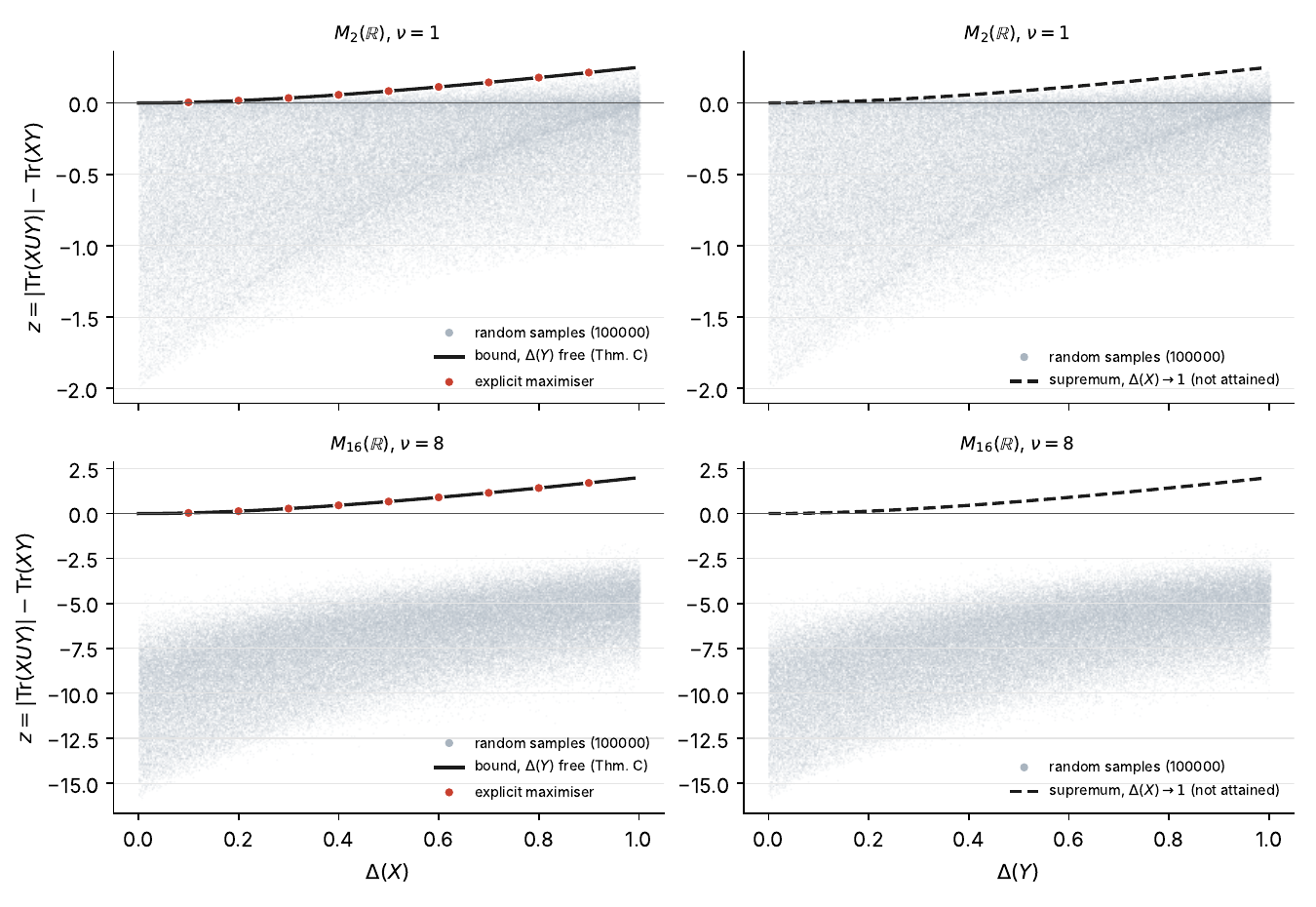}
\caption{The unnormalised Gardner violation $z=|\Tr(XUY)|-\Tr(XY)$ for $\|X\|=\|Y\|=1$ in $M_2(\R)$ (top, $\nu=1$) and
$M_{16}(\R)$ (bottom, $\nu=8$), against $\OMC(X)$ (left) and $\OMC(Y)$ (right). Grey: $100000$ random samples with
Haar-orthogonal eigenbases and $U$ and uniformly drawn contrasts, as in~\cite[Fig.~1]{abed2024}. Solid: the bound of
Theorem~\ref{thm:gardner2} with $\OMC(Y)$ free, $\nu\delta^2/(2(1+\delta))$; dots: $z$ at the explicit maximisers of its
proof, which agree with it to $3\times10^{-15}$ (printed by \texttt{trace\_\allowbreak envelopes\_\allowbreak
figures.py}). Dashed: the supremum along $\OMC(Y)$, approached as $\OMC(X)\to1$. In
$M_{16}$ no sample violates Gardner's inequality ($z>0$), although violations up to almost $2$ exist: the maximisers,
two-level spectra on aligned pairs of eigenvectors, form a set of negligible Haar measure. In the figure
$\Delta(\cdot)=\OMC(\cdot)$.}
\label{fig:gardner1d}
\end{figure}

\begin{figure}[t]
\centering\includegraphics[width=0.8\textwidth]{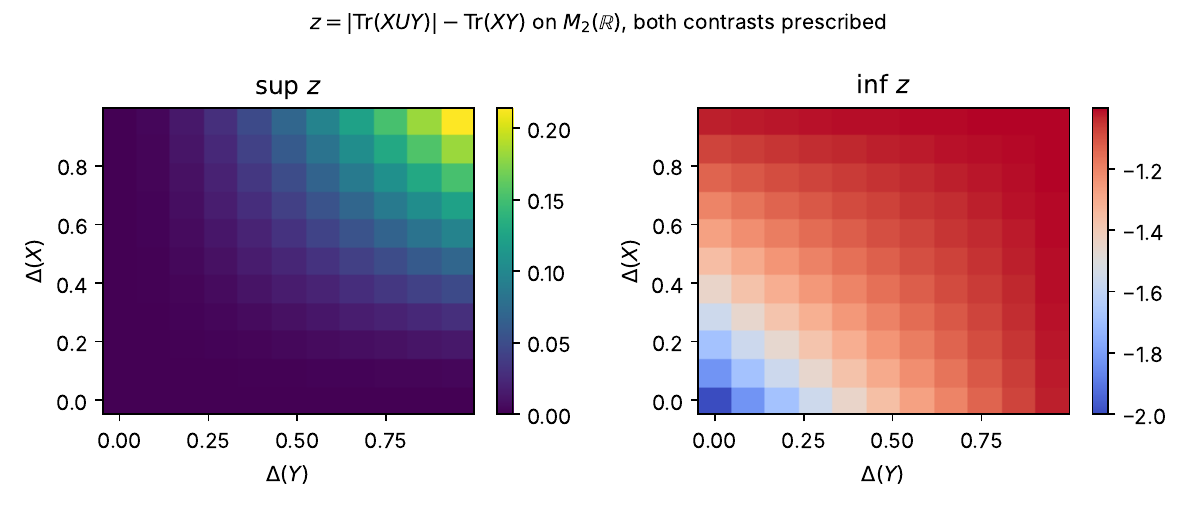}
\caption{$M_2(\R)$, both contrasts prescribed: the supremum (left) and infimum (right) of
$z=|\Tr(XUY)|-\Tr(XY)$ found by constrained optimisation on a grid of $\OMC(Y)$ (horizontal) and $\OMC(X)$
(vertical). The left panel agrees with $\delta^2\delta'^2/((1+\delta)(1+\delta'))$ (Theorem~\ref{thm:gardner2}) to
$\TCCGardnerTwoDDev$ at every grid point: a violation needs both operators to be far from scalar. In the
figure $\Delta(\cdot)=\OMC(\cdot)$.}
\label{fig:gardner2d}
\end{figure}

\begin{remark}\label{rem:gardner2}
With the contrast of $|Z|$ free, the supremum $\nu\delta^2/(2(1+\delta))$ is reached at $\delta'=1$, i.e.\ for singular
$Z$; the normalised counterpart is Theorem~\ref{thm:gardner}, $\sup_Z|\Tr(XZ)|/\Tr(X|Z|)=G(X)$. The bound is additive
over the blocks: it counts the mutually orthogonal two-dimensional subspaces that fit into them and does not depend on
the sizes of the blocks otherwise. Figures~\ref{fig:gardner1d} and~\ref{fig:gardner2d} compare it with random sampling,
the method of~\cite{abed2024}, and with numerical optimisation. Sampling stays far below the bound as soon as the blocks are
larger than $2\times2$, while the optimisation over both contrasts reproduces it.
\end{remark}

\subsection{\texorpdfstring{The other two functionals of~\cite{abed2024}}{The other two functionals of Abed et al.}}\label{sec:exp23}

Besides Gardner's inequality, \cite[Sect.~5]{abed2024} sampled two further unnormalised trace functionals,
\[
  z_2=\Tr|XY|-\Tr(|X|\,|Y|),\qquad z_3=\Tr(Y|X|Y)-\Tr|YXY|\quad(X=X^*,\ Y\ge0),
\]
with $|X|$ and $|Y|$ normalised to $\lambda_{\max}=1$ and their contrasts as parameters. Both are determined here in
$\M=\bigoplus_cM_{k_c}$, with $\nu=\sum_c\lfloor k_c/2\rfloor$ and $\ell=\frac{1-\delta}{1+\delta}$, $\ell'=\frac{1-\delta'}{1+\delta'}$
as before; note $1-\ell=\frac{2\delta}{1+\delta}$.

\begin{proposition}\label{prop:exp2}
Let $X=UX_0$, $Y=VY_0$ with $X_0,Y_0\ge0$ and unitaries $U,V\in\M$, so that $|X|=X_0$, $|Y|=Y_0$ and
$z_2=\|X_0VY_0\|_1-\Tr(X_0Y_0)$. For given spectra,
\[
  \sup z_2=-\inf z_2=\sum_c\sum_ix_{c,i}^\downarrow\bigl(y_{c,i}^\downarrow-y_{c,i}^\uparrow\bigr),
\]
with $x_{c,i}^\downarrow$ the eigenvalues of $X_{0,c}$ in decreasing order and $y_{c,i}^\downarrow$, $y_{c,i}^\uparrow$ those of
$Y_{0,c}$ in decreasing and increasing order. With $\OMC(|X|)=\delta$ and $\OMC(|Y|)=\delta'$ prescribed,
$\sup z_2=-\inf z_2=\nu(1-\ell)(1-\ell')=\frac{4\nu\delta\delta'}{(1+\delta)(1+\delta')}$, attained by two-level spectra.
\end{proposition}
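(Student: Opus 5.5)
We first compute $\sup_V\|X_0VY_0\|_1$ and $\inf_V\|X_0VY_0\|_1$ for fixed $X_0,Y_0$, and then compare with $\Tr(X_0Y_0)$, which also depends only on the relative position of the eigenbases. The expression $z_2=\|X_0VY_0\|_1-\Tr(X_0Y_0)$ has three free ingredients: the eigenbases of $X_0$ and $Y_0$ and the unitary $V$. We may conjugate so that $X_0$ is diagonal in a fixed basis, with $Y_0=WDW^*$. Then $\|X_0VY_0\|_1=\|X_0VWD\|_1$ depends only on $VW$, while $\Tr(X_0Y_0)$ depends only on $W$, so the two terms can be optimised independently. Everything decomposes over the blocks $c$, since $U,V,X_0,Y_0\in\M$, so it suffices to treat a single block $M_k$ and add the results.

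For the trace norm we use von Neumann's trace inequality in its dual form: $\|A B\|_1=\sup_{U'}|\Tr(U'AB)|\le\sum_i s_i(A)s_i(B)$. With $A=X_0$ and $B=VY_0$ this gives $\|X_0VY_0\|_1\le\sum_ix_i^\downarrow y_i^\downarrow$, with equality when $V$ maps the eigenvectors of $Y_0$ onto those of $X_0$ in matching order. The matching lower bound $\|X_0VY_0\|_1\ge\Tr(X_0VY_0)$-type estimates are not enough, so instead we note $\|X_0VY_0\|_1\ge\sum_i x_i^\downarrow y_i^\uparrow$. This is the anti-sorted lower bound for singular values of products, and it follows from Horn's multiplicative majorisation, or more simply from Gel'fand–Naimark: $\prod_{i\ge j}s_i(AB)\ge\prod s_i(A)s_{\dots}$. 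The route I intend is the Lidskii-type statement that $\log s(AB)$ majorises $\log s^\downarrow(A)+\log s^\uparrow(B)$, combined with concavity of $\sum e^{t_i}$ on the relevant set. Both bounds are attained by $V$ that pairs the eigenvectors. The same two bounds hold for $\Tr(X_0Y_0)=\sum x_iy_j|\langle e_i,f_j\rangle|^2$: by Birkhoff, a doubly stochastic average lies between the rearrangement bounds $\sum x^\downarrow y^\uparrow$ and $\sum x^\downarrow y^\downarrow$.

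Combining these, we get $z_2\le\sum x^\downarrow y^\downarrow-\sum x^\downarrow y^\uparrow$. This is attained by choosing $W$ anti-aligned, so that the trace is minimal, and $V$ realigning, so that the trace norm is maximal. Symmetrically, $z_2\ge\sum x^\downarrow y^\uparrow-\sum x^\downarrow y^\downarrow$, attained with $W$ aligned and $V$ anti-aligning. This gives $\sup z_2=-\inf z_2$ together with the stated formula, summed over blocks.

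Finally we maximise over spectra with prescribed contrasts and $\lambda_{\max}=1$. Within a block, $\sum_i x_i^\downarrow(y_i^\downarrow-y_i^\uparrow)$ pairs large $x$ with the positive differences in the first half and small $x$ with the negative ones in the second half. Writing it as $\sum_{i\le k/2}(x_i^\downarrow-x_{k+1-i}^\downarrow)(y_i^\downarrow-y_{k+1-i}^\downarrow)$, each term is at most $(1-\ell)(1-\ell')$, and there are $\lfloor k/2\rfloor$ such terms. Equality holds for two-level spectra with half the eigenvalues equal to $1$ and half equal to $\ell$, resp. $\ell'$. The middle index for odd $k$ contributes $0$. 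Summing over blocks gives $\nu(1-\ell)(1-\ell')$. Both contrasts must be attained, but a single block with $k\ge2$ already realises them, and any other block can be filled consistently. The main obstacle is the lower bound $\|AB\|_1\ge\sum s_i^\downarrow(A)s_i^\uparrow(B)$. If majorisation proves awkward, I would fall back on Lemma~\ref{lem:halmos}-free reasoning via $\|AB\|_1\ge\|A^{-1}\|^{-1}$-type interpolation, or directly on the known result that for fixed singular values the minimum of $\|AVB\|_1$ over unitaries is the anti-sorted sum.
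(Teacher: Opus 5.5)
Your proposal is correct and follows the same route as the paper. Both reduce to single blocks, take the upper bound from von Neumann's inequality and the lower bound from its anti-sorted counterpart $\|X_0VY_0\|_1\ge\sum_ix_i^\downarrow y_i^\uparrow$, use the rearrangement bounds for $\Tr(X_0Y_0)$, and pair $i$ with $k+1-i$; your substitution $V'=VW$ makes explicit why both extremes can be realised at once. The step you single out as the main obstacle is in fact immediate: trace-norm duality with the unitary $V^*$ gives $\|X_0VY_0\|_1\ge\Tr\bigl((V^*X_0V)Y_0\bigr)\ge\sum_ix_i^\downarrow y_i^\uparrow$ by the same rearrangement bound you already use, and in your log-majorisation route the property needed is the convexity of $t\mapsto e^t$ (Schur-convexity of $\sum_ie^{t_i}$), not concavity.
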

\begin{proof}
$V$ and the eigenbases act sector by sector, and von Neumann's trace inequality $\|AB\|_1\le\sum_is_i(A)s_i(B)$, its lower
counterpart $\|AB\|_1\ge\sum_is_i(A)s_{k+1-i}(B)$ and the rearrangement inequality for $\Tr(X_0Y_0)$ give the first
formula; both extremes are attained by aligning or reversing the eigenbases. Pairing $i$ with $k+1-i$,
$\sum_ix_{c,i}^\downarrow(y_{c,i}^\downarrow-y_{c,i}^\uparrow)=\sum_{i\le k/2}(x_{c,i}^\downarrow-x_{c,k+1-i}^\downarrow)
(y_{c,i}^\downarrow-y_{c,k+1-i}^\downarrow)\le\lfloor k/2\rfloor(1-\ell)(1-\ell')$, with equality for two-level spectra.
\end{proof}

\noindent So $z_2$ is linear in each contrast, unlike Gardner's violation, which is quadratic in each. The third
functional is never negative:

\begin{proposition}\label{prop:exp3eq}
For $X=X^*$ and $Y\ge0$, $\Tr|YXY|\le\Tr(Y|X|Y)$, with equality if and only if $YX_+Y^2X_-Y=0$, in particular if $Y$
commutes with $X$.
\end{proposition}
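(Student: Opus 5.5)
The plan is to reduce the inequality to a statement about trace norms, split $X$ into positive and negative parts, and read off the equality case from the resulting triangle inequality. Put $B=YXY$ and write $X=X_+-X_-$ with $X_\pm\ge0$ and $X_+X_-=0$, so that $|X|=X_++X_-$. Then
\[
YXY=YX_+Y-YX_-Y
\]
is a difference of two positive operators, and $Y|X|Y=YX_+Y+YX_-Y$.

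\emph{Inequality.} By the triangle inequality for $\|\cdot\|_1$, and because $\|C\|_1=\Tr C$ for $C\ge0$,
\[
\Tr|YXY|=\|YX_+Y-YX_-Y\|_1\le\Tr(YX_+Y)+\Tr(YX_-Y)=\Tr(Y|X|Y).
\]
This works in every block of $\M$ and, more generally, for any faithful trace.

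\emph{Equality.} For positive $C,D$ we have $\|C-D\|_1=\Tr C+\Tr D$ if and only if $CD=0$. Indeed, let $E$ be the projection onto the positive part of $C-D$. Then
\[
\|C-D\|_1=\Tr(E(C-D))-\Tr((1-E)(C-D)),
\]
and the triangle bound is an equality exactly when $\Tr((1-E)C)=0$ and $\Tr(ED)=0$. These conditions mean $C=ECE$ and $D=(1-E)D(1-E)$, which is equivalent to $CD=0$: the direction ``$\Rightarrow$'' is immediate, and for ``$\Leftarrow$'' the supports of $C$ and $D$ are orthogonal, so $C-D$ is already a Jordan decomposition.

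Applying this with $C=YX_+Y$ and $D=YX_-Y$ gives the condition $YX_+Y^2X_-Y=0$. If $Y$ commutes with $X$, it commutes with $X_\pm$, and then $YX_+Y^2X_-Y=Y^4X_+X_-=0$, which gives the stated special case.

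\emph{Main obstacle.} The only step with real content is the equality characterisation of the triangle inequality for the trace norm. The reduction sketched above, through the spectral projection $E$ and faithfulness of the trace applied to $(1-E)C(1-E)\ge0$ and $EDE\ge0$, should handle it cleanly. Everything else is bookkeeping over the blocks of $\M$.
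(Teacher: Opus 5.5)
Your proposal is correct and follows essentially the same route as the paper. Both apply the triangle inequality for $\|\cdot\|_1$ to $YX_+Y-YX_-Y$, obtain the equality case from the spectral projection onto the positive part of $C-D$ (which forces $(1-E)C=0=ED$ and hence $CD=0$), and treat the commuting case via $Y^4X_+X_-=0$.
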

\begin{proof}
With $X=X_+-X_-$ and $|X|=X_++X_-$, the triangle inequality for the trace norm gives
$\|YX_+Y-YX_-Y\|_1\le\Tr(YX_+Y)+\Tr(YX_-Y)$. For positive $A,B$, $\|A-B\|_1=\Tr A+\Tr B$ if and only if $AB=0$: with $P$
the projection onto the positive part of $A-B$, $\|A-B\|_1=\Tr A+\Tr B-2\Tr((1-P)A)-2\Tr(PB)$, and equality forces
$(1-P)A=0=PB$, so $AB=AP(1-P)B=0$; conversely, if $AB=0$ the supports are orthogonal and $|A-B|=A+B$. Apply this to
$A=YX_+Y$, $B=YX_-Y$. If $XY=YX$, then $YX_+Y^2X_-Y=Y^4X_+X_-=0$.
\end{proof}

\begin{proposition}\label{prop:k2b}
Let $X=X^*$ in $M_2(\C)$, let $|X|$ have eigenvalues $x_1\ge x_2\ge0$, $x_1>0$, and let $Y\ge0$ have eigenvalues
$y_1\ge y_2\ge0$. Then
\[
  \Tr(Y|X|Y)-\Tr|YXY|\le\frac{2x_1x_2}{x_1+x_2}\,(y_1-y_2)^2 .
\]
With $S=x_1y_2+x_2y_1$, $q=(x_1+x_2)(y_1+y_2)$ and $\gamma=|\langle e_1,f_1\rangle|^2$ for unit eigenvectors $e_1$ of $X$
for its eigenvalue of modulus $x_1$ and $f_1$ of $Y$ for $y_1$: if $x_2>0$ and $y_1>y_2$, the bound is attained if and only
if $X$ is indefinite and $\gamma=S/q$; it is attained by real matrices.
\end{proposition}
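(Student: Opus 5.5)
The plan is to reduce everything to an explicit function of the single overlap parameter $\gamma$ and maximise it. Write $D=\Tr(Y|X|Y)-\Tr|YXY|$.

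First I dispose of the definite case. If $X\ge0$ or $X\le0$, then $YXY=\pm Y|X|Y$, so $D=0$. This is consistent with Proposition~\ref{prop:exp3eq}, since $X_+$ or $X_-$ vanishes. Hence I may assume that $X$ is indefinite with eigenvalues $x_1$ and $-x_2$, the order of signs being immaterial by $X\mapsto-X$. If $x_2=0$, then $X$ is semidefinite and $D=0$, which agrees with the bound.

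\emph{Explicit formulas.} Work in the eigenbasis $f_1,f_2$ of $Y$ and let $P=e_1e_1^*$, so that $X=x_1P-x_2(1-P)$ and $|X|=x_1P+x_2(1-P)$. Put
\[
a=\gamma y_1^2+(1-\gamma)y_2^2,\qquad b=(1-\gamma)y_1^2+\gamma y_2^2 .
\]
Then
\[
\Tr(Y|X|Y)=\Tr(|X|Y^2)=x_1a+x_2b=:A,\qquad \Tr(XY^2)=x_1a-x_2b=:B .
\]
For $YXY$ I use that a self-adjoint $2\times2$ matrix with negative determinant has $\|M\|_1^2=(\Tr M)^2-4\det M$. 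Here $\det(YXY)=-x_1x_2y_1^2y_2^2$, so
\[
D=A-\sqrt{B^2+4x_1x_2y_1^2y_2^2}.
\]
Since $A^2-B^2=4x_1x_2ab$ and $ab-y_1^2y_2^2=\gamma(1-\gamma)(y_1^2-y_2^2)^2$, this becomes
\[
D=A-\sqrt{A^2-4x_1x_2\gamma(1-\gamma)(y_1^2-y_2^2)^2},
\]
which is a function of $\gamma\in[0,1]$ only.

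\emph{Maximisation.} Here I would mimic the proof of Proposition~\ref{prop:k2}. I write $D=E/(A+\sqrt{A^2-E})$ with $E=4x_1x_2\gamma(1-\gamma)(y_1^2-y_2^2)^2$, and then either maximise directly in $\gamma$ or find a tangent-line inequality of the form $\sqrt w\ge\dots$ that turns $D$ into a concave quadratic in $\gamma$ with equality at one point. The expected outcome is that the maximiser is $\gamma=S/q$ with $S=x_1y_2+x_2y_1$ and $q=(x_1+x_2)(y_1+y_2)$. At this point I expect the factor $(y_1^2-y_2^2)^2$ to combine with the denominator, after the ratio $(y_1+y_2)^2/\dots$ cancels, to give $\frac{2x_1x_2}{x_1+x_2}(y_1-y_2)^2$. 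I would then check the value by substituting and simplifying with $q-2S=(x_1-x_2)(y_1-y_2)$, as in Proposition~\ref{prop:k2}. This optimisation, and identifying a clean inequality that yields the equality case exactly at $\gamma=S/q$, is the step I expect to be the main obstacle. If no tangent-line trick appears, I would fall back on computing $dD/d\gamma=0$, which is a quadratic equation after squaring.

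\emph{Equality and realisability.} Equality requires $X$ indefinite, because otherwise $D=0$ while the bound is positive when $x_2>0$ and $y_1>y_2$. It also requires $\gamma$ to be the unique critical point, which follows from strict concavity or the strictness of the tangent inequality. Every $\gamma\in[0,1]$ is realised by rotating the real eigenvectors of $X$, so the extremisers can be taken real.
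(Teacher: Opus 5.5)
You set the problem up correctly, and the setup is the paper's. With $u=a=\gamma y_1^2+(1-\gamma)y_2^2$ and $s=y_1^2+y_2^2$ you have $A=(x_1-x_2)u+x_2s$ and $B=(x_1+x_2)u-x_2s$, both affine in $u$, and $D=A-\sqrt{B^2+K^2}$ with the constant $K=2\sqrt{x_1x_2}\,y_1y_2$.

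\textbf{The gap.} Your argument stops exactly where the content is. You only say the maximiser is ``expected'' to be $\gamma=S/q$, with the stated value, so neither the inequality nor the equality case is proved. The trick you propose to borrow from Proposition~\ref{prop:k2} also runs the wrong way. There the square root enters with a plus sign, and concavity of $\sqrt{\cdot}$ supplies the upper bound $\sqrt r\le r/q+q/4$. Here it enters with a minus sign, so you need a \emph{lower} bound for the square root that is sharp at an interior point. Tangent lines of the concave function $w\mapsto\sqrt w$ lie above it, not below. Rewriting $D$ as $A-\sqrt{A^2-E}$, with $E$ quadratic in $\gamma$, further hides the affine structure you had found.

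\textbf{The missing idea.} Use the convexity of the Euclidean norm in $(B,K)$, i.e.\ Cauchy--Schwarz: for every unit vector $(\alpha,\beta)$ one has $\sqrt{B^2+K^2}\ge\alpha B+\beta K$. Choose $(\alpha,\beta)=(x_1-x_2,\,2\sqrt{x_1x_2})/(x_1+x_2)$, which makes $A-\alpha B$ independent of $u$. Then for every $u$
\[
A-\alpha B-\beta K=\frac{2x_1x_2}{x_1+x_2}\,(s-2y_1y_2)=\frac{2x_1x_2}{x_1+x_2}\,(y_1-y_2)^2 ,
\]
which is the bound. Equality holds iff $(B,K)$ is a nonnegative multiple of $(\alpha,\beta)$. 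Since $\beta>0$ when $x_2>0$, this is the condition $\beta B=\alpha K$, i.e.\ $B=(x_1-x_2)y_1y_2$. This unwinds to $u-y_2^2=(y_1-y_2)S/(x_1+x_2)$, that is $\gamma=S/q$, which lies in $[0,1]$.

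This argument also covers $y_2=0$. There $K=0$ and $D=A-|B|$ is piecewise linear in $\gamma$, so your appeal to strict concavity and your fallback via $dD/d\gamma=0$ do not apply as stated. For $y_2>0$ the fallback would work: $D$ is strictly concave in $u$, and the critical-point condition $B/\sqrt{B^2+K^2}=\alpha$ is exactly the Cauchy--Schwarz equality condition. You would still have to carry out the computation of the maximal value.
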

\begin{proof}
$z_3$ is invariant under $X\mapsto-X$ and vanishes for semidefinite $X$. Otherwise let $Xe_1=x_1e_1$, $Xe_2=-x_2e_2$,
$u=\langle e_1,Y^2e_1\rangle=\gamma y_1^2+(1-\gamma)y_2^2$ and $s=y_1^2+y_2^2$. Then $\Tr(Y|X|Y)=\Tr(|X|Y^2)=A:=(x_1-x_2)u+x_2s$,
and since $\det(YXY)=-x_1x_2y_1^2y_2^2\le0$, $\Tr|YXY|=\sqrt{\Tr(YXY)^2-4\det(YXY)}=\sqrt{L^2+K^2}$ with
$L=\Tr(XY^2)=(x_1+x_2)u-x_2s$ and $K=2\sqrt{x_1x_2}\,y_1y_2$. For the unit vector
$(\alpha,\beta)=(x_1-x_2,\,2\sqrt{x_1x_2})/(x_1+x_2)$ the Cauchy--Schwarz inequality gives $\sqrt{L^2+K^2}\ge\alpha L+\beta K$,
and $A-\alpha L-\beta K=\frac{2x_1x_2}{x_1+x_2}(y_1-y_2)^2$ for every $u$; equality holds iff $\beta L=\alpha K$, i.e.\
$u-y_2^2=(y_1-y_2)S/(x_1+x_2)$, i.e.\ $\gamma=S/q$. Real maximisers: diagonal $X$ and $Y$ rotated by $\arccos\sqrt\gamma$.
\end{proof}

\begin{theorem}\label{thm:exp3}
Let $\M=\bigoplus_{c=1}^CM_{k_c}(\mathbb K)$, $\mathbb K\in\{\R,\C\}$, with $\nu\ge1$, and let $|X|$ and $Y$ be normalised to
$\lambda_{\max}=1$, with $\OMC(|X|)=\delta\in[0,1)$ and $\OMC(Y)=\delta'\in[0,1]$. Then
\[
  \sup\bigl(\Tr(Y|X|Y)-\Tr|YXY|\bigr)=\begin{cases}(\nu-\delta)\bigl(\frac{2\delta'}{1+\delta'}\bigr)^2&\text{if every }k_c\text{ is even},\\[2pt]
  \nu\bigl(\frac{2\delta'}{1+\delta'}\bigr)^2&\text{if some }k_c\text{ is odd},\end{cases}
\]
and the supremum is attained by real direct sums of the $2\times2$ maximisers of Proposition~\ref{prop:k2b} and $1\times1$
blocks.
\end{theorem}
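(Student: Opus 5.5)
The plan is to prove a matching upper bound and construction, with the $2\times2$ estimate of Proposition~\ref{prop:k2b} as the basic building block. With $|X|$ and $Y$ normalised to $\lambda_{\max}=1$, the smallest eigenvalues are $\ell$ and $\ell'$. Proposition~\ref{prop:k2b} then bounds a two-dimensional summand with $|X|$-eigenvalues $x_1\ge x_2$ by $\frac{2x_1x_2}{x_1+x_2}(1-\ell')^2$. This equals $(1-\ell')^2=\bigl(\frac{2\delta'}{1+\delta'}\bigr)^2$ when $x_1=x_2=1$, and $\frac{2\ell}{1+\ell}(1-\ell')^2=(1-\delta)(1-\ell')^2$ when $x_2=\ell$. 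So the theorem says that all $\nu$ two-dimensional slots can carry the full weight, except that the eigenvalue $\ell$ of $|X|$ must be placed somewhere. In an odd block it can sit on a $1\times1$ summand, where $z_3=0$. If every block is even, it costs at least a factor $1-\delta$ on one slot. Losing a whole slot instead would only give $\nu-1<\nu-\delta$.

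\emph{Construction.} Take $\nu$ orthogonal real $2\times2$ summands. On each, use $X$ indefinite with $|X|=1$, so $X=\diag(1,-1)$, and $Y$ with spectrum $\{1,\ell'\}$ rotated to $\gamma=S/q$. By Proposition~\ref{prop:k2b} each such summand contributes $(1-\ell')^2$. If some $k_c$ is odd, put $|X|=\ell$ and $Y=1$ on a leftover $1\times1$ summand, which contributes $0$. If all $k_c$ are even, use $|X|$-eigenvalues $1,\ell$ on one summand instead; that summand contributes $(1-\delta)(1-\ell')^2$. The contrast constraints are met, and $z_3$ is additive over these summands.

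\emph{Upper bound, step 1: monotonicity in $|X|$.} Write $X=X_+-X_-$. Since $\Tr|A-B|=\sup_W\Tr(W(A-B))$ over self-adjoint contractions $W$, we get
\[
  z_3=\inf_W\bigl[\Tr((1-W)YX_+Y)+\Tr((1+W)YX_-Y)\bigr].
\]
Fix the eigenbasis of $X$ and the signs of its eigenvalues. Each bracket is then linear in the eigenvalues of $|X|$ with nonnegative coefficients, namely $\langle(1\mp W)Ye_i,Ye_i\rangle$. Hence $z_3$ is nondecreasing in every eigenvalue of $|X|$. We may therefore assume that $|X|$ has all eigenvalues $1$ except a single $\ell$. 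Then $X=J-(1-\ell)\epsilon\,e e^*$ for a symmetry $J$, a unit eigenvector $e$ of $J$ and $\epsilon=\pm1$.

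\emph{Step 2: reduction of $Y$ to two-level form (the main obstacle).} In the symmetric part, $z_3=\Tr(Y^2)-\Tr|YJY|$ involves $Y$ nonlinearly, and the vertex principle (Lemma~\ref{lem:vertex}) does not apply directly. The first thing I would try is to pass to $R=Y^2$ and use that $YXY$ is similar to $XR$, so $\Tr|YXY|$ is the sum of $|\lambda_i(R^{1/2}XR^{1/2})|$. For $J=2E-1$ with a projection $E$, one would then show convexity of $\Tr|R^{1/2}JR^{1/2}|$ along the relevant faces of the spectral box of $Y$. Failing that, I would show directly that $z_3$ is at most a sum over a Halmos-type pairing of the eigenvectors of $Y$ with $E$ of $2\times2$ compressions, each bounded by Proposition~\ref{prop:k2b}. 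The compression argument of Proposition~\ref{prop:pz3}\ref{pz3:b} and Cauchy interlacing control the trace norm of a compression.

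\emph{Step 3: counting slots.} Once $|X|$ and $Y$ are two-level, Lemma~\ref{lem:halmos}, applied blockwise, splits each block into invariant summands of dimension $1$ or $2$. One-dimensional summands contribute $0$, and two-dimensional ones are bounded by Proposition~\ref{prop:k2b}. A block of size $k_c$ holds at most $\lfloor k_c/2\rfloor$ two-dimensional summands. The summand carrying the eigenvalue $\ell$ of $|X|$ contributes at most $(1-\delta)(1-\ell')^2$, or $0$ if it is one-dimensional. In an all-even algebra a one-dimensional summand forces a second one, so at most $\nu-1$ two-dimensional summands remain. This gives $\max(\nu-\delta,\nu-1)=\nu-\delta$ in the even case and $\nu$ in the odd case, which matches the construction.
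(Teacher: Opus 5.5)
\textbf{Gap in Step~3.} Step~1 is correct; it is a form of Lemma~\ref{lem:exp3}(a). After it, $X=J-(1-\ell)\epsilon ee^*$ is a function of two commuting projections, $E=\frac12(1+J)$ and $ee^*$. But $Y=\ell'+(1-\ell')Q$ is a function of a third projection $Q$. Lemma~\ref{lem:halmos} decomposes with respect to two projections only, so it cannot make both $X$ and $Y$ block diagonal:
\begin{itemize}
\item Applied to $E$ and $Q$, it makes $J$ and $Y$ block diagonal. But $e$ may have components in several two-dimensional summands. Then $X$ is not block diagonal, and there is no ``summand carrying the eigenvalue $\ell$''.
\item Applied to $ee^*$ and $Q$, it loses $J$ instead.
\end{itemize}
Nothing in your argument moves $e$ into one summand. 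For distinct Halmos angles, the unitaries commuting with $E$ and $Q$ only multiply the summands by phases. And $z_3$ is an infimum of linear functions of $|X|$, hence concave in $|X|$, so no vertex argument applies to the position of $e$.

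When some $k_c$ is odd this does no harm. You may raise the eigenvalue $\ell$ to $1$ as well, so that $X=J$, and your counting then gives $\nu(1-\ell')^2$. When every $k_c$ is even, however, the sector containing $e$ can be filled by $m=k_c/2$ two-dimensional summands with $e$ spread over all of them. The bound $(m-\delta)(1-\ell')^2$ for that sector needs a separate argument.

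The paper supplies it in Proposition~\ref{prop:mblocks}. In the Halmos coordinates, $X=X_+\oplus(-1)$ with an arbitrary, not necessarily diagonal, $0<X_+\le1$ with eigenvalues $x_i$, and the bound is $z_3\le(1-\ell')^2\sum_i2x_i/(1+x_i)$. Its proof shows by Schur complements that $YXY$ has the same eigenvalues as a self-adjoint matrix $\Xi$ that is affine in $c_\theta^2$. It then uses $\|\Xi\|_1\ge\Tr(T\Xi)$ for a symmetry $T$ chosen so that all terms containing $c_\theta^2$ cancel. For $X_+=1-(1-\ell)ee^*$ this gives exactly $(m-\delta)(1-\ell')^2$.

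\textbf{Step~2.} This step is left open, and the property you propose to prove points the wrong way. Since $\Tr(Y|X|Y)=\Tr(|X|R)$ is linear in $R=Y^2$, the vertex principle needs $z_3$ to be convex in $R$. That means $R\mapsto\Tr|R^{1/2}XR^{1/2}|$ must be concave, not convex. It is needed for the $X$ of Step~1, not only for $J$.

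Your own formula from Step~1 already gives this.
\begin{itemize}
\item Put $W=1-2D$. The formula becomes $z_3=2\inf_{0\le D\le1}\Tr\bigl(DA+(1-D)B\bigr)$, with $A=YX_+Y$ and $B=YX_-Y$.
\item For invertible $Y$, substitute $F=ZDZ^*$ with $Z=|X|^{1/2}Y$. This gives $z_3=2\inf\{\Tr(EF+(1-E)(\Gamma-F)):0\le F\le\Gamma\}$, where $\Gamma=|X|^{1/2}R|X|^{1/2}$ and $E$ is the positive spectral projection of $X$.
\item This is a partial minimum of a linear function over a convex set of pairs $(F,\Gamma)$. Hence $z_3$ is convex in $\Gamma$, which is affine in $R$ (Lemma~\ref{lem:exp3}(b)).
\end{itemize}
Your construction of the maximisers agrees with the paper's.
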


\noindent For odd blocks the eigenvalue $\ell$ of $|X|$ can sit on a one-dimensional summand, where it costs nothing, so
the supremum does not depend on $\delta$; for $k=3$, $C=1$: $(2\delta'/(1+\delta'))^2>(1-\delta)(2\delta'/(1+\delta'))^2$. The
proof reduces to two-level spectra and, with Lemma~\ref{lem:halmos}, to $2\times2$ blocks, except in one case: the
eigenvector of $|X|$ for $\ell$ may be spread over all two-dimensional blocks of its sector, so that $X$ is not block
diagonal. This case is settled by the following sharp bound, whose proof linearises the problem.

\begin{proposition}\label{prop:mblocks}
Let $X=X_+\oplus(-1)$ on $\C^m\oplus\C^m$, where $0<X_+\le1$ has eigenvalues $x_1,\dots,x_m$; let
$c_\theta=\mathrm{diag}(\cos\theta_j)$, $s_\theta=\mathrm{diag}(\sin\theta_j)$, $Q$ the projection onto the range of
$\binom{c_\theta}{s_\theta}:\C^m\to\C^{2m}$, $b\in[0,1)$ and $Y=b+(1-b)Q$ (a two-level operator). Then
\[
  \Tr(Y|X|Y)-\Tr|YXY|\ \le\ (1-b)^2\sum_{i=1}^m\frac{2x_i}{1+x_i},
\]
with equality for $X_+=\mathrm{diag}(x_1,\dots,x_m)$ and $\cos^2\theta_i=(1+bx_i)/((1+x_i)(1+b))$.
\end{proposition}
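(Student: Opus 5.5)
We need a plan for Proposition mblocks: $X=X_+\oplus(-1)$, $Y=b+(1-b)Q$ with $Q$ the projection onto the graph of $\binom{c_\theta}{s_\theta}$. Writing $z_3=\Tr(Y|X|Y)-\Tr|YXY|$, we want $z_3\le(1-b)^2\sum_i 2x_i/(1+x_i)$.

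\emph{Linearise the trace norm.} Since $YXY$ is self-adjoint, $\Tr|YXY|=\sup_{\|W\|\le1,\,W=W^*}\Tr(WYXY)$. It therefore suffices to exhibit, for each configuration, one self-adjoint contraction $W$ with
\[
\Tr\bigl(Y|X|Y\bigr)-\Tr\bigl(WYXY\bigr)\le(1-b)^2\sum_i\frac{2x_i}{1+x_i}.
\]
Equivalently, we need $\Tr\bigl(Y(|X|-Y^{\,0}\!\cdot\!)\bigr)$-type bounds. The key reduction is to choose $W$ block-adapted so the estimate splits into $m$ two-dimensional problems.

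\emph{Reduce to the case of diagonal $X_+$.} The columns of $\binom{c_\theta}{s_\theta}$ pair $e_j$ in the first summand with $e_j$ in the second, so $Q$ is the direct sum of the rank-one projections $Q_j$ onto $\cos\theta_j e_j\oplus\sin\theta_j e_j'$ in $H_j=\operatorname{span}\{e_j,e_j'\}$. Hence $Y=\bigoplus_jY_j$ with $Y_j=b+(1-b)Q_j$. The obstacle is that $X_+$ need not be diagonal in the basis $e_j$, so $X$ does not respect this decomposition.

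To handle this, I take $W=\bigoplus_jW_j$, where $W_j$ is the optimal sign unitary for the $2\times2$ problem on $H_j$ with $X$ replaced by its compression $X_j=\mathrm{diag}(\xi_j,-1)$, $\xi_j=\langle X_+e_j,e_j\rangle$. Since $Y$ and $W$ are block diagonal, $\Tr(WYXY)=\sum_j\Tr(W_jY_jX_jY_j)$ only involves the diagonal blocks of $X$. The same holds for $\Tr(Y|X|Y)=\Tr(|X|Y^2)=\sum_j\Tr(|X|_{jj}Y_j^2)$, where $|X|_{jj}=\mathrm{diag}(\xi_j,1)=|X_j|$. So
\[
z_3\le\sum_j\Bigl(\Tr(Y_j|X_j|Y_j)-\Tr|Y_jX_jY_j|\Bigr),
\]
using $|X|=X_+\oplus1$, so that compressing $|X|$ gives exactly $|X_j|$. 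Each summand is bounded by Proposition~\ref{prop:k2b} with $x_1=1$, $x_2=\xi_j$ (or the roles swapped, which is symmetric) and $y_1-y_2=1-b$, giving $\frac{2\xi_j}{1+\xi_j}(1-b)^2$.

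\emph{Pass back to the eigenvalues.} It remains to show $\sum_j f(\xi_j)\le\sum_i f(x_i)$ for $f(t)=2t/(1+t)$. The diagonal $(\xi_j)$ is majorised by the eigenvalues $(x_i)$ (Schur), and $f$ is concave, so this inequality runs the wrong way. This is the step I expect to be the main obstacle.

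My first fix is to avoid compressing: choose $W=U^*\bigl(\bigoplus W_j\bigr)U$, where $U=U_+\oplus1$ diagonalises $X_+$. The catch is that $Y$ then is not block diagonal with respect to the eigenbasis.

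The alternative is to keep the linearisation as a joint convex problem. Write the $2\times2$ bound from Proposition~\ref{prop:k2b} in the Cauchy--Schwarz form $\sqrt{L^2+K^2}\ge\alpha L+\beta K$, with coefficients $\alpha,\beta$ built from an operator $X_+$ rather than from scalars: set $\alpha=(X_+-1)(X_++1)^{-1}$ and $\beta=2X_+^{1/2}(X_++1)^{-1}$, and take the test operator $W=\begin{pmatrix}\alpha'&\beta'\\\beta'&-\alpha'\end{pmatrix}$ with operator entries functional in $X_+$. This $W$ is a self-adjoint unitary. Then $\Tr(Y|X|Y)-\Tr(WYXY)$ should be computed as a trace of a polynomial in $c_\theta,s_\theta,X_+$, and I would aim to show it is at most $\Tr\,(1-b)^2\,2X_+(1+X_+)^{-1}$ by a $2\times2$ operator-matrix positivity argument.

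Equality then follows by inserting diagonal $X_+$ and the stated $\cos^2\theta_i$, which is the $\gamma=S/q$ condition of Proposition~\ref{prop:k2b} with $x_1=1$, $x_2=x_i$, $y_1=1$, $y_2=b$.
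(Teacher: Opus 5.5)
\textbf{There is a genuine gap: none of your three routes proves the inequality.}

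Your first two routes do not close, as you observe yourself. Compressing $X$ to the blocks $H_j$ only gives $(1-b)^2\sum_j f(\xi_j)$. By Schur's majorisation and the concavity of $f$, this is \emph{larger} than the claimed bound. Conjugating by the eigenbasis of $X_+$ destroys the block structure of $Y$.

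The third route cannot work either. There you take a fixed self-adjoint unitary $W=\begin{pmatrix}\alpha'&\beta'\\ \beta'&-\alpha'\end{pmatrix}$ with entries functions of $X_+$ and apply it to $YXY$ itself. This is not a computation left to do; no such $W$ exists. Take $m=1$, $0<x<1$, $0<b<1$, and let $\theta_*$ satisfy $\cos^2\theta_*=(1+bx)/((1+x)(1+b))$.
\begin{itemize}
\item For every self-adjoint contraction $W$, the quantity $h_W(\theta)=\Tr(Y|X|Y)-\Tr(WYXY)$ dominates $z_3(\theta)=\Tr(Y|X|Y)-\Tr|YXY|$.
\item At the maximiser $\theta_*$ the matrix $YXY$ is invertible, since $\det YXY=-b^2x$. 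So $\sup_\theta h_W=z_3(\theta_*)$ forces $W=\operatorname{sign}(YXY(\theta_*))$.
\item But $-\theta_*$ is also a maximiser: conjugation by $1\oplus(-1)$ commutes with $X$ and flips the sign of $s_\theta$.
\item The two signs $\operatorname{sign}(YXY(\pm\theta_*))$ differ, because
$(YXY)_{12}=Y_{12}(xY_{11}-Y_{22})=(1-b)\cos\theta_*\sin\theta_*\cdot\frac{2b(x-1)}{1+b}\ne0$.
\end{itemize}
Hence no $\theta$-independent test operator on $\C^m\oplus\C^m$ gives the bound. Even the one that works at a single $\theta_*$ depends on $b$, not only on $X_+$. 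With your literal $\alpha=(x-1)/(x+1)$, $\beta=2\sqrt x/(1+x)$ at $x=\frac14$, $b=\frac12$, you get $h_W\approx1.27$ at $\theta_*$ (with $\sin\theta_*>0$) and $h_W\approx0.87$ at $-\theta_*$. The target is $\frac1{10}$.

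\textbf{The missing idea is to change the matrix, not the test operator.} The paper shows by two Schur complement computations, together with $\det(\lambda-YXY)=\det(\lambda-XY^2)$, that $YXY$ has the same characteristic polynomial as
\[
\Xi=\begin{pmatrix}\gamma K^{1/2}AK^{1/2}+b^2X_+-1 & bX_+^{1/2}\\ bX_+^{1/2} & 0\end{pmatrix},\qquad \gamma=1-b^2,\quad K=1+X_+,\quad A=c_\theta^2 .
\]
This matrix is \emph{affine} in $A$ and does not see the sign of $s_\theta$. Then $\Tr|YXY|=\|\Xi\|_1\ge\Tr(T\Xi)$ with the fixed reflection $T=\begin{pmatrix}\alpha&\beta\\ \beta&-\alpha\end{pmatrix}$, which has exactly your $\alpha,\beta$. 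The terms containing $A$ cancel against those of
$\Tr(|X|Y^2)=b^2\Tr X_++\gamma\Tr(X_+A)+mb^2+\gamma\Tr(1-A)$.
What remains is $(1-b)^2\sum_i2x_i/(1+x_i)$, for every $\theta$ and every, not necessarily diagonal, $X_+$.

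In the original frame this test operator is $VTV^*$, with a unitary $V$ depending on $\theta$ and $b$. That is why your ansatz fails there: you guessed the right test matrix but applied it in the wrong frame. Your equality discussion, via Proposition~\ref{prop:k2b} with $\gamma=\sin^2\theta_i$, is fine.
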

\begin{proof}
Let $\gamma=1-b^2$, $K=1+X_+$, $A=c_\theta^2$ (so $s_\theta^2=1-A$). In the decomposition $\C^m\oplus\C^m$,
$Y^2=b^2+\gamma Q$ has the blocks $b^2+\gamma A$, $\gamma c_\theta s_\theta$, $\gamma c_\theta s_\theta$, $b^2+\gamma s_\theta^2$, all diagonal.
For $\lambda>0$ the lower right block of $\lambda-XY^2$ is the invertible diagonal $E=\lambda+1-\gamma A$, and as
$(b^2+\gamma A)E-\gamma^2As_\theta^2=\lambda(b^2+\gamma A)+b^2$, the Schur complement formula gives
\[
  \det(\lambda-XY^2)=\det\bigl(\lambda E-X_+(\lambda(b^2+\gamma A)+b^2)\bigr)=\det\bigl((\lambda+1)(\lambda-b^2X_+)-\lambda\gamma KA\bigr)
  =p(\lambda),
\]
$p(\lambda)=\det\bigl((\lambda+1)(\lambda-b^2X_+)-\lambda\gamma K^{1/2}AK^{1/2}\bigr)$. The Schur complement of the lower right
block $\lambda$ gives $\det(\lambda-\Xi)=p(\lambda)$ as well, for the self-adjoint matrix
\[
  \Xi=\begin{pmatrix}\gamma K^{1/2}AK^{1/2}+b^2X_+-1 & bX_+^{1/2}\\ bX_+^{1/2} & 0\end{pmatrix},
\]
which is affine in $A$. Since $\det(\lambda-YXY)=\det(\lambda-XY^2)$, the polynomials agree and $YXY$ and $\Xi$ have
the same eigenvalues; hence $\Tr|YXY|=\|\Xi\|_1\ge\Tr(T\Xi)$ for every self-adjoint $T$ with $\|T\|\le1$.
Take
\[
  T=\begin{pmatrix}-\Lambda & \Gamma\\ \Gamma & \Lambda\end{pmatrix},\qquad \Lambda=\frac{1-X_+}{1+X_+},\quad \Gamma=\frac{2X_+^{1/2}}{1+X_+},
\]
so that $T^2=1$ ($\Lambda$ and $\Gamma$ commute and $\Lambda^2+\Gamma^2=1$). With $\Tr(Y|X|Y)=\Tr(|X|Y^2)=b^2\Tr X_++\gamma\Tr(X_+A)+mb^2+\gamma\Tr(1-A)$
and $K^{1/2}\Lambda K^{1/2}=1-X_+$, the terms containing $A$ cancel in $\Tr(|X|Y^2)-\Tr(T\Xi)$, and
\[
  \Tr(Y|X|Y)-\Tr|YXY|\le\sum_{i=1}^m\Bigl(b^2x_i+1+\frac{1-x_i}{1+x_i}(b^2x_i-1)-\frac{4bx_i}{1+x_i}\Bigr)=(1-b)^2\sum_{i=1}^m\frac{2x_i}{1+x_i}.
\]
For diagonal $X_+$ both operators are direct sums of $2\times2$ blocks, on which $X=\mathrm{diag}(x_i,-1)$, and
Proposition~\ref{prop:k2b} gives the value $(1-b)^2\,2x_i/(1+x_i)$ at the stated angle (there $\gamma=\sin^2\theta_i$).
\end{proof}

\begin{lemma}\label{lem:exp3}
Let $X=X^*$ be invertible, $P$ the projection onto the positive part of $X$, $S=2P-1$, and $Y\ge0$. Then
(a) if $|X|\le|X'|$, $X'=S|X'|$ and $|X'|$ commutes with $P$, then $z_3(X,Y)\le z_3(X',Y)$; (b) for fixed $X$, $z_3$ is a
convex function of $Y^2$.
\end{lemma}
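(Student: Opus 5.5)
Part (b) is the easier half, so I will prove it first. For fixed $X$, write $\Tr(Y|X|Y)=\Tr(|X|Y^2)$, which is linear in $W:=Y^2$. The point is to show that $W\mapsto\Tr|YXY|$ is concave in $W\ge0$.

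The eigenvalues of $YXY$ are those of $X^{1/2}$-type congruences. A cleaner route is to write $|X|^{1/2}S|X|^{1/2}=X$ and note that $YXY$ and $S^{1/2}$-free forms are awkward, since $S$ is indefinite. So I use the same device as in Proposition~\ref{prop:mblocks}. The nonzero eigenvalues of $YXY$ coincide with those of $|X|^{1/2}YYX\ldots$; precisely, $YXY=(Y|X|^{1/2})S(|X|^{1/2}Y)$ has the same nonzero spectrum as $S\,|X|^{1/2}W|X|^{1/2}=:SR$ with $R=|X|^{1/2}W|X|^{1/2}\ge0$. This is because $AB$ and $BA$ share nonzero spectrum, with $A=Y|X|^{1/2}S$ and $B=|X|^{1/2}Y$. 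The matrix $SR$ is similar to $R^{1/2}SR^{1/2}$, which is self-adjoint. Hence
\[
\Tr|YXY|=\|R^{1/2}SR^{1/2}\|_1=\max_{\|T\|\le1,\,T=T^*}\Tr(TR^{1/2}SR^{1/2}).
\]
This max-of-linear form is not linear in $R$, so instead I will use the identity $\|R^{1/2}SR^{1/2}\|_1=\Tr R-2\min(\ldots)$. In block form relative to $P$, $\|R^{1/2}SR^{1/2}\|_1=\Tr R-2\Tr(\text{negative part})$. I expect concavity in $R$ to follow from the variational formula
\[
\Tr\bigl(R^{1/2}SR^{1/2}\bigr)_-=\max\{\Tr(CR):\ \ldots\}.
\]
If that route fails, my fallback is the representation $\|Z\|_1=\min\{\Tr(Z_1+Z_2):Z=Z_1-Z_2,\ Z_i\ge0\}$ transported through the congruence. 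Then $\Tr|YXY|=\min\{\Tr(A_1+A_2): YXY=A_1-A_2\}$. The infimum of affine functions is concave provided the feasible set is jointly convex in $(W,A_1,A_2)$, and this is where the difficulty lies, because $YXY$ is not linear in $W$. So I would first try to prove directly that $W\mapsto\|R^{1/2}SR^{1/2}\|_1$ is concave: for $2\times2$ blocks this is the Schur-complement/$\det$ calculation of Proposition~\ref{prop:k2b}, which shows $\sqrt{L^2+K^2}$ with $L$ linear and $K\propto\sqrt{\det}$ concave. In general I would aim at the formula $\|R^{1/2}SR^{1/2}\|_1=\Tr R-2\,\Tr(P R P - PR(1-P)(\ldots))$, i.e.\ a Schur complement of $R$, which is concave in $R$. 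Then $z_3=\Tr(|X|W)-\Tr|YXY|$ is convex. I expect this step to be the main obstacle.

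For part (a), with $S$ fixed, write $|X|=D$ and $|X'|=D'$ with $D\le D'$, where $D'$ commutes with $P$ (and $D$ commutes with $P$ automatically). By the previous reduction,
\[
z_3(X,Y)=\Tr(DW)-\|W^{1/2}D^{1/2}SD^{1/2}W^{1/2}\|_1.
\]
Setting $M=W^{1/2}D^{1/2}$, I would show that $D\mapsto\Tr(DW)-\|W^{1/2}(PDP-(1-P)D(1-P))W^{1/2}\|_1$ is nondecreasing in $D$ on the cone of $D$ commuting with $P$, which suffices since $D\le D'$ are both in that cone. The derivative in a direction $E\ge0$ commuting with $P$ is $\Tr(EW)-\Tr(\Sigma\,W^{1/2}SEW^{1/2})$, where $\Sigma$ is the sign of $W^{1/2}XW^{1/2}$. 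Since $\|\Sigma\|\le1$, and $SE=ES$ makes $W^{1/2}E^{1/2}SE^{1/2}W^{1/2}$ self-adjoint, this derivative is $\ge\Tr(EW)-\|E^{1/2}WE^{1/2}\|_1=0$. Integrating along the segment from $D$ to $D'$ gives (a); at nonsmooth points I will use subgradients.
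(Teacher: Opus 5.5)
Your part (a) is correct and is the paper's argument in another form. The lower bound you derive for the directional derivative is just the triangle inequality $\|H+hK\|_1\le\|H\|_1+h\|K\|_1$ with $K=W^{1/2}SEW^{1/2}$ and $\|K\|_1\le\Tr(EW)$. The paper applies this inequality directly, as $\|A+D-B\|_1\le\|A-B\|_1+\Tr D$ for $D\ge0$ with $A=YX_+Y$, $B=YX_-Y$, so no derivatives or subgradients are needed.

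Part (b), however, is not proved. Each route you sketch either points the wrong way or is not an argument. You need $R\mapsto\|R^{1/2}SR^{1/2}\|_1=\Tr(SR)+2\Tr(R^{1/2}SR^{1/2})_-$ to be \emph{concave}.
\begin{itemize}
\item A representation of the negative part as $\max\{\Tr(CR):C\in\mathcal C\}$ with $\mathcal C$ independent of $R$ would make it convex, the wrong direction.
\item A partial minimisation of $\Tr(A_1+A_2)$ over a set that is jointly convex in $(W,A_1,A_2)$ yields a \emph{convex} function of $W$. So your fallback would prove convexity of $\Tr|YXY|$, which is false: for $X=\diag(1,-1)$ and $W=\bigl(\begin{smallmatrix}1&c\\c&1\end{smallmatrix}\bigr)$, $|c|<1$, one gets $\Tr|YXY|=2\sqrt{1-c^2}$.
\item A formula $\|R^{1/2}SR^{1/2}\|_1=\Tr R-2\Tr(\text{Schur complement})$ would make $z_3=2\Tr(\text{Schur complement})$ concave, since Schur complements are concave in $R$.
\item The $2\times2$ remark does not help. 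The map $(L,K)\mapsto\sqrt{L^2+K^2}$ is convex and increasing in $K\ge0$, so concavity of $K$ gives nothing; concavity there actually comes from the Lorentz form $L^2+K^2=(x_1w_{11}+x_2w_{22})^2-4x_1x_2|w_{12}|^2$. Moreover, (b) is needed precisely to make $Y$ two-level before any block decomposition is available.
\end{itemize}

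The missing idea is a substitution that moves the $R$-dependence into the constraint set. Your reduction $\Tr|YXY|=\|R^{1/2}SR^{1/2}\|_1$ is correct, and your $R=|X|^{1/2}W|X|^{1/2}$ is the paper's $\Gamma$. With $H=R^{1/2}SR^{1/2}$, write $\Tr H_-=\max_{0\le E\le1}\Tr(-EH)$ and put $F=R^{1/2}ER^{1/2}$. For invertible $W$, hence invertible $R$, this maps $\{0\le E\le1\}$ onto $\{0\le F\le R\}$, and $\Tr(EH)=\Tr(SF)$. Therefore
\[
z_3=\Tr R-\|H\|_1=2\Tr\bigl((1-P)R\bigr)-2\Tr H_-=2\min\bigl\{\Tr\bigl(PF+(1-P)(R-F)\bigr):0\le F\le R\bigr\}.
\]
This is the partial minimum of a function linear in $(F,R)$ over the convex cone $\{(F,R):0\le F\le R\}$, hence convex in $R$. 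Since $R$ is linear in $W=Y^2$, $z_3$ is convex in $Y^2$, and continuity handles singular $Y$. The paper does exactly this with $F=ZEZ^*$, $Z=|X|^{1/2}Y$, starting from $z_3=2\min_{0\le E\le1}\Tr(EA+(1-E)B)$.
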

\begin{proof}
With $A=YX_+Y$ and $B=YX_-Y$, $z_3=\Tr A+\Tr B-\|A-B\|_1$. (a) $A=Y|X|^{1/2}P|X|^{1/2}Y$ is non-decreasing in $|X|$ among
operators commuting with $P$, and so is $B$; by the triangle inequality $\|A+D-B\|_1\le\|A-B\|_1+\Tr D$ for $D\ge0$, so
$z_3$ does not decrease when $A$ or $B$ increases. (b) As $\|H\|_1=\max_{-1\le T\le1}\Tr(TH)$ for self-adjoint $H$, with
$T=1-2E$ we get $z_3=2\min_{0\le E\le1}\Tr\bigl(EA+(1-E)B\bigr)$. For invertible $Y$, $Z=|X|^{1/2}Y$ is invertible and
$E\mapsto F=ZEZ^*$ maps $\{0\le E\le1\}$ onto $\{0\le F\le \Gamma\}$, $\Gamma=ZZ^*=|X|^{1/2}Y^2|X|^{1/2}$, with $\Tr(EA)=\Tr(PF)$ and
$\Tr((1-E)B)=\Tr((1-P)(\Gamma-F))$. So $z_3=2\min\{\Tr(PF+(1-P)(\Gamma-F)):0\le F\le \Gamma\}$ is the partial minimum of a linear
function over a convex set of pairs $(F,\Gamma)$, hence convex in $\Gamma$, which is affine in $Y^2$; by continuity also at
singular $Y$.
\end{proof}

\begin{proof}[Proof of Theorem~\ref{thm:exp3}]
Write $a=\ell$, $b=\ell'$; it suffices to treat the complex algebra, since the maximisers are real. Let $\delta<1$, so that
$X$ is invertible; $z_3(-X,Y)=z_3(X,Y)$. Since $|X|$ commutes with $P$, it has a unit eigenvector $e$ for its eigenvalue $a$
in one sector and in $\mathrm{ran}\,P$ or in $\ker P$; replacing $X$ by $-X$ if necessary, $e\in\mathrm{ran}\,P$. Then
$|X|\le|X'|:=1-(1-a)ee^*$, and $X'=S|X'|=S-(1-a)ee^*$ has contrast $\delta$; by Lemma~\ref{lem:exp3}(a),
$z_3(X,Y)\le z_3(X',Y)$. By Lemma~\ref{lem:exp3}(b), a convex function of the eigenvalues of $Y^2$ on the faces
$\{y_i=1,\ y_j=b^2\}$ of $[b^2,1]^K$ is maximal at a vertex, so $z_3(X',Y)\le z_3(X',Y')$ with $Y'=b+(1-b)Q$, $Q\ne0,1$ a
projection in $\M$. Apply Lemma~\ref{lem:halmos} to $P$ and $Q$ in every sector. In a sector without $e$, $X'=S$ is a
direct sum over the subspaces, a one-dimensional summand contributes $0$, and a two-dimensional one at most $(1-b)^2$ by
Proposition~\ref{prop:k2b} ($|X|=1$ there): together at most $\lfloor k_c/2\rfloor(1-b)^2$. In the sector of $e$,
Lemma~\ref{lem:exp3}(a) with $|X'|\le1$ gives the same bound for $z_3$ restricted to it, and, if its size $k_c$ is odd or it
contains fewer than $k_c/2$ two-dimensional subspaces, at most $\lfloor k_c/2\rfloor(1-b)^2$, resp.\ $(k_c/2-1)(1-b)^2$.
Otherwise its $m=k_c/2$ two-dimensional subspaces fill it, with bases $f_j,f_j'$, $f_j\in\mathrm{ran}\,P$; the unitary that
multiplies $f_j'$ by $e^{-i\chi_j}$ commutes with $X'$ (which is $-1$ on $\ker P$) and removes the phases, and in the basis
$(f_1,\dots,f_m,f_1',\dots,f_m')$ we are in the setting of Proposition~\ref{prop:mblocks} with $X_+=1-(1-a)ee^*$, whose
eigenvalues are $a,1,\dots,1$: the sector contributes at most $(1-b)^2\bigl(\frac{2a}{1+a}+m-1\bigr)=(k_c/2-\delta)(1-b)^2$.
Summing over the sectors, $z_3\le(\nu-\delta)(1-b)^2$ if every $k_c$ is even and $z_3\le\nu(1-b)^2$ otherwise. Equality: if every
$k_c$ is even, $X=\mathrm{diag}(a,-1)\oplus\mathrm{diag}(1,-1)^{\oplus(\nu-1)}$ and $Y$ with eigenvalues $1,b$ in every block at
the angle of Proposition~\ref{prop:k2b}; if some $k_c$ is odd, $X=\mathrm{diag}(1,-1)$ on $\nu$ blocks, the eigenvalue $a$ on a
one-dimensional block of an odd sector, $1$ on the other one-dimensional blocks, and $Y$ as before ($Y=1$ on the
one-dimensional blocks).
\end{proof}

\noindent Before this proof was found, the cases $k=4$ and $k=6$ ($C=1$) had been verified by a rigorous branch-and-bound
computation over the angles of the reduced model (with gaps $10^{-7}$ and $10^{-5}$ that include floating-point
rounding), and numerical optimisation reproduces all suprema of $z_2$ and $z_3$. Theorems~\ref{thm:gardner2} and~\ref{thm:exp3} and Proposition~\ref{prop:exp2} thus settle
the three functionals of~\cite[Sect.~5]{abed2024} with both contrasts prescribed. Like Gardner's violation, they are
additive over the blocks: they count the mutually orthogonal two-dimensional subspaces that fit into the sectors.

\section{Characterisations of central elements}\label{sec:centre}

In this section $\cM$ is an arbitrary von Neumann algebra, unless finiteness is stated, and
$a\in\cM_{++}$. Normal functionals have the polar decomposition $\varphi=|\varphi|(\cdot\,u)$,
i.e.\ $\varphi(x)=|\varphi|(xu)$, with $|\varphi|\ge0$ and a partial isometry $u$ with
$u^*u=s(|\varphi|)$, and this decomposition is unique \cite[Ch.~III, \S4]{Tak79}; then
$|\varphi|(x)=\varphi(xu^*)$. Hermitian normal functionals have a unique Jordan decomposition
$\varphi=\varphi^+-\varphi^-$ with $\|\varphi\|=\|\varphi^+\|+\|\varphi^-\|$, and then
$|\varphi|=\varphi^++\varphi^-$ \cite[Ch.~III, \S4]{Tak79}. (The other convention
$\varphi=|\varphi|(u\,\cdot)$ is interchanged with ours by $\varphi\mapsto\varphi^*$, where
$\varphi^*(x)=\overline{\varphi(x^*)}$; this does not change any of the quantities below.) For $z\in\cZ_{++}$ put $\varphi_z=\varphi(z\,\cdot)$.

\begin{lemma}\label{lem:funct}
For $z\in\cZ_{++}$, $\varphi\mapsto\varphi_z$ is a bijection of the normal (Hermitian, positive)
functionals, $|\varphi_z|=|\varphi|_z$, $(\varphi_z)^\pm=(\varphi^\pm)_z$, and
$|\varphi_z+\omega_z|=|\varphi+\omega|_z$.
\end{lemma}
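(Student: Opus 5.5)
The plan is to reduce every claim to the two facts that $z$ is central and invertible and that the polar and Jordan decompositions are unique. Uniqueness lets us identify the decompositions of $\varphi_z$ by exhibiting candidates with the right properties.

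\emph{Bijection.} The map $\varphi\mapsto\varphi_z$ is linear with inverse $\varphi\mapsto\varphi_{z^{-1}}$. It preserves normality, since $x\mapsto zx$ is weak$^*$ continuous. It preserves Hermitian functionals: $\varphi_z(x^*)=\varphi(zx^*)=\varphi((xz)^*)=\overline{\varphi(xz)}=\overline{\varphi(zx)}$, using $xz=zx$. It preserves positivity: for $x\ge0$ we have $zx=z^{1/2}xz^{1/2}\ge0$. The same arguments apply to $z^{-1}$, which gives the bijection in all three classes.

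\emph{Modulus.} Write $\varphi=|\varphi|(\cdot\,u)$. Then $\varphi_z(x)=|\varphi|(zxu)=|\varphi|_z(xu)$, so $\varphi_z=|\varphi|_z(\cdot\,u)$, and $|\varphi|_z\ge0$ is normal by the first step. To invoke uniqueness I still need $u^*u=s(|\varphi|_z)$. For this I show $s(\psi_z)=s(\psi)$ for positive normal $\psi$. If $\psi(q)=0$ for a projection $q$, then $\psi(zq)=\psi(qzq)\le\|z\|\psi(q)=0$. Conversely, $\psi(q)=\psi_z(z^{-1}q)\le\|z^{-1}\|\psi_z(q)$. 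So the two functionals have the same null projections and hence the same support. Uniqueness of the polar decomposition then gives $|\varphi_z|=|\varphi|_z$.

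\emph{Jordan parts and sums.} For Hermitian $\varphi$ we have $\varphi_z=(\varphi^+)_z-(\varphi^-)_z$ with positive summands. The Jordan decomposition is characterised among such decompositions either by $\|\varphi\|=\|\varphi^+\|+\|\varphi^-\|$ or, equivalently, by orthogonal supports; the paper also gives $|\varphi|=\varphi^++\varphi^-$. The support formulation is the easier one to check here. The supports of $(\varphi^\pm)_z$ equal those of $\varphi^\pm$ by the computation above, so they are orthogonal, and uniqueness yields $(\varphi_z)^\pm=(\varphi^\pm)_z$. Alternatively, $(\varphi^+)_z+(\varphi^-)_z=|\varphi|_z=|\varphi_z|$ identifies the parts as $\frac12(|\varphi_z|\pm\varphi_z)$. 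The last identity follows by linearity: $\varphi_z+\omega_z=(\varphi+\omega)_z$, and the modulus statement then gives $|\varphi_z+\omega_z|=|\varphi+\omega|_z$.

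The main obstacle is the support identity $s(\psi_z)=s(\psi)$, which is what the uniqueness arguments need, together with invoking the right characterisation of the Jordan decomposition. Both reduce to the two-sided bound $\|z^{-1}\|^{-1}\psi\le\psi_z\le\|z\|\psi$ on $\cM_+$, which holds because $z$ is central.
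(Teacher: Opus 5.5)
Your proposal is correct and follows the paper's proof: write $\varphi_z=|\varphi|_z(\cdot\,u)$, check that $s(|\varphi|_z)=s(|\varphi|)=u^*u$ because $z$ is central and invertible, and invoke uniqueness of the polar decomposition. Your support argument, your identification of the Jordan parts (either via orthogonal supports or via $(\varphi_z)^\pm=\tfrac12(|\varphi_z|\pm\varphi_z)$) and the linearity step for sums spell out what the paper compresses into ``the other claims are similar.''
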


\begin{proof}
$\varphi_z(x)=|\varphi|(zxu)=|\varphi|_z(xu)$ and $s(|\varphi|_z)=s(|\varphi|)=u^*u$, since $z$ is central and
invertible; uniqueness gives $|\varphi_z|=|\varphi|_z$. The other claims are similar.
\end{proof}

\begin{theorem}[Item 40]\label{thm:pap}
$\lambda(a):=\min\{\lambda:pap\le\lambda a\text{ for all projections }p\in\cM\}=\sup_p\|a^{1/2}pa^{-1/2}\|^2=\cosh^2\LNC(a)$.
For $\cM=M_n(\C)$ the upper bound is contained in \cite[Cor.~1.5]{Bou06}.
\end{theorem}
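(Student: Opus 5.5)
The plan has three steps: rewrite the operator inequality as a norm, prove the upper bound by central rescaling, and prove the lower bound by a corner construction.

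\emph{The identity.} Since $a$ is invertible, $pap\le\lambda a$ holds iff $a^{-1/2}pap\,a^{-1/2}\le\lambda$. The left side is $W^*W$ with $W=a^{1/2}pa^{-1/2}$, so the condition is $\|a^{1/2}pa^{-1/2}\|^2\le\lambda$. This gives $\lambda(a)=\sup_p\|a^{1/2}pa^{-1/2}\|^2$, and the minimum over $\lambda$ exists because the set of admissible $\lambda$ is closed.

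\emph{Upper bound.} We use central rescaling, exactly as in Theorem~\ref{thm:proj}\ref{p:a}, but now in an arbitrary von Neumann algebra; no trace is needed. Put $z_0=(\ceil a\flr a)^{-1/2}$. Lemma~\ref{lem:pTp}, applied on the Hilbert space to $T=z_0a$, gives
\[
pz_0ap\le\cosh^2\OLC(z_0a)\,z_0a .
\]
By Proposition~\ref{prop:char}\ref{ch:resc}, $\kappa(z_0a)=\kappa_\cZ(a)$, so $\cosh^2\OLC(z_0a)=\cosh^2\LNC(a)$. Because $z_0$ is central and commutes with $p$, conjugating by $z_0^{-1/2}$ yields $pap\le\cosh^2\LNC(a)\,a$. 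Lemma~\ref{lem:env} and Corollary~\ref{cor:norm} hold in any von Neumann algebra, so this step does not need finiteness.

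\emph{Lower bound.} Assume $a\notin\cZ$; if $a\in\cZ$, then $pap=ap\le a$ and $\lambda=1=\cosh^20$, with $p=1$ giving equality. Take the corner construction of Lemma~\ref{lem:corner}\ref{c:a},\ref{c:b} at a point $\omega_0$ where $\ceil a/\flr a$ is maximal. This gives the matrix units $e_{ij}$ with associated $*$-homomorphism $\pi:M_2\to\cM$, and orthonormal vectors $\eta_1=v\xi$, $\eta_2=\xi$. By part~\ref{c:b} with $g(t)=t$ and $g(t)=t^{-1}$, the compressions of $a$ and of $a^{-1}$ to $\mathcal K=\operatorname{span}\{\eta_1,\eta_2\}$ are diagonal, with entries $r_i$ in $I_i$ and $r_i'$ in $I_i^{-1}$ respectively.

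Let $p=\pi(|w\rangle\langle w|)$ with $w=(1,1)/\sqrt2$; this is a rank-one projection onto $h=(\eta_1+\eta_2)/\sqrt2$. Then
\[
\|a^{1/2}pa^{-1/2}\|^2=\|a^{1/2}h\|^2\,\|a^{-1/2}h\|^2=\langle ah,h\rangle\langle a^{-1}h,h\rangle .
\]
Because the off-diagonal terms vanish, this equals $\tfrac14(r_1+r_2)(r_1'+r_2')$. As $\varepsilon\to0$ it tends to $\tfrac14(\alpha+\beta)(\alpha^{-1}+\beta^{-1})=(1+\kappa_\cZ)^2/(4\kappa_\cZ)=\cosh^2\LNC(a)$.

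The step that needs most care is this last one. We must check that the cross terms $\langle a^{\pm1}\eta_1,\eta_2\rangle$ really vanish, which part~\ref{c:b} gives for every bounded Borel $g$, in particular $t^{-1}$ on $\sigma(a)\subset(0,\infty)$. We must also check that we get diagonal values close to $(\alpha,\beta)$ and their inverses simultaneously; this follows from the window bounds $\langle g(a)\eta_i,\eta_i\rangle\in[\inf_{I_i}g,\sup_{I_i}g]$. Taking the supremum over $\varepsilon$ gives $\lambda(a)\ge\cosh^2\LNC(a)$, which completes the proof.
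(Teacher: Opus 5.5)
Your identity $\lambda(a)=\sup_p\|a^{1/2}pa^{-1/2}\|^2$ and your upper bound are correct and coincide with the paper's argument: central rescaling by $z_0$ combined with Lemma~\ref{lem:pTp}, valid in any von Neumann algebra. The lower bound, however, rests on a false claim. The projection $p=\pi(|w\rangle\langle w|)=\tfrac12(f_1+v+v^*+f_2)$ is not a rank-one projection onto $h$. Its range contains $(v\zeta+\zeta)/\sqrt2$ for every $\zeta\in f_2\cH$, so it is as large as $f_2$. That is infinite-dimensional in a II$_1$ factor, or whenever $\cM$ has no minimal projections. The rank-one projection $|h\rangle\langle h|$ is in general not in $\cM$, so it is not admissible in the supremum.

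The formula $\|a^{1/2}pa^{-1/2}\|^2=\langle ah,h\rangle\langle a^{-1}h,h\rangle$ holds for $p=|h\rangle\langle h|$. For a projection that merely contains $h$ it is not even an inequality in the needed direction: $p=1$ gives $1$ on the left. It fails for your $p$ too. In $M_2(\C)\otimes M_2(\C)$ take $a=\diag(\alpha_0,\beta_0)\otimes\diag(1+s,1-s)$ with small $s>0$. For $\alpha_0s\le\varepsilon<(\alpha-\beta)/4$ the partial isometry $v=e_{12}\otimes1$ is admissible in Lemma~\ref{lem:corner}\ref{c:a}. Then $a^{1/2}pa^{-1/2}=(D_0^{1/2}|w\rangle\langle w|D_0^{-1/2})\otimes1$ with $D_0=\diag(\alpha_0,\beta_0)$, so the left side is $(\alpha_0+\beta_0)^2/(4\alpha_0\beta_0)$. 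For $\xi=e_2\otimes(1,1)/\sqrt2$ your right side is larger by the factor $(1-s^2)^{-1}$.

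So at fixed $\varepsilon$ your quantity $\tfrac14(r_1+r_2)(r_1'+r_2')$ is not shown to be a lower bound for $\lambda(a)$. Letting $\varepsilon\to0$ does not repair this without an error estimate. The cross terms and window bounds you checked are not the problem.

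The missing idea is that $\mathcal K=\operatorname{span}\{\eta_1,\eta_2\}$ reduces $p$. By Lemma~\ref{lem:corner}\ref{c:b}, $\pi(m)$ acts on $\mathcal K$ as the matrix $m$, so $p$ restricted to $\mathcal K$ is the projection onto $\C h$. Test the quadratic form inequality $\langle papk,k\rangle\le\lambda\langle ak,k\rangle$ only on $k=c_1\eta_1+c_2\eta_2$. Then
\[
\langle papk,k\rangle=\langle apk,pk\rangle=|\langle c,w\rangle|^2\langle Dw,w\rangle,\qquad \langle ak,k\rangle=\langle Dc,c\rangle,
\]
where $D=\diag(r_1,r_2)$ is the diagonal compression of $a$ to $\mathcal K$. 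Taking the supremum over $c$ gives $\lambda(a)\ge\langle Dw,w\rangle\langle D^{-1}w,w\rangle=(r_1+r_2)^2/(4r_1r_2)$, which tends to $\cosh^2\LNC(a)$. This is the paper's proof.

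What appears is the inverse of the compression of $a$, not the compression of $a^{-1}$. The latter is larger, since $P_{\mathcal K}a^{-1}P_{\mathcal K}\ge(P_{\mathcal K}aP_{\mathcal K})^{-1}$ on $\mathcal K$. That is why your expression can overshoot, and the function $g(t)=t^{-1}$ is not needed at all.
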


\begin{proof}
$pap\le\lambda a$ iff $\|a^{-1/2}papa^{-1/2}\|\le\lambda$, and $a^{-1/2}papa^{-1/2}=(a^{1/2}pa^{-1/2})^*(a^{1/2}pa^{-1/2})$.
For $z\in\cZ_{++}$, $p(za)p=z^{1/2}papz^{1/2}$, so $\lambda(za)=\lambda(a)$, and Lemma~\ref{lem:pTp} with
$T=z_0a$ gives $\lambda(a)\le\cosh^2\LNC(a)$. For the lower bound, if $a\notin\cZ$, use
Lemma~\ref{lem:corner}\ref{c:b}: the compression of $a$ to $K=\operatorname{span}\{\eta_1,\eta_2\}$ is
$D=\diag(a_1,a_2)$ with $a_i\in I_i$, and $K$ is invariant under $p=\pi(|w\rangle\langle w|)$,
$w=(1,1)/\sqrt2$. For $k=c_1\eta_1+c_2\eta_2$, $\langle papk,k\rangle=|\langle c,w\rangle|^2\langle Dw,w\rangle$
and $\langle ak,k\rangle=\langle Dc,c\rangle$; the supremum over $c$ of the ratio is
$\langle Dw,w\rangle\langle D^{-1}w,w\rangle=(a_1+a_2)^2/(4a_1a_2)$ (the Kantorovich expression,
which here attains its maximum $G(D)^2$), which tends to $\cosh^2\LNC(a)$ as
$\varepsilon\to0$.
\end{proof}

\begin{theorem}[Item 39]\label{thm:nt2}
\[
C(a):=\sup_{0\ne\varphi\in\cM_*}\frac{|\varphi(a)|}{|\varphi|(a)}=\cosh\LNC(a).
\]
More strongly, $|\varphi(ay)|\le\cosh\LNC(a)\,|\varphi|(a)$ for every $\varphi\in\cM_*$ and every
contraction $y\in\cM$.
\end{theorem}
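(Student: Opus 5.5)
Three ingredients do the work: an upper bound via the polar decomposition, Cauchy--Schwarz and the Kantorovich inequality; invariance under central rescaling; and a lower bound from the corner construction. Write $\varphi=|\varphi|(\cdot\,u)$ and $\omega=|\varphi|\ge0$, so that $\varphi(ay)=\omega(ayu)$. By the Cauchy--Schwarz inequality for the positive functional $\omega$,
\[
|\omega(ayu)|^2=|\omega\bigl((a^{1/2}yu)^{*\,*}\cdots\bigr)|^2 ,
\]
more precisely, writing $ayu=a\cdot(yu)$,
\[
|\omega(a\,yu)|^2\le\omega(a^2)\,\omega\bigl(u^*y^*yu\bigr)\le\omega(a^2)\,\omega(u^*u).
\]
Since $u^*u=s(\omega)$, we have $\omega(u^*u)=\omega(1)$. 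Lemma~\ref{lem:krein}\ref{k:a} then gives $\omega(a^2)\omega(1)\le G(a)^2\omega(a)^2$, hence $|\varphi(ay)|\le\cosh\OLC(a)\,|\varphi|(a)$. (The Cauchy--Schwarz step has to be set up as $|\omega(x^*w)|^2\le\omega(x^*x)\omega(w^*w)$ with $x=a$ and $w=yu$.) This is the bound with $\kappa$ in place of $\kappa_\cZ$.

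To pass to $\kappa_\cZ$ I use rescaling. For $z\in\cZ_{++}$ put $\psi=\varphi_{z^{-1}}$. By Lemma~\ref{lem:funct}, $|\psi|=|\varphi|_{z^{-1}}$, so $\psi(zay)=\varphi(ay)$ and $|\psi|(za)=|\varphi|(a)$. As $\varphi\mapsto\varphi_{z^{-1}}$ is a bijection, the supremum of $|\varphi(ay)|/|\varphi|(a)$ over $\varphi$ and $y$ is the same for $a$ and for $za$. Choosing $z=z_0$ as in Proposition~\ref{prop:char}\ref{ch:resc} gives $\kappa(z_0a)=\kappa_\cZ(a)$, and the upper bound becomes $\cosh\LNC(a)$. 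Taking $y=1$ yields $C(a)\le\cosh\LNC(a)$.

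For the lower bound I may assume $a\notin\cZ$, since otherwise $\LNC=0$ and $\varphi=\omega$ gives ratio $1$. I take $\eta_1,\eta_2$ and $\pi$ from Lemma~\ref{lem:corner}\ref{c:b}. The compression of $a$ to $\operatorname{span}\{\eta_1,\eta_2\}$ is $D=\diag(a_1,a_2)$ with $a_i\in I_i$, and $a_1/a_2\to\kappa_\cZ(a)$ as $\varepsilon\to0$. I then use the $M_2$ extremiser of Theorem~\ref{thm:gardner}. With unit vectors $b$ and $c=Db/\|Db\|$ in $\C^2$ chosen so that $\|Db\|/\langle Db,b\rangle=G(D)$, I set $\xi=b_1\eta_1+b_2\eta_2$, $\zeta=c_1\eta_1+c_2\eta_2$ and $\varphi=\langle\cdot\,\xi,\zeta\rangle$, which is normal. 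Its polar decomposition is explicit: $|\varphi|=\langle\cdot\,\xi,\xi\rangle$ and $u=|\xi\rangle\langle\zeta|$. Hence $|\varphi|(a)=\langle a\xi,\xi\rangle=\langle Db,b\rangle$ and $\varphi(a)=\langle a\xi,\zeta\rangle=\langle Db,c\rangle=\|Db\|$, where I use that $\langle g(a)\eta_1,\eta_2\rangle=0$ and that the diagonal entries of the compression are $a_1,a_2$. The ratio is therefore $G(D)=(a_1+a_2)/(2\sqrt{a_1a_2})$, which tends to $\cosh\LNC(a)$.

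I expect the main obstacle to be bookkeeping. The conventions for $|\varphi|$ and $u$, and the left/right placement of $u$ in the Cauchy--Schwarz step, must be checked carefully. It must also be verified in the example that the vector state of $\xi$ really is $|\varphi|$, which follows from uniqueness of the polar decomposition, since $\zeta,\xi$ are unit vectors and $u^*u=|\xi\rangle\langle\xi|$ is the support. The analysis itself is just Kantorovich together with rescaling.
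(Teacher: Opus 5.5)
Your upper bound is correct and is the paper's argument. Cauchy--Schwarz for $\omega=|\varphi|$ plus Lemma~\ref{lem:krein}\ref{k:a} is the paper's GNS estimate written without the GNS pair, and the rescaling by $z_0$ via Lemma~\ref{lem:funct} is the same as in the paper. The lower bound uses the paper's configuration too, but the step where you identify $|\varphi|$ fails as written, and that step is where the real work lies.

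\emph{The partial isometry is not in $\cM$.} Your $u=|\xi\rangle\langle\zeta|$ is a rank-one operator on $\cH$ and in general does not lie in $\cM$. Likewise $|\xi\rangle\langle\xi|$ is not the support of $\omega_\xi=\langle\cdot\,\xi,\xi\rangle$ in $\cM$; that support is the projection onto $[\cM'\xi]$. Uniqueness of the polar decomposition of a normal functional on $\cM$ concerns partial isometries in $\cM$ only. (Also, for your $u$ one has $u^*u=|\zeta\rangle\langle\zeta|$, not $|\xi\rangle\langle\xi|$.)

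\emph{The modulus is the other vector state.} With the convention $\varphi=|\varphi|(\cdot\,u)$ that you fixed in the upper bound, $\omega_\xi(xu)=\langle\xi,\zeta\rangle\langle x\xi,\xi\rangle\ne\varphi(x)$. In fact the modulus of your $\varphi=\langle\cdot\,\xi,\zeta\rangle$ is $\omega_\zeta$. For a unitary $w\in M_2(\C)$ with $wc=b$, the element $u=\pi(w)s(\omega_\zeta)\in\cM$ satisfies $\omega_\zeta(\cdot\,u)=\varphi$ and $u^*u=s(\omega_\zeta)$. Then $|\varphi|(a)=\langle Dc,c\rangle$, and your ratio is $\langle D^2b,b\rangle^{3/2}/\langle D^3b,b\rangle\le1$. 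So this functional shows no violation at all.

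\emph{The repair} is the paper's construction. Use the adjoint functional $\varphi=\langle\cdot\,\zeta,\xi\rangle=\omega_\xi(\cdot\,u)$ with $u=\pi(w)s(\omega_\xi)$, where now $w\in M_2(\C)$ is a unitary with $wb=c$. Since $\xi\in e\cH$ with $e=\pi(1)=f_1+f_2\in\cM$, you have $s(\omega_\xi)\le e$. Hence $u^*u=s(\omega_\xi)\pi(w^*w)s(\omega_\xi)=s(\omega_\xi)$, and $u\xi=\pi(w)\xi=\zeta$ by Lemma~\ref{lem:corner}\ref{c:b}. Uniqueness now gives $|\varphi|=\omega_\xi$. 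Therefore $|\varphi(a)|/|\varphi|(a)=\langle Dc,b\rangle/\langle Db,b\rangle=\|Db\|/\langle Db,b\rangle=G(D)\to\cosh\LNC(a)$.

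Equivalently, your functional does work for the opposite convention $\varphi=|\varphi|(u\,\cdot)$ with $uu^*=s(|\varphi|)$, which the paper notes is interchanged with its own by $\varphi\mapsto\varphi^*$. Even then, the partial isometry must be built inside $\cM$ from the matrix units, as $s(\omega_\xi)\pi(w)^*$, not as a rank-one operator on $\cH$.
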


\begin{proof}
\emph{Upper bound.} Let $\omega=|\varphi|$ and $(\pi_\omega,\xi)$ its GNS pair. Then
\[
|\varphi(ay)|=|\omega(ayu)|=|\langle\pi_\omega(yu)\xi,\pi_\omega(a)\xi\rangle|\le\|\xi\|\,\|\pi_\omega(a)\xi\|\le\cosh\OLC(a)\,\omega(a)
\]
by Lemma~\ref{lem:krein}, since $\sigma(\pi_\omega(a))\subset\sigma(a)$. With $z_0$ as in
Lemma~\ref{lem:resc} and by Lemma~\ref{lem:funct},
\[
|\varphi(ay)|=|\varphi_{z_0^{-1}}(z_0ay)|\le\cosh\OLC(z_0a)\,|\varphi_{z_0^{-1}}|(z_0a)=\cosh\LNC(a)\,|\varphi|(a).
\]
\emph{Lower bound} ($a\notin\cZ$). In Lemma~\ref{lem:corner}\ref{c:b} let $y\in\C^2$ be a unit vector
with $|y_1|^2=a_2/(a_1+a_2)$, $|y_2|^2=a_1/(a_1+a_2)$, $x=Dy/\|Dy\|$, and $w\in M_2(\C)$ unitary
with $wy=x$. Put $\zeta=y_1\eta_1+y_2\eta_2$, $\omega=\omega_\zeta$ and $\varphi=\omega(\cdot\,\pi(w)s(\omega))$.
Since $\zeta\in e\cH$ with $e=f_1+f_2\in\cM$, $s(\omega)\le e$, so $u=\pi(w)s(\omega)$ satisfies
$u^*u=s(\omega)$ and $|\varphi|=\omega$. Since $s(\omega)\zeta=\zeta$ and $\pi(w)\zeta=x_1\eta_1+x_2\eta_2$,
\[
\frac{|\varphi(a)|}{|\varphi|(a)}=\frac{|\langle Dx,y\rangle|}{\langle Dy,y\rangle}=\frac{\|Dy\|}{\langle Dy,y\rangle}=\frac{a_1+a_2}{2\sqrt{a_1a_2}},
\]
which tends to $\cosh\LNC(a)$.
\end{proof}

In the finite case, writing $\varphi=\tau(\cdot\,h)$, Theorem~\ref{thm:nt2} is the $L^1$-version of
Theorem~\ref{thm:gardner} with the roles reversed: the density $a$ is fixed and the operator $h$
varies, and the extremal configuration is the same.

\begin{theorem}[Item 41]\label{thm:ntrest}
For the remaining items of \cite[Thm.~1 and Cor.~1]{novikov2015}:
\begin{enumerate}
\item\label{r:a} $\sup\varphi^+(a)/\varphi_1(a)$ over Hermitian normal $\varphi=\varphi_1-\varphi_2$ with
$\varphi_i\ge0$; $\sup\varphi^+(a)/\omega^+(a)$ over Hermitian normal $\varphi\le\omega$; and
$\sup(\varphi+\omega)^+(a)/(\varphi^+(a)+\omega^+(a))$ over Hermitian normal $\varphi,\omega$ are all equal to $\cosh^2\LNC(a)$;
\item\label{r:b} $\sup|\varphi+\omega|(a)/(|\varphi|(a)+|\omega|(a))=\cosh\LNC(a)$, both over Hermitian
and over all normal $\varphi,\omega$.
\end{enumerate}
\end{theorem}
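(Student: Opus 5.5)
The plan has three parts. First, reduce everything to Theorems~\ref{thm:pap} and~\ref{thm:nt2} via Lemma~\ref{lem:funct}, which shows that central rescaling $a\mapsto za$, $\varphi\mapsto\varphi_{z^{-1}}$ leaves all the ratios invariant. Second, prove the upper bounds with $\cosh\OLC(a)$ and then replace $\OLC$ by $\LNC$ through $z_0$ as in the proof of Theorem~\ref{thm:nt2}. Third, get the lower bounds from the corner construction of Lemma~\ref{lem:corner}\ref{c:b}, as in those two theorems.

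\emph{Upper bounds in (a).} For Hermitian normal $\varphi=\varphi_1-\varphi_2$, let $e$ be the support projection of $\varphi^+$. I would take it to satisfy $\varphi^+=\varphi(e\,\cdot\,e)$ and $\varphi^-(e\,\cdot\,e)=0$ (Jordan decomposition with orthogonal supports). Then
\[
\varphi^+(a)=\varphi(eae)\le\varphi_1(eae)\le\cosh^2\OLC(a)\,\varphi_1(a),
\]
by positivity of $\varphi_2$ and Lemma~\ref{lem:pTp} ($eae\le\cosh^2\OLC(a)\,a$). For $\varphi\le\omega$, write $\varphi=\omega-(\omega-\varphi)$; this is a decomposition into a difference of positive functionals only if $\omega\ge0$. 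In general I would use $\varphi^+(a)=\varphi(eae)\le\omega(eae)\le\omega^+(eae)\le\cosh^2\,\omega^+(a)$. For the third ratio, apply the first with $\varphi_1=\varphi^++\omega^+$, since $\varphi+\omega=(\varphi^++\omega^+)-(\varphi^-+\omega^-)$. Each bound then passes to $\LNC$ by rescaling: $(\varphi_z)^+=(\varphi^+)_z$ and $p(za)p=z^{1/2}papz^{1/2}$.

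\emph{Upper bound in (b).} Write $\varphi+\omega=|\varphi+\omega|(\cdot\,u)$, so that
\[
|\varphi+\omega|(a)=\varphi(au^*)+\omega(au^*)\le|\varphi(au^*)|+|\omega(au^*)|.
\]
The strong form of Theorem~\ref{thm:nt2} with the contraction $y=u^*$ bounds each term by $\cosh\LNC(a)$ times $|\varphi|(a)$, respectively $|\omega|(a)$. This argument already covers arbitrary normal functionals.

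\emph{Lower bounds.} Take $a\notin\cZ$ and compress to $K=\operatorname{span}\{\eta_1,\eta_2\}$ with $D=\diag(a_1,a_2)$. For (a), take $p=\pi(|w\rangle\langle w|)$ as in Theorem~\ref{thm:pap} and a vector state $\omega_1=\omega_k$, $k=c_1\eta_1+c_2\eta_2$, with $c$ chosen to maximise $\langle papk,k\rangle/\langle ak,k\rangle$. Set $\varphi=\omega_k(p\,\cdot\,p)-\omega_k((1-p)\,\cdot\,(1-p))$. This is a Jordan decomposition with orthogonal supports, so $\varphi^+=\omega_k(p\cdot p)$ and $\varphi^+(a)/\varphi_1(a)$ with $\varphi_1=\omega_k(p\cdot p)+\omega_k((1-p)\cdot(1-p))$ is the wrong normalisation. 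The natural fix is $\varphi_1=\omega_k$ and $\varphi_2=\omega_k-\varphi$, which needs $\omega_k-\varphi\ge0$. That inequality is the main obstacle: it is a cross-term issue, and I would instead try $\varphi=\omega_k(p\cdot p)-\omega_k((1-p)\cdot(1-p))$ paired with $\varphi_1=\omega_k$. Here $\varphi_2=\omega_k(p\cdot(1-p))+\omega_k((1-p)\cdot p)+2\omega_k((1-p)\cdot(1-p))$, which is positive if $\omega_k((1-p)\cdot(1-p))$ dominates the cross term after scaling the $(1-p)$ component. The ratio is then $\langle papk,k\rangle/\langle ak,k\rangle\to\cosh^2\LNC(a)$. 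Failing that, use the $M_2$ computation directly on $\C^2$ and transfer it.

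The other two ratios in (a) follow by taking $\omega=\varphi_1$ and $(\varphi_1,-\varphi_2)$ respectively. For (b), use $\varphi$ from the proof of Theorem~\ref{thm:nt2}, made Hermitian by taking the Hermitian part of its $M_2$ model, together with $\omega=0$. Alternatively, split a rank-one Hermitian example into two positive pieces whose sum realises the Kantorovich ratio $\|Dy\|/\langle Dy,y\rangle$.
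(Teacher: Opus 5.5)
Your upper bounds are correct, and so is the reduction of the second and third suprema in (a) to the first; both coincide with the paper's argument. The paper quotes Theorem~\ref{thm:pap} for $eae\le\cosh^2\LNC(a)\,a$ instead of Lemma~\ref{lem:pTp} plus rescaling, which amounts to the same thing. The gaps are in both lower bounds.

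\textbf{Lower bound in (a).} The positivity you need fails for every scaling. For Hermitian $h\in M_2(\C)$ put $\chi_h(x)=\sum_{i,j}h_{ij}\langle x\eta_i,\eta_j\rangle$; then $\chi_h\ge0$ iff $h\ge0$. Write your vector as $k\leftrightarrow\alpha w+\beta r$ with $r\perp w$. Then $\omega_k-\omega_{pk}+t\,\omega_{(1-p)k}=\chi_H$ with
\[
H=\begin{pmatrix}0&\alpha\bar\beta\\ \bar\alpha\beta&(1+t)|\beta|^2\end{pmatrix}\quad\text{in the basis }w,r,
\]
and $\det H=-|\alpha\beta|^2<0$. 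This is because your maximiser $k\leftrightarrow D^{-1}w$ is neither parallel nor orthogonal to $w$.
\begin{itemize}
\item The cross term is linear in the $(1-p)k$-component and the diagonal term is quadratic, so no rescaling helps.
\item Falling back to $M_2$ meets the same determinant.
\item The target pair $\varphi_1=\omega_k$, $\varphi^+=\omega_{pk}$ cannot be hit exactly in any $\cM$; a limit is needed.
\end{itemize}
The missing idea is to fix the decomposition rather than the positive part:
\begin{itemize}
\item Take $\varphi_1=\chi_{|q\rangle\langle q|}$ with $q=D^{-1}w/\|D^{-1}w\|$, and $\varphi_2=t\,\chi_{|r\rangle\langle r|}$. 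Both are trivially positive.
\item Then $\varphi^+=\chi_{(h_t)_+}$ for $h_t=|q\rangle\langle q|-t|r\rangle\langle r|$. This is the Jordan decomposition, because the spectral pieces live under the orthogonal projections $\pi(|g_\pm\rangle\langle g_\pm|)$, where $g_\pm$ are the eigenvectors of $h_t$.
\item As $t\to\infty$, $(h_t)_+\to|\langle q,w\rangle|^2|w\rangle\langle w|$. So the ratio tends to $\langle D^{-1}w,w\rangle\langle Dw,w\rangle=(a_1+a_2)^2/(4a_1a_2)$, which is exactly the limit you were aiming at.
\end{itemize}

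\textbf{Lower bound in (b).} Neither of your candidates can work.
\begin{itemize}
\item With $\omega=0$ the ratio is $|\varphi|(a)/|\varphi|(a)=1$.
\item For $\varphi,\omega\ge0$ one has $|\varphi+\omega|=\varphi+\omega=|\varphi|+|\omega|$, so the ratio is again $1$.
\item The quotient $|\varphi(a)|/|\varphi|(a)$ of Theorem~\ref{thm:nt2} plays no role here.
\end{itemize}
You need two functionals of opposite sign, the centre analogue of the subadditivity example. Take the rank-one projections $P_\pm$ of the proof of Theorem~\ref{thm:subadd}, built for $D$ (so $s^2=a_2/(a_1+a_2)$, $c^2=a_1/(a_1+a_2)$), and put $\varphi=\chi_{P_+}$, $\omega=-\chi_{P_-}$. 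Then:
\begin{itemize}
\item $|\varphi|(a)+|\omega|(a)=\Tr(D(P_++P_-))=4a_1a_2/(a_1+a_2)$;
\item $P_+-P_-=2sc\,\sigma$ gives $|\varphi+\omega|=2sc(\omega_{\eta_1}+\omega_{\eta_2})$;
\item the ratio is $2sc(a_1+a_2)^2/(4a_1a_2)=(a_1+a_2)/(2\sqrt{a_1a_2})\to\cosh\LNC(a)$.
\end{itemize}
This example is Hermitian, so it serves both suprema in (b).
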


\begin{proof}
\ref{r:a} Upper bounds: let $\lambda=\cosh^2\LNC(a)$ and $e=s(\varphi^+)$. Then
$\varphi^+(x)=\varphi(exe)$, and $eae\le\lambda a$ by Theorem~\ref{thm:pap}. Hence
$\varphi^+(a)=\varphi(eae)\le\varphi_1(eae)\le\lambda\varphi_1(a)$; for $\varphi\le\omega$,
$\varphi^+(a)\le\omega(eae)\le\omega^+(eae)\le\lambda\omega^+(a)$; and with $e'=s((\varphi+\omega)^+)$,
$(\varphi+\omega)^+(a)\le(\varphi^++\omega^+)(e'ae')\le\lambda(\varphi^+(a)+\omega^+(a))$. Lower bound: the
first supremum is a special case of the other two ($\omega=\varphi_1$, resp.\ $\varphi_1$ and
$-\varphi_2$), so it suffices to treat it. With the notation of Lemma~\ref{lem:corner}\ref{c:b}, for
a Hermitian $2\times2$ matrix $h$ let $\chi_h(x)=\sum_{i,j}h_{ij}\langle x\eta_i,\eta_j\rangle=\Tr(hC_x)$,
where $C_x$ is the matrix of the compression of $x$ to $K$; thus $\chi_h(a)=\Tr(hD)$ and
$\chi_{|g\rangle\langle g|}=\omega_{g_1\eta_1+g_2\eta_2}$. If
$h=\mu_+|g_+\rangle\langle g_+|-\mu_-|g_-\rangle\langle g_-|$ is the spectral decomposition, then
$\chi_h=\mu_+\omega_{g_+}-\mu_-\omega_{g_-}$ (vectors $g_\pm$ read in $K$), and the element
$x=\pi(|g_+\rangle\langle g_+|-|g_-\rangle\langle g_-|)$ of norm $1$ gives $\chi_h(x)=\mu_++\mu_-$, so
this is the Jordan decomposition. Take $w=(1,1)/\sqrt2$, $r\perp w$, $q=D^{-1}w/\|D^{-1}w\|$ and
$h_t=|q\rangle\langle q|-t|r\rangle\langle r|$, $\varphi_1=\chi_{|q\rangle\langle q|}$,
$\varphi_2=t\chi_{|r\rangle\langle r|}$. Then $\varphi^+(a)/\varphi_1(a)=\Tr(D(h_t)_+)/\langle Dq,q\rangle$.
As $t\to\infty$, $(h_t)_+\to|\langle q,w\rangle|^2|w\rangle\langle w|$ (in the basis $w,r$ the
positive eigenvalue of $h_t$ tends to $|\langle q,w\rangle|^2$ and its eigenvector to $w$), and the
ratio tends to the Kantorovich expression
$\langle D^{-1}w,w\rangle\langle Dw,w\rangle=(a_1+a_2)^2/(4a_1a_2)$.

\ref{r:b} Upper bound: if $\chi=\varphi+\omega=|\chi|(\cdot\,u)$, then
$|\chi|(a)=\varphi(au^*)+\omega(au^*)\le\cosh\LNC(a)(|\varphi|(a)+|\omega|(a))$ by
Theorem~\ref{thm:nt2}. Lower bound: let $P_\pm$ be the projections of the proof of
Theorem~\ref{thm:subadd} built for $D$ in place of $\diag(\kappa,1)$ (i.e.\ $s^2=a_2/(a_1+a_2)$,
$c^2=a_1/(a_1+a_2)$), $\varphi=\chi_{P_+}$, $\omega=-\chi_{P_-}$. Then $|\varphi|(a)+|\omega|(a)=\Tr(D(P_++P_-))=4a_1a_2/(a_1+a_2)$,
and $\varphi+\omega=\chi_{P_+-P_-}$ with $P_+-P_-=2sc\,\sigma$, whose Jordan decomposition (as above) gives
$|\varphi+\omega|=2sc(\omega_{\eta_1}+\omega_{\eta_2})$. The ratio is
$2sc(a_1+a_2)^2/(4a_1a_2)=(a_1+a_2)/(2\sqrt{a_1a_2})$.
\end{proof}

The convexity items (vii) and (viii) of \cite[Remark~1]{novikov2015} are not treated here.

\begin{theorem}[Item 42]\label{thm:l1}
\begin{enumerate}
\item\label{l:a} If $\cM$ is finite and $A\in\cM_{++}$, then
\[
\sup_{X\ne0}\frac{\tau(|AXA|)}{\tau(A|X|A)}=\kappa_\cZ(A),\qquad
\inf_{X\ne0}\frac{\tau(|AXA|)}{\tau(A|X|A)}=\kappa_\cZ(A)^{-1}.
\]
\item\label{l:b} For every normal $\varphi\ne0$, with $a^{1/2}\varphi a^{1/2}=\varphi(a^{1/2}\cdot a^{1/2})$,
\[
\kappa_\cZ(a)^{-1/2}\le\frac{\|a^{1/2}\varphi a^{1/2}\|}{|\varphi|(a)}\le\kappa_\cZ(a)^{1/2},
\]
and if $\cM$ is finite both bounds are sharp.
\end{enumerate}
\end{theorem}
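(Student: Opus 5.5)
For part~\ref{l:a}, the plan is a Hölder estimate for the supremum, a polar-decomposition estimate for the infimum, central rescaling to pass from $\kappa$ to $\kappa_\cZ$, and the corner construction for sharpness. Part~\ref{l:b} follows the same pattern with the GNS Cauchy--Schwarz inequality in place of Hölder, and in the finite case it reduces to~\ref{l:a}.

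\emph{Supremum in \ref{l:a}.} Write $X=U|X|$ with $U$ unitary. By Hölder's inequality,
\[
\tau(|AXA|)\le\|AU|X|^{1/2}\|_2\,\||X|^{1/2}A\|_2=\tau(A|X^*|A)^{1/2}\,\tau(A|X|A)^{1/2}.
\]
Both traces are values of $\varphi_{A^2}=\tau(A^2\,\cdot\,)$, at $|X^*|$ and at $|X|$. Theorem~\ref{thm:modulus} with $\rho=A^2$ and $q=1$, together with $\kappa_\cZ(A^2)=\kappa_\cZ(A)^2$ (Proposition~\ref{prop:char}\ref{ch:prop}), gives $\tau(A|X^*|A)\le\kappa_\cZ(A)^2\tau(A|X|A)$. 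Hence the ratio is at most $\kappa_\cZ(A)$.

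\emph{Infimum in \ref{l:a}.} For the reverse estimate I start from
\[
\tau(A|X|A)=\tau(A^2U^*X)=\tau\bigl(AU^*A^{-1}\cdot AXA\bigr)\le\|AU^*A^{-1}\|\,\tau(|AXA|).
\]
The left side is unchanged under $A\mapsto zA$ for $z\in\cZ_{++}$ up to the common factor $z^2$ inside $\tau$, and so is the right side; hence I may take $A$ with $\kappa(A)=\kappa_\cZ(A)$ (Proposition~\ref{prop:char}\ref{ch:resc}). Then $\|AU^*A^{-1}\|\le\kappa_\cZ(A)$, which is exactly the easy half of Remark~\ref{rem:ad}, reproved inline by this rescaling. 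So the ratio is at least $\kappa_\cZ(A)^{-1}$.

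\emph{Sharpness in \ref{l:a}.} In $M_2$ with $A=\diag(\alpha,\beta)$ and $X=e_{12}$ the ratio is $\alpha\beta/\beta^2=\alpha/\beta$, and $X=e_{21}$ gives $\beta/\alpha$. To move this into $\cM$ I use the corner $v$ of Lemma~\ref{lem:corner} with $a=A$. The ratio is built from $\tau$ rather than from $\varphi$, so it is not literally a violation constant in the sense of Definition~\ref{def:viol}. Instead I estimate directly:
\[
|AvA|^2=Av^*A^2vA,\qquad v^*A^2v\ge(\alpha-2\varepsilon)^2f_2,\qquad f_2Af_2\ge(\beta-\varepsilon)f_2,
\]
and $\tau(A|v|A)=\tau(A^2f_2)\le(\beta+2\varepsilon)^2t$. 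This gives a ratio tending to $\alpha/\beta=\kappa_\cZ(A)$, and $v^*$ gives the infimum. I expect this step to be the main obstacle: turning the operator lower bound on $|AvA|^2$ into a lower bound on $\tau(|AvA|)$ requires operator monotonicity of the square root, applied to $|AvA|^2\ge(\alpha-2\varepsilon)^2Af_2A$ after comparing $Af_2A$ with a multiple of $f_2$ via Lemma~\ref{lem:corner}\ref{c:a}.

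For~\ref{l:b}, first note central invariance. By Lemma~\ref{lem:funct}, $(za)^{1/2}\varphi(za)^{1/2}=a^{1/2}\varphi_za^{1/2}$ and $|\varphi|(za)=|\varphi_z|(a)$, so the pair $(za,\varphi)$ has the same ratio as $(a,\varphi_z)$. I therefore assume $\kappa(a)=\kappa_\cZ(a)$, with $m\le a\le M$.

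\emph{Upper bound in \ref{l:b}.} Let $\omega=|\varphi|$, $\varphi=\omega(\cdot\,u)$, with GNS vector $\xi$. For a contraction $y$,
\[
|\varphi(a^{1/2}ya^{1/2})|=|\langle\pi(ya^{1/2}u)\xi,\pi(a^{1/2})\xi\rangle|\le\omega(u^*au)^{1/2}\omega(a)^{1/2}.
\]
Since $u^*u=s(\omega)$, we have $\omega(u^*au)\le M\omega(1)\le\kappa\,\omega(a)$, which gives the upper bound $\kappa^{1/2}$.

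\emph{Lower bound in \ref{l:b}.} Write
\[
|\varphi|(a)=\varphi(au^*)=(a^{1/2}\varphi a^{1/2})(a^{-1/2}u^*a^{1/2})\le\|a^{1/2}\varphi a^{1/2}\|\,\|a^{-1/2}u^*a^{1/2}\|,
\]
and $\|a^{-1/2}u^*a^{1/2}\|\le\kappa^{1/2}$. This gives the lower bound $\kappa^{-1/2}$.

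\emph{Finite case of \ref{l:b}.} Writing $\varphi=\tau(\cdot\,h)$, we get $\|a^{1/2}\varphi a^{1/2}\|=\tau(|a^{1/2}ha^{1/2}|)$ and $|\varphi|(a)=\tau(a^{1/2}|h|a^{1/2})$. This is~\ref{l:a} with $A=a^{1/2}$, and $\kappa_\cZ(a^{1/2})=\kappa_\cZ(a)^{1/2}$, so sharpness of~\ref{l:b} follows from sharpness of~\ref{l:a}.
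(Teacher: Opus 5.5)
Your proposal is correct and follows the paper's proof step by step: H\"older's inequality plus Theorem~\ref{thm:modulus} for the density $A^2$ give the supremum, $\|AU^*A^{-1}\|\le\kappa(A)$ after central rescaling gives the infimum, the corner partial isometry $v$ (and $v^*$) gives sharpness, and the GNS Cauchy--Schwarz estimate proves \ref{l:b}, which in the finite case is reduced to \ref{l:a}.

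Two small corrections. First, in the lower bound of \ref{l:b} the test element must be $a^{1/2}u^*a^{-1/2}$. Your element $a^{-1/2}u^*a^{1/2}$ yields $\varphi(u^*a)=|\varphi|(u^*au)$, not $\varphi(au^*)=|\varphi|(a)$; the corrected element has the same norm bound $\kappa^{1/2}$. Second, for $X=v$ the paper avoids square roots altogether with the estimate $\tau(|AvA|)\ge\tau(v^*AvA)\ge(\alpha-2\varepsilon)\tau(f_2Af_2)$. In your route, $Af_2A$ can be compared with a multiple of $f_2$ only under the trace, for example via $\tau((Af_2A)^{1/2})=\tau((f_2A^2f_2)^{1/2})$, because $f_2$ need not commute with $A$.
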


\begin{proof}
\ref{l:a} Let $X=U|X|$. By H\"older's inequality,
\[
\tau(|AXA|)\le\|AU|X|^{1/2}\|_2\,\||X|^{1/2}A\|_2=\tau(A^2|X^*|)^{1/2}\tau(A^2|X|)^{1/2},
\]
and Theorem~\ref{thm:modulus} for the density $A^2$ gives
$\tau(A^2|X^*|)\le\kappa_\cZ(A^2)\tau(A^2|X|)$, where $\kappa_\cZ(A^2)=\kappa_\cZ(A)^2$. For the infimum,
$|X|=U^*X$ gives
\[
\tau(A|X|A)=\tau((AU^*A^{-1})(AXA))\le\|AU^*A^{-1}\|\,\tau(|AXA|)\le\kappa(A)\,\tau(|AXA|),
\]
and the ratio at $(zA,X)$ equals that at $(A,z^2X)$ for $z\in\cZ_{++}$, so $\kappa(A)$ may be
replaced by $\kappa_\cZ(A)$. Sharpness ($A\notin\cZ$): with Lemma~\ref{lem:corner}\ref{c:a},\ref{c:c} for $A$,
$X=v$ gives
$\tau(|AvA|)\ge\tau(v^*AvA)\ge(\alpha-2\varepsilon)\tau(f_2Af_2)\ge(\alpha-2\varepsilon)(\beta-\varepsilon)t$
and $\tau(A|v|A)=\tau(f_2A^2f_2)\le(\beta+2\varepsilon)^2t$; $X=v^*$ gives, since
$Av^*A=(AvA)^*$, $\tau(|Y^*|)=\tau(|Y|)$ and $AvA=(f_1A)^*(vf_2A)$,
$\tau(|Av^*A|)=\tau(|AvA|)\le\|Af_1\|_2\|vf_2A\|_2\le(\alpha+\varepsilon)(\beta+2\varepsilon)t$ and
$\tau(A|v^*|A)=\tau(f_1A^2f_1)\ge(\alpha-2\varepsilon)^2t$.

\ref{l:b} Let $\chi=a^{1/2}\varphi a^{1/2}$ and $\omega=|\varphi|$. By the Cauchy--Schwarz inequality,
\[
|\chi(x)|=|\omega(a^{1/2}xa^{1/2}u)|\le\omega(a^{1/2}xx^*a^{1/2})^{1/2}\omega(u^*au)^{1/2}\le\|x\|\,\omega(a)^{1/2}(\|a\|\omega(1))^{1/2},
\]
and $\omega(1)\le\|a^{-1}\|\omega(a)$, so $\|\chi\|\le\kappa(a)^{1/2}|\varphi|(a)$. Conversely
$|\varphi|(a)=\varphi(au^*)=\chi(a^{1/2}u^*a^{-1/2})\le\kappa(a)^{1/2}\|\chi\|$. By Lemma~\ref{lem:funct}
the ratio at $(za,\varphi)$ equals the ratio at $(a,\varphi_z)$, so $\kappa(a)$ may be replaced by
$\kappa_\cZ(a)$. If $\cM$ is finite and $\varphi=\tau(\cdot\,h)$ with $h=w|h|\in\cM$, then
$|\varphi|=\tau(|h|\,\cdot)$, $|\varphi|(a)=\tau(a^{1/2}|h|a^{1/2})$ and
$\|\chi\|=\tau(|a^{1/2}ha^{1/2}|)$, so sharpness follows from \ref{l:a} with $A=a^{1/2}$ and
$\kappa_\cZ(a^{1/2})=\kappa_\cZ(a)^{1/2}$.
\end{proof}

\begin{remark}\label{rem:centregen}
Theorems~\ref{thm:pap}, \ref{thm:nt2} and~\ref{thm:ntrest} and the bounds in
Theorem~\ref{thm:l1}\ref{l:b} hold in arbitrary von Neumann algebras, because the corner construction
in its vector form (Lemma~\ref{lem:corner}\ref{c:b}) needs no trace. Theorem~\ref{thm:l1}\ref{l:a}
and the sharpness in \ref{l:b} are proved for finite algebras only. All constants in this section
are finite because $a$ is invertible. We do not treat non-invertible $a$; the natural condition
for finiteness would then be $\flr a\ge c\ceil a$ for some $c>0$, which in a factor holds only for
$a=0$, but we do not prove that it is necessary.
\end{remark}

\section{Contrast versus noncentrality}\label{sec:compare}

Call a violation constant treated in finite von Neumann algebras in Section~\ref{sec:catalogue} or in
Section~\ref{sec:centre} \emph{exact} if it was shown
to be $f(\kappa_\cZ)$ for a function $f$ given in closed form; these are the constants of
Theorems~\ref{thm:gardner}, \ref{thm:subadd}, \ref{thm:modulus}, \ref{thm:pz2}, \ref{thm:ba}
($\gamma$), \ref{thm:young}, \ref{thm:mono2}, \ref{thm:proj}, \ref{thm:pap}, \ref{thm:nt2},
\ref{thm:ntrest}, \ref{thm:l1} (for finite $\cM$), of Corollaries~\ref{cor:item3}, \ref{cor:item5}, \ref{cor:item8}
and of Propositions~\ref{prop:youngsa}, \ref{prop:item21} and~\ref{prop:item22}; for an
infimum (as in
Theorems~\ref{thm:modulus} and~\ref{thm:l1}) we use its reciprocal, which is a supremum as in
Definition~\ref{def:viol}. In all these cases $f$ is continuous and strictly increasing on
$[1,\infty)$ with $f(1)=1$ ($f(1)=0$ for $\gamma$ and for item~21). The constant of item~37
(Proposition~\ref{prop:pb}\ref{pb:a}) is also exact, equal to $1$ on $\cZ$ and to $+\infty$ elsewhere; parts
\ref{cp:a} and~\ref{cp:b} below concern the exact constants with finite $f$, and parts \ref{cp:c}
and~\ref{cp:d} hold for item~37 as well.

\begin{theorem}\label{thm:compare}
Let $\rho\in\cM_{++}$ (with $\cM$ finite for the constants of Section~\ref{sec:catalogue} and of
Theorem~\ref{thm:l1}).
\begin{enumerate}
\item\label{cp:a} For every exact constant with finite $f$, $c(\rho)=f(\kappa_\cZ(\rho))\le f(\kappa(\rho))$: the
bound given by the contrast is valid.
\item\label{cp:b} $c(\rho)<f(\kappa(\rho))$ if and only if $\max_\Omega\ceil\rho$ and $\min_\Omega\flr\rho$
are not attained at a common point of $\Omega$. In a factor $c(\rho)=f(\kappa(\rho))$.
\item\label{cp:c} The loss is unbounded: in $\cM=M_2(\C)\oplus M_2(\C)$ with any faithful tracial state,
$\rho_\lambda=\diag(1,4)\oplus\lambda\diag(1,4)$ ($\lambda\ge1$) has Gardner constant
$\cosh\LNC(\rho_\lambda)=5/4$, while $\cosh\OLC(\rho_\lambda)=(1+4\lambda)/(2\sqrt{4\lambda})\to\infty$; and
every central $\rho$ has all constants equal to their tracial value, whatever its contrast. (Items 11
and 24, which characterise the canonical trace, are not violation constants in this sense and are
violated by every non-scalar central density, Remark~\ref{rem:sectors}.)
\item\label{cp:d} Let $\cM$ be neither a factor nor abelian. For every $t>1$ there are $\rho_1,\rho_2\in\cM_{++}$ with
$\kappa(\rho_1)=\kappa(\rho_2)=t$, $\kappa_\cZ(\rho_1)=1$ and $\kappa_\cZ(\rho_2)=t$. Consequently, for every
inequality treated in finite von Neumann algebras in Sections~\ref{sec:catalogue} and~\ref{sec:centre}
the violation constant is not a function
of the contrast on $\cM$, whereas every exact constant is a function of the noncentrality $\NC$
(equivalently, of $\kappa_\cZ$).
\end{enumerate}
\end{theorem}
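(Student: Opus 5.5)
For part~\ref{cp:a} I would use only two facts: the exact constants satisfy $c(\rho)=f(\kappa_\cZ(\rho))$ with $f$ continuous and strictly increasing, and $\kappa_\cZ\le\kappa$ (Proposition~\ref{prop:char}\ref{ch:prop}). For part~\ref{cp:b}, strict monotonicity of $f$ turns $c(\rho)<f(\kappa(\rho))$ into $\kappa_\cZ(\rho)<\kappa(\rho)$. By Corollary~\ref{cor:norm} this reads $\max_\Omega(u/l)<\max_\Omega u/\min_\Omega l$, with $u=\ceil\rho$ and $l=\flr\rho$ continuous on the compact space $\Omega$.

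If $u$ attains its maximum and $l$ its minimum at a common point $\omega_0$, then $(u/l)(\omega_0)=\max u/\min l$, so equality holds. Conversely, suppose equality holds and let $\omega_1$ maximise $u/l$. Then $u(\omega_1)/l(\omega_1)=\max u/\min l$. Since $u(\omega_1)\le\max u$ and $l(\omega_1)\ge\min l>0$, both inequalities must be equalities, so $\omega_1$ is a common point. In a factor $\Omega$ is a single point, and Proposition~\ref{prop:char} gives $\kappa_\cZ=\kappa$.

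For part~\ref{cp:c} I would compute directly. Envelopes are computed summandwise, so $\kappa_\cZ(\rho_\lambda)=\max(4,4)=4$, and $\cosh\LNC=(1+4)/(2\cdot2)=5/4$ by Theorem~\ref{thm:gardner}. The spectrum of $\rho_\lambda$ is $\{1,4,\lambda,4\lambda\}$, so $\kappa(\rho_\lambda)=4\lambda$ and $\cosh\OLC(\rho_\lambda)=(1+4\lambda)/(2\sqrt{4\lambda})\to\infty$. For central $\rho$ we have $\kappa_\cZ=1$, so the value is $f(1)$, the tracial value. Item~37 is also covered, since it is $1$ on $\cZ$. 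The statement about items 11 and 24 is deferred to Remark~\ref{rem:sectors}.

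Part~\ref{cp:d} is the main point. Since $\cM$ is not abelian, there is a projection $e$ that is not central. Since $\cM$ is not a factor, there is a central projection $q\ne0,1$. I would then look for a non-central projection $e'$ and a central projection $q'$ with $q'\ne0,1$ and $e'\le q'$; a non-central projection lives somewhere in the centre, so I expect this to be possible. The candidate densities are $\rho_1=q'+t(1-q')$, which is central with $\kappa=t$, and $\rho_2=1+(t-1)e'$. For $\rho_2$ I need $\kappa_\cZ(\rho_2)=t$. Lemma~\ref{lem:env} gives $\ceil{e'}=c(e')$ and $\flr{e'}=1-c(1-e')$. So it suffices to find a point of $\Omega$ where $c(e')=1$ and $c(1-e')=1$, that is, a point in the clopen set $c(e')c(1-e')\ne0$. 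This set is nonempty because $e'$ is not central: otherwise $e'$ and $1-e'$ would have orthogonal central supports summing to $1$, and $e'$ would equal $c(e')$. At such a point $u/l=t$.

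The main obstacle is the step of choosing $e'$ inside a proper central summand $q'$. If $e$ itself is not dominated by a proper central projection, I would take $e'=eq$ or $e'=e(1-q)$. One of these is non-central, since $e=eq+e(1-q)$ and both pieces central would make $e$ central; then set $q'=q$ or $q'=1-q$ accordingly. Finally, for each inequality, the lower bound of Theorem~\ref{thm:transfer}\ref{t:low} together with the characterisation theorem gives $c(\rho_2)>1=c(\rho_1)$. Hence $c$ is not a function of $\kappa$, and the exact constants are functions of $\kappa_\cZ$ by definition.
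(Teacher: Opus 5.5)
Your proofs of (a)--(c) and your two densities in (d) are the paper's. The detour through $e'\le q'$ is unnecessary: $\rho_1$ and $\rho_2$ need no relation to each other. The paper simply takes any central projection $P\ne0,1$ and any non-central projection $e$, for which $c(e)c(1-e)\ne0$ exactly as you argue. So what you call the main obstacle is not one.

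The one real gap is your final sentence, where you claim that Theorem~\ref{thm:transfer}\ref{t:low} together with ``the characterisation theorem'' gives $c(\rho_2)>1=c(\rho_1)$ for each inequality. There are three problems with it.
\begin{itemize}
\item Theorem~\ref{thm:transfer}\ref{t:low} is proved only for violation constants in the sense of Definition~\ref{def:viol}. Several of the constants you must separate fall outside that definition:
\begin{itemize}
\item the spectral geometric mean (item~6), whose variables are invertible;
\item $m_{\exp}$, $c_{\exp}$, $c^{\rm pr}_{\exp}$, $b$, $c_{\rm PB}$ and item~37, since $\exp(0)\ne0$;
\item all constants of Section~\ref{sec:centre}, which are suprema over functionals or projections for a fixed element $a$, not over operator tuples for a density.
\end{itemize}
\item A published characterisation applied in $M_2$ is not always a statement about the single constant at hand. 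Item~31 has none, and for item~38 the paper proves $c_{r,q}>1$ for each fixed $q$ directly (Proposition~\ref{prop:alt}).
\item The tracial value is $0$, not $1$, for $\gamma$ (Theorem~\ref{thm:ba}) and for item~21 (Proposition~\ref{prop:item21}).
\end{itemize}

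The repair is the paper's argument, split into three cases.
\begin{itemize}
\item For every exact constant, $c(\rho_1)=f(1)$ is the tracial value and $c(\rho_2)=f(t)>f(1)$, because $f$ is strictly increasing. This already covers $\gamma$, item~21, and the centre constants of Theorems~\ref{thm:pap}, \ref{thm:nt2}, \ref{thm:ntrest} and~\ref{thm:l1}.
\item For the constants with only bounds, cite the item-specific results: Propositions~\ref{prop:pzk}, \ref{prop:youngpos}, \ref{prop:monop}, \ref{prop:af}, \ref{prop:bikexp}, \ref{prop:pb}, \ref{prop:alt}, \ref{prop:sgm}, \ref{prop:altlow} and Theorem~\ref{thm:tik}. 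Each gives an upper bound equal to $1$ at $\kappa_\cZ=1$ and a value $>1$ whenever $\kappa_\cZ>1$. For item~6 and the exponential constants, the lower bounds there come from direct modifications of the corner construction, not from Theorem~\ref{thm:transfer}\ref{t:low} itself.
\item Item~37 is handled by Proposition~\ref{prop:pb}\ref{pb:a}, which gives the value $+\infty$ off the centre.
\end{itemize}
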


\begin{proof}
\ref{cp:a} $\kappa_\cZ\le\kappa$ and $f$ is increasing.
\ref{cp:b} Let $u=\ceil\rho$, $l=\flr\rho$. By Corollary~\ref{cor:norm}, $\kappa=\max u/\min l$, and
$\kappa_\cZ=\max(u/l)$. Since $u(\omega)\le\max u$ and $l(\omega)\ge\min l$, $u(\omega)/l(\omega)=\kappa$
iff $u(\omega)=\max u$ and $l(\omega)=\min l$. As $f$ is strictly increasing, $c<f(\kappa)$ iff
$\kappa_\cZ<\kappa$. In a factor $\Omega$ is a point.
\ref{cp:c} $\kappa_\cZ(\rho_\lambda)=4$ by Proposition~\ref{prop:char}\ref{ch:prop} and $\kappa(\rho_\lambda)=4\lambda$;
$\cosh(\ln2)=5/4$. For central $\rho$, $\kappa_\cZ=1$.
\ref{cp:d} Since $\cM$ is not a factor, there is a central projection $P\ne0,1$; put
$\rho_1=P+t(1-P)\in\cZ$. Since $\cM$ is not abelian, there is a non-central projection $e$; then
$c(e)c(1-e)\ne0$ (otherwise $1-e\le1-c(e)$, i.e.\ $c(e)\le e$ and $e=c(e)$). Put $\rho_2=1+(t-1)e$. Its
spectrum is $\{1,t\}$, and by Lemma~\ref{lem:env}, $\ceil{\rho_2}=1+(t-1)c(e)$ and
$\flr{\rho_2}=1+(t-1)(1-c(1-e))$, so $\ceil{\rho_2}/\flr{\rho_2}=t$ on $c(e)c(1-e)$ and
$\kappa_\cZ(\rho_2)=t$. Every such violation constant equals
its tracial value ($1$, and $0$ for $\gamma$ and for item~21) iff $\kappa_\cZ=1$, and exceeds it otherwise: for the
exact ones this is the formula. For the others, namely Propositions~\ref{prop:pzk},
\ref{prop:youngpos}, \ref{prop:monop}, \ref{prop:af}, \ref{prop:bikexp}, \ref{prop:pb},
\ref{prop:alt} (with $r>1$), \ref{prop:sgm}, \ref{prop:altlow} and Theorem~\ref{thm:tik}, the upper bound equals $1$ at $\kappa_\cZ=1$
and the constant exceeds $1$ for $\kappa_\cZ>1$. For the Peierls--Bogoliubov inequality in the
form with $\varphi(Be^A)$ the constant is $1$ for central $\rho$ and $+\infty$ otherwise. Hence
$c(\rho_1)$ is the tracial value and $c(\rho_2)$ exceeds it, although $\OMC(\rho_1)=\OMC(\rho_2)$.
\end{proof}

Conversely, in a factor the contrast determines every exact constant, and in an abelian algebra
every functional is tracial and all constants equal their tracial values. Thus, for the exact
constants, the contrast gives the correct value for every $\rho$ precisely in factors (in an abelian
algebra the constants equal their tracial value $f(1)$, which is $<f(\kappa(\rho))$ unless $\rho$
is scalar), while the noncentrality gives it in every von Neumann algebra (every finite one for the
constants of Section~\ref{sec:catalogue} and of Theorem~\ref{thm:l1}). For the Petz--Zem\'anek constants $c_k$, $k\ge3$, even the noncentrality does
not suffice: by Propositions~\ref{prop:pz3} and~\ref{prop:pzk4}, $c_k$ is not a function of
$\kappa_\cZ=\kappa$ on $M_k(\C)$. Nor are the constants $a$, $c_{\exp}$ and $c^{\rm pr}_{\exp}$
functions of $\kappa_\cZ$ (Propositions~\ref{prop:af} and~\ref{prop:bikexp}). For the other
constants for which only bounds are proved, whether they are functions of $\kappa_\cZ$ is open
(Section~\ref{sec:open}).

\subsection{Finite direct sums}\label{ssec:sectors}
Let $\M=\bigoplus_{c=1}^CM_{k_c}$, with the trace $\Tr=\sum_c\Tr_c$ and $\varphi_\rho=\Tr(\rho\,\cdot\,)$,
$\rho=\bigoplus_c\rho_c\ge0$. The traces on $\M$ are the functionals $\sum_ct_c\Tr_c$, $t_c\ge0$; so $\varphi_\rho$ is a
trace iff $\rho$ is central, whatever its global contrast $\OMC(\rho)$. By Remark~\ref{rem:blocks},
$\NC(\rho)=\max_c\OMC(\rho_c)$ is the contrast inside the blocks, and for every exact constant
Theorem~\ref{thm:compare} gives $c_\M(\rho)=f(\max_c\kappa(\rho_c))=\max_cc_{M_{k_c}}(\rho_c)$. The following
corollary extends this, as an inequality, to the items of Table~\ref{tab:items} for which only matrix
envelopes are known; its proof is elementary.

\begin{corollary}[Direct sums]\label{cor:sectors}
Let $\rho=\bigoplus_c\rho_c>0$ in $\M$, and let $V_\M(\rho)$ and $V_{M_{k_c}}(\rho_c)$ be the suprema of the
normalised violation of one of the items 1--25 of Table~\ref{tab:items}, other than 11, 16 and 24, for
$\varphi_\rho$ on $\M$ and for $\varphi_{\rho_c}$ on $M_{k_c}$. Then
\[
  V_\M(\rho)\le\max_cV_{M_{k_c}}(\rho_c),
\]
with equality for items 1--10, 12--14, 17, 18, 21--23 and~25. In particular, if every $\rho_c$ is scalar, i.e.\
$\NC(\rho)=0$, none of the items 1--25 except 11 and 24 is violated, however different the scalars are; and
the envelope on $\M$ along the noncentrality $\NC(\rho)=\max_c\OMC(\rho_c)=\delta$ is at most the largest
envelope $V_{k_c}(\delta)$ of the blocks.
\end{corollary}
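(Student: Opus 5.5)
The plan is to split every quantity over the blocks and compare quotients of sums. Write $\varphi_\rho(Y)=\sum_c\varphi_{\rho_c}(Y_c)$ for $Y=\bigoplus_cY_c$. Every operator variable of an item is an element of $\M$, hence a direct sum $X=\bigoplus_cX_c$. Every admissible expression is computed blockwise: moduli, powers, products, supports, $P\wedge Q$, $P\vee Q$, $A\natural B$ and exponentials all act sector by sector. The constraints (positivity, projection, $A\le B$) also hold blockwise. Hence for linear items, i.e.\ those with $\cL=\sum_c\cL_c$, $\cR=\sum_c\cR_c$ and $\cN=\sum_c\cN_c$, the basic estimate is
\[
\frac{\cL-\cR}{\cN}=\frac{\sum_c(\cL_c-\cR_c)}{\sum_c\cN_c}\le\max_c\frac{\cL_c-\cR_c}{\cN_c},
\]
where blocks with $\cN_c=0$ are dropped after checking that they have $\cL_c\le\cR_c$ (they then only help). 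This is the same ``quotient of sums'' step used in the proof of Theorem~\ref{thm:projP}. It covers items 2, 5, 8 (in the form $\frac12(\cdots)$), 9, 10, 12, 13, 17--19, 21--23 and 25 directly, and item 3 because $\varphi(A+A^*)$ is additive. Item 1 needs one more step: $|\sum_c\varphi_c(X_c)|\le\sum_c|\varphi_c(X_c)|$, and then the same quotient estimate applies. Item 4 is handled the same way with $|\varphi(A^2)|\le\sum_c|\varphi_c(A_c^2)|$. Item 15 and item 20 are linear in $\varphi$ with $\cN=\cR$, so they reduce directly.

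The multiplicative items need separate handling. For item 6 I would use Cauchy--Schwarz in the form $\sum_c\sqrt{a_cb_c}\le\sqrt{\sum a_c\sum b_c}$, so that
\[
\sum_c\varphi_c(A_c\natural B_c)\le\max_c c_c\sum_c\sqrt{\varphi_c(A_c)\varphi_c(B_c)}\le\max_c c_c\sqrt{\varphi(A)\varphi(B)}.
\]
Item 7 is linear. For item 14 I would use H\"older with exponents $3$ and $3/2$, which gives $\sum_c p_c^{1/3}q_c^{2/3}\le(\sum p_c)^{1/3}(\sum q_c)^{2/3}$ and bounds $|\varphi(PQ)|\le\sum_c|\varphi_c(P_cQ_c)|$ in the same way. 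Item 8 can be done either linearly or, more sharply, by the same Cauchy--Schwarz argument. For relative violations with $\cN=\cR$ these bounds translate into $V_\M\le\max_cV_{M_{k_c}}$. Items 11, 16 and 24 are excluded from the statement, so they need no argument.

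For the equality cases I would pick the block $c$ attaining the maximum and take the variables equal to a near-extremal tuple there and trivial elsewhere: $0$ for arbitrary, positive or projection variables, and, for item 6, $\varepsilon\cdot1$ on the other blocks with $\varepsilon\to0$. With these choices the other blocks contribute $0$ to $\cL$, $\cR$ and $\cN$, or a vanishing amount, and the block quotient is recovered. This is where the listed exceptions must be checked item by item. Items 15, 19 and 20 are excluded from equality because a zero variable still contributes $\varphi(1-\cdots)$ or $\varphi(e^0)$ in the other blocks, which is the dilution phenomenon of Remark~\ref{rem:item20}. Item 25 and item 10 are fine with $A=B=0$ on the other blocks. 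I expect this item-by-item verification, that each item's normalisation or exponential does or does not leave a residual contribution in unused blocks, to be the main obstacle; the inequality itself is routine.

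Finally, for the consequences: if every $\rho_c$ is scalar, each block functional is a trace, so each $V_{M_{k_c}}(\rho_c)\le0$ by the characterisation theorems, and the bound gives $V_\M\le0$. For the envelope, each $V_{M_{k_c}}(\rho_c)$ is at most the block envelope $V_{k_c}(\OMC(\rho_c))$. Applying the inequality then bounds the envelope on $\M$ by $\max_cV_{k_c}(\delta)$, using that the block envelopes are nondecreasing in $\delta$ where needed. If monotonicity is not available for some item, the same statement can be phrased with $\sup_{\delta'\le\delta}$ instead.
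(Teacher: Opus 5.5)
Your plan is the paper's proof. It uses blockwise functional calculus, the quotient-of-sums estimate after discarding blocks with $\cN_c=0$, and the triangle inequality for the absolute values in items 1, 4 and 14. For the non-additive right-hand sides of items 6 and 14 you use Cauchy--Schwarz and H\"older. For the equality cases you pad by $0$ (by $\varepsilon1$ in item~6), with dilution through $\varphi(1)$ explaining why items 15, 19 and 20 are missing from the equality list.

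There is one real omission, in the consequences. The clause that none of the items 1--25 except 11 and 24 is violated for scalar $\rho_c$ includes item~16. You set item~16 aside together with 11 and 24, and your main inequality does not cover it, so ``the bound gives $V_\M\le0$'' does not reach it. The repair is the paper's one line. If every $\rho_c$ is scalar, then $\rho$ is central, so $\varphi_\rho$ is a trace on $\M$, and the Peierls--Bogoliubov inequality holds for every trace. This observation in fact gives the whole clause directly, for every item that holds for all traces.

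On the envelope statement, you are right that one needs $V_{M_{k_c}}(\rho_c)\le V_{k_c}(\delta)$ also for blocks with $0<\OMC(\rho_c)<\delta$; the paper's proof does not spell this out. You need not fall back to a $\sup_{\delta'\le\delta}$ version, because the monotonicity can be proved:
\begin{itemize}
\item Fix the variables and move along $\rho_s=(1+s)\rho_c-s\,\bar\lambda(\rho_c)1$, whose contrast is $(1+s)\OMC(\rho_c)$.
\item Replace $|\varphi_{\rho_s}(\cdot)|$ by $\re(e^{-i\theta}\varphi_{\rho_s}(\cdot))$, where $\theta$ is the argument at $s=0$.
\item In items 6 and 14, replace $\sqrt{pq}$ and $p^{1/3}q^{2/3}$ by the weighted arithmetic means that touch them at $s=0$.
\end{itemize}
The resulting quotient is linear-fractional in $s$, with positive denominator on $[-1,\delta/\OMC(\rho_c)-1]$. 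It agrees with the true quotient at $s=0$ and is a lower bound for it everywhere. At the scalar point $s=-1$ the functional is a trace, so the modified quotient shows no violation there. If $\rho_c$ gives a violation at $s=0$, the modified quotient, being monotone, is therefore nondecreasing on this interval. Hence the true quotient at contrast $\delta$ is at least as large as at $s=0$.
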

\begin{proof}
All variables of the items are in $\M$, hence block diagonal, and every functional calculus acts blockwise. Let $\cL_c$,
$\cR_c$, $\cN_c$ be the expressions of the item for the block variables and $\varphi_{\rho_c}$. For all items except 11, 16 and~24,
$\cL\le\sum_cL_c$ (with equality unless $\cL$ contains an absolute value of a functional, as in 1, 4 and~14),
$\cN=\sum_cN_c$, and $\cR=\sum_cR_c$, except in items 6 and~14, where $\cR=\sqrt{\sum_c\varphi_c(A_c)\sum_c\varphi_c(B_c)}\ge
\sum_cR_c$ by the Cauchy--Schwarz inequality and $\cR\ge\sum_cR_c$ by Hölder's inequality. Since $\rho>0$, $\cN_c=0$ forces the
variables of block~$c$ to vanish where they enter $\cL_c$, so $\cL_c\le \cR_c$ there. Hence
$(\cL-\cR)/\cN\le\sum_c(\cL_c-\cR_c)/\sum_cN_c\le\max_c(\cL_c-\cR_c)/\cN_c$, the maximum over the blocks with $\cN_c>0$ (for $\cN=\cR$ in items 6
and~14 use $\cL/\cR\le\sum_cL_c/\sum_cR_c$ instead). This proves the inequality. For the equality, in the
listed items $\cN$ is additive and the variables may vanish on all blocks but one without making $\cN$ vanish; restricting a
near-optimal configuration of one block and setting the variables to zero on the other blocks reproduces its quotient
(in 19, 20 and 15, $\cN$ contains $\varphi(1)$ and the other blocks dilute the violation). For item~6, whose
variables are invertible, set them equal to $\varepsilon1$ on the other blocks instead; their contributions to
$\cL$, $\cR$ and $\cN$ are $O(\varepsilon)$, and the quotient follows as $\varepsilon\to0$. If every $\rho_c$ is scalar, every
$V_{M_{k_c}}(\rho_c)$ vanishes, since $\varphi_{\rho_c}$ is then a trace on $M_{k_c}$; for item~16 the same conclusion holds
because the Peierls--Bogoliubov inequality holds for every trace on $\M$.
\end{proof}

\begin{remark}\label{rem:sectors}
The corollary is a statement about the characterisations, not about the inequalities: a non-central density is detected by
every characterisation of traces, but only through the variation \emph{inside} the blocks. Items 11 and 24 characterise
the \emph{canonical} trace $\Tr$ among the functionals with $\varphi(1)=\dim$, and central $\rho$ violate them; for a fixed
density their violations are $1-\mathrm{GM}/\mathrm{AM}$ and $\lambda_{\min}(\rho)^{-1/2}-1$, computed from the eigenvalues of $\rho$ normalised to
$\Tr \rho=\sum_ck_c$ (Propositions~\ref{prop:det} and~\ref{prop:power}; the proofs do not use the block structure).
\end{remark}

\begin{remark}[Dilution]\label{rem:dilute}
In items 15, 19 and 20 the normalisation contains $\varphi(1)$, and the other blocks dilute the violation:
a block on which the variables vanish contributes to $\cN$ and nothing positive to $\cL-\cR$, and the inequality of Corollary~\ref{cor:sectors} is in
general strict. For item~20, Proposition~\ref{prop:bikexp}\ref{be:c} shows that the violation tends to $0$
as the weight of a central summand grows while $\kappa_\cZ$ is fixed (Remark~\ref{rem:item20}); so its
constant is not a function of $\kappa_\cZ$. On $M_n$ the envelopes of items 19 and 20 depend on the parity of
$n$ (Theorem~\ref{thm:projP}). The numerical search on $M_2\oplus M_3$ confirms the corollary
(Section~\ref{sec:numerics}).
\end{remark}

\section{Contrasts of composite operators}\label{sec:composite}

The operators of Table~\ref{tab:items} enter the inequalities through composites: sums $A+B$ (items 2, 10, 12, 13),
congruences $B^{1/2}AB^{1/2}$ (items 6, 7), means (item~6), Jordan products $\Re PQ=\frac12(PQ+QP)$ (items 9, 21, 22),
and, behind every characterisation, the commutator $AB-BA$, on which a functional vanishes if and only if it is a trace.
This section describes how the contrast of a composite depends on the contrasts of its factors, and then uses the
answers to explain the thresholds observed in the numerical search. Two reparametrisations of the contrast make the
answers additive. For $A>0$ with extreme eigenvalues $M_A\ge m_A>0$ and condition number $\kappa_A=M_A/m_A$ let
\[
  \eta(A)=\operatorname{artanh}\OMC(A)=\tfrac12\log \kappa_A,\qquad
  \psi(A)=\arcsin\OMC(A)\in[0,\tfrac\pi2],\qquad \bar\lambda(A)=\frac{M_A+m_A}2 .
\]
The rapidity $\eta(A)=\OLC(A)$ is half the logarithm of the condition number, that is, half the Hilbert projective distance of
$A$ to $1$ and the Thompson distance of $A$ to the scalars (Section~\ref{sec:related}). The angle $\psi(A)$ is that of
Krein and Gustafson~\cite{krein1969,gustafson1968,gustafsonrao1997}, the largest angle between $x$ and $Ax$:
$\cos\psi(A)=\min_{x\ne0}\langle Ax,x\rangle/(\|Ax\|\|x\|)=2\sqrt{M_Am_A}/(M_A+m_A)$, and
$\sin\psi(A)=\min_{t>0}\|tA-1\|=\OMC(A)$, attained at $t=1/\bar\lambda(A)$ (Figure~\ref{fig:angle}(a)). For the density, $G=\cosh\eta(\rho)=\sec\psi(\rho)$, so the envelope $G-1$ of items 1--5, 18 and~22
(Theorem~\ref{thm:gardner}) is $\sec\psi(\rho)-1$. The angle between vectors is $\angle(x,y)=\arccos(\Re\langle x,y\rangle/
(\|x\|\|y\|))$, the angle of the real inner product $\Re\langle\cdot,\cdot\rangle$ on $\C^n=\R^{2n}$; it satisfies the
triangle inequality.

\begin{proposition}[Multiplicative composites: rapidities add]\label{prop:rapid}
Let $A,B>0$ in $M_n$ and $\eta_A=\eta(A)$, $\eta_B=\eta(B)$.
\begin{enumerate}
\item[(a)] $\OMC(A^p)=\tanh(|p|\eta_A)$ for real $p\ne0$, $\OMC(A\otimes B)=\tanh(\eta_A+\eta_B)$, and the compression of
$A$ to a subspace has contrast at most $\OMC(A)$.
\item[(b)] $\tanh|\eta_A-\eta_B|\le\OMC(A^{1/2}BA^{1/2})\le\tanh(\eta_A+\eta_B)$; the same bounds hold for the contrast of
$|AB|$, and for the spectrum of $AB$.
\item[(c)] For the geometric mean $A\#B=A^{1/2}(A^{-1/2}BA^{-1/2})^{1/2}A^{1/2}$,
$\tanh\frac{|\eta_A-\eta_B|}2\le\OMC(A\#B)\le\tanh\frac{\eta_A+\eta_B}2$.
\item[(d)] $\OMC(sA+tB)\le\max(\OMC(A),\OMC(B))$ for $s,t>0$, and the same holds for the harmonic mean. If
$\OMC(A),\OMC(B)>0$, then $\OMC(A+tB)=0$ for suitable $t>0$ and a suitable relative position.
\end{enumerate}
All bounds are attained by commuting operators, with the extreme eigenvectors aligned (upper bounds) or interchanged
(lower bounds), in every dimension $n\ge2$.
\end{proposition}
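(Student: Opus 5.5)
The plan is to work throughout with the logarithmic contrast $\eta=\frac12\log\kappa$ and with the Thompson/Hilbert metric. The basic identity is that $\kappa$ is submultiplicative under congruence: if $m_A\le A\le M_A$, then $m_A B\le B^{1/2}AB^{1/2}\le M_A B$. Hence the spectrum of $B^{1/2}AB^{1/2}$ lies in $[m_Am_B,M_AM_B]$, which gives $\kappa\le\kappa_A\kappa_B$, i.e.\ the upper bound $\tanh(\eta_A+\eta_B)$ in (b).

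For the lower bound in (b) I would write $B=A^{-1/2}(A^{1/2}BA^{1/2})A^{-1/2}$ and apply the same congruence bound to $A^{-1}$ (whose $\eta$ equals $\eta_A$ by (a)). This gives $\eta_B\le\eta(A^{1/2}BA^{1/2})+\eta_A$, and by symmetry $|\eta_A-\eta_B|\le\eta(A^{1/2}BA^{1/2})$. The statements for $|AB|$ and for the spectrum of $AB$ follow because $|AB|^2=BA^2B$ is unitarily similar to $AB^2A$, and $AB$ is similar to $A^{1/2}BA^{1/2}$. For $|AB|$ this gives bounds on $\eta(|AB|^2)=2\eta(|AB|)$ in terms of $2\eta_A,2\eta_B$.

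Part (a) is immediate. The spectrum of $A^p$ is $\{\lambda^p\}$, so $\kappa(A^p)=\kappa_A^{|p|}$, and the spectrum of $A\otimes B$ consists of the products of eigenvalues. For a compression, the numerical range of $A$ restricted to the subspace lies in $[m_A,M_A]$.

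For (c) I would reduce to (b). Put $C=A^{-1/2}BA^{-1/2}$; then $\eta(C)\le\eta_A+\eta_B$ by (b) and (a), so $\eta(C^{1/2})\le(\eta_A+\eta_B)/2$. Applying (b) again to $A^{1/2}C^{1/2}A^{1/2}$ would give only $\eta_A+\frac{\eta_A+\eta_B}2$, which is too weak. Instead I would use the Thompson-metric route mentioned in Section~\ref{sec:related}: $\#$ is the midpoint, $d_T(A\#B,X)\le\frac12(d_T(A,X)+d_T(B,X))$ for scalars $X=t1$ (geodesic convexity/joint monotonicity of $\#$), and $\eta(\cdot)=\min_t d_T(\cdot,t1)$. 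Choosing $t$ as the product of the optimal scalars for $A$ and $B$ gives the upper bound. Concretely, $sA\le \text{}$ scalars $\le$ bounds pass through $\#$ by monotonicity and $(s1)\#(t1)=\sqrt{st}$: from $m_A\le A\le M_A$ and $m_B\le B\le M_B$ we get $\sqrt{m_Am_B}\le A\#B\le\sqrt{M_AM_B}$. For the lower bound, I would use $B=(A\#B)A^{-1}(A\#B)$, the Riccati identity, together with (b) applied twice, giving $\eta_B\le 2\eta(A\#B)+\eta_A$, and then symmetry.

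For (d), the ordering $sm_A+tm_B\le sA+tB\le sM_A+tM_B$ bounds the ratio by a mediant, which is at most $\max(\kappa_A,\kappa_B)$. For the harmonic mean I would take inverses and use (a). For the vanishing example, take diagonal $A$ and $B$ with interchanged eigenvectors and $t=(M_A-m_A)/(M_B-m_B)$, so that $A+tB$ is scalar.

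Attainment in all parts follows from commuting diagonal examples with aligned or reversed eigenvalue orders. These embed in any $n\ge2$, padding with intermediate eigenvalues. The main obstacle I expect is the lower bound in (c); if the Riccati argument gives only $\tanh|\eta_A-\eta_B|/2$ in a weaker form, I would instead test the bound on commuting examples and use monotonicity of $\#$ in each argument.
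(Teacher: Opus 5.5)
Your proposal is correct and follows the paper's proof essentially step by step. It uses Loewner bounds under congruence for (b), monotonicity and homogeneity of $\#$ together with the identity $B=(A\#B)A^{-1}(A\#B)$ for (c), and the mediant inequality and inversion for (d). Your lower bound in (b), obtained by inverting the congruence $B=A^{-1/2}(A^{1/2}BA^{1/2})A^{-1/2}$, and your treatment of $|AB|$ through $|AB|^2=BA^2B$ are only minor variants of the paper's use of eigenvalue and singular-value monotonicity.

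One detail needs fixing: the vanishing example in (d) for $n\ge3$. Choosing $t=(M_A-m_A)/(M_B-m_B)$ with interchanged extreme eigenvectors makes $A+tB$ scalar only if the intermediate eigenvalues of $B$ are also reflected. The paper takes $B=c-A$ in the eigenbasis of $A$, whose contrast takes every value in $(0,1)$ as $c$ ranges over $(M_A,\infty)$. For arbitrarily prescribed spectra of $A$ and $B$, no relative position works in general. For example, $A=\mathrm{diag}(3,2,1)$ and a $B$ with spectrum $\{3,1,1\}$ both have contrast $1/2$, yet $A+tUBU^*$ is never scalar.
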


\noindent These facts are classical, and we include the short proof for the attainment statements. The upper bound in (b)
is the submultiplicativity of the condition number, $\kappa(A^{1/2}BA^{1/2})\le \kappa_A\kappa_B$. In the language of Hilbert's projective
metric $d_H$, which is invariant under congruences and under inversion, (b) reads
$|d_H(B,1)-d_H(1,A^{-1})|\le d_H(B,A^{-1})=d_H(A^{1/2}BA^{1/2},1)\le d_H(B,1)+d_H(1,A^{-1})$, i.e.\ it is the triangle
inequality~\cite{bushell1973,lemmens2012}; this is why rapidities, the halved distances to the scalars, add.
\begin{proof}
(a) Spectral mapping, the eigenvalues of $A\otimes B$ are the products of eigenvalues, and Cauchy interlacing.
(b) From $m_BA\le A^{1/2}BA^{1/2}\le M_BA$ and the monotonicity of the eigenvalues~\cite{bhatia1997},
$\lambda_k(A)m_B\le\lambda_k(A^{1/2}BA^{1/2})\le\lambda_k(A)M_B$ for every $k$. Hence the largest eigenvalue lies in
$[M_Am_B,M_AM_B]$ and the smallest in $[m_Am_B,m_AM_B]$, so the condition number lies in $[\kappa_A/\kappa_B,\kappa_A\kappa_B]$; since
$A^{1/2}BA^{1/2}$ is similar to $B^{1/2}AB^{1/2}$, it is also at least $\kappa_B/\kappa_A$. With $\OMC=\tanh(\frac12\log \kappa)$ this
is (b). For $|AB|$ use the singular values, $\sigma_k(A)\sigma_{\min}(B)\le\sigma_k(AB)\le\sigma_k(A)\sigma_{\max}(B)$;
$AB$ is similar to $A^{1/2}BA^{1/2}$. (c) The mean is monotone and homogeneous, so $\sqrt{m_Am_B}\le A\#B\le
\sqrt{M_AM_B}$, which gives the upper bound. From $B=(A\#B)A^{-1}(A\#B)$, whose spectrum is that of
$A^{-1/2}(A\#B)^2A^{-1/2}$, (a) and (b) give $\eta_B\le\eta_A+2\eta(A\#B)$, and symmetrically. (d) The mediant inequality
$\frac{sM_A+tM_B}{sm_A+tm_B}\le\max(\kappa_A,\kappa_B)$ bounds the condition number of $sA+tB$; for the harmonic mean apply it to
$A^{-1}+B^{-1}$. For the last claim take $B=c-A$ with $c>M_A$ in the eigenbasis of $A$ and with the intermediate
eigenvalues reflected; $\OMC(c-A)=(M_A-m_A)/(2c-M_A-m_A)$ takes every value in $(0,1)$. Attainment: for commuting
$A,B$ with aligned extreme eigenvectors all extreme eigenvalues multiply (add, for sums), and with interchanged ones the
quotients do.
\end{proof}

\paragraph{The Jordan product.} For $AB+BA$ the picture changes: it need not be positive, and angles, not rapidities,
add. The reason is geometric. A positive operator $A$ turns every vector by at most its angle: $\angle(x,Ax)\le\psi_A$,
where $\sin\psi_A=\OMC(A)$. Hence $ABx$ and $BAx$ differ from $x$ by at most $\psi_A+\psi_B$, and so does their sum
$(AB+BA)x$, as long as $\psi_A+\psi_B<\frac\pi2$. Then $(AB+BA)x$ stays in the half-space of $x$, i.e.\
$\langle(AB+BA)x,x\rangle>0$. If $\psi_A+\psi_B>\frac\pi2$, a suitable rotation of $B$ against $A$ turns some $x$ by more
than a right angle, and $AB+BA$ has a negative eigenvalue. In terms of contrasts: \emph{$AB+BA$ is positive for every
relative position if and only if the point $(\OMC(A),\OMC(B))$ lies in the unit disc}
(Figure~\ref{fig:ops-positivity}). The sufficiency of this criterion is Gustafson's~\cite{gustafson1968}, whose angle
argument we reproduce. Its sharpness is in the spirit of the work on eigenvalues of Jordan products by
Strang~\cite{strang1962}, Alikakos and Bates~\cite{alikakos1984,alikakos1985} and Conley, Pucci and
Serrin~\cite{conley2005}; we include the short proof of both directions, since we need the extremal configuration, and add
that the same angles bound the contrast of $AB+BA$.

\begin{figure}[!htbp]
\centering\includegraphics[width=\textwidth]{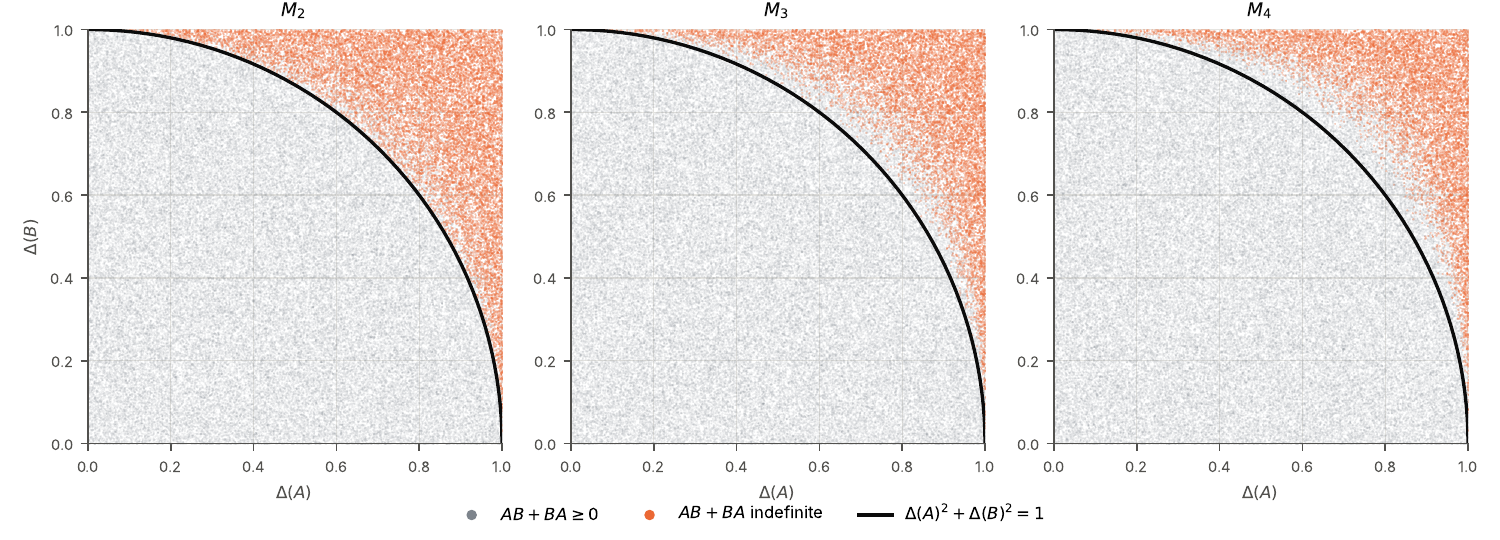}
\caption[]{Is $AB+BA$ positive? $\COSamples$ random pairs $A,B>0$ per dimension, with contrasts uniform in $[0,1]^2$ and
Haar random relative position: positive (grey) or indefinite (orange). No indefinite product occurs inside the quarter
circle $\OMC(A)^2+\OMC(B)^2=1$ (Theorem~\ref{thm:jordan}(a)); outside it occurs for some positions, more often the
further out. In the figure $\Delta(\cdot)=\OMC(\cdot)$.}
\label{fig:ops-positivity}
\end{figure}

\begin{theorem}[Jordan product]\label{thm:jordan}
Let $A,B>0$ in $M_n$, $n\ge2$, with contrasts $\delta_A,\delta_B$, angles $\psi_A=\arcsin\delta_A$, $\psi_B=\arcsin\delta_B$
and rapidities $\eta_A=\operatorname{artanh}\delta_A$, $\eta_B=\operatorname{artanh}\delta_B$.
\begin{enumerate}
\item[(a)] $AB+BA\ge0$ for every relative position of $A$ and $B$ if and only if $\delta_A^2+\delta_B^2\le1$. If
$\delta_A^2+\delta_B^2>1$, $AB+BA$ is indefinite for some relative position, in every dimension.
\item[(b)] If $\delta_A^2+\delta_B^2<1$, then $\tanh|\eta_A-\eta_B|\le\OMC(AB+BA)\le\sin(\psi_A+\psi_B)$, and the lower
bound is attained by commuting $A,B$ whose extreme eigenvectors are interchanged.
\end{enumerate}
\end{theorem}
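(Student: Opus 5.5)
I would normalise first: since $AB+BA$ and the contrasts are invariant under $A\mapsto sA$, $B\mapsto tB$, replace $A$ by $A/\bar\lambda(A)=1+E$ and $B$ by $1+F$ with self-adjoint $E,F$, $\|E\|=\delta_A$, $\|F\|=\delta_B$. The key geometric fact, already recorded before Figure~\ref{fig:angle}, is that $\|(1+E)x-x\|\le\delta_A\|x\|$, so $\angle(x,Ax)\le\psi_A$ for every $x\ne0$.

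\emph{Sufficiency in (a).} For a unit vector $x$ write $\langle(AB+BA)x,x\rangle=2\Re\langle Bx,Ax\rangle$. The angle between $Ax$ and $Bx$ is at most $\angle(Ax,x)+\angle(x,Bx)\le\psi_A+\psi_B$ by the triangle inequality for angles. If $\delta_A^2+\delta_B^2\le1$, then $\psi_A+\psi_B\le\pi/2$, so $\Re\langle Bx,Ax\rangle\ge0$, which gives $AB+BA\ge0$. Strict inequality $\psi_A+\psi_B<\pi/2$ gives $AB+BA>0$ with $\Re\langle Bx,Ax\rangle\ge\|Ax\|\|Bx\|\cos(\psi_A+\psi_B)$, which I keep for (b).

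\emph{Necessity in (a).} Work in $M_2$ and embed in $M_n$ as a direct sum with scalar blocks $1\oplus\dots$, which contribute positive blocks; the other eigenvalues of $A,B$ are chosen equal to extreme ones so that the contrasts are unchanged. Take $x$ at the Krein angle for $A$: with $A=\diag(1+\delta_A,1-\delta_A)$ there is a unit $x$ with $\angle(x,Ax)=\psi_A$, attained when $\langle Ax,x\rangle/\|Ax\|=\cos\psi_A$, namely $|x_1|^2=(1-\delta_A)/2$ up to the normalisation. Then choose $B$, by a rotation of $\diag(1+\delta_B,1-\delta_B)$, so that $Bx$ is turned by $\psi_B$ to the \emph{other} side of $x$ in the real plane spanned by $x$ and the rotated vectors (in $\R^2$ these angles are signed and they add exactly). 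Then $\angle(Ax,Bx)=\psi_A+\psi_B>\pi/2$, so $\langle(AB+BA)x,x\rangle<0$. The main obstacle is checking that the signs can be arranged simultaneously. I would verify this by an explicit real $2\times2$ computation, with $B=R_\theta\diag(1+\delta_B,1-\delta_B)R_\theta^*$, showing that the extreme rotation direction of $Bx$ relative to $x$ is realised by either sign of $\theta$.

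\emph{Part (b).} For the upper bound, use the estimate above together with $\|(AB+BA)x\|\le$ the corresponding bound. It is cleaner to argue that $(AB+BA)x$ lies within angle $\psi_A+\psi_B$ of $x$, so that $\psi(AB+BA)\le\psi_A+\psi_B$. This holds because $ABx$ and $BAx$ are each within that angle of $x$: apply the single-operator angle bound twice, namely $\angle(x,Bx)\le\psi_B$ and $\angle(Bx,ABx)\le\psi_A$. The cone of vectors within angle $<\pi/2$ of $x$ is convex, so their sum is also within that angle. Then Gustafson's characterisation $\sin\psi(T)=\OMC(T)$ for $T>0$, where $\cos\psi(T)$ is the infimum of $\langle Tx,x\rangle/(\|Tx\|\|x\|)$, gives $\OMC(AB+BA)\le\sin(\psi_A+\psi_B)$. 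For the lower bound, note that $AB+BA=2A^{1/2}\bigl(\dots\bigr)$ is not directly a congruence. Instead use the spectrum: for commuting operators, $AB+BA=2AB$ and Proposition~\ref{prop:rapid}(b) gives the bounds with equality for interchanged eigenvectors. For general positions I would show that the extreme eigenvalues satisfy $\lambda_{\max}(AB+BA)\ge2\lambda_{\max}$ of the appropriate product, by evaluating at eigenvectors. If that fails, I would state the lower bound only as attained by the commuting configuration, which is what (b) explicitly asserts.
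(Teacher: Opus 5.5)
Part (a) and the upper bound in (b) are sound. Your sufficiency argument, via $\langle(AB+BA)x,x\rangle=2\Re\langle Bx,Ax\rangle$ and $\angle(Ax,Bx)\le\angle(Ax,x)+\angle(x,Bx)\le\psi_A+\psi_B$, is a slightly leaner variant of the paper's cone argument.

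Your signed-angle construction for necessity also works. In $\R^2$, the sense in which a positive $T$ turns $x$ is reversed by reflecting $x$ in an eigenvector axis of $T$. So $Ax$ and $Bx$ can be placed on opposite sides of $x$, each at its Krein-extremal angle. The paper instead rotates $B$ by $\pi/4$ and checks that $\det(AB+BA)=4\ell_A\ell_B-p^2c(1-c)<0$. To conclude that $AB+BA$ is \emph{indefinite} rather than merely not positive, add that $\Tr(AB+BA)=2\Tr(A^{1/2}BA^{1/2})>0$. The upper bound in (b) is exactly the paper's argument.

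The gap is the lower bound in (b). The statement asserts $\tanh|\eta_A-\eta_B|\le\OMC(AB+BA)$ for \emph{every} relative position with $\delta_A^2+\delta_B^2<1$. Attainment by commuting operators with interchanged eigenvectors is an additional claim. Your fallback, proving the bound only for commuting $A,B$, rests on a misreading, and the commuting computation via Proposition~\ref{prop:rapid}(b) says nothing about non-commuting pairs.

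The missing step is short. Write $M_A=\max\sigma(A)$, $m_A=\min\sigma(A)$, and similarly for $B$, and let $x,y$ be unit eigenvectors of $A$ for $M_A$ and $m_A$. Then $\lambda_{\max}(AB+BA)\ge2\Re\langle Bx,Ax\rangle=2M_A\langle Bx,x\rangle\ge2M_Am_B$ and $\lambda_{\min}(AB+BA)\le2m_A\langle By,y\rangle\le2m_AM_B$. Hence $\kappa(AB+BA)\ge\kappa_A/\kappa_B$, and by symmetry also $\ge\kappa_B/\kappa_A$. Since $AB+BA>0$ by (a), applying $\tanh(\frac12\ln\cdot)$ gives the bound.

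Alternatively, in the spirit of your ``appropriate product'' hint, take an eigenvector $v$ of $AB$ with eigenvalue $\mu>0$. Then $\langle(AB+BA)v,v\rangle=2\mu\|v\|^2$, so $\kappa(AB+BA)\ge\kappa(AB)=\kappa(A^{1/2}BA^{1/2})$, and Proposition~\ref{prop:rapid}(b) finishes.
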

\begin{proof}
(a) and the upper bound in (b). We use the real angle $\angle(x,y)$ defined above. By the formula for $\cos\psi$,
$\angle(x,Bx)\le\psi_B$ and $\angle(Bx,ABx)\le\psi_A$, so $\angle(x,ABx)\le\psi_A+\psi_B$ by the triangle inequality;
likewise $\angle(x,BAx)\le\psi_A+\psi_B$. If $\psi_A+\psi_B<\frac\pi2$, the cone $\{y:\angle(x,y)\le\psi_A+\psi_B\}$ is
convex and contains $(AB+BA)x$, so $\langle(AB+BA)x,x\rangle>0$; moreover the largest angle of $AB+BA$ is at most
$\psi_A+\psi_B$, and its sine is $\OMC(AB+BA)$. The case $\psi_A+\psi_B=\frac\pi2$ follows by continuity.

Converse in (a). On $M_2$ take $A=\mathrm{diag}(1,\ell_A)$ and $B$ with eigenvalues $1,\ell_B$ and eigenvector
$(\cos\theta,\sin\theta)$ for $1$, where $\ell_A=\frac{1-\delta_A}{1+\delta_A}$, $\ell_B=\frac{1-\delta_B}{1+\delta_B}$. With
$c=\cos^2\theta$ and $p=(1-\ell_A)(1-\ell_B)$ a direct computation gives
\[
  \tfrac12\Tr(AB+BA)=\ell_A+\ell_B+pc,\qquad\det(AB+BA)=4\ell_A\ell_B-p^2c(1-c).
\]
At $\theta=\pi/4$ the determinant is negative if and only if $16\ell_A\ell_B<p^2$; with
$\sqrt{\ell_A}=\sqrt{1-\delta_A^2}/(1+\delta_A)$ and $1-\ell_A=2\delta_A/(1+\delta_A)$ this reads $(1-\delta_A^2)(1-\delta_B^2)<\delta_A^2\delta_B^2$, i.e.\ $\delta_A^2+\delta_B^2>1$.
On $M_n$ add the direct summands $\ell_A1_{n-2}$ and $\ell_B1_{n-2}$.

Lower bound in (b). If $x,y$ are unit eigenvectors of $A$ for its extreme eigenvalues $M_A$, $m_A$, then
$\lambda_{\max}(AB+BA)\ge2\Re\langle Bx,Ax\rangle=2M_A\langle Bx,x\rangle\ge2M_Am_B$ and likewise
$\lambda_{\min}(AB+BA)\le2m_AM_B$. So the condition number of $AB+BA$ is at least $\kappa_A/\kappa_B$, and by symmetry at least
$\kappa_B/\kappa_A$.
\end{proof}

\noindent The angle argument in the proof of (a) works on any Hilbert space, so $\OMC(A)^2+\OMC(B)^2<1$
implies $AB+BA>0$ for positive invertible operators on any Hilbert space. Since $AB+BA>0$ is invariant
under $(A,B)\mapsto(z_1A,z_2B)$ with $z_1,z_2\in\cZ_{++}$, rescaling then shows that in every von Neumann algebra
$\NC(A)^2+\NC(B)^2<1$ implies $AB+BA>0$ (Remark~\ref{rem:upgrade}).

How large can $\OMC(AB+BA)$ be? The upper bound $\sin(\psi_A+\psi_B)$ is reached only at the boundary of the disc. The
true maximum depends on the dimension, because in $M_2$ both extreme eigenvalues of $AB+BA$ come from the same pair of
$2\times2$ matrices, while in $M_3$ they can come from different blocks.

\begin{proposition}[Largest contrast of the Jordan product]\label{prop:jordanmax}
In the notation of the proof above, let $\delta_A^2+\delta_B^2<1$.
\begin{enumerate}
\item[(a)] On $M_2$, $\displaystyle\max\OMC(AB+BA)=\max_{0\le c\le1}\Bigl(1-\frac{4\ell_A\ell_B-p^2c(1-c)}{(\ell_A+\ell_B+pc)^2}\Bigr)^{1/2}$.
\item[(b)] On $M_n$, $n\ge3$, $\max\OMC(AB+BA)\ge(2-\mu)/(2+\mu)$, where
$\mu=\min_{0\le c\le1}\bigl(\ell_A+\ell_B+pc-\sqrt{(\ell_A+\ell_B+pc)^2-4\ell_A\ell_B+p^2c(1-c)}\,\bigr)$ is the smallest eigenvalue of $AB+BA$ over the
$2\times2$ pairs of (a).
\end{enumerate}
Numerically, both values equal the commuting value $\tanh(\eta_A+\eta_B)$ for small contrasts and exceed it for large ones.
\end{proposition}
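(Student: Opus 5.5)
Part (a) needs three things: a reduction to a one-parameter family, an exact eigenvalue formula on that family, and an upper bound that holds for every relative position. I take the contrast scale-invariant and normalise $A=\mathrm{diag}(1,\ell_A)$. On $M_2$ every relative position of $B$ with the prescribed spectrum $\{1,\ell_B\}$ is a unitary conjugate $B=\ell_B+(1-\ell_B)vv^*$. Up to a diagonal unitary, which commutes with $A$, the vector can be taken as $v=(\cos\theta,e^{i\chi}\sin\theta)$.

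The diagonal unitary removes the phase $\chi$, and this does not change the spectrum of $AB+BA$. So the spectrum depends only on $c=\cos^2\theta\in[0,1]$. The trace and determinant formulas in the proof of Theorem~\ref{thm:jordan} then give the eigenvalues
\[
\lambda_\pm=T\pm\sqrt{T^2-\Delta},\qquad T=\ell_A+\ell_B+pc,\quad \Delta=4\ell_A\ell_B-p^2c(1-c).
\]
These are the eigenvalues of $\tfrac12(AB+BA)$, with $\Delta$ taken up to the factor $4$, and the normalisation cancels in the contrast.

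For $\delta_A^2+\delta_B^2<1$ we have $\Delta>0$ for all $c$, by the same computation as in Theorem~\ref{thm:jordan}(a), since $c(1-c)\le\frac14$. Then
\[
\OMC=\frac{\lambda_+-\lambda_-}{\lambda_++\lambda_-}=\sqrt{1-\Delta/T^2}.
\]
Maximising over $c$ is then the stated formula. The only care needed is the scaling of the determinant, and I would write the computation with $AB+BA$ itself to keep the factors consistent.

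For (b), on $M_3$ I would use a direct sum. I take $A=\mathrm{diag}(1,\ell_A)\oplus 1$ and $B=B_2\oplus 1$, where $B_2$ is the $2\times2$ configuration at the $c$ minimising the smaller eigenvalue $\mu$ of (a). Both $A$ and $B$ keep their contrasts, because the added eigenvalue $1$ lies inside both spectra. The summand $1\cdot1+1\cdot1=2$ is an eigenvalue of $AB+BA$. Hence $\lambda_{\max}\ge2$ and $\lambda_{\min}\le\mu$, which gives $\OMC\ge(2-\mu)/(2+\mu)$. Positivity of $\mu$ follows from (a).

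Here I must match conventions. If $\mu$ in the statement is the smallest eigenvalue of $\tfrac12(AB+BA)$, then the $1\times1$ block should contribute $1$ rather than $2$. I would settle which convention the displayed formula uses and adjust the summand so the ratio reads $(2-\mu)/(2+\mu)$. For $n>3$ I pad with further copies of the identity in both operators.

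The main obstacle is the claim that on $M_2$ all relative positions reduce to $c$, so that the maximum in (a) is really the maximum. This is the phase-elimination step, and I would verify explicitly that $\mathrm{diag}(1,e^{i\chi})$ conjugates the general $B$ to the real one while fixing $A$.

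I would not prove the final numerical sentence. I would present it as an observation supported by evaluating (a) against $\tanh(\eta_A+\eta_B)$.
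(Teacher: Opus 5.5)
Your proof is correct and follows the paper's route: for (a), reduce to the one-parameter family of Theorem~\ref{thm:jordan} and use $\OMC^2=1-4\det/\Tr^2$; for (b), pad the extremal $2\times2$ pair with a common eigenvalue $1$. In (b) you only need $\lambda_{\max}\ge2$ and $\lambda_{\min}\le\mu$, so the paper's norm bound $\|A_2B_2+B_2A_2\|\le2$ is not even required. On the convention you left open: $T\pm\sqrt{T^2-\Delta}$ are the eigenvalues of $AB+BA$ itself (trace $2T$, determinant $\Delta$), not of $\tfrac12(AB+BA)$, so the displayed $\mu$ is the smallest eigenvalue of $AB+BA$ and the padded summand is $2$, exactly as in your main line.
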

\begin{proof}
(a) On $M_2$ every pair is unitarily equivalent to the pair of the proof of Theorem~\ref{thm:jordan} for some $\theta$,
and $\OMC^2=1-4\det/\Tr^2$. (b) Take $A=1\oplus A_2$ and $B=1\oplus B_2$ (padded with ones) with the $2\times2$ pair
$A_2,B_2$ of smallest eigenvalue $\mu$. Then $AB+BA=2\oplus(A_2B_2+B_2A_2)$ has the largest eigenvalue $2$, since
$\|A_2B_2+B_2A_2\|\le2$, and the smallest eigenvalue $\mu$.
\end{proof}

For $\delta_A=\delta_B=0.6$: the commuting value is $\COJordanTanh$, the maximum on $M_2$ is $\COJordanTwo$, the value of (b)
for $n\ge3$ is $\COJordanThree$, and $\sin(\psi_A+\psi_B)=\COJordanSin$. A numerical optimisation over all relative
positions and intermediate eigenvalues (24 starts per point) finds on $M_3$ and on $M_4$ exactly the value of (b)
($\COJordanThreeOpt$ and $\COJordanFour$ at $\delta_A=\delta_B=0.6$, and largest difference $\COJordanDev$ on a grid
of $\delta_A$ with $\delta_B=0.6$; Figure~\ref{fig:ops-jordan}). Equality for
every $n\ge3$ would follow if the smallest eigenvalue of a Jordan product with spectra in $[\ell_A,1]$ and $[\ell_B,1]$
were always attained on a two-dimensional subspace. Strang~\cite{strang1962} bounded this smallest eigenvalue from below,
and Alikakos and Bates showed that his bound is best possible~\cite{alikakos1985}; we have not checked whether it
coincides with $\mu$.

\begin{proposition}[Commutators: three contrasts multiply]\label{prop:commutator}
Let $A,B>0$ in $M_n$.
\begin{enumerate}
\item[(a)] $\|AB-BA\|\le2\OMC(A)\OMC(B)\,\bar\lambda(A)\bar\lambda(B)$, with equality for suitable $A,B$ of any prescribed contrasts in
every $M_n$, $n\ge2$. In particular $\|AB-BA\|\le\frac{2\OMC(A)\OMC(B)}{(1+\OMC(A))(1+\OMC(B))}\|A\|\|B\|\le
\frac12\|A\|\|B\|$.
\item[(b)] For a state $\varphi_\rho$ ($\Tr \rho=1$) with $\OMC(\rho)=\delta$, $\kappa=\frac{1+\delta}{1-\delta}$,
\[
  |\varphi_\rho(AB-BA)|\le2\,c_n(\delta)\,\OMC(A)\OMC(B)\,\bar\lambda(A)\bar\lambda(B),\qquad
  c_n(\delta)=\begin{cases}\delta,&n\ \text{even},\\[2pt] \dfrac{k(\kappa-1)}{k(\kappa+1)+1},&n=2k+1,\end{cases}
\]
and the bound is attained for every $\delta$ and all prescribed contrasts of $A$ and $B$.
\end{enumerate}
\end{proposition}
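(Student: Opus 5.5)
The plan is to center both operators at the midpoints of their spectra and reduce everything to the commutator of two self-adjoint contractions. Write $A=\bar\lambda(A)(1+E)$ and $B=\bar\lambda(B)(1+F)$ with $E,F$ self-adjoint, $\|E\|=\OMC(A)=:\delta_A$ and $\|F\|=\OMC(B)=:\delta_B$, as in the description of the Krein--Gustafson angle. Scalars drop out of commutators, so
\[
AB-BA=\bar\lambda(A)\bar\lambda(B)\,[E,F].
\]

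\emph{Part (a).} The trivial estimate $\|EF-FE\|\le2\|E\|\|F\|$ gives the upper bound. For equality on $M_2$, take $E=\delta_A\sigma_3$ and $F=\delta_B\sigma_1$; then $[E,F]=2i\delta_A\delta_B\sigma_2$, which has norm $2\delta_A\delta_B$. On $M_n$, pad $E$ and $F$ with zeros. The corresponding $A$ and $B$ are positive and have exactly the prescribed contrasts. For the second form, use $\bar\lambda(A)=\|A\|/(1+\delta_A)$. Then note that $x\mapsto 2x/(1+x)$ is increasing, so $\frac{2xy}{(1+x)(1+y)}$ is at most $\frac12$ on $[0,1]^2$, with equality at $x=y=1$.

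\emph{Part (b).} Since $\Tr[E,F]=0$, we have $\varphi_\rho(AB-BA)=\bar\lambda(A)\bar\lambda(B)\Tr\bigl((\rho-c)[E,F]\bigr)$ for every real $c$. By Hölder's inequality,
\[
|\varphi_\rho(AB-BA)|\le2\delta_A\delta_B\,\bar\lambda(A)\bar\lambda(B)\,\min_c\|\rho-c\|_1 .
\]
The task is therefore to show that $\min_c\|\rho-c\|_1\le c_n(\delta)$ whenever $\Tr\rho=1$, which is a statement purely about eigenvalues. I would take $c$ to be a median of the eigenvalues $w_1\ge\dots\ge w_n$. Then $\min_c\|\rho-c\|_1=\sum_{i\le\lfloor n/2\rfloor}(w_i-w_{n+1-i})$.

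For even $n=2k$, each pair contributes at most $a-b$, while $1=\Tr\rho\ge k(a+b)$. Hence the ratio is at most $(a-b)/(a+b)=\delta$.

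For odd $n=2k+1$, the quotient $\sum_{i\le k}(w_i-w_{n+1-i})/\sum_iw_i$ is linear-fractional in the spectrum on the box $[b,a]^n$ with the extremes fixed. Its maximum is therefore at a vertex. I expect the worst vertex to be $k$ eigenvalues $a$ and $k+1$ eigenvalues $b$: the unpaired middle eigenvalue only enlarges the denominator, so it should sit at $b$. This gives $k(a-b)/(k(a+b)+b)=k(\kappa-1)/(k(\kappa+1)+1)$. This vertex optimisation, together with the claim that the median choice of $c$ is harmless, is the step I expect to need the most care.

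\emph{Attainment.} For even $n$, decompose $\C^n$ into $k$ two-dimensional subspaces, each spanned by one eigenvector of $\rho$ for $a$ and one for $b$. On each such block put $\rho=\diag(a,b)$, $E=\delta_A\sigma_3$ and $F=\delta_B\sigma_1$, rotated by a phase in each block so that every $\Tr(\rho[E,F])$ has the same argument. For odd $n$, do the same on $k$ blocks and add one further eigenvector for $b$, on which $E=F=0$. In each block $|\Tr(\diag(a,b)\cdot2i\delta_A\delta_B\sigma_2)|=2\delta_A\delta_B(a-b)$. Summing over blocks gives exactly $2c_n(\delta)\delta_A\delta_B$ after normalising $\Tr\rho=1$. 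All operators stay positive with the prescribed contrasts because $\|E\|=\delta_A<1$ and $\|F\|=\delta_B<1$ (and for contrast $1$ one checks the singular limit separately).
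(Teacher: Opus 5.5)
Your route is the paper's. The centring $A=\bar\lambda(A)(1+E)$, $B=\bar\lambda(B)(1+F)$ and the Pauli pair give (a) exactly as there. In (b), your H\"older bound against $\rho-c$, with $c$ a median, is the dual form of the paper's von Neumann trace-inequality step for the traceless $C=i(AB-BA)$, and it leads to the same quantity $\sum_{i\le k}(w_i-w_{n+1-i})$, $k=\lfloor n/2\rfloor$. Two steps, however, fail as written.

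First, in the even case the inequality $1=\Tr\rho\ge k(a+b)$ is false. For the spectrum $(a,b,\dots,b)$ and $k\ge2$ one has $\Tr\rho=a+(2k-1)b<k(a+b)$. The conclusion survives if you bound termwise: $w_i-w_{n+1-i}\le\delta(w_i+w_{n+1-i})$ because $w_i/w_{n+1-i}\le\kappa$, and then sum. The paper avoids all case analysis with a small linear programme. With $S=\sum_{i\le k}w_i$ the numerator is at most $S-kb$, subject to $S\le k\kappa b$ and $S\le1-(n-k)b$. The maximum, at $k\kappa b=1-(n-k)b$, is $k(\kappa-1)/(k\kappa+n-k)$, which is $c_n(\delta)$ for both parities.

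Your odd-case vertex argument can also be made to work, but the sorted numerator is only piecewise linear on the box. The quotient is linear-fractional on the ordered chamber $a=w_1\ge\dots\ge w_n=b$, whose vertices are the spectra with $m$ eigenvalues $a$ and $n-m$ eigenvalues $b$. You still have to compare $m(a-b)/(ma+(n-m)b)$ for $m\le k$ with $(n-m)(a-b)/(ma+(n-m)b)$ for $m>k$ to see that $m=k$ wins.

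Second, the attainment construction gives no violation at all. With $\rho=\diag(a,b)$ and $E=\delta_A\sigma_3$, the matrix $E$ commutes with $\rho$, so $\Tr(\rho[E,F])=\Tr(\rho EF)-\Tr(E\rho F)=0$. Concretely, $\diag(a,b)\sigma_2$ has zero diagonal, and no choice of phases changes this. The commutator must be diagonal in the eigenbasis of $\rho$. Take $E=\delta_A\sigma_1$ and $F=\delta_B\sigma_2$, so that $[E,F]=2i\delta_A\delta_B\sigma_3$ and $|\Tr(\diag(a,b)[E,F])|=2\delta_A\delta_B(a-b)$. Equivalently, as in the paper, keep the pair of (a) and give $\rho$ the eigenvalue $a$ on the positive and $b$ on the negative eigenvector of $i[E,F]$ in each block.

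With this change, your spectra $a^{(k)},b^{(k)}$ and $a^{(k)},b^{(k+1)}$, normalised to $\Tr\rho=1$ and with $E=F=0$ on the extra vector, attain the bound. Positivity and the prescribed contrasts are preserved as you say.
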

\noindent Part (a) is a consequence of Kittaneh's inequality $\|XY-YX\|\le\|X\|\|Y\|$ for $X\ge0$~\cite{kittaneh2008},
applied to $X=A-m_A\ge0$ and $Y=B-\bar\lambda(B)$, whose norms are $2\OMC(A)\bar\lambda(A)$ and $\OMC(B)\bar\lambda(B)$;
equivalently, of the bound $\|AB-BA\|\le\frac12\|A\|\|B\|$ for positive operators applied to $A-m_A$ and $B-m_B$. Our
contribution is the sharp trace version (b), the commutator test of traciality with all three contrasts prescribed.

\begin{proof}
(a) Write $A=\bar\lambda(A)(1+E)$, $B=\bar\lambda(B)(1+F)$ with self-adjoint $E,F$, $\|E\|=\OMC(A)$, $\|F\|=\OMC(B)$.
Then $AB-BA=\bar\lambda(A)\bar\lambda(B)(EF-FE)$ and $\|EF-FE\|\le2\|E\|\|F\|$. Equality holds on $M_2$ for $E=\OMC(A)\sigma_z$,
$F=\OMC(B)\sigma_x$, since $\sigma_z\sigma_x-\sigma_x\sigma_z=2i\sigma_y$; in $M_n$ add a direct summand $\bar\lambda(A)\oplus\bar\lambda(B)$.
The second form uses $\|A\|=\bar\lambda(A)(1+\OMC(A))$. (b) $C=i(AB-BA)$ is self-adjoint with $\Tr C=0$ and $\|C\|\le K$, the bound
of (a). By von Neumann's trace inequality~\cite{bhatia1997}, $|\Tr \rho C|$ is at most the maximum of
$\sum_iw_ic_i$ over decreasingly ordered $w_i$ (the eigenvalues of $\rho$) and $c_i\in[-K,K]$ with $\sum_ic_i=0$, which is
$K\sum_{i\le k}(w_i-w_{n+1-i})$, $k=\lfloor n/2\rfloor$. Let $a=w_1$, $b=w_n=a/\kappa$ and $S=\sum_{i\le k}w_i\le ka$. Then
$\sum_{i\le k}(w_i-w_{n+1-i})\le S-kb$ and $1=\Tr \rho\ge S+(n-k)b$. Maximising $S-kb$ under $S\le k\kappa b$ and $S\le1-(n-k)b$ gives
the value at $k\kappa b=1-(n-k)b$, namely $k(\kappa-1)/(k\kappa+n-k)$, which is $c_n(\delta)$ for even and odd $n$. For equality take $k$
copies of the $2\times2$ pair of (a), $\rho$ diagonal in the eigenbasis of the $k$ blocks of $C$ with the eigenvalues $a$ on the
positive and $b$ on the negative eigenvectors of $C$, and, for odd $n$, the eigenvalue $b$ of $\rho$ and scalars $A$, $B$ on
the remaining one-dimensional block.
\end{proof}

So the most basic characterisation of the trace, $\varphi(AB)=\varphi(BA)$, fails exactly at the order of the product of
the three contrasts, $2\OMC(\rho)\OMC(A)\OMC(B)$ in even dimension; this is the simplest instance of the principle that a
violation needs a functional far from a trace and operators far from scalars. The odd dimensions are slightly weaker, as
for Gardner's inequality (Theorem~\ref{thm:gardnerD}), and $c_{2k+1}(\delta)\to\delta$ as $k\to\infty$.

Random samples ($\COSamples$ per bound and dimension, Haar random relative positions, the intermediate eigenvalues
uniform between the extreme ones) never exceed the bounds of this section on $M_2$, $M_3$ and $M_4$ (the largest value of
sample minus bound is $\COWorst$), and among $\COSamples$ random pairs per dimension with contrasts uniform in $[0,1]^2$ the
number of indefinite Jordan products with $\delta_A^2+\delta_B^2\le1$ is $\COIndefInside$. As in
Section~\ref{sec:montecarlo}, random positions rarely come close to the extremes in higher dimension: for the commutator
test on $M_4$ the samples stay far below the bound, which needs two aligned $2\times2$ blocks
(Figure~\ref{fig:ops-commutator}).

\subsection{Thresholds in the contrast of the operators}\label{sec:thresholds}

The numerical search of Section~\ref{sec:bothcontrasts} shows that several characterisations are not violated at all
unless the operators have contrast close to $1$. The composition rules above explain this for two of them. The simplest
example is the analogue of item~21 for positive operators, $\varphi(\Re AB)\ge0$: it holds for every positive functional
as long as $\OMC(A)^2+\OMC(B)^2\le1$, because then $\Re AB=\frac12(AB+BA)\ge0$ by Theorem~\ref{thm:jordan}, and for
operators of equal contrast $\delta'$ it can fail only for $\delta'>1/\sqrt2$. Projections have the angle $\frac\pi2$,
$\Re PQ$ is indefinite in general, and accordingly item~21 is violated by every non-tracial functional, with the
envelope $\frac12\tan^2(\psi/2)$ of Theorem~B.

For subadditivity of the square root (item~12) the same mechanism works one level deeper, through the link between
Jordan products and subadditivity found by Moslehian and Najafi~\cite{moslehian2011}.

\begin{proposition}[Sharp contrast threshold for item 12]\label{prop:item12}
Let $A,B>0$ with $\OMC(A^{1/2})^2+\OMC(B^{1/2})^2\le1$, i.e.
\[
  \Bigl(\frac{\OMC(A)}{1+\sqrt{1-\OMC(A)^2}}\Bigr)^2+\Bigl(\frac{\OMC(B)}{1+\sqrt{1-\OMC(B)^2}}\Bigr)^2\le1 .
\]
Then $(A+B)^{1/2}\le A^{1/2}+B^{1/2}$, and hence $\varphi((A+B)^{1/2})\le\varphi(A^{1/2})+\varphi(B^{1/2})$ for every
positive functional $\varphi$. In particular, if $\OMC(A)=\OMC(B)=\delta'\le2\sqrt2/3\approx0.943$, item~12 is not
violated for any density, so $V_n(\delta,\delta')\le0$ for all $\delta$ and $n$. The threshold is sharp: if
$\delta'>2\sqrt2/3$, then for every $n\ge2$ item~12 is violated by operators of contrast $\delta'$ and every density of
contrast $\delta$ close enough to $1$.
\end{proposition}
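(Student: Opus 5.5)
The plan is to reduce the operator inequality to positivity of a Jordan product, exactly as in the Moslehian--Najafi link~\cite{moslehian2011}. Put $X=A^{1/2}$ and $Y=B^{1/2}$. Then
\[
(X+Y)^2=X^2+Y^2+(XY+YX)=A+B+(XY+YX).
\]
If $XY+YX\ge0$, this gives $(X+Y)^2\ge A+B$. The L\"owner--Heinz inequality (operator monotonicity of $t\mapsto t^{1/2}$) then gives $X+Y\ge(A+B)^{1/2}$, which is the claimed operator inequality. Applying $\varphi\ge0$ to it yields the functional inequality for every positive functional and every density. By Theorem~\ref{thm:jordan}(a), $XY+YX\ge0$ holds as soon as $\OMC(X)^2+\OMC(Y)^2\le1$; the angle argument there works on any Hilbert space, so no dimension restriction enters.

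It remains to express $\OMC(A^{1/2})$ through $\OMC(A)$. By Proposition~\ref{prop:rapid}(a), $\OMC(A^{1/2})=\tanh(\eta_A/2)$ with $\tanh\eta_A=\OMC(A)$. The half-angle formula $\tanh(\eta/2)=\tanh\eta/(1+\sqrt{1-\tanh^2\eta})$ gives the displayed condition. For equal contrasts the condition reads $2\tanh^2(\eta/2)\le1$, i.e.\ $\tanh(\eta/2)\le1/\sqrt2$. Converting back via $\tanh\eta=2t/(1+t^2)$ at $t=1/\sqrt2$ gives $\delta'\le\sqrt2/(3/2)=2\sqrt2/3$. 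Hence $V_n(\delta,\delta')\le0$ in that range.

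For sharpness, I would first reduce to an operator statement. Suppose $(A+B)^{1/2}\le A^{1/2}+B^{1/2}$ fails for some $A,B\in M_2$ of contrast $\delta'$. Then there is a unit vector $x$ with $\langle(A+B)^{1/2}x,x\rangle>\langle(A^{1/2}+B^{1/2})x,x\rangle$. Conjugate so that $x$ is the top eigenvector of $\rho=\diag(1,\ell)$. As $\delta\to1$ (i.e.\ $\ell\to0$), $\varphi_\rho$ tends to the vector state of $x$, so item~12 is violated for all $\delta$ close to $1$ by continuity. Direct sums with scalar summands, as in the proof of Theorem~\ref{thm:jordan}, carry this from $M_2$ to every $n\ge2$.

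So everything hinges on failure of the operator inequality when $\tanh(\eta/2)>1/\sqrt2$, and this is the main obstacle. Indefiniteness of $XY+YX$ only shows $(X+Y)^2\not\ge A+B$, and the square root is not order-reflecting, so this does not immediately give $X+Y\not\ge(A+B)^{1/2}$. I would attack it by an explicit $2\times2$ computation. Take $X=\diag(1,t_\ell)$ and $Y$ the same spectrum rotated by $\pi/4$ (the extremal configuration of Theorem~\ref{thm:jordan}), and allow a free scale $Y\mapsto sY$, which leaves contrasts unchanged. Then test whether $S^{-1/2}CS^{-1/2}\le1$, with $S=X+sY$ and $C=(X^2+s^2Y^2)^{1/2}$. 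For $2\times2$ matrices $C$ has a closed form, $C=(M+\sqrt{\det M})/\sqrt{\Tr M+2\sqrt{\det M}}$ for $M=X^2+s^2Y^2$. The condition then reduces to $\det(S-C)\ge0$ together with $\Tr(S-C)\ge0$, and I expect $\det(S-C)<0$ for a suitable $s$ exactly when $2\tanh^2(\eta/2)>1$. If the algebra resists, I would first try to prove just that sign change, expanding around the boundary $\tanh(\eta/2)=1/\sqrt2$, where $\det(XY+YX)=0$ and the configuration becomes degenerate.
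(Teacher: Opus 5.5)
Your sufficiency argument and the threshold $2\sqrt2/3$ are exactly the paper's: $\OMC(A^{1/2})=\tanh(\eta_A/2)$, positivity of the Jordan product of the square roots by Theorem~\ref{thm:jordan}(a), then operator monotonicity of $t\mapsto t^{1/2}$. Your reduction of sharpness to a failure of the operator inequality on $M_2$ also matches the paper, followed by letting $\varphi_\rho$ tend to a vector state and padding with scalar summands. The gap is the step you flag yourself. You never show that $(A+B)^{1/2}\le A^{1/2}+B^{1/2}$ actually fails when $2t^2>1$, $t=\OMC(A^{1/2})$; you only expect $\det(S-C)<0$ for some scale $s$. As written, the sharpness claim is unproved.

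The missing idea is a symmetry of the configuration you already chose, taken with $s=1$.
\begin{itemize}
\item Let $X=\diag(1,\sqrt{\ell'})$ and $Y=RXR^*$, with $R$ the rotation by $\pi/4$. The reflection $F$ in the line at angle $\pi/8$ maps $e_1\mapsto Re_1$ and $e_2\mapsto-Re_2$, so $FXF=Y$ and $FYF=X$.
\item Hence $F$ commutes with $S=X+Y$, with $A+B=X^2+Y^2$ and with $J=XY+YX$. Since $F$ has the simple eigenvalues $\pm1$, all three are diagonal in its eigenbasis $u_1,u_2$, and so is $(A+B)^{1/2}$.
\item In this commuting situation the square root \emph{is} order-reflecting. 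The identity $S^2=A+B+J$ gives $\langle Su_i,u_i\rangle^2=\langle(A+B)u_i,u_i\rangle+\langle Ju_i,u_i\rangle$.
\item $J$ is indefinite by the converse part of Theorem~\ref{thm:jordan}(a), in the case $\theta=\pi/4$, $\delta_A=\delta_B=t$, $2t^2>1$. For the $u_i$ with $\langle Ju_i,u_i\rangle<0$ you get $\langle Su_i,u_i\rangle<\langle(A+B)^{1/2}u_i,u_i\rangle$.
\end{itemize}
This $u_i$ is the witness vector your continuity argument needs. It also gives your $\det(S-C)<0$ at once, since $S-C$ is diagonal with entries of opposite signs. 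No closed form for the $2\times2$ square root, no free scale and no expansion near the boundary are needed.
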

\begin{proof}
By Proposition~\ref{prop:rapid}(a), $\OMC(A^{1/2})=\tanh(\eta_A/2)=\OMC(A)/(1+\sqrt{1-\OMC(A)^2})$, which gives the
two forms of the hypothesis. By Theorem~\ref{thm:jordan}(a) applied to $A^{1/2}$ and $B^{1/2}$,
$A^{1/2}B^{1/2}+B^{1/2}A^{1/2}\ge0$, hence
\[
  (A^{1/2}+B^{1/2})^2=A+B+A^{1/2}B^{1/2}+B^{1/2}A^{1/2}\ge A+B ,
\]
and, the square root being operator monotone, $A^{1/2}+B^{1/2}\ge(A+B)^{1/2}$. A positive functional preserves this
inequality. For $\OMC(A)=\OMC(B)=\delta'$ the hypothesis reads $\tanh(\eta/2)\le1/\sqrt2$ with $\tanh\eta=\delta'$, i.e.\
$\delta'=2t/(1+t^2)$ at $t=1/\sqrt2$, which is $2\sqrt2/3$.

Sharpness. Let $\delta'>2\sqrt2/3$, so that $t=\OMC(A^{1/2})>1/\sqrt2$, and on $M_2$ let $A=\mathrm{diag}(1,\ell')$ and
$B=RAR^*$ with $R$ the rotation by $\pi/4$. The reflection $F$ in the line at angle $\pi/8$ interchanges $A$ and $B$, hence
$A^{1/2}$ and $B^{1/2}$, so it commutes with $A+B$, with $S=A^{1/2}+B^{1/2}$ and with $J=A^{1/2}B^{1/2}+B^{1/2}A^{1/2}$.
Since $F$ has the simple eigenvalues $\pm1$, these three matrices are diagonal in its eigenbasis $u_1,u_2$, and
$S^2=A+B+J$ gives $\langle u_i,Su_i\rangle^2=\langle u_i,(A+B)u_i\rangle+\langle u_i,Ju_i\rangle$. The pair $A^{1/2}$,
$B^{1/2}$ is the configuration of the converse in Theorem~\ref{thm:jordan}(a) with $\delta_A=\delta_B=t$, so $J$ is
indefinite since $2t^2>1$, and for the $u_i$ with $\langle u_i,Ju_i\rangle<0$ we get
$\langle u_i,(A^{1/2}+B^{1/2})u_i\rangle<\langle u_i,(A+B)^{1/2}u_i\rangle$. On $M_n$ replace $A,B$ by $A\oplus1_{n-2}$ and
$B\oplus1_{n-2}$, which have the same contrast, and let $\rho$ have $u_i$ as eigenvector for its largest eigenvalue and all
other eigenvalues equal. As $\delta\to1$, $\varphi_\rho\to n\langle u_i,\cdot\,u_i\rangle$, so
$\varphi_\rho((A+B)^{1/2})>\varphi_\rho(A^{1/2})+\varphi_\rho(B^{1/2})$ for all $\delta$ close enough to $1$.
\end{proof}

\begin{figure}[t]
\centering\includegraphics[width=\textwidth]{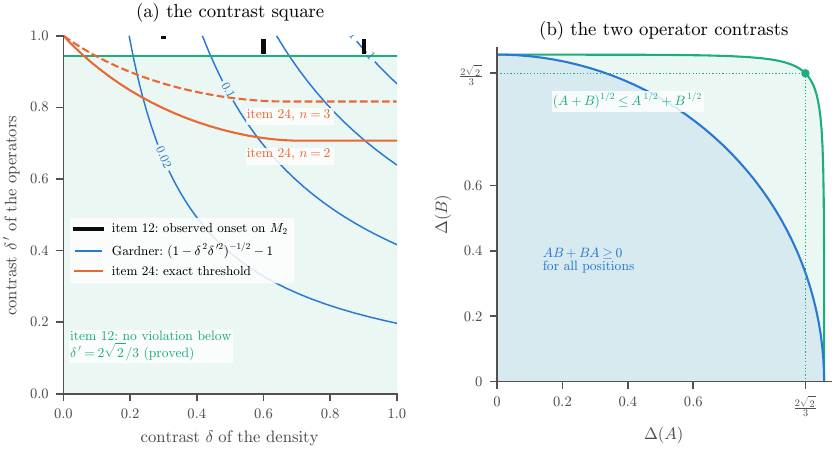}
\caption{(a) The contrast square: level curves of the Gardner envelope $(1-\delta^2\delta'^2)^{-1/2}-1$ (blue; the
values $0.02$, $0.1$, $0.3$, $1$), the band in which item~12 cannot be violated (green,
Proposition~\ref{prop:item12}), the exact violation thresholds of item~24 for $n=2$ and $n=3$ (orange,
Proposition~\ref{prop:item24}), and, in black, the interval of grid values of $\delta'$ in which the numerical search on $M_2$
first finds a violation of item~12, at $\delta=0.3$, $0.6$ and $0.9$. (b) The plane of the two operator contrasts:
$AB+BA\ge0$ for all relative positions inside the quarter disc (blue, Theorem~\ref{thm:jordan}), and
$(A+B)^{1/2}\le A^{1/2}+B^{1/2}$ in the larger green region (Proposition~\ref{prop:item12}). In the figure
$\Delta(\cdot)=\OMC(\cdot)$, $\delta$ is the contrast of the density and $\delta'$ that of the operators.}
\label{fig:square}
\end{figure}

The proved threshold is consistent with the computation: on $M_2$ and $M_3$ the search finds no violation of item~12 for
$\delta'\le\TCCSubaddNone$ and $\delta\in\{0.3,0.6,0.9\}$, and the first violation above $10^{-4}$ at
$\delta'=$ \TCCSubaddOnset; all these grid values lie above $2\sqrt2/3$ (Figure~\ref{fig:square}(a)). Unlike item~24 below,
whose whole threshold curve we find for $n=2$, here sharpness is proved only in the limit $\delta\to1$; at
$\delta=0.3$, $0.6$ and $0.9$ the search finds the onset of violations higher. The criterion
of Moslehian and Najafi~\cite[Theorem~2.1]{moslehian2011} is that $AB+BA\ge0$ if and only if $f(A+B)\le f(A)+f(B)$ for
\emph{all} non-negative operator monotone functions $f$. For the square root alone the weaker hypothesis
$A^{1/2}B^{1/2}+B^{1/2}A^{1/2}\ge0$ suffices, and the proof of sharpness shows that in the extremal configuration it is
also necessary.

For the power-mean inequality $\varphi(A^2)^{1/2}\le\varphi(A)$ with $\varphi(1)=n$ (item~24) a Kantorovich-type argument
gives the envelope with both contrasts prescribed in closed form, and with it a sharp threshold.

\begin{proposition}[Item 24 with both contrasts]\label{prop:item24}
Let $n\ge2$, $\OMC(\rho)=\delta<1$, $\Tr \rho=n$, and let $A\ge0$ have $\OMC(A)=\delta'$. Put $\ell=\frac{1-\delta}{1+\delta}$,
$\ell'=\frac{1-\delta'}{1+\delta'}$,
\[
  x_0=n\ell'+(1-\ell')\frac{n\ell}{n-1+\ell},\qquad x_*=\frac{2n\ell'}{1+\ell'},\qquad
  f(x)=\frac{\sqrt{(1+\ell')x-n\ell'}}x .
\]
Then $\varphi_\rho(A^2)^{1/2}/\varphi_\rho(A)\le f(\max(x_0,x_*))$, so $V_n(\delta,\delta')\le f(\max(x_0,x_*))-1$, with equality
if $x_0\ge x_*$ or $(n-1)\ell\ell'\le1$, in particular always for $n=2$. Consequently:
\begin{enumerate}
\item[(a)] item~24 is violated for no density if and only if $\delta'\le\sqrt{(n-1)/n}$; for $\delta'>\sqrt{(n-1)/n}$ it is
violated for every $\delta$ close enough to $1$;
\item[(b)] for $n=2$ the violation threshold is the curve $f(\max(x_0,x_*))=1$ in the $(\delta,\delta')$-square, and it equals
$1/\sqrt2$ for all $\delta\ge1/\sqrt2$.
\end{enumerate}
\end{proposition}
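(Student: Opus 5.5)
The plan is a Kantorovich-type argument in the variable $x=\varphi_\rho(A)$. Normalise $A$ so that $\max\sigma(A)=1$; then $\sigma(A)\subset[\ell',1]$ with both endpoints in the spectrum, and $\varphi_\rho(1)=\Tr\rho=n$. As in the proof of Lemma~\ref{lem:krein}, the operator inequality $(1-A)(A-\ell')\ge0$ gives $A^2\le(1+\ell')A-\ell'$. Applying $\varphi_\rho$ yields
\[
\frac{\varphi_\rho(A^2)}{\varphi_\rho(A)^2}\le\frac{(1+\ell')x-n\ell'}{x^2}=f(x)^2 .
\]
A one-variable computation shows that $f^2$ increases on $[n\ell',x_*]$ and decreases beyond $x_*=2n\ell'/(1+\ell')$. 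So it remains to bound the admissible $x$ from below, and I claim $x\ge x_0$. If $e$ is a unit eigenvector of $A$ for the eigenvalue $1$, then $A\ge\ell'+(1-\ell')ee^*$. Hence
\[
x\ge n\ell'+(1-\ell')\langle\rho e,e\rangle\ge n\ell'+(1-\ell')w_{\min}.
\]
The bound $w_{\min}\ge n\ell/(n-1+\ell)$ from the proof of Proposition~\ref{prop:power} then gives $x\ge x_0$. Since $f$ is unimodal, $\sup_{x\ge x_0}f(x)=f(\max(x_0,x_*))$, which is the stated upper bound.

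For equality I will use the spectrum $\{a,\dots,a,b\}$ of $\rho$ (so $w_{\min}=n\ell/(n-1+\ell)$) and a two-level $A=\ell'+(1-\ell')P$. For such $A$ the Kantorovich step is an equality, because $(1-A)(A-\ell')=0$, and so the ratio is exactly $f(x)$ with $x=n\ell'+(1-\ell')\varphi_\rho(P)$.

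If $x_0\ge x_*$, take $P$ the rank-one projection onto the eigenvector of $w_{\min}$; this gives $x=x_0$. If $x_*>x_0$, I need a projection $P$ with $\varphi_\rho(P)=n\ell'/(1+\ell')$. Rank-one projections give every value in $[w_{\min},w_{\max}]$ by continuity, and higher-rank projections sweep larger values. The main obstacle I expect is verifying that the target value $n\ell'/(1+\ell')$ is reachable, possibly after adjusting the spectrum of $\rho$ within contrast $\delta$, exactly under the condition $(n-1)\ell\ell'\le1$. I would first try the rank-one family together with a spectrum $\{a,b,\dots,b\}$ in place of $\{a,\dots,a,b\}$, and compare the resulting inequality with $(n-1)\ell\ell'\le1$. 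For $n=2$ both spectral choices coincide, and I expect the condition to hold automatically.

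Part (a): $x_0$ decreases as $\delta$ increases, so the bound $f(\max(x_0,x_*))$ is nondecreasing in $\delta$. As $\delta\to1$ we have $\ell\to0$ and $x_0\to n\ell'<x_*$. The supremum over densities is therefore
\[
f(x_*)^2=\frac{(1+\ell')^2}{4n\ell'}=\frac{G(\delta')^2}{n}=\frac1{n(1-\delta'^2)}.
\]
This is $\le1$ iff $\delta'^2\le(n-1)/n$. Conversely, when $\delta'^2>(n-1)/n$ the equality case (here $x_*>x_0$ for $\delta$ near $1$, and $(n-1)\ell\ell'\le1$ since $\ell\to0$) gives a violation for all $\delta$ near $1$.

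Part (b): for $n=2$ equality always holds, so the threshold is the curve $f(\max(x_0,x_*))=1$. On the part of the curve where $x_*\ge x_0$, the value $f(x_*)$ does not depend on $\delta$. There $f(x_*)=1$ reads $\delta'=1/\sqrt2$, i.e.\ $\ell'=3-2\sqrt2$. It then remains to check, by direct substitution, that $x_*\ge x_0$ at $\ell'=3-2\sqrt2$ holds exactly when $\delta\ge1/\sqrt2$.
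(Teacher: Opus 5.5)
Apart from one step, your argument is the paper's proof. The bound $A^2\le(1+\ell')A-\ell'$, the estimate $x\ge x_0$ from $A\ge\ell'+(1-\ell')ee^*$ and $w_{\min}\ge n\ell/(n-1+\ell)$, the unimodality of $f$ about $x_*$, the two-level rank-one extremisers, and your derivations of (a) and (b) all match.

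The step you leave open is the one you flag yourself. When $x_*>x_0$ you must exhibit $\rho$ of contrast $\delta$ and a unit vector $e$ with $\langle\rho e,e\rangle=n\ell'/(1+\ell')$. Your tentative plan, the spectrum $\{a,b,\dots,b\}$ \emph{in place of} $\{a,\dots,a,b\}$, does not do this for $n\ge3$. For that spectrum the values $\langle\rho e,e\rangle$ fill only $[n\ell/(1+(n-1)\ell),\,n/(1+(n-1)\ell)]$, and the left end can exceed the target. For instance, $n=3$, $\ell=\frac12$, $\ell'=\frac27$ give $x_0=\frac97<\frac43=x_*$ and the target $\frac23$. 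But $\rho=\diag(\frac32,\frac34,\frac34)$ has numerical range $[\frac34,\frac32]$, which misses $\frac23$.

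The paper closes this step by using both spectra.
\begin{itemize}
\item Their numerical ranges are $[n\ell/(n-1+\ell),\,n/(n-1+\ell)]$ and $[n\ell/(1+(n-1)\ell),\,n/(1+(n-1)\ell)]$. They overlap, because $n\ell/(1+(n-1)\ell)\le n/(n-1+\ell)$ is equivalent to $\ell^2\le1$. So together they cover $[n\ell/(n-1+\ell),\,n/(1+(n-1)\ell)]$.
\item The left end is at most $n\ell'/(1+\ell')$ precisely because $x_0\le x_*$: subtract $n\ell'$ and divide by $1-\ell'>0$.
\item The right end is at least $n\ell'/(1+\ell')$ if and only if $1+\ell'\ge\ell'(1+(n-1)\ell)$, i.e.\ $(n-1)\ell\ell'\le1$.
\end{itemize}
For $n=2$ the two spectra coincide and $\ell\ell'\le1$ is automatic, as you expected.
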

\begin{proof}
Both sides are homogeneous of degree one in $A$, so let $\|A\|=1$; then the spectrum of $A$ lies in $[\ell',1]$ and contains
both end points. From $(1-A)(A-\ell')\ge0$ we get $A^2\le(1+\ell')A-\ell'$, hence, with $x=\varphi(A)$,
\[
  \frac{\varphi(A^2)^{1/2}}{\varphi(A)}\le\frac{\sqrt{(1+\ell')x-n\ell'}}x=f(x),
\]
with equality for two-level $A$. If $e$ is a unit eigenvector of $A$ for the eigenvalue $1$, then $A\ge\ell'+(1-\ell')ee^*$,
so $x\ge n\ell'+(1-\ell')\langle \rho e,e\rangle\ge n\ell'+(1-\ell')\lambda_{\min}(\rho)$, and $\lambda_{\min}(\rho)\ge n\ell/(n-1+\ell)$
for $\Tr \rho=n$ (proof of Proposition~\ref{prop:power}); so $x\ge x_0$. The function $f$ increases on $[n\ell'/(1+\ell'),x_*]$
and decreases on $[x_*,\infty)$, since $\frac d{dx}f(x)^2=x^{-3}\bigl(2n\ell'-(1+\ell')x\bigr)$; hence $f(x)\le f(\max(x_0,x_*))$.
For equality take $A=\ell'+(1-\ell')ee^*$ with a unit vector $e$, so that $x=n\ell'+(1-\ell')\langle \rho e,e\rangle$. If
$x_0\ge x_*$, let $\rho$ have the spectrum $\{a,\dots,a,b\}$ and $e$ an eigenvector for $b$; then $x=x_0$. Otherwise we need
$\langle \rho e,e\rangle=n\ell'/(1+\ell')$; the values $\langle \rho e,e\rangle$ for the spectra $\{a,\dots,a,b\}$ and
$\{a,b,\dots,b\}$ (normalised to $\Tr \rho=n$) fill the interval $[n\ell/(n-1+\ell),\,n/(1+(n-1)\ell)]$, whose left end is
at most $n\ell'/(1+\ell')$ because $x_0\le x_*$, and whose right end is at least $n\ell'/(1+\ell')$ if $(n-1)\ell\ell'\le1$.

(a) Since $f(\max(x_0,x_*))\le f(x_*)=(1+\ell')/(2\sqrt{n\ell'})$, there is no violation if $(1+\ell')^2\le4n\ell'$, i.e.\
$\ell'\ge(\sqrt n-\sqrt{n-1})^2$, i.e.\ $\delta'\le\sqrt{(n-1)/n}$. Conversely, as $\delta\to1$ we have $\ell\to0$, so
$x_0\to n\ell'<x_*$ and $(n-1)\ell\ell'\le1$, and the envelope tends to $f(x_*)-1>0$ when $(1+\ell')^2>4n\ell'$. (b) For $n=2$
equality always holds; $x_0\le x_*$ means $1-\delta\le2\ell'/(1+\ell')$, and at $\ell'=(\sqrt2-1)^2$ this is
$\delta\ge1/\sqrt2$.
\end{proof}

\noindent The computed envelopes of item~24 on $M_2$ and $M_3$ with both contrasts prescribed agree at every grid point
with Proposition~\ref{prop:item24} to $\TCCPowerMeanDev$ (absolute difference). The threshold rises above $\sqrt{(n-1)/n}$ for smaller $\delta$ and
tends to $1$ as $\delta\to0$ (Figure~\ref{fig:square}(a)): the less tracial the functional, the more moderate the
operators that detect it. For monotonicity (item~10) an argument of this kind is not available: for $0\le A\le B=A+tvv^*$ with a unit vector $v$
that is not an eigenvector of $A$, $B^2-A^2=t\bigl((A+\frac t2)vv^*+vv^*(A+\frac t2)\bigr)$ is indefinite for every $t>0$,
however small the contrasts of $A$ and $B$, so the operator inequality fails and only the functional can prevent a
violation. Correspondingly, the numerically observed onset of violations of item~10 depends on $\delta$: on $M_2$ the
first grid value of $\delta'$ with a violation above $10^{-4}$ is \TCCMonOnsetTwo.

\begin{figure}[!htbp]
\centering\includegraphics[width=\textwidth]{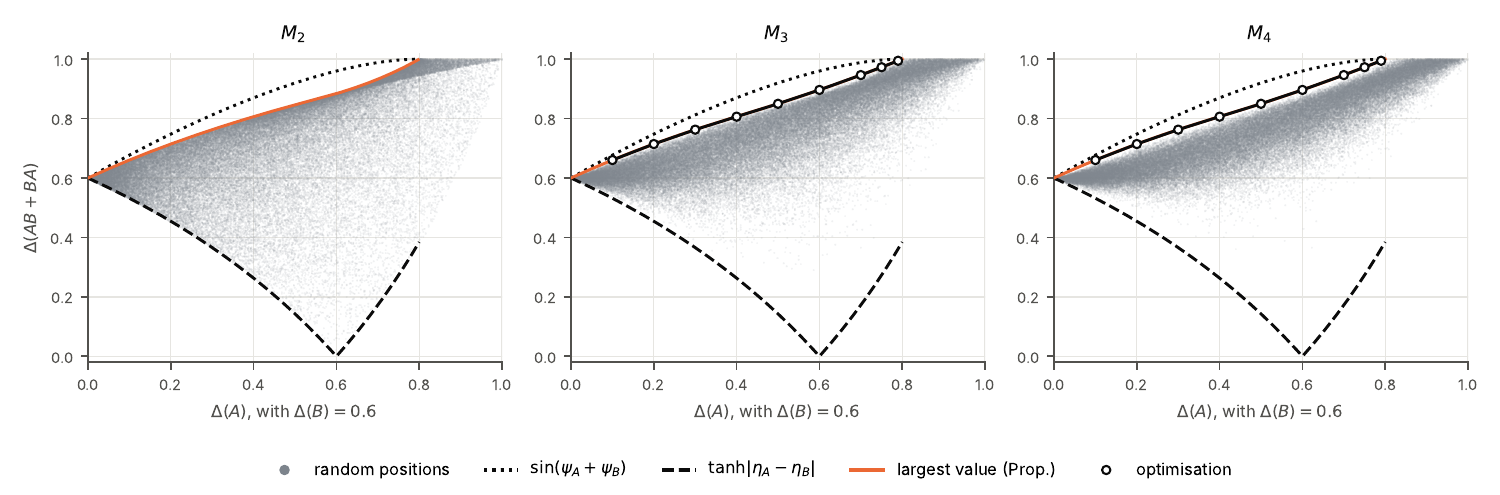}
\caption[]{Contrast of the Jordan product $AB+BA$ against $\OMC(A)<0.8$, with $\OMC(B)=0.6$ (for $\OMC(A)\le0.8$ it
is positive, Theorem~\ref{thm:jordan}(a)): the bounds of Theorem~\ref{thm:jordan}(b), the largest value of
Proposition~\ref{prop:jordanmax} (orange; on $M_3$ and $M_4$ the value $(2-\mu)/(2+\mu)$), and the largest value found by
optimisation (markers). In the figure $\Delta(\cdot)=\OMC(\cdot)$.}
\label{fig:ops-jordan}
\end{figure}
\begin{figure}[!htbp]
\centering\includegraphics[width=\textwidth]{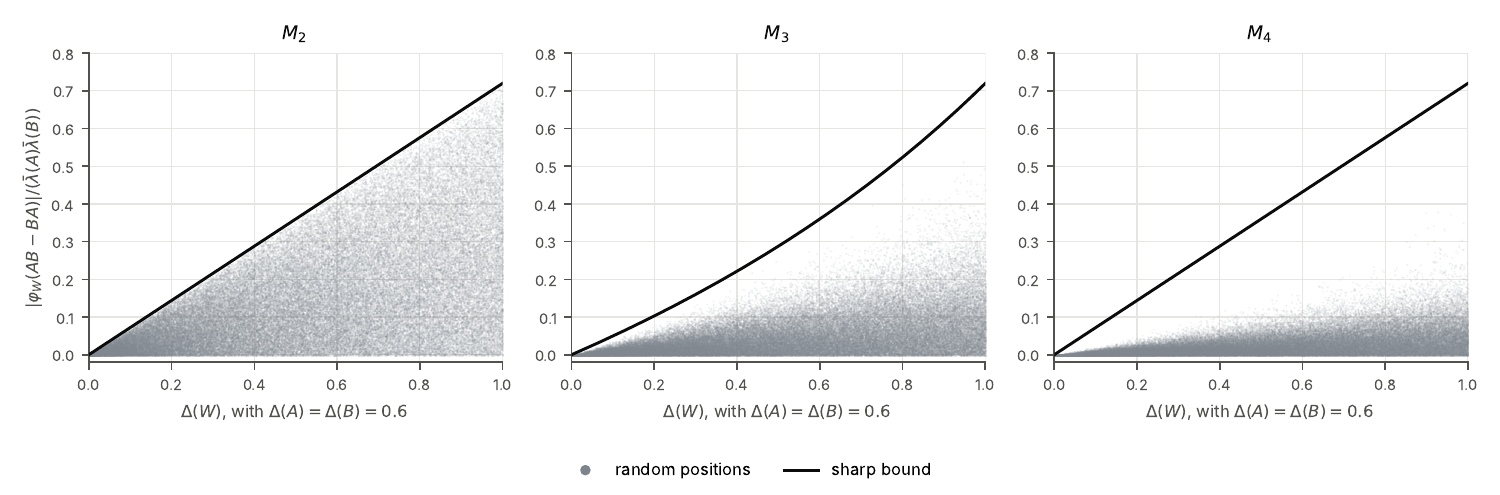}
\caption[]{The commutator test $|\varphi_\rho(AB-BA)|/(\bar\lambda(A)\bar\lambda(B))$ for random states $\varphi_\rho$ and random
$A,B$ against $\OMC(\rho)$, with $\OMC(A)=\OMC(B)=0.6$, and the sharp bound $2c_n(\delta)\cdot0.36$ of
Proposition~\ref{prop:commutator}(b). In the figure $\varphi_W=\varphi_\rho$, $\Delta(W)=\OMC(\rho)=\delta$ and
$\Delta(\cdot)=\OMC(\cdot)$.}
\label{fig:ops-commutator}
\end{figure}

\section{Numerical evidence}\label{sec:numerics}

\subsection{Method}\label{sec:method}
The envelopes were computed by constrained gradient ascent with the library of~\cite{lib} (the programs
\texttt{trace\_\allowbreak characterizations.py} and \texttt{trace\_\allowbreak contrast.py} of its repository). The
random sampling (\texttt{trace\_\allowbreak montecarlo.py}), the computations for composite operators
(\texttt{contrast\_\allowbreak operations.py}) and Figure~\ref{fig:gardner1d} (\texttt{trace\_\allowbreak
envelopes\_\allowbreak figures.py}) are plain NumPy/SciPy programs in the same repository that do not use the library.
The density $\rho$ is diagonal with a spectrum parametrised so that $\OMC(\rho)=\delta$
holds exactly, the other eigenvalues being free. Positive variables have a free contrast in $[0,1]$, so rank-deficient
operators are included; there are self-adjoint and unitary variables (polar factors, and projections $UDU^*$ with every rank
pattern spread over the starts) and positive scalars. In item~2 the search runs over self-adjoint $X,Y$ only, which
contain the extremiser of Corollary~\ref{cor:item5}. Half of the starts begin at structured points (two-level spectra,
rotations in pairs of eigenvectors of $\rho$). The lattice operations are the spectral projections of $P+Q$ for the
eigenvalue $2$ ($P\wedge Q$) and for its range ($P\vee Q$), computed by functional calculus with an indicator function. In
items 19 and~23 the suprema are approached as $P$ and $Q$ merge, where $P\wedge Q$ and $P\vee Q$ jump; there the search
maximises $(\cL-\cR)/\cN-10^{-12}/\|P-Q\|_{\mathrm F}^2$, which keeps $P$ and $Q$ about $10^{-3}$ apart, while the reported
values are those of $(\cL-\cR)/\cN$. A grid of $\TCGridSize$ contrasts times $\TCStarts$ starts forms one batch per item
and dimension ($\TCSteps$ gradient steps, $n=\TCDims$).

The best final point for every contrast is recomputed with $\TCDigits$-digit arithmetic (mpmath), after the unitaries
are re-orthonormalised and the contrast is recomputed from the stored eigenvalues. Every reported value is therefore the
exact violation at an exactly admissible point, and so a rigorous \emph{lower} bound for the supremum; we call it the
\emph{computed envelope}. It is not an upper bound: where no theorem is available, the true envelope may be larger. The
double-precision and high-precision values agree to $\TCCertDev$ (relative), and at $\delta=0$ no value exceeds
$\TCVzero$. For items 12--16, independent NumPy/SciPy searches (BFGS from $\TCSciStarts$ random starts on $M_2$ at
$\delta=0.3,0.5,0.9$) give values such that the ratio of the batched search to them lies in
$[\TCSciRatioMin,\TCSciRatioMax]$; the ratios above $1$ come from item~13, where BFGS stops earlier. The whole computation takes $\TCSeconds$~s on $\TCWorkers$ CPU processes.

\subsection{Envelopes with the operators free}

\begin{table}[h]
\centering\scriptsize
\setlength{\tabcolsep}{4pt}
\begin{tabular}{llllll}
\toprule
items & $V^\star(\delta)$ & as $\delta\to0$ & $V^\star(0.5)$ & search $V(0.5)$ & status\\
\midrule
\TCClassRows
\bottomrule
\end{tabular}
\caption[]{The items with a proved envelope or lower bound, ordered by the order of vanishing at the trace. $V^\star$:
the sharp value or, where marked $\ge$, a proved lower bound; ``search'': the computed envelope on $M_2$ (a range over the
items of a row). Items 11 and 24 are given for $n=2$.}
\label{tab:classes}
\end{table}

\begin{table}[h]
\centering\scriptsize
\setlength{\tabcolsep}{1.8pt}
\begin{tabular}{rlccccccrc}
\toprule
 & inequality $\cL\le \cR$ & ref. & $V(0.1)$ & $V(0.5)$ & $V(0.9)$ & $V^\star(0.5)$ & order & slope & $n$-dep.\\
\midrule
\TCRows
\bottomrule
\end{tabular}
\caption[]{Computed envelopes (certified lower bounds) on $M_2$ at $\delta=0.1,0.5,0.9$ and the proved values $V^\star$;
$^a$: sharp for $n=2$ only, $^b$: a lower bound, $^c$: approached but not attained. ``order'': leading order of $V^\star$
as $\delta\to0$; ``slope'': $\log_2(V(0.2)/V(0.1))$ of the search; ``$n$-dep.'': the largest $|V_n/V_2-1|$ for $n=3,4$ and
$\delta\ge0.1$ (for item~7 it reflects search shortfalls, for item~13 a genuine increase with $n$; items 19 and 20 differ
for odd $n$, Theorem~\ref{thm:projP}). ``n.r.'': no violation resolved by the search (\TCMode{} run on \TCDevice).}
\label{tab:values}
\end{table}

Tables~\ref{tab:classes} and~\ref{tab:values}, Figure~\ref{fig:ladder} and the top rows of
Figures~\ref{fig:mc-gardner}--\ref{fig:mc-wmon} summarise the computation. Where an envelope is known exactly and
attained (items 1--5, 8, 11, and 9 for $n=2$; for item~10, exact by Theorem~\ref{thm:mono2} and
Remark~\ref{rem:g10}, see Section~\ref{sec:numcompare}), the search reproduces it to $\TCSharpDev$ for $\delta\ge0.05$ and
$n=2,3,4$; for items 17, 18, 20--22 and 24 it does so to $\TCSharpDevNew$, and for the approached suprema of 19 and 23 to
$\TCSharpDevApproached$. The values of 17, 18 and 21--23 on $M_3$ and $M_4$ agree with those on $M_2$ to $\TCNDevNew$, as
Theorem~\ref{thm:proj}, Corollary~\ref{cor:item5} and Propositions~\ref{prop:item21} and~\ref{prop:item22} predict, and on $M_3$ the search finds the odd-dimensional value $\TCExpOddHalf$ of item~20 at
$\delta=0.5$ ($M_2$: $\TCRefExpqpqHalf$). These agreements are a check of the method rather than new information.

The new information concerns the items without a proved envelope. All five items 12--16 fail quadratically, with
$V\approx0.5\,\delta^2$ (items 12 and 13), $\delta^2/36$ (item~14, equal to the lower bound of
Proposition~\ref{prop:holder} to leading order), $0.064\,\delta^2$ (item~15) and $0.09\,\delta^2$ (item~16). Item~15
decreases with $n$, because the identity part of $e^{P/2}e^Qe^{P/2}$ enters $\cN$, whereas item~13 increases with $n$ at
large $\delta$ (on $M_3$ at $\delta=0.9$ the value $\TCVPowersNineThree$ against $\TCVPowersNine$ on $M_2$); items 12,
14 and~16 give the same values for $n=2,3,4$ (to $\TCNDev$). In item~16 the best points have large $\|A\|$ and $\|B\|$, and the supremum is approached as the
scale grows. Among items 6, 7 and 25 (marked $^b$ in Table~\ref{tab:values}) and item~10 (exact,
Theorem~\ref{thm:mono2}), whose computed envelopes were compared with the proved bounds, no computed value exceeds the bound (the largest relative
difference, search minus bound, is $\TCBoundExcess$) except for item~6 in dimension $n\ge3$: on $M_2$ the search never
exceeds $G-1$, but for $n\ge3$ it finds larger violations, $V\ge\TCSGMExcessV$ at $n=\TCSGMExcessN$,
$\delta=\TCSGMExcessDelta$, where $G-1=\TCSGMExcessKappa$. At $n=3$, $\delta=0.9$ the present run reaches only
$\TCVSgmNineThree<G-1=\TCRefSgmNine$, but a point found by an earlier run and stored in the test suite of the
repository gives $V>2.1$ there (recomputed with 60 digits). For item~6 we therefore only know $G-1\le V_n\le \kappa-1$ (Proposition~\ref{prop:sgm}). Items 9
and 10 give the same values for $n=3,4$ as for $n=2$, to $\TCNDevProj$ and $\TCNDevMon$.

Except for items 6 ($n\ge3$), 11 ($A=\rho^{-1}$) and 25 ($A$ of rank two with eigenvalue ratio $\approx\delta^2/8$), the best
points are rank-one operators or matrix units in the span of eigenvectors of $\rho$ for its extreme eigenvalues, and they do
not commute with $\rho$. The search has two limitations. The proved lower bounds of items 6 and 7 are approached only as
an operator becomes singular; gradient ascent reaches $\TCSGMRatioHalf$ and $\TCALTRatioHalf$ of the lower bound at $\delta=0.5$ and
finds no violation of item~6 for $\delta\le0.2$. Item~25 is resolved for $\delta\ge\TCWmonFirst$; the search reaches
$g_{25}$ to $10^{-3}$ for $\delta\ge\TCWmonReach$ and does not exceed it. Finally, on $M_2\oplus M_3$ with
$\rho=1\oplus s1$ and $\rho=\mathrm{diag}(1,\ell)\oplus s1$, $s\in\{\frac14,4\}$, and intra-block contrast
$\delta\in\{0.3,0.5,0.7\}$, the computation confirms Corollary~\ref{cor:sectors}: for central $\rho$ the largest value found for the items that
hold for every trace is $\TCSectorCentral$; for items 11 and 24 the search reproduces the exact values for the fixed
density to $\TCSectorDet$; with intra-block contrast no value exceeds the $M_2$ envelope (largest relative difference
$\TCSectorExcess$), and where Corollary~\ref{cor:sectors} gives equality the search reaches it to $10^{-3}$ in $\TCSectorReached$ of
$\TCSectorTotal$ cases.

\subsection{Both contrasts}\label{sec:bothcontrasts}

For the 14 items with operator variables that are not projections, a second computation prescribes the contrast
$\delta'$ of all operator variables as well (in items 10 and 25 the variables of prescribed contrast are $A$ and $B-A$;
$\TCCStarts$ starts per grid point, $\TCCSteps$ gradient steps, best point
recomputed with $\TCCDigits$ digits; $\TCCSeconds$~s on four CPU processes). The bottom rows of
Figures~\ref{fig:mc-gardner}--\ref{fig:mc-wmon} (Appendix~\ref{app:mc}) show the computed $V(0.9,\delta')$. At
$\delta'=0$ no item is violated, as Proposition~\ref{prop:scalar} requires, but beyond that the dependence on $\delta'$
takes several forms. For item~1 the values follow Theorem~\ref{thm:gardnerD} to $\TCCGardnerDev$ (relative, $n=2,3$,
all grid points). For items 2 and~3 the theorem gives only the upper bound $G(\delta\delta')-1$; on $M_2$ the
search reaches it (to $\TCCGardnerDevEven$), and on $M_3$ it reaches the odd-dimensional value of item~1 up to the
relative differences $\TCCTriangleDevOdd$ (item~2) and $\TCCRealpartDevOdd$ (item~3). Item~8 follows
Proposition~\ref{prop:youngtwo} on $M_2$; here the violation grows smoothly with both contrasts. Items 6, 10, 12, 13, 24
and~25, in contrast, show no violation above $10^{-4}$ for $\delta'<\TCCThresholdMin$: at $\delta=0.9$ the first grid
value of $\delta'$ with a violation above $10^{-4}$ is \TCCThresholds\ (``--'': none on the grid). For items 12 and 24
Section~\ref{sec:thresholds} proves such thresholds, and for item~24 the computed values agree with the exact envelope of
Proposition~\ref{prop:item24}; for items 6, 10, 13 and 25 the thresholds remain observations. The determinant inequality
(item~11) behaves differently again: it is violated most strongly when the contrast of $A$ is close to that of $\rho$, since
its extremiser is $\rho^{-1}$, and in $M_2$ items 4 and~7 also have an interior maximum in $\delta'$ (for item~4 at the
contrast of $|A|$ for the extremiser $A=\sqrt a\,e_1e_2^*+\sqrt b\,e_2e_1^*$), which the additional dimension of $M_3$
removes. Finally, item~16 does not see the contrast of the operators at all: it is invariant under $A\mapsto A+c$,
$B\mapsto B+c$, which makes both contrasts arbitrarily small without changing the quotient, so $V(\delta,\delta')$ equals the
envelope with free operators for every $\delta'>0$ and jumps to $0$ at $\delta'=0$.

The same computation restricted to two-level operators of contrast $\delta'$ reproduces the violations found with
general spectra on $M_{\TCCTwoLevelDim}$ (to $1\%$, wherever they exceed $10^{-4}$) except in the items and at the
grid values \TCCTwoLevelShort. In item~4 the extremisers on $M_3$ apparently need a third eigenvalue (with the contrast
free, the extremiser has the eigenvalues $\sqrt a,\sqrt b,0$; Section~\ref{sec:projsuffice}), and in items 6 and~7 the two-level values fall further behind as $\delta'\to1$. In item~25 two-level operators give no violation
at $\delta=0.9$, $\delta'=0.99$ and $0.999$, where general spectra give $1.9\times10^{-4}$ and $1.5\times10^{-3}$; in
item~11 they fall short most at $\delta=0.6$, $\delta'=0.9$ ($0.017$ against $0.045$), and in item~2 by about
$10\%$ at $\delta=0.3$. In item~13 at $\delta=0.9$ two-level operators give no violation for $\delta'=0.9$--$0.99$,
where general spectra give $0.019$--$0.11$, but this is a failure of the restricted search rather than evidence: on
$M_3$ it returns the same values at every $\delta'\le0.99$, i.e.\ it does not move away from its starting points. On
$M_2$, where every operator of contrast $\delta'$ is two-level, the two searches should agree; they differ in item~7
($\delta=0.3$, $\delta'=0.9$) and in item~13 ($\delta=0.9$, $\delta'=0.9$ and $0.95$, where the restricted search finds
the larger value), which indicates the reliability of the search near $\delta'=1$. In item~10 the two classes differ only
where neither violates the inequality.

\subsection{Projections and two-level operators}\label{sec:projsuffice}

For the items to which the vertex principle (Lemma~\ref{lem:vertex}) applies, two-level operators attain the supremum,
and in eleven items the variables are projections by definition. To see how far this extends, we repeated the search for
the other 14 items with the positive variables (the moduli, for the others) restricted to two-level operators
$c(\ell+(1-\ell)P)$ with free $\ell\in[0,1]$, $c>0$ and a projection $P$ of any rank (for self-adjoint variables
$\alpha+\beta P$ with free real $\alpha,\beta$), and to scaled projections $cP$ (Table~\ref{tab:projsuffice}).

\begin{table}[h]
\centering\scriptsize
\setlength{\tabcolsep}{3pt}
\begin{tabular}{r|ccc|ccc|ccc|ccc}
\toprule
 & \multicolumn{6}{c|}{$M_2$} & \multicolumn{6}{c}{$M_3$}\\
 & \multicolumn{3}{c|}{$\delta=0.5$} & \multicolumn{3}{c|}{$\delta=0.9$} & \multicolumn{3}{c|}{$\delta=0.5$} &
 \multicolumn{3}{c}{$\delta=0.9$}\\
item & all & two-level & proj. & all & two-level & proj. & all & two-level & proj. & all & two-level & proj.\\
\midrule
\TCCRowsA
\bottomrule
\end{tabular}
\caption[]{Computed envelopes (certified lower bounds) over all operators, over two-level operators and over scaled
projections, for the items whose variables are not projections. In $M_2$ every operator is two-level, so there the middle
column only checks the restricted parametrisation. ``--'': item~11 needs an invertible $A$.}
\label{tab:projsuffice}
\end{table}

The result is an observation with clear exceptions, not a general principle. Scaled projections attain the computed
envelope for items 1, 2, 3, 8, 10, 12, 16 (to within $\TCCProjGap\%$) and 24 in $M_2$ and $M_3$, and for item~13 in $M_2$.
The restricted searches are less reliable than the unrestricted one: two-level operators include the scaled projections,
so every entry of the two-level column below the projection column is a failure of the restricted search, as are, on
$M_2$, where every operator is two-level, the values $0$ for item~6 and $0.0329$ for item~13 at $\delta=0.5$. In items 6
and~7 the supremum is approached only in a degenerate limit in which an operator becomes a projection and both sides of
the inequality vanish; restricting the search to projections (item~6) brings it closer to the proved lower bound
$G-1$ than the unrestricted search, on $M_2$ and on $M_3$, and restricting it to two-level operators (item~7) brings
it closer to $G^2-1$ on $M_2$ (but not on $M_3$); for item~7 the violation at projections themselves is zero. Two-level operators that are not projections are needed for item~4 (the extremiser has
$|A|$ with the eigenvalues $\sqrt a,\sqrt b$), item~11 ($A=\rho^{-1}$) and item~25 (rank-one $A$ gives no violation,
Proposition~\ref{prop:wmon}). In $M_3$ the extremiser of item~4 lives on a two-dimensional subspace and vanishes on its
complement, so it has the three eigenvalues $\sqrt a,\sqrt b,0$ and the restricted search misses it. Finally,
general operators give strictly larger violations than the restricted searches find in two cases: in $M_3$ at
$\delta=0.9$ item~13 is violated by $0.3048$, against $0.2197$ for projections and for $M_2$, and for item~6 with
$n\ge3$ the unrestricted search of the previous sections exceeds $G-1$, while the restricted searches stay below
it. In short, projections and their affine images are a good first test of a characterisation, but they do not always
realise the largest violation.

\subsection{Random sampling}\label{sec:montecarlo}

Random sampling is the method of~\cite[Sect.~5]{abed2024}. We extended it to all items: for $n=\TMCDims$ and each item
we draw $\TMCSamples$ random functionals and operators ($\rho$ with a uniformly distributed contrast and the other
eigenvalues uniform in between, positive operators $U\mathrm{diag}(\lambda)U^*$ with Haar-random $U$, uniformly distributed
contrast and random scale, arbitrary operators $V|X|$ with Haar-random $V$, self-adjoint ones with random signs, and
Haar-random projections), and a second sample in which every positive operator (every modulus) is replaced by a scaled
projection of random rank. Appendix~\ref{app:mc} shows all samples together with the computed envelopes, and
Table~\ref{tab:montecarlo} the fraction of violating samples.

\begin{table}[h]
\centering\small
\setlength{\tabcolsep}{4pt}
\begin{tabular}{rl|rr|rr|rr}
\toprule
 & & \multicolumn{2}{c|}{$M_2$} & \multicolumn{2}{c|}{$M_3$} & \multicolumn{2}{c}{$M_4$}\\
item & inequality & random & proj. & random & proj. & random & proj.\\
\midrule
\TMCRows
\bottomrule
\end{tabular}
\caption[]{Random sampling, $\TMCSamples$ samples per item, class and dimension: the percentage of samples that violate
the inequality, for random operators and for scaled projections. ``--'': not applicable (the variables are projections,
or an invertible operator is needed).}
\label{tab:montecarlo}
\end{table}

No sample exceeds the computed envelope by more than the interpolation between grid values allows: the largest ratio of
a sampled violation to the envelope, interpolated linearly in $\OMC(\rho)$ at the sample's own contrast, is
$\TMCWorstRel$ over all items and dimensions. Ratios slightly above $1$ are possible because the linear interpolation
underestimates the envelope where it is concave between grid values, and for the items without a closed form the
computed envelope is only a lower bound. This is an independent consistency check of the computed values. The samples also show why sampling alone is
a poor test of a characterisation. Random operators rarely come close to the envelope, and for monotonicity,
subadditivity, the Powers--St{\o}rmer inequality and the weighted monotonicity fewer than $0.1\%$ of the samples (or none)
violate the inequality at all, so a Monte Carlo study alone would suggest that these inequalities hold for every
$\varphi_\rho$. Sampling also deteriorates with the dimension: for Gardner's inequality the fraction of violating random
samples drops from $4.4\%$ on $M_2$ to below $0.1\%$ on $M_3$ and $M_4$, and the best sample stays far below the envelope
(Figure~\ref{fig:mc-gardner}), because the extremal configurations, rank-one operators on a two-dimensional subspace
aligned with the extreme eigenvectors of $\rho$, have a Haar measure that decreases rapidly with $n$. Sampling scaled
projections helps: the fraction of violating samples rises, for Gardner's inequality from $4.4\%$ to $19\%$ on $M_2$ and
from below $0.1\%$ to $0.6\%$ on $M_4$, for subadditivity from below $0.1\%$ to $21\%$, $9\%$ and $5\%$ on $M_2$, $M_3$,
$M_4$, and for the power-mean inequality from $7\%$ to $50\%$ on $M_2$, and the best sample comes close to the envelope
(Figures~\ref{fig:mc-gardner}, \ref{fig:mc-subadd} and~\ref{fig:mc-lp}). The exceptions are those of
Section~\ref{sec:projsuffice}: items 4 and~7, whose extremisers are two-level operators that are not projections, and
items 6 and~11, which need invertible operators.

\subsection{Comparison with the constants of Section~\ref{sec:catalogue}}\label{sec:numcompare}
Several computed envelopes can now be compared with constants proved in finite von Neumann
algebras. For item~10 the computed envelope equals $(G-1)^2/(4G)$ (Theorem~\ref{thm:mono2} and
Remark~\ref{rem:g10}) to the digits of Table~\ref{tab:values}, and for $n=3,4$ it agrees with $n=2$ to
$\TCNDevMon$, as the theorem predicts. For item~12 it agrees with the lower bound
$\mathsf t_{1/2}(\kappa)-1$ of Theorem~\ref{thm:tik} to three decimal places (Section~\ref{ssec:tik}),
and for item~16 it lies below and within $3\cdot10^{-3}$ of the lower bound $b_2(\kappa)-1$ of
Proposition~\ref{prop:pb}. For item~7 the search reaches $\TCALTRatioHalf$ of the proved lower bound
$G^2-1$ at $\delta=0.5$; the multistart value $c_{2,1}\ge2.99$ for $\kappa=10$ of Remark~\ref{rem:pbnum} is
also below the proved $G^2=3.025$. For item~6 the search exceeds $G-1$ for $n\ge3$ but stays below the
upper bound $\kappa-1$ of Proposition~\ref{prop:sgm}.

\subsection{Other numerical observations}\label{sec:numother}
The numerical statements in Remarks~\ref{rem:pznum}, \ref{rem:youngstar}, \ref{rem:afnum},
\ref{rem:expnum} and~\ref{rem:pbnum}, after Conjecture~\ref{conj:young} and after
Theorem~\ref{thm:tik}, concern quantities not covered by the certified computation: the constants of
items 27, 30, 34--36 and~38; the constant $c_{2,1}$ of item~7 at $\kappa=10$; and the constants $c_{\rm PB}$
(item~16) and $b$ at the values $\kappa=2,4,10,100$ on $M_2$ and on $M_3$ and $M_2\oplus M_2$, where no
certified computation was made. They were obtained by multistart optimisation in double precision on $M_2$, $M_3$, $M_4$ and
$M_2\oplus M_2$; unlike the computed envelopes of this section they are not recomputed at exactly
admissible points, so they are indications, not certified lower bounds. No proof uses them.

\section{Conclusions and open problems}\label{sec:open}

A characterisation of the trace by an inequality becomes quantitative once the distance of a functional
from the traces is measured by the Michelson contrast of its density on a factor, and by its
noncentrality on a general finite von Neumann algebra. For Gardner's inequality the answer is as simple
as it could be: the sharp constant is the secant of the angle whose sine is the product of the contrasts
of the density and of the operator, and with the operator free it is $\cosh\LNC(\rho)$. For the other
characterisations the finite proved matrix envelopes vanish at the trace to the orders $\delta$,
$\delta^2$ or $\delta^4$ (item~37 is $+\infty$ for every non-central $\rho$, i.e.\ non-scalar on $M_n$), and the weighted monotonicity (item~25) has
a proved lower bound of order $\delta^8$, with the classical reverse-inequality constants as exact
values where we could determine them. Of the 38 trace characterisations, 18 (items 1--5, 8, 10, 17, 18, 21--23, 26, 28, 29, 31, 33 and~37, the last being
$+\infty$) have constants that are explicit functions of $\kappa_\cZ$ in every finite von Neumann algebra,
five more are
exact on matrix algebras (items 11, 19, 24; item~9 for $n=2$; item~20 for even $n$), 13 are bounded, and
items 13 and 15 are known only numerically; on direct sums only the contrast inside the blocks counts. The rules by which
contrasts compose (rapidities add for products and means, angles add for Jordan products, three
contrasts multiply in the commutator test) explain some of the thresholds in the contrast of the
operators that the computation reveals, and prove them for subadditivity of the square root and for the
power-mean inequality.

\subsection{Matrix envelopes}\label{ssec:openmat}
The computation supports the following values. (The value $(G-1)^2/(4G)$ of item~10, which the search
had also suggested, is proved in Theorem~\ref{thm:mono2} and Remark~\ref{rem:g10}.)

\begin{conjecture}\label{conj:values}
$V_n=G^2-1$ for item~7 and $V_n=g_{25}(\delta)$ for item~25 in every dimension; $V_2=G-1$ for item~6; the value of
Proposition~\ref{prop:abed} holds for item~9 in every dimension; and the lower bound of
Proposition~\ref{prop:youngtwo} is the two-contrast envelope of item~8 for even $n$.
\end{conjecture}

Several questions remain open. What is the threshold of item~12 for $\delta<1$, where the search places it
above the value $2\sqrt2/3$ of Proposition~\ref{prop:item12}, and what are the thresholds of items 6, 10, 13
and~25, which the search shows but which no operator inequality of the kind used in
Section~\ref{sec:thresholds} can explain for item~10? What is the envelope of item~6 for $n\ge3$, where the
search exceeds $G-1$, and does it grow with $n$? Do items 12--16 have closed forms? The search gives the
leading coefficients $\frac12$, $\frac12$, $\frac1{36}$, $0.064$ and $0.09$; for item~14 the coefficient
$(1-\frac2p)^2/4$ is a proved lower bound, and for items 12 and 16 the computed envelopes agree with the
proved lower bounds $\mathsf t_{1/2}(\kappa)-1$ and $b_2(\kappa)-1$ (Section~\ref{sec:numcompare}), which
suggests that these are the values. Finally, the same questions can be asked for other
characterisations, such as those of~\cite{moslehian2025jmaa} with the arithmetic mean.

\subsection{Von Neumann algebras}\label{ssec:openvn}
We first record two observations that bear on problems~(7) and~(8) below.

\begin{remark}\label{rem:modular}
Let $\cM$ be finite and $\varphi=\tau(\rho\,\cdot)$ with $\rho\in\cM_{++}$. In the standard form
$L^2(\cM,\tau)$, with $Jh=h^*$ and the cyclic and separating vector $\rho^{1/2}$, the Tomita operator
is $S(x\rho^{1/2})=x^*\rho^{1/2}=J\,L_{\rho^{1/2}}R_{\rho^{-1/2}}(x\rho^{1/2})$. Hence the modular
operator is $\Delta_\varphi=L_\rho R_{\rho^{-1}}$ \cite[Ch.~9]{KR2}. For $h=Y\rho^{1/2}$ we have
$\langle\Delta_\varphi h,h\rangle=\|\rho^{1/2}Y\|_2^2=\varphi(|Y^*|^2)$ and $\|h\|_2^2=\varphi(|Y|^2)$. So
Theorem~\ref{thm:modulus} with $q=2$ gives $\|\Delta_\varphi\|=\|\Delta_\varphi^{-1}\|=\kappa_\cZ(\rho)$.
Conversely, let $\varphi$ be a faithful normal state with cyclic vector $\xi_\varphi$ on an arbitrary
von Neumann algebra $\cM$, and suppose $\|\Delta_\varphi\|<\infty$. Then $\cM$ is finite. Indeed,
$\varphi(xx^*)=\|Sx\xi_\varphi\|^2=\|\Delta_\varphi^{1/2}x\xi_\varphi\|^2\le\|\Delta_\varphi\|\varphi(x^*x)$
for $x\in\cM$. If $\cM$ were not finite, it would contain a nonzero properly infinite projection
$p$. Such a $p$ is the sum of infinitely many mutually orthogonal projections $f_k\sim p$
\cite[Ch.~6]{KR2}. With $v_k^*v_k=p$ and $v_kv_k^*=f_k$ we get
$\varphi(p)=\varphi(v_k^*v_k)\le\|\Delta_\varphi\|\varphi(f_k)$, so $N\varphi(p)\le\|\Delta_\varphi\|\varphi(p)$
for every $N$, a contradiction. Since $\varphi$ is faithful, $\cM$ is also $\sigma$-finite, so it carries a
faithful normal tracial state \cite[Ch.~8]{KR2}, i.e.\ it is finite in the sense of
Section~\ref{sec:setting}. The condition $\|\Delta_\varphi\|<\infty$ does not require $\rho$ to
have a bounded inverse. For example, $\rho=\bigoplus_ic_i\diag(2,1)$ in $\bigoplus_iM_2(\C)$ with
$c_i\downarrow0$ and $\tau=\sum_it_i\operatorname{tr}_i$ (normalised traces $\operatorname{tr}_i$, $t_i>0$,
$\sum_it_i=1$) gives $\|\Delta_\varphi\|=2$. For weights
the finiteness conclusion fails. If $h\in B(\cH)$ is positive and invertible and $\dim\cH=\infty$,
then $\varphi=\Tr(h\,\cdot)$ is a faithful normal semifinite weight with $\Delta_\varphi=L_hR_{h^{-1}}$
on the Hilbert--Schmidt class and $\|\Delta_\varphi\|=\kappa(h)<\infty$, but $B(\cH)$ is not finite.
\end{remark}

\begin{remark}\label{rem:cstar}
Let $A$ be a unital $C^*$-algebra, $\rho\in A$ positive and invertible, and
$\kappa_\cZ^A(\rho)=\inf_z\kappa(z\rho)$ over positive invertible central $z$. Let $\Sigma$ be the
maximal ideal space of the centre, and $J_\zeta$ the Glimm ideal generated by the maximal ideal at
$\zeta\in\Sigma$. A central element has scalar image in $A/J_\zeta$, and in $A/\mathfrak p$ for every primitive
ideal $\mathfrak p$; each $\mathfrak p$ contains some $J_\zeta$. Hence $\kappa(z\rho)\ge\kappa(\rho+J_\zeta)\ge\kappa(\rho+\mathfrak p)$
for $\mathfrak p\supseteq J_\zeta$. Since the irreducible representations of $A$ separate its points,
$\|a\|=\sup_{\mathfrak p}\|a+\mathfrak p\|$, and as $\|a+\mathfrak p\|\le\|a+J_\zeta\|\le\|a\|$ for
$\mathfrak p\supseteq J_\zeta$, also $\|a\|=\sup_\zeta\|a+J_\zeta\|$ for $a\in A$. Also
$\|\Ad_\rho\|=\|\Ad_{z\rho}\|\le\kappa(z\rho)$. Moreover $\Ad_\rho$ induces $\Ad_{\rho+\mathfrak p}$ on $A/\mathfrak p$, and
by Kaplansky's density theorem the latter has norm $\kappa(\rho+\mathfrak p)$ (as on $B(\cH)$, using rank-one
operators). Thus
\[
\sup_{\mathfrak p}\kappa(\rho+\mathfrak p)\le\|\Ad_\rho\|\le\kappa_\cZ^A(\rho),\qquad\sup_\zeta\kappa(\rho+J_\zeta)\le\kappa_\cZ^A(\rho).
\]
We sketch why equality holds in the last inequality. We use that $\zeta\mapsto\|a+J_\zeta\|$ is upper
semicontinuous on $\Sigma$ for every $a\in A$; this is part of the Dauns--Hofmann representation of
$A$ by sections over $\Sigma$ \cite{DH68}, and we have not checked the precise reference. Then
$\zeta\mapsto\kappa(\rho+J_\zeta)$ is upper semicontinuous on the compact space $\Sigma$, so its
supremum is a maximum. Let $t,s>0$ with $ts=\max_\zeta\kappa(\rho+J_\zeta)$. The
Kat\v{e}tov--Tong insertion theorem \cite{Kat51,Ton52} gives a continuous $z$ on $\Sigma$, that is, a
central element, with $\|\rho^{-1}+J_\zeta\|/s\le z(\zeta)\le t/\|\rho+J_\zeta\|$. Then
$\|z\rho\|\le t$ and $\|(z\rho)^{-1}\|\le s$.
\end{remark}

\begin{enumerate}[label=(\arabic*)]
\item \emph{Young's inequality for positive operators.} Prove Conjecture~\ref{conj:young}, in
particular $c_2^+=\cosh\LNC$. Only the positivity of $B$ matters. For $B=B^*$ the constant is
$(4G_\cZ^2-3)^{1/2}>G_\cZ$ (Proposition~\ref{prop:youngsa}); so if the conjecture holds, the positivity of $B$
cannot be weakened to self-adjointness. By
Remark~\ref{rem:youngstar}, $c_2^+=G_\cZ$ is equivalent to \eqref{eq:youngstar}, which for rank-one $B$
is the Kantorovich inequality. For $p>2$, find $c_p^+$; is it a function of $\kappa_\cZ$?
\item \emph{Monotonicity.} Is $m_p(\rho)=\mpp(\kappa_\cZ(\rho))$, i.e.\ is the $2\times2$ problem
extremal for every $p>1$? Find a closed form of $\mpp$ for $p\ne2$. Does the trace inequality
\eqref{eq:bourintrace} hold in every finite von Neumann algebra (Remark~\ref{rem:fujii})? If so,
the bounds $m_p\le K(\kappa_\cZ^{1/p},p)$ and $m_{\exp}\le S(\kappa_\cZ)$ of
Proposition~\ref{prop:monop}\ref{m:e} hold without the assumption that $\cM$ is AFD.
\item \emph{Petz--Zem\'anek for $k\ge3$.} By Propositions~\ref{prop:pzk} and~\ref{prop:pzG},
$G_\cZ\le c_k\le\kappa_\cZ^{1-1/k}$. By Propositions~\ref{prop:pz3} and~\ref{prop:pzk4}, $c_k$ is not a
function of $\kappa_\cZ$. Determine $c_k$; which spectral data of $\rho$ govern it? Numerically
$c_k>G_\cZ$ for $k\ge3$, and $c_k$ is increasing in $k$ (Remark~\ref{rem:pznum}); prove this.
Proposition~\ref{prop:pzG} gives $\liminf_kc_k\ge c_\infty$; is $\lim_kc_k=c_\infty$?
\item \emph{Subadditivity.} Is $c_p^T=\tp(\kappa_\cZ)$? Numerically (not used in the proofs) no
configuration exceeding $\tp(\kappa_\cZ)$ was found in $M_2(\C)$, $M_3(\C)$ or
$M_2(\C)\oplus M_2(\C)$ for $p\in\{0.1,0.25,0.5,0.75\}$. For an operator monotone $f\ge0$ with
$f(0)=0$ the constant of the $f$-version is at most $\frac{1+\kappa_\cZ}2$ (Remark~\ref{rem:opmon}).
Since (R) fails, on what does it depend? For $p=\frac12$ (item~12) the certified matrix computation agrees
with $\mathsf t_{1/2}(\kappa)$ (Section~\ref{sec:numcompare}).
\item \emph{The Alhasan--Fawwaz inequality.} For $A\ge0$ and $B=B^*$ \cite{AF21}, the constant
$a(\rho)$ satisfies $1<a\le G_\cZ$ for $\rho\notin\cZ$. It is not a function of $\kappa_\cZ$, it is
$<2$ on $M_2(\C)$, and it is $<\cosh\OLC$ on every $M_n(\C)$ (Proposition~\ref{prop:af}).
Numerically it depends on the multiplicity of $\min\sigma(\rho)$ (Remark~\ref{rem:afnum}). Determine
$a$. Is $a$ bounded on $M_n(\C)$ independently of $\rho$? Is $a=G_\cZ$ in II$_1$ factors? The same
questions arise for the variant with $B\ge0$.
\item \emph{Exponential and Araki--Lieb--Thirring inequalities.} Bikchentaev's characterisation
of traces by $\varphi(e^{PQP})\le\varphi(e^Q)$ concerns projections \cite{Bik12}. For traces it holds
for all
$Q\ge0$, but not for all $Q=Q^*$. Its constants lie in $(1,\kappa_\cZ]$ and are not functions of
$\kappa_\cZ$ (Proposition~\ref{prop:bikexp}\ref{be:c}). The shift-invariant constant $b$ satisfies
$b_2(\kappa_\cZ)\le b\le\kappa_\cZ$ (Proposition~\ref{prop:bikexp}). For the Peierls--Bogoliubov
inequality of \cite{bikchentaev2011pb} in the form with $\varphi(Be^A)$ the constant is infinite for $\rho\notin\cZ$. In the
symmetric form, $b\le c_{\rm PB}\le\kappa_\cZ$ (Proposition~\ref{prop:pb}). Is
$c_{\rm PB}=b=b_2(\kappa_\cZ)$, as the numerics in Remarks~\ref{rem:expnum} and~\ref{rem:pbnum} and the computed envelope of item~16
(Section~\ref{sec:numcompare}) suggest? For the Araki--Lieb--Thirring inequality of \cite{bikchentaev2011pb} (exponents $r>1$ and $q>0$,
all $A,B\ge0$), $1<c_{r,q}\le\kappa_\cZ$ and $\sup_qc_{r,q}=\kappa_\cZ$ (Proposition~\ref{prop:alt});
find $c_{r,q}$; for $(r,q)=(2,1)$, $G_\cZ^2\le c_{2,1}\le\kappa_\cZ$ (Proposition~\ref{prop:altlow}). For the spectral
geometric mean of~\cite{bikchentaev2026sgm}, $G_\cZ\le c_6\le\kappa_\cZ$ (Proposition~\ref{prop:sgm}); determine $c_6$.
\item \emph{Weights and non-finite algebras.} Several characterisations hold for normal semifinite
weights \cite{gardner1979,stolyarov2002}. For a faithful normal state, $\|\Delta_\varphi\|$ extends $\kappa_\cZ$
(Remark~\ref{rem:modular}). Do the results of Sections~\ref{sec:catalogue} and~\ref{sec:centre} hold with
$\|\Delta_\varphi\|$ in place of $\kappa_\cZ$ whenever $\|\Delta_\varphi\|<\infty$? This includes
$\varphi=\tau(\rho\,\cdot)$ with $\rho$ or $\rho^{-1}$ unbounded. For faithful normal states (and
positive functionals), $\|\Delta_\varphi\|<\infty$ forces $\cM$ to be finite. For weights it does not:
$\varphi=\Tr(h\,\cdot)$ on $B(\cH)$ with $h$ positive and invertible has $\|\Delta_\varphi\|=\kappa(h)$
(Remark~\ref{rem:modular}). For faithful normal semifinite weights with bounded $\Delta_\varphi$, do
the results hold with $\|\Delta_\varphi\|$ in place of $\kappa_\cZ$?
\item \emph{$C^*$-algebras.} Central envelopes need not exist in a $C^*$-algebra, but
$\kappa^A_\cZ(\rho)$ is defined, and by Remark~\ref{rem:cstar} it is (modulo the sketch there)
$\max_\zeta\kappa(\rho+J_\zeta)$. Do the results extend with $\kappa^A_\cZ$? Can the inequality
$\sup_P\kappa(\rho+P)\le\kappa^A_\cZ(\rho)$ be strict? For inner derivations, the analogous
comparison between $\|D_a\|$ and the distance from $a$ to the centre is governed by the
structure of the primitive ideal space \cite{Arc78,Som93}, which suggests that it can be.
\item \emph{Two contrasts beyond matrices.} Does Theorem~A hold in finite von Neumann algebras, with
$\delta$ replaced by $\NC(\rho)$ and $\delta'$ by a suitable central version of the contrast of $|X|$? The
rescaling argument of Lemma~\ref{lem:resc} does not apply, since $X\mapsto zX$ changes the contrast of
$|X|$. The same question arises for the matrix-only items 9, 11, 13--15, 19, 24 and~25.
\end{enumerate}

\section*{Declarations}

\noindent\textbf{Author contributions.} All authors contributed to the conception of the work,
the proofs, the computations and the writing, and approved the final manuscript.

\noindent\textbf{Funding.} No funding was received for this work.

\noindent\textbf{Competing interests.} The authors declare no competing interests.

\noindent\textbf{Data availability.} No data sets were generated or analysed apart from the numerical
computations of Section~\ref{sec:numerics}, which are reproduced by the programs described below. No
proof depends on a floating-point computation: the computed envelopes are certified lower bounds that
support the conjectures and illustrate the theorems, and the further numerical observations of
Section~\ref{sec:numother} are not used in any proof. The proof of Proposition~\ref{prop:af}\ref{af:d}
uses an exact computation in rational arithmetic, which can be checked by hand.

\noindent\textbf{Code availability.} The library~\cite{lib}, the programs that compute all numbers,
tables and figures of Sections~\ref{sec:gardner2}, \ref{sec:composite} and~\ref{sec:numerics} and of
Appendix~\ref{app:mc}, and the tests are available at
\url{https://github.com/andre-a-no/von_neumann_type_i} (MIT licence), in the version that accompanies the
arXiv posting of this paper.

\noindent\textbf{Reproducing the computations.} The programs are in the directory \texttt{scripts} of
the repository and run in this order, since later ones read the results of earlier ones:
\texttt{trace\_\allowbreak characterizations.py --mode full} (the envelopes with free operators,
Tables~\ref{tab:classes} and~\ref{tab:values}; $\TCSeconds$~s), \texttt{trace\_\allowbreak contrast.py
--mode full} (both contrasts and the restricted searches, Table~\ref{tab:projsuffice}; $\TCCSeconds$~s),
then \texttt{trace\_\allowbreak montecarlo.py} (Table~\ref{tab:montecarlo} and Appendix~\ref{app:mc}),
\texttt{contrast\_\allowbreak operations.py} (Section~\ref{sec:composite}),
\texttt{trace\_\allowbreak envelopes\_\allowbreak figures.py} (Figure~\ref{fig:gardner1d}) and
\texttt{trace\_\allowbreak paper\_\allowbreak schematics.py} (Figures~\ref{fig:angle}, \ref{fig:ladder}
and~\ref{fig:square}). Figure~\ref{fig:gardner2d} comes from \texttt{bounds\_\allowbreak search.py --mode
full} of the pipeline of~\cite{lib}. The programs write the numbers as \LaTeX{} macros and tables, which
the source of this paper reads; the figures label the contrast of the density $\Delta(W)$, which is
$\OMC(\rho)$ in the notation of this paper.

{\sloppy

}

\clearpage
\appendix
\section{Random samples and computed envelopes, item by item}\label{app:mc}

Each figure shows one inequality, with one panel per dimension. In the figures the contrast of the
density is labelled $\Delta(W)$, which is $\delta=\OMC(\rho)$, and the contrast $\delta'$ of the operators is
labelled $\delta'$. Top row: the normalised violation $(\cL-\cR)/\cN$
of the random samples (grey) and, where the variables are not projections, of the
scaled-projection samples (orange) against $\OMC(\rho)$ on $M_2$, $M_3$, $M_4$; black: the computed envelope $V(\delta)$,
a certified lower bound for the supremum, joined by straight lines. Bottom row (items with operator variables): samples
with $\OMC(\rho)=0.9$ and all operator variables of contrast $\delta'$, against $\delta'$, on $M_2$ and $M_3$, with the
computed $V(0.9,\delta')$. Samples far below zero are cut off. Where an envelope is concave between grid points (item~11
below its maximum at $\delta'=\delta$ and after its steep drop near $\delta'=1$), a few samples lie above the straight
segments; on $M_2$ they lie below the closed form $1-(w_1\ell'+w_2)/(2\sqrt{\ell'})$, $\ell'=\frac{1-\delta'}{1+\delta'}$,
$w=\frac{2}{1+\ell}(1,\ell)$.

\begin{figure}[!htbp]
\centering
\includegraphics[width=\textwidth]{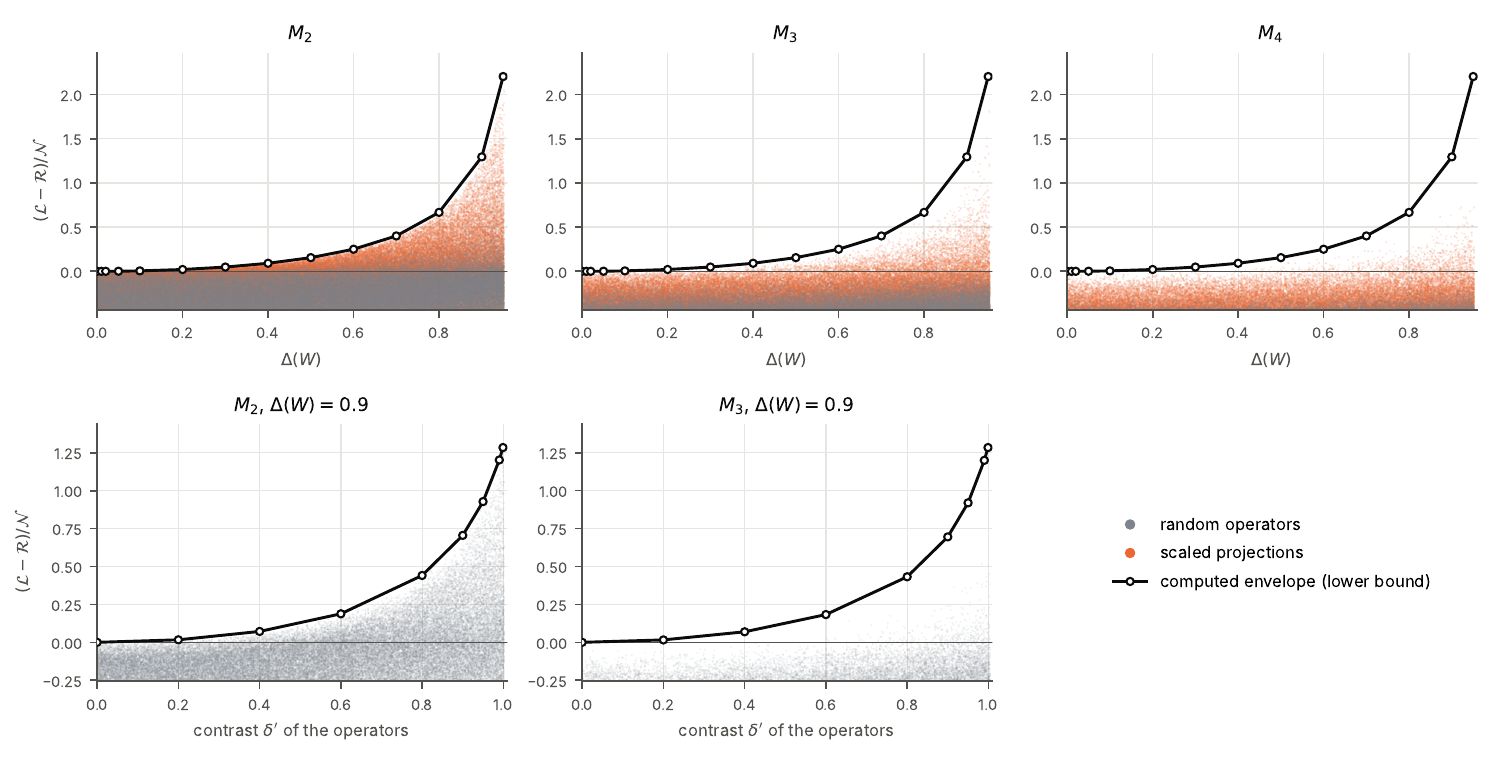}
\caption[]{Item 1, $|\varphi(X)|\le\varphi(|X|)$: random samples and the computed envelope (a certified lower bound); top: against the contrast $\delta=\OMC(\rho)$ of the density, bottom: against the contrast $\delta^\prime$ of the operators at $\delta=0.9$. In the figure $\Delta(W)=\OMC(\rho)=\delta$.}
\label{fig:mc-gardner}
\end{figure}

\begin{figure}[!htbp]
\centering
\includegraphics[width=\textwidth]{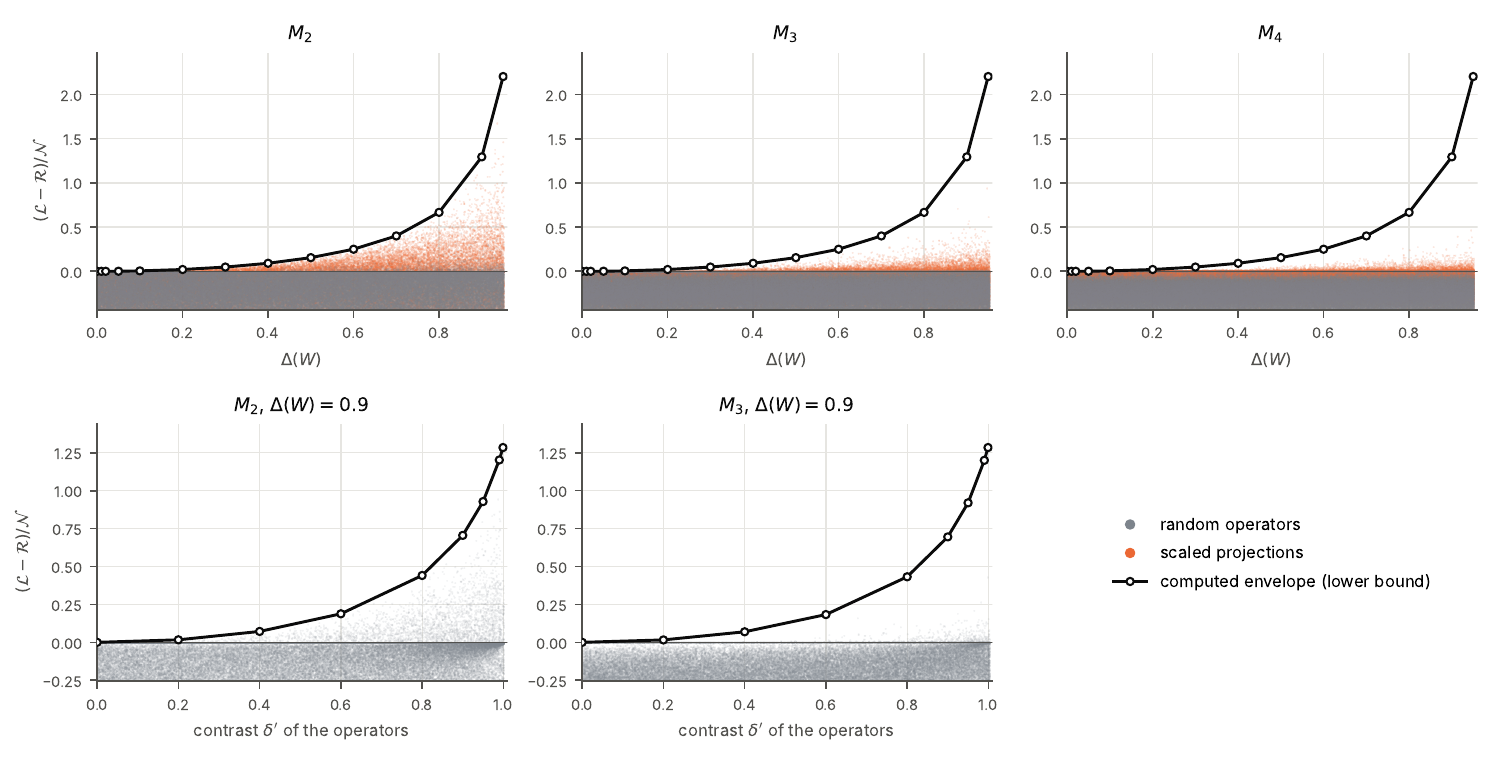}
\caption[]{Item 2, $\varphi(|X+Y|)\le\varphi(|X|)+\varphi(|Y|)$: random samples and the computed envelope (a certified lower bound); top: against the contrast $\delta=\OMC(\rho)$ of the density, bottom: against the contrast $\delta^\prime$ of the operators at $\delta=0.9$. In the figure $\Delta(W)=\OMC(\rho)=\delta$.}
\label{fig:mc-triangle}
\end{figure}

\begin{figure}[!htbp]
\centering
\includegraphics[width=\textwidth]{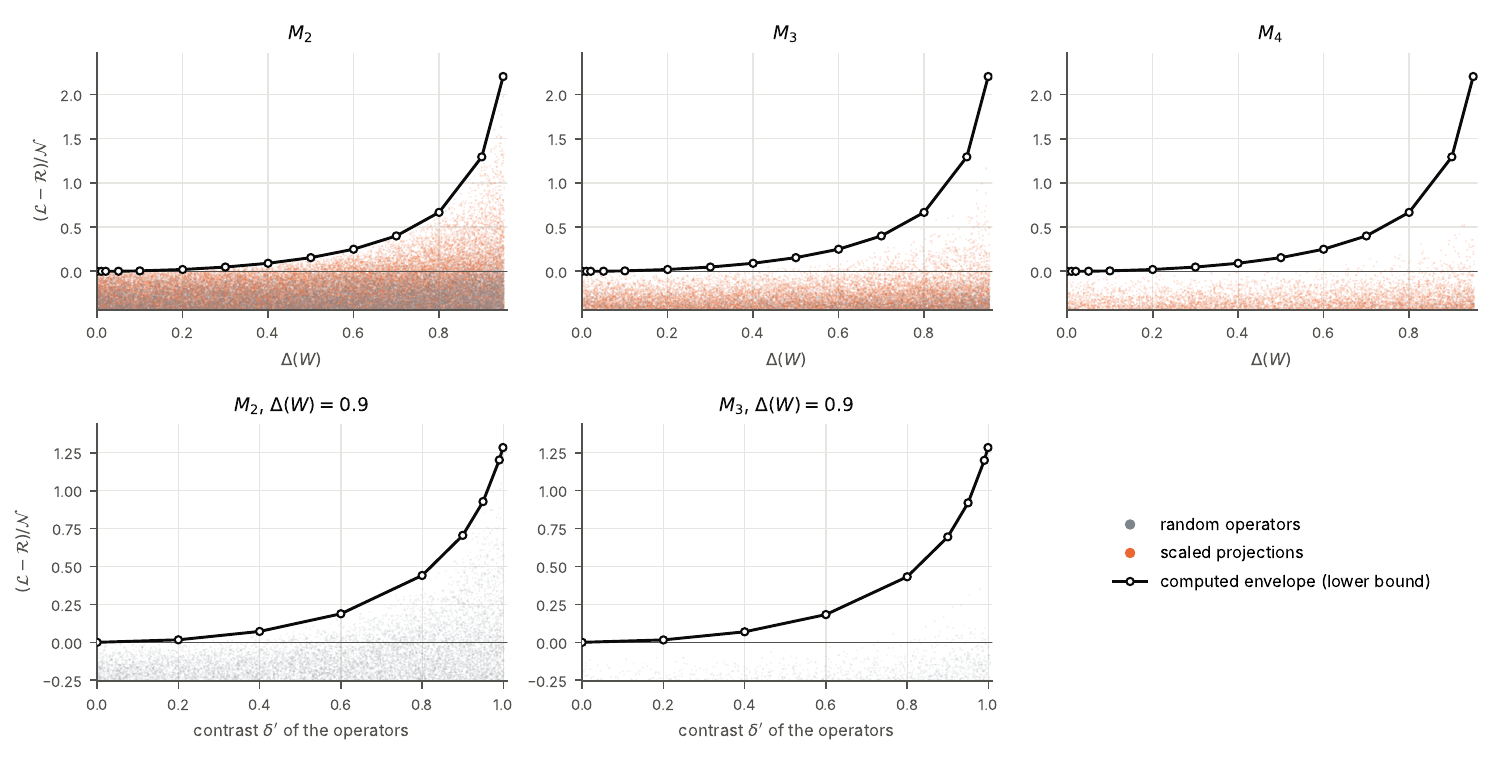}
\caption[]{Item 3, $\varphi(A+A^*)\le2\varphi(|A^*|)$: random samples and the computed envelope (a certified lower bound); top: against the contrast $\delta=\OMC(\rho)$ of the density, bottom: against the contrast $\delta^\prime$ of the operators at $\delta=0.9$. In the figure $\Delta(W)=\OMC(\rho)=\delta$.}
\label{fig:mc-realpart}
\end{figure}

\begin{figure}[!htbp]
\centering
\includegraphics[width=\textwidth]{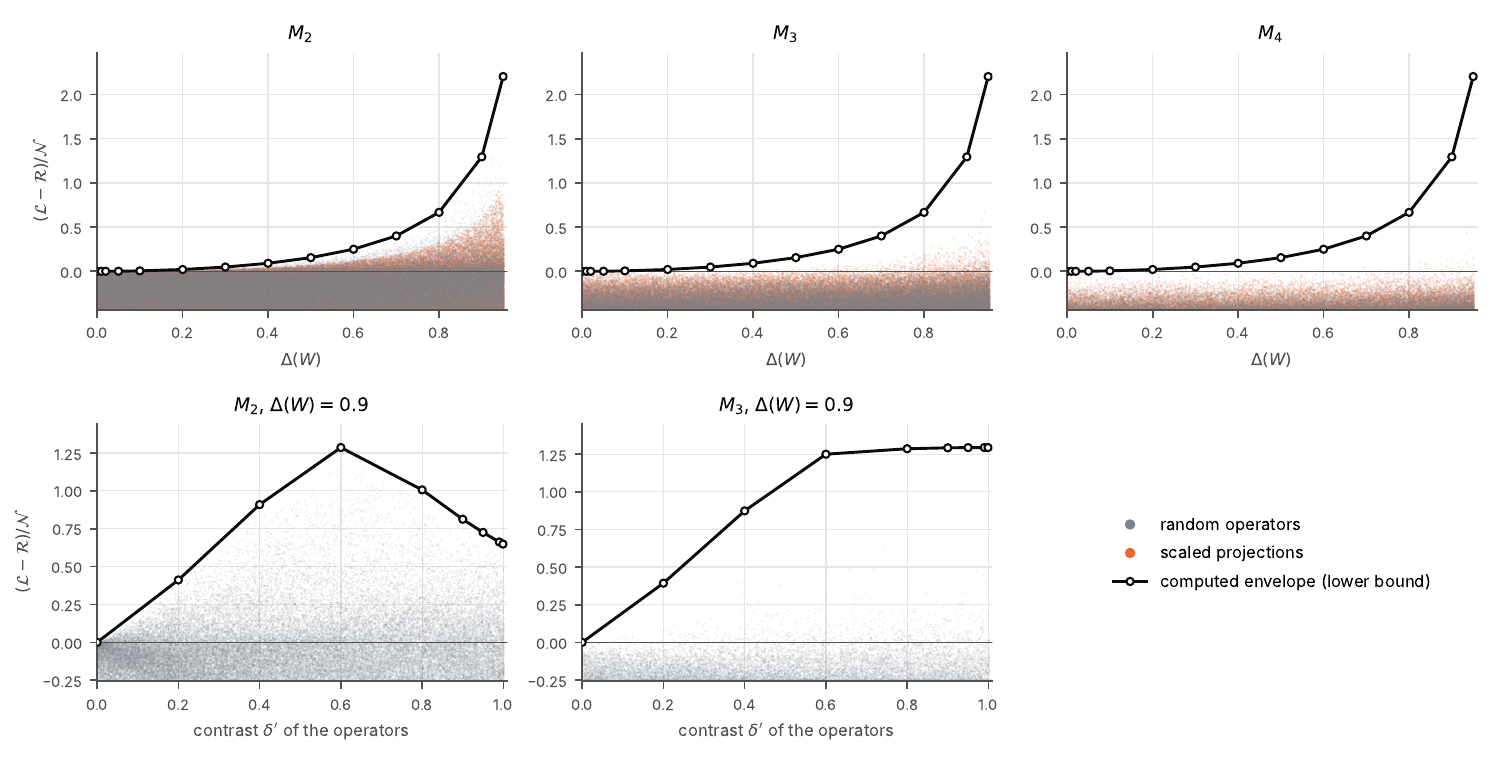}
\caption[]{Item 4, $|\varphi(A^2)|\le\varphi(A^*A)$: random samples and the computed envelope (a certified lower bound); top: against the contrast $\delta=\OMC(\rho)$ of the density, bottom: against the contrast $\delta^\prime$ of the operators at $\delta=0.9$. In the figure $\Delta(W)=\OMC(\rho)=\delta$.}
\label{fig:mc-square}
\end{figure}

\begin{figure}[!htbp]
\centering
\includegraphics[width=\textwidth]{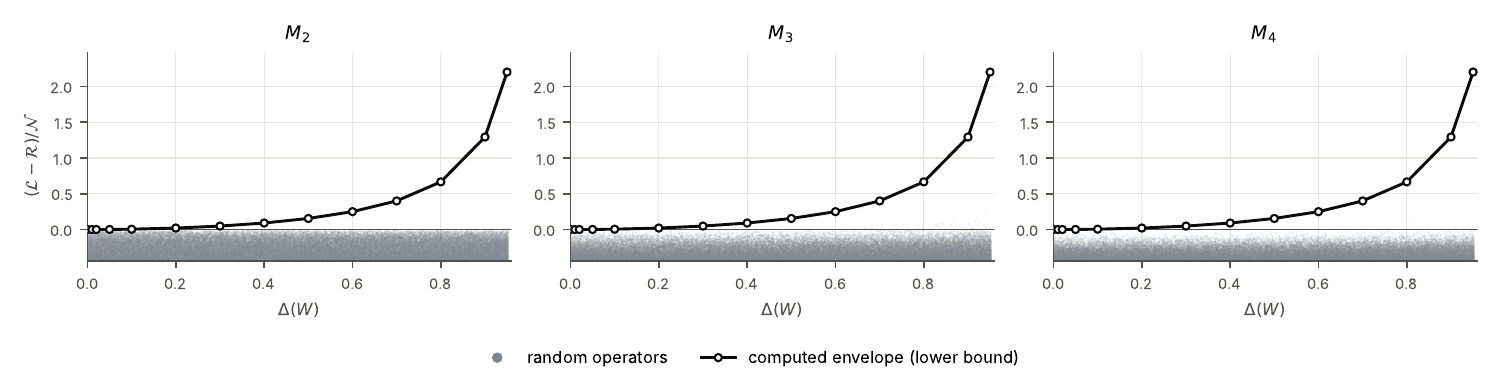}
\caption[]{Item 5, $\varphi(|P-R|)\le\varphi(|P-Q|)+\varphi(|Q-R|)$: random samples and the computed envelope (a certified lower bound) against the contrast $\delta=\OMC(\rho)$ of the density, labelled $\Delta(W)$ in the figure.}
\label{fig:mc-metric}
\end{figure}

\begin{figure}[!htbp]
\centering
\includegraphics[width=\textwidth]{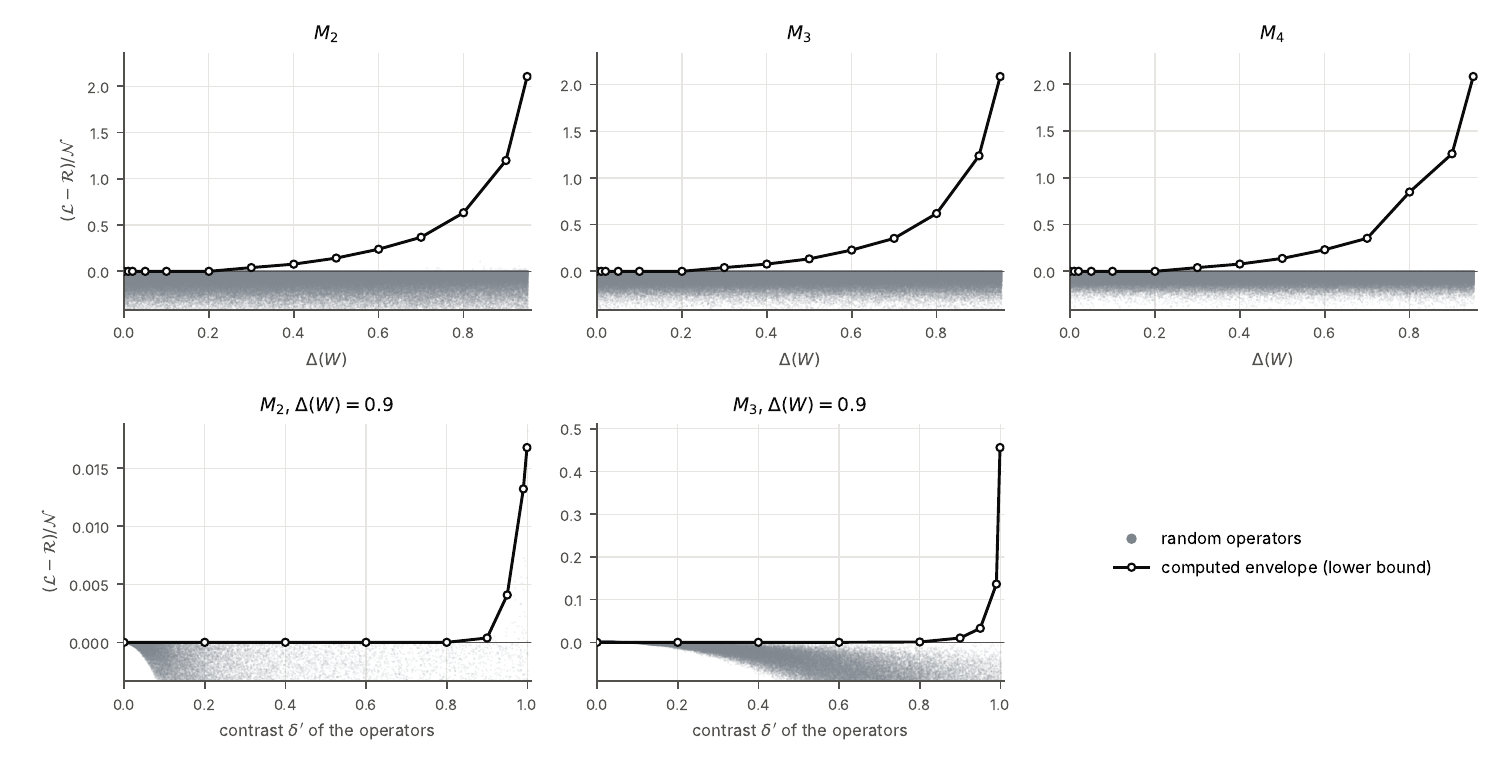}
\caption[]{Item 6, $\varphi(A\natural B)\le\sqrt{\varphi(A)\varphi(B)}$: random samples and the computed envelope (a certified lower bound); top: against the contrast $\delta=\OMC(\rho)$ of the density, bottom: against the contrast $\delta^\prime$ of the operators at $\delta=0.9$. In the figure $\Delta(W)=\OMC(\rho)=\delta$.}
\label{fig:mc-sgm}
\end{figure}

\begin{figure}[!htbp]
\centering
\includegraphics[width=\textwidth]{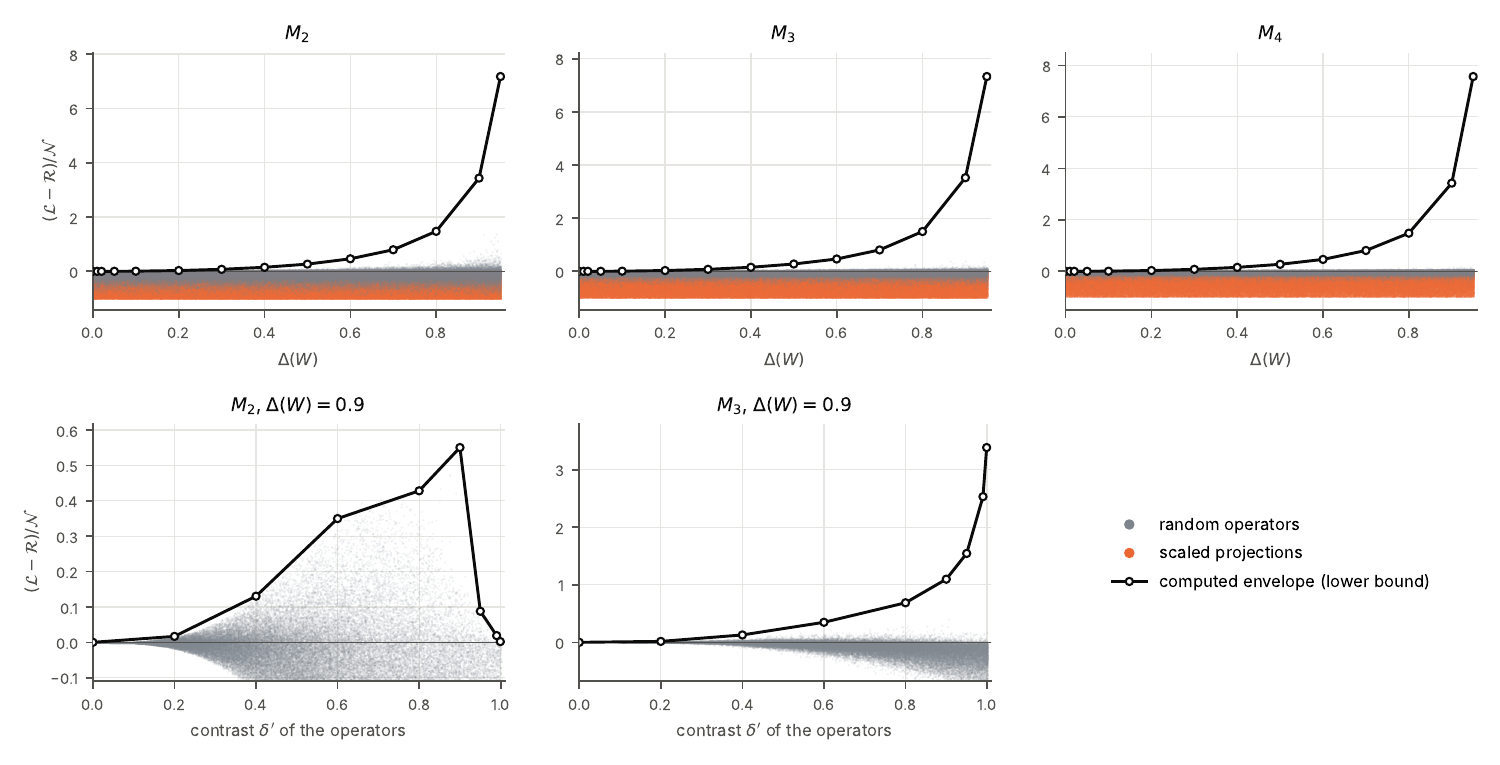}
\caption[]{Item 7, $\varphi((B^{1/2}AB^{1/2})^2)\le\varphi(BA^2B)$: random samples and the computed envelope (a certified lower bound); top: against the contrast $\delta=\OMC(\rho)$ of the density, bottom: against the contrast $\delta^\prime$ of the operators at $\delta=0.9$. In the figure $\Delta(W)=\OMC(\rho)=\delta$.}
\label{fig:mc-alt}
\end{figure}

\begin{figure}[!htbp]
\centering
\includegraphics[width=\textwidth]{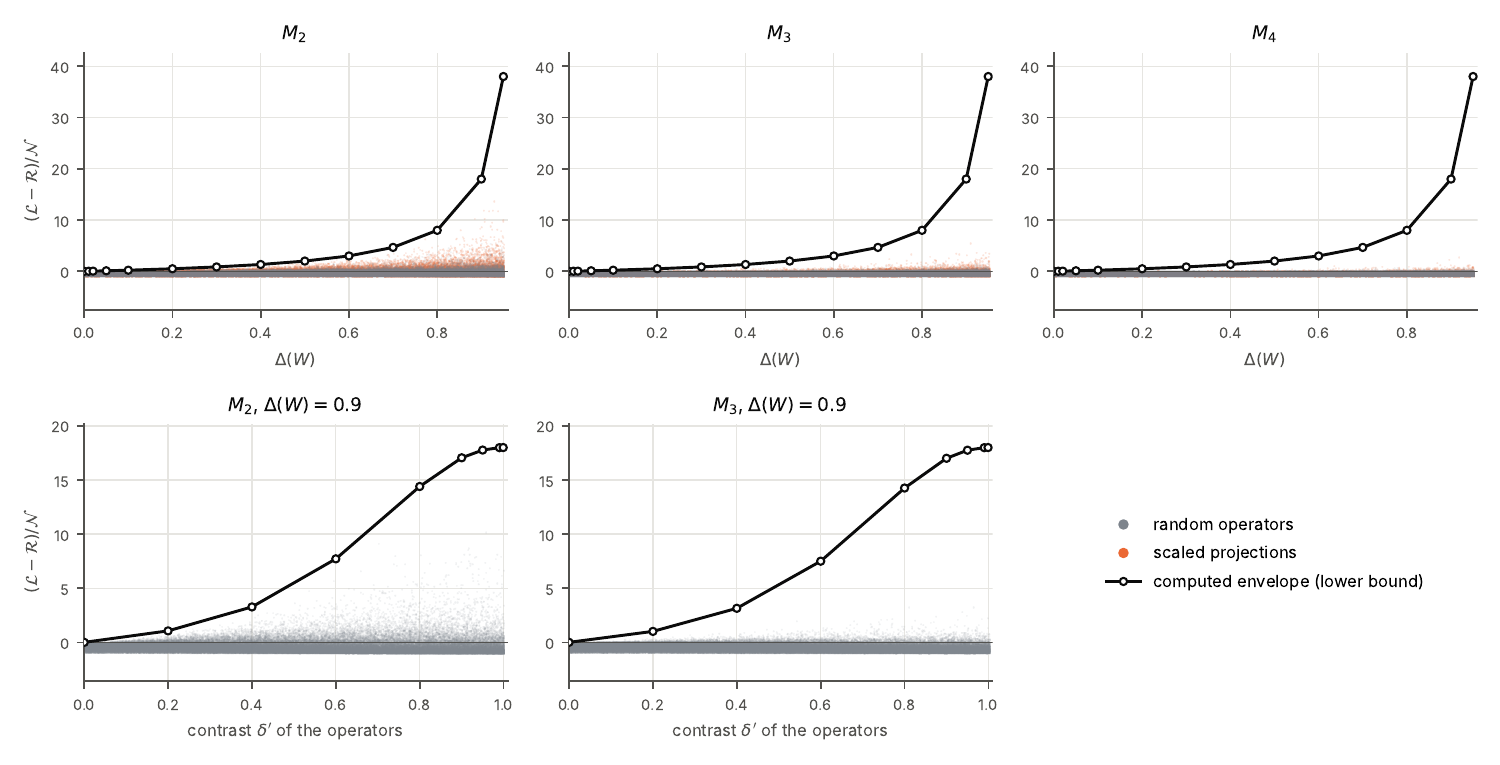}
\caption[]{Item 8, $\varphi(|AB^*|)\le\frac12\varphi(|A|^2)+\frac12\varphi(|B|^2)$: random samples and the computed envelope (a certified lower bound); top: against the contrast $\delta=\OMC(\rho)$ of the density, bottom: against the contrast $\delta^\prime$ of the operators at $\delta=0.9$. In the figure $\Delta(W)=\OMC(\rho)=\delta$.}
\label{fig:mc-young}
\end{figure}

\begin{figure}[!htbp]
\centering
\includegraphics[width=\textwidth]{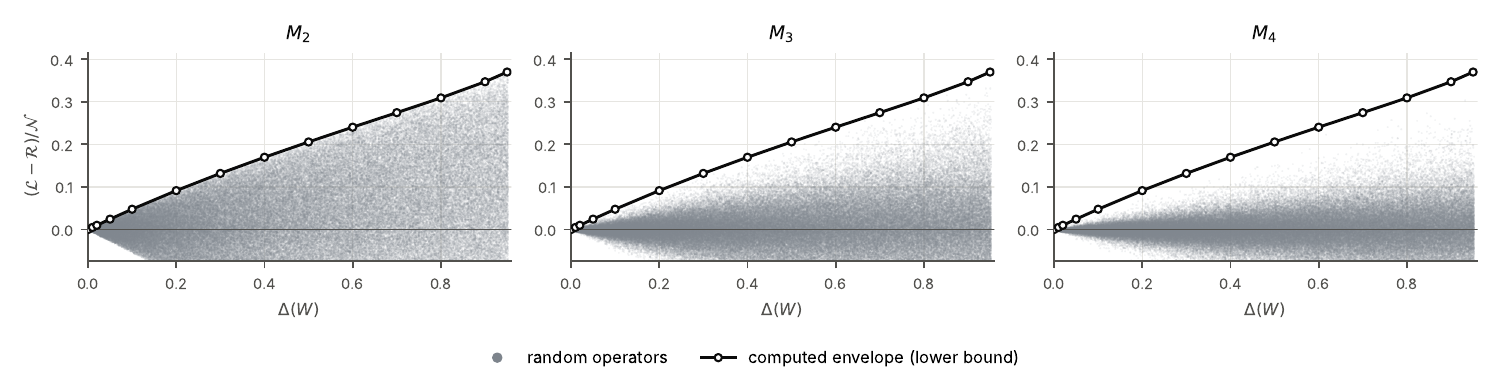}
\caption[]{Item 9, $\varphi(PQ+QP)\le2\varphi(QPQ)$: random samples and the computed envelope (a certified lower bound) against the contrast $\delta=\OMC(\rho)$ of the density, labelled $\Delta(W)$ in the figure.}
\label{fig:mc-projections}
\end{figure}

\begin{figure}[!htbp]
\centering
\includegraphics[width=\textwidth]{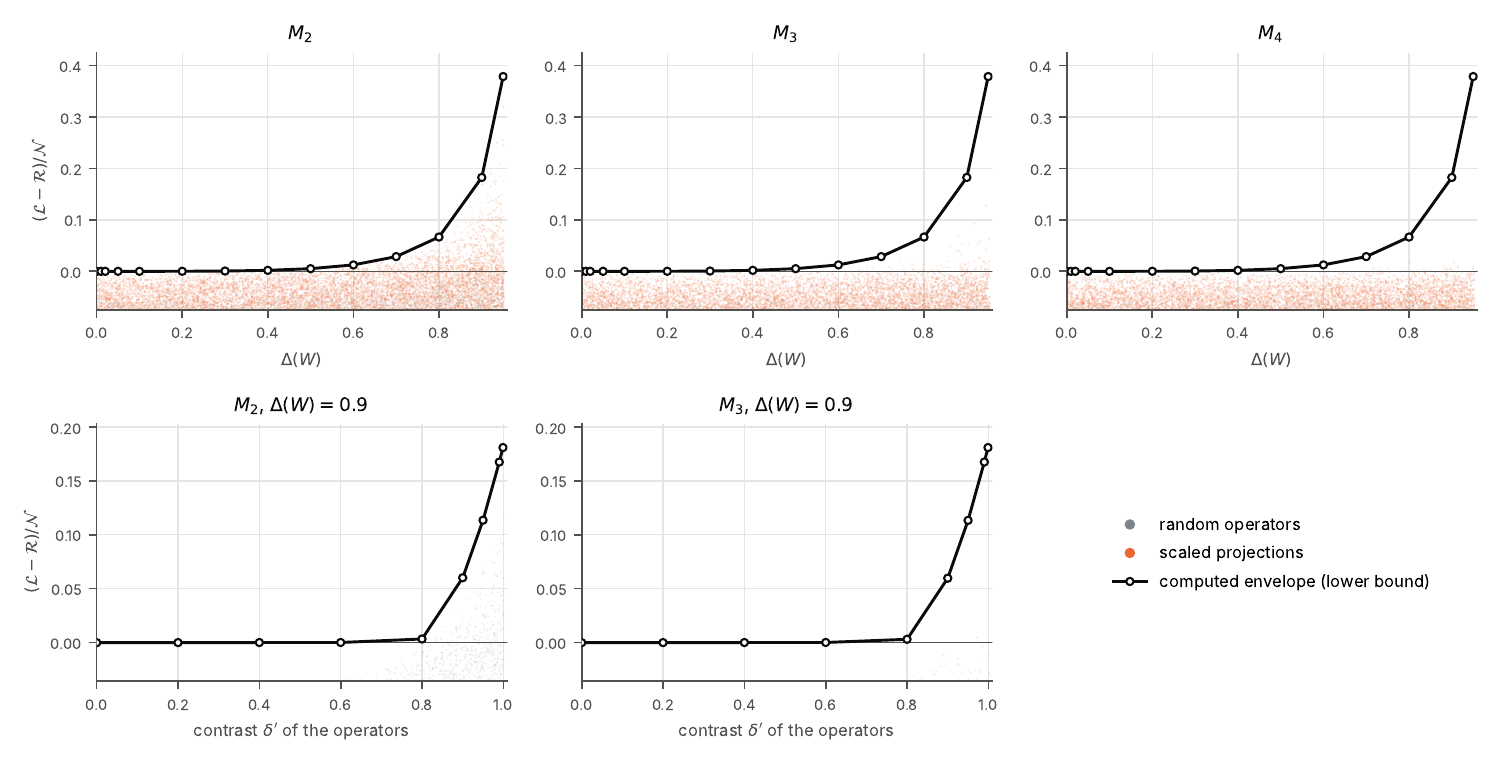}
\caption[]{Item 10, $0\le A\le B\Rightarrow\varphi(A^2)\le\varphi(B^2)$: random samples and the computed envelope (a certified lower bound); top: against the contrast $\delta=\OMC(\rho)$ of the density, bottom: against the contrast $\delta^\prime$ of the operators at $\delta=0.9$. In the figure $\Delta(W)=\OMC(\rho)=\delta$.}
\label{fig:mc-monotone}
\end{figure}

\begin{figure}[!htbp]
\centering
\includegraphics[width=\textwidth]{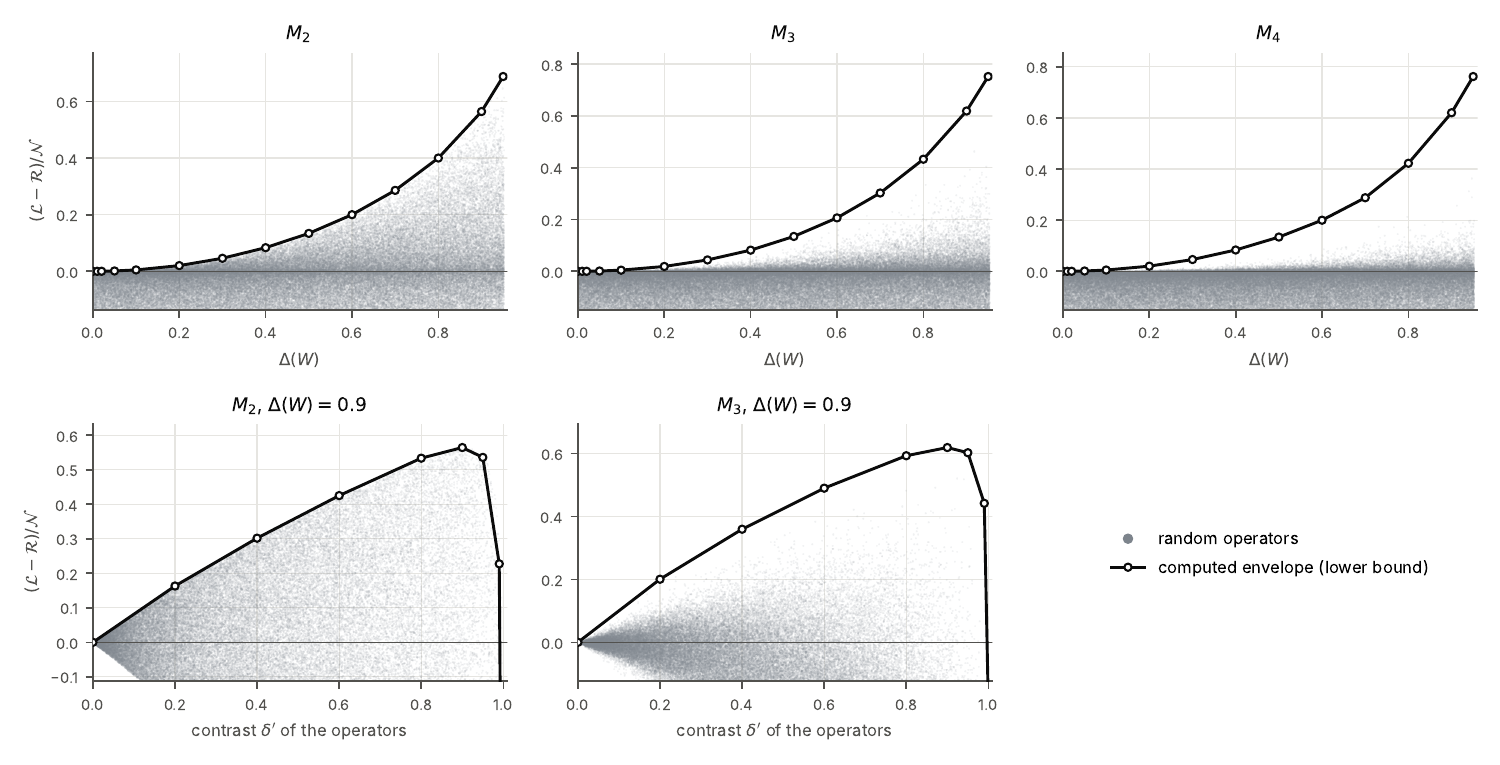}
\caption[]{Item 11, $(\det A)^{1/n}\le\frac1n\varphi(A)$, $\varphi(1)=n$: random samples and the computed envelope (a certified lower bound); top: against the contrast $\delta=\OMC(\rho)$ of the density, bottom: against the contrast $\delta^\prime$ of the operators at $\delta=0.9$. In the figure $\Delta(W)=\OMC(\rho)=\delta$.}
\label{fig:mc-det}
\end{figure}

\begin{figure}[!htbp]
\centering
\includegraphics[width=\textwidth]{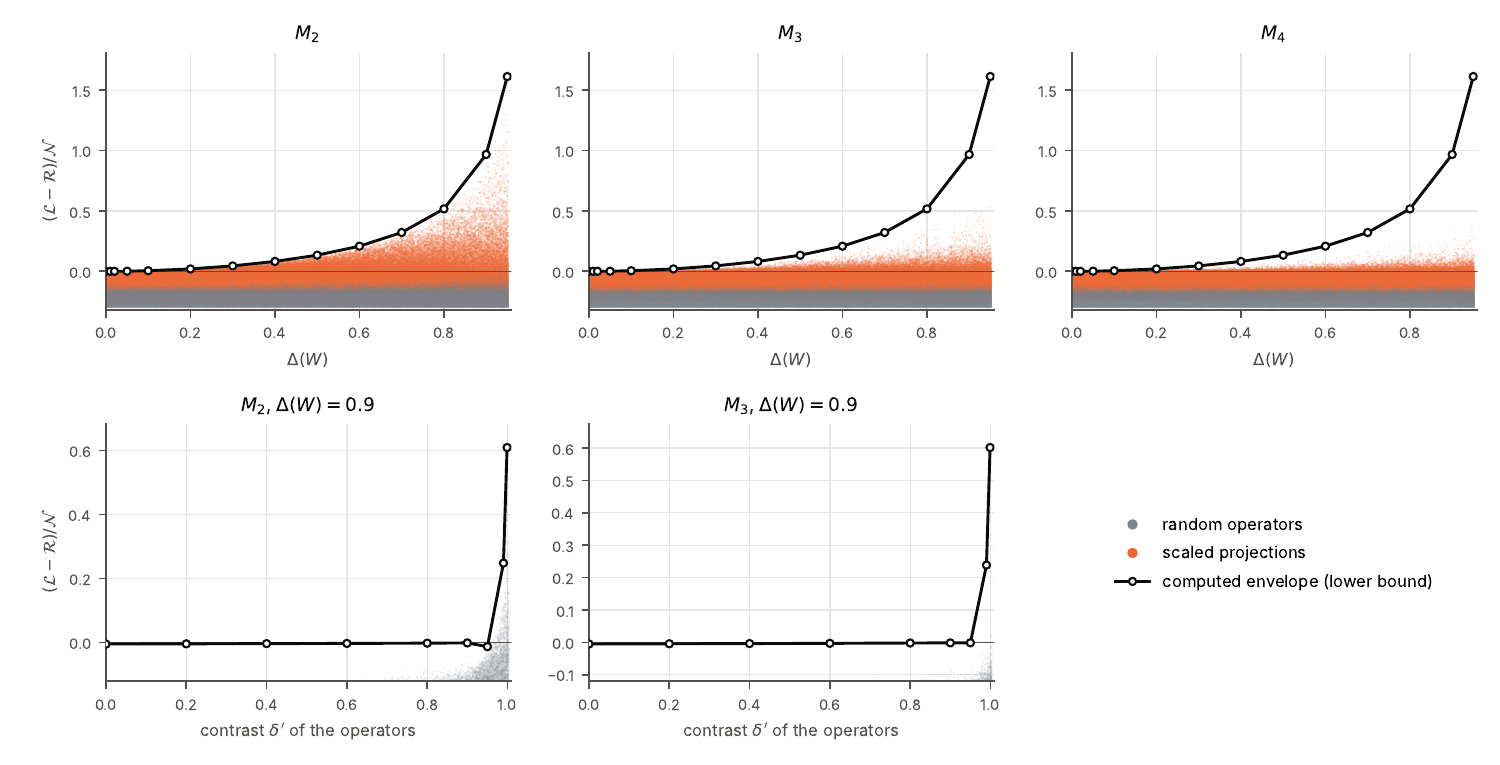}
\caption[]{Item 12, $\varphi((A+B)^{1/2})\le\varphi(A^{1/2})+\varphi(B^{1/2})$: random samples and the computed envelope (a certified lower bound); top: against the contrast $\delta=\OMC(\rho)$ of the density, bottom: against the contrast $\delta^\prime$ of the operators at $\delta=0.9$. In the figure $\Delta(W)=\OMC(\rho)=\delta$.}
\label{fig:mc-subadd}
\end{figure}

\begin{figure}[!htbp]
\centering
\includegraphics[width=\textwidth]{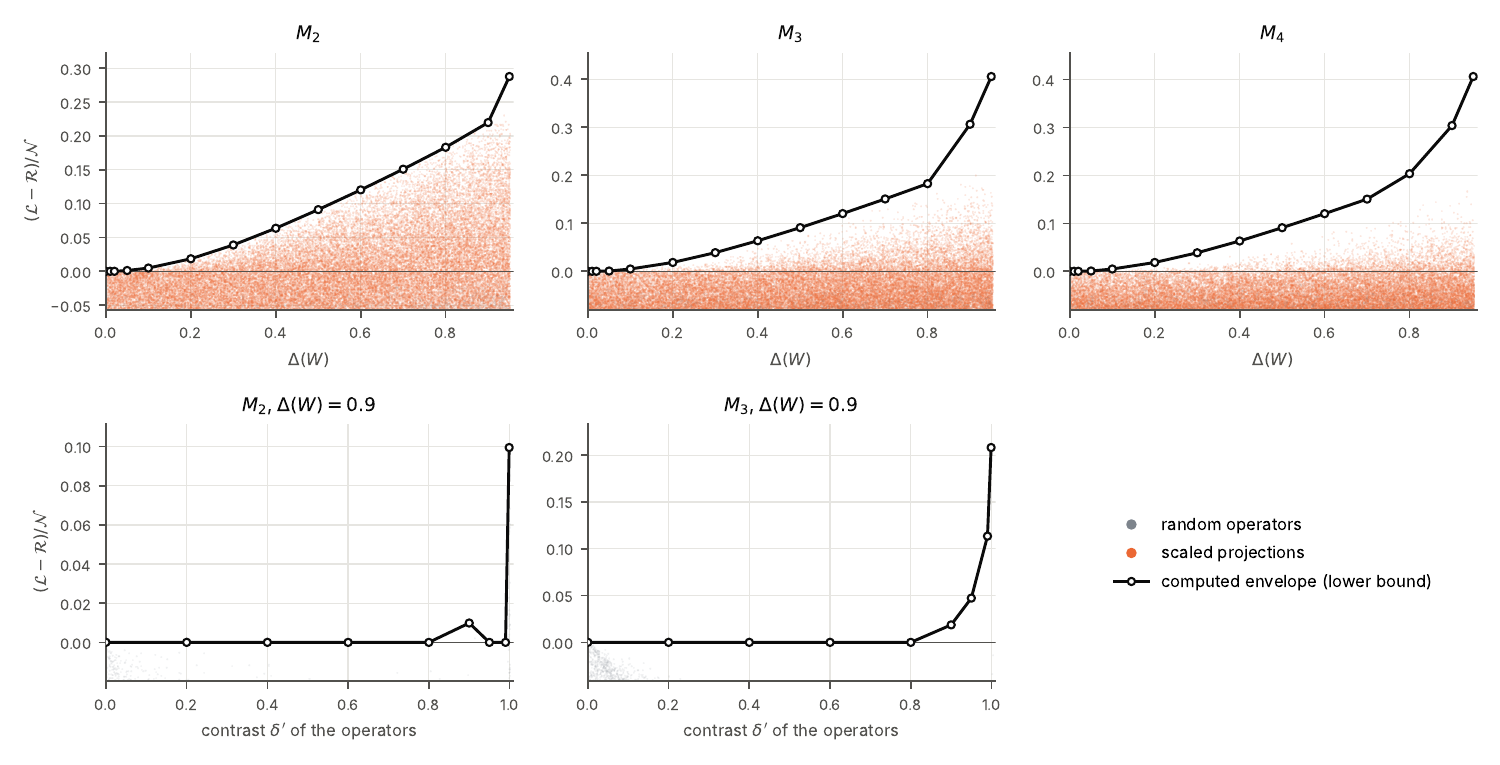}
\caption[]{Item 13, $\varphi(A+B-|A-B|)\le2\varphi(A^{1/4}B^{1/2}A^{1/4})$: random samples and the computed envelope (a certified lower bound); top: against the contrast $\delta=\OMC(\rho)$ of the density, bottom: against the contrast $\delta^\prime$ of the operators at $\delta=0.9$. In the figure $\Delta(W)=\OMC(\rho)=\delta$.}
\label{fig:mc-powers}
\end{figure}

\begin{figure}[!htbp]
\centering
\includegraphics[width=\textwidth]{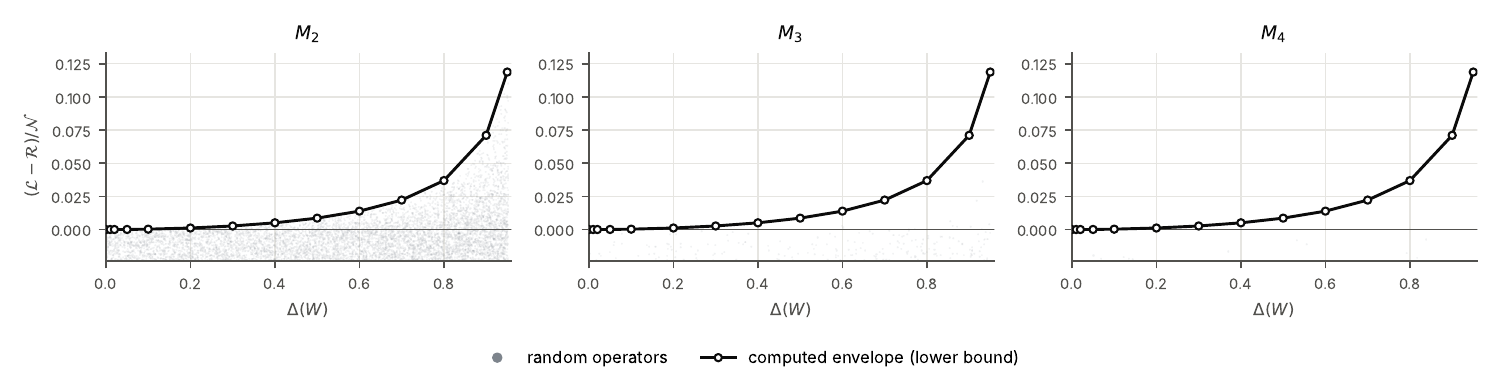}
\caption[]{Item 14, $|\varphi(PQ)|\le\varphi(P)^{1/3}\varphi(Q)^{2/3}$: random samples and the computed envelope (a certified lower bound) against the contrast $\delta=\OMC(\rho)$ of the density, labelled $\Delta(W)$ in the figure.}
\label{fig:mc-holder}
\end{figure}

\begin{figure}[!htbp]
\centering
\includegraphics[width=\textwidth]{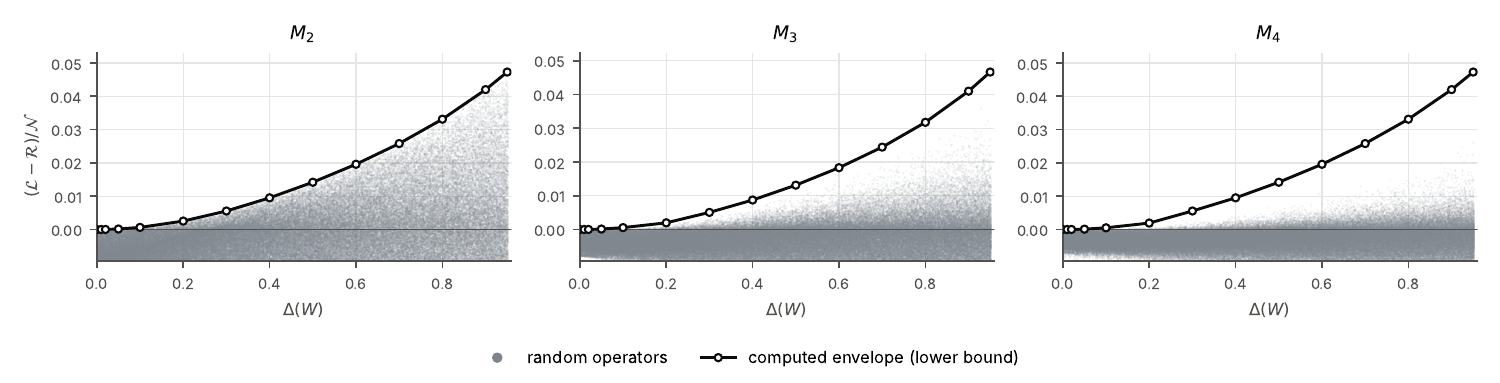}
\caption[]{Item 15, $\varphi(e^{P+Q})\le\varphi(e^{P/2}e^Qe^{P/2})$: random samples and the computed envelope (a certified lower bound) against the contrast $\delta=\OMC(\rho)$ of the density, labelled $\Delta(W)$ in the figure.}
\label{fig:mc-gt}
\end{figure}

\begin{figure}[!htbp]
\centering
\includegraphics[width=\textwidth]{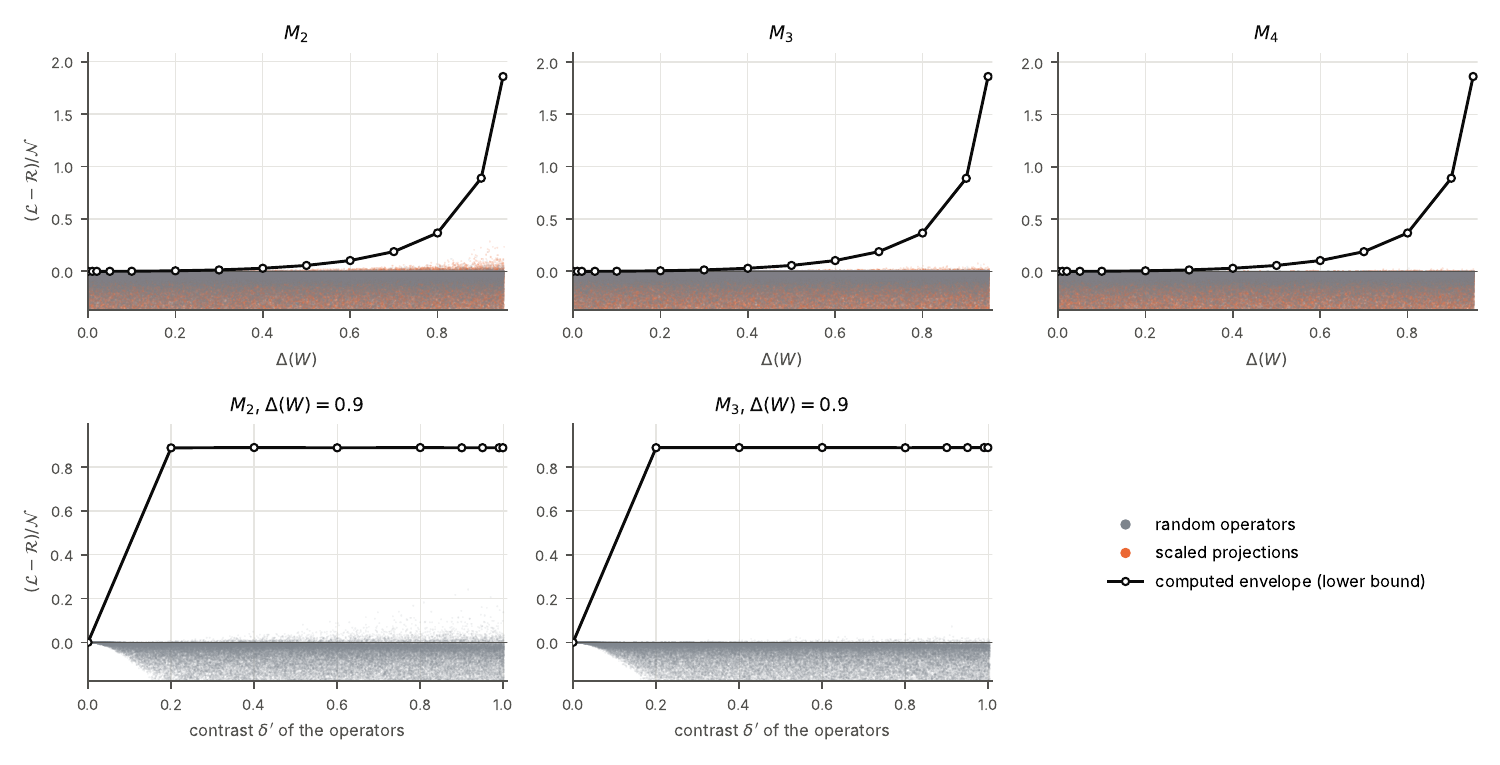}
\caption[]{Item 16, $\varphi(e^A)e^{\varphi(e^{A/2}Be^{A/2})/\varphi(e^A)}\le\varphi(e^{A+B})$: random samples and the computed envelope (a certified lower bound); top: against the contrast $\delta=\OMC(\rho)$ of the density, bottom: against the contrast $\delta^\prime$ of the operators at $\delta=0.9$. In the figure $\Delta(W)=\OMC(\rho)=\delta$.}
\label{fig:mc-pb}
\end{figure}

\begin{figure}[!htbp]
\centering
\includegraphics[width=\textwidth]{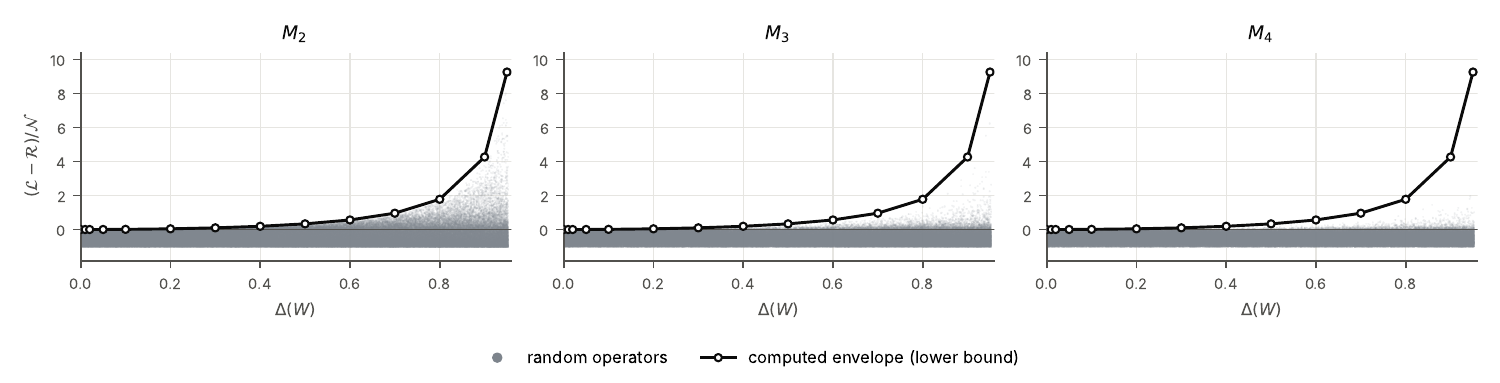}
\caption[]{Item 17, $\varphi(PQP)\le\varphi(Q)$: random samples and the computed envelope (a certified lower bound) against the contrast $\delta=\OMC(\rho)$ of the density, labelled $\Delta(W)$ in the figure.}
\label{fig:mc-pqp}
\end{figure}

\begin{figure}[!htbp]
\centering
\includegraphics[width=\textwidth]{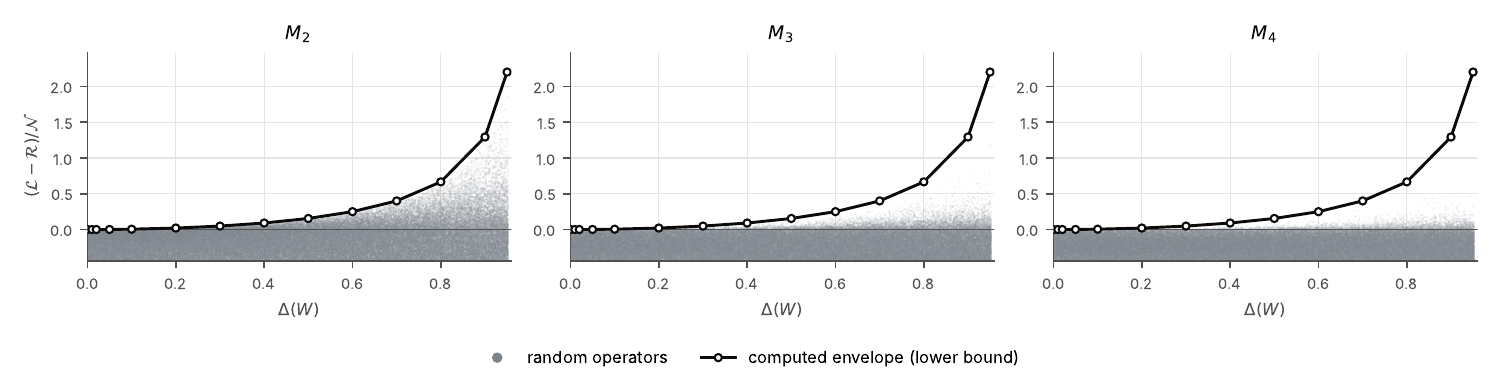}
\caption[]{Item 18, $\varphi(|P-Q|)\le\varphi(P+Q)$: random samples and the computed envelope (a certified lower bound) against the contrast $\delta=\OMC(\rho)$ of the density, labelled $\Delta(W)$ in the figure.}
\label{fig:mc-absdiff}
\end{figure}

\begin{figure}[!htbp]
\centering
\includegraphics[width=\textwidth]{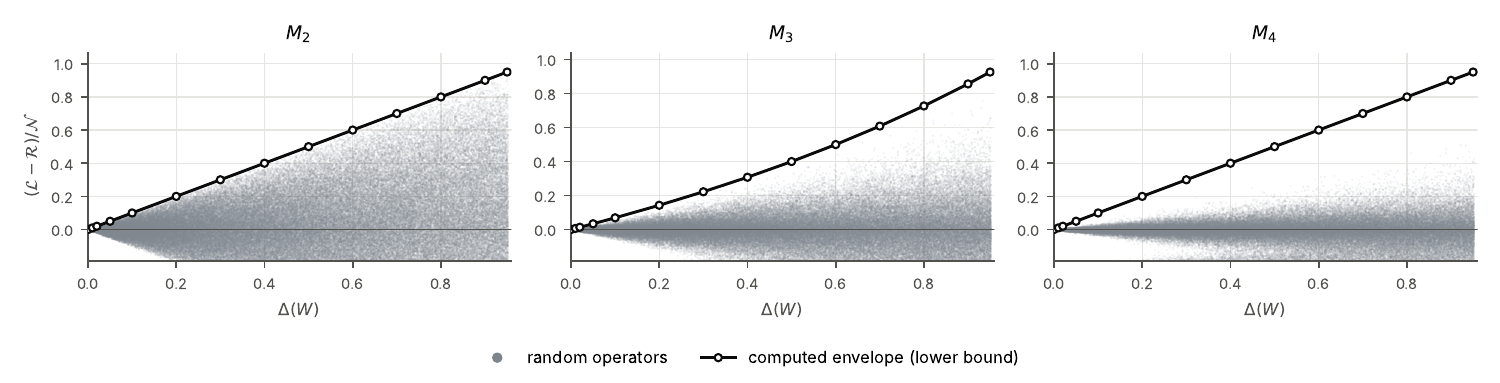}
\caption[]{Item 19, $\varphi(P+Q)\le\varphi(1+P\wedge Q)$: random samples and the computed envelope (a certified lower bound) against the contrast $\delta=\OMC(\rho)$ of the density, labelled $\Delta(W)$ in the figure.}
\label{fig:mc-meet}
\end{figure}

\begin{figure}[!htbp]
\centering
\includegraphics[width=\textwidth]{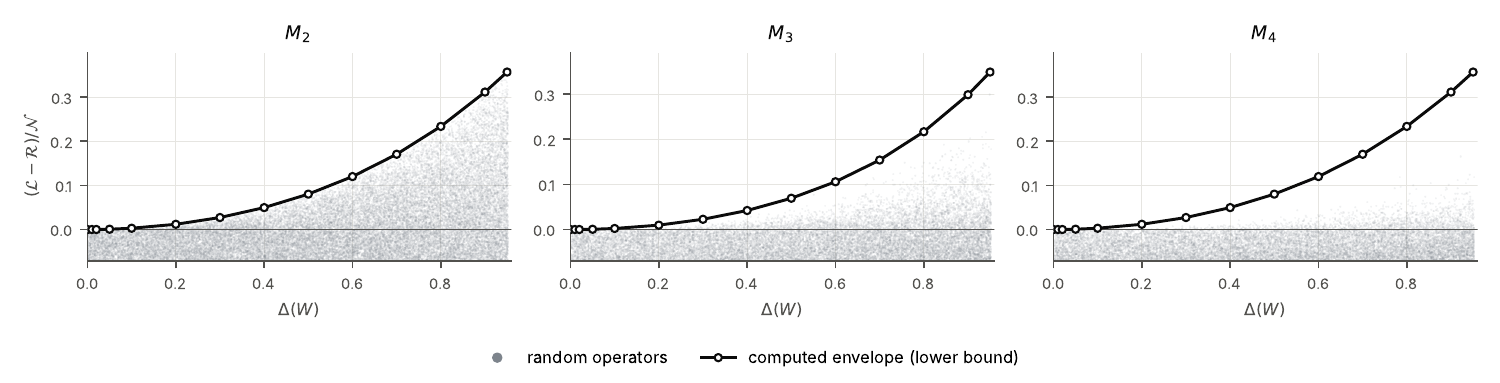}
\caption[]{Item 20, $\varphi(e^{QPQ})\le\varphi(e^P)$: random samples and the computed envelope (a certified lower bound) against the contrast $\delta=\OMC(\rho)$ of the density, labelled $\Delta(W)$ in the figure.}
\label{fig:mc-expqpq}
\end{figure}

\begin{figure}[!htbp]
\centering
\includegraphics[width=\textwidth]{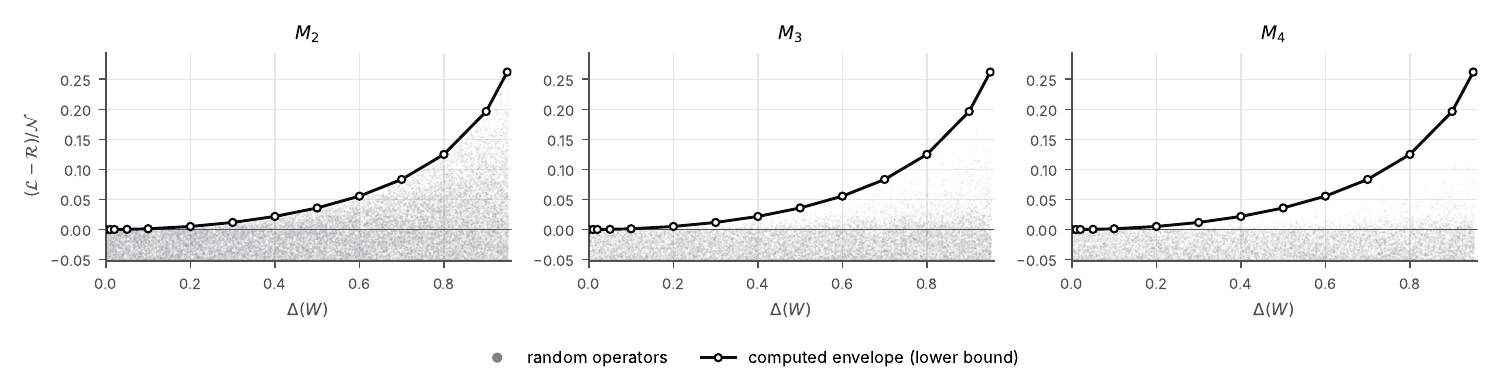}
\caption[]{Item 21, $\varphi(\mathrm{Re}\,PQ)\ge0$: random samples and the computed envelope (a certified lower bound) against the contrast $\delta=\OMC(\rho)$ of the density, labelled $\Delta(W)$ in the figure.}
\label{fig:mc-repos}
\end{figure}

\begin{figure}[!htbp]
\centering
\includegraphics[width=\textwidth]{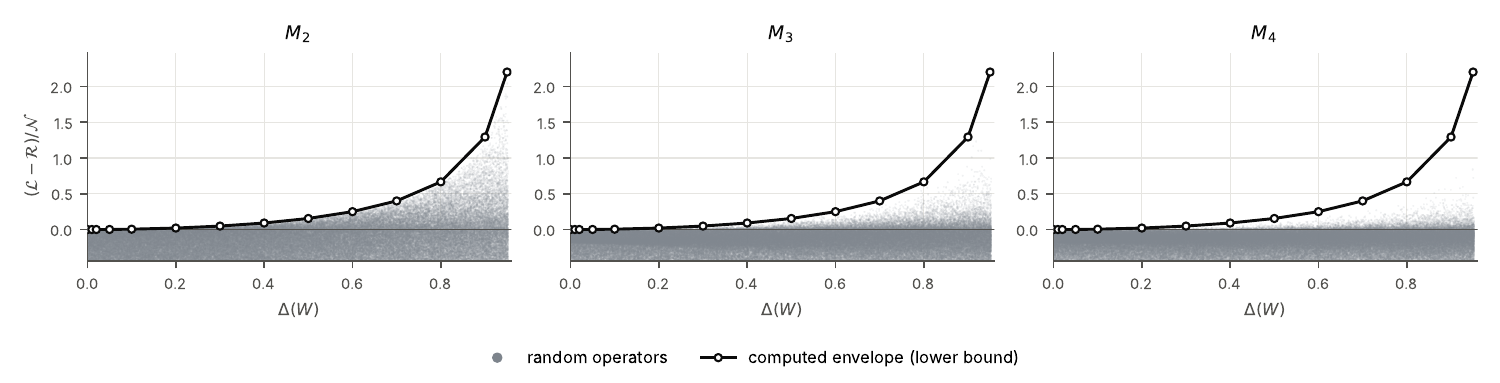}
\caption[]{Item 22, $\varphi(\mathrm{Re}\,PQ)\le\varphi(|PQ|)$: random samples and the computed envelope (a certified lower bound) against the contrast $\delta=\OMC(\rho)$ of the density, labelled $\Delta(W)$ in the figure.}
\label{fig:mc-reabs}
\end{figure}

\begin{figure}[!htbp]
\centering
\includegraphics[width=\textwidth]{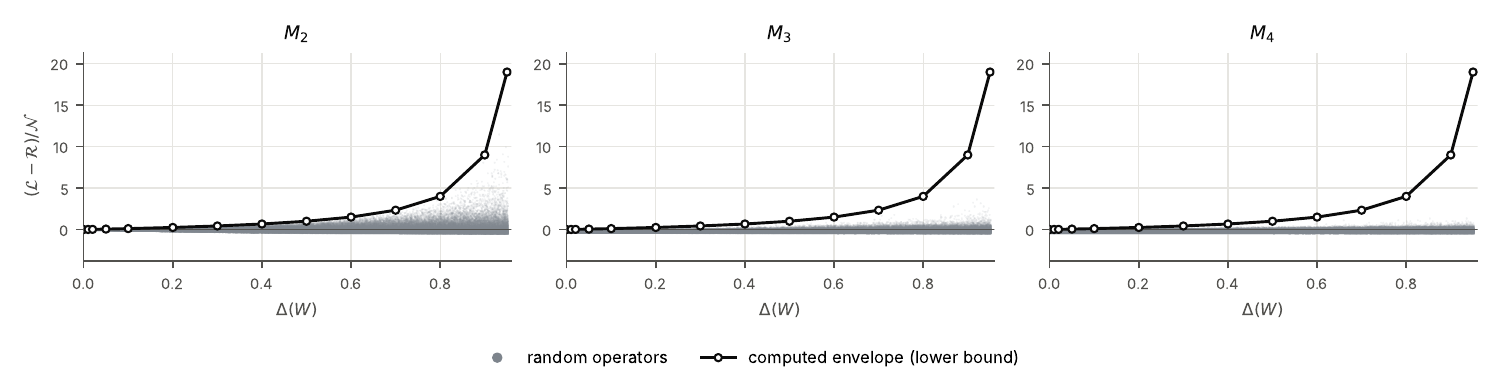}
\caption[]{Item 23, $\varphi(P\vee Q)\le\varphi(P)+\varphi(Q)$: random samples and the computed envelope (a certified lower bound) against the contrast $\delta=\OMC(\rho)$ of the density, labelled $\Delta(W)$ in the figure.}
\label{fig:mc-join}
\end{figure}

\begin{figure}[!htbp]
\centering
\includegraphics[width=\textwidth]{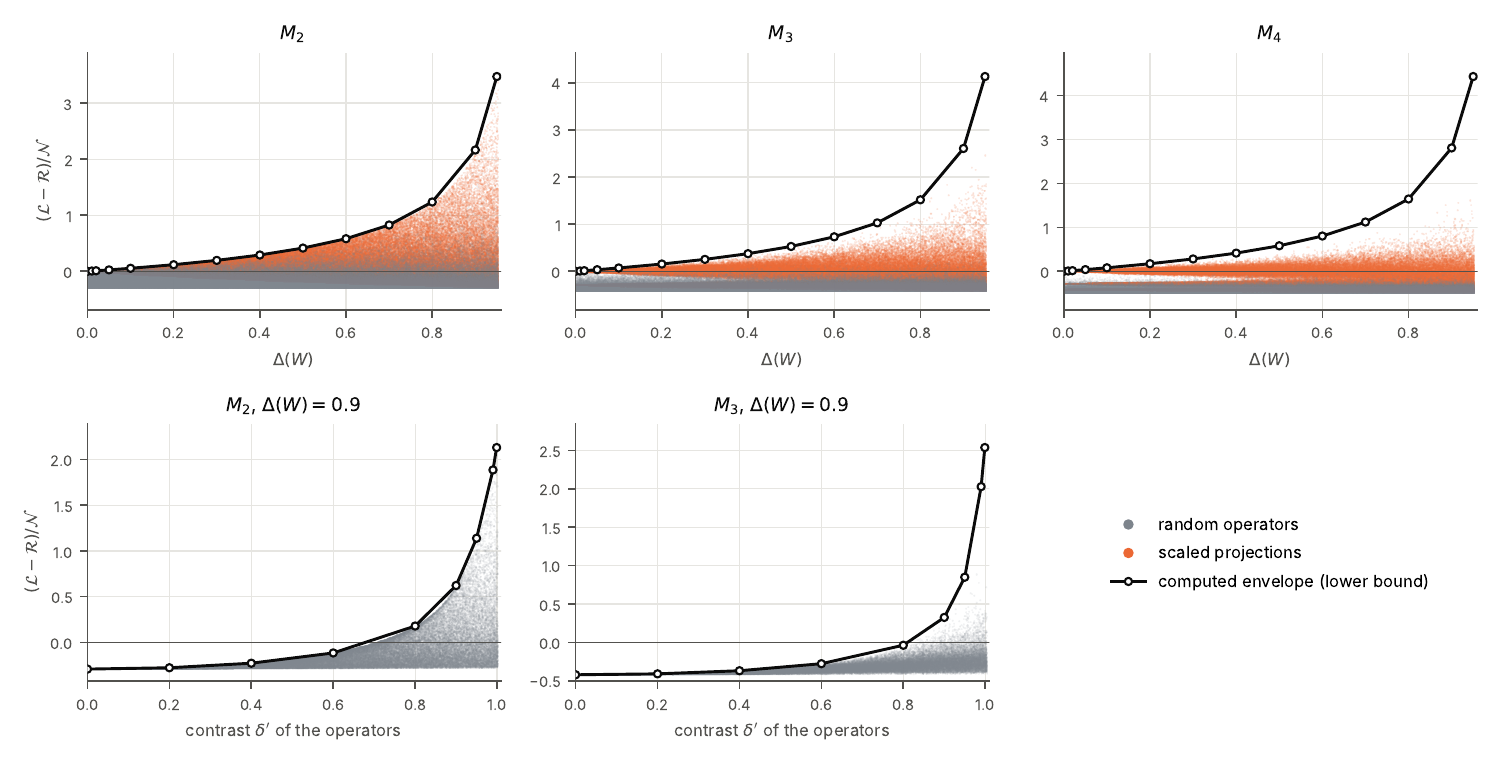}
\caption[]{Item 24, $\varphi(A^2)^{1/2}\le\varphi(A)$, $\varphi(1)=n$: random samples and the computed envelope (a certified lower bound); top: against the contrast $\delta=\OMC(\rho)$ of the density, bottom: against the contrast $\delta^\prime$ of the operators at $\delta=0.9$. In the figure $\Delta(W)=\OMC(\rho)=\delta$.}
\label{fig:mc-lp}
\end{figure}

\begin{figure}[!htbp]
\centering
\includegraphics[width=\textwidth]{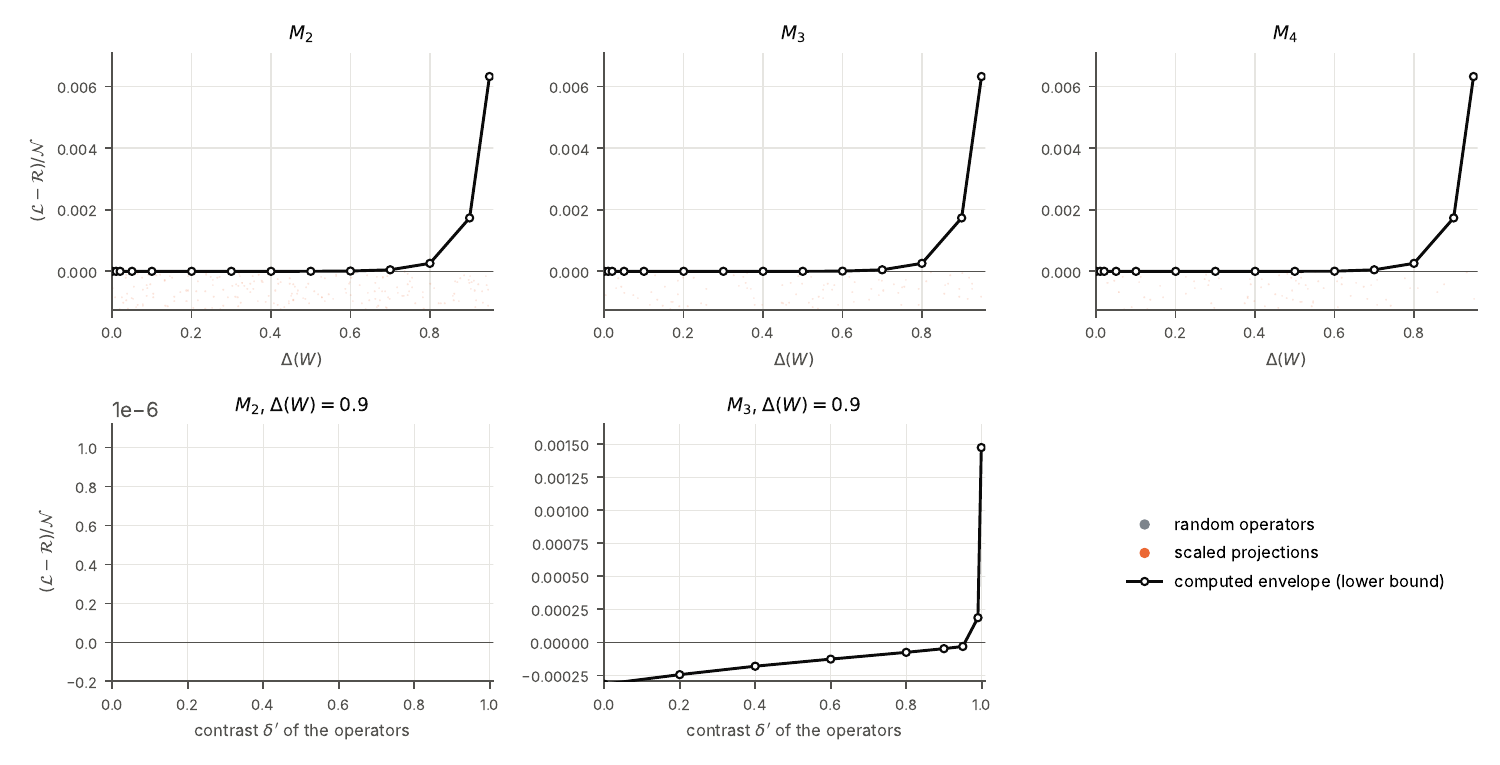}
\caption[]{Item 25, $0\le A\le B\Rightarrow\varphi(A^4)\le\varphi(AB^2A)$: random samples and the computed envelope (a certified lower bound); top: against the contrast $\delta=\OMC(\rho)$ of the density, bottom: against the contrast $\delta^\prime$ of the operators at $\delta=0.9$. In the figure $\Delta(W)=\OMC(\rho)=\delta$.}
\label{fig:mc-wmon}
\end{figure}

\end{document}